\documentclass{memoir}

\makeindex

\usepackage{amsmath,amsthm,amssymb}
\usepackage{graphicx}
\usepackage{hyperref}

\newcommand{\bdry}[1][\X]{\mathfrak{dG}_{#1}}
\newcommand{\half}[1][\X]{\overline{\G_x^{#1}}}

\newcommand{\coc}{\nu}
\newcommand{\arr}{\longrightarrow}
\newcommand{\alb}{\mathsf{X}}
\newcommand{\G}{\mathfrak{G}}
\newcommand{\Gh}{\mathfrak{H}}
\newcommand{\pG}{\widetilde{\G}}
\newcommand{\be}{\mathsf{o}}
\newcommand{\en}{\mathsf{t}}
\newcommand{\R}{\mathbb{R}}
\newcommand{\Z}{\mathbb{Z}}
\newcommand{\X}{X}
\newcommand{\Y}{Y}
\newcommand{\mS}{\mathcal{S}}
\newcommand{\wh}{\widehat}
\newcommand{\wt}{\widetilde}
\newcommand{\nuke}{Q}
\newcommand{\til}{\mathcal{T}}
\newcommand{\proj}{\mathsf{P}}
\newcommand{\M}{\mathcal{M}}
\newcommand{\C}{\mathbb{C}}
\newcommand{\N}{\mathbb{N}}

\newcommand{\hide}[1]{}

\newtheorem{theorem}{Theorem}[section]
\newtheorem{proposition}[theorem]{Proposition}
\newtheorem{corollary}[theorem]{Corollary}
\newtheorem{lemma}[theorem]{Lemma}

\theoremstyle{definition}
\newtheorem{defi}{Definition}[section]

\newtheorem{examp}{Example}

\numberwithin{section}{chapter}
\numberwithin{equation}{chapter}
\numberwithin{figure}{chapter}

\title{Hyperbolic groupoids and duality}
\author{Volodymyr Nekrashevych}

\date{}

\begin{document}

%\frontmatter

%\subjclass[2010]{Primary:37D20, 20L05, Secondary: 20F67}

\maketitle

\thanks{The paper is based upon work supported by NSF grants
  DMS1006280 and DMS0605019}

\begin{abstract}
We introduce a notion of hyperbolic groupoids, generalizing the notion of a
Gromov hyperbolic group. Examples of hyperbolic groupoids include actions of
Gromov hyperbolic groups on their boundaries, pseudogroups generated
by expanding self-coverings, natural pseudogroups
acting on leaves of stable (or unstable) foliation of an
Anosov diffeomorphism, e.t.c..

We describe a duality theory for hyperbolic groupoids. 
We show that for every hyperbolic groupoid $\G$ there is a naturally defined
dual groupoid $\G^\top$ acting on the Gromov boundary of a
Cayley graph of $\G$. The groupoid $\G^\top$ is also hyperbolic and such that $(\G^\top)^\top$ is
equivalent to $\G$. 

Several classes of examples of hyperbolic groupoids and their applications are discussed.
\end{abstract}

\tableofcontents

%\mainmatter
\chapter*{Introduction}

Hyperbolic groups were introduced by M.~Gromov in the 1980s as
generalizations of fundamental groups of compact hyperbolic
manifolds, using a large-scale version of negative curvature for
metric spaces (see~\cite{gro:hyperb,ghys-h:gromov}). Since
then the theory of hyperbolic groups is one of
central topics of geometric group theory and dynamics.

The aim of our notes is to show how Gromov hyperbolic graphs appear
naturally in the study of hyperbolic and expanding dynamical systems,
and to generalize the notion of a
Gromov hyperbolic group to pseudogroups and groupoids. 
Another goal is to describe a duality theory for hyperbolic
groupoids, which in some sense interchanges the large-scale and
topological structures of hyperbolic groupoids.

There are many well known connections between negative curvature and
different types of dynamical hyperbolicity. For example, geodesic flow on a
compact negatively curved manifold is an important example of an
Anosov flow. Symbolic dynamics for Smale's Axiom A
diffeomorphisms~\cite{bowen:markovpartition} is
similar to symbolic dynamics of the geodesic
flow on a hyperbolic manifold~\cite{morse:hyperbolic}
and to automatic structures on Gromov hyperbolic
groups~\cite{cannon:hyperbolic,gro:hyperb,curnpap:symb}. We propose a
general framework uniting large-scale negative curvature (Gromov
hyperbolicity) with topological hyperbolicity (expansion).
We define \emph{hyperbolic groupoids} (or pseudogroups)
combining M.~Gromov's notion of a hyperbolic graph,
A.~Haefliger's notion of a compactly generated pseudogroup, and
D.~Ruelle's notion of a Smale space.

Examples of hyperbolic groupoids include actions of
Gromov hyperbolic groups on their boundaries, pseudogroups generated
by expanding self-coverings (e.g., by restriction of a hyperbolic
complex rational function to its Julia set), natural pseudogroups
acting on leaves of stable (or unstable) foliation of an
Anosov diffeomorphism, e.t.c..

A new aspect of the theory of
hyperbolic groupoids, not apparent in the case of Gromov hyperbolic
groups, is a duality theory interchanging
large-scale and infinitesimal properties of the groupoids. Namely, for
every hyperbolic groupoid $\G$ there is a naturally defined
\emph{dual} groupoid $\G^\top$ acting on the Gromov boundary of a
Cayley graph of $\G$, which is also hyperbolic and such that $(\G^\top)^\top$ is
equivalent to $\G$.

In this way we represent the space on which a hyperbolic groupoid acts
as the boundary of a Cayley graph of the dual groupoid. This makes it
possible to apply the methods of the theory of
Gromov hyperbolic graphs to hyperbolic dynamics. See, for example, the
paper~\cite{nek:psmeasure}, where the Patterson-Sullivan construction
and the visual metric on the boundary of a hyperbolic graph are used to
study metrics and measures on spaces on which the hyperbolic groupoid
acts.

\section{Main definitions and results}
A \emph{pseudogroup} $\pG$ of local homeomorphisms of a space $\X$ is a set
of homeomorphisms between open subsets of $\X$ that is closed under taking
inverses, compositions, restrictions to open subsets, and unions (of
homeomorphisms agreeing on the intersections of their domains). A
\emph{germ} of $\pG$ is the equivalence class of a pair $(F, x)$, where
$F\in\pG$ and $x$ is a point of the domain of $F$. We identify two
germs $(F_1, x)$ and $(F_2, x)$ if there exists a neighborhood $U$ of
$x$ such that $F_1|_U=F_2|_U$. The set of germs of elements of a
pseudogroup $\pG$ has a natural topology and is a groupoid (i.e., a
small category of isomorphisms) with
respect to the obvious composition. A pseudogroup is uniquely
determined by the topological groupoid of germs.
We will denote by $\pG$ the pseudogroup associated with a groupoid $\G$ (so that $\G$ is the groupoid of germs of the pseudogroup $\pG$).

It is more natural to consider groupoids and pseudogroups up to an
equivalence weaker than isomorphism (see~\cite{haefliger:someremarks}
and~\cite{muhlyrenault:equiv}). If $\X_0$ is an open subset of the space
$\X$ on which a pseudogroup $\pG$ acts, then we denote by
$\pG|_{\X_0}$ the set of elements of $\pG$ such that their range and
domain are subsets of $\X_0$.  We say that a subset $\X_0$ of the
space on which a pseudogroup $\pG$ acts is a $\pG$-\emph{transversal}
if it intersects every $\pG$-orbit.

\begin{defi}
Pseudogroups $\pG_1$, $\pG_2$ are \emph{equivalent} if there exists a
pseudogroup $\wt{\Gh}$ and $\wt{\Gh}$-transversals $\X_1$ and $\X_2$
such that restrictions of $\wt{\Gh}$ to $\X_1$ and $\X_2$ are
isomorphic to $\pG_1$ and $\pG_2$ respectively.
\end{defi}

Pseudogroups and their groupoids of germs appear naturally in the
study of \emph{local
symmetries} of structures, especially when the structure does not have enough
globally defined symmetries, so that group theory does not
describe the symmetric nature of the structure adequately. Structures of this
type are, for example, aperiodic tilings (like the Penrose tiling),
Julia sets, attractors of iterated function systems, non-wandering
sets of Smale's Axiom A diffeomorphisms, etc.

In many interesting cases local symmetries include
\emph{self-similarities}, i.e., symmetries identifying
parts of the object on different scales.
Definition of a hyperbolic groupoid combines the idea of ``multiscale''
symmetries of a self-similar structure with the Gromov's idea of a
negatively curved Cayley graph. In order to define Cayley graphs of
groupoids, we use the following definition due to A.~Haefliger~\cite{haefliger:compactgen}.

We say that a pseudogroup $\pG$ is
\emph{compactly generated} if there exists an open relatively compact
(i.e., having compact closure) $\pG$-transversal
$\X_0$ and a finite set $\mS$ of elements of $\pG$ with range and
domain in $\X_0$ such that
\begin{enumerate}
\item every germ $(F, x)\in\G$ such that $\{x, F(x)\}\subset
  \X_0$ can be represented as a germ of a product of elements of $\mS$
  and their inverses;
\item for every element $F\in\mS$ there exists an element $\wh F$ of
  $\pG$ such that $F$ is a restriction of $\wh F$, and closures of the
  domain and the range of $F$ are compact and contained in the domain and
  the range of $\wh F$ respectively.
\end{enumerate}

Suppose that $\X_0$ and $\mS$ are as above. We say that $\mS$ is
a \emph{compact generating set} of $\pG|_{\X_0}$. The \emph{Cayley
graph} $\G(x, \mS)$ for $x\in\overline{\X_0}$
is the oriented graph with the set of vertices equal to
the set of germs $(F, x)$ such that $F(x)\in\overline{\X_0}$. There is an
arrow starting at $(F_1, x)$ and ending in $(F_2, x)$ if and only if
there exists $F\in\mS$ such that $(F_2, x)=(FF_1, x)$.

Similarly to the case of Cayley graphs of groups, the Cayley graph
$\G(x, \mS)$ does not depend (up to a quasi-isometry) on the choice of
the generating set $\mS$ and the transversal
$\X_0$ (see~\ref{ss:cayley}). It depends, however, on the choice of the basepoint $x$.

We will use notation $|x-y|$ for distance between points $x$ and $y$
in a metric space.

\begin{defi}
\label{def:hyperbolicgroupoid}
A pseudogroup $\pG$ is \emph{hyperbolic} if there exists an open relatively
compact transversal $\X_0$, a metric $|\cdot|$ defined on a neighborhood
$\wh\X_0$ of the closure of $\X_0$, and a compact generating set
$\mS$ of $\pG|_{\X_0}$ such that
\begin{enumerate}
\item all elements of $\mS^{-1}$ are Lipschitz and there exists
  $\lambda\in (0, 1)$ such that $|F(x)-F(y)|<\lambda |x-y|$ for
  all $F\in\mS$, $x, y\in\mathop{\mathrm{Dom}}F$;
\item there exists $\delta>0$ such that for every $x\in\X_0$
the Cayley graph $\G(x, \mS)$ is $\delta$-hyperbolic;
\item for every $x\in\X_0$ every vertex has at least one incoming and
  one outgoing arrow in $\G(x, \mS^{-1})$, and all infinite directed
  paths in $\G(x, \mS^{-1})$ are (in a uniform way)
quasi-geodesics converging to one
point $\omega_x\in\partial\G(x, \mS^{-1})$.
\end{enumerate}
\end{defi}

Note that this definition is a combination of two hyperbolicity
conditions: topological (the elements of $\mS$ are contractions) and large-scale (Gromov
hyperbolicity). Interplay between these two conditions and
duality between them is the central topic of this monograph.

Denote by $\partial\G_x$ the hyperbolic boundary of the Cayley graph
$\G(x, \mS)$ minus the special point $\omega_x$.
The space $\partial\G_x$ does not depend on
the choice of $\X_0$ and $\mS$, and hence is defined for every
$x\in\X$. Denote by $\half$ the union of $\G(x, \mS)$ with
$\partial\G_x$ with the natural topology (i.e., the compactification
of $\G(x, \mS)$ by its boundary with the point $\omega_x$ deleted).

A map $F:W_1\arr W_2$ between open neighborhoods of points
$\xi_1\in\partial\G_x$ and $\xi_2\in\partial\G_y$ in $\half$ and
$\half[y]$ respectively, is an \emph{asymptotic isomorphism} if for
every sequence of pairwise different edges $(g_n, h_n)\in W_1$
the distance between $g_nh_n^{-1}$ and
$F(g_n)F(h_n)^{-1}$ (with respect to a metric on a compact neighborhood of
the set of germs of $\mS$) goes to zero as $n\to\infty$.

\begin{defi}
The \emph{dual groupoid} is the groupoid of germs of the action of
asymptotic isomorphisms on the boundaries of the Cayley graphs of $\G$.
\end{defi}

If the groupoid $\G$ is \emph{minimal}, i.e., if all its orbits are
dense, then we can choose any point $x\in X$, and define the dual groupoid $\G^\top$ as the groupoid
of germs of the action on $\partial\G_x$ of the asymptotic
automorphisms of the Cayley graph $\G(x, \mS)$.

Let us give a more explicit description of asymptotic automorphisms.
If $\mS$ is rich enough, then every point
$\xi\in\partial\G_x$ is the limit as $n\to\infty$
of a sequence of vertices $g_ng_{n-1}\cdots g_1\cdot g$, where $g_i$ are
germs of elements $F_i\in\mS$ and $g$ is a vertex of the Cayley graph
$\G(x, \mS)$. Then there exists an element $F\in\pG$ such that $g=(F,
x)$, and $g_i=(F_i, F_{i-1}F_{i-2}\cdots F_1F(x))$. Since $F_i$ are
contractions, there exists a neighborhood $U$ of $x$ such that all
maps $F_nF_{n-1}\cdots F_1F$ are defined on $U$.

If $h$ is such that the range (target) $y$ of $h$ belongs to $U$, then define
\[R_g^h(g_ng_{n-1}\cdots g_1g)=F_nF_{n-1}\cdots F_1h,\] see
Figure~\ref{fig:tube}. One can show that if $x$ and $y$ are
sufficiently close, then the transformation $R_g^h$ between subsets of 
the corresponding Cayley graphs is well defined. It follows from the
fact that the maps $F_i$ are contractions that the transformations $R_g^h$ are
asymptotic isomorphisms. One can also show that all asymptotic isomorphisms are locally
equal to transformations of this form.

\begin{figure}
\centering
\includegraphics{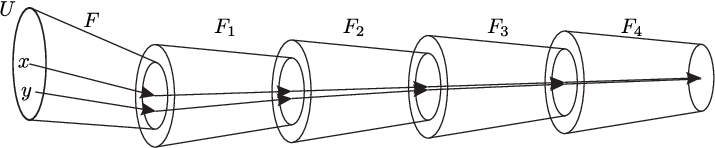}
\caption{}\label{fig:tube}
\end{figure}

The main results of the paper is the following duality theorem
(see Theorem~\ref{th:dualitytheorem}, which is proved for a more
general class of hyperbolic groupoids).

\begin{theorem}
\label{th:veryfirst}
Suppose that $\G$ is a minimal hyperbolic groupoid. Then the dual groupoid $\G^\top$ is also
hyperbolic and the groupoid $(\G^\top)^\top$ is equivalent to $\G$.
\end{theorem}

The maps $R_g^h$ can be used to define a topology on the disjoint
union $\partial\G$ of the boundaries $\partial\G_x$. Namely, define
\[[y, \xi]_F=R_g^y(\xi)=\lim_{n\to\infty} (F_nF_{n-1}\cdots F_1F,
y)\in\partial\G_y,\]
where $\xi=\lim_{n\to\infty}g_n\cdots g_1\cdot g$, and $y$ is a point
close to the target of $g$, as above.

One can show that the maps $[\cdot, \cdot]_F$ define a topology and a
\emph{local product structure} on $\partial\G$. Namely, every point
$\xi\in\partial\G_x\subset\partial\G$ has a neighborhood (called a
\emph{rectangle}) identified by the map $[\cdot, \cdot]_F$ with the
direct product of a neighborhood of $x$ in $\X$ and a neighborhood of
$\xi$ in $\partial\G_x$. Moreover, the defined direct product
decompositions agree on the intersections of the rectangles.

For a germ $g=(F, x)\in\G$ and a point $\xi\in\partial\G_{F(x)}$
represented as limit of the sequence $v_n$ of vertices of the Cayley
graph $\G(F(x), \mS)$ define $\xi\cdot g$ as limit of the sequence
$v_n\cdot g$. Then $\xi\cdot g$ belongs to $\partial\G_x$, and we
get for every $F\in\pG$ a local homeomorphism of $\partial\G$ mapping
$\xi\in\partial\G_{F(x)}$ to $\xi\cdot (F, x)\in\partial\G_x$ for
every point $x$ of the domain of $F$. Such local homeomorphisms
preserve the local product structure and generate a pseudogroup. Denote by
$\partial\G\rtimes\G$ the groupoid of germs of this pseudogroup, and
call it the \emph{geodesic quasi-flow} of the hyperbolic groupoid $\G$.

It is proved in~\ref{ss:actionofgonpg} that the groupoid
$\partial\G\rtimes\G$ is compactly generated. It is also proved that
it is a \emph{quasi-flow}, i.e., it has a natural \emph{quasi-cocycle}
$\coc:\partial\G\rtimes\G\arr\R$ such that restriction of $\coc$ to any
Cayley graph of $\partial\G\rtimes\G$ is a quasi-isometry of the
Cayley graph with $\R$.

Consider an open relatively compact transversal $W\subset\partial\G$
and a finite covering of $W$ by open rectangles $R_i=A_i\times B_i$,
where $A_i$ is an open subset of $\X$ and $B_i$ is an open subset of
$\partial\G_x$. Since
$\partial\G\rtimes\G$ preserves the local product structure, every
element of the pseudogroup $\wt{\partial\G\rtimes\G}$ can be locally
decomposed into a direct product of a homeomorphism $\proj_+(F)$
between open subsets of the sets $A_i$ and a homeomorphism
$\proj_-(F)$ between open subsets of the sets $B_i$. The groupoid
generated by the germs of the homeomorphisms of the form $\proj_+(F)$ is equivalent to
$\G$, while the groupoid generated by the germs of the homeomorphisms
$\proj_-(F)$ is equivalent to the dual groupoid $\G^\top$. In other
words, the groupoids $\G$ and $\G^\top$ are projections of
$\partial\G\rtimes\G$ on the factors of the natural local product
decomposition of $\partial\G$.

In the process of proving the Duality Theorem~\ref{th:veryfirst} we give an axiomatic
description of geodesic quasi-flows of hyperbolic groupoids. The
corresponding notion is a generalization of the classical notion of a
\emph{Smale space} (see~\cite{ruelle:therm}). Informally,
a compactly generated pseudogroup $\wt{\Gh}$ acting on a space $\X$
is a \emph{Smale quasi-flow} if the Cayley graphs of $\wt{\Gh}$ are
quasi-isometric to $\R$, and elements of $\wt{\Gh}$ expand one and
contract the other direction of a local product structure on $\X$. Precise definition has more
conditions (see Definition~\ref{def:hypflow}).

If $\Gh$ is a hyperbolic quasi-flow, then we define the
groupoids $\proj_+(\Gh)$ and $\proj_-(\Gh)$ (called the
\emph{Ruelle groupoids}) generated by the projections of the
elements of $\Gh$ onto the corresponding directions of the local
product structure. We prove the following results (see
Theorems~\ref{th:geodflowhyperbolic},~\ref{th:Ruellehyperbolic},
and~\ref{th:everyhypflowgeod}).

\begin{theorem}
\label{th:main2}
If a groupoid $\G$ is hyperbolic and minimal, then $\partial\G\rtimes\G$ is a Smale
quasi-flow.

The Ruelle groupoids of a Smale quasi-flow $\Gh$ are hyperbolic,
and $\Gh$ is equivalent to the geodesic quasi-flows of its Ruelle
groupoids.
\end{theorem}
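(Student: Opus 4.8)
The plan is to prove Theorem~\ref{th:main2} in two movements, mirroring its two-sentence structure, and to organize everything around the local product structure on $\partial\G$ and the quasi-cocycle $\coc$.

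For the first statement, I would start from the geodesic quasi-flow $\partial\G\rtimes\G$ and verify each clause of the definition of a Smale quasi-flow (Definition~\ref{def:hypflow}). The excerpt already grants me three crucial ingredients: that $\partial\G\rtimes\G$ is compactly generated, that it carries a quasi-cocycle $\coc$ whose restriction to each Cayley graph is a quasi-isometry with $\R$, and that the action preserves a local product structure with rectangles $R_i=A_i\times B_i$. So the Cayley-graphs-quasi-isometric-to-$\R$ axiom comes essentially for free from the quasi-cocycle. The substantive work is the expansion/contraction dichotomy: I would show that an element $F$ of $\wt{\partial\G\rtimes\G}$, decomposed as $\proj_+(F)\times\proj_-(F)$, contracts one direction and expands the other, with the \emph{sign} of $\coc$ determining which. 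The contraction in the $\X$-direction should follow from the Lipschitz/contraction hypothesis (1) of Definition~\ref{def:hyperbolicgroupoid} on generators of $\mS$; the expansion in the $\partial\G_x$-direction is the dual statement, where I would use the visual metric on $\partial\G_x$ and the fact that moving against the orientation (pushing toward the basepoint) on a $\delta$-hyperbolic graph corresponds to exponential divergence near the boundary. Here is where local diagonality enters: it guarantees the action on $\partial\G$ is faithful enough that the product decomposition is honest (no collapsing), so that $\proj_-(F)$ genuinely sees the hyperbolic-boundary direction.

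For the second statement I would proceed abstractly from a Smale quasi-flow $\Gh$, forgetting that it arose as some $\partial\G\rtimes\G$. Given the local product structure and the contraction/expansion dichotomy, I define the Ruelle pseudogroups $\proj_+(\Gh)$ and $\proj_-(\Gh)$ as in the excerpt and argue each is hyperbolic by \emph{constructing} a compact generating set and a metric satisfying Definition~\ref{def:hyperbolicgroupoid}. The contracting direction supplies the Lipschitz contraction condition (1) directly. For condition (2), $\delta$-hyperbolicity of the Cayley graphs of, say, $\proj_+(\Gh)$, I would build these Cayley graphs out of the flow lines of $\Gh$: a vertex is a germ reached by a word in the generators, and the quasi-cocycle $\coc$ provides a height function; the tree-like branching of the contracting foliation together with the quasi-isometry-to-$\R$ property should give a graph that is hyperbolic with the special point $\omega_x$ being the ``$\coc\to-\infty$'' end along which all against-orientation paths converge, yielding condition (3). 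Finally I would show $\Gh$ is equivalent to the geodesic quasi-flow of each Ruelle groupoid by reconstructing the local product structure of $\Gh$ from the boundary-plus-action data, using the parallel-translation maps $[\cdot,\cdot]_F$ to identify rectangles on both sides.

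The main obstacle, I expect, is establishing $\delta$-hyperbolicity of the constructed Cayley graphs of the Ruelle groupoids (condition (2)) together with the uniform convergence of against-orientation paths to the single point $\omega_x$ (condition (3)). Unlike contraction, which is local and follows from Lipschitz estimates, hyperbolicity is a genuinely large-scale, coarse-geometric property: I would need to show that the combinatorics of how contracting leaves merge as one flows in the expanding direction produces uniformly thin triangles, and that the bookkeeping of germs does not introduce unbounded distortion relative to the ambient metric. I anticipate this requires a careful comparison between the visual metric on the relevant boundary and the intrinsic word metric on the groupoid, controlled uniformly over all basepoints $x$ via the compactness of the transversal and the uniform contraction constant $\lambda$. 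The duality $(\G^\top)^\top\simeq\G$ hinted at in the introduction would then emerge from applying this reconstruction symmetrically to both Ruelle directions.
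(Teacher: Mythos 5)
Your first movement is essentially the paper's proof of Theorem~\ref{th:geodflowhyperbolic}: lift $\coc$ to $\partial\G\rtimes\G$, get contraction in the $\X$-direction from the generators, and get expansion in the boundary direction from the relation $\ell_{\be(g)}(\zeta_1\cdot g,\zeta_2\cdot g)\doteq\ell_{\en(g)}(\zeta_1,\zeta_2)+\coc(g)$; what you omit there (compatibility of the lifted log-scale and quasi-cocycle with the local product structure, and the properness condition (5) of Definition~\ref{def:hypflow}) is real but comparatively routine. The decisive gap is in your second movement, exactly at the step you yourself flag as ``the main obstacle''. You propose to establish $\delta$-hyperbolicity of the Cayley graphs of $\proj_+(\Gh)$ by directly analyzing how contracting leaves merge so as to produce uniformly thin triangles, and you offer no mechanism for doing so. The paper never attempts this: it invokes the criterion of Theorem~\ref{th:contraction}, proved once in the preliminaries, which states that for a directed graph carrying a quasi-cocycle increasing along arrows, Gromov hyperbolicity together with coarse equivalence of the quasi-cocycle to a Busemann quasi-cocycle is \emph{equivalent} to a contraction-type condition on directed paths (paths with uniformly close starting points eventually remain within a uniform distance of each other at matching cocycle levels). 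That condition is local-metric in nature and is exactly what the $\lambda$-contraction of a Smale quasi-flow can verify, via Lemma~\ref{lem:reductiontop}, which shows that every element of $\proj_+(\Gh)$ factors as $\proj_+(G_l''\cdots G_1'')^{-1}\proj_+(rG_k'\cdots G_1')$ with $r$ in a fixed compact set. Without this criterion, or an equivalent reduction of large-scale hyperbolicity to contraction estimates, your plan stalls precisely where you say it does.

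Two further steps your proposal passes over but which the paper must prove. First, Definition~\ref{def:hyperbolic} requires the Ruelle groupoid to be a \emph{Hausdorff} groupoid of germs; this is not automatic for projections of a groupoid onto one direction of a local product structure, and a substantial portion of the proof of Theorem~\ref{th:Ruellehyperbolic} is devoted to it, using local diagonality (condition (4) of Definition~\ref{def:hypflow}). Similarly, the projected quasi-cocycle $\coc_+$ must be shown to be well defined up to strong equivalence, which is a long computation you do not mention. Second, your final sentence compresses all of Theorem~\ref{th:everyhypflowgeod}: the equivalence of $\Gh$ with $\partial\proj_+(\Gh)\rtimes\proj_+(\Gh)$ is implemented by an explicit homomorphism $\Phi$ sending a composable sequence $\ldots g_2g_1$ of germs of elements of $\proj_+(\mS)$ to the unique point of $\bigcap_n\proj_-(\be(G_n\cdots G_1))$; well-definedness and injectivity of $\Phi$ (Lemmas~\ref{lem:Phiwelldefined} and~\ref{lem:injectivity}) again hinge on local diagonality, and one must also show that the range of $\Phi$ contains an open transversal. ``Reconstructing the local product structure from boundary-plus-action data'' names the goal, but the map $\Phi$ and these verifications \emph{are} the proof.
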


In particular, we show that the natural projections of a Smale space (e.g., of an Anosov
diffeomorphism, or of restriction of an Axiom A diffeomorphism to
the non-wandering set) onto the stable and unstable directions of its local
product structure form a pair of mutually dual hyperbolic groupoids. We call
them the \emph{Ruelle groupoids} of the Smale space, since the convolution
algebras of these groupoids are the Ruelle algebras studied
in~\cite{putnam:lecturenotes,putnam}, see also the paper of
D.~Ruelle~\cite{ruelle:grpd} where similar algebras and groupoids are
considered.

As a particular example, consider
the groupoid generated by the one-sided shift of finite type defined by a
finite set of prohibited
words $P$. It is hyperbolic, and its dual is the groupoid generated by the shift of finite
type defined by the set of prohibited words obtained by writing the
elements of $P$ in the opposite order.

Another class of examples of hyperbolic groupoids consists of the groupoids of
the action of Gromov-hyperbolic groups $G$ on their
boundaries $\partial G$. Every such groupoid is self-dual.
The corresponding geodesic quasi-flow is
equivalent to the diagonal action of $G$ on $\partial G\times\partial
G$  minus the diagonal. (For the notion of a geodesic flow associated
with a Gromov hyperbolic group see~\cite[Theorem~8.3.C]{gro:hyperb}.)

If $f:\M\arr\M$ is an expanding self-covering of a compact
space, then the groupoid generated by the germs of $f$ is
hyperbolic. Its dual, when $\M$ is connected, is generated by an
action of a group (called the \emph{iterated monodromy group} of $f$)
on a full one-sided Bernoulli shift-space and by the shift. The action
of the iterated monodromy group has a nice symbolic
presentation by \emph{finite automata}, which makes computations in
the dual groupoid very efficient, which in turn gives an efficient
description of the symbolic dynamics of $f$. This particular case of
hyperbolic duality is the main topic of the monograph~\cite{nek:book}. See
also~\cite{bartnek:rabbit,nek:models,nek:dendrites}
for other applications of iterated monodromy groups,
and~\cite{nek:smale}
for a generalization of iterated monodromy groups, which is also an
instance of hyperbolic duality.

An interesting $K$-theoretic duality of the convolution $C^*$-algebras of the
Ruelle groupoids of Smale spaces was
proved by J.~Kaminker, I.~Putnam, and M.~Whittaker in their
paper~\cite{kaminkerputnamwhittaker} (see also~\cite{kaminkerputnam:shifts}).
H.~Emerson~\cite{emerson:duality} has proved Poincar\'e duality for the
convolution algebras of the groupoid of the action of a
Gromov-hyperbolic group on its boundary.
It would be interesting to generalize both results
to all pairs of mutually dual hyperbolic groupoids.

A particular case of hyperbolic duality
was defined by W.~Thurston
in~\cite{thurston:automata} in relation with self-similar tilings
and numeration systems, see also~\cite{gelbrich:dualpenrose} for a natural
generalization.

Relation between Gromov hyperbolicity and hyperbolic dynamics was
explored in~\cite{nek:hyplim,pilgrim:nekrash,haisinskypilgrim}.

\section{Structure of the paper}

Chapter~\ref{s:preliminaries} is an overview of some of technical tools
used in the paper. In its first section we define a notion that is
sometimes a convenient replacement of the notion of a metric, and is
especially natural in the setting of hyperbolic dynamics and
geometry. Instead of working with a metric we work with a
\emph{log-scale}, which is, up to an additive constant, logarithm (base
less than one) of a metric. The triangle inequality is replaced by
the condition
\[\ell(x, z)\ge \min(\ell(x, y), \ell(y,
z))-\delta\]
for a positive constant $\delta$. We also assume that $\ell(x, y)=\ell(y, x)$,
and that $\ell(x, y)=\infty$ if and only if $x=y$. It follows from
Frink metrization lemma (see~\cite[Lemma~6.12]{kelley:gentop}) that
any function satisfying these conditions is equal, up to an additive constant,
to a logarithm of a metric.

We define natural notions of Lipschitz and H\"older equivalence of
log-scales, and show that log-scales can be ``pasted together'' from
locally defined log-scales (which is one of the reasons for using them
in our paper).

Next section of Chapter~\ref{s:preliminaries} introduces the
necessary notions of the theory of Gromov-hyperbolic metric spaces
and graphs. In particular, we prove a hyperbolicity criterion, which
will be used later to prove large-scale hyperbolicity of Ruelle
groupoids.

The last section of Chapter~\ref{s:preliminaries} defines the
notion of a local product structure on a topological space. This
notion was introduced by Ruelle in~\cite{ruelle:therm}.
We adapt it to our slightly more general setting.

Chapter~\ref{s:groupoidprelims} is an overview of the theory of pseudogroups and
groupoids of germs. In particular, we fix there our notation and terminology.

Section~\ref{ss:cayley} introduces the notion of a Cayley graph of
a groupoid, which is a natural development of
A.~Haefliger's notion of a compactly generated groupoid.

In the last section of Chapter~\ref{s:groupoidprelims} we define
compatibility conditions between pseudogroups, quasi-cocycles,
log-scales, and local product structures.

In Chapter~\ref{s:hypgroupoids} we define the notion of a hyperbolic
groupoid $\G$, its boundary $\partial\G$, the local product structure
on $\partial\G$, the dual groupoid $\G^\top$, and
the \emph{geodesic quasi-flow}, i.e., the
action of the groupoid $\G$ on $\partial\G$.

Chapter~\ref{s:quasiflow} is devoted to the proof of the duality
theorem for hyperbolic groupoids and to axiomatic description of
Smale quasi-flows. In the last two sections we give another definition of the dual
groupoid (using partial maps between positive cones of the Cayley
graphs), and study irreducibility conditions for hyperbolic groupoids.

In the last chapter of the paper we describe different examples of
pairs of dual hyperbolic groupoids and the corresponding hyperbolic
quasi-flows: actions of Gromov-hyperbolic groups on their boundaries,
groupoids generated by expanding maps, groupoids associated with
self-similar groups, Smale spaces, etc.

\subsection{Acknowledgments}
I am grateful to A.~Haefliger, J.~Kaminker, K.~Pilgrim, I.~Putnam
for fruitful discussions on the topics of these notes, and to the
anonimous referee for suggestions leading to improvement of the paper.

\chapter{Technical preliminaries}
\label{s:preliminaries}

\section*{Some remarks on notation and terminology}

All spaces on which groups, pseudogroups, or groupoids act are assumed
to be locally compact and metrizable. A neighborhood $U$ of a point
$x$ is any set containing an open set $V$ such that $x\in V$. In
particular, neighborhoods are not assumed to be open.

We use notation $|x-y|$ for distance between the points $x$
and $y$ of a metric space.

We write $F\doteq G$, where
$F$ and $G$ are real functions, if there exists a constant
$\Delta>0$ such that $|F-G|<\Delta$ for all values of the
variables. Similarly, we write $F\asymp G$ if there exists a constant
$C>1$ such that $C^{-1}F\le G\le CF$ for all values of the variables.

Notation $x\doteq\lim_{n\to\infty}a_n$ means that there exists a constant $\Delta$
such that for every partial limit $y$ of the sequence $a_n$ we have
$|x-y|\le\Delta $.

Most of our group and pseudogroup actions are from the right. In
particular, in a product of maps $fg$ the map $g$ acts before $f$.

We denote by $\lfloor x\rfloor$ and $\lceil x\rceil$ the largest
integer not smaller than $x$ and the smallest integer not larger than
$x$ respectively. 

\section{Logarithmic scales}
\label{ss:logscales}

\begin{defi}
\label{def:logscale} A \emph{logarithmic scale} (a
\emph{log-scale})\index{log-scale}\index{logarithmic
  scale|see{log-scale}} on a set $\X$ is a
function $\ell:\X\times\X\arr\R\cup\{\infty\}$ such that
\begin{enumerate}
\item $\ell(x, y)=\ell(y, x)$ for all $x, y\in\X$;
\item $\ell(x, y)=+\infty$ if and only if $x=y$;
\item there exists $\delta\ge 0$ such that $\ell(x, z)\ge \min(\ell(x, y), \ell(y,
z))-\delta,$ for all $x, y, z\in\X$.
\end{enumerate}
\end{defi}

\begin{defi}
We say that a metric $|x-y|$ on $\X$ is \emph{associated} with the
log-scale $\ell$ if there exist constants $\alpha$ and
$c$ such that $0<\alpha<1$, $c>1$, and
\[c^{-1}\alpha^{\ell(x, y)}\le |x-y|\le c\alpha^{\ell(x, y)}\]
for all $x, y\in\X$. \index{metric!associated with a log-scale}
\end{defi}

\begin{proposition}
\label{pr:logscalemetric}
For any log-scale $\ell$ on $\X$ there exists a metric $|\cdot|$
associated with it.

If $|x-y|$ is a metric on $\X$, then the function
\[\ell(x, y)=-\lfloor\ln |x-y|\rfloor\]
is a log-scale associated with it.
\end{proposition}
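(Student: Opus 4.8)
The statement splits into two independent directions, proved by different means: the passage from a log-scale to an associated metric rests on the Frink metrization lemma cited in the introduction, while the converse is a direct computation with logarithms and floors.

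For the first part, I would fix the constant $\delta\ge 0$ from axiom (3) of Definition~\ref{def:logscale} and choose a base $\alpha\in(0,1)$ close enough to $1$ that $\alpha^{-\delta}\le 2$ (any $\alpha$ works when $\delta=0$). Setting $g(x,y)=\alpha^{\ell(x,y)}$, with the convention $\alpha^{+\infty}=0$, makes $g$ symmetric and vanishing exactly on the diagonal. Applying the decreasing map $t\mapsto\alpha^{t}$ to $\ell(x,z)\ge\min(\ell(x,y),\ell(y,z))-\delta$ and using $\alpha^{\min(a,b)}=\max(\alpha^{a},\alpha^{b})$ converts axiom (3) into the relaxed triangle inequality $g(x,z)\le\alpha^{-\delta}\max(g(x,y),g(y,z))\le 2\max(g(x,y),g(y,z))$. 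This is precisely the hypothesis of the Frink metrization lemma, which produces a genuine metric $\rho$—concretely the chain infimum $\rho(x,y)=\inf\sum_i g(x_i,x_{i+1})$ over finite chains from $x$ to $y$—satisfying $\tfrac12 g\le\rho\le g$. Substituting $g=\alpha^{\ell}$ yields $2^{-1}\alpha^{\ell(x,y)}\le\rho(x,y)\le\alpha^{\ell(x,y)}$, so $\rho$ is associated with $\ell$ with constants $\alpha$ and $c=2$.

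For the converse, let $\ell(x,y)=-\lfloor\ln|x-y|\rfloor$. Symmetry and the equivalence $\ell(x,y)=+\infty\Leftrightarrow x=y$ follow at once from the corresponding properties of the metric. For axiom (3) I would start from $|x-z|\le|x-y|+|y-z|\le 2\max(|x-y|,|y-z|)$ and take logarithms to get $\ln|x-z|\le\ln 2+\max(\ln|x-y|,\ln|y-z|)$; negating and using $-\lfloor t\rfloor\ge -t$ on the left together with $-t>-\lfloor t\rfloor-1$ on the right gives $\ell(x,z)\ge\min(\ell(x,y),\ell(y,z))-(\ln 2+1)$, and since all three quantities are integers and $\ln 2+1<2$ this sharpens to $\ell(x,z)\ge\min(\ell(x,y),\ell(y,z))-1$, so (3) holds with $\delta=1$. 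Finally the defining estimate $\lfloor\ln|x-y|\rfloor\le\ln|x-y|<\lfloor\ln|x-y|\rfloor+1$ exponentiates to $e^{-\ell(x,y)}\le|x-y|<e\cdot e^{-\ell(x,y)}$, which is the association with $\alpha=e^{-1}$ and $c=e$.

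The only genuinely nontrivial input is the lower bound $\rho\ge\tfrac12 g$ in the Frink lemma, whose proof is the standard chaining induction over arbitrary finite chains and is exactly the content of the cited Kelley lemma; its precise normalization affects only the value of $c$, not the existence of an associated metric. Everything else reduces to bookkeeping with the monotonicity of $t\mapsto\alpha^{t}$ and elementary floor estimates, so I expect the main care to lie in correctly tracking the additive and multiplicative constants through the two conversions.
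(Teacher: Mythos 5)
Your second half (from a metric to the log-scale $-\lfloor\ln|x-y|\rfloor$) is correct, and it is the part the paper dismisses as obvious. The problem is in the first half, at exactly the point where you delegate everything to ``Frink's lemma.'' The lemma cited in the paper (Kelley, Lemma~6.12) is the \emph{entourage} version of Frink's metrization lemma: its hypothesis is the triple composition condition $U_{n+1}\circ U_{n+1}\circ U_{n+1}\subset U_n$, not your binary inequality $g(x,z)\le 2\max(g(x,y),g(y,z))$. This is why the paper's own proof first verifies $E_{n+2\delta}\circ E_{n+2\delta}\circ E_{n+2\delta}\subset E_n$ (two applications of axiom (3) of Definition~\ref{def:logscale}, each costing $\delta$) and then applies Kelley's lemma to the re-indexed family $E_{2\delta n}$; that $2\delta$ re-indexing is precisely the bridge between the log-scale axiom and the hypothesis that is actually available. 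Your condition does not supply that hypothesis at the same scale: three sets of $g$-diameter $\le t$ compose only to $g$-diameter $\le 4t$, not $\le 2t$.

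Moreover, the statement you invoke is false with the constant you give, and the ``standard chaining induction'' you defer to does not prove it. If the inductive claim is $g(x_0,x_n)\le C\sum_i g(x_i,x_{i+1})$, then splitting a chain of sum $S$ into two halves of sum $\le S/2$ plus the middle link and applying the binary inequality twice gives only $g(x_0,x_n)\le 2CS$, so the induction never closes, for any $C$. Indeed $\rho\ge\tfrac12 g$ can fail outright: take four points with $g(x_0,x_1)=g(x_2,x_3)=\epsilon$, $g(x_1,x_2)=S$, $g(x_0,x_2)=g(x_1,x_3)=2S$, $g(x_0,x_3)=4S$; all twelve instances of the inequality $g(x,z)\le2\max(g(x,y),g(y,z))$ hold, yet the chain $x_0,x_1,x_2,x_3$ has sum $S+2\epsilon<\tfrac12\,g(x_0,x_3)$. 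The correct Frink-type lemma for quasi-metrics with constant $\le 2$ yields only $\tfrac14 g\le\rho\le g$ (the example shows $\tfrac14$ is sharp), and its proof is genuinely more delicate than the factor-$2$ induction --- this is a known subtlety in the quasi-metric literature, and it is the actual mathematical content you have skipped. Your route can be repaired in two ways: prove that harder chaining lemma, or do what the paper does --- pass from $g$ to the entourages $E_{2\delta n}=\{(x,y):\ell(x,y)\ge 2\delta n\}$, check the triple composition condition, and quote Kelley's lemma as stated. Either way an associated metric results; in the paper's version the base is forced to be $\alpha=2^{-1/(2\delta)}$, which is permitted since the proposition does not prescribe $\alpha$.
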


\begin{proof}
Define for $n\in\Z$
\[E_n=\{(x, y)\;:\;\ell(x, y)\ge n\}.\]
Note that if $\delta$ is as in Definition~\ref{def:logscale}, then
\[E_{n+2\delta}\circ E_{n+2\delta}\circ E_{n+2\delta}\subset E_{n+\delta}\circ
E_{n+2\delta}\subset E_{n+\delta}\circ E_{n+\delta}\subset E_n.\]

It follows from Frink metrization lemma, see~\cite[Lemma~6.12]{kelley:gentop}, that there exists
a metric $|\cdot|$ on $\X$ such that
\[E_{2\delta n}\subset\{(x, y)\;:\;|x-y|<2^{-n}\}\subset
E_{2\delta(n-1)}.\]

Suppose that $\ell(x, y)=n$. Then $\ell(x, y)\ge 2\delta\lfloor
n/(2\delta)\rfloor$, hence \[|x-y|<2^{-\lfloor\frac{n}{2\delta}\rfloor}\le
2^{-\frac{n}{2\delta}+1}.\]

On the other hand $\ell(x, y)<2\delta(\lfloor n/(2\delta)\rfloor
+1)$, hence \[|x-y|\ge 2^{-\lfloor\frac{n}{2\delta}\rfloor-1}\ge
2^{-\frac{n}{2\delta}-2}.\]

We have shown that for $\alpha=2^{-\frac 1{2\delta}}$ we have
\[\frac 14\cdot\alpha^{\ell(x, y)}\le |x-y|\le 2\cdot
\alpha^{\ell(x, y)},\] which shows that $|\cdot|$ is associated with
$\ell$.

The second part of the proposition is obvious.
\end{proof}

If a log-scale is defined on a topological
space, then we assume that the topology defined by it (i.e., by an
associated metric) coincides with
the original topology of the space.

Taking into account the described relation between
log-scales and metrics, we give the following definitions.

\begin{defi}
Let $\ell_1$ and $\ell_2$ be log-scales on the sets $\X_1$ and
$\X_2$ respectively. A map $f:\X_1\arr\X_2$ is called
\emph{Lipschitz} if there exists a positive number $n$ such that
\[\ell_2(f(x), f(y))\ge\ell_1(x, y)-n\]
for all $x, y\in\X_1$.\index{Lipschitz!map} A map $f$ is
\emph{bi-Lipschitz} if it is invertible, and both $f$ and $f^{-1}$ are
Lipschitz.\index{bi-Lipschitz!map}

A function $f:\X_1\arr\X_2$ is said to be \emph{locally
  Lipschitz}\index{locally Lipschitz map}
if for every $x\in\X_1$ there exists a neighborhood $U$ of $x$ such
that restriction of $f$ to $U$ is Lipschitz.
Equivalently, $f$ is locally Lipschitz if for every $x\in\X_1$
there exist $N$ and $n$ such that $\ell_2(f(x), f(y))\ge\ell_1(x,
y)-n$ for all $y\in\X_1$ such that $\ell_1(x, y)\ge N$.
\end{defi}

\begin{defi}
Let $\ell$ be a log-scale on $\X$. A map $f:\X\arr\X$ is a
\emph{contraction} if there exists $c>0$ such that
\[\ell(f(x), f(y))\ge\ell(x, y)+c  \] for all $x, y\in\X_1$.
\end{defi}

Note that if $\ell_1$ is a log-scale, and $\ell_2$ is a function such
that $\ell_1-\ell_2$ is uniformly bounded on
$\X\times\X$, then $\ell_2$ is also a log-scale.

\begin{defi}
We say that log-scales $\ell_1$ and $\ell_2$ on $\X$ are
\emph{Lipschitz equivalent} if there exists $k>0$ such that
\[|\ell_1(x, y)-\ell_2(x, y)|\le k\]
for all $x, y\in\X$.\index{Lipschitz!equivalence}

They are \emph{H\"older equivalent} if there exist constants $c>1$
and $k>0$ such that
\[c^{-1}\ell_1(x, y)-k\le\ell_2(x, y)\le c\ell_1(x, y)+k\]
for all $x, y\in\X$.\index{Holder@{H\"older equivalence}}
\end{defi}

We say that a log-scale is \emph{positive} if all its
values are positive.
\index{log-scale!positive}\index{positive log-scale}

\begin{lemma}
\label{l:locglob}
Let $\ell_i$ be log-scales on spaces $\X_i$, for $i=1, 2$.
Suppose that $\X_1$ is compact and $\ell_2$ is positive.
If $f:\X_1\arr\X_2$ is locally Lipschitz, then it is Lipschitz.
\end{lemma}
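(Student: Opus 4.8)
The plan is to produce a single uniform constant $n$ such that $\ell_2(f(x), f(y))\ge \ell_1(x,y)-n$ for all $x, y\in\X_1$, by splitting the pairs $(x,y)$ into two regimes according to the size of $\ell_1(x,y)$. First I would fix, via Proposition~\ref{pr:logscalemetric}, a metric $|\cdot|$ on $\X_1$ associated with $\ell_1$, so that $\X_1$ becomes a compact metric space and $|x-y|\le c\,\alpha^{\ell_1(x,y)}$ for some $0<\alpha<1$; in particular $\ell_1(x,y)$ large forces $x$ and $y$ to be close in $|\cdot|$.

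Next I would dispose of the \emph{far} pairs, where $\ell_1(x,y)$ is bounded above by a threshold $N$ still to be chosen. Here the hypothesis that $\ell_2$ is positive does all the work: since $\ell_2(f(x),f(y))>0$ while $\ell_1(x,y)-N\le 0$ whenever $\ell_1(x,y)\le N$, the desired inequality $\ell_2(f(x),f(y))\ge \ell_1(x,y)-N$ holds automatically. Thus any $n\ge N$ already handles every pair with $\ell_1(x,y)\le N$, and the degenerate case $x=y$ is trivial since both sides are $+\infty$.

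The substance is the \emph{close} regime $\ell_1(x,y)\ge N$, and the main obstacle is upgrading the pointwise local constants to a single global one. For each $x\in\X_1$ I would choose, by local Lipschitzness, an open neighborhood $U_x$ on which $f$ is Lipschitz with some constant $n_x$ (passing to the open interior of the neighborhood does not destroy the Lipschitz bound, since a restriction of a Lipschitz map is Lipschitz). By compactness of $\X_1$ I extract a finite subcover $U_{x_1},\dots,U_{x_k}$ and set $n_0=\max_i n_{x_i}$. Letting $\lambda>0$ be a Lebesgue number of this cover for $|\cdot|$, I then choose $N$ large enough that $c\,\alpha^N<\lambda$; now $\ell_1(x,y)\ge N$ forces $|x-y|<\lambda$, so the two-point set $\{x,y\}$ lies in a single $U_{x_i}$, whence $\ell_2(f(x),f(y))\ge \ell_1(x,y)-n_{x_i}\ge \ell_1(x,y)-n_0$.

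Combining the two regimes, the constant $n=\max(N, n_0)$ witnesses that $f$ is Lipschitz. I expect the only delicate point to be the Lebesgue-number step that converts ``locally Lipschitz near each point'' into ``Lipschitz on each member of a fixed finite open cover''; positivity of $\ell_2$ is precisely what keeps the far regime under control, and compactness of $\X_1$ is what makes both the finite reduction and the Lebesgue number available. Alternatively, one can argue by contradiction: if no uniform $n$ worked, there would be pairs $x_m, y_m$ with $\ell_2(f(x_m),f(y_m))<\ell_1(x_m,y_m)-m$, forcing $\ell_1(x_m,y_m)\to\infty$ (by positivity) and hence $|x_m-y_m|\to 0$; a convergent subsequence $x_m\to x_*$ would drag $y_m\to x_*$ as well, and local Lipschitzness at $x_*$ would contradict the failure of the bound for all large $m$.
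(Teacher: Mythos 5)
Your proof is correct and follows essentially the same route as the paper's: a finite cover of $\X_1$ by open sets on which $f$ is Lipschitz, a threshold on $\ell_1(x,y)$ forcing such a pair into a single element of the cover, and positivity of $\ell_2$ to absorb the pairs below the threshold. Your explicit use of an associated metric and a Lebesgue number merely spells out the step the paper states as ``there exists $n_1$ such that if $\ell_1(x,y)\ge n_1$ then $x,y$ belong to one element of $\mathcal{U}$.''
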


In particular, two positive log-scales on a compact space are
Lipschitz equivalent if and only if they are locally Lipschitz equivalent.

\begin{proof}
The set $\X_1$ can be covered by a finite set $\mathcal{U}$ of open
sets such that restriction of $f$ to the elements of $\mathcal{U}$
are Lipschitz. There exists $n_1$ such that if $\ell_1(x, y)\ge n_1$,
then $x, y$ belong to one element of $\mathcal{U}$. Then there exists
a constant $c>0$ such that for any pair
$x, y\in\X_1$ either $\ell_1(x, y)\le n_1$, or $\ell_2(f(x),
f(y))\ge\ell_1(x, y)-c$. In the first case we have $\ell_2(f(x),
f(y))>0\ge\ell_1(x, y)-n_1$.
\end{proof}

\begin{theorem}
\label{th:pastingLipschitz} Let $\X$ be a compact metrizable space
and let $\{U_i\}_{i\in I}$ be a covering of $\X$ by open sets.
Suppose that $\ell_i$ is a positive log-scale on $\overline{U_i}$
such that for every pair $U_i, U_j$ the scales $\ell_i$ and
$\ell_j$ are Lipschitz equivalent on the intersection
$\overline{U_i}\cap\overline{U_j}$. Then there is a log-scale
$\ell$ on $\X$ such that $\ell$ is Lipschitz equivalent to
$\ell_i$ on $\overline{U_i}$ for every $i\in I$.
\end{theorem}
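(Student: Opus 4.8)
The plan is to translate everything into the language of associated metrics, where the statement becomes the standard problem of pasting finitely many locally bi-Lipschitz-compatible metrics into one global metric, and then to translate back. First I would reduce to the case of a finite cover: since $\X$ is compact I extract a finite subcover $U_1,\dots,U_n$, build the global log-scale from these, and recover Lipschitz equivalence to $\ell_i$ for the remaining indices at the very end. Using Proposition~\ref{pr:logscalemetric} I replace each $\ell_i$ by an associated metric $d_i$ on $\overline{U_i}$; the hypothesis that $\ell_i$ and $\ell_j$ are Lipschitz equivalent on $\overline{U_i}\cap\overline{U_j}$ becomes a uniform bound $\Lambda\ge 1$ with $\Lambda^{-1}d_j\le d_i\le\Lambda d_j$ there. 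The goal is a global metric $d$ on $\X$ with $c_i\,d_i\le d\le d_i$ on each $\overline{U_i}$; then $\ell(x,y)=-\lfloor\ln d(x,y)\rfloor$ is a log-scale associated with $d$ (again Proposition~\ref{pr:logscalemetric}), and the two-sided comparison of metrics is exactly a two-sided additive bound $\ell\doteq\ell_i$ on $\overline{U_i}$.

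For the construction I set the edge function $e(u,v)=\min\{d_j(u,v):u,v\in\overline{U_j}\}$ and define $d$ as the infimal chain (path) metric
\[d(x,y)=\inf\Big\{\textstyle\sum_{k} e(z_k,z_{k+1})\ :\ x=z_0,z_1,\dots,z_m=y\Big\},\]
the infimum ranging over admissible chains, i.e.\ those whose consecutive points share some $\overline{U_j}$ (such chains exist on each connected component because the $U_j$ cover $\X$; between components one adds a fixed large constant, which affects nothing local). Symmetry and the triangle inequality for $d$ are immediate from reversing and concatenating chains, and $d(x,y)\le e(x,y)\le d_i(x,y)$ for $x,y\in\overline{U_i}$ by using the one-edge chain — this already gives the easy half of the comparison.

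The main obstacle is the reverse inequality $d(x,y)\ge c_i\,d_i(x,y)$, i.e.\ showing that chains cannot shortcut the local metric; it also yields positivity of $d$ off the diagonal. I fix an auxiliary metric $\rho$ inducing the topology of $\X$ and let $\lambda$ be a Lebesgue number of the cover for $\rho$. Since each $d_j$ is continuous for $\rho$ and strictly positive off the diagonal, compactness gives a constant $\beta>0$ with $d_j(u,v)\ge\beta$ whenever $u,v\in\overline{U_j}$ and $\rho(u,v)\ge\lambda/4$. Now given an admissible chain from $x$ to $y\in\overline{U_i}$ I argue by a dichotomy. If the chain stays in the $\rho$-ball $B(x,\lambda/4)$, then all its points lie in a single $\overline{U_{j}}$, so $e(z_k,z_{k+1})\ge\Lambda^{-1}d_{j}(z_k,z_{k+1})$ and the triangle inequality for $d_{j}$ together with the overlap bound at the endpoints give $\sum_k e\ge\Lambda^{-2}d_i(x,y)$. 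Otherwise the chain leaves the ball, and locating the first edge that crosses out lets me produce, via the triangle inequality for $\rho$, one macroscopic sub-step to which the lower bound $\beta$ applies, so that $\sum_k e\ge\Lambda^{-1}\beta$; since $d_i(x,y)$ is bounded by the $d_i$-diameter $D_i$ of $\overline{U_i}$, this reads as $\sum_k e\ge(\Lambda^{-1}\beta/D_i)\,d_i(x,y)$. Taking $c_i=\min(\Lambda^{-2},\Lambda^{-1}\beta/D_i)$ completes the hard direction.

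Finally I convert $d$ back to the log-scale $\ell$ as above (shifting by a constant so that $\ell$ is positive, which preserves the log-scale property and all equivalences), obtaining $\ell\doteq\ell_i$ on $\overline{U_i}$ for $i\le n$. For an arbitrary index $i\in I$ outside the finite subcover, $\ell$ and $\ell_i$ are positive log-scales on the compact set $\overline{U_i}$ that are locally Lipschitz equivalent — near any point of $\overline{U_i}$ some $U_j$ from the subcover meets $\overline{U_i}$, and there $\ell_i\doteq\ell_j\doteq\ell$ — so Lemma~\ref{l:locglob} upgrades this to Lipschitz equivalence on all of $\overline{U_i}$, finishing the proof.
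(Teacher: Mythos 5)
Your chain-metric construction is a genuinely different route from the paper's proof (the paper never converts to metrics: it fixes a Lebesgue-number neighborhood $E$ of the diagonal, sets $\ell'$ equal to a local $\ell_i$ on pairs in $E$, and truncates, taking $\ell=\max\{\ell',n\}$ on $E$ and $\ell=n$ off $E$), and the core of your argument --- the edge function, the one-edge upper bound, and the dichotomy giving $\sum_k e\ge c_i\,d_i(x,y)$ --- is sound. But there is a genuine gap in both translation steps, and it sits exactly at the distinction this theorem is about: Lipschitz versus H\"older equivalence. Proposition~\ref{pr:logscalemetric} produces, for each $i$, a metric $d_i$ and a base $\alpha_i\in(0,1)$ with $c^{-1}\alpha_i^{\ell_i}\le d_i\le c\,\alpha_i^{\ell_i}$, and the base coming out of Frink's lemma is $\alpha_i=2^{-1/(2\delta_i)}$, which depends on the hyperbolicity constant $\delta_i$ of $\ell_i$. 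If $\alpha_i\ne\alpha_j$, the hypothesis $|\ell_i-\ell_j|\le k$ on the overlap does \emph{not} give $\Lambda^{-1}d_j\le d_i\le\Lambda d_j$: the ratio $d_i/d_j$ is comparable to $(\alpha_i/\alpha_j)^{\ell_j}$ and is therefore unbounded in general; what the hypothesis actually gives is a snowflake relation $d_i\asymp d_j^{\theta}$ with $\theta=\ln\alpha_i/\ln\alpha_j$. The same mismatch reappears at the end: from $c_i\,d_i\le d\le d_i$ you only get $-\lfloor\ln d\rfloor\doteq(-\ln\alpha_i)\,\ell_i$, which is Lipschitz equivalent to $\ell_i$ only if $\alpha_i=e^{-1}$, and Frink's construction does not provide that. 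As written, your argument proves H\"older equivalence of $\ell$ with the $\ell_i$, not the Lipschitz equivalence asserted in Theorem~\ref{th:pastingLipschitz}.

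Both failures are repairable by enforcing a common base, after which your proof does go through. Since you have already passed to a finite subcover, all the $\ell_i$ satisfy the log-scale inequality with the single constant $\delta=\max_i\delta_i$; running the construction of Proposition~\ref{pr:logscalemetric} with this common $\delta$ (equivalently, replacing each $d_i$ by a suitable snowflake $d_i^{\theta_i}$, $\theta_i\in(0,1]$, which is again a metric) yields associated metrics with one base $\alpha$ for every $i$, and then the overlap hypothesis really does become a uniform two-sided multiplicative bound. At the end you must also convert back with the same base, taking $\ell:=\lfloor\log_\alpha d\rfloor$ rather than $-\lfloor\ln d\rfloor$, so that the multiplicative comparison $c_i\,d_i\le d\le d_i$ turns into an additive bound against $\ell_i$ itself. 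Two smaller points: in the escape case of your dichotomy, the triangle inequality in $\rho$ only guarantees a sub-step of $\rho$-size $\lambda/8$ (either the pre-crossing sub-chain, measured in the patch containing the ball, or the crossing edge), so $\beta$ should be defined with that threshold; and your final appeal to Lemma~\ref{l:locglob} for indices outside the finite subcover is correct, since the compatibility hypothesis is assumed for all pairs in $I$.
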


Note that the log-scale $\ell$ satisfying the conditions of the
theorem is necessarily unique up to Lipschitz equivalence.

\begin{proof}
After passing to a finite sub-covering, we may assume that $I$ is finite.
By Lebesgue's number lemma, there exists a symmetric
neighborhood $E$ of the diagonal of $\X\times\X$ such that for
every $x\in\X$ there exists $i\in I$ such that
\[\{y\in\X\;:\;(x, y)\in E\}\subset U_i.\]

For every $i\in I$ the set
$E\cap\overline{U_i}\times\overline{U_i}$ is a symmetric
neighborhood of the diagonal of $\overline{U_i}\times\overline{U_i}$, hence by compactness of
$\overline{U_i}$ there exists $n_i$ such that for every pair $x,
y\in\overline{U_i}$ such that $\ell_i(x, y)\ge n_i$ we have $(x,
y)\in E$. Let $n=\max_{i\in I}n_i$.

Choose for every two-element subset $\{x, y\}\subset\X$ such that
$(x, y)\in E$ an element $U_i$ of the covering such that $\{x,
y\}\subset U_i$ (which exists by the choice of $E$). Define
$\ell'(x, y)=\ell_i(x, y)$.

For every pair of points $x, y$, let
\[\ell(x, y)=\left\{\begin{array}{ll}\max\{\ell'(x, y), n\}, & \text{if $(x, y)\in E$,}\\
n & \text{otherwise.}\end{array}\right.\]
Let us show that $\ell$ is a log-scale on $\X$ satisfying the conditions of the
theorem.

Let $\Delta$ be such that
\[\ell_i(x, y)-\Delta\le \ell_j(x, y)\le\ell_i(x, y)+\Delta\]
for all $i, j\in I$ and $x, y\in \overline{U_i}\cap\overline{U_j}$.
Let $\delta$ be
such that \[\ell_i(x, z)\ge\min\{\ell_i(x, y), \ell_i(y,
z)\}-\delta\] for all $i\in I$ and $x, y, z\in\overline{U_i}$. Such numbers
exist by conditions of the theorem and finiteness of the set $I$.

Suppose that $x, y, z\in\X$ are such that $(x, y), (y, z)\in E$.
Then there exists $U_i$ such that $\{x, y, z\}\subset U_i$.
Suppose at first that $(x, z)\notin E$. Then $\ell_i(x, z)<n$.

If one of the numbers $\ell'(x, y), \ell'(y, z)$ is less than or equal
to $n$, then we have $\min\{\ell(x, y), \ell(y, z)\}=n=\ell(x, z)$.

If both numbers $\ell'(x, y), \ell'(y, z)$ are greater than $n$,
then $\ell(x, y)=\ell'(x, y)$, $\ell(y, z)=\ell'(y, z)$, and
\begin{multline*}\ell(x, z)=n>\ell_i(x, z)\ge
\min\{\ell_i(x, y), \ell_i(y, z)\}-\delta\\ \ge\min\{\ell'(x, y),
\ell'(y, z)\}-\Delta-\delta=\min\{\ell(x, y), \ell(y,
z)\}-\Delta-\delta.\end{multline*} Suppose now that $(x, z)\in E$. Then
\begin{multline*}\ell(x, z)\ge\ell'(x, z)\ge\ell_i(x, z)-\Delta\ge\\
\min\{\ell_i(x, y), \ell_i(y, z)\}-\delta-\Delta\ge\\ \min\{\ell'(x, y), \ell'(y,
z)\}-\delta-2\Delta.\end{multline*} It follows that if $\ell'(x, y)$
and $\ell'(y, z)$ are both greater than $n$, then we have
\[\ell(x, z)\ge\min\{\ell(x, y), \ell(y, z)\}-\delta-2\Delta.\]
Otherwise
$\ell(x, z)\ge n=\min\{\ell(x, y), \ell(y, z)\}$.

Suppose now that at least one of the pairs $(x, y)$ and $(y, z)$ does not
belong to $E$. Then $\min\{\ell(x, y), \ell(y, z)\}=n$, and
$\ell(x, z)\ge n=\min\{\ell(x, y), \ell(y, z)\}$.

We have shown that $\ell$ is a log-scale on $\X$. Let us
show that it is Lipschitz equivalent to $\ell_i$ on every set
$\overline{U_i}$.

Let $x, y\in\overline{U_i}$ and suppose that $(x, y)\notin E$. Then
$\ell_i(x, y)\le n$, hence
\[\ell(x, y)=n\ge\ell_i(x, y)\ge 1=\ell(x, y)+(1-n).\]

Suppose now that $(x, y)\in E$. Then either $\ell'(x, y)\le n$ and
$\ell(x, y)=n$, so that
\[\ell(x, y)-n-\Delta=-\Delta\le\ell'(x, y)-\Delta\le\ell_i(x, y)\le
\ell'(x, y)+\Delta\le\ell(x, y)+\Delta;\]
or $\ell'(x, y)>n$ and
\[\ell(x, y)-\Delta=\ell'(x, y)-\Delta\le\ell_i(x, y)\le
\ell'(x, y)+\Delta=\ell(x, y)+\Delta.\]
It follows that $\ell_i$ is Lipschitz equivalent to $\ell$.
\end{proof}

Let us reformulate the notions related to completion of a metric space
in terms of log-scales.

A sequence $x_n$ is \emph{Cauchy} if $\ell(x_n,
x_m)\to\infty$
as $n, m\to\infty$. Two Cauchy sequences $x_n$ and $y_n$ are
equivalent if $\ell(x_n, y_n)\to\infty$. A completion of $\X$ with
respect to the log-scale $\ell$ is the set of equivalence classes of
Cauchy sequences in $\X$.

\begin{proposition}
Suppose that $x_n$ and $y_n$ are non-equivalent Cauchy sequences. Then there exists a constant
$\Delta>0$ such that any two partial limits of the sequence $\ell(x_n,
y_n)$ differ not more than by $\Delta$ from each other.
\end{proposition}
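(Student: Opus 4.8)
The plan is to work with the real-or-$(+\infty)$-valued sequence $a_n := \ell(x_n, y_n)$ and to show that all of its partial limits lie in an interval of length $2\delta$, where $\delta$ is the constant from axiom~(3) of Definition~\ref{def:logscale}; then $\Delta = 2\delta$ does the job. First I would pin down $L := \liminf_n a_n$. Non-equivalence of the two sequences means precisely that $a_n$ does not tend to $+\infty$, so $L < +\infty$. On the other hand, passing to a metric $|\cdot|$ associated with $\ell$ (Proposition~\ref{pr:logscalemetric}), each of $x_n$, $y_n$ is Cauchy hence bounded for $|\cdot|$, so $|x_n - y_n|$ is bounded and the inequality $c^{-1}\alpha^{a_n} \le |x_n - y_n|$ (with $0<\alpha<1$) forces $a_n$ to be bounded below; thus $L$ is finite and is attained as a partial limit. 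It therefore suffices to prove $\limsup_n a_n \le L + 2\delta$.

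The key geometric input is a ``quadrilateral'' version of axiom~(3): iterating $\ell(a,c) \ge \min(\ell(a,b), \ell(b,c)) - \delta$ twice along the chain $x_p \to x_q \to y_q \to y_p$ gives, for all indices $p, q$,
\[\ell(x_p, y_p) \ge \min\bigl(\ell(x_p, x_q),\, \ell(x_q, y_q),\, \ell(y_q, y_p)\bigr) - 2\delta.\]
The Cauchy hypothesis is what lets me control the two ``cross'' terms $\ell(x_p, x_q)$ and $\ell(y_q, y_p)$: for every $M$ there is an $N$ such that both exceed $M$ whenever $p, q \ge N$. The strategy is to anchor $q$ at an index where $a_q$ is close to the finite value $L$ and then let the cross terms grow large enough that they drop out of the minimum, leaving only $a_q$ behind.

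Concretely, fix $\varepsilon > 0$. Using the Cauchy property, choose $N$ so that $\ell(x_n, x_m) > L + 2\delta + \varepsilon$ and $\ell(y_n, y_m) > L + 2\delta + \varepsilon$ for all $n, m \ge N$; then, using $L = \liminf_n a_n$, choose an index $q \ge N$ with $a_q < L + \varepsilon$. Now I run the quadrilateral inequality with the roles of $p$ and $q$ exchanged, so as to bound $a_q$ from below by $a_p$:
\[a_q = \ell(x_q, y_q) \ge \min\bigl(\ell(x_q, x_p),\, a_p,\, \ell(y_p, y_q)\bigr) - 2\delta.\]
If some $p \ge N$ had $a_p > L + 2\delta + \varepsilon$ (in particular if $a_p = +\infty$, i.e.\ $x_p = y_p$), then all three entries of the minimum would exceed $L + 2\delta + \varepsilon$, forcing $a_q > L + \varepsilon$ and contradicting the choice of $q$. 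Hence $a_p \le L + 2\delta + \varepsilon$ for every $p \ge N$, so $\limsup_n a_n \le L + 2\delta + \varepsilon$; letting $\varepsilon \to 0$ yields $\limsup_n a_n \le L + 2\delta$. Every partial limit of $a_n$ thus lies in $[L, L + 2\delta]$, and $\Delta = 2\delta$ works.

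The step I expect to require the most care is the bookkeeping that guarantees the finiteness of $L$ and rules out a partial limit at $+\infty$: the argument above is clean only because non-equivalence together with the associated-metric bound confines $L$ to a finite value, and because the very contradiction that bounds the finite terms simultaneously eliminates any index with $a_p = +\infty$ beyond $N$. Everything else is the routine observation that the ultrametric-type inequality, once the Cauchy cross-terms are made large, behaves like genuine transitivity up to the additive defect $2\delta$.
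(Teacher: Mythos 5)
Your proof is correct and takes essentially the same route as the paper's: both rest on the doubled inequality $\ell(x_p,y_p)\ge\min\bigl(\ell(x_p,x_q),\ell(x_q,y_q),\ell(y_q,y_p)\bigr)-2\delta$, anchored at indices where $\ell(x_n,y_n)$ is close to its smallest partial limit, with the Cauchy cross-terms made large enough to drop out of the minimum. Your $\liminf$/$\limsup$ bookkeeping (together with the associated-metric argument ruling out a partial limit at $-\infty$) is a tidier organization of the paper's subsequence extraction and even yields a sharper constant ($2\delta$ rather than the paper's roughly $7\delta$ interval), but the underlying mechanism is identical.
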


\begin{proof}
We have for all $n, m$
\[\ell(x_m, y_m)\ge\min(\ell(x_n, x_m), \ell(x_n, y_n), \ell(y_n,
y_m))-2\delta.\] There exists a sequence $n_i$ such that $\ell(x_{n_i},
y_{n_i})$ is bounded from above by some number $l$. Then for all
$m$ and $i$ big enough we have $\ell(x_{n_i}, x_m)\ge l\ge\ell(x_{n_i},
y_{n_i})$ and $\ell(y_{n_i}, y_m)\ge l\ge\ell(x_{n_i}, y_{n_i})$, hence
\[\ell(x_m, y_m)\ge\min\{\ell(x_{n_i}, x_m), \ell(x_{n_i}, y_{n_i}),
\ell(y_{n_i}, y_m)\}-2\delta=\ell(x_{n_i}, y_{n_i})-2\delta.\]
In particular, for all $i$ and $j$ big enough we have
\[\ell(x_{n_j}, y_{n_j})\ge\ell(x_{n_i}, y_{n_i})-2\delta.\]
It follows that there exists $i_0$ such that all values of
$\ell(x_{n_i}, y_{n_i})$ for $i\ge i_0$ are less
than $2\delta$ away from each other. Let $k=\ell(x_{n_{i_0}}, y_{n_{i_0}})$.
Then, for all $m$ and $i$ big enough, we have $\ell(x_m, y_m)\ge
k-4\delta$ and
\begin{multline*}
k+2\delta>\ell(x_{n_i}, y_{n_i})\\
\ge\min(\ell(x_m, x_{n_i}), \ell(x_m, y_m), \ell(y_m, y_{n_i}))-\delta=\ell(x_m,
y_m)-\delta\\ \ge k-5\delta,
\end{multline*} since we may assume that $\ell(x_m, x_{n_i})$ and
$\ell(y_m, y_{m_i})$ are greater that $k+3\delta$.
It follows that the values of $\ell(x_m,
y_m)$ for all $m$ big enough are at most $3\delta$ away from $k$.
\end{proof}

If $x=\lim_{n\to\infty}x_n\ne y=\lim_{n\to\infty}y_n$
in the completion of $\X$ with respect to $\ell$, then
we can define $\ell(x, y)$ as any partial limit of the
sequence $\ell(x_n, y_n)$. The defined function is a log-scale on
the completion and is unique up to Lipschitz equivalence. We
will call this log-scale the \emph{natural extension} of the
log-scale $\ell$ to the completion.

\section{Gromov-hyperbolic metric spaces and graphs}
\subsection{General definitions and main properties}
\label{sss:hyperbolicdef}
Let $(\X, |\cdot|)$ be a metric space. Choose a point $x_0\in\X$ and
define
\[(x, y)_{x_0}=\frac 12(|x_0-x|+|x_0-y|-|x-y|),\]\index{Gromov product}
and
\[\ell_{x_0}(x, y)=\left\{\begin{array}{lr}(x, y)_{x_0} & \text{if\ } x\ne y,\\
\infty & \text{otherwise.}\end{array}\right.\] Note that it follows
from the triangle inequality that $(x,
y)_{x_0}$ is non-negative and not greater than $\min(|x_0-x|,
|x_0-y|)$. See Figure~\ref{fig:grprod}.

\begin{figure}
\includegraphics{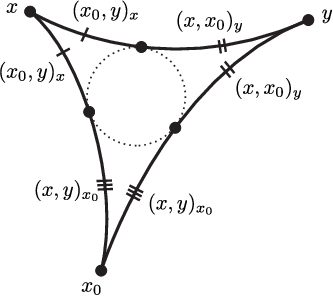}
\caption{Gromov product}\label{fig:grprod}
\end{figure}

\begin{defi}
The space $(\X, d)$ is called \emph{Gromov hyperbolic} if
$\ell_{x_0}$ is a log-scale, i.e., if there exists $\delta>0$ such that
\begin{equation}
\label{eq:ellhyperb}
\ell_{x_0}(x, z)\ge\min(\ell_{x_0}(x, y), \ell_{x_0}(y,
z))-\delta
\end{equation} for all $x, y, z\in\X$.\index{Gromov hyperbolic space}
\end{defi}

Our definition is slightly different from the classical one (which
uses $(x, y)_{x_0}$ instead of $\ell_{x_0}(x, y)$), but is
equivalent to it. Namely, we have the following.

\begin{lemma}
Inequality~\eqref{eq:ellhyperb} is equivalent to the inequality
\begin{equation}
\label{eq:gromovhyperb} (x, z)_{x_0}\ge\min((x, y)_{x_0}, (y,
z)_{x_0})-\delta
\end{equation}
\end{lemma}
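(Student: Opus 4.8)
The plan is to exploit the fact that $\ell_{x_0}$ and the Gromov product $(\cdot, \cdot)_{x_0}$ agree off the diagonal: by definition $\ell_{x_0}(x, y) = (x, y)_{x_0}$ whenever $x \ne y$, while $\ell_{x_0}(x, y) = +\infty$ when $x = y$. Consequently, whenever the three points $x, y, z$ are pairwise distinct, the terms appearing in the two formulations are identical, so \eqref{eq:ellhyperb} and \eqref{eq:gromovhyperb} assert literally the same thing. The entire content of the lemma therefore lies in checking that, in the remaining cases where some pair among $x, y, z$ coincides, both inequalities hold automatically for any $\delta \ge 0$, so that neither condition constrains the other on the diagonal.

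First I would prove that \eqref{eq:ellhyperb} implies \eqref{eq:gromovhyperb}. For pairwise-distinct points this is immediate from the agreement above, so only the coincidences need treatment. If $x = y$ or $y = z$, then, using symmetry of the Gromov product, the right-hand side of \eqref{eq:gromovhyperb} reduces to $(x, z)_{x_0} - \delta$, and the inequality is trivial. If $x = z$, I use $(x, x)_{x_0} = |x_0 - x|$ together with the bound noted before the lemma, $(x, y)_{x_0} \le \min(|x_0 - x|, |x_0 - y|) \le |x_0 - x|$, to get $(x, z)_{x_0} = |x_0 - x| \ge (x, y)_{x_0} = \min((x, y)_{x_0}, (y, z)_{x_0})$, so \eqref{eq:gromovhyperb} holds even without the $-\delta$.

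The converse, that \eqref{eq:gromovhyperb} implies \eqref{eq:ellhyperb}, is handled in the same fashion: for distinct points the two inequalities coincide, and for any coincidence at least one of the three values $\ell_{x_0}(x, y)$, $\ell_{x_0}(y, z)$, $\ell_{x_0}(x, z)$ equals $+\infty$. If $\ell_{x_0}(x, z) = +\infty$, namely the case $x = z$, the left-hand side of \eqref{eq:ellhyperb} dominates everything. If instead $x = y$ or $y = z$, then one of the first two terms is infinite, the minimum on the right equals the remaining finite term, which is exactly $\ell_{x_0}(x, z)$, and the inequality holds.

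There is essentially no obstacle here; the only point requiring care is the bookkeeping in the degenerate case $x = z$ of the forward direction, where one must invoke $(x, x)_{x_0} = |x_0 - x|$ and the elementary bound on the Gromov product rather than the hyperbolicity hypothesis itself. Since every coincidence case is dispatched unconditionally, each inequality holds for all triples if and only if it holds for all pairwise-distinct triples, and over such triples the two conditions are identical.
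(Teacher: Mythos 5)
Your proof is correct and follows essentially the same route as the paper: both arguments note that $\ell_{x_0}$ and the Gromov product coincide off the diagonal, and then dispose of the coincidence cases ($x=y$, $y=z$, $x=z$) by showing each inequality holds there unconditionally, the only substantive case being $x=z$ for \eqref{eq:gromovhyperb}, which you settle via the bound $(x,y)_{x_0}\le|x_0-x|$ while the paper unwinds the same bound into the triangle inequality. The two proofs are the same argument up to bookkeeping.
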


\begin{proof}
If $x=z$, then $\ell_{x_0}(x, z)=\infty$, hence inequality~\eqref{eq:ellhyperb} is
true. In inequality~\eqref{eq:gromovhyperb} we will have $\min((x, y)_{x_0}, (y,
z)_{x_0})=(x, y)_{x_0}=(y, z)_{x_0}=1/2(|x_0-y|+|x_0-x|-|x-y|)$ and
$(x, z)_{x_0}=|x_0-x|$. Then inequality~\eqref{eq:gromovhyperb} is
\[|x_0-x|>1/2(|x_0-y|+|x_0-x|-|x-y|)-\delta,\]
and it is equivalent to
\[|x_0-x|+|x-y|>|x_0-y|-2\delta,\]
which is always true.

If $x=y$, then $\ell_{x_0}(x, y)=\infty$, hence
$\min(\ell_{x_0}(x, y), \ell_{x_0}(y, z))=\ell_{x_0}(x, z)$,
therefore~\eqref{eq:ellhyperb} is true.

In the case of~\eqref{eq:gromovhyperb} we have $(y, z)_{x_0}=(x, z)_{x_0}$,
therefore
\[(x, z)_{x_0}=(y, z)_{x_0}\ge\min((x, y)_{x_0}, (y, z)_{x_0})>
\min((x, y)_{x_0}, (y, z)_{x_0})-\delta.\]
\end{proof}

A proof of the following proposition
can be found in~\cite[Corollary~1.1B]{gro:hyperb} and
in~\cite[Proposition~2.2, p.~10]{ghysetaleds:geometrical}.

\begin{proposition}
If $(\X, d)$ is $\delta$-hyperbolic with respect to $x_0$, then it
is $2\delta$-hyperbolic with respect to any point $x_1\in\X$.
\end{proposition}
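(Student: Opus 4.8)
The plan is to reduce the change-of-basepoint statement to a symmetric four-point (``tetrahedron'') inequality that refers to no basepoint at all, and then to extract that inequality from $\delta$-hyperbolicity at $x_0$.

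First I would record the elementary dictionary between the Gromov-product inequality at a basepoint and a four-point inequality. Fix four points $x_0, x, y, z$ and set $P=\frac12(|x_0-x|+|x_0-y|+|x_0-z|)$. Then $(x,y)_{x_0}=P-\frac12(|x_0-z|+|x-y|)$, and similarly for the other two products, so that $(x,y)_{x_0},(y,z)_{x_0},(x,z)_{x_0}$ are, up to the common term $P$ and a sign, the three ``opposite-edge'' distance sums $|x_0-z|+|x-y|$, $|x_0-x|+|y-z|$, $|x_0-y|+|x-z|$ of the tetrahedron on $\{x_0,x,y,z\}$. Under this dictionary inequality~\eqref{eq:gromovhyperb} at $x_0$ (which, taken over all labelings, says that the two smallest of the three products differ by at most $\delta$) becomes exactly the statement that the two \emph{largest} of these three distance sums differ by at most $2\delta$. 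The key feature is that this reformulated inequality is symmetric in the four points and singles out no basepoint.

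Applying the same dictionary with basepoint $x_1$, the desired conclusion — that $\ell_{x_1}$ is a log-scale with constant $2\delta$ — is equivalent to the four-point inequality for every quadruple $\{x_1,x,y,z\}$ with constant $4\delta$. Thus it suffices to promote the four-point inequality from ``all quadruples containing the fixed point $x_0$, with constant $2\delta$'' to ``all quadruples, with constant $4\delta$.'' As the concrete bridge I would use the change-of-basepoint identity
\[(y,z)_{x_1}=(y,z)_{x_0}-(x_1,y)_{x_0}-(x_1,z)_{x_0}+|x_0-x_1|,\]
which rewrites the target inequality purely in terms of Gromov products at $x_0$; after a short split on which of $(x,y)_{x_1},(y,z)_{x_1}$ is the smaller, it reduces to comparing the three opposite-pair product-sums of $\{x_1,x,y,z\}$ measured at $x_0$.

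The hard part is this final step: deriving the four-point inequality for the quadruple $\{x_1,x,y,z\}$, which in general does \emph{not} contain $x_0$, from the hyperbolicity inequality at $x_0$, which only supplies four-point inequalities for quadruples through $x_0$. Here I would bring in the fifth point $x_0$ and add the four-point inequalities for the sub-quadruples that do contain it, namely $\{x_0,x_1,x,y\}$, $\{x_0,x_1,x,z\}$, $\{x_0,x_1,y,z\}$, and $\{x_0,x,y,z\}$, choosing in each application the pairing that lets the distances to $x_0$ cancel. The genuine obstacle is the bookkeeping: each four-point inequality carries a $\max$ over two pairings, so one must run a finite case analysis according to which branch is realized and check, in every branch, that the $x_0$-terms cancel and that the errors accumulate to exactly $2\delta+2\delta=4\delta$. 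Since $\delta$-hyperbolicity at $x_0$ gives each sub-inequality with constant $2\delta$, this produces the four-point inequality for $\{x_1,x,y,z\}$ with constant $4\delta$, which by the dictionary of the second paragraph is precisely $2\delta$-hyperbolicity at $x_1$.
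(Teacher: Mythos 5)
Your proof is correct, and it is essentially the argument the paper points to: the paper does not prove this proposition itself but cites \cite[Corollary~1.1B]{gro:hyperb} and \cite[Proposition~2.2, p.~10]{ghysetaleds:geometrical}, where the proof runs exactly along your lines --- one-basepoint hyperbolicity is rewritten as the four-point condition for quadruples through $x_0$, and that condition is then propagated to arbitrary quadruples, with the constant doubled, by a finite case analysis inside the five-point configuration $\{x_0,x_1,x,y,z\}$. The one step you leave as ``bookkeeping'' does close, and with exactly the constant you claim: after your change-of-basepoint identity, what must be shown is
\[(x,y)_{x_0}+(z,x_1)_{x_0}\ \ge\ \min\bigl\{(x,z)_{x_0}+(y,x_1)_{x_0},\;(y,z)_{x_0}+(x,x_1)_{x_0}\bigr\}-2\delta,\]
and this follows from the four instances of the $\delta$-inequality at $x_0$ obtained by inserting $z$ or $x_1$ into the product $(x,y)_{x_0}$ and inserting $x$ or $y$ into the product $(z,x_1)_{x_0}$: in every branch of the ensuing case analysis, two of these four inequalities add up to the displayed claim, which accounts for the total error $2\delta$ here, i.e.\ $4\delta$ in the distance form and $2\delta$-hyperbolicity at $x_1$.
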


Note that
\[(x, y)_{x_0}\le (x, y)_{x_1}+|x_0-x_1|,\]
hence the log-scales $\ell_{x_0}$ and $\ell_{x_1}$ are Lipschitz
equivalent.

\subsection{Boundary and the Busemann cocycle}
\label{sss:busemann}

\begin{defi}
Let $(\X, d)$ be a Gromov hyperbolic metric space.
The \emph{boundary} $\partial\X$ of $\X$ is the complement
\index{boundary!of a Gromov hyperbolic space}
of $\X$ in its completion with respect to the log-scale
$\ell_{x_0}$.
\end{defi}

Since the Lipschitz class of the log-scale
$\ell_{x_0}$ does not depend on $x_0$, the boundary $\partial\X$
and the Lipschitz class of the extension of $\ell_{x_0}$ to
$\partial\X$ do not depend on the choice of $x_0$.

Note that the topology defined on $\X$ by the log-scale $\ell_{x_0}$ is
often different from the original topology (defined by the metric
$|x-y|$). In such cases, the completion of $\X$ with respect to the
log-scale $\ell_{x_0}$ is not homeomorphic to the classical
compactification of $\X$ by its boundary.

Let $\xi\in\partial\X$ be the limit of an
$\ell_{x_0}$-Cauchy sequence $x_n\in\X$. For any pair of points
$x, y\in\X$ consider
\[|x-x_n|-|y-x_n|=
2\ell_{x_0}(y, x_n)-2\ell_{x_0}(x, x_n)+|x-x_0|-|y-x_0|.\]

It follows from the last equality that all partial limits of the sequence
$|x-x_n|-|y-x_n|$ are, up to an additive constant, equal to $2(\ell_{x_0}(y,
\xi)-\ell_{x_0}(x, \xi))+|x-x_0|-|y-x_0|$.

\begin{defi}
\label{def:busemann}
The function
\[\beta_\xi(x, y)\doteq\lim_{x_n\to\xi}(|x-x_n|-|y-x_n|)\doteq
2(\ell_{x_0}(y, \xi)-\ell_{x_0}(x, \xi))+|x-x_0|-|y-x_0|\]
is called the \emph{Busemann quasi-cocycle} associated with
$\xi\in\partial\X$.\index{Busemann quasi-cocycle!on a hyperbolic space}
\end{defi}

The Busemann quasi-cocycle measures in some sense the difference
$|x-\xi|-|y-\xi|$ of distances from $x$ and $y$ to $\xi$, see
Figure~\ref{fig:buseman}.

\begin{figure}
\centering
\includegraphics{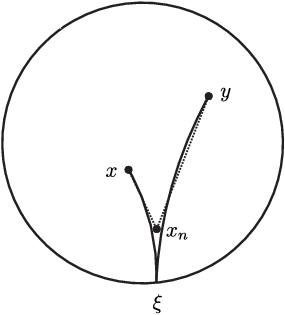}
\caption{Busemann cocycle}\label{fig:buseman}
\end{figure}

It follows directly from the definitions that for any three points
$x, y, z\in\X$ we have
\begin{equation}\label{eq:cocycle}\beta_\xi(x, z)\doteq\beta_\xi(x,
y)+\beta_\xi(y, z).\end{equation}

Let us recall the definition of a natural extension of the Gromov
product to the boundary and a natural metric on the space
$\partial\X\setminus\{\omega\}$ (see~\cite[Section~8.1]{ghys-h:gromov}). Define
\[\ell_{\xi, x_0}(x, y)=\frac 12(\beta_\xi(x, x_0)+\beta_\xi(y, x_0)-|x-y|).\]

Note that
\begin{multline*}\ell_{\xi, x_0}(x, y)\doteq
\lim_{x_n\to\xi}\left(\frac 12(|x-x_n|+|y-x_n|-|x-y|)-|x_0-x_n|\right)\doteq\\
\lim_{x_n\to\xi}\left(\ell_{x_n}(x, y)-|x_0-x_n|\right).
\end{multline*}
It follows that the Lipschitz class of $\ell_{\xi, x_0}$ does not
depend on $x_0$.

\begin{proposition}
\label{prop:completionhalfplane}
Let $\X$ be a hyperbolic metric space. Then the function
$\ell_{\xi, x_0}$ is a log-scale for all $\xi\in\partial\X$ and $x_0\in\X$.

The identity map $\X\arr\X$ extends to a homeomorphism from the
completion of $\X$ with respect to $\ell_{\xi, x_0}$ to the space
$\X\cup\partial\X\setminus\{\xi\}$.
\end{proposition}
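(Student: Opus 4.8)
The statement has two parts: first, that $\ell_{\xi, x_0}$ is a log-scale; second, that its completion recovers $\X\cup\partial\X\setminus\{\xi\}$. I would prove them in that order, since the log-scale property is the analytic core and the homeomorphism statement is then a matter of matching up Cauchy sequences.

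For the log-scale property, the plan is to exploit the two alternative formulas for $\ell_{\xi, x_0}$ established just before the proposition. The defining formula $\ell_{\xi, x_0}(x, y)=\frac 12(\beta_\xi(x, x_0)+\beta_\xi(y, x_0)-|x-y|)$ is structurally identical to the Gromov product $(x, y)_{x_0}$, but with the Busemann quasi-cocycle $\beta_\xi(\cdot, x_0)$ playing the role of the distances $|x_0-\cdot|$. So the cleanest route is via the approximation $\ell_{\xi, x_0}(x, y)\doteq\lim_{x_n\to\xi}\left(\ell_{x_n}(x, y)-|x_0-x_n|\right)$. Since $\X$ is hyperbolic, each $\ell_{x_n}$ satisfies the log-scale (ultrametric-type) inequality with a \emph{uniform} $\delta$ — here I would invoke the earlier proposition that $\delta$-hyperbolicity with respect to $x_0$ implies $2\delta$-hyperbolicity with respect to any $x_1$, so the constant does not degenerate as $x_n\to\xi$. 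Subtracting the constant $|x_0-x_n|$ (constant in $x, y$) preserves the inequality exactly, and passing to the partial limit costs only an additive constant absorbed into $\dot=$. Thus $\ell_{\xi, x_0}$ inherits condition (3) of Definition~\ref{def:logscale} with a possibly enlarged $\delta$. Symmetry is immediate from the symmetry of $\ell_{x_n}$, and the condition $\ell_{\xi, x_0}(x, y)=\infty\Leftrightarrow x=y$ follows since the approximating quantity blows up exactly when $x=y$. Because modifying a log-scale by a bounded function leaves it a log-scale (noted in the excerpt), the $\dot=$ ambiguity is harmless.

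For the homeomorphism, I would show that a sequence $y_n\in\X$ is $\ell_{\xi, x_0}$-Cauchy precisely when it converges in $\X\cup\partial\X\setminus\{\xi\}$, and that equivalence of Cauchy sequences matches convergence to the same limit. The key computation is to compare $\ell_{\xi, x_0}$ with $\ell_{x_0}$: one expects $\ell_{\xi, x_0}(x, y)$ to be large exactly when $x, y$ are close in $\X$ \emph{or} both head off toward a common boundary point \emph{other than} $\xi$. Using $\ell_{\xi, x_0}(x, y)\doteq\lim_{x_n\to\xi}(\ell_{x_n}(x, y)-|x_0-x_n|)$, a Cauchy sequence for $\ell_{\xi, x_0}$ is one whose points become indistinguishable from the viewpoint ``at $\xi$,'' which geometrically means convergence to a point of $\X\cup\partial\X$ different from $\xi$. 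The limit $\xi$ itself is excluded because a sequence tending to $\xi$ makes $\ell_{\xi, x_0}$ stay bounded (the subtracted term $|x_0-x_n|$ cancels the growth), so $\xi$ is not a point of the completion. I would make this precise by estimating $\ell_{\xi, x_0}$ on sequences in terms of the boundary log-scale extension of $\ell_{x_0}$ and checking both directions of the correspondence.

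The main obstacle is the interchange of the two limits — the defining limit $x_n\to\xi$ inside the Busemann function and the Cauchy/convergence limit $y_m\to\eta$ — while keeping every error term uniformly bounded. Controlling this is exactly where the uniform hyperbolicity constant and the $\dot=$ bookkeeping do the real work, and care is needed to ensure that the additive constants accumulated from repeated use of the hyperbolic inequality and from passing to partial limits do not depend on which sequences are chosen. The standard reference~\cite[Section~8.1]{ghys-h:gromov} treats the analogous classical statement, and I expect the log-scale formalism to streamline rather than complicate the argument, since it replaces delicate $\epsilon$-estimates with the single robust inequality of Definition~\ref{def:logscale}.
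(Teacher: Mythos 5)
Your first part coincides with the paper's argument: the paper proves the log-scale inequality for $\ell_{\xi,x_0}$ exactly by writing $\ell_{\xi, x_0}(x, z)\doteq\lim_{n\to\infty}(\ell_{x_n}(x, z)-|x_0-x_n|)$, applying the hyperbolicity inequality for each $\ell_{x_n}$ with a uniform constant, and noting that subtracting the term $|x_0-x_n|$ (constant in $x,y,z$) and passing to partial limits only costs bounded errors.

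For the second part you have chosen the right comparison (against $\ell_{x_0}$), but the proposal stops exactly where the work begins, and the missing step is worth naming because it also dissolves the ``interchange of limits'' obstacle you flag as the main difficulty. The paper first computes
\[
|x-x_0|-\beta_\xi(x, x_0)\doteq\lim_{n\to\infty}\bigl(|x-x_0|-|x-x_n|+|x_0-x_n|\bigr)=\lim_{n\to\infty}2\ell_{x_0}(x, x_n)\doteq 2\ell_{x_0}(x,\xi),
\]
which gives the identity
\[
\ell_{\xi, x_0}(x, y)\doteq\ell_{x_0}(x, y)-\ell_{x_0}(x,\xi)-\ell_{x_0}(y,\xi).
\]
Consequently, on the set $C_r$ of points $\zeta$ with $\ell_{x_0}(\zeta,\xi)<r$ (the complement of a neighborhood of $\xi$ in $\X\cup\partial\X$) one has $\ell_{x_0}(x,y)-2r\le\ell_{\xi,x_0}(x,y)+k_1$ and $\ell_{\xi,x_0}(x,y)\le\ell_{x_0}(x,y)+k_2$, i.e., the two log-scales are \emph{Lipschitz equivalent on the complement of every neighborhood of $\xi$}. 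This single statement finishes the proof: Cauchy sequences, equivalence of Cauchy sequences, and hence completions, agree with those of $\ell_{x_0}$ away from $\xi$, while sequences converging to $\xi$ fail to be $\ell_{\xi,x_0}$-Cauchy (their $\ell_{\xi,x_0}$-values are bounded above, as you correctly observe). Without this identity, ``checking both directions of the correspondence'' on sequences is where all the hard work hides, and the double limit you worry about ($x_n\to\xi$ inside the Busemann function against the Cauchy limit of the sequence) would indeed have to be controlled by hand; with it, everything reduces to already-established properties of $\ell_{x_0}$ and its extension to the boundary, and no limit interchange is ever needed. So your route is the paper's route, but as written it defers rather than carries out the decisive estimate.
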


We will denote $\partial\X_\xi=\partial\X\setminus\{\xi\}$.

\begin{proof}
Let us show at first that $\ell_{\xi, x_0}$ is a log-scale. For
any $x, y, z\in\X$ and $x_n\to\xi$ we have
\begin{multline*}\ell_{\xi, x_0}(x, z)\doteq\lim_{n\to
\infty}(\ell_{x_n}(x, z)-|x_0-x_n|)\ge\\
\lim_{n\to\infty}(\min(\ell_{x_n}(x, y), \ell_{x_n}(y, z))-|x_0-x_n|)-\delta\doteq\\ \min\left(\lim_{n\to\infty}\ell_{x_n}(x,
y)-|x_0-x_n|, \lim_{n\to\infty}\ell_{x_n}(y, z)-|x_0-x_n|\right)\doteq\\
\min(\ell_{\xi, x_0}(x, y), \ell_{\xi, x_0}(y, z)).
\end{multline*}

We have
\begin{multline*}
2\ell_{x_0}(x, y)-2\ell_{\xi, x_0}(x, y)=\\
|x-x_0|+|y-x_0|-\beta_\xi(x, x_0)-\beta_\xi(y,
x_0)=\\ (|x-x_0|-\beta_\xi(x, x_0))+(|y-x_0|-\beta_\xi(y,
x_0)),\end{multline*}
and
\begin{multline*}
|x-x_0|-\beta_\xi(x, x_0)=\lim_{n\to\infty}|x-x_0|-|x-x_n|+|x_0-x_n|=\\
\lim_{n\to\infty}2\ell_{x_0}(x, x_n)\doteq 2\ell_{x_0}(x, \xi).
\end{multline*} It follows that
\[\ell_{x_0}(x, y)-\ell_{\xi, x_0}(x, y)\doteq\ell_{x_0}(x,
\xi)+\ell_{x_0}(y, \xi),\] or
\[\ell_{\xi, x_0}(x, y)\doteq\ell_{x_0}(x, y)-\ell_{x_0}(x,
\xi)-\ell_{x_0}(y, \xi).\]

Consider for an arbitrary number $r$ the set $C_r$ of points
$\zeta\in\X\cup\partial\X$ such that $\ell_{x_0}(\zeta, \xi)<r$. It is
complement of the neighborhood $\{x\in\X\;:\;\ell_{x_0}(x, \xi)\ge
r\}$ of $\xi\in\X\cup\partial\X$. Then for any $x, y\in C_r$ we have
\[\ell_{\xi, x_0}(x, y)\doteq\ell_{x_0}(x, y)-\ell_{x_0}(x,
\xi)-\ell_{x_0}(y, \xi)>\ell_{x_0}(x, y)-2r.\] We also have
\[\ell_{\xi, x_0}(x, y)\le\ell_{x_0}(x, y)+k,\] for some constant $k$,
since $\ell_{x_0}$ is non-negative on $\X$.

Consequently, $\ell_{\xi, x_0}$ and $\ell_{x_0}$ are Lipschitz
equivalent on $C_r$, hence they are Lipschitz equivalent on
complement of any neighborhood of $\xi$. This finishes the proof
of the proposition.
\end{proof}

\subsection{Hyperbolic graphs}

Let $\Gamma$ be a connected graph. 
A \emph{geodesic}\index{geodesic path} in $\Gamma$
connecting vertices $v_1$ and $v_2$ is a shortest path (i.e., a path
with minimal numbers of edges) from $v_1$ to
$v_2$. Distance between two vertices is, by definition, the length
of a geodesic path connecting them.
We will consider (and denote) paths in $\Gamma$ as sequences of vertices.

The graph $\Gamma$ has \emph{bounded degree}\index{bounded degree} if there is a number
$k$ such that every vertex of $\Gamma$ belongs to not more than
$k$ edges.

\begin{defi}
\label{def:quasiisometry}
A subset $N$ of a metric space $\Gamma$ is called a \emph{net} \index{net} if
there exists a constant $\Delta>0$ such that for every point $x\in\Gamma$
there exists a point $y\in N$ such that $|x-y|<\Delta$.

A map $f:\Gamma_1\arr\Gamma_2$ is a \emph{quasi-isometry}
\index{quasi-isometry} if
$f(\Gamma_1)$ is a net in $\Gamma_2$ and there
exist $\Lambda>1$ and $\Delta>0$ such that
\[\Lambda^{-1}\cdot |x-y|-\Delta\le |f(x)-f(y)|\le\Lambda\cdot
|x-y|+\Delta\]
for all $x, y\in\Gamma_1$.
\end{defi}

\begin{defi}
Let $(\X, |\cdot|)$ be a metric space. A sequence $t_0, t_1, \ldots,
t_n$ of points of $\X$ is a \emph{$(\Lambda,
\Delta)$-quasi-geodesic}\index{quasi-geodesic} if $|t_i-t_{i+1}|\le\Delta$ and
$|i-j|\le\Lambda\cdot |t_i-t_j|$ for all $i, j\in\{0, 1, \ldots, n\}$.

A metric space $(\X, |\cdot|)$ is \emph{quasi-geodesic} if there exist
positive numbers $\Lambda$ and $\Delta$ such that for any two points $x,
y\in\X$ there exists an $(\Lambda, \Delta)$-quasi-geodesic sequence
$x=t_0, t_1, \ldots, t_n=y$.
\end{defi}

A metric space is quasi-geodesic if
and only if it is quasi-isometric to a geodesic space, i.e., to a
space in which any two points $x, y$ can be connected by an isometric arc
$\gamma:[0, |x-y|]\arr\X$. Every
quasi-geodesic space is quasi-isometric to a graph.

The following is a classical result of the theory of Gromov
hyperbolic spaces (see~\cite[Theorem~5.12]{ghys-h:gromov}
and~\cite[Proposition~2.1, p.~16]{ghysetaleds:geometrical}).

\begin{theorem}
\label{th:thintriangles}
Let $\Gamma_1$ and $\Gamma_2$ be quasi-isometric graphs of bounded
degree. If $\Gamma_1$ is Gromov hyperbolic, then so is $\Gamma_2$.

A graph is Gromov hyperbolic if and only if there exists $\delta$
such that in any geodesic triangle each side belongs to the
$\delta$-neighborhood of the union of the other two sides. (One
says that such a triangle is \emph{$\delta$-thin}, see Figure~\ref{fig:dtriangle}).
\end{theorem}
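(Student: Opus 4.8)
The plan is to route both assertions through two classical pillars: the equivalence of the Gromov-product (four-point) definition of hyperbolicity with the thin-triangles (Rips) condition, and the stability of quasi-geodesics (the Morse lemma). Since a connected graph of bounded degree is quasi-isometric to its geometric realization, a genuine geodesic space, I may assume throughout that the ambient space is geodesic. I would first prove the second assertion for geodesic spaces. The direction ``$\delta$-thin triangles $\Rightarrow$ inequality~\eqref{eq:gromovhyperb}'' is the easy one: thin triangles give the geometric interpretation of the Gromov product $(x, y)_{x_0}$ as, up to an additive constant, the distance from $x_0$ to a geodesic $[x, y]$, and feeding this into the configuration $x_0, x, y, z$ yields the four-point inequality with $\delta$ replaced by a bounded multiple of itself.

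The reverse direction, ``four-point $\Rightarrow$ thin triangles,'' is the technical core, and I would handle it by approximation with trees. Given a geodesic triangle on $x, y, z$ and a point $p$ on the side $[x, y]$, so that $|x-p|+|p-y|=|x-y|$, the four-point condition lets the four points $x, y, z, p$ embed into a metric tree with additive error $O(\delta)$, uniformly, since only four points are ever involved at once. In the tree the side $[x, y]$ is a tripod leg and $p$ lands within $O(\delta)$ of the union of the tree-sides $[x, z]$ and $[y, z]$; transporting this back shows $p$ is $O(\delta)$-close to $[x, z]\cup[y, z]$ in the original space. As $p$ was arbitrary, the triangle is $O(\delta)$-thin.

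Next I would establish the Morse lemma: in a $\delta$-thin geodesic space every $(\Lambda, \Delta)$-quasi-geodesic lies in an $H$-neighborhood of any geodesic joining its endpoints, with $H=H(\delta, \Lambda, \Delta)$. The standard argument estimates the length of a subpath straying a distance $r$ from the geodesic; $\delta$-thinness makes such a detour cost length growing exponentially in $r$, contradicting the linear length control of a quasi-geodesic once $r$ exceeds $H$. With this, the first assertion follows. Let $f:\Gamma_1\arr\Gamma_2$ be the quasi-isometry with quasi-inverse $g$, and assume $\Gamma_1$ hyperbolic. For a geodesic triangle in $\Gamma_2$ with sides $\sigma_1, \sigma_2, \sigma_3$, the images $g\circ\sigma_i$ are quasi-geodesics in $\Gamma_1$; by the Morse lemma each stays near an honest geodesic, and the geodesic triangle in $\Gamma_1$ on the images of the three vertices is $\delta$-thin. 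Transporting this thinness back through $f$, using $f\circ g\doteq\mathrm{id}$ and bounded degree to control distortion, shows the triangle in $\Gamma_2$ is $\delta'$-thin for a uniform $\delta'$. By the equivalence above, $\Gamma_2$ is hyperbolic.

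The main obstacle is the hard direction of the equivalence — deducing thin triangles from the four-point condition via tree approximation — since this is where a purely metric three-distance inequality must be converted into genuine geometric control over the shape of geodesics. The Morse lemma, though nontrivial, is then a comparatively routine exponential-divergence estimate, and the quasi-isometry transport is bookkeeping with the constants.
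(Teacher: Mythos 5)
The paper itself offers no proof of this theorem: it is quoted as a classical result, with references to~\cite[Theorem~5.12]{ghys-h:gromov} and~\cite[Proposition~2.1]{ghysetaleds:geometrical}. So your attempt can only be measured against the standard arguments, and your architecture is exactly the standard one: pass from bounded-degree graphs to geodesic spaces, prove the equivalence of the four-point condition with the thin-triangle condition, prove stability of quasi-geodesics, and transport thin triangles through the quasi-isometry and a quasi-inverse. The reduction to geodesic spaces, the Morse-lemma sketch, the quasi-isometry transport, and the direction ``thin triangles $\Rightarrow$ four-point inequality'' are all correct in outline.

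There is, however, a genuine gap in the step you yourself call the core, ``four-point $\Rightarrow$ thin triangles.'' Embedding the quadruple $\{x,y,z,p\}$ in a tree with additive error $O(\delta)$ and observing that $\bar p$ lies $O(\delta)$-close to $[\bar x,\bar z]\cup[\bar y,\bar z]$ yields, once distances are transported back to the original space, only a metric statement: $\min\bigl((x,z)_p,(y,z)_p\bigr)=O(\delta)$, i.e.\ $p$ is $O(\delta)$-almost-between $x$ and $z$, or between $y$ and $z$. It does \emph{not} yield a point of the actual geodesic $[x,z]$ (or $[y,z]$) at distance $O(\delta)$ from $p$: the approximating tree contains only the four sampled points, so the points of the tree-segment $[\bar x,\bar z]$ correspond to no points of the space at all. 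Moreover, ``almost-between'' is strictly weaker than ``close to a geodesic'' in a general geodesic space — in the Euclidean plane a point $p$ with $(x,z)_p\le\varepsilon$ can lie at distance roughly $\sqrt{\varepsilon\,|x-z|}$ from $[x,z]$. So the sentence ``transporting this back shows $p$ is $O(\delta)$-close to $[x,z]\cup[y,z]$'' is exactly the metric-to-geometric conversion you flag as the main obstacle, and the tree approximation has not performed it; hyperbolicity must be invoked a second time, against a point of the geodesic itself. The standard repair: choose $q\in[x,z]$ with $|x-q|=(z,p)_x$; since $q\in[x,z]$ one has $(z,q)_x=|x-q|=(z,p)_x$, so the four-point inequality at basepoint $x$ gives $(p,q)_x\ge (p,z)_x-\delta$, whence $|p-q|=|x-p|+|x-q|-2(p,q)_x\le (x,z)_p+2\delta$. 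This lemma (the distance from $p$ to $[x,z]$ is at most $(x,z)_p+2\delta$), combined with the observation that $(x,y)_p=0$ for $p\in[x,y]$ (so the four-point inequality at basepoint $p$ gives $\min((x,z)_p,(y,z)_p)\le\delta$), produces $3\delta$-thin triangles directly — and makes the tree approximation superfluous. With that lemma inserted, your proof closes; without it, the central implication is assumed rather than proved.
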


\begin{figure}
\includegraphics{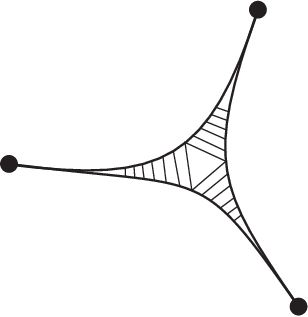}
\caption{$\delta$-thin triangle}\label{fig:dtriangle}
\end{figure}

Another important property of hyperbolic spaces is described in the following
theorem, which can be found, for instance,
in~\cite[Theorem~5.11]{ghys-h:gromov}.

\begin{theorem}
\label{th:rigidquasigeodesics}
Suppose that $\Gamma$ is a hyperbolic graph. Then for any pair of
positive numbers $\Lambda, \Delta$ there exists $\delta$ such that if $t_0,
t_1, \ldots, t_n$ and $s_0, s_1, \ldots, s_m$ are two $(\Lambda,
\Delta)$-quasi-geodesics connecting $x=t_0=s_0$ to
$y=t_n=s_m$, then the sequences $t_i$ and $s_i$ are at most
$\delta$ apart (i.e., for every point of one sequence there exists
a point of the other on distance less than $\delta$).
\end{theorem}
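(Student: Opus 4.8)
The statement is the stability (Morse) lemma for quasi-geodesics, and the plan is to reduce everything to a comparison with a single honest geodesic. Fix the vertices $x$ and $y$ and a geodesic $\gamma$ joining them in $\Gamma$ (which exists since $\Gamma$ is a connected graph). It suffices to prove one lemma: there is a constant $D=D(\delta,\Lambda,\Delta)$ such that every $(\Lambda,\Delta)$-quasi-geodesic from $x$ to $y$ lies within Hausdorff distance $D$ of $\gamma$. Granting this, if $t_i$ and $s_j$ are two such quasi-geodesics, then each $t_i$ is within $D$ of some point of $\gamma$, which in turn is within $D$ of some $s_j$; so the two sequences are at most $2D$ apart, and the constant required by the statement can be taken to be $2D$. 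Before starting I would replace the sequence $t_0,\dots,t_n$ by a genuine path $P$ in $\Gamma$ obtained by concatenating edge-geodesics joining consecutive $t_i,t_{i+1}$ (each of length at most $\Delta$); the image of $P$ is within $\Delta$ of the sequence, so it is enough to control $P$.

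The technical heart is a logarithmic divergence estimate extracted from the thin-triangles characterization of Theorem~\ref{th:thintriangles}: if $\Gamma$ is $\delta$-thin, $[a,b]$ is a geodesic, and $Q$ is any path in $\Gamma$ from $a$ to $b$ of length $L\ge 1$, then every vertex $m$ of $[a,b]$ satisfies $|m-\mathrm{im}(Q)|\le\delta\log_2 L + C$ for a universal constant $C$. I would prove this by repeated bisection: split $Q$ at its arc-length midpoint into two halves, form the geodesic triangle on $a$, $b$ and the midpoint, use $\delta$-thinness to push $m$ to within $\delta$ of one of the two half-sides, and iterate roughly $\log_2 L$ times until the relevant piece of $Q$ has length at most $1$.

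The first containment, $\gamma\subset N_D(\mathrm{im}(P))$, is the step I expect to be the main obstacle, and it uses the estimate above through a self-referential bound. Let $D=\max_{p\in\gamma}|p-\mathrm{im}(P)|$, attained at some $p_0$ by compactness of $\gamma$. Pick points $u,v\in\gamma$ on opposite sides of $p_0$ at distance $2D$ (or the endpoints $x,y$ if they are nearer), and points $u',v'$ on $P$ with $|u-u'|,|v-v'|\le D$. The subpath of $P$ between $u'$ and $v'$ has length at most linear in $D$, since $|u'-v'|\le 6D$ and $P$ comes from a $(\Lambda,\Delta)$-quasi-geodesic; adjoining the geodesic bridges $[u,u']$ and $[v',v]$ gives a path $Q$ from $u$ to $v$ of length $L\le C_1 D+C_2$. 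Applying the divergence estimate to the sub-geodesic $[u,v]\subset\gamma$ and to $Q$ bounds $|p_0-\mathrm{im}(Q)|$ by $\delta\log_2(C_1D+C_2)+C$; but every point of the two bridges lies at distance at least $D$ from $p_0$ (because $|p_0-u|=|p_0-v|=2D$ and each bridge has length at most $D$), so once $D$ is large enough that $\delta\log_2(C_1D+C_2)+C<D$, the nearest point of $Q$ to $p_0$ must lie on the middle part, which is contained in $\mathrm{im}(P)$. This yields $D\le\delta\log_2(C_1D+C_2)+C$, forcing $D\le D_0(\delta,\Lambda,\Delta)$.

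For the reverse containment, $\mathrm{im}(P)\subset N_{D_1}(\gamma)$, I would show that between two consecutive points of $P$ lying within $D_0$ of $\gamma$ the path cannot wander far: such a maximal excursion is shadowed by a bounded arc of $\gamma$ (using the first containment to see that its endpoints project to nearby points of $\gamma$), and the quasi-geodesic inequality then bounds its length, hence its distance from $\gamma$, by a constant $D_1=D_1(\delta,\Lambda,\Delta)$. Taking $D=D_0+D_1$ completes the Hausdorff bound and, by the reduction above, proves the theorem with the required constant equal to $2D$.
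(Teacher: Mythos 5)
The paper offers no proof of this statement to compare yours against: it is quoted as a classical fact, with the reader referred to~\cite[Theorem~5.11]{ghys-h:gromov}. What you have written is the standard textbook proof of the Morse stability lemma (the taming-and-bisection argument found in~\cite{ghys-h:gromov} and~\cite{bridhaefl}): reduce to comparing each quasi-geodesic with a single geodesic $\gamma$ joining the same endpoints, tame the sequence into a path $P$, extract the logarithmic divergence estimate from the thin-triangle characterization of Theorem~\ref{th:thintriangles} by repeated bisection, and then run the self-improving bound on $D=\max_{p\in\gamma}|p-\mathrm{im}(P)|$ to force $D\le D_0(\delta,\Lambda,\Delta)$, followed by the excursion argument for the reverse containment. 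This outline is correct, and it does work with the paper's discrete definition of a $(\Lambda,\Delta)$-quasi-geodesic: the condition $|i-j|\le\Lambda|t_i-t_j|$ is exactly what converts the bound $|u'-v'|\le 6D$ into a linear-in-$D$ bound on the length of the intervening subpath, which is what the self-referential inequality $D\le\delta\log_2(C_1D+C_2)+C$ needs.

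The one step you should flesh out is the reverse containment. Knowing that the endpoints of a maximal excursion of $P$ outside $N_{D_0}(\gamma)$ lie within $D_0$ of $\gamma$ does not by itself bound the arc of $\gamma$ that the excursion shadows: a priori the excursion could run parallel to $\gamma$ at distance just over $D_0$ for an arbitrarily long time, and then the quasi-geodesic inequality gives you nothing. The standard fix is a connectedness argument using the first containment: every point of $\gamma$ between the two projected endpoints is within $D_0$ of $\mathrm{im}(P)$, but it cannot be within $D_0$ of the excursion itself (whose points all lie at distance greater than $D_0$ from $\gamma$), so it is within $D_0$ of the portion of $P$ preceding the excursion or of the portion following it. These two closed sets cover a connected arc of $\gamma$ and each contains one of its ends, so some point of $\gamma$ is $D_0$-close to both portions; that bounds the distance between the excursion's entry and exit points, and only then does the quasi-geodesic inequality bound the excursion's length. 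With that step inserted, your argument is complete.
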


In particular, if $\Gamma$ is a hyperbolic graph, then there
exists $\delta$ such that every $(\Lambda, \Delta)$-quasi-geodesic is at most
$\delta$ away from a geodesic path connecting the same points.

The following proposition is a direct corollary of~\ref{th:rigidquasigeodesics}.

\begin{proposition}
\label{pr:thininftriangles}
Let $\Gamma$ be a hyperbolic graph, and let $\xi_1, \xi_2$ be
arbitrary points of the space $\Gamma\cup\partial\Gamma$. Then
there exists a geodesic path $\gamma$ connecting $\xi_1$ and
$\xi_2$, i.e., a geodesic path in $\Gamma$ such that its beginning (resp.\
end) is either equal to the vertex $\xi_1\in\Gamma$ (resp.\ $\xi_2$), or
converges to $\xi_1\in\partial\Gamma$ (resp.\ to $\xi_2$).

There exists $\delta$ such that for any two points $\xi_1,
\xi_2\in\Gamma\cup\partial\Gamma$  any two geodesic paths
$\gamma_1$ and $\gamma_2$ connecting $\xi_1$ and $\xi_2$ are on
distance not more than $\delta$ from each other.
\end{proposition}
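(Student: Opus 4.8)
The plan is to derive everything from the stability of quasi-geodesics (Theorem~\ref{th:rigidquasigeodesics}) by the single observation that a geodesic path in $\Gamma$ is a $(1,1)$-quasi-geodesic: if $t_0,\dots,t_n$ is a shortest path then $|t_i-t_{i+1}|=1$ and $|i-j|=|t_i-t_j|$, so the defining inequalities hold with $\Lambda=\Delta=1$. Consequently, when $\xi_1,\xi_2$ are both \emph{vertices}, the second assertion is immediate: apply Theorem~\ref{th:rigidquasigeodesics} with $\Lambda=\Delta=1$ to get a $\delta$ with any two geodesics joining $\xi_1$ to $\xi_2$ within $\delta$ of each other. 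The entire content of the proposition is therefore the extension of both existence and uniform fellow-traveling to the case where one or both endpoints lie on $\partial\Gamma$, and the strategy is to reduce each boundary endpoint to a vertex endpoint by approximation and a limiting argument.

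For existence (the first assertion), I would represent each ideal endpoint by an $\ell_{x_0}$-Cauchy sequence of vertices: choose $x_k\to\xi_1$ and $y_k\to\xi_2$ (taking $x_k\equiv\xi_1$ or $y_k\equiv\xi_2$ when that endpoint is already a vertex) and pick a geodesic segment $\sigma_k$ from $x_k$ to $y_k$. Since $\Gamma$ has bounded degree, balls are finite, so by a diagonal (K\"onig's lemma) argument a subsequence of the $\sigma_k$ converges, edge by edge, to a path $\gamma$ that is again geodesic (each finite subpath is a limit of geodesic subsegments of the same length). It then remains to check that $\gamma$ converges to $\xi_1$ and $\xi_2$; this follows by controlling Gromov products, using that for a vertex $p$ on a geodesic from $x_0$ toward an endpoint one has $(p,\xi)_{x_0}\doteq|x_0-p|$, so the Cauchy condition $(\gamma(t),y_k)_{x_0}\to\infty$ forces $\gamma(t)\to\xi_2$ (and symmetrically for $\xi_1$).

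For the fellow-traveling bound (the second assertion) at ideal endpoints, let $\gamma_1,\gamma_2$ be two geodesics joining $\xi_1$ to $\xi_2$ and fix a point $p$ on $\gamma_1$. I would truncate both geodesics far out toward each endpoint, choosing vertices $a_i,b_i$ on $\gamma_i$ with $a_i$ toward $\xi_1$ and $b_i$ toward $\xi_2$, and with $p$ lying between $a_1$ and $b_1$. Because $a_1,a_2$ lie on geodesics converging to the same ideal point $\xi_1$, their Gromov products with $\xi_1$ (hence with one another) are large, and likewise for $b_1,b_2$ toward $\xi_2$; joining $a_1$ to $a_2$ and $b_1$ to $b_2$ by geodesic connectors therefore produces, out of the segment of $\gamma_1$ from $a_1$ to $b_1$ and the segment of $\gamma_2$ from $a_2$ to $b_2$, two $(\Lambda,\Delta)$-quasi-geodesics sharing the same pair of endpoints with uniform constants. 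Theorem~\ref{th:rigidquasigeodesics} then places $p$ within a fixed $\delta$ of the $\gamma_2$-part, and since $\delta$ depends only on the hyperbolicity constant it is independent of $p$ and of how far out the truncation is taken.

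The main obstacle is precisely this last reduction: one must verify that the concatenated truncations are genuine $(\Lambda,\Delta)$-quasi-geodesics between a single pair of endpoints (so that the connectors do not backtrack and inflate the quasi-geodesic constants), and that the resulting Hausdorff bound is uniform as the truncation points recede to infinity. Establishing that the connectors toward a common ideal endpoint are controlled—via the identity $(p,\xi)_{x_0}\doteq|x_0-p|$ for $p$ on a geodesic ray together with the thin-triangle criterion of Theorem~\ref{th:thintriangles}—is the only genuinely nontrivial point; everything else is a formal passage to the limit from the vertex case handled by Theorem~\ref{th:rigidquasigeodesics}.
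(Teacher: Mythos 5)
Your overall strategy---reduce everything to the stability of quasi-geodesics---is exactly what the paper intends: its entire proof of this proposition is the single sentence that it ``is a direct corollary of'' Theorem~\ref{th:rigidquasigeodesics}. But as written, your proposal has two genuine gaps. The first is in the existence argument: bounded degree makes balls finite, but K\"onig's lemma produces a convergent subsequence of the geodesics $\sigma_k$ only if infinitely many of them meet one fixed finite ball, and nothing you have said prevents the $\sigma_k$ from eventually leaving every finite set, in which case there is no limit path at all. The missing anchoring step is where hyperbolicity and the implicit hypothesis $\xi_1\ne\xi_2$ must enter: one needs $d(x_0,\sigma_k)\le (x_k,y_k)_{x_0}+O(\delta)$, and the Gromov products $(x_k,y_k)_{x_0}$ stay bounded precisely because the Cauchy sequences $x_k\to\xi_1$ and $y_k\to\xi_2$ are non-equivalent (the proposition on non-equivalent Cauchy sequences in Section~\ref{ss:logscales}). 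A tell-tale sign of the gap is that your argument never uses $\xi_1\ne\xi_2$, yet the conclusion is false when $\xi_1=\xi_2\in\partial\Gamma$: no geodesic connects a boundary point to itself.

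The second gap is the one you flag yourself, but it is worse than a technical verification: a large Gromov product $(a_1,a_2)_p$ keeps the connector $[a_1,a_2]$ \emph{far from} $p$, it does not make the connector short or prevent backtracking, and showing that truncation points can be chosen so that the concatenation is a $(\Lambda,\Delta)$-quasi-geodesic with uniform constants essentially requires already knowing that geodesics converging to a common ideal point eventually fellow-travel---which is the statement being proved, so the reduction as formulated is circular. The repair is to drop the quasi-geodesic packaging and use your Gromov-product observation for what it actually gives. Consider the geodesic quadrilateral with sides $\gamma_1|_{[a_1,b_1]}$, the connector $[b_1,b_2]$, $\gamma_2|_{[a_2,b_2]}$, and the connector $[a_2,a_1]$; splitting it by a diagonal and applying Theorem~\ref{th:thintriangles} twice shows each side lies in the $2\delta$-neighborhood of the union of the other three. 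Since $d(p,[a_1,a_2])\ge (a_1,a_2)_p$ (an exact triangle-inequality estimate) and $(a_1,a_2)_p\to\infty$ as $a_1,a_2\to\xi_1$ (convergence to a boundary point is basepoint-independent), and likewise for the other connector, taking the truncations deep enough relative to $p$ forces $p$ to lie within $2\delta$ of $\gamma_2$; the bound $2\delta$ is independent of $p$ and of the truncation depth. With this substitution both halves of your proof go through, and Theorem~\ref{th:rigidquasigeodesics} is in fact only needed for the vertex-endpoint case you dispose of at the start.
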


\subsection{Hyperbolic graphs directed towards a point of the boundary}
\begin{defi}
\label{def:almcoc} Let $\Gamma$ be a graph of bounded degree. A
function $\coc:\Gamma\times\Gamma\arr\R$ is called a
\emph{quasi-cocycle}\index{quasi-cocycle!on a graph}
if there exist numbers $\Delta>0$ and $\eta>0$ such that
$|\coc(v_1, v_2)|\le\Delta$ for all pairs of adjacent vertices $v_1,
v_2\in\Gamma$, and
\[\coc(v_1, v_2)+\coc(v_2, v_3)-\eta\le\coc(v_1, v_3)\le\coc(v_1,
v_2)+\coc(v_2, v_3)+\eta,
\]
for all $v_1, v_2, v_3\in\Gamma$.
\end{defi}

Note that under the conditions of the definition we have
\begin{equation}
\label{eq:coclipschitz} |\coc(v_1, v_2)|\le |v_1-v_2|(\Delta+\eta),
\end{equation}
since for any path $u_0=v_1, u_1, \ldots, u_n=v_2$ we have
\[\coc(v_1, v_2)\le\coc(u_0, u_1)+\coc(u_1,
u_2)+\cdots+\coc(u_{n-1}, u_n)+(n-1)\eta<n\Delta+n\eta\] and
\begin{multline*}
\coc(v_1, v_2)\ge\coc(u_0, u_1)+\coc(u_1,
u_2)+\cdots+\coc(u_{n-1}, u_n)-(n-1)\eta\ge\\
-n\Delta-(n-1)\eta\ge-n(\Delta+\eta).
\end{multline*}

An example of a quasi-cocycle on a graph is the Busemann quasi-cocycle
$\beta_\xi$ associated with a point of the boundary of a hyperbolic
graph (see Definition~\ref{def:busemann}).

If $\coc_1$ is a quasi-cocycle, and
$\coc_2:\Gamma\times\Gamma\arr\R$ is such that $|\coc_1-\coc_2|$
is bounded, then $\coc_2$ is obviously also a quasi-cocycle.

\begin{defi}
Two quasi-cocycles $\coc_1$ and $\coc_2$ are said to be
\emph{strongly equivalent} if $|\coc_1-\coc_2|$ is uniformly
bounded.\index{strongly equivalent quasi-cocycles}
\end{defi}

Let $\Gamma, \Gamma'$ be hyperbolic graphs of bounded degree, and
let $F:\Gamma\arr\Gamma'$ be a quasi-isometry. We will also denote by $F$
also the extension of $F$ onto the boundaries of the graphs. Let $\beta_\xi$ be the
Busemann quasi-cocycle on $\Gamma$ associated with a point
$\xi\in\partial\Gamma$, let $\beta_{F(\xi)}'$ be the Busemann
quasi-cocycle associated with the corresponding point $F(\xi)\in\partial\Gamma'$.

\begin{lemma}
\label{lem:betaxi}
There exist constants $C>1, D>0$ such that the following condition is satisfied.
Let $(v_0, v_1, \ldots)$ be a geodesic path of vertices in $\Gamma$ converging to
$\xi$. Then for all $0\le i\le j$ we have
\[C^{-1}\beta_\xi(v_i, v_j)-D\le\beta_{F(\xi)}'(F(v_i), F(v_j))\le
C\beta_\xi(v_i, v_j)+D.\]
\end{lemma}

\begin{proof}
Note that $\beta_\xi(v_i, v_j)\doteq j-i$ for all $i, j$,
since $(v_0, v_1, \ldots)$ is a geodesic path converging to $\xi$.
The sequence $(F(v_0), F(v_1), \ldots)$ is quasi-geodesic path in $\Gamma'$
converging to $F(\xi)$. By Theorem~\ref{th:rigidquasigeodesics},
there exits $\Delta>0$, depending only on $\Gamma, \Gamma'$, and $F$, such
that there exits a geodesic path $(w_0, w_1, \ldots)$ in $\Gamma'$ 
on distance less than $\Delta$ from $(F(v_0), F(v_1), \ldots)$.
Then for any $0\le i\le j$, there exist $i', j'$ such that
$|F(v_i)-w_{i'}|, |F(v_j)-w_{j'}|\le \Delta$. If $j-i$ is big enough, then
$j'-i'$ is positive, and
\[\beta_{F(\xi)}'(F(v_i), F(v_j))\doteq\beta_{F(\xi)}'(w_{i'},
w_{j'})\doteq|w_{i'}-w_{j'}|.\]
It follows that there exist constants $C>1$ and $D_0>0$ such that
\[C^{-1}\beta_\xi(v_i, v_j)\doteq C^{-1}|v_i-v_j|-D_0\le |w_{i'}-w_{j'}|\le C|v_i-v_j|+D_0\doteq
C\beta_\xi(v_i, v_j),\]
which finishes the proof of the lemma.
\end{proof}

\begin{defi}
\label{def:genbusemann}
Let $\Gamma$ be a hyperbolic graph of bounded valency. Let
$\xi\in\partial\Gamma$. A quasi-cocycle $\beta$ is a
\emph{(generalized) Busemann quasi-cocycle}
\index{generalized!Busemann quasi-cocycle} associated with $\xi$ if
there exist constants $C>1$ and $D>0$ such that for all geodesic
vertex-paths $(v_0, v_1, \ldots)$ converging to $\xi$, and all $0\le i\le
j$ we have
\[C^{-1}(j-i)-D_0\le\beta(v_i, v_j)\le C(j-i)+D_0.\]
\end{defi}

The same arguments as in the proof of Lemma~\ref{lem:betaxi} show that
if $\beta$ is a generalized Busemann quasi-cocycle on $\Gamma$, and
$F:\Gamma'\arr\Gamma$ is a quasi-isometry, then $\beta'(v,
u)=\beta(F(v), F(u))$ is a generalized Busemann quasi-cocycle on
$\Gamma'$. In other words, the notion of a generalized Busemann
quasi-cocycle is quasi-isometry invariant.

\begin{lemma}
\label{lem:quasigeodesic}
Let $\Gamma$ be a directed graph and let $\coc$ be a quasi-cocycle on $\Gamma$.
Let $\Delta$ and $\eta$ be as in Definition~\ref{def:almcoc}.
Suppose that for any arrow of $\Gamma$ starting at a vertex $v$
and ending in a vertex $u$ we have $\coc(v, u)>2\eta$.

Then every directed path in $\Gamma$ is a $((\Delta+\eta)/\eta,
1)$-quasi-geodesic.
\end{lemma}

\begin{proof}
Let $(\ldots, x_1, x_2, \ldots)$ be an infinite or a finite
directed path in $\Gamma$.

We have for any $i<j$:
\begin{multline*}2\eta(j-i)-(j-i-1)\eta<\\ \coc(x_i,
x_{i+1})+\cdots+\coc(x_{j-1}, x_j)-(j-i-1)\eta\le\coc(x_i,
x_j)\le\\ \coc(x_i, x_{i+1})+\cdots+\coc(x_{j-1},
x_j)+(j-i-1)\eta<\\ \Delta(j-i)+(j-i-1)\eta,\end{multline*} hence
\[\eta(j-i)<\coc(x_i, x_j)<(\Delta+\eta)(j-i)+\eta.\]

By~\eqref{eq:coclipschitz} we have for any $i<j$
\[|x_i-x_j|\ge\frac 1{\Delta+\eta}|\coc(x_i, x_j)|=
\frac 1{\Delta+\eta}\coc(x_i, x_j)>\frac\eta{\Delta+\eta}(j-i),\] which
implies that the path is quasi-geodesic.
\end{proof}

\begin{theorem}
\label{th:contraction}
Let $\Gamma$ be a directed graph such that every vertex of $\Gamma$
has at least one outgoing arrow. Let $\coc$ be a quasi-cocycle on
$\Gamma$,
and let $\Delta$ and $\eta$ be as in Definition~\ref{def:almcoc}.
Suppose that for every arrow of $\Gamma$ starting at $v$
and ending in $u$ we have $\coc(v, u)>2\eta$. Fix a number $\Delta_1\ge\Delta+\eta$.

Then the following conditions are equivalent.
\begin{enumerate}
\item The graph $\Gamma$ is hyperbolic and $\coc$ is a generalized
Busemann quasi-cocycle $\beta_\omega$ associated with a
point $\omega\in\partial\Gamma$.
\item There exists $\rho_0>0$ such that for any $m>0$ there exists
$k_m>0$ such that for any directed paths $(u_0, u_1, \ldots)$ and
$(v_0, v_1, \ldots)$ with $|u_0-v_0|\le m$ we have $|u_i-v_j|<\rho_0$
whenever $|\coc(u_i, v_j)|\le\Delta_1$, $i>k_m$, and $j>k_m$.
\end{enumerate}
\end{theorem}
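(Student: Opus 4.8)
I would prove the two implications separately, exploiting the quasi-geodesic property of directed paths (Lemma~\ref{lem:quasigeodesic}) as the bridge between the cocycle $\coc$ and the geometry of $\Gamma$. The key heuristic is that condition~(2) is a reformulation of the ``all backward/forward directed paths converge'' phenomenon: two directed rays that start close together and are compared at comparable cocycle-levels (the condition $|\coc(u_i,v_j)|\le\Delta_1$ is exactly a uniform-level requirement) must stay within a bounded distance. This is precisely the contraction behavior that forces the two rays to converge to a common boundary point, which is the mechanism behind $\coc$ being the Busemann cocycle of a single point $\omega$.

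\textbf{(1)$\Rightarrow$(2).} Assume $\Gamma$ is hyperbolic and $\coc\doteq_{\text{coarse}}\beta_\omega$. By Lemma~\ref{lem:quasigeodesic} each directed path is a $((\Delta+\eta)/\eta,1)$-quasi-geodesic, so by Theorem~\ref{th:rigidquasigeodesics} it tracks a genuine geodesic within bounded distance. I would first argue that every directed ray converges to $\omega$: since $\coc$ grows linearly along the path (the displayed estimate $\eta(j-i)<\coc(x_i,x_j)$ in the proof of Lemma~\ref{lem:quasigeodesic}) and $\coc$ is coarsely the Busemann function $\beta_\omega$, the ray must head toward $\omega\in\partial\Gamma$. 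Now take two directed rays $(u_i)$, $(v_j)$ with $|u_0-v_0|\le m$. Both converge to $\omega$, so for indices with comparable Busemann level they become uniformly close; the condition $|\coc(u_i,v_j)|\le\Delta_1$ pins down ``comparable level'' up to the coarse-equivalence constants, and hyperbolicity (thinness of the ideal triangle with apex $\omega$, via Proposition~\ref{pr:thininftriangles}) bounds $|u_i-v_j|$ by a constant $\rho_0$ depending only on $\delta$, $\Lambda$, $\Delta$, $\eta$. The threshold $k_m$ absorbs the initial segment where the rays have not yet entered the region where they fellow-travel.

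\textbf{(2)$\Rightarrow$(1).} This is the harder direction and the main obstacle, since I must manufacture hyperbolicity and a boundary point $\omega$ out of the purely metric contraction hypothesis. The plan is: use the outgoing-arrow assumption to extend, from any vertex, a directed ray; condition~(2) applied to two such rays from nearby vertices gives that all directed rays are asymptotic, so they define a single point $\omega$ in a suitable completion, and $\coc$ restricted to a ray is a quasi-isometry onto $\R$ (from Lemma~\ref{lem:quasigeodesic}), making $\coc$ a candidate Busemann-type cocycle for $\omega$. The real work is proving $\Gamma$ is hyperbolic: I would verify the thin-triangle criterion of Theorem~\ref{th:thintriangles} by decomposing an arbitrary geodesic triangle using the directed-ray structure — each geodesic can be compared to directed paths through the $\coc$-levels, and condition~(2) furnishes the uniform backtracking bound that forces sides to fellow-travel. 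I expect the delicate point to be handling geodesic segments that run \emph{against} the orientation: I would split such a geodesic at its $\coc$-extremal vertex into two coherently-oriented directed quasi-geodesics and then invoke~(2) on each half. Once thinness is established, hyperbolicity gives a genuine boundary $\partial\Gamma$, the common endpoint of all directed rays is identified with $\omega\in\partial\Gamma$, and a final estimate comparing $\coc$ with $\beta_\omega$ along geodesics (both being quasi-isometries to $\R$ that agree coarsely on directed rays) yields the coarse equivalence $\coc\doteq_{\text{coarse}}\beta_\omega$.
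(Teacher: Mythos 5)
Your direction (1)$\Rightarrow$(2) follows the paper's own argument in outline: directed paths are quasi-geodesics converging to $\omega$, the ideal triangle with vertices $u_0$, $v_0$, $\omega$ is thin by Theorem~\ref{th:thintriangles} and Proposition~\ref{pr:thininftriangles}, and the bound $|\coc(u_i,v_j)|\le\Delta_1$ is converted into a bound on $|u_i-v_j|$ using the linear growth of $\coc$ along directed paths (Lemma~\ref{lem:quasigeodesic}). That half is sound.

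The genuine gap is in (2)$\Rightarrow$(1), at exactly the step you yourself flag as delicate. You propose to verify thin triangles by splitting each geodesic ``at its $\coc$-extremal vertex into two coherently-oriented directed quasi-geodesics'' and then invoking (2) on each half. But a geodesic of $\Gamma$ is not a directed path, nor a concatenation of two directed paths: its edges may be traversed with or against the orientation in any order, so there is no coherently-oriented decomposition available to split at an extremal vertex. The assertion that every geodesic is uniformly close to a descending directed path, followed by a bounded middle segment, followed by an ascending directed path is essentially equivalent to the conclusion you are trying to prove (hyperbolicity together with the Busemann structure of $\coc$); assuming it begs the question. This is where the paper's proof does its real work: it builds an auxiliary graph $\Gamma_1$ on the same vertex set, with \emph{horizontal} edges joining vertices connected by bounded chains inside a single band of $\coc$-levels of width $\Delta_2$, and \emph{vertical} edges given by directed paths descending one band; it shows $\Gamma_1$ is bi-Lipschitz to $\Gamma$ (Lemma~\ref{lem:quasiisom}); it uses condition (2) to prove the contraction property (Lemma~\ref{lem:contraction}) that a horizontal path of length $n$ projects one level down to a horizontal path of length about $n/2$; from this it deduces the structure of $\Gamma_1$-geodesics (descending edges precede ascending ones, at most six horizontal edges --- Corollary~\ref{cor:downup}, Lemmas~\ref{lem:less6} and~\ref{lem:less7}), proves that triangles of proper $V$-shaped paths are uniformly thin, and only then transfers thinness to arbitrary geodesics via Lemma~\ref{l:geodesicV}. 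Some replacement for this level-by-level contraction machinery (or for the analogous arguments for self-similarity complexes and augmented trees that the paper cites) is indispensable; without it, your sentence ``condition (2) furnishes the uniform backtracking bound that forces sides to fellow-travel'' names the desired conclusion rather than proving it.
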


Note that condition (1) of the theorem together with the condition
$\coc(v, u)>2\eta$ mean that all arrows of the graph $\Gamma$ are
directed towards the point $\omega\in\partial\Gamma$ in the sense that
any infinite directed path converges to $\omega$.

The proof of Theorem~\ref{th:contraction} is very similar to the
proof of hyperbolicity of the \emph{selfsimilarity complex} given
in~\cite{nek:hyplim} (see~\cite[Theorem~3.9.6]{nek:book}). See also a
similar criterion of hyperbolicity of \emph{augmented trees}
in~\cite{kaim:augtree}.

\begin{proof}
Let us prove that (1) implies (2). Let $(u_0, u_1, \ldots)$ and $(v_0,
v_1, \ldots)$ be two directed paths such that $|u_0-v_0|\le m$. Both
paths converge to $\omega$ and are quasi-geodesic. Connect $u_0$ to
$v_0$ by a geodesic path $\gamma$, and consider the obtained triangle
with vertices $u_0, v_0, \omega$. There exists $\delta$ depending only
on $\Gamma$ such that all such triangles are $\delta$-thin (see
Theorem~\ref{th:thintriangles} and~\ref{pr:thininftriangles}). In
particular, the path $(u_0, u_1, \ldots)$ belongs to the
$\delta$-neighborhood of the union of the paths $\gamma$ and $(v_0,
v_1, \ldots)$. If $|v_0-v_i|>m+\delta$, then distance to $v_i$ from
every point of $\gamma$ is greater than $\delta$, hence there exists
$u_j$ such that $|v_i-u_j|\le\delta$. Similarly, if
$|u_0-u_i|>m+\delta$, then distance from $u_i$ to any point of
$\gamma$
is greater than $\delta$, hence there exist $v_j$ such that
$|v_j-u_i|\le\delta$. The paths $(v_0, v_1, \ldots)$ and $(u_0, u_1,
\ldots)$ are quasi-geodesics (see
Lemma~\ref{lem:quasigeodesic}), hence there exists $k_m$ not depending
on $v_0$ and $u_0$ such that for every $i>k_m$ there exists $j$ such
that $|v_i-u_j|\le\delta$, and for every $i>k_m$ there exists $j$
such that $|u_i-v_j|\le\delta$.

Suppose that $i>k_m$ and $j>k_m$ are such that $|\coc(v_i,
u_j)|<\Delta_1$. Then there exists $l$ such that $|v_i-u_l|\le\delta$ and
there exists $m$ such that $|v_m-u_j|\le\delta$. Then $|\coc(v_i,
u_l)|\le\delta(\Delta+\eta)$ and $|\coc(v_m,
u_j)|\le\delta(\Delta+\eta)$. Consequently,
\[|\coc(v_i, v_m)|\le|\coc(v_i, u_j)|+|\coc(u_j, v_m)|+\eta\le
\Delta_1+\delta(\Delta+\eta)+\eta.\]
We get a uniform bound $M=(\Delta_1+\delta(\Delta+\eta)+\eta)/\eta$ for
the difference $|i-m|$. It
follows that \[|v_i-u_j|\le |v_i-v_m|+|v_m-u_j|\le
M+\delta,\]
so that we can take $\rho_0=M+\delta$. Note that $\rho_0$ depends only on
$\Gamma$ and $\Delta_1$.

Let us prove now that (2) implies (1).
Choose a vertex $x_0\in\Gamma$, and consider the functions
\[\lambda(v)=\coc(v, x_0),\qquad\coc_1(v, u)=\coc(v, x_0)-\coc(u,
x_0)=\lambda(v)-\lambda(u).\]
Then we have $|\coc_1(u, v)-\coc(u, v)|\le\eta$ for any pair $u,
v\in\Gamma$. For any two adjacent vertices $u$ and $v$ of $\Gamma$, we have
$|\coc_1(u, v)|\le\Delta+\eta\le\Delta_1$. For every directed edge
$(u, v)$, we have $\coc_1(u, v)\ge\eta$. We also have
\[\coc_1(u, w)=\coc_1(u, v)+\coc_1(v, w)\]
for all triples of vertices $u, v, w$ of $\Gamma$, and
$\coc_1(u, u)=0$ for all $u\in\Gamma$.

Let $(u_0, u_1, \ldots)$ and $(v_0, v_1, \ldots)$ be directed
paths in $\Gamma$.
Suppose that $0\le\coc_1(u_i, v_j)\le\Delta_1$ for some $i, j$.
If $0\le\coc_1(u_i, v_i)\le\Delta_1-\eta$, then $-\eta\le\coc(u_i,
v_i)\le\Delta_1$. If $\Delta_1-\eta<\coc_1(u_i, v_i)\le\Delta_1$, then
\[\coc_1(u_i, v_j)-\Delta_1\le\coc_1(u_i, v_{j-1})=
\coc_1(u_i, v_j)+\coc_1(v_j, v_{j-1})\le
\coc_1(u_i, v_j)-\eta,\]
hence $-\eta\le\coc_1(u_i, v_{j-1})\le\Delta_1-\eta$, so that
$-\Delta_1<-2\eta\le\coc(u_i, v_{j-1})\le\Delta_1$. We have proved that if
$0\le\coc_1(u_i, v_j)\le\Delta_1$, then either $|\coc(u_i, v_j)|\le\Delta_1$, or
$|\coc(u_i, v_{j-1})|\le\Delta_1$. Similarly, if
$0\le\coc_1(v_j, u_i)=-\coc_1(u_i, v_j)\le\Delta_1$, then either $|\coc(u_i,
v_j)|\le\Delta_1$, or $|\coc(u_{i-1}, v_i)|\le\Delta_1$.

It follows that condition (2) of the theorem implies the following
condition (when we take $\rho_1=\rho_0+1$).
\begin{itemize}
\item[(2)'] There exists $\rho_1>0$ such that for any $m>0$ there exists
$k_m$ such that for any pair of directed paths $(u_0, u_1, \ldots)$
and $(v_0, v_1, \ldots)$ satisfying $|u_0-v_0|\le m$, we
have $|u_i-v_j|<\rho_1$, whenever $|\coc_1(u_i, v_j)|\le\Delta_1$,
$i>k_m$, and $j>k_m$.
\end{itemize}

Denote $k=k_{2\rho_1}$ and let $\Delta_2$ and $r$ be positive integers
such that $r$ is divisible by 4, and
\[\Delta_2>(k+1)\Delta_1,\qquad r\ge\frac{4\Delta_2}{\rho_1\eta}.\]

Consider a new graph $\Gamma_1$ with the set of vertices equal to the
set of vertices of $\Gamma$ in which two different vertices $u, v$ are
connected by an edge if and only if one of the following conditions holds
\begin{itemize}
\item there exists a sequence $(u=w_0, w_1, \ldots, v=w_n)$ of
vertices of $\Gamma$ and a number $l\in\Z$ such that $n\le r$, $|w_i-w_{i+1}|\le \rho_1$, and
$l\Delta_2\le\lambda(w_i)<(l+1)\Delta_2$ for all $i=0, 1, \ldots, n$;
\item there exists a \emph{directed} path $(u=w_0, w_1, \ldots,
v=w_n)$ in $\Gamma$ and a number $l\in\Z$ such that
$l\Delta_2\le\lambda(u)<(l+1)\Delta_2$ and $(l-1)\Delta_2\le\lambda(v)\le
l\Delta_2$.
\end{itemize}

\begin{figure}
\centering
\includegraphics{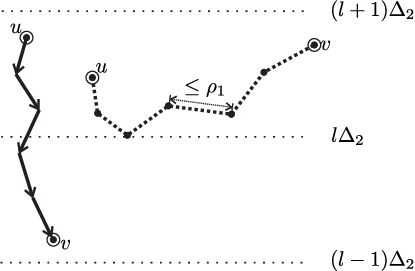}
\caption{Edges of $\Gamma_1$}\label{fig:gamma1}
\end{figure}

We will imagine the graphs $\Gamma$ and $\Gamma_1$ drawn in such a way
that $\lambda(v)$ is the height of the position of the vertex $v$. Then
the arrows and directed paths of $\Gamma$ go down.
See Figure~\ref{fig:gamma1}, where the two cases for edges of
$\Gamma_1$ are shown.

The edges of $\Gamma_1$ defined by the
first condition are called \emph{horizontal}.
The edges defined by the second conditions are called
\emph{vertical}. For a path $(v_1, v_2, \ldots, v_n)$ in
$\Gamma_1$, we say that a vertical edge $(v_i, v_{i+1})$ is
\emph{descending}, if $\lambda(v_{i+1})<\lambda(v_i)$ and
\emph{ascending} otherwise.

Let us call the number $\lfloor\lambda(v)/\Delta_2\rfloor$ the
\emph{level} of the vertex $v$. It follows from the definition of the
edges of $\Gamma_1$ that if $(v, u)$ is a horizontal edge, then $v$
and $u$ belong to the same level, and if $(v, u)$ is a descending
vertical edge, then the level of $u$ is one less than the level of
$v$.

We will denote by $|u-v|$ and $|u-v|_1$ distances
between the vertices $u$ and $v$ in $\Gamma$ and $\Gamma_1$ respectively.

\begin{lemma}
\label{lem:quasiisom}
The identity map between the sets of vertices of $\Gamma$ and
$\Gamma_1$ is a bi-Lipschitz equivalence, i.e., there exists a
constant $\Lambda$ such that $\Lambda^{-1}\cdot |u-v|\le
|u-v|_1\le\Lambda\cdot |u-v|$ for
all $u, v\in\Gamma$.
\end{lemma}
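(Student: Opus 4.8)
The plan is to establish the bi-Lipschitz equivalence by bounding each distance in terms of the other. The easy direction is $|u-v|_1 \le \Lambda |u-v|$: every edge of $\Gamma_1$ corresponds to a bounded-length detour in $\Gamma$, so I need only check that each edge of $\Gamma$ can be ``simulated'' by a bounded number of edges in $\Gamma_1$. First I would observe that horizontal edges of $\Gamma_1$ join vertices that are within $\Gamma$-distance $r\rho_1$ (since a horizontal edge is witnessed by a $\Gamma$-path of length at most $r$ with steps of size at most $\rho_1$), and a vertical descending edge joins the endpoints of a directed $\Gamma$-path, hence its $\Gamma$-length is controlled by the drop in $\lambda$, which by the definition of the level is comparable to $\Delta_2$. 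Conversely, given an edge $(u,v)$ of $\Gamma$, the two endpoints differ in $\lambda$ by at most $\Delta+\eta\le\Delta_1$, so they lie in the same or adjacent levels; I would connect them in $\Gamma_1$ using one horizontal and at most one vertical edge, giving a uniform bound on $|u-v|_1$.

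For the reverse inequality $|u-v|\le\Lambda|u-v|_1$, the strategy is to take a geodesic in $\Gamma_1$ and estimate its length in $\Gamma$. Each horizontal edge contributes at most $r\rho_1$ to the $\Gamma$-distance, and each vertical edge contributes a $\Gamma$-length comparable to $\Delta_2$ (bounded above because the directed path witnessing it is a quasi-geodesic by Lemma~\ref{lem:quasigeodesic}, so its combinatorial length is controlled by the cocycle difference, which is at most roughly $2\Delta_2/\eta$). Summing over the edges of the $\Gamma_1$-geodesic, the total $\Gamma$-length is at most a constant times $|u-v|_1$, and since $|u-v|$ is the \emph{minimal} $\Gamma$-length of a connecting path, this gives the desired bound.

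The main obstacle I anticipate is the reverse inequality, specifically controlling the accumulation of the \emph{ascending} vertical edges. A naive length count over a $\Gamma_1$-geodesic could fail if the geodesic oscillates up and down in level many times, since each vertical edge costs $\sim\Delta_2$ in $\Gamma$ but only $1$ in $\Gamma_1$; the danger is that a path short in $\Gamma_1$ but rich in vertical steps could be long in $\Gamma$. To handle this I would argue that a $\Gamma_1$-geodesic cannot profitably oscillate: because the levels are coarse (width $\Delta_2$) and horizontal moves already realize displacements up to $r\rho_1$, any efficient path stays essentially monotone in level, so the number of vertical edges is bounded by the total net level change plus a constant, which is itself controlled by $|\coc(u,v)|$ and hence by $|u-v|$. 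Making this monotonicity rigorous—showing that detours through extra levels never shorten a $\Gamma_1$-path enough to break the estimate—is where the real work lies, and it is precisely the point at which the choice $r\ge 4\Delta_2/(\rho_1\eta)$ and $\Delta_2>(k+1)\Delta_1$ should be invoked to trade vertical steps for horizontal ones. Once this monotonicity is in hand, both inequalities combine to yield the constant $\Lambda$ and complete the proof.
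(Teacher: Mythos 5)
Your two inequalities are established exactly as in the paper, and together they already constitute a complete proof; the obstacle you anticipate in the second half is not real. For $|u-v|\le\Lambda|u-v|_1$ the only input needed is that each \emph{single} edge of $\Gamma_1$ joins vertices at $\Gamma$-distance at most a uniform constant: at most $r\rho_1$ for a horizontal edge, and at most $2\Delta_2/\eta\le r\rho_1/2$ for a vertical edge (each directed edge of $\Gamma$ decreases $\lambda$ by at least $\eta$, while the total drop across a vertical edge is at most $2\Delta_2$). Replacing each edge of a $\Gamma_1$-geodesic from $u$ to $v$ by such a $\Gamma$-path produces \emph{some} $\Gamma$-path from $u$ to $v$ of length at most $r\rho_1|u-v|_1$, and since $|u-v|$ is a minimum over all connecting paths, the bound follows no matter how wildly the $\Gamma_1$-geodesic oscillates between levels: each vertical edge costs only $1$ on the right-hand side but also only a bounded amount ($\le r\rho_1$) on the left-hand side, and a bounded cost ratio is precisely what bi-Lipschitz allows. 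Oscillation could only threaten a \emph{lower} bound on the number of $\Gamma_1$-edges in terms of $\Gamma$-length, but that is the other direction of the equivalence, which is even easier than your version of it: as the paper observes, every directed edge $(u,v)$ of $\Gamma$ is \emph{already} an edge of $\Gamma_1$ (horizontal if $\lfloor\lambda(v)/\Delta_2\rfloor=\lfloor\lambda(u)/\Delta_2\rfloor$, vertical if it is one less; no other case occurs since $\eta\le\lambda(u)-\lambda(v)\le\Delta_1<\Delta_2$), so $|u-v|_1\le|u-v|$ holds with constant $1$, with no need to simulate an edge by ``one horizontal plus one vertical edge.''

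Consequently the level-monotonicity of $\Gamma_1$-geodesics, which you identify as ``where the real work lies,'' is not needed for this lemma at all; it is the content of the later Lemma~\ref{lem:downup} and Corollary~\ref{cor:downup} (proved via Lemma~\ref{lem:contraction}), and it enters only in the subsequent hyperbolicity argument. Similarly, of the two numerical hypotheses you propose to invoke, only $r\ge 4\Delta_2/(\rho_1\eta)$ is used here, and only to make $r\rho_1$ a single constant $\Lambda$ covering both edge types; the condition $\Delta_2>(k+1)\Delta_1$ plays no role in this lemma. Had you pursued the monotonicity route you would have been doing (harder) unnecessary work, so the one correction to internalize is that your summing argument was already finished.
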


\begin{proof}
Suppose that $(u, v)$ is a directed edge of $\Gamma$. Let
$l=\lfloor\lambda(u)/\Delta_2\rfloor$. Then $\eta\le\nu_1(u,
v)=\lambda(u)-\lambda(v)\le\Delta_1<\Delta_2$,
hence either $\lfloor\lambda(v)/\Delta_2\rfloor=l$, or
$\lfloor\lambda(v)/\Delta_2\rfloor=l-1$. In the first case $(u, v)$ is a
horizontal edge of $\Gamma_1$, in the second case it is a vertical
edge of $\Gamma_1$. Consequently, the set of edges of $\Gamma$ is a
subset of the set of edges of $\Gamma_1$, hence $|u-v|_1\le |u-v|$
for all $u, v\in\Gamma$.

The distance in $\Gamma$ between vertices connected by a
horizontal edge of $\Gamma_1$ is bounded from above by $r\rho_1$. If $(u,
v)$ is a vertical edge in $\Gamma_1$, then $\nu_1(u,
v)=\lambda(u)-\lambda(v)\le 2\Delta_2$, and
\[\nu_1(u, v)=\nu_1(w_0, w_1)+\nu_1(w_1,
w_2)+\cdots+\nu_1(w_{n-1}, w_n)\ge n\eta,\] hence
$n\le\frac{2\Delta_2}{\eta}\le\frac{r\rho_1}2$.
Consequently, $|u-v|\le r\rho_1|u-v|_1$ for all $u, v\in\Gamma_1$.
\end{proof}

\begin{lemma}
\label{lem:contraction}
Let $(v_0, v_1, \ldots, v_n)$ be a path of horizontal edges in
$\Gamma_1$. Let $(u_0, v_0)$ and $(u_n, v_n)$ be ascending vertical
edges. Then the vertices $u_0$ and $u_n$ can be connected by a path of
horizontal edges of $\Gamma_1$ of length not more than
$\lceil (n+1)/2\rceil$.
\end{lemma}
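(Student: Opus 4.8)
The plan is to push the whole horizontal path down one level using condition~(2)' of the proof of Theorem~\ref{th:contraction}, and to exploit that this roughly halves the combinatorial length.

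First I would replace the path of horizontal edges by a single fine path inside the block of level $l$. Each horizontal edge $(v_i,v_{i+1})$ comes with a realizing sequence $v_i=w_0^{(i)},\dots,w_{n_i}^{(i)}=v_{i+1}$ with $n_i\le r$, consecutive $\Gamma$-distances at most $\rho_1$, and all terms in the level-$l$ block $\{w:\ l\Delta_2\le\lambda(w)<(l+1)\Delta_2\}$ (recall horizontal edges join vertices of the same level). Concatenating these gives one sequence $z_0=v_0,z_1,\dots,z_N=v_n$ with $N=\sum_i n_i\le nr$, with $|z_s-z_{s+1}|\le\rho_1$, lying entirely in that block.

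The key step is contraction via every-other sampling. Since every vertex of $\Gamma$ has an outgoing arrow, from each $z_s$ I can grow an infinite directed path, choosing the ones issuing from $v_0$ and $v_n$ to pass through the prescribed $u_0$ and $u_n$. Descending each path until it first reaches a fixed target band near the bottom of the level-$(l-1)$ block produces landings $z_s^\downarrow$, all at heights within $\Delta_1$ of one another, hence $|\coc_1(z_s^\downarrow,z_t^\downarrow)|\le\Delta_1$. Because $\Delta_2>(k+1)\Delta_1$ and every directed arrow drops $\lambda$ by at least $\eta$ and at most $\Delta_1$, crossing from level $l$ to this band costs more than $k=k_{2\rho_1}$ arrows. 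Sampling every other point $z_0,z_2,z_4,\dots$, consecutive samples satisfy $|z_{2s}-z_{2s+2}|\le 2\rho_1=m$, so condition~(2)' applied to the directed paths through $z_{2s}$ and $z_{2s+2}$ (both past step $k$, compared at the matching heights of their landings) gives $|z_{2s}^\downarrow-z_{2s+2}^\downarrow|<\rho_1$. Thus the landings of the even-indexed points form a $\rho_1$-chain of length $\lceil N/2\rceil$ inside the level-$(l-1)$ block joining $u_0$ to $u_n$. Cutting this chain into consecutive blocks of at most $r$ steps realizes each block as one horizontal edge at level $l-1$, and the number of edges is at most $\lceil\lceil N/2\rceil/r\rceil\le\lceil n/2\rceil\le\lceil(n+1)/2\rceil$, the asserted bound; the divisibility of $r$ by $4$ and $r\ge 4\Delta_2/(\rho_1\eta)$ supply the slack so the contracted sub-chains never overflow a single horizontal edge.

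The main obstacle is the height bookkeeping in the contraction step: one must descend far enough that every landing lies past step $k$ (so that~(2)' applies) while simultaneously keeping all landings at a common height inside the level-$(l-1)$ block and, crucially, making the two extreme directed paths terminate exactly at the prescribed $u_0$ and $u_n$, whose heights inside the block are not under our control. Reconciling these three demands is precisely what the calibrated constants $\Delta_2>(k+1)\Delta_1$ and $r\ge 4\Delta_2/(\rho_1\eta)$ are for, and the rounding gap between $\lceil n/2\rceil$ and $\lceil(n+1)/2\rceil$ is what absorbs the adjustment at the two endpoints.
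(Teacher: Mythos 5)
Your construction follows the paper's proof almost step for step: concatenate the realizing sequences of the $n$ horizontal edges, issue an infinite directed path from each point of the resulting chain (routing the extreme paths through $u_0$ and $u_n$), stop each path when it first enters the band $[(l-1)\Delta_2,\,(l-1)\Delta_2+\Delta_1]$ at the bottom of level $l-1$, note that reaching this band takes more than $k=k_{2\rho_1}$ arrows (this is where $\Delta_2>(k+1)\Delta_1$ enters), and apply condition (2)' to every-other-point samples to obtain a $\rho_1$-chain of landings of roughly half the length, which is then cut into blocks of at most $r$ hops, i.e., into horizontal edges of level $l-1$. All of this matches the paper and is correct.

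The genuine flaw is in the endpoint accounting. Your landing chain joins $z_0^\downarrow$ to $z_N^\downarrow$, not $u_0$ to $u_n$: the paths issued from $v_0$ and $v_n$ pass through $u_0$ and $u_n$ but then continue down to the common band, so the prescribed vertices sit above the landings, at heights you cannot control. You acknowledge this, but your proposed remedy --- that the rounding gap between $\lceil n/2\rceil$ and $\lceil(n+1)/2\rceil$ absorbs the two connecting segments --- does not work: that gap is zero whenever $n$ is odd, so there is no slack left to attach $u_0$ and $u_n$. The correct accounting, which is what the paper does, merges the connecting segments into the chain \emph{before} chopping: the piece of directed path from $u_0$ (resp.\ $u_n$) down to its landing has at most $\Delta_2/\eta\le r\rho_1/4$ edges, hence splits into at most $r/4$ hops of length $\le\rho_1$, all lying in level $l-1$; prepending and appending these to the landing chain yields a single $\rho_1$-chain with at most $nr/2+r/4+r/4=r(n+1)/2$ hops, which cuts into at most $\lceil(n+1)/2\rceil$ horizontal edges. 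In other words, the ``$+1$'' in the bound is precisely the cost of the two endpoint segments --- and the constant $r\ge 4\Delta_2/(\rho_1\eta)$ you cite is exactly what prices each of them at $r/4$ hops --- not a rounding artifact.
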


\begin{proof}
Let $l$ be the level of the vertices $v_i$. Then the vertices $u_0$
and $u_n$ belong to the level $l-1$.

It follows from the definition of horizontal edges that
there exists a sequence $z_0, z_1,
\ldots, z_m$ of vertices of $\Gamma$ such that $z_0=v_0$, $z_m=v_n$,
$|z_i-z_{i+1}|\le 2\rho_1$, $m\le rn/2$, and
$\lfloor\lambda(z_i)/\Delta_2\rfloor=l$ for all $i=0, 1, \ldots, m$.

For every $i=0, 1, \ldots, m$ find an infinite directed path $(w_{0, i},
w_{1, i}, \ldots)$ in $\Gamma$ such that $w_{0, i}=z_i$, and there
exist $t_0$ and $t_m$ such that $w_{t_0, 0}=u_0$ and $w_{t_m, m}=u_n$.
(See Figure~\ref{fig:contraction}.)

\begin{figure}
\centering
\includegraphics{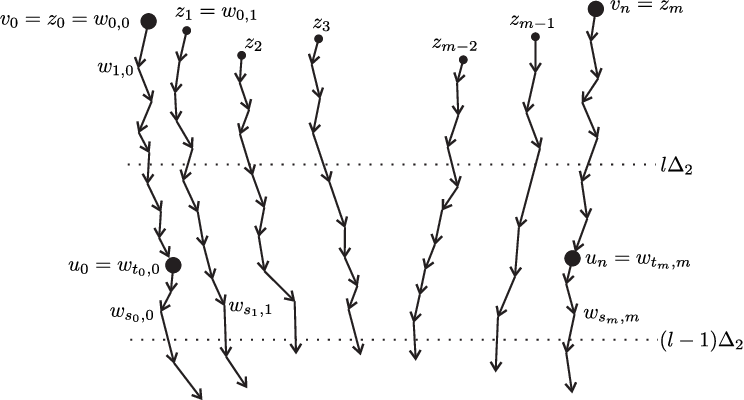}
\caption{} \label{fig:contraction}
\end{figure}

For every $i$ let $s_i$ be the maximal index such that
$\lambda(w_{s_i, i})>(l-1)\Delta_2$. Note that $s_0\ge t_0$ and $s_m\ge t_m$.

We have $\lambda(w_{s_i, i})-\lambda(w_{s_i+1, i})=\nu_1(w_{s_i, i},
w_{s_i+1, i})\le\Delta_1$. If
$\lambda(w_{s_i, i})>(l-1)\Delta_2+\Delta_1$, then
\[\lambda(w_{s_i+1, i})\ge\lambda(w_{s_i})-\Delta_1>(l-1)\Delta_2,\]
which contradicts the choice of $s_i$. Consequently,
\[(l-1)\Delta_2\le\lambda(w_{s_i, i})\le (l-1)\Delta_2+\Delta_1.\]
The length $s_i$ of the path $(w_{0, i}, w_{1, i},\ldots, w_{s_i, i})$
is not less than $\frac{\Delta_2-\Delta_1}{\eta}>k=k_{2\rho_1}$.
We have
\[|\coc_1(w_{s_i, i}, w_{s_{i+1}, i+1})|\le\Delta_1,\qquad |w_{0, i}-w_{0,
  i+1}|\le 2\rho_1,\]
hence by condition (2)' we have
\[|w_{s_i, i}-w_{s_{i+1}, i+1}|\le \rho_1.\]

The lengths of the $\Gamma$-paths
$\gamma_1=(u_0=w_{t_0, 0}, w_{t_0+1, 0}, \ldots, w_{s_0,
0})$ and $\gamma_2=(u_n=w_{t_m, m}, w_{t_m+1, m}, \ldots, w_{s_m, m})$ are not
greater than $\frac{\Delta_2}{\eta}\le\frac{r\rho_1}4$. We can
therefore split each of them into segments of length $\le \rho_1$ so
that we get not more
than $r/4$ segments. Appending these sequences of segments to
the sequence $w_{s_0, 0}, w_{s_1, 1}, \ldots, w_{s_m, m}$ we get
a sequence of vertices of $\Gamma$ consisting of at most
$m+r/2\le rn/2+r/2=r(n+1)/2$
segments of length at most $\rho_1$. All these vertices will belong to
the level $l-1$. Consequently, we can find a path
in $\Gamma_1$ consisting of at most $\lceil (n+1)/2\rceil$ horizontal
edges connecting the vertices $u_0$ and $u_n$.
\end{proof}

\begin{lemma}
\label{lem:downup}
If $(v_0, v_1, \ldots, v_n)$ is a path in $\Gamma_1$ of length at
least 2 such that $(v_1, v_2)$ is ascending,
$(v_{n-1}, v_n)$ is descending and all edges between them are
horizontal, then the path is not a geodesic.
\end{lemma}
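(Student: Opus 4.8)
The plan is to produce a strictly shorter connection between the two ends of the arc $(v_1, v_2, \ldots, v_n)$, thereby showing that this arc---and hence the whole path---fails to be a geodesic. Since every subpath of a geodesic must itself be a geodesic, it suffices to shorten the ``up--horizontal--down'' arc $(v_1, \ldots, v_n)$, and this is precisely the configuration that Lemma~\ref{lem:contraction} lets us contract; the leading vertex $v_0$ plays no role beyond being carried along.

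First I would record the level structure. Because $(v_1, v_2)$ is ascending, $(v_{n-1}, v_n)$ is descending, and every edge strictly between them is horizontal, the vertices $v_2, v_3, \ldots, v_{n-1}$ all lie on a single level, say $l$, while $v_1$ and $v_n$ both lie on level $l-1$. Thus $(v_2, \ldots, v_{n-1})$ is a path of horizontal edges at level $l$, and $(v_1, v_2)$ together with $(v_n, v_{n-1})$ are ascending vertical edges joining $v_1$ and $v_n$ (both on level $l-1$) to the two endpoints $v_2$ and $v_{n-1}$ of this horizontal path.

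Next I would apply Lemma~\ref{lem:contraction} directly to the horizontal path $(v_2, \ldots, v_{n-1})$, which has $n-3$ edges, together with the ascending vertical edges $(v_1, v_2)$ and $(v_n, v_{n-1})$. The lemma yields a path of horizontal edges of $\Gamma_1$ connecting $v_1$ and $v_n$ of length at most $\lceil (n-2)/2\rceil$. Since the arc $(v_1, v_2, \ldots, v_n)$ has length $n-1$, and $\lceil (n-2)/2\rceil \le (n-1)/2 < n-1$ whenever $n \ge 2$, this horizontal connection is strictly shorter. Hence $(v_1, \ldots, v_n)$ is not a geodesic, and therefore neither is $(v_0, v_1, \ldots, v_n)$.

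The argument carries essentially no difficulty beyond Lemma~\ref{lem:contraction}: the only points requiring care are the correct accounting of levels (so that $v_1$ and $v_n$ sit exactly one level below the horizontal segment and the two vertical edges attach to its endpoints $v_2$ and $v_{n-1}$), and the degenerate case $n=3$, in which the middle horizontal path collapses to the single vertex $v_2$. There Lemma~\ref{lem:contraction} still produces a horizontal path of length at most $\lceil 1/2\rceil = 1$, which is shorter than the length-$2$ arc $(v_1, v_2, v_3)$, so the conclusion persists.
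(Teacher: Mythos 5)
Your proof is correct and takes essentially the same route as the paper: apply Lemma~\ref{lem:contraction} to the horizontal segment flanked by the two ascending vertical edges (one of them the reversed descending edge) to produce a strictly shorter horizontal connection between the bottom endpoints. The only difference is bookkeeping: the paper's one-line proof bounds $|v_0-v_n|_1\le\lceil(n-1)/2\rceil<n$, implicitly reading the ascending edge as $(v_0,v_1)$, whereas you follow the statement's literal indexing and shorten the subarc $(v_1,\ldots,v_n)$, invoking that subpaths of geodesics are geodesics --- the same argument modulo this off-by-one.
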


\begin{proof}
By Lemma~\ref{lem:contraction}, distance between $v_0$ and $v_n$
is not more than $\lceil(n-1)/2\rceil<n$.
\end{proof}

\begin{corollary}
\label{cor:downup}
In any geodesic path of $\Gamma_1$ all descending edges come before
all ascending ones.
\end{corollary}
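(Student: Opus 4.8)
The plan is to reduce the statement to the single forbidden configuration already ruled out by Lemma~\ref{lem:downup}, namely a ``peak'' consisting of an ascending edge, followed by some horizontal edges, followed by a descending edge. Since a sub-path of a geodesic is again a geodesic, it suffices to show that a geodesic of $\Gamma_1$ contains no such peak sub-path; the point is that the \emph{existence} of any ascending edge occurring before a descending one can be localized to produce exactly such a peak.

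Concretely, I would argue by contradiction. Suppose $\gamma=(v_0,\dots,v_N)$ is a geodesic in which some ascending edge occurs before some descending edge, i.e.\ there are indices $i<j$ with $(v_i,v_{i+1})$ ascending and $(v_j,v_{j+1})$ descending. This is precisely the negation of the assertion, since an edge cannot be both ascending and descending. Among all such pairs I would choose one minimizing the gap $j-i$. The key observation is that for this minimal pair every edge strictly between positions $i$ and $j$ must be horizontal: an ascending edge at a position $k$ with $i<k<j$ would yield the pair $(k,j)$ of smaller gap, and a descending edge at such a $k$ would yield the pair $(i,k)$ of smaller gap, each contradicting minimality.

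This produces the sub-path $(v_i,v_{i+1},\dots,v_{j+1})$, of length $j+1-i\ge 2$, whose first edge is ascending, whose last edge is descending, and all of whose intermediate edges are horizontal. By Lemma~\ref{lem:downup} this sub-path is not a geodesic, so $v_i$ and $v_{j+1}$ are joined in $\Gamma_1$ by a strictly shorter path; replacing the sub-path by it would shorten $\gamma$. This contradicts the standard fact that every sub-path of the geodesic $\gamma$ is itself geodesic. Hence no ascending edge precedes a descending edge, which is exactly the claim that all descending edges come before all ascending ones.

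As for difficulty, there is essentially no real obstacle, since all the geometric content—the contraction estimate of Lemma~\ref{lem:contraction}—has already been absorbed into Lemma~\ref{lem:downup}. The only work is the elementary extremal argument isolating a minimal peak, together with the remark that sub-paths of geodesics are geodesic. The degenerate case $j=i+1$ (a peak with no intermediate horizontal edges) causes no trouble, as Lemma~\ref{lem:downup} permits the set of intermediate horizontal edges to be empty.
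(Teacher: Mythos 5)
Your proof is correct and is essentially the argument the paper intends: Corollary~\ref{cor:downup} is stated without proof precisely because it follows from Lemma~\ref{lem:downup} by the localization you spell out (take an innermost ascending--descending pair, note the intermediate edges must be horizontal by minimality, and invoke the fact that subpaths of geodesics are geodesics). Your handling of the degenerate case with no intermediate horizontal edges is also consistent with how Lemma~\ref{lem:downup} is used in the paper.
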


\begin{lemma}
\label{lem:less6}
A geodesic of $\Gamma_1$ can not have more than 6 consecutive
horizontal edges.
\end{lemma}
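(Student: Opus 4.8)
The plan is to bound the number of consecutive horizontal edges in a geodesic of $\Gamma_1$ by exploiting the contraction Lemma~\ref{lem:contraction} together with the height/level structure defined by $\lambda$. Recall that horizontal edges all stay within one level, and that a horizontal path of length $n$ sitting just below two ascending vertical edges can be replaced (by Lemma~\ref{lem:contraction}) by a horizontal path of length at most $\lceil (n+1)/2 \rceil$ at the level above. So the strategy is to take a hypothetical geodesic segment consisting of $N$ consecutive horizontal edges at some level $l$ and derive a contradiction if $N$ is too large.

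First I would set up the geodesic segment. Suppose for contradiction that $(v_0, v_1, \ldots, v_N)$ is a geodesic subpath consisting entirely of horizontal edges, all at level $l$, with $N$ large (the claim is $N \le 6$, so I would assume $N \ge 7$). By Corollary~\ref{cor:downup}, a geodesic has all descending edges before all ascending ones, which constrains what the edges adjacent to this horizontal block can be: on one side the geodesic must arrive via a descending edge (or a vertex), and on the other side it must leave via an ascending edge (or a vertex). The key geometric move is to push the horizontal path up one level: I would attach ascending vertical edges $(u_0, v_0)$ and $(u_N, v_N)$ at the two endpoints (these exist because every vertex of $\Gamma$ has an outgoing arrow, so one can always climb to level $l-1$ in $\Gamma_1$ terms, i.e.\ the level just above in the height picture — here I must be careful about orientation conventions, since arrows go \emph{down}). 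Lemma~\ref{lem:contraction} then produces a horizontal path from $u_0$ to $u_N$ of length at most $\lceil (N+1)/2 \rceil$.

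The heart of the argument is comparing path lengths. The original geodesic traverses the $N$ horizontal edges from $v_0$ to $v_N$ using exactly $N$ edges. The alternative route goes up by one vertical edge, across at most $\lceil (N+1)/2 \rceil$ horizontal edges at level $l-1$, and back down by one vertical edge, for a total length at most $2 + \lceil (N+1)/2 \rceil$. If this is strictly less than $N$, the original segment cannot be a geodesic — but this only yields a contradiction once $N$ is large, and the precise threshold is what pins down the constant $6$. Solving $2 + \lceil (N+1)/2 \rceil < N$ gives roughly $N > 5$, consistent with the bound of $6$. I would carry out this inequality carefully, tracking the ceiling, to confirm that $N = 7$ already forces a shortcut; the possible off-by-one issues in $\lceil (N+1)/2 \rceil$ and in how the two vertical edges interact with the ambient geodesic are exactly where the exact constant is decided.

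The main obstacle I anticipate is not the length count itself but verifying that the rerouted path is genuinely admissible and genuinely shorter than the geodesic within $\Gamma_1$, i.e.\ that the two vertical detour edges and the shortened horizontal path really concatenate into a legal $\Gamma_1$-path connecting the same endpoints $v_0$ and $v_N$ as the original subpath (not $u_0$ and $u_N$). This requires that the descending/ascending edges bounding the horizontal block can be matched with the ascending edges used in Lemma~\ref{lem:contraction} so that the endpoints align; I expect one must use the structure of the geodesic from Corollary~\ref{cor:downup} to ensure the block is bounded by edges of the correct orientation, and possibly absorb the bounding edges of the geodesic into the detour rather than adding fresh ones. Once the endpoints are correctly identified and the concatenation is shown legal, the strict inequality $2 + \lceil (N+1)/2 \rceil < N$ for $N \ge 7$ contradicts geodesity, proving that at most $6$ consecutive horizontal edges can occur.
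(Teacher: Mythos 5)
Your proposal is correct and is essentially the paper's own proof: attach vertical edges at the two endpoints of the horizontal block, apply Lemma~\ref{lem:contraction} to get a horizontal path of length at most $\lceil (N+1)/2\rceil$ one level away, and the resulting detour of length $2+\lceil (N+1)/2\rceil$ is strictly shorter than $N$ precisely when $N\ge 7$ (for $N=6$ one gets equality, which is why the bound is $6$). One correction to your informal gloss: the detour goes \emph{down}, not up --- the vertices $u_0, u_N$ lie at level $l-1$, which is \emph{below} level $l$ in the height picture, and they are reached by following outgoing arrows of $\Gamma$ (this is also why such edges always exist); but since your formal setup uses ascending edges $(u_0,v_0)$, $(u_N,v_N)$, exactly as Lemma~\ref{lem:contraction} requires, the argument is unaffected, and your worries about Corollary~\ref{cor:downup} and the bounding edges of the geodesic are unnecessary because the fresh vertical edges concatenate with the contracted horizontal path into a legal $\Gamma_1$-path from $v_0$ to $v_N$ regardless of how the geodesic enters and leaves the block.
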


\begin{proof}
Let $v_0, v_1, \ldots, v_7$ be a horizontal path. Find descending
edges $(v_0, u_0)$ and $(v_7, u_7)$. Then by
Lemma~\ref{lem:contraction}, distance between $u_0$ and $u_7$ is
not more than $4$, hence the $\Gamma_1$-distance between $v_0$ and $v_7$ is not
more than 6.
\end{proof}

When we talk about distances between paths in $\Gamma$ or $\Gamma_1$
we consider the paths as sets of vertices and use the Hausdorff
distance between them, i.e., the smallest number $R$ such each of the
paths (as sets of vertices) belongs to the $R$-neighborhood of the other.

\begin{lemma}
\label{lem:less7}
A geodesic path of $\Gamma_1$ can not have more than 6 horizontal
edges, and it is on distance (in $\Gamma_1$) not more than 2 from a
path with at most 6 horizontal edges in which all descending edges are
at the beginning, and all ascending edges are
at the end.
\end{lemma}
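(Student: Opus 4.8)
The plan is to combine the ``valley'' shape of geodesics coming from Corollary~\ref{cor:downup} with the contraction estimate of Lemma~\ref{lem:contraction} in order to funnel every horizontal edge down to the minimal level, and then to read off the count from the consecutive bound of Lemma~\ref{lem:less6}. First I would record the shape of a geodesic $\gamma$: by Corollary~\ref{cor:downup} all of its descending edges precede all of its ascending ones, so the level $\lfloor\lambda(\cdot)/\Delta_2\rfloor$ is non-increasing along the first part of $\gamma$ and non-decreasing along the second, and horizontal edges occur exactly where the level is momentarily constant. Thus $\gamma$ splits into a descending part, a (possibly empty) horizontal run at the minimal level $l_*$, and an ascending part, with further horizontal runs interspersed among the vertical edges of the two parts.

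The central tool is a commutation move. Given a horizontal run $(v_0,\dots,v_n)$ on the descending part whose right end carries a descending edge $v_n\to q$ (so $q$ is one level lower), I would use that every vertex of $\Gamma$ has an outgoing arrow to pick a descending edge $v_0\to u_0$, and then apply Lemma~\ref{lem:contraction} (with the reversed edges $(u_0,v_0)$ and $(q,v_n)$ as the two ascending legs) to join $u_0$ to $q$ by a horizontal path of length at most $\lceil(n+1)/2\rceil$ one level below. This replaces the subpath $v_0\to\cdots\to v_n\to q$ of length $n+1$ by a path of length at most $1+\lceil(n+1)/2\rceil$, which for $n\ge 1$ has the same length when $n\le 2$ and is strictly shorter when $n\ge 3$. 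The symmetric move on the ascending part, together with the ``down--across--up'' move already exploited in Lemma~\ref{lem:less6}, is obtained simply by reversing $\gamma$.

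Since $\gamma$ is geodesic it admits no strict shortcut, so each commutation must be length neutral. This forces every horizontal run lying above a further descending edge (respectively below a further ascending edge) to have length at most $2$; moreover, because $\lceil(n+1)/2\rceil=n$ for $n\in\{1,2\}$, any drop in the horizontal count would again produce a strict shortcut, so in a geodesic every such commutation preserves the number of horizontal edges exactly. Performing these length- and count-preserving commutations repeatedly, I would push all horizontal edges of both parts into a single run at level $l_*$, producing a normal-form path $\gamma'$ (all descending edges first, then the horizontal run, then all ascending edges) of the same length as $\gamma$, hence itself geodesic. By Lemma~\ref{lem:less6} the single horizontal run of $\gamma'$ has at most $6$ edges, and since the commutations preserved the total count, this run length equals the number of horizontal edges of $\gamma$; this yields the bound of $6$.

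For the distance estimate I would track how far the commutations move vertices: each elementary commutation replaces the two ends of a horizontal edge by vertices one vertical step away in $\Gamma_1$, and the quantitative contraction of condition~(2)$'$ (through Lemma~\ref{lem:contraction}) shows that in a geodesic the interspersed runs must already sit within a bounded window of the minimal level, so that straightening $\gamma$ into $\gamma'$ displaces each vertex by at most $2$ in $\Gamma_1$. I expect the main obstacle to be precisely this quantitative part: pinning down the exact constants $6$ and $2$ requires carefully controlling the accumulation that occurs when successive runs merge as they are pushed downward, and handling the seam at the minimal level where the descending and ascending parts meet.
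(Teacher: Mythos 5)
Your derivation of the bound of $6$ follows the paper's proof step for step: the same commutation move justified by Lemma~\ref{lem:contraction}, the same use of geodesicity to force horizontal runs between consecutive descending edges (and symmetrically between ascending ones) to have length at most $2$ and to make the move both length- and count-preserving, the same normalization into a $V$-shaped geodesic, and the same final appeal to Lemma~\ref{lem:less6} (the paper's printed citation of Lemma~\ref{lem:less7} at that point is evidently a typo for Lemma~\ref{lem:less6}, so your reference is the correct one). That half of the lemma you have proved correctly.

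The genuine gap is the distance estimate, exactly where you predicted trouble, and the mechanism you propose there would fail rather than merely need polishing. Condition (2)$'$ bounds how far apart two descending rails can drift; it does not force them to come together, and in particular two rails joined at the top by a single horizontal edge can stay at $\Gamma_1$-distance exactly $1$ through arbitrarily many levels. Hence a geodesic may consist of one horizontal edge followed by arbitrarily many descending edges (such geodesics occur, for instance, in the Cayley graphs of Example~\ref{ex:dyadic}), so your claim that the interspersed runs ``must already sit within a bounded window of the minimal level'' is false, and a bookkeeping of one vertical step of displacement per commutation can never produce a constant bound. What actually gives the constant $2$ is that one should not track where each vertex is moved to, but match old and new vertices level by level: if below its run the original geodesic descends through $q, q_1, q_2, \ldots$ and the normalized path descends through $p, p_1, p_2, \ldots$, where $p$ is the left end of the run and $|p-q|_1\le 2$, then Lemma~\ref{lem:contraction} applied at each level gives $|p_i-q_i|_1\le\lceil(2+1)/2\rceil=2$ for every $i$; that is, horizontal distance at most $2$ between descending rails is preserved under descent, uniformly in the depth. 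This is the same induction that opens the proof of Lemma~\ref{l:geodesicV}. Since the abandoned run vertices lie within distance $2$ of $p$, which stays on the path, and since the estimate persists through the single possible merge of two length-$1$ runs (the merged run still has length $2$), the Hausdorff distance between $\gamma$ and $\gamma'$ is at most $2$ no matter how many levels the runs are pushed --- which is the statement you need in place of the ``bounded window'' claim.
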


\begin{proof}
Let $\gamma=(v_0, \ldots, v_n)$ be a geodesic path in $\Gamma_1$.
Suppose that $(v_{i-1}, v_i)$ and $(v_j, v_{j+1})$ for $j> i$
are descending, and all edges between $v_i$ and $v_j$ are
horizontal. Let $(v_i, v)$ be a descending edge. Then, by
Lemma~\ref{lem:contraction}, the distance $|v-v_{j+1}|_1$ is not
more than $\lceil(j-i+1)/2\rceil$. Hence, $j+1-i=|v_i-v_{j+1}|_1
\le\lceil(j-i+1)/2\rceil+1$.

But $\lceil(j-i+1)/2\rceil+1<j+1-i$ for $j-i\ge 3$. It follows that
the number $j-i$ of consecutive horizontal edges between descending
edges is not more than 2.

If $j-i=1$ or $j-i=2$, then $\lceil(j-1+1)/2\rceil=j-i+1$, and we can
replace the segment $v_i, v_{i+1}, \ldots, v_j, v_{j+1}$ of $\gamma$
by $v_i, v$ followed by a sequence of horizontal edges connecting $v$
to $v_{j+1}$ so that we again get a geodesic path, but the segment of
horizontal edges has moved towards the end of $\gamma$.

Similarly, we can not have more than two horizontal edges between two
ascending edges, and if we have one or two ascending horizontal edges
between ascending ones, we can move the horizontal edges towards the
beginning of $\gamma$.

Let us move the first segment of horizontal edges between descending
edges forward, as it is
described above for as long as we can, i.e., until it will meet
another segment of horizontal edges, or an ascending edge. In the
first case we will put the horizontal edges together, and continue
moving them forward. This may happen at most once (if both segments
consist of only one edge). In the same way, move all horizontal edges
between ascending edges towards the beginning of the path. At the end
we get a geodesic path $\gamma'$ that has all descending edges at the
beginning and all ascending edges at the end. It is also easy
to see that the Hausdorff distance between $\gamma'$ and $\gamma$ is at
most 2. The path $\gamma'$ (and hence the path $\gamma$)
can not have more than 6 horizontal paths by Lemma~\ref{lem:less7}.
\end{proof}

Let us say that a path $\gamma$ in $\Gamma_1$ is \emph{$V$-shaped} if
all its descending edges are at the beginning, all its ascending edges
are at the end, and it has not more than 6 horizontal edges. Note that we
allow any of the three sets of edges (descending, ascending, and
horizontal) to be empty.

We have shown above that every geodesic path in $\Gamma_1$ is on
distance not more than $2$ from a $V$-shaped geodesic path connecting
the same vertices.

We say that a $V$-shaped path $\gamma$ has \emph{depth} $l$ if $l$ is
the level of the vertices of its horizontal edges (or the level of the
last vertex of the descending part of $\gamma$, or the level of the
first vertex of the ascending part of $\gamma$).
We say that a $V$-shaped path $\gamma$ is
\emph{proper} if it has the maximal possible depth among all $V$-shaped
paths connecting the same pair of points as $\gamma$.

\begin{lemma}
\label{l:geodesicV}
Let $\gamma_1$ be a geodesic path (in $\Gamma_1$) connecting the
vertices $u$ and $v$, and let $\gamma_2$ be any proper $V$-shaped path
connecting the same vertices. Then distance between $\gamma_1$ and
$\gamma_2$ is not more than 9.
  \end{lemma}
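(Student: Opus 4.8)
The plan is to reduce to a comparison of two $V$-shaped paths and then to exploit the contraction condition (2)$'$ prong by prong. By Lemma~\ref{lem:less7} (and the observation following it) the geodesic $\gamma_1$ lies within Hausdorff distance $2$ in $\Gamma_1$ of a \emph{$V$-shaped geodesic} $\gamma_1'$ joining $u$ and $v$; since Hausdorff distance obeys the triangle inequality, it suffices to bound the distance between $\gamma_1'$ and the proper $V$-shaped path $\gamma_2$ by $7$. Write $a=\lfloor\lambda(u)/\Delta_2\rfloor$ and $b=\lfloor\lambda(v)/\Delta_2\rfloor$ for the levels of the endpoints, let $\gamma_1'$ have depth $l_1$ and $h_1\le 6$ horizontal edges, and let $\gamma_2$ have depth $l_2$ and $h_2\le 6$ horizontal edges. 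Each path is a directed descent of $a-l$ vertical edges, then at most $6$ horizontal edges at level $l$, then an ascent of $b-l$ vertical edges.

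Next I compare depths. Since $\gamma_2$ is proper, $l_2\ge l_1$; since $\gamma_1'$ is a geodesic, its length $(a-l_1)+h_1+(b-l_1)$ is at most that of $\gamma_2$, which gives $2(l_2-l_1)\le h_2-h_1\le 6$, hence $l_2-l_1\le 3$. Thus the two bottom levels differ by at most three.

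Now the prongs. The descending parts of $\gamma_1'$ and $\gamma_2$ are directed paths of $\Gamma$ issuing from the common vertex $u$, and their ascending parts, read backwards from $v$, are directed paths issuing from $v$. I apply condition (2)$'$ with $m=2\rho_1$ (legitimate, as $|u-u|=0\le m$), so the relevant threshold is $k=k_{2\rho_1}$. For directed $\Gamma$-steps beyond $k$, whenever the $\coc_1$-values are aligned the two descents are within $\rho_1$ in $\Gamma$, hence a single horizontal edge apart in $\Gamma_1$, and symmetrically for the two ascents near $v$. The uncontrolled prefix is the first $k$ directed steps, over which $\lambda$ drops by at most $k\Delta_1<(k+1)\Delta_1<\Delta_2$; thus it is confined to the top level and lies within bounded $\Gamma_1$-distance of $u$ (respectively $v$). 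Hence the prongs of $\gamma_1'$ and $\gamma_2$ track each other to within a small $\Gamma_1$-distance all the way down to level $l_2$, and symmetrically up from level $l_2$.

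Finally the bottom region. Let $P_1,Q_1$ be the vertices where the descending and ascending parts of $\gamma_1'$ cross level $l_2$, and $P',Q'$ the endpoints of the horizontal bottom of $\gamma_2$; by the previous paragraph $|P_1-P'|_1\le 1$ and $|Q_1-Q'|_1\le 1$. The residual bottom excursion of $\gamma_1'$ (descent $\le 3$, horizontal $\le 6$, ascent $\le 3$) has $\Gamma_1$-length at most $12$, so each of its vertices lies within $6$ of $\{P_1,Q_1\}$, hence within $7$ of $\gamma_2$; conversely the horizontal bottom of $\gamma_2$ lies within $3$ of $\{P',Q'\}$, hence within $4$ of $\gamma_1'$. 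Combining this with the prong estimate yields Hausdorff distance at most $7$ between $\gamma_1'$ and $\gamma_2$, and at most $9$ after the initial reduction. I expect the main obstacle to be exactly this last accounting: verifying that the contraction estimate and the bounded-width bottom region combine to the stated constant, and in particular that the uncontrolled prefixes never leave a single level, which is precisely what the choices $\Delta_2>(k+1)\Delta_1$ and $r\ge 4\Delta_2/(\rho_1\eta)$ were arranged to guarantee.
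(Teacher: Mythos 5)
Your skeleton is the same as the paper's: reduce to a $V$-shaped geodesic $\gamma_1'$ via Lemma~\ref{lem:less7} (cost $2$), get $l_2-l_1\le 3$ from the geodesic-length inequality, track the two descending and ascending prongs, and then do the bottom-region accounting. The depth comparison and the bottom-region bookkeeping are correct. The genuine gap is in the prong-tracking step, which is the heart of the lemma. You assert that, beyond the threshold $k=k_{2\rho_1}$, condition (2)$'$ makes the two descents ``a single horizontal edge apart,'' and in particular that the level-$l_2$ crossing vertices satisfy $|P_1-P'|_1\le 1$. But condition (2)$'$ only controls pairs of vertices of the underlying directed $\Gamma$-paths whose $\coc_1$-values differ by at most $\Delta_1$, whereas the vertices you actually need to compare --- the vertices where the two $\Gamma_1$-prongs cross a given level, such as $P_1$ and $P'$ --- can have $\coc_1$-difference up to nearly $\Delta_2$, which is much larger than $\Delta_1$ (recall $\Delta_2>(k+1)\Delta_1$). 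Moreover, the aligned-cocycle pairs that (2)$'$ does control need not lie in the same level (so $\Gamma$-distance less than $\rho_1$ does not produce a horizontal edge), and need not be vertices of the $\Gamma_1$-paths at all, only of the underlying $\Gamma$-paths. Converting this $\Gamma$-closeness into $\Gamma_1$-distance $1$ between the actual $\Gamma_1$-vertices is exactly the nontrivial content of Lemma~\ref{lem:contraction}, which your argument never invokes.

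The prefix is a second, related problem. You concede the first $k$ steps are uncontrolled and say they lie ``within bounded $\Gamma_1$-distance of $u$.'' Bounded is not enough for the stated constant: each prefix vertex is only within roughly $k_{2\rho_1}$ of $u$, and $k_{2\rho_1}$ is an unspecified constant of the graph, so your argument yields a Hausdorff bound of the form $\max(k_{2\rho_1},7)+2$ rather than $9$. (Also, a drop of $\lambda$ smaller than $\Delta_2$ can still cross a level boundary, so the prefix can occupy two levels; and the choices $\Delta_2>(k+1)\Delta_1$, $r\ge 4\Delta_2/(\rho_1\eta)$ were made to ensure that a full-level descent takes more than $k$ steps and to control the regrouping into horizontal edges inside the proof of Lemma~\ref{lem:contraction}, not to confine prefixes to a level.) The paper kills both problems at once with a one-line induction: by Lemma~\ref{lem:contraction}, any two descending vertical paths in $\Gamma_1$ issuing from a common vertex have same-level vertices at $\Gamma_1$-distance at most $1$ --- the threshold never appears because it was absorbed, once and for all, into the proof of Lemma~\ref{lem:contraction}. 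If you replace your prong-tracking paragraph by this induction, the rest of your accounting goes through and gives $7$, hence $9$.
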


\begin{proof}
Let $(v_1, v_2, \ldots)$ and $(u_1, u_2, \ldots)$ be descending
vertical paths in $\Gamma_1$ such that $v_1=u_1$. Then it follows by
induction from Lemma~\ref{lem:contraction} that $|v_i-u_i|_1\le 1$
for all $i$.

Let $\gamma_1$ and $\gamma_2$ be $V$-shaped paths connecting the same
pair of vertices, and suppose that $\gamma_1$ is geodesic and $\gamma_2$ is
proper. Let $l_2$ be the depth of $\gamma_2$. Then the depth $l_1$ of
$\gamma_1$ is not more than $l_2$. Let $h_1$ and $h_2$ be the lengths
of the horizontal parts of $\gamma_1$ and $\gamma_2$ respectively.

Then the length of the descending part of $\gamma_1$ minus the length of
the descending part of $\gamma_2$ is equal to $l_2-l_1$, and the same
is true for the ascending parts.

It follows that the length of $\gamma_2$ is equal to the length of
$\gamma_1$ minus $2(l_2-l_1)+h_1-h_2$. But $\gamma_1$ is a geodesic
path, hence $2(l_2-l_1)+h_1-h_2\le 0$. Consequently, $l_2-l_1\le
(h_2-h_1)/2\le 3$. It follows that the
Hausdorff distance between $\gamma_1$ and $\gamma_2$ is not more than
7. See Figure~\ref{fig:geodesic}.

\begin{figure}
\centering
\includegraphics{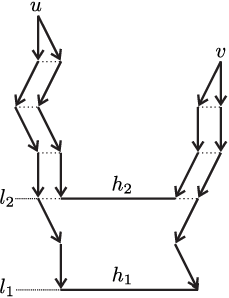}
\caption{} \label{fig:geodesic}
\end{figure}

If $\gamma_1$ is any (not necessarily $V$-shaped) geodesic, then it is
on distance at most 2 from a $V$-shaped geodesic connecting the same
points, hence it is at distance at most 9 from any proper $V$-shaped
path connecting the same points.
\end{proof}

We are ready now to prove that the graph $\Gamma_1$ (and hence also
$\Gamma$) is Gromov-hyperbolic. Let $a, b, c\in\Gamma_1$ be arbitrary
vertices. We have to prove that there exists a constant $\delta>0$ (not
depending on $a, b, c$) such that every geodesic triangle with the vertices
$a$, $b$, and $c$ is $\delta$-thin. By the
previous lemma, it is enough to show that every triangle of proper
$V$-shaped paths with the vertices $a$, $b$, and $c$ is $\delta$-thin
for some universal $\delta$.

Let $l_{xy}$ for $xy\in\{ab, bc, ac\}$ be the depth of a
proper $V$-shaped path $\gamma_{xy}$ connecting $x$ to $y$.
Without loss of generality we may assume that $l_{ab}\ge
l_{bc}\ge l_{ac}$ (see Figure~\ref{fig:triangle}).

\begin{figure}
\centering
\includegraphics{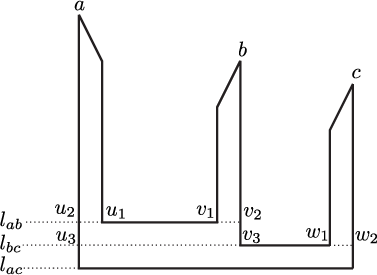}
\caption{} \label{fig:triangle}
\end{figure}

Denote by $u_1$ and $v_1$ the beginning and the end of the horizontal
part of the path $\gamma_{ab}$. Let $u_2$ and $v_2$ be the vertices of
level $l_{ab}$ of the descending parts of $\gamma_{ac}$ and $\gamma_{bc}$
respectively.
Then $|u_1-u_2|_1\le 1$ and $|v_1-v_2|_1\le 1$. It follows that
$|u_2-v_2|_1\le 8$.

Denote now by $v_3$ and $w_1$ the beginning and the end of the
horizontal part of $\gamma_{bc}$. Let $u_3$ and $w_2$ be the vertices
of the descending and ascending parts of $\gamma_{ac}$ of level
$l_{bc}$ (see Figure~\ref{fig:triangle}).

By Lemma~\ref{lem:contraction}, $|u_3-v_3|_1\le 5$. We also have
$|v_3-w_1|_1\le 6$ and $|w_1-w_2|_1\le
1$. It follows that $|u_3-w_2|_1\le 12$. Then by Lemma~\ref{lem:contraction},
$l_{bc}-l_{ac}\le 2$.

It follows that the segment $[a, u_2]$ of $\gamma_{ac}$ is on distance at most
1 from the segment $[a, u_1]$, the segment $[u_2, u_3]$ is on distance at
most 8 from $[b, v_3]$, the segment $[u_3, w_2]$ is on distance at most 8
from $[b, c]$, the segment $[w_2, c]$ is on distance at most 1 from $[c,
w_1]$ (see Figure~\ref{fig:triangle}). Consequently, the side
$\gamma_{ac}$ belongs to the 8-neighborhood of
$\gamma_{ab}\cup\gamma_{bc}$. It is also easy to see that
$\gamma_{ab}$ belongs to the 4-neighborhood of
$\gamma_{ac}\cup\gamma_{bc}$, and that $\gamma_{bc}$ belongs to the
8-neighborhood of $\gamma_{ac}\cup\gamma_{ab}$. We have proved that
the triangle with sides $\gamma_{ab}$, $\gamma_{bc}$, and
$\gamma_{ac}$ is 8-thin.

Consequently, by Lemma~\ref{l:geodesicV}, every geodesic triangle in
$\Gamma_1$ is 26-thin, and the graph $\Gamma$ is Gromov-hyperbolic. 

It
remains to prove that $\coc_1$ is a generalized Busemann quasi-cocycle
associated with a point of the boundary.
By Lemma~\ref{lem:quasigeodesic}, every infinite directed path $(v_1, v_2,
\ldots)$ of $\Gamma$ is a quasi-geodesic, hence it converges to a
point of $\partial\Gamma$. By condition (2), any two infinite directed
paths are eventually on distance not more than $\rho_0$, hence they
converge to the same point $\omega\in\partial\Gamma$.

Consequently, every infinite descending path of $\Gamma_1$ converges
to $\omega$. Let $\beta(v, u)$ be the Busemann quasi-cocycle on $\Gamma_1$
associated with $\omega$. Fix an infinite descending path $(w_0, w_1,
\ldots)$, where $\lambda(w_0)=0$.

Then $\beta(v, u)\doteq\lim_{n\to\infty}|v-w_n|_1-|u-w_n|_1$.
Let $(v=v_0, v_1, \ldots)$ and $(u=u_0, u_1, \ldots)$ be infinite
descending paths. Let $l_v$ and $l_u$ be the levels of $v$ and $u$
respectively. Then the vertices $v_n$ and $u_n$ will belong to the levels
$l_v-n$ and $l_u-n$ respectively.

By condition (2)' and Lemma~\ref{lem:contraction} there
exists $k$ such that for all $n\ge k$ the $|\cdot|_1$-diameter of
the set $\{w_n, v_{l_v-n}, u_{l_u-n}\}$ is not more than 2. Therefore,
\begin{multline*}
 |(|v-w_n|_1-|u-w_n|_1)-(l_v-l_u)|=\\
\left|(|v-w_n|_1-|u-w_n|_1)-(|v-v_{l_v-n}|_1-|u-u_{l_u-n}|_1\right|\le 4.
\end{multline*}

But $l_v\Delta_2\le\lambda(v)<(l_v-1)\Delta_2$ for every $v$, hence
$\Delta_2^{-1}\lambda(v)+1<l_v\le\Delta_2^{-1}\lambda(v)$, so that
\begin{multline*}
\Delta_2^{-1}\coc_1(v, u)-1=
\Delta_2^{-1}(\lambda(v)-\lambda(u))-1<l_v-l_u<\\
\Delta_2^{-1}(\lambda(v)-\lambda(u))+1=\Delta_2^{-1}\coc_1(v, u)+1.
\end{multline*}

We have shown that $\beta(v, u)\doteq\Delta_2^{-1}\coc_1(v,
u)\doteq\Delta_2^{-1}\coc(v, u)$, which by Lemma~\ref{lem:quasiisom}
shows that $\coc$ is a Busemann quasi-cocycle associated with $\omega$.
\end{proof}

\subsection{Boundary of a directed hyperbolic graph}

Let $\Gamma$ be a directed graph with a
quasi-cocycle $\coc$ such that for every directed edge $(u, v)$ we have
$2\eta\le\coc(u, v)\le\Delta$, where $\eta$ is as in
Definition~\ref{def:almcoc}. We assume that every vertex has at
least one outgoing arrow.

Suppose that $\Gamma$ satisfies the equivalent conditions of
Theorem~\ref{th:contraction}. Let $\omega\in\partial\Gamma$ be
such that $\coc$ is a Busemann quasi-cocycle associated with $\omega$. We denote
$\partial\Gamma_\omega=\partial\Gamma\setminus\{\omega\}$.

We say that a sequence $(v_1, v_2, \ldots)$ of vertices of $\Gamma$ is
an \emph{ascending path} if $(v_{i+1}, v_i)$ are directed edges of
$\Gamma$ for every $i$. It is a \emph{descending path} if $(v_i,
v_{i+1})$ is a directed edge for every $i$.

Let $v$ and $u$ be vertices of $\Gamma$. By
Proposition~\ref{prop:completionhalfplane} the space
$\Gamma\cup\partial\Gamma_\omega$ is the completion of the set of
vertices of $\Gamma$ with respect to the log-scale
\[\ell_{\omega, x_0}(v, u)=\frac 12\left(\beta_\omega(v,
  x_0)+\beta_\omega(u, x_0)-|v-u|\right),\]
where $x_0$ is a fixed vertex of $\Gamma$.
Denote
\[\lambda(v)=\beta_\omega(v, x_0).\]

\begin{proposition}
\label{prop:ushaped}
There exist constants $\Lambda>1$, $\rho>0$, and $k>0$ such
that any two vertices $u, v$ of $\Gamma$ can be connected by an
$(\Lambda, k)$-quasi-geodesic of the form $\gamma_0\gamma_1\gamma_2$, where
$\gamma_0$ is a descending path, $\gamma_1$ is a path of length not
more than $\rho$, and $\gamma_2$ is an ascending path.
\end{proposition}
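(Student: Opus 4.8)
The plan is to exploit the combinatorial structure of the auxiliary graph $\Gamma_1$ built in the proof of Theorem~\ref{th:contraction}, which is bi-Lipschitz equivalent to $\Gamma$ by Lemma~\ref{lem:quasiisom}, and in which every geodesic has a very rigid ``V-shape'' (Corollary~\ref{cor:downup} and Lemma~\ref{lem:less7}). Since $\lambda(v)=\beta_\omega(v,x_0)$ here plays exactly the role of the height function $\lambda$ from the proof of Theorem~\ref{th:contraction} (the quasi-cocycle $\coc$ is coarsely equivalent to $\beta_\omega$, and on directed edges $2\eta\le\coc(u,v)\le\Delta$), all of the structural lemmas about $\Gamma_1$ apply verbatim. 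The key observation is that a V-shaped path in $\Gamma_1$ is precisely a descending part, followed by at most $6$ horizontal edges, followed by an ascending part; the horizontal middle is already of bounded $\Gamma_1$-length, and descending/ascending edges of $\Gamma_1$ correspond to short directed/reverse-directed paths in $\Gamma$.

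First I would invoke Lemma~\ref{lem:less7}: any two vertices $u,v$ are connected in $\Gamma_1$ by a geodesic that is within Hausdorff distance $2$ of a V-shaped path $\gamma=\gamma_0'\gamma_1'\gamma_2'$, where $\gamma_0'$ is a sequence of descending vertical edges, $\gamma_1'$ is at most $6$ horizontal edges, and $\gamma_2'$ is a sequence of ascending vertical edges. Next I would translate this back into $\Gamma$. Each descending vertical edge of $\Gamma_1$ comes (by the second bullet in the definition of the edges of $\Gamma_1$) from a directed path in $\Gamma$ of bounded length; concatenating these gives a genuine descending path $\gamma_0$ in $\Gamma$, and similarly the ascending edges assemble into an ascending path $\gamma_2$ in $\Gamma$. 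The $6$ horizontal edges of $\Gamma_1$ each correspond to a $\Gamma$-path of length at most $r\rho_1$ (as computed in the proof of Lemma~\ref{lem:quasiisom}), so the middle part $\gamma_1$ has $\Gamma$-length at most $6r\rho_1=:\rho$, a constant depending only on $\Gamma$.

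It then remains to check that the concatenation $\gamma_0\gamma_1\gamma_2$, viewed as a path in $\Gamma$, is a quasi-geodesic with uniform constants. Because $\gamma$ is a $\Gamma_1$-geodesic up to the bounded-distance adjustment from Lemma~\ref{lem:less7}, and the identity map $\Gamma\to\Gamma_1$ is a bi-Lipschitz equivalence with constant $\Lambda$ (Lemma~\ref{lem:quasiisom}), the path $\gamma_0\gamma_1\gamma_2$ is automatically a $(\Lambda',k)$-quasi-geodesic in $\Gamma$ for constants $\Lambda'$ and $k$ absorbing $\Lambda$, the Hausdorff slack $2$, and the bounded blow-up $r\rho_1$ of each horizontal edge. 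The main obstacle I anticipate is bookkeeping at the seams: one must verify that the vertical edges of $\Gamma_1$ really do unwind into honestly directed (resp.\ reverse-directed) paths in $\Gamma$ with no gaps in height, and that the endpoints of these $\Gamma$-segments match the endpoints of the horizontal part without introducing detours that would spoil the quasi-geodesic estimate; this is exactly where the height-monotonicity of $\lambda$ along directed edges ($\coc_1(u,v)\ge\eta$) together with Lemma~\ref{lem:quasigeodesic} does the real work, since it guarantees that the assembled descending and ascending pieces are individually quasi-geodesic and that their concatenation with the short middle remains so.
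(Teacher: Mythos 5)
Your proposal is correct and follows exactly the paper's own route: the paper's proof of Proposition~\ref{prop:ushaped} consists precisely of the citation ``Follows directly from Lemmas~\ref{lem:less7} and~\ref{lem:quasiisom},'' i.e., extract a $V$-shaped geodesic in $\Gamma_1$ and transfer it back to $\Gamma$ via the bi-Lipschitz equivalence. Your write-up simply makes explicit the unpacking of vertical and horizontal $\Gamma_1$-edges into directed, reverse-directed, and bounded-length $\Gamma$-paths, which is the bookkeeping the paper leaves to the reader.
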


\begin{proof}
It follows directly from Lemmas~\ref{lem:less7} and~\ref{lem:quasiisom}.
\end{proof}

\begin{proposition}
\label{prop:ellmin}
For a pair of vertices $u, v\in\Gamma$ choose a geodesic path
$\gamma_{u, v}$ connecting $u$ to $v$, and
denote by $\ell(u, v)$ the minimal value of $\lambda$ on a
vertex of $\gamma_{u, v}$. Then
there exists a constant $c>0$ (not depending on $u$, $v$, and
the choice of the geodesics) such that $|\ell(u,
v)-\ell_{\omega, x_0}(u, v)|<c$.
\end{proposition}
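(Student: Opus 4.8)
The plan is to interpret both $\ell(u,v)$ and $\ell_{\omega,x_0}(u,v)$ as the value of $\lambda$ at the lowest vertex of a geodesic from $u$ to $v$, and to pin down that lowest value using the $V$-shaped quasi-geodesics of Proposition~\ref{prop:ushaped}. The first ingredient I would isolate is a \emph{calibration lemma} for descending paths: if $(w_0,w_1,\ldots,w_n)$ is a descending path in $\Gamma$, then $\lambda(w_0)-\lambda(w_n)=\beta_\omega(w_0,w_n)\doteq|w_0-w_n|$, with the implied constant depending only on $\Gamma$. To prove it, extend the path to an infinite descending path (every vertex has an outgoing arrow by assumption); by Lemma~\ref{lem:quasigeodesic} it is a quasi-geodesic, and by condition~(2) of Theorem~\ref{th:contraction} it converges to $\omega$. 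By quasi-geodesic rigidity (Theorem~\ref{th:rigidquasigeodesics} and Proposition~\ref{pr:thininftriangles}) it lies within bounded Hausdorff distance of a geodesic ray $(z_0=w_0,z_1,\ldots)$ converging to $\omega$, along which Definition~\ref{def:busemann} gives directly $\beta_\omega(z_0,z_j)=j=|z_0-z_j|$. Since $w_n$ is within bounded distance of some $z_j$ and $\beta_\omega$ is $1$-Lipschitz up to an additive constant, the asserted equality follows. Applying the same estimate to the sub-paths $(w_i,\ldots,w_n)$ shows $\lambda(w_i)\ge\lambda(w_n)-\mathrm{const}$, i.e.\ $\lambda$ is coarsely monotone along descending paths.

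Next I would take the $(\Lambda,k)$-quasi-geodesic $\gamma=\gamma_0\gamma_1\gamma_2$ of Proposition~\ref{prop:ushaped}, where $\gamma_0$ is a descending path from $u$ ending at a vertex $a$, $\gamma_2$ read backwards is a descending path from $v$ ending at a vertex $b$, and $|a-b|\le\rho$. By the calibration lemma, $\lambda(u)-\lambda(a)\doteq|u-a|$ and $\lambda(v)-\lambda(b)\doteq|v-b|$, while $|a-b|\le\rho$ forces $\lambda(a)\doteq\lambda(b)$ and $|v-a|\doteq|v-b|$. Coarse monotonicity of $\lambda$ along $\gamma_0$ and along $\gamma_2$, together with the bounded length of $\gamma_1$, shows that the minimum of $\lambda$ over the vertices of $\gamma$ equals $\lambda(a)$ up to an additive constant.

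It remains to evaluate $\ell_{\omega,x_0}(u,v)=\tfrac12(\lambda(u)+\lambda(v)-|u-v|)$, for which I need the \emph{additive} relation $|u-v|\doteq|u-a|+|v-b|$. This is the main obstacle, since Proposition~\ref{prop:ushaped} supplies only multiplicative (quasi-geodesic) control of lengths. I would recover the additive relation from rigidity: a geodesic $\gamma_{u,v}$ and the quasi-geodesic $\gamma$ both join $u$ to $v$, so by Theorem~\ref{th:rigidquasigeodesics} the vertex $a\in\gamma$ lies within a bounded distance of some vertex $m\in\gamma_{u,v}$; as $m$ lies on a geodesic, $|u-m|+|m-v|=|u-v|$, whence $|u-a|+|a-v|\doteq|u-v|$, and thus $|u-v|\doteq|u-a|+|v-b|$. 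Substituting the calibration estimates yields
\[
\ell_{\omega,x_0}(u,v)\doteq\tfrac12\bigl(\lambda(u)+\lambda(v)-(\lambda(u)-\lambda(a))-(\lambda(v)-\lambda(b))\bigr)=\tfrac12(\lambda(a)+\lambda(b))\doteq\lambda(a).
\]

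Finally, since $\gamma_{u,v}$ is within bounded Hausdorff distance of $\gamma$ and $\lambda$ is Lipschitz by~\eqref{eq:coclipschitz}, the minimum $\ell(u,v)$ of $\lambda$ along $\gamma_{u,v}$ satisfies $\ell(u,v)\doteq\min_{\gamma}\lambda\doteq\lambda(a)\doteq\ell_{\omega,x_0}(u,v)$, with all constants depending only on $\Gamma$ and not on $u,v$. Independence of the bound from the particular geodesic $\gamma_{u,v}$ follows because any two geodesics joining $u$ and $v$ are uniformly close by the second part of Proposition~\ref{pr:thininftriangles}, together with the Lipschitz property of $\lambda$.
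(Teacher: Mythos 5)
Your proof is correct, but it takes a genuinely different route from the paper's. The paper argues directly with the classical fact that the Gromov product based at a finite point $w_n$ is, up to the thin-triangle constant, the distance from $w_n$ to a geodesic joining $u$ and $v$: the triangle inequality gives $\ell_{w_n}(u,v)\le|x-w_n|$ for every vertex $x$ of $\gamma_{u,v}$, while thinness of the triangle with vertices $u,v,w_n$ produces a vertex $x$ of $\gamma_{u,v}$ with $|x-w_n|<\ell_{w_n}(u,v)+\delta$; it then lets $w_n\to\omega$, subtracts $|x_0-w_n|$, and uses $\beta_\omega(x,x_0)\doteq\lim(|x-w_n|-|x_0-w_n|)$ together with $\ell_{\omega,x_0}(u,v)\doteq\lim(\ell_{w_n}(u,v)-|x_0-w_n|)$ to turn this two-sided bound into the statement. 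You instead exploit the directed structure: a calibration lemma (Busemann increments match distances along descending paths, proved by comparison with a geodesic ray to $\omega$), the $V$-shaped quasi-geodesic of Proposition~\ref{prop:ushaped}, and quasi-geodesic rigidity to transfer the computation back to $\gamma_{u,v}$. Both arguments are sound with uniform constants; the paper's is shorter and needs neither Proposition~\ref{prop:ushaped} nor the descending/ascending combinatorics (so it would work verbatim in any hyperbolic space with a distinguished boundary point), whereas yours yields the extra information that the minimum of $\lambda$ along $\gamma_{u,v}$ is attained, coarsely, at the turning point of the $V$-shaped path. One small caveat: comparing the infinite descending path with a geodesic ray converging to $\omega$ uses stability of quasi-geodesic rays with an endpoint at infinity, an extension of Theorem~\ref{th:rigidquasigeodesics}, which is stated only for finite paths with common endpoints; this extension is standard, and the paper itself invokes it (e.g.\ for the ideal triangles in the proof of Theorem~\ref{th:contraction}), so it is within the toolkit the paper presupposes, but it deserves an explicit mention.
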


\begin{proof}
Let $w_n\in\Gamma$ be a sequence converging to $\omega$. Let $\gamma$
be a geodesic path connecting $v$ to $u$, and let $x$ be an arbitrary
vertex of $\gamma$. Then
\[|v-w_n|\le |v-x|+|x-w_n|,\qquad |u-w_n|\le |u-x|+|x-w_n|,\]
hence
\[\ell_{w_n}(v, u)\le\frac 12(|v-x|+|u-x|+2|x-w_n|-|v-u|)=
|x-w_n|.\]
On the other hand, there is $\delta>0$ depending only on $\Gamma$,
such that if $\gamma_v$ and $\gamma_u$ are geodesics
connecting $w_n$ to $v$ and $u$ respectively, and the points
$x\in\gamma$, $x_v\in\gamma_v$, $x_u\in\gamma_u$ are such that
\[|x-v|=|x_v-v|,\quad |x-u|=|x_u-u|, \quad |x_u-w_n|=|x_v-w_n|,\]
then diameter of $\{x, x_v, x_u\}$ is less than $\delta$.

Note that then $|x_u-w_n|=|x_v-w_n|=\ell_{w_n}(v, u)$, hence
$|x-w_n|\le |x_u-w_n|+|x-x_u|<\ell_{w_n}(v, u)+\delta$.

It follows that the minimal value of $|x-w_n|$ along the geodesic
$\gamma$ belongs to the interval $[\ell_{w_n}(v, u), \ell_{w_n}(v,
u)+\delta]$.

But $\ell_{\omega, x_0}(v, u)\doteq\lim_{w_n\to\omega}\ell_{w_n}(v,
u)-|x_0-w_n|$,
and $\beta_\omega(x, x_0)\doteq\lim_{w_n\to\omega}|x-w_n|-|x_0-w_n|$,
hence the minimal value of $\beta_\omega(x, x_0)$ along
$\gamma$ differs from $\ell_{\omega, x_0}(v, u)$ by a uniformly bounded constant.
\end{proof}

\begin{corollary}
\label{cor:minell}
For every pair of constants $k>0$, $\Lambda>1$ there exists $c>0$ such
that if $\ell(v, u)$ is the minimal value of $\lambda$ at a
vertex of an $(\Lambda, k)$-quasi-geodesic connecting $v$ to $u$, then
$|\ell_\omega(v, u)-\ell(v, u)|<c$.
\end{corollary}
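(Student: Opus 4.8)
The plan is to reduce the statement to Proposition~\ref{prop:ellmin}, which already proves the estimate for genuine geodesics, by exploiting the rigidity of quasi-geodesics in a hyperbolic graph. Fix an $(\Lambda, k)$-quasi-geodesic $\sigma=(t_0=v, t_1, \ldots, t_n=u)$ realizing $\ell(v, u)=\min_i\lambda(t_i)$, and fix a geodesic $\gamma$ connecting the same endpoints $v$ and $u$. Since $\Gamma$ satisfies the conditions of Theorem~\ref{th:contraction} it is Gromov-hyperbolic, so Theorem~\ref{th:rigidquasigeodesics} supplies a constant $\delta>0$, depending only on $\Lambda$, $k$, and the hyperbolicity constant of $\Gamma$, such that $\sigma$ and $\gamma$ are within Hausdorff distance $\delta$ of one another (every vertex of each lies in the $\delta$-neighborhood of the other).

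Next I would record that $\lambda=\beta_\omega(\cdot, x_0)$ is Lipschitz for the graph metric. Indeed, for vertices $x, y$ the quasi-cocycle inequality of Definition~\ref{def:almcoc} gives $|\lambda(x)-\lambda(y)|=|\beta_\omega(x, x_0)-\beta_\omega(y, x_0)|\le|\beta_\omega(x, y)|+\eta$, and by~\eqref{eq:coclipschitz} we have $|\beta_\omega(x, y)|\le|x-y|(\Delta+\eta)$, whence $|\lambda(x)-\lambda(y)|\le|x-y|(\Delta+\eta)+\eta$. Setting $C=\delta(\Delta+\eta)+\eta$, I can now compare the minima of $\lambda$ along the two paths: if $t_j$ realizes the minimum along $\sigma$, pick a vertex $g$ of $\gamma$ with $|t_j-g|\le\delta$, so $\min_{x\in\gamma}\lambda(x)\le\lambda(g)\le\ell(v, u)+C$; the symmetric choice, starting from the vertex of $\gamma$ realizing $\min_\gamma\lambda$ and a nearby vertex of $\sigma$, yields the reverse inequality. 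Hence the minimal values of $\lambda$ along $\sigma$ and along $\gamma$ differ by at most $C$.

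Finally, Proposition~\ref{prop:ellmin} bounds $\min_{x\in\gamma}\lambda(x)$ against $\ell_{\omega, x_0}(v, u)$ (the quantity written $\ell_\omega(v,u)$ in the statement) by a constant $c_0$ that does not depend on $v$, $u$, or the choice of geodesic. The triangle inequality then gives $|\ell(v, u)-\ell_{\omega, x_0}(v, u)|<c_0+C=:c$. The only point demanding care — rather than any genuine difficulty — is the uniformity of $c$, i.e.\ that it depends on $\Lambda$ and $k$ but not on $v$ or $u$. This is automatic: $\delta$ depends only on $(\Lambda, k)$ and $\Gamma$ by Theorem~\ref{th:rigidquasigeodesics}, the Lipschitz constant of $\lambda$ is a fixed feature of $\Gamma$, and $c_0$ is already uniform by Proposition~\ref{prop:ellmin}. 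No hyperbolic geometry beyond the quasi-geodesic stability already quoted is required, which is why the statement is presented as a corollary.
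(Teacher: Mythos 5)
Your proof is correct and follows exactly the route the paper intends: its entire proof of this corollary is the citation of Theorem~\ref{th:rigidquasigeodesics}, i.e.\ use quasi-geodesic stability to replace the $(\Lambda,k)$-quasi-geodesic by a geodesic at uniformly bounded Hausdorff distance and then invoke Proposition~\ref{prop:ellmin}. Your write-up simply supplies the details left implicit there (the Lipschitz bound on $\lambda=\beta_\omega(\cdot,x_0)$ coming from Definition~\ref{def:almcoc} and~\eqref{eq:coclipschitz}, and the uniformity of the constants), all of which is sound.
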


\begin{proof}
It follows from Theorem~\ref{th:rigidquasigeodesics}.
\end{proof}

\begin{proposition}
\label{prop:ascendingconverge}
Every ascending path in $\Gamma$ converges to a point of
$\partial\Gamma_\omega$.
Every point of $\partial\Gamma_\omega$ is a limit of an ascending
path.
\end{proposition}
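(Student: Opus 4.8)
The plan is to handle the two assertions separately, using in both cases that, by Proposition~\ref{prop:completionhalfplane}, the space $\Gamma\cup\partial\Gamma_\omega$ is precisely the completion of $\Gamma$ with respect to $\ell_{\omega, x_0}$. Thus a sequence of vertices converges to a point of $\partial\Gamma_\omega$ exactly when it is $\ell_{\omega, x_0}$-Cauchy, and no such Cauchy sequence can have $\omega$ as a limit, since $\omega$ does not lie in this completion. This observation lets me avoid arguing separately that the limit is distinct from $\omega$.

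First I would prove that an ascending path $(v_1, v_2, \ldots)$ converges. Each reversed finite segment $(v_j, v_{j-1}, \ldots, v_i)$ is a directed path, so by Lemma~\ref{lem:quasigeodesic} it is a $((\Delta+\eta)/\eta, 1)$-quasi-geodesic; hence every sub-path of the ascending path is a quasi-geodesic with uniform constants. The same estimate as in Lemma~\ref{lem:quasigeodesic} gives $\coc(v_k, v_i) > \eta(k-i)$ for $k > i$, and the quasi-cocycle identity then yields $\coc(v_k, x_0) \ge \coc(v_i, x_0) - \eta$ for all $k \ge i$ together with $\coc(v_i, x_0) \to +\infty$. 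Since $\coc$ is coarsely equivalent to $\beta_\omega$, the value $\lambda(v_k) = \beta_\omega(v_k, x_0)$ is therefore bounded below along the tail by a quantity tending to $+\infty$. Applying Corollary~\ref{cor:minell} to the ascending quasi-geodesic joining $v_i$ to $v_j$, the minimal value of $\lambda$ along it is within a uniform constant of $\ell_{\omega, x_0}(v_i, v_j)$; combined with the previous lower bound this forces $\ell_{\omega, x_0}(v_i, v_j) \to \infty$ as $i, j \to \infty$. So the path is $\ell_{\omega, x_0}$-Cauchy and converges to a point of $\partial\Gamma_\omega$.

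For the second assertion, I would fix $\xi \in \partial\Gamma_\omega$ and choose vertices $u_n \to \xi$. By Proposition~\ref{prop:ushaped} each pair $x_0, u_n$ is joined by a uniform quasi-geodesic $\gamma_0^{(n)}\gamma_1^{(n)}\gamma_2^{(n)}$ consisting of a descending part, a bounded-length middle part, and an ascending part $\gamma_2^{(n)}$. Because $\xi \ne \omega$, Proposition~\ref{prop:ellmin} (applied via Corollary~\ref{cor:minell}) shows that the minimal height along these quasi-geodesics, namely $\approx \ell_{\omega, x_0}(x_0, u_n) \to \ell_{\omega, x_0}(x_0, \xi) < \infty$, stays uniformly bounded; hence the descending parts $\gamma_0^{(n)}$ have uniformly bounded length, while the ascending parts $\gamma_2^{(n)}$ grow without bound and carry the quasi-geodesics out to $\xi$. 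By Theorem~\ref{th:rigidquasigeodesics} any two of these quasi-geodesics fellow-travel, so their cofinal ascending parts fellow-travel as well; passing to a limit diagonally, using local finiteness of $\Gamma$, produces a single infinite ascending path $\gamma_2$. By the first part $\gamma_2$ converges to a point of $\partial\Gamma_\omega$, and since it fellow-travels the quasi-geodesics to $u_n \to \xi$, that point is $\xi$.

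The main obstacle is the second assertion: one must guarantee that the ascending part is genuinely infinite and that a coherent limiting ascending ray can be extracted. Both points hinge on the hypothesis $\xi \ne \omega$, which via Proposition~\ref{prop:ellmin} pins the depth of the $V$-shaped quasi-geodesics to the finite value $\ell_{\omega, x_0}(x_0, \xi)$ and thereby bounds the descending parts; the stability statement of Theorem~\ref{th:rigidquasigeodesics} then supplies the fellow-traveling needed to pass to the limit. The first assertion, by contrast, reduces cleanly to the Cauchy criterion once one records that $\coc(\cdot, x_0)$, and hence $\lambda$, grows along ascending directions.
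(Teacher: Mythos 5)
Your proposal is correct and follows essentially the same route as the paper: part~1 via Lemma~\ref{lem:quasigeodesic} together with growth of the quasi-cocycle along the path (your Cauchy-completion phrasing is just a more explicit version of the paper's remark that $\beta_\omega(v_n,v_1)$ increases), and part~2 by taking vertices $u_n\to\xi$, invoking Proposition~\ref{prop:ushaped} and Corollary~\ref{cor:minell} for uniform fellow-traveling at bounded heights, and extracting a limit directed path by local finiteness. The only cosmetic difference is that the paper performs the extraction on infinite \emph{descending} paths started at the $u_n$ (obtaining a bi-infinite directed path from $\omega$ to $\xi$), whereas you extract it from the ascending parts of the $V$-shaped quasi-geodesics based at $x_0$; both constructions rest on the same fellow-traveling estimates.
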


\begin{proof}
By Lemma~\ref{lem:quasigeodesic} every ascending path is a
quasi-geodesic, hence it converges to a point of
$\partial\Gamma$. The values of the Busemann quasi-cocycle
$\beta_\omega(v_n, v_1)$ increase, hence the limit is different from $\omega$.

Let $\xi\in\partial\Gamma_\omega$. Let $u_n\in\Gamma$ be a sequence
 such that $\xi=\lim_{n\to\infty}u_n$. For every $n$ find an infinite
descending path $\gamma_n$ starting at $u_n$. Then, by
Lemma~\ref{lem:contraction}, Proposition~\ref{prop:ushaped}, and
Proposition~\ref{cor:minell}, there exist constants $k_1, k_2$ such
that for any fixed $l$ and all sufficiently big $n$ and $m$ we have
$|s-t|\le k_1$ for all
$s\in\gamma_n$ and $t\in\gamma_m$ such that $\lambda(s), \lambda(t)\in
[l, l+k_2]$. By local finiteness of $\Gamma$
we can find a sequence $n_i$ such that the paths $\gamma_{n_i}$
converge to a bi-infinite directed path connecting $\omega$ with $\xi$.
\end{proof}

\section{Local product structures}

The notions presented here generalize the classical notions of a local
product structure on a manifold with an Anosov diffeomorphism,
and more generally the notion of a local product structure on a Smale
space (see~\cite{ruelle:therm}).

 The structure of a direct product decomposition of a
topological space can be formalized in the following way.

\begin{defi}
A \emph{rectangle} is a topological space $R$ together with a
continuous map
$[\cdot, \cdot]:R\times R\arr R$ such that
\begin{enumerate}
\item $[x, x]=x$ for all $x\in R$;
\item $[x, [y, z]]=[x, z]$ for all $x, y, z\in R$;
\item $[[x, y], z]=[x, z]$ for all $x, y, z\in R$.
\end{enumerate}\index{rectangle}
\end{defi}

For a direct product $R=A\times B$ the map
\[[(x_1, y_1), (x_2, y_2)]=(x_1, y_2)\]
obviously satisfies the conditions of the definition.

In the other direction, every rectangle comes
with a natural direct product decomposition. Define for $x\in R$
\[\proj_1(R, x)=\{y\in R\;:\;[x, y]=x\},\qquad\proj_2(R, x)=\{y\in
R\;:\;[x, y]=y\}.\]
The sets $\proj_i(R, x)$ are called \emph{plaques} \index{plaques}
\index{P@$\proj_i(R, x)$} of the rectangle.

Note that $[x, y]=x$ implies $[y, x]=[y, [x, y]]=y$. Similarly,
$[x, y]=y$ implies $[y, x]=[[x, y], x]=x$. On the other hand, $[y,
x]=y$ implies $[x, y]=[x, [y, x]]=x$, and $[y, x]=x$
implies $[x, y]=[[y, x], y]=y$. Consequently,
\[\proj_1(R, x)=\{y\in R\;:\;[y, x]=y\},\qquad\proj_2(R, x)=\{y\in
R\;:\;[y, x]=x\}.\]

\begin{proposition}
For every $x\in R$ the map $[\cdot, \cdot]:\proj_1(R,
x)\times\proj_2(R, x)\arr R$ is a homeomorphism.

In terms of the obtained direct product decomposition $R=\proj_1(R,
x)\times\proj_2(R, x)$ the map $[\cdot, \cdot]:R\times R\arr R$ is defined by the
rule
\[[(y_1, z_1), (y_2, z_2)]=(y_1, z_2).\]
\end{proposition}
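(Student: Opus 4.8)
The plan is to produce an explicit continuous two-sided inverse to the map $\Phi=[\cdot,\cdot]\colon\proj_1(R,x)\times\proj_2(R,x)\arr R$ and then to read off the product formula from the very identities used in verifying that inverse. The candidate I would use is $\Psi\colon R\arr\proj_1(R,x)\times\proj_2(R,x)$ defined by $\Psi(w)=([w,x],[x,w])$. Guessing this formula is really the only non-mechanical step; once it is in hand, everything reduces to applying the three rectangle axioms together with the two equivalent descriptions of the plaques established immediately above the statement.

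First I would check that $\Psi$ lands in the claimed product. That $[w,x]\in\proj_1(R,x)$ follows from axiom (2), since $[x,[w,x]]=[x,x]=x$, which is exactly the defining condition for $\proj_1(R,x)$; symmetrically $[x,w]\in\proj_2(R,x)$ because axiom (2) gives $[x,[x,w]]=[x,w]$. With $\Psi$ well defined, I would verify $\Phi\circ\Psi=\mathrm{id}_R$ by computing $[[w,x],[x,w]]$: axiom (3) collapses this to $[w,[x,w]]$, and axiom (2) then collapses that to $[w,w]=w$.

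Next I would check $\Psi\circ\Phi=\mathrm{id}$ on $\proj_1(R,x)\times\proj_2(R,x)$. Given $y\in\proj_1(R,x)$ and $z\in\proj_2(R,x)$, set $w=[y,z]$; then axiom (3) gives $[w,x]=[[y,z],x]=[y,x]=y$, where the last equality is the alternative description $\proj_1(R,x)=\{y:[y,x]=y\}$, and axiom (2) gives $[x,w]=[x,[y,z]]=[x,z]=z$, using $\proj_2(R,x)=\{z:[x,z]=z\}$. Continuity of $\Phi$ is clear since it is a restriction of the continuous bracket, and continuity of $\Psi$ is clear as well, each coordinate being the bracket with one argument frozen at $x$; hence $\Phi$ is a homeomorphism.

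Finally, the product formula falls out of the two computations already performed. Identifying a point $w$ with its coordinates $([w,x],[x,w])$, the image $[w_1,w_2]$ has first coordinate $[[w_1,w_2],x]=[w_1,x]$ by axiom (3) and second coordinate $[x,[w_1,w_2]]=[x,w_2]$ by axiom (2); in terms of the coordinates $(y_i,z_i)$ of $w_i$ this is precisely $[(y_1,z_1),(y_2,z_2)]=(y_1,z_2)$. I do not expect a genuine obstacle: the argument is an exercise in the absorption identities (2) and (3), and the only point demanding a moment's thought is recognizing that $w\mapsto([w,x],[x,w])$ is the correct inverse and that each of the four required identities is a single application of one axiom.
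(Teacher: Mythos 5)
Your proposal is correct and follows essentially the same route as the paper: the paper also exhibits the inverse $w\mapsto([w,x],[x,w])$, verifies it with the rectangle axioms and the plaque characterizations, and derives the product formula from $[[y_1,z_1],[y_2,z_2]]=[y_1,z_2]$. The only difference is cosmetic — you check both compositions explicitly where the paper verifies one and leaves the other implicit in its final computation.
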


See Figure~\ref{fig:rectangle} for the structure of $R$ and the map
$[\cdot, \cdot]$.

\begin{proof}
The map $[\cdot, \cdot]:\proj_1(R, x)\times\proj_2(R, x)\arr R$ is obviously
continuous.

For every $z\in R$ we have $[x, [z, x]]=x$, hence $[z, x]\in
\proj_1(R, x)$. We also have $[x, [x, z]]=[x, z]$, hence $[x, z]\in
\proj_2(R, x)$. It follows now from \[[[z, x], [x, z]]=[[z, x], z]=[z,
z]=z\] that the map
\[z\mapsto ([z, x], [x, z])\] is continuous and is inverse to
$[\cdot, \cdot]:\proj_1(R, x)\times\proj_2(R, x)\arr R$.

In terms of the decomposition $R=\proj_1(R, x)\times\proj_2(R, x)$ we have
\[[(y_1, z_1), (y_2, z_2)]=[[y_1, z_1], [y_2, z_2]]=[y_1,
z_2]=(y_1, z_2),\] for all $y_1, y_2\in\proj_1(R, x)$ and $z_1,
z_2\in\proj_2(R, x)$.
\end{proof}

\begin{figure}
\centering
\includegraphics{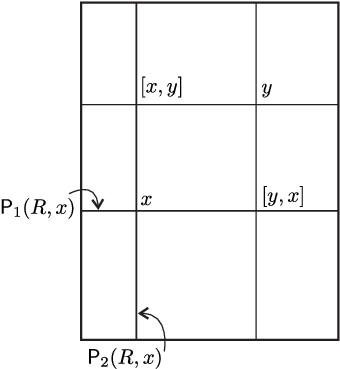}
\caption{Rectangle $R$} \label{fig:rectangle}
\end{figure}

The described direct product decomposition $R=\proj_1(R,
x)\times\proj_2(R, x)$
essentially does not depend on the choice of the point $x$.
Namely, we have the following natural homeomorphisms between
$\proj_i(R, x)$ and $\proj_i(R, y)$ for $x, y\in R$.

\begin{proposition}
For every $x, y\in R$ the maps
\[H_{1, y}:z\mapsto [z, y]:\proj_1(R, x)\arr\proj_1(R, y),\qquad
H_{2, y}:z\mapsto [y, z]:\proj_2(R, x)\arr\proj_2(R, y)\] are homeomorphisms.
\end{proposition}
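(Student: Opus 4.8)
The plan is to show that each of $H_{1,y}$ and $H_{2,y}$ is a continuous bijection whose inverse is the ``same'' map with the roles of $x$ and $y$ interchanged. Continuity is free throughout: every map in sight is a restriction of the continuous structure map $[\cdot,\cdot]:R\times R\arr R$ to a slice, and the candidate inverses are of the same form, hence also continuous. So the entire content reduces to bijectivity, which I would extract purely from the three rectangle axioms together with the two equivalent descriptions of the plaques recorded just above, namely $\proj_1(R, w)=\{z:[w,z]=w\}=\{z:[z,w]=z\}$ and $\proj_2(R, w)=\{z:[w,z]=z\}=\{z:[z,w]=w\}$.

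First I would check that the maps have the asserted codomains. For $H_{1,y}$, axiom (2) gives $[y,[z,y]]=[y,y]=y$ for \emph{every} $z\in R$, so $[z,y]\in\{w:[y,w]=y\}=\proj_1(R,y)$; the hypothesis $z\in\proj_1(R,x)$ is not even needed here. Symmetrically, axiom (2) gives $[y,[y,z]]=[y,z]$, so $H_{2,y}(z)=[y,z]$ always lies in $\{w:[y,w]=w\}=\proj_2(R,y)$. Thus both maps land where claimed.

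Next I would exhibit the inverses. For $H_{1,y}$ the natural candidate is $H_{1,x}:w\mapsto[w,x]:\proj_1(R,y)\arr\proj_1(R,x)$, and for $H_{2,y}$ it is $H_{2,x}:w\mapsto[x,w]:\proj_2(R,y)\arr\proj_2(R,x)$. For the first pair the verifications use axiom (3): if $z\in\proj_1(R,x)$ then $[z,x]=z$, whence $[[z,y],x]=[z,x]=z$; and if $w\in\proj_1(R,y)$ then $[w,y]=w$, whence $[[w,x],y]=[w,y]=w$. For the second pair the verifications use axiom (2): if $z\in\proj_2(R,x)$ then $[x,z]=z$, whence $[x,[y,z]]=[x,z]=z$; and if $w\in\proj_2(R,y)$ then $[y,w]=w$, whence $[y,[x,w]]=[y,w]=w$. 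In each case both composites are the identity, so the maps are mutually inverse.

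The only thing to watch — and the nearest thing to an obstacle in an otherwise mechanical argument — is the bookkeeping: selecting the correct one of the two equivalent descriptions of a plaque at each step, and applying the appropriate axiom, namely axiom (2) for the $\proj_2$-maps and a combination of (2) (for well-definedness) and (3) (for inversion) for the $\proj_1$-maps. Once the inverses are identified in this way, each of $H_{1,y}$ and $H_{2,y}$ is a continuous bijection with continuous inverse, hence a homeomorphism.
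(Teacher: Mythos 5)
Your proof is correct and follows essentially the same route as the paper: verify the codomains via axiom (2), then check that $H_{1,x}$ and $H_{2,x}$ are inverses using axioms (3) and (2) respectively, with continuity coming for free from continuity of $[\cdot,\cdot]$. The only difference is that you verify both composites explicitly, while the paper checks just one and leaves the other to the symmetry in $x$ and $y$.
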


\begin{proof}
We have $[y, [z, y]]=y$, hence $[z, y]\in\proj_1(R, y)$ for every $z\in
R$. Similarly, $[y, [y, z]]=[y, z]$, hence $[y, z]\in\proj_2(R, y)$
for every $z\in R$. We have $H_{1, x}(H_{1, y}(z))=[[z, y], x]=[z,
x]=z$ for all $z\in\proj_1(R, x)$ and $H_{2, x}(H_{2, y}(z))=[x, [y,
z]]=[x, z]=z$ for all $z\in\proj_2(R, x)$.
\end{proof}

Therefore, we can canonically identify the spaces $\proj_i(R, x)$ with
one space $\proj_i(R)$ and get a direct product decomposition
$R=\proj_1(R)\times\proj_2(R)$, which we
will call the \emph{canonical direct product decomposition} of the
rectangle $(R, [\cdot, \cdot])$.

\begin{defi}
\label{def:ldps} Let $\X$ be a topological space. A \emph{local
product structure} on $\X$ is given by a covering of $\X$ by open
subsets $R_i$, $i\in I$, together with a structure of a rectangle
$(R_i, [\cdot, \cdot]_i)$ on each set $R_i$, such that for every
pair $i, j\in I$ and every $x\in\X$ there exists a
neighborhood $U$ of $x$ such that $[y, z]_i=[y,
z]_j$ for all $y, z\in U\cap R_i\cap R_j$.\index{local product structure}

Two coverings of $\X$ by open rectangles define the same local
product structure on $\X$ if their union satisfies the above
compatibility condition.
\end{defi}

\begin{figure}
\includegraphics{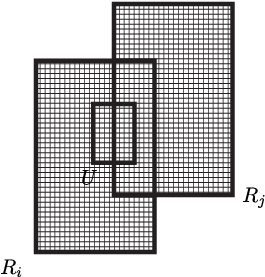}
\caption{Local product structure}\label{fig:locprod}
\end{figure}

Note that the condition of the definition is void for pairs $i,
j\in I$ such that the closures $\overline{R_i}$ and $\overline{R_j}$
are disjoint and for points $x\in\X$
that do not belong to the intersection
$\overline{R_i}\cap\overline{R_j}$.

A covering by rectangles $(R_i, [\cdot, \cdot]_i)$ satisfying the
conditions of the definition is called an \emph{atlas} of the local
product structure.

If the space $\X$ is compact, then any local product structure can be
defined by one function $[x, y]$ defined for all pairs $(x, y)\in
R\times R$ belonging to a neighborhood of the diagonal. It follows
that our notion of a local product structure is 
equivalent in the compact case to the notion of a local product
structure
defined in~\cite{ruelle:therm}.

\begin{defi}
Let $\X$ be a space with a local product structure on it. An open subset
$R\subset\X$ together with a direct product structure $[\cdot, \cdot]$
on $R$ is called a \emph{rectangle} of $\X$ if the union of an atlas of the local
product structure of $\X$ with $\{(R, [\cdot, \cdot])\}$ is also an
atlas of the local product structure, i.e., if it satisfies the
compatibility condition of Definition~\ref{def:ldps}.\index{rectangle}
\end{defi}

\chapter{Preliminaries on groupoids and pseudogroups}
\label{s:groupoidprelims}

\section{Pseudogroups and groupoids}
A \emph{groupoid}\index{groupoid} is a small category of isomorphisms. More
explicitly, it is a set $\G$ with a product $(g_1,
g_2)\mapsto g_1g_2$ defined on a subset $\G^{(2)}\subset\G\times\G$ and an
operation $g\mapsto g^{-1}:\G\arr\G$ such that the following
conditions hold.
\begin{enumerate}
\item if $g_1g_2$ and $g_2g_3$ are defined, then
  $(g_1g_2)g_3=g_1(g_2g_3)$ and the corresponding products are
  defined;
\item the products $gg^{-1}$ and $g^{-1}g$ are defined for all $g\in\G$;
\item if $gh$ is defined, then $ghh^{-1}=g$, $g^{-1}gh=h$, and the
  corresponding products are defined.
\end{enumerate}

We denote
\[\be(g)=g^{-1}g,\qquad\en(g)=gg^{-1}\]
and call $\be(g)$ and $\en(g)$ the \emph{origin} and the \emph{target} of $g$
respectively. We imagine $g$ as an arrow from $\be(g)$ to $\en(g)$.
The product $gh$ is defined if and only if $\en(h)=\be(g)$. Then
$\be(gh)=\be(h)$ and $\en(gh)=\en(g)$. We also have
$\be(g^{-1})=\en(g)$.\index{origin@$\be(g)$}\index{target@$\en(g)$}

Elements $g\in\G$ such that $g=\be(g)$ (which is equivalent to
$g=\en(g)$) are called \emph{units}\index{unit of a groupoid} of $\G$. The set of units is
denoted $\G^{(0)}$. In terms of category theory, we may identify the
units with the objects of the category. Then every element $g\in\G$ is an
isomorphism from $\be(g)$ to $\en(g)$.

For subsets $A, B\subset\G^{(0)}$, we denote $\G_A=\be^{-1}(A)$,
$\G^B=\en^{-1}(A)$, and $\G_A^B=\G_A\cap\G^B$. We also write $\G_x$
and $\G^x$ instead of $\G_{\{x\}}$ and $\G^{\{x\}}$.

For any $A\subset\G^{(0)}$ we denote by $\G|_A$ the sub-groupoid of elements
$g\in\G$ such that $\be(g), \en(g)\in A$, i.e.,
$\G|_A=\G_A^A$. The groupoid $\G|_A$ is called the \emph{restriction}
\index{restriction of a groupoid} of $\G$ to $A$.

Two units $x, y\in\G^{(0)}$ belong to the same \emph{orbit} \index{orbit} if there
exists $g\in\G$ such that $\be(g)=x$ and $\en(g)=y$ (i.e., if the
corresponding objects of the category are isomorphic). The relation of
belonging to the same orbit is obviously an equivalence.
A subset $A\subset\G^{(0)}$ is a \emph{$\G$-transversal} if it
intersects every orbit. \index{transversal}

If $x$ is a unit of a groupoid $\G$, then its \emph{isotropy
  group}\index{isotropy group} is the set $\G|_{\{x\}}$ of elements $g\in\G$ such
that $\be(g)=\en(g)=x$. The groupoid is \emph{principal}
\index{principal groupoid} \index{groupoid!principal} if all its
isotropy groups are trivial.

A \emph{topological groupoid}
\index{groupoid!topological}\index{topological groupoid}
is a groupoid $\G$ with a structure of a
topological space, such that multiplication and taking inverse are
continuous operations. We always assume that every element of $\G$ has
a compact Hausdorff neighborhood, and that the subspace $\G^{(0)}$ of units is
locally compact and metrizable.

\begin{defi}
Let $\X$ be a topological space. A \emph{pseudogroup}
\index{pseudogroup} $\wt\Gh$ acting on $\X$
is a set of homeomorphisms $F:U\arr V$ between open subsets of $\X$
closed under the following operations:
\begin{enumerate}
\item \textbf{composition}: if $F_1:U_1\arr V_1$ and $F_2:U_2\arr V_2$
  belong to $\wt\Gh$, then $F_1\circ F_2:F_2^{-1}(U_1\cap V_2)\arr
  F_1(U_1\cap V_2)$ is also an element of $\wt\Gh$;
\item \textbf{restrictions}: for every element $F:U\arr V$ of $\wt\Gh$
  and every open subset $W\subset\X$ the homeomorphism $F|_{U\cap W}$
  is an element of $\wt\Gh$;
\item \textbf{unions}: if $F:U\arr V$ is a homeomorphism between open subsets
  such that $U$ can be covered by a collection of open subsets $U_i$
  such that $F|_{U_i}\in\wt\Gh$, then $F\in\wt\Gh$.
\end{enumerate}
We always assume that the identical homeomorphism $\X\arr\X$ belongs to $\wt\Gh$.
\end{defi}

Let $\G$ be a topological groupoid. An open set $U\subset\G$ is called a
\emph{bisections}\index{bisection} if $\be:U\arr\be(U)$ and
$\en:U\arr\en(U)$ are
homeomorphisms. If $U$ is an open bisection, then the map
$\be(g)\mapsto\en(g)$ for $g\in U$ is a homeomorphism between open
subsets of $\G^{(0)}$. If every element of $\G$ has a neighborhood that is a bisection, then the
groupoid $\G$ is called \emph{\'etale}.
\index{etale groupoid@\'etale groupoid}
\index{groupoid!etale@\'etale}

The set of homeomorphisms defined by bisections is a
pseudogroup acting on $\G^{(0)}$, which will be called the
\emph{pseudogroup associated with the groupoid}
\index{pseudogroup!associated with a groupoid} and denoted $\pG$.

Note that we will consider bisections both as maps (elements of the
pseudogroup) and as subsets of the groupoid of germs. So, for example, if
$F_1, F_2\in\pG$, then notation $g\in F_1$ means that $g$ is a germ of
$F_1$, and $F_1\subset F_2$ means that $F_1$ is a restriction of $F_2$.

For any pseudogroup $\wt\Gh$ acting on a space
$\X$ a \emph{germ} \index{germ} of $\wt\Gh$ is an
equivalence class of a
pair $(F, x)$, where $F:U\arr V$ is an element of $\wt\Gh$ and $x\in
U$. Two pairs $(F_1, x_1)$ and $(F_2, x_2)$ are equivalent (define the
same germ) if $x_1=x_2$ and there exists a neighborhood $U$ of $x_1$
such that $F_1|_U=F_2|_U$. We define a topology on the set of all
germs of elements of $\wt\Gh$ choosing the basis of open sets consisting of
all sets of the form $\{(F, x);:\;x\in U\}$ for
$F:U\arr V$ an element of $\wt\Gh$. It is easy to see that the set of all
germs of $\wt\Gh$ is an \'etale groupoid with respect to the multiplication
\[(F_1, x_1)(F_2, x_2)=(F_1F_2, x_2),\] where the product is defined
if and only if $F_2(x_2)=x_1$. The inverse is given by the rule
$(F, x)^{-1}=(F^{-1}, F(x))$.

If $\G$ is the groupoid of germs of a pseudogroup $\wt\Gh$ acting on
$\X$, then $\wt\Gh$ coincides with the pseudogroup $\pG$ associated with
$\G$, and the groupoid
of germs of $\pG$ is naturally isomorphic to $\G$. Here we
identify $\X$ with the space of units $\G^{(0)}$.
Note that in general the groupoid of germs of a pseudogroup is not
Hausdorff.

We say that $\G$ is a \emph{groupoid of germs},
\index{groupoid!of germs} if there exists a pseudogroup
such that $\G$ is its groupoid of germs. In other words, a groupoid
$\G$ is a groupoid of germs if it is \'etale and coincides with the
groupoid of germs of the associated pseudogroup $\pG$.

\begin{defi}
Let $\G$ be a groupoid of germs. We say that $U\subset\G$ is
\emph{extendable} \index{extendable subset of a groupoid}
if there exists $V\in\pG$ such that $\overline U\subset V$.
\end{defi}

Since we assume that the space of units $\G^{(0)}$ is locally compact
and metrizable, every element of $\G$ has a compact extendable
neighborhood $U$. Moreover, we may find such $U$ that there exists an
element $\wh U\in\pG$ such that $\wh U\supset\overline U$ and $\wh U$ is
extendable.

\begin{defi} Suppose that $\G^{(0)}$ is a metric space.
For $U\in\pG$ and $g\in\G$ we say that $g$ is
\emph{$\epsilon$-contained}
\index{epsilon-contained@$\epsilon$-contained}
in $U$ if the $\epsilon$-neighborhood of
$\be(g)$ is contained in $\be(U)$.
\end{defi}

\begin{lemma}
\label{lem:Lebesguenumber}
Let $C\subset\G$ be a compact set, and let $\mathcal{U}\subset\pG$ be
an open covering of $C$. Then there exists $\epsilon>0$ such that for
every $g\in C$ there exists $U\in\mathcal{U}$ such that $g$ is
$\epsilon$-contained in $U$.
\end{lemma}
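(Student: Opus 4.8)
The plan is to reduce the statement to the classical Lebesgue number argument, carried out downstairs in the metric space $\G^{(0)}$ while keeping track of the germs upstairs. The key structural input is that $\G$ is \'etale: each $U\in\pG$ is a bisection, so the origin map $\be$ restricts to a homeomorphism of $U$ onto the open set $\be(U)\subset\G^{(0)}$. In particular, whenever $g\in U$ we have $\be(g)\in\be(U)$ with $\be(U)$ open, so there is a radius $r>0$ such that the $2r$-neighborhood of $\be(g)$ is contained in $\be(U)$.

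Concretely, first I would associate to every $g\in C$ an element $U_g\in\mathcal{U}$ containing $g$ (possible since $\mathcal{U}$ covers $C$) together with a radius $r_g>0$ such that $\{y : |\be(g)-y|<2r_g\}\subset\be(U_g)$. Then I would form the open set $W_g=U_g\cap\be^{-1}(\{y:|\be(g)-y|<r_g\})\subset\G$, which contains $g$; the family $\{W_g\}_{g\in C}$ is an open cover of $C$. Using only quasi-compactness of $C$ (no Hausdorff assumption on $\G$ is needed) I would extract a finite subcover $W_{g_1},\dots,W_{g_m}$ and set $\epsilon=\min_j r_{g_j}>0$.

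To finish, given an arbitrary $g\in C$ choose $j$ with $g\in W_{g_j}$. Then $g\in U_{g_j}$ and $|\be(g)-\be(g_j)|<r_{g_j}$, so for any $y$ with $|\be(g)-y|<\epsilon$ the triangle inequality gives $|y-\be(g_j)|<\epsilon+r_{g_j}\le 2r_{g_j}$, whence $y\in\be(U_{g_j})$. Thus the $\epsilon$-neighborhood of $\be(g)$ lies in $\be(U_{g_j})$, i.e., $g$ is $\epsilon$-contained in $U_{g_j}$ (and, as a bonus, $g$ itself lies in $U_{g_j}$).

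The only real subtlety, and the step I would be most careful about, is that the cover $\mathcal{U}$ lives upstairs in the groupoid $\G$ (which need not be Hausdorff) while the $\epsilon$-containment condition is measured downstairs in the metric space $\G^{(0)}$. The \'etale structure is exactly what bridges this gap, turning the open germ-neighborhoods $U_g$ into open base-neighborhoods $\be(U_g)$; the factor-of-two margin in the definition of $r_g$ is what allows a single finite subcover to produce one uniform $\epsilon$ via the triangle inequality, just as in the classical Lebesgue number lemma.
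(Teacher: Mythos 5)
Your proof is correct, but it follows a different classical template than the paper's. The paper adapts the averaging proof of the Lebesgue number lemma: for each $g\in C$ and $U\in\mathcal{U}$ it sets $\delta_{g,U}$ equal to the supremum of those $\epsilon\ge 0$ for which $g$ is $\epsilon$-contained in $U$, forms the average $f(g)=\frac{1}{|\mathcal{U}|}\sum_{U\in\mathcal{U}}\delta_{g,U}$, argues that $f$ is continuous (this is where the \'etale hypothesis enters) and strictly positive, and takes $\epsilon$ to be a positive lower bound of $f$ on the compact set $C$; then for every $g$ the average exceeds $\epsilon$, so some $\delta_{g,U}$ does too. You instead adapt the finite-subcover proof: a doubled radius $2r_g$ at each $\be(g)$, open sets $W_g=U_g\cap\be^{-1}(B(\be(g),r_g))$ pulled back upstairs through the continuous map $\be$, a finite subcover of $C$, and $\epsilon=\min_j r_{g_j}$, closed off by the triangle inequality. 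Each route has something to recommend it. Yours uses only continuity of $\be$ and quasi-compactness of $C$, and it sidesteps the one step the paper leaves implicit, namely the verification that the functions $\delta_{g,U}$ (hence $f$) are continuous. It also yields slightly more than the literal statement: the $U\in\mathcal{U}$ you produce actually contains the germ $g$, whereas the paper's argument only guarantees that the $\epsilon$-neighborhood of $\be(g)$ lies in $\be(U)$ for some $U$, which need not contain $g$; the stronger form is the one actually invoked later (e.g.\ in the proof of Theorem~\ref{th:localproduct}, where germs $g_i$ must be germs of the covering elements $F_i$ in which they are $\epsilon$-contained). The paper's proof, in exchange, is shorter once continuity of $f$ is granted, producing the uniform $\epsilon$ in one stroke from the extreme value argument.
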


In conditions of the lemma, we say that $\epsilon$ is a \emph{Lebesgue's
  number} of the covering $\mathcal{U}$ of $C$.

\begin{proof}
The proof basically repeats the proof of the classical Lebesgue's
number lemma. We may assume that $\mathcal{U}$ is finite, and that no
element of $\mathcal{U}$ covers the whole set $C$. For every $g\in C$
and $U\in\mathcal{U}$ denote by $\delta_{g, U}$
the supremum of numbers $\epsilon\ge 0$ such that $g$ is $\epsilon$-contained
in $U$ (and zero if $g\notin U$). Denote
$f(g)=\frac{1}{|\mathcal{U}|}\sum_{U\in\mathcal{U}}\delta_{g, U}$. The
function $f:C\arr\R$ is continuous, since $\mathcal{G}$ is \'etale. It
is strictly positive, since $\mathcal{U}$ covers $C$. Let $\epsilon$
be a positive lower bound for the values of $f$ on $C$. Then for every
$g\in C$ the average $f(g)$ of the numbers $\delta_{g, U}$ is greater than
$\epsilon$, hence one of the numbers $\delta_{g, U}$ is greater than
$\epsilon$. Consequently, $\epsilon$ satisfies the conditions of the lemma.
\end{proof}

\section{Actions of groupoids and equivalence}

\subsection{Actions of groupoids}
The following definition is given in~\cite{muhlyrenault:equiv}
and~\cite[III.$\mathcal{G}$ Definition  3.11]{bridhaefl}.

\begin{defi}
\label{def:graction}
A \emph{(right) action} \index{action!of a groupoid} of a groupoid
$\G$ on a space $\mathfrak{B}$ over an
open map $P:\mathfrak{B}\arr\G^{(0)}$ is a continuous map $(x,
g)\mapsto x\cdot g$ from the set
\[\mathfrak{B}\times_P\G=\{(x, g)\;:\;P(x)=\en(g)\}\]
to $\mathfrak{B}$ such that $P(x\cdot g)=\be(g)$, and $(x\cdot
g_1)\cdot g_2=x\cdot g_1g_2$ for all $x\in\mathfrak{B}$ and $g_1,
g_2\in\G$ such that $P(x)=\en(g_1)$ and $\be(g_1)=\en(g_2)$.
\end{defi}

Similarly, the \emph{left action} is a map $(g, x)\mapsto g\cdot x$ from
$\G\times_P\mathfrak{B}=\{(g, x)\;:\;P(x)=\be(g)\}$ to
$\mathfrak{B}$ such that $P(g\cdot x)=\en(g)$ and $g_1\cdot
(g_2\cdot x)=g_1g_2\cdot x$.

See Figure~\ref{fig:graction} for a schematic illustration of Definition~\ref{def:graction}.

\begin{figure}
\centering
\includegraphics{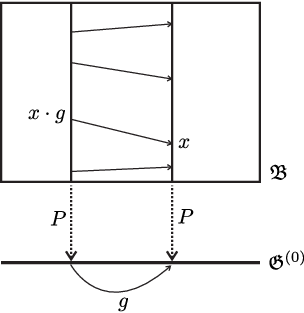}
\caption{Action of a groupoid}\label{fig:graction}
\end{figure}

\begin{defi}
\label{def:actiongroupoid}
Suppose that we have a right action of $\G$ on $\mathfrak{B}$ over
a map $P:\mathfrak{B}\arr\G^{(0)}$. The \emph{groupoid of the
action} \index{groupoid!of the action} $\mathfrak{B}\rtimes\G$ is the space
$\mathfrak{B}\times_P\G$ together with multiplication
\[(x_1, g_1)\cdot (x_2, g_2)=(x_1, g_1g_2),\]
where the product is defined if and only if $x_2=x_1\cdot g_1$.
\end{defi}

Note that the space of units
of $\mathfrak{B}\rtimes\G$ is the set of elements $(x,
g)\in\mathfrak{B}\times_P\G$
such that $g$ is a unit. The unit $(x, g)$ is
uniquely determined by $x$, thus the unit space of
$\mathfrak{B}\rtimes\G$ is naturally identified with $\mathfrak{B}$.

A right action is \emph{free} \index{action!free} \index{free action}
if $x\cdot g=x$ implies that $g$ is a unit. The action is \emph{proper} \index{action!proper}
\index{proper action} if the map
\[(\be,
\en):(x, g)\mapsto (x,
x\cdot g):\mathfrak{B}\rtimes\G\arr\mathfrak{B}^2\] is proper. 
In other words, the action is proper, if for every
compact set $C\subset\mathfrak{B}$ the set of elements $g\in\G$
such that $C\cdot g\cap C\ne\emptyset$ is compact.

Suppose that we have a free proper action of $\G$ on
$\mathfrak{B}$. Consider the space $\mathfrak{B}*_P\mathfrak{B}=\{(x,
y)\in\mathfrak{B}^2\;:\;P(x)=P(y)\}$. We have a natural groupoid
structure on $\mathfrak{B}*_P\mathfrak{B}$ given by
\[(x, y)\cdot (y, z)=(x, z),\]
where $(x, y)$ is a unit if and only if $x=y$ (so that the origin
and the target maps are given by the equalities $\be(x, y)=y$ and $\en(x, y)=x$).

The groupoid $\G$ acts on $\mathfrak{B}*_P\mathfrak{B}$ by the
diagonal action. Consider the space $\mathfrak{B}*_P\mathfrak{B}/\G$
of orbits of the action (which is Hausdorff, for instance
by~\cite[Proposition~4.2.1]{nek:book}). Note that the
groupoid structure on $\mathfrak{B}*\mathfrak{B}$ induces a groupoid
structure on the quotient, since from $(x_1, y_1)=(x, y)\cdot g_1$
and $(y_1, z_1)=(y, z)\cdot g_2$ follows $y\cdot g_1=y\cdot g_2$,
which implies $g_1=g_2$ by freeness of the action, so that $(x_1,
z_1)=(x, z)\cdot g_1$.

\subsection{Equivalence of groupoids and pseudogroups}
\label{s:equivalence}

The following definition was introduced in~\cite{muhlyrenault:equiv}.
\begin{defi}
Let $\G$ and $\Gh$ be topological groupoids. A $(\G, \Gh)$-equivalence is given by
a locally compact Hausdorff space $\mathfrak{B}$ together with a free
proper left $\G$-action and a free proper right $\Gh$-action over maps
$P_1:\mathfrak{B}\arr\G^{(0)}$ and $P_2:\mathfrak{B}\arr\Gh^{(0)}$, such that
\begin{enumerate}
\item the actions of $\G$ and $\Gh$ commute, i.e., if $g\cdot x$ and
$x\cdot h$ are defined, then $(g\cdot x)\cdot h$ and $g\cdot (x\cdot h)$ are
defined and are equal;
\item the maps $P_1$ and $P_2$ induce bijections
$\mathfrak{B}/\Gh\arr\G^{(0)}$ and
$\G\backslash\mathfrak{B}\arr\Gh^{(0)}$.
\end{enumerate} \index{equivalence!of groupoids}
\end{defi}

It is convenient to imagine elements $x\in\mathfrak{B}$ of an equivalence as arrows
from $P_2(x)$ to $P_1(x)$ and to interpret the actions of $\G$ and
$\Gh$ as compositions of these arrows with the arrows of the
groupoids.

Note that the first two conditions imply that $P_1$ and $P_2$
induce well defined maps $\mathfrak{B}/\Gh\arr\G^{(0)}$ and
$\G\backslash\mathfrak{B}\arr\Gh^{(0)}$ respectively. Namely, by
the second condition, if $g\cdot x$ and $x\cdot h$ are defined,
i.e., if $P_1(x)=\be(g)$ and $P_2(x)=\en(h)$, then $(g\cdot
x)\cdot h$ and $g\cdot (x\cdot h)$ are defined, i.e.,
\[P_2(g\cdot x)=\en(h)=P_2(x),\qquad P_1(x\cdot
h)=\be(g)=P_1(x).\]

Condition (2) of the definition implies that for every pair $(x,
y)\in\mathfrak{B}*_{P_1}\mathfrak{B}$ there exists $h\in\Gh$ such
that $x\cdot h=y$. The element $h$ is unique, by freeness of the
action. The map $(x, y)\mapsto h$ is invariant under the action of
$\G$ (since $x\cdot h=y$ is equivalent to $g\cdot x\cdot h=g\cdot
y$), and it induces a map from
$\G\backslash\left(\mathfrak{B}*_{P_1}\mathfrak{B}\right)$ to $\Gh$. It is easy
to check that this map is an isomorphism of groupoids.

If $\mathfrak{B}_1$ is an $(\G, \Gh)$-equivalence, and
$\mathfrak{B}_2$ is a $(\Gh, \mathfrak{K})$-equivalence, then
$\mathfrak{B}_1\otimes_{\Gh}\mathfrak{B}_2$ is a $(\G,
\mathfrak{K})$-equivalence, where
$\mathfrak{B}_1\otimes_{\Gh}\mathfrak{B}_2$ is the quotient of the
space \[\{(x, y)\;:\;x\in\mathfrak{B}_1, y\in\mathfrak{B}_2,
P_2(x)=P_1(y)\}\subset\mathfrak{B}_1\times\mathfrak{B}_2\] by the
equivalence relation $(x, y)=(x\cdot g, g^{-1}\cdot y)$.

\subsection{Equivalence of \'etale groupoids}

\begin{lemma}
\label{lem:etaleaction}
If $\mathfrak{B}$ is a right $\G$-space and $\G$ is \'etale, then
the action groupoid $\mathfrak{B}\rtimes\G$ is \'etale.
\end{lemma}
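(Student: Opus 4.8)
The plan is to verify the defining property of an \'etale groupoid directly: I will show that every element of $\mathfrak{B}\rtimes\G$ has an open neighborhood that is a bisection, i.e.\ on which both the origin and the target map restrict to homeomorphisms onto open subsets of the unit space $\mathfrak{B}$. First I would record how $\be$ and $\en$ act on $\mathfrak{B}\rtimes\G$ under the identification of its units with $\mathfrak{B}$. From the multiplication rule and the description of inverses one computes $(x,g)^{-1}=(x\cdot g, g^{-1})$, so that $\en(x,g)$ is the unit corresponding to the point $x$ and $\be(x,g)$ is the unit corresponding to the point $x\cdot g$; thus, as maps into $\mathfrak{B}$, we have $\en(x,g)=x$ and $\be(x,g)=x\cdot g$.

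Fix an element $(x,g)$. Since $\G$ is \'etale, I choose a bisection $U\subset\G$ that is an open neighborhood of $g$, so that $\be\colon U\arr\be(U)$ and $\en\colon U\arr\en(U)$ are homeomorphisms onto open subsets of $\G^{(0)}$. Put $W=\mathfrak{B}\times_P U$, which is an open neighborhood of $(x,g)$ in $\mathfrak{B}\times_P\G$. The target map is the easy case: $\en|_W$ is the restriction of the first projection $(y,h)\mapsto y$, its image is the open set $P^{-1}(\en(U))$, and it is a bijection onto this set because for each such $y$ there is exactly one $h\in U$ with $\en(h)=P(y)$; its inverse $y\mapsto(y,(\en|_U)^{-1}(P(y)))$ is continuous because $(\en|_U)^{-1}$ and $P$ are.

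The main work is the origin map $\be|_W\colon(y,h)\mapsto y\cdot h$, where the action enters. For injectivity I would use that $\be(y\cdot h)=\be(h)$, so that equality of two images forces $h_1=h_2=h$ by the bisection property, and then cancel $h$ by acting with $h^{-1}$, invoking $(y\cdot h)\cdot h^{-1}=y\cdot\en(h)=y$ (the unit $\en(h)=P(y)$ acts as the identity). For surjectivity onto the open set $P^{-1}(\be(U))$ I would, given $z$ with $P(z)\in\be(U)$, take the unique $h\in U$ with $\be(h)=P(z)$ and set $y=z\cdot h^{-1}$, checking that $(y,h)\in W$ and $y\cdot h=z$. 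Continuity of the inverse $z\mapsto(z\cdot h^{-1},h)$ then follows from continuity of $P$, of $(\be|_U)^{-1}$, of inversion in $\G$, and of the action map. This exhibits $W$ as a bisection, and since $(x,g)$ was arbitrary, $\mathfrak{B}\rtimes\G$ is \'etale.

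I expect the only real obstacle to be the bookkeeping in the origin map: one must keep straight the action axioms, in particular that $x\cdot\en(g)=x$ and hence $(x\cdot g)\cdot g^{-1}=x$, and one must note that continuity of the reconstruction $z\mapsto(z\cdot h^{-1},h)$ genuinely uses continuity of the action, not merely continuity of the groupoid operations. By contrast, the target map, the openness of $W$, and the identification of $\be,\en$ are all routine.
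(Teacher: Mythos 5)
Your proof is correct and follows essentially the same route as the paper: choose an open bisection $U\ni g$ in $\G$, pull it back to the open set $\mathfrak{B}\times_P U$ in the action groupoid, and exhibit the explicit continuous local inverses $y\mapsto\bigl(y,(\en|_U)^{-1}(P(y))\bigr)$ for the target and $z\mapsto\bigl(z\cdot h^{-1},h\bigr)$ with $h=(\be|_U)^{-1}(P(z))$ for the origin. The paper's proof is just a terser version of yours, writing down these two inverse formulas directly, while you additionally spell out the injectivity/surjectivity bookkeeping (including the use of $x\cdot\en(g)=x$) that the paper leaves implicit.
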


\begin{proof}
Suppose that the action is defined over a map
$P:\mathfrak{B}\arr\G^{(0)}$. The origin and the target maps of the action
groupoid are given by
\[\be(x, g)=(x\cdot g, \be(g)),\qquad\en(x, g)=(x, \en(g)).\]
If $U$ is an open neighborhood of $g$ such that $\be:U\arr\G^{(0)}$ and
$\en:U\arr\G^{(0)}$ are homeomorphic embeddings, then
$U'=\mathfrak{B}\rtimes\G\cap\mathfrak{B}\times U$ is an open set
such that restrictions of $\be$ and $\en$ to $U'$ are
homeomorphic embeddings. Namely, the local inverses of $\be$ and $\en$ are
\[(x, P(x))\mapsto (x\cdot(\be^{-1}(P(x)))^{-1},
\be^{-1}(P(x))),\qquad (x, P(x))\mapsto (x, \en^{-1}(P(x))),\]
where $\be^{-1}$ and $\en^{-1}$ are the inverses of
$\be:U\arr\be(U)$ and $\en:U\arr\en(U)$ respectively.
\end{proof}

Suppose that $\mathfrak{B}$ is a $(\G, \Gh)$-equivalence, where
$\G$ and $\Gh$ are \'etale groupoids. Let
$P_1:\mathfrak{B}\arr\G^{(0)}$ and $P_2:\mathfrak{B}\arr\Gh^{(0)}$
be the maps over which the actions are defined.

\begin{lemma}
\label{lem:P1P2} The maps $P_1$ and $P_2$ are
\'etale,\index{map!\'etale}
i.e., are local homeomorphisms.
\end{lemma}

\begin{proof}
Let $x\in\mathfrak{B}$ be an arbitrary point and let $U$ be a
compact neighborhood of $x$. By properness of the groupoid
$\mathfrak{B}\rtimes\Gh$, the set $C$ of elements $(y,
h)\in\mathfrak{B}\rtimes\Gh$ such that $y\in U$ and $y\cdot h\in U$
is compact. Since $\mathfrak{B}\rtimes\Gh$ is \'etale, the set of
units of $\mathfrak{B}\rtimes\Gh$ is open, hence
$C'=C\setminus(\mathfrak{B}\rtimes\Gh)^{(0)}$ is compact.

The groupoid $\mathfrak{B}\rtimes\Gh$ is \'etale and principal, hence
for $(y, h)\in C'$ there exists a neighborhood $V$ of $(y, h)$
such that $\overline V$ is compact and either
$x\notin \be(\overline V)\cup\en(\overline V)$
(if $x\ne y$ and $x\ne y\cdot h$), or $x\in\be(V)$ and
$x\notin\en(\overline V)$ (if $x=y$), or
$x\notin\be(\overline{V})$ and $x\in\en(V)$ (if $x=y\cdot h$). It
follows that there is a finite set $A$ of elements
$V\in\wt{\mathfrak{B}\rtimes\Gh}$ covering
$C'$ and satisfying the above conditions. Then
\[U'=U\setminus\bigcup_{V\in A, x\notin\be(V)}\be(\overline V)\cup\bigcup_{V\in A,
x\notin\en(V)}\en(\overline V)\] is a neighborhood of $x$ such
that there is no non-unit element $(y, h)\in\mathfrak{B}\rtimes\Gh$
such that $y\in U'$ and $y\cdot h\in U'$.

Similarly, we can find a neighborhood $U''\subset U$ of $x$ such that there
is no non-unit element $(g, y)\in\G\ltimes\mathfrak{B}$ such that
$y\in U''$ and $g\cdot y\in U''$. Consider the intersection
$U'\cap U''$. It is a neighborhood of $x$ such that its
closure is a compact subset of $U$ and such that the maps
$P_1:U'\cap U''\arr\G^{(0)}$ and $P_2:U'\cap U''\arr\Gh^{(0)}$ are
injective, since $P_1(y_1)=P_1(y_2)$ implies existence of
$h\in\Gh$ such that $y_1\cdot h=y_2$. Since $P_i$ are continuous
and open, we conclude that they are homeomorphic embeddings on
$U'\cap U''$.
\end{proof}

Let $\mathfrak{B}$ be a $(\G, \Gh)$-equivalence. Consider the
disjoint union
\[\G\vee_{\mathfrak{B}}\Gh:=\G\sqcup\mathfrak{B}\sqcup\mathfrak{B}^{-1}\sqcup\Gh,\]
where $\mathfrak{B}^{-1}$ is a copy of $\mathfrak{B}$. We denote by
$g^{-1}$ the element of $\mathfrak{B}^{-1}$ corresponding to
$g\in\mathfrak{B}$. Denote $\en(x)=P_1(x)$, $\be(x)=P_2(x)$,
$\be(x^{-1})=P_1(x)$, $\en(x^{-1})=P_2(x)$ for $x\in\mathfrak{B}$.
Define multiplication on $\G\vee_{\mathfrak{B}}\Gh$ using multiplications inside
$\G$ and $\Gh$, actions of $\G$ and $\Gh$ on $\mathfrak{B}$,
``flipped'' actions:
\[x^{-1}\cdot g=(g^{-1}\cdot x)^{-1},\qquad h\cdot x^{-1}=(x\cdot
h^{-1})^{-1}\] on $\mathfrak{B}^{-1}$, and multiplication between
elements of $\mathfrak{B}$ and $\mathfrak{B}^{-1}$ given by the
rules
\[x\cdot y^{-1}=g\in\G,\quad\text{if and only if}\quad g\cdot y=x\]
\[x^{-1}\cdot y=h\in\Gh,\quad\text{if and only if}\quad x\cdot
h=y\] for $x, y\in\mathfrak{B}$. Note that the corresponding
elements $g$ and $h$ exist and are unique by the definition of
equivalences.

It is easy to show that the defined multiplication defines a
groupoid structure on $\G\vee_{\mathfrak{B}}\Gh$.

\begin{proposition}
If $\G$ and $\Gh$ are \'etale groupoids, then
$\G\vee_{\mathfrak{B}}\Gh$ is also an \'etale groupoid.
\end{proposition}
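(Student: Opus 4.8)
The plan is to show that $\G\vee_{\mathfrak{B}}\Gh$ is étale by verifying that every element has a neighborhood that is a bisection; by symmetry it suffices to handle the four types of elements (those in $\G$, in $\Gh$, in $\mathfrak{B}$, and in $\mathfrak{B}^{-1}$) one block at a time. For elements of $\G$ and $\Gh$, the bisection property is inherited directly from the étale structure of those groupoids, so the real content is finding bisection neighborhoods for points of $\mathfrak{B}$ (and then $\mathfrak{B}^{-1}$ by the flip $x\mapsto x^{-1}$, which reverses origin and target and is a homeomorphism of $\mathfrak{B}$ onto $\mathfrak{B}^{-1}$).

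For a point $x\in\mathfrak{B}$, recall that the origin and target maps are $\be(x)=P_2(x)\in\Gh^{(0)}$ and $\en(x)=P_1(x)\in\G^{(0)}$. First I would invoke Lemma~\ref{lem:P1P2}, which states that $P_1$ and $P_2$ are étale: this gives a neighborhood $U$ of $x$ in $\mathfrak{B}$ on which both $P_1|_U:U\arr\G^{(0)}$ and $P_2|_U:U\arr\Gh^{(0)}$ are homeomorphic embeddings onto open subsets. Since $\be=P_2$ and $\en=P_1$ on $\mathfrak{B}$, this $U$ is exactly a bisection of $\G\vee_{\mathfrak{B}}\Gh$ consisting of elements of $\mathfrak{B}$. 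Thus every point of $\mathfrak{B}$ has a bisection neighborhood, and applying the flip homeomorphism yields the same for $\mathfrak{B}^{-1}$, where $\be(x^{-1})=P_1(x)$ and $\en(x^{-1})=P_2(x)$.

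The one point requiring care is topological compatibility: I must confirm that the disjoint-union topology on $\G\vee_{\mathfrak{B}}\Gh=\G\sqcup\mathfrak{B}\sqcup\mathfrak{B}^{-1}\sqcup\Gh$ makes multiplication and inversion continuous, and that the neighborhoods produced above are open in this topology. Because each of the four pieces is open and closed in the disjoint union, a bisection neighborhood found \emph{within} one piece is automatically open in $\G\vee_{\mathfrak{B}}\Gh$, so no gluing across pieces is needed for the bisection count. Continuity of the structure maps follows from continuity of the four constituent operations (the products inside $\G$ and $\Gh$, the two actions, the flipped actions, and the $\mathfrak{B}\times\mathfrak{B}^{-1}$ products defined via the equivalence) together with the already-established fact that these operations are well defined and jointly continuous on the equivalence.

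The main obstacle, and the step I expect to absorb the genuine work, is Lemma~\ref{lem:P1P2}: once we know $P_1$ and $P_2$ are étale the bisection property for $\mathfrak{B}$ is immediate, so the substance of the proposition is really packaged into that lemma rather than into the gluing. Everything else is the routine bookkeeping of checking that a disjoint union of étale pieces, with continuous structure maps, remains étale — which reduces to the observation that each element lies in exactly one of the four clopen blocks and has a bisection neighborhood inside that block.
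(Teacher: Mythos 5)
Your proposal is correct and follows the paper's own route: the paper's proof is the single observation that the proposition follows from Lemma~\ref{lem:P1P2}, and your argument is just the natural unpacking of that sentence (bisection neighborhoods inside each clopen block, with the étaleness of $P_1$ and $P_2$ carrying the real content for $\mathfrak{B}$ and $\mathfrak{B}^{-1}$). Your closing remark that the substance is packaged into Lemma~\ref{lem:P1P2} rather than the gluing is exactly the paper's point of view.
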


\begin{proof}
It follows from Lemma~\ref{lem:P1P2} that the operations in the
groupoid $\G\vee_{\mathfrak{B}}\Gh$ are continuous and that the
obtained groupoid is \'etale.
\end{proof}

Now we can introduce a more convenient definition of equivalence in the case of groupoids of germs.

\begin{proposition}
Let $\G$ and $\Gh$ be groupoids of germs of pseudogroups $\pG$ and
$\wt{\Gh}$ respectively. The groupoids $\G$ and $\Gh$ are equivalent
if and only if there exists a pseudogroup
$\pG\vee\wt{\Gh}$ acting on the disjoint union
$\G^{(0)}\sqcup\Gh^{(0)}$ such that the restriction of
$\pG\vee\wt{\Gh}$ to $\G^{(0)}$ (resp.\ $\Gh^{(0)}$) coincides
with $\pG$ (resp.\ $\wt{\Gh}$), and the sets $\G^{(0)}$ and
$\Gh^{(0)}$ are $\pG\vee\wt{\Gh}$-transversals.\index{equivalence!of groupoids}
\end{proposition}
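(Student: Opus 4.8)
The plan is to run everything through the \emph{linking groupoid} $\G\vee_{\mathfrak{B}}\Gh$ of Section~\ref{s:equivalence}, which has unit space $\G^{(0)}\sqcup\Gh^{(0)}$, restricts to $\G$ over $\G^{(0)}$ and to $\Gh$ over $\Gh^{(0)}$, and is \'etale by the preceding proposition. The one general fact I would isolate first is this: for \emph{any} groupoid $\mathcal{K}$ whose unit space splits as a disjoint union of two clopen pieces, a single arrow of $\mathcal{K}$ with both endpoints in one piece lies, by definition of the restriction, in $\mathcal{K}|$ of that piece. Hence the intersection of a $\mathcal{K}$-orbit with $\G^{(0)}$ is, when nonempty, exactly one $\pG$-orbit, and similarly for $\Gh^{(0)}$; and the restriction of the pseudogroup of $\mathcal{K}$ to the clopen set $\G^{(0)}$ is exactly $\pG$. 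I would also record that $\G\vee_{\mathfrak{B}}\Gh$ is not merely \'etale but a genuine groupoid of germs: an element with $\be(g)=\en(g)=q\in\Gh^{(0)}$ inducing the identity germ on the unit space is a unit because $\Gh$ is a groupoid of germs, and an element of $\mathfrak{B}$ is determined by its germ because otherwise $g_1^{-1}g_2\in\Gh$ would be a non-unit with identity germ. Thus $\G\vee_{\mathfrak{B}}\Gh$ is the groupoid of germs of a well-defined pseudogroup, the candidate for $\pG\vee\wt{\Gh}$.

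For the forward implication I start from a $(\G,\Gh)$-equivalence $\mathfrak{B}$ and let $\pG\vee\wt{\Gh}$ be the pseudogroup of $\G\vee_{\mathfrak{B}}\Gh$. The restriction statements are immediate from the boxed general fact. For the orbit condition, the same fact shows the two intersections of each orbit are single $\pG$- and $\wt{\Gh}$-orbits, so I only need each orbit to meet \emph{both} $\G^{(0)}$ and $\Gh^{(0)}$. This is precisely condition (2) of the equivalence: surjectivity of $P_1$ means every $p\in\G^{(0)}$ equals $P_1(x)=\en(x)$ for some $x\in\mathfrak{B}$, so $p$ is joined by the arrow $x^{-1}$ to $P_2(x)\in\Gh^{(0)}$, and symmetrically via $P_2$. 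Hence every orbit of $\pG\vee\wt{\Gh}$ is the union of exactly one $\pG$-orbit and one $\wt{\Gh}$-orbit.

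For the converse, given $\pG\vee\wt{\Gh}$ with groupoid of germs $\mathcal{K}$, I set $\mathfrak{B}=\{g\in\mathcal{K}\;:\;\be(g)\in\Gh^{(0)},\ \en(g)\in\G^{(0)}\}$ and take $P_1=\en|_{\mathfrak{B}}$, $P_2=\be|_{\mathfrak{B}}$; these are open since $\mathcal{K}$ is \'etale. The left $\G$-action and right $\Gh$-action are given by groupoid multiplication: they commute by associativity, and $g\cdot x=x$ forces $g=\en(x)$, so both actions are free. The hypothesis on orbits yields condition (2): surjectivity of $P_1$ and $P_2$ is exactly the statement that every orbit meets both $\G^{(0)}$ and $\Gh^{(0)}$, while injectivity of $\mathfrak{B}/\Gh\arr\G^{(0)}$ follows because $P_1(x)=P_1(y)$ makes $x^{-1}y\in\Gh$ and $y=x\cdot(x^{-1}y)$, so $x$ and $y$ lie in one $\Gh$-orbit; the induced isomorphism $\G\backslash(\mathfrak{B}*_{P_1}\mathfrak{B})\arr\Gh$ then recovers $\Gh$ exactly as in Section~\ref{s:equivalence}.

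The part needing the most care is the topology of the constructed $\mathfrak{B}$. Properness comes cheaply: for compact $C\subset\mathfrak{B}$, every $h\in\Gh$ with $C\cdot h\cap C\neq\emptyset$ has the form $h=x^{-1}y$ with $x,y\in C$ and $P_1(x)=P_1(y)$, and since $P_1$ is continuous into the Hausdorff space $\G^{(0)}$ the fibered product $\{(x,y)\in C\times C:P_1(x)=P_1(y)\}$ is compact, so its continuous image under $(x,y)\mapsto x^{-1}y$ is the compact set of such $h$; the left action is handled symmetrically. The genuinely delicate point, which I expect to be the main obstacle, is that the definition of equivalence asks for $\mathfrak{B}$ to be Hausdorff, whereas a groupoid of germs need not be: two non-separable germs $g_1\ne g_2\in\mathfrak{B}$ over the same pair $(q,p)$ would force the isotropy element $g_1^{-1}g_2\in\Gh$ to be a non-unit approached by units, i.e.\ non-Hausdorffness already present in $\Gh$. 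The groupoid-of-germs property established in the first paragraph rules out \emph{extra} germs over the diagonal, and the standing assumption that every element has a compact Hausdorff neighborhood makes $\mathfrak{B}$ locally compact and locally Hausdorff; this is precisely the level of regularity inherited from $\G$ and $\Gh$, and it is in this sense that the equivalence is to be understood here.
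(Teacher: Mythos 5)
Your proposal is correct and follows essentially the same route as the paper: the forward direction via the pseudogroup associated with the linking groupoid $\G\vee_{\mathfrak{B}}\Gh$, and the converse by taking $\mathfrak{B}$ to be the set of germs joining $\Gh^{(0)}$ to $\G^{(0)}$ with actions given by composition, properness of the actions being the one substantive condition to verify. Your properness check (compactness of the fibered product $\{(x,y)\in C\times C\;:\;P_1(x)=P_1(y)\}$ together with continuity of $(x,y)\mapsto x^{-1}y$) differs in detail from the paper's argument, which covers $C$ by compact extendable closures of pseudogroup elements and writes each such germ as $h_1^{-1}h_2$, but both are valid, and your explicit discussion of the Hausdorffness of the constructed $\mathfrak{B}$ addresses a point the paper passes over in silence.
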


\begin{proof}
If $\G$ and $\Gh$ are equivalent, then it is easy to see that the
pseudogroup associated with the groupoid $\G\vee_{\mathfrak{B}}\Gh$
satisfies the conditions of the proposition.

In the other direction, the set $\mathfrak{B}$ of germs of elements of
$\pG\vee\wt{\Gh}$, such that their domain is in $\pG$ and range in
$\wt{\Gh}$, together with the actions of $\G$
and $\Gh$ on $\mathfrak{B}$ defined by composition, is an equivalence. The only
condition we have to check is properness of the actions. Any
compact subset $C$ of $\mathfrak{B}$ can be covered by compact
extendable closures of elements of $\pG\vee\wt{\Gh}$. Then for every
element $g$ of the action groupoid with origin and target in $C$ there
will exist germs $h_1$ and $h_2$ of elements of the covering such that
$g=h_1^{-1}h_2$.
\end{proof}

\begin{defi}
We say that two pseudogroups are \emph{equivalent}
\index{equivalence!of pseudogroups}
if their groupoids of germs are equivalent.
\end{defi}

\subsection{Localization}
\label{ss:localization}
We describe here standard methods of constructing pseudogroups equivalent to a
given one: \emph{localization} and \emph{restriction}.

\begin{proposition}
\label{pr:localization}
Let $\G$ be a groupoid, and let
$f:\mathcal{Y}\arr\G^{(0)}$ be an \'etale map such that the range of $f$
is a $\G$-transversal. Consider the pseudogroup $f^*(\pG)$
generated by homeomorphisms $F:U\arr V$ between open subsets
of $\mathcal{Y}$ such that $f|_U:U\arr f(U)$, $f|_V:V\arr f(V)$
are homeomorphisms, and $f|_V\circ F\circ f|_U^{-1}$ is an element
of $\pG$. Then  $f^*(\pG)$ is a pseudogroup equivalent to $\pG$.
\end{proposition}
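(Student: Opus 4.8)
The statement to prove is Proposition~\ref{pr:localization}: given an \'etale map $f:\mathcal{Y}\arr\G^{(0)}$ whose range meets every $\G$-orbit, the pullback pseudogroup $f^*(\pG)$ is equivalent to $\pG$. The natural strategy is to produce an explicit $(\G, f^*(\G))$-equivalence, where $f^*(\G)$ denotes the groupoid of germs of $f^*(\pG)$, and then invoke the definition of equivalence of groupoids directly. I would build the equivalence bimodule $\mathfrak{B}$ out of the map $f$ itself, composed with arrows of $\G$.

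**Constructing the bimodule.** Concretely, I would set
\[\mathfrak{B}=\{(y, g)\;:\;y\in\mathcal{Y},\ g\in\G,\ \en(g)=f(y)\}\big/{\sim},\]
i.e.\ the space of germs of elements of $\pG$ composed on one side with local sections of $f$, with $P_1(y,g)=\be(g)\in\G^{(0)}$ and $P_2(y,g)=y\in\mathcal{Y}=f^*(\G)^{(0)}$. Intuitively $\mathfrak{B}$ consists of arrows running from a point $y\in\mathcal Y$ to the point $\be(g)$, where $g$ is a $\G$-arrow emanating from $f(y)$. The left $\G$-action is by post-composition, $g'\cdot(y,g)=(y, g'g)$ when $\en(g')=\be(g)$, and the right $f^*(\G)$-action is by precomposition with germs of $f^*(\pG)$, using the defining recipe $f|_V\circ F\circ f|_U^{-1}\in\pG$ to relate an $f^*(\pG)$-germ at $y$ to a genuine $\G$-germ between the images $f(y)$ and $f(F(y))$. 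The key point making both actions well defined is precisely that $f$ is \'etale, so local inverses $f|_U^{-1}$ exist and are continuous; this is what lets me transport $\G$-arrows to $\mathcal Y$ and back.

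**Verifying the axioms.** I would then check the four conditions in the definition of a $(\G,f^*(\G))$-equivalence. Commutativity of the two actions is immediate since one acts by pre- and the other by post-composition of the same kind of germ-arrows, and associativity in $\G$ gives $(g'\cdot x)\cdot F = g'\cdot(x\cdot F)$. That $P_1$ and $P_2$ induce bijections $\mathfrak{B}/f^*(\G)\arr\G^{(0)}$ and $\G\backslash\mathfrak{B}\arr f^*(\G)^{(0)}$ reduces to two facts: surjectivity of $P_1$ is exactly the hypothesis that the range of $f$ is a $\G$-transversal (every unit of $\G$ is reachable by a $\G$-arrow from some $f(y)$), while the fiber identifications follow because two pairs $(y,g)$, $(y',g')$ with the same $P_1$-image differ by a unique $f^*(\G)$-germ, built by conjugating $g^{-1}g'$ through $f$. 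Freeness in both directions follows from this uniqueness, which in turn rests on $\pG$ being a groupoid of germs (so germs are determined by their action near a point). Local compactness and Hausdorffness of $\mathfrak{B}$ come from the standing assumptions on $\G^{(0)}$ and $\mathcal Y$ together with the \'etale property.

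**The main obstacle.** The genuinely delicate step is \emph{properness} of the two actions, since freeness and the bijection conditions are essentially formal. For properness I expect to argue as in the proof of the preceding equivalence constructions: cover a compact $C\subset\mathfrak{B}$ by compact extendable closures of bisections (using that $\G$ is a groupoid of germs with the local-compactness assumptions, and Lemma~\ref{lem:Lebesguenumber} to get a uniform Lebesgue number), and observe that any element of the action groupoid carrying one point of $C$ into $C$ factors as $h_1^{-1}h_2$ for germs $h_1,h_2$ of elements of that finite cover, so the relevant set of group(oid) elements is contained in a compact set. The technical care here is in checking that the \'etale structure of $f$ interacts correctly with the extendability of bisections, so that the cover by extendable pieces survives pullback along $f$; this is where I would spend the most effort. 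Once properness is in hand, $\mathfrak{B}$ is a genuine equivalence and the proposition follows.
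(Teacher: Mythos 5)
Your proposal is correct in substance but takes a genuinely different route from the paper's. The paper proves this proposition in one line: it invokes its criterion for equivalence of groupoids of germs (the proposition asserting that $\G$ and $\Gh$ are equivalent if and only if there is a pseudogroup $\pG\vee\wt\Gh$ acting on $\G^{(0)}\sqcup\Gh^{(0)}$ whose restrictions to the two pieces are $\pG$ and $\wt\Gh$ and each of whose orbits is the union of one orbit of each), taking $\pG\vee f^*(\pG)$ to be the pseudogroup on $\G^{(0)}\sqcup\mathcal{Y}$ generated by $\pG$ together with the restrictions of $f$ (these are local homeomorphisms from $\mathcal{Y}$ to $\G^{(0)}$ because $f$ is \'etale); the restriction to $\mathcal{Y}$ is then exactly $f^*(\pG)$, and the orbit condition is exactly the hypothesis that the range of $f$ is a $\G$-transversal. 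You instead construct the Muhly--Renault--Williams equivalence bimodule directly. This works, and it is worth noting that your $\mathfrak{B}$ --- germs of elements of $\pG$ composed with local sections of $f$ --- is precisely the set of ``cross germs'' $\pG\vee f^*(\pG)\setminus(\pG\cup f^*(\pG))$ that appears in the paper's proof of its criterion, and your properness argument (cover a compact subset of $\mathfrak{B}$ by compact extendable closures of bisections and factor every element of the action groupoid as $h_1^{-1}h_2$) is the same one used there. In effect you have inlined the proof of the general criterion into this special case: your argument is self-contained but duplicates work the paper did once and for all, while the paper's proof is shorter at the price of relying on that earlier machinery. Two small cautions: your composition conventions slip in places (you posit $\en(g)=f(y)$ but then describe $g$ as emanating from $f(y)$, and with the paper's conventions $g'g$ is defined when $\en(g)=\be(g')$, not when $\en(g')=\be(g)$); and Hausdorffness of $\mathfrak{B}$ is not automatic from the standing assumptions when $\G$ itself is non-Hausdorff, though the paper's own bimodule construction carries the same caveat.
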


We call the pseudogroup $f^*(\pG)$ the \emph{lift} of $\pG$ by
$f$. Its groupoid of germs $f^*(\G)$ is the lift of $\G$.\index{lift of a groupoid}

In particular, if $f$ is the identical embedding of an open subset
$Y\subset\G^{(0)}$, then $f^*(\pG)$ is the restriction
$\pG|_Y$. Therefore, restriction of a groupoid $\G$ to an open
transversal $Y$ is equivalent to $\G$.

\begin{proof} It is easy to check that we can define the pseudogroup
  $f^*(\pG)\vee\pG$
to be equal to the pseudogroup generated by $\pG$ and restrictions of
$f$.
\end{proof}

\begin{examp}
Consider an atlas of an $n$-dimensional manifold $M$, and let $\X$ be the disjoint
union of the corresponding open subsets of $\R^n$. Then we have a
natural \emph{pseudogroup of changes of charts} $\G$ acting on $\X$. It is
equal to the lift of the trivial pseudogroup on $M$ by the natural
quotient map $\X\arr M$. It follows that the pseudogroup $\G$
is equivalent to the trivial pseudogroup (i.e., the pseudogroup
consisting of identity maps on open subsets) of the manifold $M$.
\end{examp}

\begin{examp}
Let $G$ be a group acting properly and freely on a space $\X$. Then
the groupoid of the action $\X\rtimes G$ is equivalent to the trivial
(i.e., consisting only of units) groupoid of the space of orbits
$\X/G$. In particular, a manifold (as a trivial groupoid) 
is equivalent to the groupoid of the
action of the fundamental group on the universal covering.
\end{examp}

Another example of application of Proposition~\ref{pr:localization}
is the notion of localization to an open covering. If $\{U_i\}_{i\in I}$ is a collection of
open subsets of $\G^{(0)}$ such that their union is a $\pG$-transversal,
then \emph{localization} $\pG|_{\{U_i\}}$ is lift by the map
from the disjoint union $\bigsqcup_{i\in I}U_i$ to $\G^{(0)}$ equal to
the identical embedding on each $U_i$.

We denote by $(U_i, i)$ the
copies of $U_i$ in the disjoint union. If $g\in\G$ is such that
$\be(g)\in U_i$ and $\en(g)\in U_j$, then we denote by $(g, i, j)$
the corresponding element of the localization such that $\be(g, i,
j)\in (U_i, i)$ and $\en(g, i, j)\in (U_j, j)$. \index{localization}

\subsection{Groupoid equivalence for group actions}

Let $G_i$ be a group acting by homeomorphisms on $\X_i$, for
$i=1,2$.

\begin{proposition}
\label{prop:groupactionseq}
The groupoids $G_1\ltimes\X_1$ and
$G_2\ltimes\X_2$ of the actions are equivalent if and only if there exists a
space $\mathfrak{B}$ and commuting free proper left and right
actions of $G_1$ and $G_2$ respectively, on $\mathfrak{B}$ such
that $\mathfrak{B}/G_2$ (resp.\ $G_1\backslash\mathfrak{B}$) is
$G_1$-equivariantly (reps.\ $G_2$-equivariantly) homeomorphic to
$\X_1$ (resp.\ $\X_2$).
\end{proposition}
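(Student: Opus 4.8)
The plan is to take the equivalence space itself as the space $\mathfrak{B}$ in both directions, exploiting the fact that an action of the groupoid $G_i\ltimes\X_i$ is nothing more than a group action of $G_i$ together with a bookkeeping of base points recorded by the moment map. Write $\G=G_1\ltimes\X_1$ and $\Gh=G_2\ltimes\X_2$, with unit spaces $\X_1$ and $\X_2$; an arrow $(g,x)\in\G$ has $\be(g,x)=x$ and $\en(g,x)=g\cdot x$, and symmetrically for $\Gh$. A right $\Gh$-orbit of a point then coincides with its $G_2$-orbit (the arrow with prescribed group component and correct target is unique), and dually a left $\G$-orbit is a $G_1$-orbit; this single observation is what makes the quotient conditions in the two pictures literally the same.

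For the ``if'' direction, suppose we are given commuting free proper left $G_1$- and right $G_2$-actions on $\mathfrak{B}$ with the stated quotient identifications. I take $P_1\colon\mathfrak{B}\to\X_1$ and $P_2\colon\mathfrak{B}\to\X_2$ to be the orbit maps $\mathfrak{B}\to\mathfrak{B}/G_2\cong\X_1$ and $\mathfrak{B}\to G_1\backslash\mathfrak{B}\cong\X_2$, and promote the group actions to groupoid actions by $(g,x)\cdot b=g\cdot b$ whenever $P_1(b)=x$, and $b\cdot(h,y)=b\cdot h$ for the unique arrow of $\Gh$ with $\en(h,y)=P_2(b)$. Since $P_1$ is $G_1$-equivariant, $P_1(g\cdot b)=g\cdot P_1(b)=\en(g,x)$, so this is a genuine left $\G$-action, and symmetrically on the right. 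Freeness of the $\G$- and $\Gh$-actions is immediate from freeness of the group actions (a stabilizing arrow has trivial group component, hence is a unit), and the induced bijections $\mathfrak{B}/\Gh\to\X_1$ and $\G\backslash\mathfrak{B}\to\X_2$ are exactly the given homeomorphisms. For the ``only if'' direction I reverse this: starting from a $(\G,\Gh)$-equivalence $\mathfrak{B}$ with moment maps $P_1,P_2$, define $g\cdot b:=(g,P_1(b))\cdot b$ and $b\cdot h:=b\cdot\gamma$ for the unique $\gamma\in\Gh$ with $\en(\gamma)=P_2(b)$ and group component $h$. Associativity of the groupoid actions gives $(g_1g_2)\cdot b=g_1\cdot(g_2\cdot b)$ and the analogous identity on the right, and units act trivially, so these are group actions; they commute because the $\G$- and $\Gh$-actions commute and because the left $\G$-action preserves $P_2$ while the right $\Gh$-action preserves $P_1$ (the compatibility recorded already in the discussion following the definition of equivalence). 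Since $P_1$ intertwines the $G_1$-action on $\mathfrak{B}$ with the original action on $\X_1$, and is open and continuous (indeed \'etale by Lemma~\ref{lem:P1P2}), the equivalence bijection $\mathfrak{B}/G_2\to\X_1$ is a $G_1$-equivariant homeomorphism, and symmetrically for $P_2$.

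The one point that requires genuine care is \textbf{properness}, where I must pass between properness of the groupoid actions and properness of the group actions. The key is that $\G=G_1\ltimes\X_1$ is, as a topological space, $G_1\times\X_1$, so a relatively compact subset of $\G$ has relatively compact (for $G_1$ discrete, finite) projection to $G_1$. Hence for compact $C\subset\mathfrak{B}$ the set $\{g\in G_1:g\cdot C\cap C\neq\emptyset\}$ is contained in the $G_1$-projection of the relatively compact set $\{\gamma\in\G:\gamma\cdot C\cap C\neq\emptyset\}$, yielding properness of the $G_1$-action from that of the $\G$-action; conversely an arrow $(g,x)$ with $(g,x)\cdot C\cap C\neq\emptyset$ forces $g\cdot C\cap C\neq\emptyset$ and $x\in P_1(C)$, so the relevant arrows lie in the relatively compact set $\{g:g\cdot C\cap C\neq\emptyset\}\times P_1(C)$, giving properness of the $\G$-action from that of $G_1$. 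The same bookkeeping handles $G_2$ and $\Gh$. All remaining hypotheses (local compactness and Hausdorffness of $\mathfrak{B}$, continuity of the structure maps) are inherited unchanged, since the space $\mathfrak{B}$ and its topology are the same object in both pictures; indeed the two constructions above are mutually inverse, which completes the equivalence.
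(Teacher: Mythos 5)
Your proposal is correct and follows essentially the same route as the paper: both pass between group and groupoid actions via the formulas $g\cdot b=(g,P_1(b))\cdot b$ and $b\cdot h=b\cdot(h,P_2(b))$, identify freeness and the quotient conditions on the two sides, and observe that the action groupoid of $G_i\ltimes\X_i$ on $\mathfrak{B}$ coincides with that of the group $G_i$, so properness transfers. Your write-up merely spells out the properness bookkeeping (via projections to $G_i$ and to $P_i(C)$) that the paper leaves as ``it is easy to check.''
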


\begin{proof}
Suppose that $\mathfrak{B}$ is a $(G_1\ltimes\X_1,
G_2\ltimes\X_2)$-equivalence, and let $P_i:\mathfrak{B}\arr\X_i$ be
the corresponding maps. For every $x\in\mathfrak{B}$ and every
$g\in G_1$, $h\in G_2$ we can define
\[g\cdot x=(g, P_1(x))\cdot x\]
and
\[x\cdot h=x\cdot (h, P_2(x)).\]
It is easy to check that we get actions of $G_i$ on $\mathfrak{B}$.
The actions commute. Freeness of the actions of groupoids is
equivalent to the usual freeness of the group actions. The
groupoid of the action of $G_i\ltimes\X_i$ coincides with the
groupoid of the action of $G_i$, hence properness of the actions
of the groupoids is equivalent to properness of the actions of the
groups.

By the remaining condition of the definition of equivalence of
groupoids, the map $P_1$ induces a homeomorphism of
$\mathfrak{B}/G_2$ with $\X_1$, which is $G_1$-equivariant, since
the actions of $G_1$ and $G_2$ commute. The other direction of the
proof is similar.
\end{proof}

\begin{proposition}
If $\mathfrak{B}$ together with left and right actions of $G_1$ and
$G_2$ over maps $P_i:\mathfrak{B}\arr\X_i$
is an equivalence between the groupoids $G_1\rtimes\X_1$ and
$G_2\rtimes\X_2$, then $P_i:\mathfrak{B}\arr\X_i$ are covering maps.
\end{proposition}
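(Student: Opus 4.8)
The plan is to factor each $P_i$ through the quotient by one of the group actions and then invoke that the orbit projection of a free proper action of a discrete group is a covering. Recall from Proposition~\ref{prop:groupactionseq} and its proof that the actions of $G_1$ and $G_2$ on $\mathfrak{B}$ are free and proper, that they commute, that $P_1$ is $G_2$-invariant and induces a homeomorphism $\mathfrak{B}/G_2\arr\X_1$, and that $P_2$ is $G_1$-invariant and induces a homeomorphism $G_1\backslash\mathfrak{B}\arr\X_2$. In particular $P_1$ is the composition of the canonical projection $\pi:\mathfrak{B}\arr\mathfrak{B}/G_2$ with a homeomorphism, so it suffices to prove that $\pi$ is a covering map; the statement for $P_2$ is then symmetric. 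Since the action groupoids here are groupoids of germs, the groups $G_1$, $G_2$ are discrete, which is exactly what makes ``covering map'' the appropriate conclusion.

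First I would produce, for each $x\in\mathfrak{B}$, a \emph{wandering} open neighborhood $U\ni x$, i.e.\ one with $hU\cap U=\emptyset$ for every $h\in G_2\setminus\{e\}$. Using local compactness of $\mathfrak{B}$, choose a compact neighborhood $K$ of $x$. By properness of the $G_2$-action the set $F=\{h\in G_2\;:\;hK\cap K\ne\emptyset\}$ is finite and contains the identity. For each $h\in F\setminus\{e\}$ freeness gives $hx\ne x$, so by the Hausdorff property together with continuity of $y\mapsto hy$ there is an open neighborhood $U_h\ni x$ with $hU_h\cap U_h=\emptyset$. Setting $U=\mathrm{int}(K)\cap\bigcap_{h\in F\setminus\{e\}}U_h$ gives the desired set: for $h\in F\setminus\{e\}$ we have $hU\cap U\subset hU_h\cap U_h=\emptyset$, while for $h\notin F$ we have $hU\cap U\subset hK\cap K=\emptyset$.

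Next I would verify that $\pi(U)$ is evenly covered. Because $U$ is wandering, $\pi^{-1}(\pi(U))=\bigsqcup_{h\in G_2}hU$ is a disjoint union of open sets. The projection $\pi$ is open (for open $V$ the saturation $\pi^{-1}(\pi(V))=\bigcup_{h}hV$ is open, each $y\mapsto hy$ being a homeomorphism), so $\pi$ restricts on each $hU$ to a continuous open bijection onto $\pi(U)$, hence to a homeomorphism. Thus $\pi(U)$ is an evenly covered neighborhood of $\pi(x)$ and $\pi$ is a covering map. Composing with the homeomorphism $\mathfrak{B}/G_2\arr\X_1$ of Proposition~\ref{prop:groupactionseq} shows that $P_1$ is a covering map, and exchanging the roles of $G_1$ and $G_2$ gives the same for $P_2$.

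The only real work is the construction of the wandering neighborhood: properness is used to cut the offending group elements down to the finite set $F$, and freeness together with the Hausdorff property is used to separate $x$ from each of its finitely many nontrivial translates. Once that slice is in hand, the evenly-covered property and hence the covering conclusion are the standard verification that a free, wandering (discrete) group action yields a covering of the orbit space.
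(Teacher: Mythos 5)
Your proof is correct, and it takes a somewhat different route from the paper's. The paper first invokes Lemma~\ref{lem:P1P2} to conclude that $P_1$ is \'etale, so some neighborhood $U$ of a point in the fiber already maps homeomorphically onto an open set; it then shrinks $U$, using properness and freeness of the $G_2$-action, so that the translates $g(U)$, $g\in G_2$, are pairwise disjoint, and concludes that $P_1^{-1}(P_1(U))=\bigsqcup_{g\in G_2}g(U)$ is evenly covered. You bypass the \'etale lemma entirely: factoring $P_1$ through the orbit projection $\pi:\mathfrak{B}\arr\mathfrak{B}/G_2$ via the homeomorphism of Proposition~\ref{prop:groupactionseq}, you run the classical argument that a free and proper action of a discrete group on a locally compact Hausdorff space makes $\pi$ a covering: the wandering neighborhood is built from properness (finiteness of the set $F$) and from freeness plus Hausdorff separation, and openness of $\pi$ then yields the local homeomorphism property that the paper instead imports from Lemma~\ref{lem:P1P2}. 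The two arguments share their core --- the wandering slice produced by properness and freeness, and the identification of fibers of $P_1$ with $G_2$-orbits coming from condition (2) of the definition of equivalence --- but yours is more elementary and self-contained, while the paper's is shorter because \'etaleness of $P_1$, the genuinely nontrivial input, was already established there for arbitrary equivalences of \'etale groupoids. One small imprecision on your side: the discreteness of $G_1$, $G_2$ is a standing convention of this section (the action groupoid $G_i\ltimes\X_i$ is \'etale precisely because $G_i$ is discrete), not a consequence of the groupoids being groupoids of germs; nothing in your argument is affected by this.
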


\begin{proof}
The maps $P_1$ and $P_2$ are \'etale by Lemma~\ref{lem:P1P2}. Let
$x_1\in\X_1$, and let $y_1\in P_1^{-1}(x)$. Then there exists a
relatively compact
neighborhood $U$ of $y_1$ such that $P_1:U\arr P_1(U)$ is a
homeomorphism, and $P_1(U)$ is open. Moreover, by properness and
freeness of the action of $G_2$ on $\mathfrak{B}$, we may choose such
$U$ that $g(U)$ are disjoint for all $g\in G_2$. It follows then that
$P_1^{-1}(P_1(U))=\bigsqcup_{g\in G_2}g(U)$ and that restriction of $P_1$
to each set $g(U)$ is a homeomorphism with $P_1(U)$. This means
that $P_1$ is a covering map.
\end{proof}

Suppose that the space $\X$ is connected and semi-locally simply
connected. Let $G$ be a group acting faithfully on $\X$ by homeomorphisms. Let
$P:\wt{\X}\arr\X$ be the universal covering. For every $g\in G$
consider the set $P^*(g)$ of all lifts of $g$ to a homeomorphism of
$\wt{\X}$. Note that for any two $h_1, h_2\in P^*(g)$ the
homeomorphism $h_1^{-1}h_2$ belongs to the fundamental group of $\X$
(with respect to its natural action on the universal covering). The
union of the sets $P^*(g)$ is a group $\wt G$ of homeomorphisms of
$\X$. We have a natural epimorphism $\phi:\wt G\arr G$. Kernel of
$\phi$ is the fundamental group $\pi_1(\X)$. We call $\wt G$ the
\emph{lift of $G$ to the universal covering} of $\X$. Note that $\wt
G$ consists of all homeomorphism $h:\wt{\X}\arr\wt{\X}$ such that there exists
$g\in G$ such that $P(h(x))=g(P(x))$ for all $x\in\wt{\X}$.

\begin{theorem}
\label{th:equivalentgroupactions}
Let $\X_1$ and $\X_2$ be connected and semi-locally simply connected
topological spaces, and let, for $i=1, 2$, $G_i$ be a
group acting faithfully on $\X_i$ by homeomorphisms.

Then the groupoids $G_1\ltimes\X_1$ and $G_2\ltimes\X_2$ are
equivalent if and only if the lifts of the actions $(G_1, \X_1)$ and
$(G_2, \X_2)$ to the universal coverings $\wt\X_i$ are topologically conjugate.
\end{theorem}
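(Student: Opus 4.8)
The plan is to reduce the theorem to the special case in which the spaces are simply connected, and then to extract the conjugating homeomorphism from an equivalence bibundle, exploiting that a covering of a simply connected space is single-sheeted.

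\emph{Reduction to the universal covers.} First I would establish, for each $i$, the auxiliary equivalence $G_i\ltimes\X_i\sim\wt G_i\ltimes\wt\X_i$. The universal covering $P\colon\wt\X_i\arr\X_i$ is \'etale and surjective, so its range is a $(G_i\ltimes\X_i)$-transversal; hence by Proposition~\ref{pr:localization} the localization $P^*(\pG_i)$ is a pseudogroup equivalent to the pseudogroup $\pG_i$ of $G_i\ltimes\X_i$. By construction $P^*(\pG_i)$ is generated by the local lifts through $P$ of the maps $x\mapsto g(x)$, $g\in G_i$; since $\wt\X_i$ is simply connected, unique path-lifting makes every such local lift extend to a global homeomorphism of $\wt\X_i$, i.e. to an element of $\wt G_i$, and conversely every element of $\wt G_i$ is of this form. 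Thus $P^*(\pG_i)$ is exactly the pseudogroup generated by $\wt G_i$ on $\wt\X_i$, and passing to groupoids of germs gives $G_i\ltimes\X_i\sim\wt G_i\ltimes\wt\X_i$. By transitivity of equivalence it then suffices to prove that $\wt G_1\ltimes\wt\X_1\sim\wt G_2\ltimes\wt\X_2$ if and only if the actions $\wt G_i\curvearrowright\wt\X_i$ are topologically conjugate, i.e. the theorem in the case where each base equals its own universal cover.

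\emph{Easy direction.} If the actions are conjugate by a homeomorphism $\Phi\colon\wt\X_1\arr\wt\X_2$ with $\Phi\,\wt G_1\,\Phi^{-1}=\wt G_2$, then $\Phi$ induces an isomorphism of action groupoids $\wt G_1\ltimes\wt\X_1\cong\wt G_2\ltimes\wt\X_2$, and isomorphic groupoids are equivalent; combined with the reduction this yields $G_1\ltimes\X_1\sim G_2\ltimes\X_2$.

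\emph{Hard direction, construction of $\Phi$.} Conversely, assume $\wt G_1\ltimes\wt\X_1\sim\wt G_2\ltimes\wt\X_2$. By Proposition~\ref{prop:groupactionseq} there is a space $\mathfrak{B}$ with commuting free proper actions of $\wt G_1$ (left) and $\wt G_2$ (right), whose equivariant structure maps are the orbit maps $Q_1\colon\mathfrak{B}\arr\mathfrak{B}/\wt G_2\cong\wt\X_1$ and $Q_2\colon\mathfrak{B}\arr\wt G_1\backslash\mathfrak{B}\cong\wt\X_2$; by the proposition following Proposition~\ref{prop:groupactionseq} both $Q_j$ are covering maps. Because $\wt\X_1$ and $\wt\X_2$ are simply connected and locally path connected, the restriction of each $Q_j$ to any connected component of $\mathfrak{B}$ is a homeomorphism onto $\wt\X_j$. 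Fixing one component $\mathfrak{B}_0$ I set $\Phi=(Q_2|_{\mathfrak{B}_0})\circ(Q_1|_{\mathfrak{B}_0})^{-1}\colon\wt\X_1\arr\wt\X_2$, a homeomorphism.

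\emph{That $\Phi$ is a conjugacy, and the obstacle.} Since $Q_1|_{\mathfrak{B}_0}$ is bijective, $\mathfrak{B}_0$ meets every $\wt G_2$-orbit exactly once; freeness then forces the $\wt G_2$-stabilizer of $\mathfrak{B}_0$ to be trivial, so $\wt G_2$ permutes the components of $\mathfrak{B}$ simply transitively, and symmetrically for $\wt G_1$. Hence $g\cdot\mathfrak{B}_0=\mathfrak{B}_0\cdot\theta(g)$ determines a bijection $\theta\colon\wt G_1\arr\wt G_2$, which is an isomorphism because the two actions commute. Writing $g\cdot b_0=b_0'\cdot\theta(g)$ with $b_0,b_0'\in\mathfrak{B}_0$ and using the equivariance of $Q_1,Q_2$, one computes $\Phi(g\cdot x)=\Phi(x)\cdot\theta(g)^{-1}$ for all $x\in\wt\X_1$, $g\in\wt G_1$, so $\Phi\,\wt G_1\,\Phi^{-1}=\wt G_2$ and the actions are conjugate. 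I expect the main obstacle to lie in the first step: one must verify that the localized pseudogroup is generated exactly by the \emph{global} lifts $\wt G_i$ (where simple connectivity and the identification of the action groupoid with its groupoid of germs enter) and, in the final step, that components of $\mathfrak{B}$ map homeomorphically onto the simply connected bases — the geometric input that upgrades an abstract Morita equivalence to an honest topological conjugacy.
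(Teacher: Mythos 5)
Your proof is correct, but your converse direction takes a genuinely different route from the paper's. The paper never moves the equivalence up to the universal coverings: it applies Proposition~\ref{prop:groupactionseq} directly to a bibundle $\mathfrak{B}$ realizing $G_1\ltimes\X_1\sim G_2\ltimes\X_2$, takes a connected component $\X\subset\mathfrak{B}$ (which is a common covering space of $\X_1$ and $\X_2$), and shows that the stabilizer $H\le G_1\times G_2$ of $\X$ acts on $\X$ so that $(H,\X)$ is simultaneously the lift of $(G_1,\X_1)$ and of $(G_2,\X_2)$; both lifts to the universal coverings are then conjugate to the lift of $(H,\X)$ to the universal covering of $\X$, which coincides with $\wt\X_i$. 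You instead first transport the equivalence to the lifted groupoids (Proposition~\ref{pr:localization} plus composition of equivalence bibundles) and only then invoke Proposition~\ref{prop:groupactionseq}, exploiting simple connectivity to trivialize the bibundle componentwise and to extract the conjugacy $\Phi$ and the isomorphism $\theta$ explicitly from the simply transitive permutation action on components. In effect you prove the simply connected case (the first part of Corollary~\ref{cor:simplyconnected}) first and deduce the theorem from it, reversing the paper's logical order, in which that corollary is a consequence of the theorem. Your route yields a completely explicit conjugating homeomorphism and group isomorphism, at the price of needing transitivity of groupoid equivalence and the identification of $\wt G_i\ltimes\wt\X_i$ with the localization of $G_i\ltimes\X_i$ along the covering map --- an identification the paper itself makes in its ``if'' direction, so this is shared ground, as are the implicit standing hypotheses (local path connectedness, and the tacit passage between action groupoids and groupoids of germs) needed for the covering-space arguments on both sides. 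The paper's route avoids the transport step and works with a single bibundle, but needs the intermediate action $(H,\X)$ and the fact that a lift of a lift is a lift.
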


Here two group actions $(H_1, \X_1)$ and $(H_2, \X_2)$ are
\emph{topologically conjugate} if there exist a homeomorphism
$F:\X_1\arr\X_2$ and an isomorphism $\phi:H_1\arr H_2$ such that
$F(g(x))=\phi(g)(F(x))$ for all $x\in\X_1$ and $g\in H_1$.

\begin{proof}
The lift of a group action to the universal
covering is equivalent as a groupoid to the original action (see
Proposition~\ref{pr:localization}). This
proves the ``if'' part of the theorem.

Suppose that the actions $(G_i, \X_i)$ are equivalent. Let
$\mathfrak{B}$ be as in Proposition~\ref{prop:groupactionseq}. Choose
a point $x\in\mathfrak{B}$, and let $\X$ be the connected component of
$\mathfrak{B}$ containing $x$. Since $P_i:\mathfrak{B}\arr\X_i$ are
covering maps, $P_i:\X\arr\X_i$ are also covering maps. In particular,
they are onto.

The actions of $G_i$ on $\mathfrak{B}$ taken together form an action
of $G_1\times G_2$ (by the rule $(g_1, g_2)(x)=g_1\cdot x\cdot
g_2^{-1}$). Let $H=\{g\in G_1\times G_2\;:\;g(\X)=\X\}$
be the stabilizer of $\X$ in $G_1\times G_2$. 

If $x, y\in\X$ are such that $P_1(x)=P_1(y)$,
then there exists a unique element $g\in G_2$ such that $g(x)=y$. Note
that then $g\in H$. It follows that $G_2\cap H$ is the group of deck
transformations of the covering map $P_1:\X\arr\X_1$, and that the
covering is normal.

Let us show that restriction to $H$ of the projection $G_1\times G_2\arr G_1$
is surjective. Let $g\in G_1$ be an arbitrary element. Since the map
$\mathfrak{B}/G_2\arr\X_1$ induced by $P_1$ is a homeomorphism, and
$G_2$ acts by permutations on the set of connected components of
$\mathfrak{B}$, there exists $g_2\in G_2$ such that $g\cdot\X\cdot
g_2=\X$. Then $(g, g_2^{-1})\in H$.

We have proved that there exists a pair of covering maps $\X\arr\X_i$
and a group $H$ acting on $\X$ such that the lifts of $(G_i, \X_i)$
by the covering maps coincide with $(H, \X)$. It follows that the
lifts of $(G_i, \X_i)$ to the universal coverings $\wt{\X_i}$ both are
topologically conjugate to the lift of $(H, \X)$ to the universal
covering of $\X$.
\end{proof}

\begin{corollary}
\label{cor:simplyconnected} Let $(G_i, \X_i)$, for $i=1,2$, be group
actions on simply connected spaces. The groupoids
$G_1\ltimes\X_1$ and $G_2\ltimes\X_2$ are equivalent if and only if the
actions are topologically conjugate.

Groupoids of \emph{free} proper actions $(G_i, \X_i)$ on connected
semi-locally simply connected spaces are equivalent if and only if
the spaces $\X_i/G_i$ are homeomorphic.
\end{corollary}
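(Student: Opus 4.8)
The plan is to read off both statements from the results just proved: the first is the simply connected specialization of Theorem~\ref{th:equivalentgroupactions}, and the second follows most cleanly from the Example on free proper actions (an instance of Proposition~\ref{pr:localization}) together with the fact that the orbit space is an invariant of equivalence. Recall from the construction preceding Theorem~\ref{th:equivalentgroupactions} that the lift $\wt G$ of a faithful action $(G,\X)$ acts on the universal covering $\wt\X$ and sits in an exact sequence $1\to\pi_1(\X)\to\wt G\to G\to 1$, and that Theorem~\ref{th:equivalentgroupactions} identifies equivalence of $G_1\ltimes\X_1$ and $G_2\ltimes\X_2$ with topological conjugacy of the lifted actions $(\wt{G_1},\wt{\X_1})$ and $(\wt{G_2},\wt{\X_2})$.

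For the first statement I would specialize to simply connected $\X_i$. Then $\wt{\X_i}=\X_i$ and $\pi_1(\X_i)=1$, so the epimorphism $\wt{G_i}\to G_i$ is an isomorphism carrying the lifted action back to the original one; that is, $(\wt{G_i},\wt{\X_i})$ is topologically conjugate to $(G_i,\X_i)$. Theorem~\ref{th:equivalentgroupactions} then says that $G_1\ltimes\X_1$ and $G_2\ltimes\X_2$ are equivalent exactly when $(G_1,\X_1)$ and $(G_2,\X_2)$ are topologically conjugate, which is the assertion (the actions being understood to be faithful, as in the theorem).

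For the second statement, let the actions $(G_i,\X_i)$ be free and proper. The cleanest route avoids universal coverings altogether: by the Example on free proper actions, $G_i\ltimes\X_i$ is equivalent to the trivial groupoid (consisting only of units) on the orbit space $B_i:=\X_i/G_i$. Thus $G_1\ltimes\X_1$ and $G_2\ltimes\X_2$ are equivalent if and only if the two trivial groupoids on $B_1$ and $B_2$ are equivalent. Now the orbit space is an invariant of equivalence of groupoids of germs — this is built into the proposition characterizing such equivalence, where every orbit of $\pG\vee\wt\Gh$ meets each side in exactly one orbit, so that the two orbit spaces are identified — and for a trivial groupoid the orbit space is simply its unit space. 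Hence the two trivial groupoids are equivalent precisely when $B_1$ and $B_2$ are homeomorphic, the converse being clear since homeomorphic unit spaces give isomorphic, hence equivalent, trivial groupoids. This is the stated criterion.

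The only genuine obstacle is the verification that a free proper action is equivalent to the trivial groupoid on its quotient, and this is exactly the content of the Example invoked above; everything else is formal bookkeeping. Alternatively, one may stay inside the framework of Theorem~\ref{th:equivalentgroupactions}: one checks that for a free proper action the quotient $\X_i\to B_i$ is a covering with deck group $G_i$, that the universal covering $\wt{\X_i}$ is simultaneously the universal covering of $B_i$, and that the lift $\wt{G_i}$ coincides with the deck group $\pi_1(B_i)$ of $\wt{\X_i}\to B_i$ (a short diagram chase on the two exact sequences with common kernel $\pi_1(\X_i)$ and common quotient $G_i$). The lifted action is then the deck action on $\widetilde{B_i}$, and two such deck actions are topologically conjugate if and only if the bases $B_i$ are homeomorphic, by lifting a homeomorphism $F\colon B_1\to B_2$ to the universal covers and, conversely, pushing a conjugacy down to the quotients. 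This is where connectedness and semi-local simple connectivity of the $\X_i$, hence of the $B_i$, are used.
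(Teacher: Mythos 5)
Your proof is correct. For the first statement you do exactly what the paper intends: the corollary is stated with no separate argument, so its proof is meant to be the direct specialization of Theorem~\ref{th:equivalentgroupactions}, which is precisely your observation that for simply connected $\X_i$ one has $\wt\X_i=\X_i$ and $\wt{G_i}\cong G_i$, so the lifted actions are the original ones. For the second statement your primary route genuinely differs from the one the paper's placement suggests. The paper's implicit derivation is your ``alternative'' argument: for a free proper action the lift $\wt{G_i}$ is the deck group $\pi_1(\X_i/G_i)$ acting on the common universal covering of $\X_i$ and of $\X_i/G_i$ (this uses that $\X_i\to\X_i/G_i$ is a normal covering), and two such deck actions on universal covers are topologically conjugate exactly when the bases are homeomorphic. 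Your main argument instead bypasses the theorem and universal coverings altogether, invoking the Example after Proposition~\ref{pr:localization} (the groupoid of a free proper action is equivalent to the trivial groupoid on $\X_i/G_i$) and reducing everything to the fact that two trivial groupoids are equivalent if and only if their unit spaces are homeomorphic. The one point you should make explicit there is why the orbit bijection $B_1\to B_2$ induced by an equivalence of trivial groupoids is a homeomorphism: each orbit of the equivalence pseudogroup contains exactly one point of each $B_i$, so the bijection is locally realized by elements of that pseudogroup, which are local homeomorphisms between open sets; hence it and its inverse are continuous. With that noted, your route is not only correct but slightly more general than the paper's: it never uses connectedness or semi-local simple connectivity of the $\X_i$, whereas the theorem-based route needs those hypotheses (they are what make the lift construction work); what the theorem-based route buys in exchange is uniformity, treating both halves of the corollary by the same specialization of the machinery already built.
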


\section{Compactly generated groupoids and their Cayley graphs}
\label{ss:cayley}
We say that $\X\subset\G^{(0)}$ is a \emph{topological
transversal} \index{transversal!topological} if there is an open transversal $\X_0\subset\X$.

The following definition is equivalent to a definition due to
A.~Haefliger,  see~\cite{haefliger:compactgen}.

\begin{defi}
\label{def:compactlygenerated}
Let $\G$ be an \'etale groupoid. A \emph{compact generating pair}
\index{compact generating pair} \index{generating pair}
of $\G$ is a pair of  sets $(S, \X)$, where $S\subset\G$ and
$\X\subset\G^{(0)}$ are compact, $\X$ is a topological transversal, and for every
$g\in\G|_{\X}$ there exists $n$ such that
$\bigcup_{k\ge 1}^n(S\cup S^{-1})^k$ is
a neighborhood of $g$ in $\G|_{\X}$.
The set $S$  called a \emph{generating set}.\index{generating set!of a groupoid}

A groupoid $\G$ is \emph{compactly generated} if it has a compact
generating pair. \index{groupoid!compactly generated} \index{compactly
  generated groupoid}
\end{defi}

In other words, $(S, \X)$ is a compact generating pair, if $S$ is a
generating set of $\G|_{\X}$ and the word length on $\G|_{\X}$ defined
with respect to $S$ is a locally bounded function.

\begin{proposition}
\label{pr:differnttransv} Let $\G$ be a compactly
generated groupoid of germs. Then for every compact topological
transversal $\X\subset\G^{(0)}$ there exists a compact set $S\subset\G|_{\X}$
such that $(S, \X)$ is a generating pair.
\end{proposition}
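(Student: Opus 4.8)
The plan is to start from a compact generating pair $(S_0, \X_0)$, which exists because $\G$ is compactly generated, and to transport it to the new transversal $\X$ by conjugating the elements of $S_0$ through a finite family of bisections connecting $\X$ and $\X_0$. First I would note that a topological transversal meets every orbit, so both $\X$ and $\X_0$ intersect every orbit: hence for each $x\in\X$ the orbit of $x$ meets $\X_0$, and for each $y\in\X_0$ the orbit of $y$ meets $\X$. Using that $\G$ is \'etale (every germ has a bisection neighborhood) together with compactness of $\X$ and $\X_0$, I would extract finitely many bisections $W_1,\dots,W_N$ with $\en(W_i)\subseteq\X_0$ such that the open sets $\be(W_i)$ cover $\X$ and the open sets $\en(W_i)$ cover $\X_0$. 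By the extendability remark (every germ has a compact extendable neighborhood) I may take each $\overline{W_i}$ compact, and set $\mathcal{W}=\bigcup_i\overline{W_i}$.

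The candidate generating set is then
\[ S = \{\, v'^{-1}\,\hat s\, v \;:\; v, v'\in\mathcal{W},\ \hat s\in S_0\cup S_0^{-1},\ \be(v'^{-1}\hat s\, v)\in\X,\ \en(v'^{-1}\hat s\, v)\in\X \,\}, \]
listing $v'^{-1}\hat s\, v$ only when the product is defined. Since $\mathcal{W}$ and $S_0\cup S_0^{-1}$ are compact, the composable triples with endpoints in the compact set $\X$ form a compact set, and the multiplication map is continuous, so $S$ is compact; by the endpoint conditions $S\subseteq\G|_{\X}$.

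To see that $S$ generates $\G|_{\X}$, take $g\in\G|_{\X}$ and choose germs $w,w'\in\mathcal{W}$ with $\be(w)=\be(g)$ and $\be(w')=\en(g)$ (possible since the $\be(W_i)$ cover $\X$). Then $h=w'gw^{-1}$ has $\be(h)=\en(w)\in\X_0$ and $\en(h)=\en(w')\in\X_0$, so $h\in\G|_{\X_0}$; as $(S_0,\X_0)$ is a generating pair, $h=\hat s_k\cdots\hat s_1$ with $\hat s_m\in S_0\cup S_0^{-1}$, with intermediate targets $c_0,\dots,c_k\in\X_0$. For each $c_m$ I pick a germ $v_m\in\mathcal{W}$ with $\en(v_m)=c_m$ (possible since the $\en(W_i)$ cover $\X_0$), taking $v_0=w$ and $v_k=w'$. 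Setting $t_m=v_m^{-1}\hat s_m v_{m-1}$, a telescoping computation in which the units $v_m v_m^{-1}$ cancel gives $t_k\cdots t_1=(w')^{-1}hw=g$, and each $t_m$ is a germ of an element of $S\cup S^{-1}$. Thus $g\in(S\cup S^{-1})^k$.

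The hard part will be the remaining condition for a generating pair: that the $S$-word length be locally bounded on $\G|_{\X}$. For this I would fix $g_0\in\G|_{\X}$, extend the chosen germs to bisections $W,W'$ near $\be(g_0),\en(g_0)$, and observe that conjugation $\phi\colon g\mapsto (W')^{-1}gW$ is a homeomorphism from a neighborhood of $g_0$ in $\G$ onto a neighborhood of $h_0=\phi(g_0)$ in $\G$, which (because $\en(W_i)\subseteq\X_0$) carries a neighborhood of $g_0$ in $\G|_{\X}$ into $\G|_{\X_0}$. Since $(S_0,\X_0)$ is a generating pair, the $S_0$-length is bounded by some $n_0$ on a neighborhood of $h_0$ in $\G|_{\X_0}$, and the decomposition above shows that every $g$ in the corresponding neighborhood of $g_0$ has $S$-length at most $n_0$; hence $\bigcup_{k=1}^{n_0}(S\cup S^{-1})^k$ contains an open neighborhood of $g_0$ in $\G|_{\X}$, which is exactly the local-boundedness condition. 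The delicate technical point throughout is verifying that $\phi$ sends neighborhoods in the subspace $\G|_{\X}$ to neighborhoods in the subspace $\G|_{\X_0}$, which rests on the \'etale (local homeomorphism) property together with $\en(W_i)\subseteq\X_0$.
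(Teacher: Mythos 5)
Your strategy --- transporting the given generating pair by conjugating with a finite family of bisections and then telescoping --- is exactly the strategy of the paper's proof, but the containment conditions you impose on the family $\mathcal{W}$ are oriented the wrong way, and this produces a genuine gap. First, the family you ask for cannot exist in general: you require open sets $\en(W_i)\subseteq\X_0$ whose union covers $\X_0$, which would force the compact transversal $\X_0$ to be a union of open sets and hence open in $\G^{(0)}$. A compact topological transversal is usually not open --- e.g.\ the closed interval $I=[x_1,x_2]\subset\R$ in the paper's irrational rotation example --- and no open subset of $\G^{(0)}$ contained in $\X_0$ can contain a boundary point of $\X_0$. Second, and independently of how the covering is produced, the telescoping step fails: a factor $t_m=v_m^{-1}\hat s_m v_{m-1}$ belongs to your $S\cup S^{-1}$ only if $\be(v_{m-1})$ and $\be(v_m)$ lie in $\X$, but for the intermediate indices $0<m<k$ all you know is $\be(v_m)\in\bigcup_i\be(W_i)$, and these sets merely \emph{cover} $\X$; being open, they generally stick out of $\X$. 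So the factors $t_m$ need not lie in $S\cup S^{-1}$, and both your generation argument and your local-boundedness argument break at this exact point.

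The missing idea is to use the open transversal sitting inside $\X$ (this is what ``topological transversal'' provides) and to introduce a \emph{second} family of bisections with the containment on the source side: bisections $U_j$ whose targets $\en(U_j)$ are open sets covering the compact set $\X_0$ (no containment required there), and whose sources satisfy $\be(U_j)\subseteq\X^\circ$, where $\X^\circ\subseteq\X$ is an open transversal. Such bisections exist because $\X^\circ$ is open and meets every orbit, and finitely many suffice by compactness of $\X_0$; this is precisely the second family $U_{r+1},\dots,U_{r+s}$ in the paper's proof. Choosing the interleaving germs $v_m$ from this second family gives $\be(v_m)\in\X^\circ\subseteq\X$, so each $t_m$ really is in $S\cup S^{-1}$, and then your telescoping and your conjugation/local-boundedness arguments go through. (One further convention you rely on, which the paper's proof relies on as well: the intermediate targets $c_m$ lie in $\X_0$ because the given generating set $S_0$ is taken inside $\G|_{\X_0}$.) The general rule of thumb: impose containment on the side where an open transversal is available, and impose covering on the side of the compact set.
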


\begin{proof}
Let $(S', \X')$ be a compact generating pair of
$\G$. Let $\X_0'\subset\X'$ and $\X_0\subset\X$ be
open transversals.

For every $x\in\X$ there exists an element $U\in\pG$ such that
$\be(U)\ni x$, $\en(U)\subset\X_0'$, and $\overline U$ is compact.
We get an open covering of $\X$ by the sets $\be(U)$.
Choose a finite sub-covering $\{\be(U_1),
\be(U_2), \ldots, \be(U_r)\}$.

Similarly, there exists a finite set $\{U_{r+1}, U_{r+2}, \ldots,
U_{r+s}\}$ of elements of $\pG$ such that $\overline{U_i}$ are
compact, $\en(U_i)$ cover $\X'$, and $\be(U_i)\subset\X_0$.

Denote $W=\bigcup_{i=1}^{r+s}U_i$, and consider the set
$S=\left(W^{-1}\cdot S'\cdot W\right)\cap\G|_{\X}$. The set $S$ has
compact closure. For every
$g\in\G|_{\X}$ there exist $i, j\in\{1, \ldots, r\}$ such that
$U_i\cdot g\cdot U_j^{-1}\in\G|_{\X_0'}$. Since $(S', \X')$ is a
generating pair, there exists $m$ such that
$\bigcup_{k=1}^m(S'\cup(S')^{-1})^k$ is a neighborhood of $U_i\cdot g\cdot
U_j^{-1}$. Then
\[W^{-1}\bigcup_{k=1}^m(S'\cup (S')^{-1})^k W\supset
U_i^{-1}\cdot \bigcup_{k=1}^m(S'\cup (S')^{-1})^k\cdot U_j\]
is a neighborhood of $g$. Since for every $x\in\X'$ there exists
$U_{r+l}$ such that $\en(U_{r+l})\ni x$ and $\be(U_{r+l})\subset\X_0$,
we have
\[\left(W^{-1}\cdot (S'\cup (S')^{-1})^k\cdot W\right)\cap\G|_{\X}\subset
\left(\left(W^{-1}S'W\cap\G|_{\X}\right)\cup
\left(W^{-1}S'W\cap\G|_{\X}\right)^{-1}\right)^k\]
for all $k$. It follows that $\bigcup_{k=1}^m(S\cup S^{-1})^k$ is a
neighborhood of $g$ in $\G|_{\X}$.
\end{proof}

\begin{corollary}
If $\G$ is a compactly generated groupoid of germs, then every
equivalent \'etale groupoid is also compactly generated.
\end{corollary}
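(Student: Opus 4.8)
The plan is to realize both $\G$ and the equivalent étale groupoid $\Gh$ as restrictions of one compactly generated étale groupoid, namely the linking groupoid $\mathfrak{K}=\G\vee_{\mathfrak{B}}\Gh$ attached to a $(\G,\Gh)$-equivalence $\mathfrak{B}$, and then to transport a compact generating pair from $\G$ across $\mathfrak{K}$ to $\Gh$ by invoking Proposition~\ref{pr:differnttransv}. Recall that $\mathfrak{K}$ is étale by the proposition preceding this corollary.

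First I would record the structure of $\mathfrak{K}$. Its unit space is $\G^{(0)}\sqcup\Gh^{(0)}$, in which $\G^{(0)}$ and $\Gh^{(0)}$ are clopen; the full preimages $\be^{-1}(\G^{(0)})\cap\en^{-1}(\G^{(0)})$ and $\be^{-1}(\Gh^{(0)})\cap\en^{-1}(\Gh^{(0)})$ are exactly the open subgroupoids $\G$ and $\Gh$, so $\mathfrak{K}|_{A}=\G|_{A}$ for $A\subset\G^{(0)}$ and $\mathfrak{K}|_{B}=\Gh|_{B}$ for $B\subset\Gh^{(0)}$, with matching subspace topologies. Since $P_1,P_2$ are surjective and their fibres are tied together by the $\Gh$- and $\G$-actions (equivalence conditions (1)--(2)), the $\mathfrak{K}$-orbit of every point meets both $\G^{(0)}$ and $\Gh^{(0)}$; hence any $\G$-transversal is a $\mathfrak{K}$-transversal. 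Consequently, if $(S,\X)$ is a compact generating pair of $\G$, then $S\subset\G=\mathfrak{K}|_{\G^{(0)}}$, the set $\X$ is a compact topological transversal of $\mathfrak{K}$, and $\mathfrak{K}|_{\X}=\G|_{\X}$, so the very same pair $(S,\X)$ witnesses that $\mathfrak{K}$ is compactly generated.

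Next I would produce a compact topological transversal $\Y\subset\Gh^{(0)}$ by pushing $\X$ across the equivalence. Since $P_1\colon\mathfrak{B}\arr\G^{(0)}$ is étale (Lemma~\ref{lem:P1P2}) and surjective, the open sets over which $P_1$ restricts to a homeomorphism cover the compact set $\X$; choosing a finite subcover from relatively compact opens $V_1,\dots,V_n\subset\mathfrak{B}$, I set $\Y=\bigcup_i P_2(\overline{V_i})$, compact because $P_2$ is continuous. Its open subset $\Y_0=\bigcup_i P_2\!\left(V_i\cap P_1^{-1}(\X_0)\right)$ meets every $\Gh$-orbit: given $y\in\Gh^{(0)}$, its $\mathfrak{K}$-orbit meets the open transversal $\X_0$ at some $x'$, giving $b\in\mathfrak{B}$ with $P_1(b)=x'$ and $P_2(b)$ in the $\Gh$-orbit of $y$; choosing $b'\in V_i$ with $P_1(b')=x'$ and using that $b,b'$ differ by a $\Gh$-factor shows $P_2(b')\in\Y_0$ lies in the $\Gh$-orbit of $y$. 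Thus $\Y$ is a compact topological transversal of $\Gh$, hence of $\mathfrak{K}$.

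Finally, I would apply Proposition~\ref{pr:differnttransv} to the compactly generated groupoid $\mathfrak{K}$ and the transversal $\Y$, obtaining a compact $S'\subset\mathfrak{K}|_{\Y}$ with $(S',\Y)$ a generating pair; as $\Y\subset\Gh^{(0)}$ forces $\mathfrak{K}|_{\Y}=\Gh|_{\Y}$, this is a compact generating pair of $\Gh$. The main obstacle is the third step, verifying that the pushed-forward set $\Y$ is genuinely a transversal, which rests on combining étale-ness of $P_1,P_2$, compactness of $\X$, and the fact that two points of $\mathfrak{B}$ lying over the same base point belong to a single $\Gh$-orbit. A secondary point is to justify treating $\mathfrak{K}$ as a groupoid of germs (or that the proof of Proposition~\ref{pr:differnttransv} applies verbatim to it), which follows from $\G$ and $\Gh$ being étale with locally compact unit spaces.
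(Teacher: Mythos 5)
Your proof is correct and follows the route the paper intends: the corollary is stated without proof as an immediate consequence of Proposition~\ref{pr:differnttransv}, applied through the linking groupoid $\mathfrak{K}=\G\vee_{\mathfrak{B}}\Gh$ (a generating pair of $\G$ generates $\mathfrak{K}$ because $\G$-transversals are $\mathfrak{K}$-transversals, and a compact transversal pushed into $\Gh^{(0)}$ then yields a generating pair of $\Gh$), which is exactly what you carry out. One small correction to your final remark: being a groupoid of germs does \emph{not} follow from \'etaleness plus local compactness (it requires effectiveness), so when $\Gh$ is merely \'etale the correct justification is your parenthetical alternative---the proof of Proposition~\ref{pr:differnttransv} uses only open bisections, transversality, and compactness, and hence applies verbatim to the \'etale groupoid $\mathfrak{K}$.
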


\begin{defi}
Let $(S, \X)$ be a compact generating pair of
$\G$. For $x\in\X$ the \emph{Cayley graph} $\G(x, S)$
is the oriented graph with the set of vertices
$\G_x^{\X}$ in which there is an arrow from $g$ to $h$ if there exists
$s\in S$ such that $h=sg$. \index{Cayley
  graph of a groupoid}
\end{defi}

In most cases we will ignore the orientation on the Cayley graph (in
particular, when we talk about valency of vertices, or combinatorial distance on it).

Since a compact subset of $\G$ can be covered by a finite number of local
homeomorphisms, the Cayley graph $\G(x, S)$ has uniformly bounded
degree for all $x\in\X$.

If $x\in\X$ has trivial isotropy group, then the map $g\mapsto\en(g)$
from $\G_x^{\X}$ to the $\G|_{\X}$-orbit of $x$ is a bijection. Then
the Cayley graph $\G(x, S)$ is naturally isomorphic to the
\emph{orbital  graph}. \index{orbital graph}
The set of vertices of the orbital
graph is the orbit of $x$, and two vertices $y_1, y_2$ are connected
by an arrow from $y_1$ to $y_2$ if there exists a generator $s\in S$
such that $\be(s)=y_1$ and $\en(s)=y_2$.

It is not hard to show that if $\G$ is a compactly generated groupoid
of germs, then
the set of points $x\in\G^{(0)}$ with trivial isotropy group is
co-meager. It follows that generically the Cayley graph $\G(x, S)$
is naturally isomorphic to the orbital graph of $x$. In
general, the Cayley graph $\G(x, S)$ is a covering of the orbital
graph. 

\begin{proposition}
\label{pr:equivalencegensets} If $(S_1, \X)$ and $(S_2, \X)$ are compact
generating pairs of $\G$, then there exists $n$ such that
$\bigcup_{k=1}^n(S_1\cup S_1^{-1})^k$ is a neighborhood of $S_2$ and
$\bigcup_{k=1}^n(S_2\cup S_2^{-1})^k$ is a neighborhood of $S_1$.
\end{proposition}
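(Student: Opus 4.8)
The plan is to prove the statement by a straightforward compactness reduction, exploiting that each $S_i$ is compact while the other set generates $\G|_{\X}$ with locally bounded word length, which is exactly the content of Definition~\ref{def:compactlygenerated}.

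First I would fix the generating pair $(S_1, \X)$ and view $S_2$ as a compact subset of $\G|_{\X}$. For each germ $g\in S_2$, the defining property of the compact generating pair $(S_1,\X)$ supplies an integer $n_g$ such that $\bigcup_{k=1}^{n_g}(S_1\cup S_1^{-1})^k$ is a neighborhood of $g$ in $\G|_{\X}$; that is, there is an open set $V_g$ with $g\in V_g\subset\bigcup_{k=1}^{n_g}(S_1\cup S_1^{-1})^k$. At this point I would record that all the products in question remain inside $\G|_{\X}$: since $S_1\subset\G|_{\X}$, every composable product of elements of $S_1\cup S_1^{-1}$ has both its origin $\be$ and its target $\en$ in $\X$.

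Next I would cover $S_2$ by the family $\{V_g\}_{g\in S_2}$ and, using compactness of $S_2$, extract a finite subcover $V_{g_1},\dots,V_{g_m}$. Setting $n_1=\max_{1\le j\le m}n_{g_j}$, the open set $\bigcup_{j=1}^m V_{g_j}$ contains $S_2$ and lies in $\bigcup_{k=1}^{n_1}(S_1\cup S_1^{-1})^k$, because each $\bigcup_{k=1}^{n_{g_j}}(S_1\cup S_1^{-1})^k$ is contained in the larger union up to exponent $n_1$. Hence $\bigcup_{k=1}^{n_1}(S_1\cup S_1^{-1})^k$ is a neighborhood of $S_2$. Interchanging the roles of $S_1$ and $S_2$ gives an $n_2$ with $\bigcup_{k=1}^{n_2}(S_2\cup S_2^{-1})^k$ a neighborhood of $S_1$. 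Finally I would take $n=\max(n_1,n_2)$ and note that enlarging the exponent only enlarges the union, so that each union remains a neighborhood (a superset of a neighborhood is again a neighborhood); this single $n$ then works in both directions simultaneously.

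I do not expect any serious obstacle here: the argument is a routine Lebesgue-style compactness reduction. The only points needing a little care are confirming that the iterated products stay inside $\G|_{\X}$, and that throughout the neighborhoods are taken in the subspace topology of $\G|_{\X}$ rather than in that of $\G$.
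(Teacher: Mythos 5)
Your proof is correct and is essentially the paper's own argument: the paper likewise applies the defining property of a compact generating pair to each element of the other generating set and then uses compactness to extract a uniform bound $n$ (stating only one direction explicitly, with the other following by symmetry exactly as you do). No gaps; your version just spells out the finite-subcover step and the final maximum more explicitly.
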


\begin{proof}
For every $g\in S_1$ there exists $n_g$ such that
$\bigcup_{k=1}^{n_g}(S_2\cup S_2^{-1})^k$ is a neighborhood of $g$. Then, by
compactness of $S_1$ there exists $n$ such that
$\bigcup_{k=1}^n(S_2\cup S_2^{-1})^k$ is a neighborhood of every point
$g\in S_1$.
\end{proof}

\begin{corollary}
If $(S_1, \X)$ and $(S_2, \X)$ are compact generating pairs of $\G$,
then the identity map is a quasi-isometry of the Cayley
graphs $\G(x, S_1)$ and $\G(x, S_2)$.
\end{corollary}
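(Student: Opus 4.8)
The plan is to deduce the statement directly from Proposition~\ref{pr:equivalencegensets} by translating that proposition's statement about products of generators into a statement about distances in the Cayley graphs. The two graphs $\G(x, S_1)$ and $\G(x, S_2)$ share the same vertex set $\G_x^{\X}$, so the identity map is automatically a bijection of vertices, and I only need to produce the multiplicative distance comparison demanded by Definition~\ref{def:quasiisometry}; the ``net'' requirement is then trivial.

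First I would set up the dictionary between paths and products of generators. By the definition of the Cayley graph, a path $g=g_0, g_1, \ldots, g_m=h$ in $\G(x, S)$ is a sequence with $s_i:=g_ig_{i-1}^{-1}\in S\cup S^{-1}$ for each $i$, and telescoping gives $hg^{-1}=s_m s_{m-1}\cdots s_1$. Conversely, given a factorization $hg^{-1}=s_m\cdots s_1$ with $s_i\in S\cup S^{-1}$, the partial products $g_i:=s_i\cdots s_1 g$ form a path from $g$ to $h$, and each $g_i$ is again a vertex: $\be(g_i)=\be(g)=x$ while $\en(g_i)=\en(s_i)\in\X$, the latter because $S\subset\G|_{\X}$. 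Hence the distance $|g-h|_S$ equals the least $m$ for which $hg^{-1}$ is a germ of a product of at most $m$ elements of $S\cup S^{-1}$. This verification that the intermediate vertices $g_i$ stay inside $\G_x^{\X}$ is the one genuinely content-bearing point, and it is precisely where the hypothesis $S\subset\G|_{\X}$ is used.

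Next I would invoke Proposition~\ref{pr:equivalencegensets} to obtain an $n$ with $\bigcup_{k=1}^n(S_1\cup S_1^{-1})^k$ a neighborhood of $S_2$ and $\bigcup_{k=1}^n(S_2\cup S_2^{-1})^k$ a neighborhood of $S_1$; in particular, as germs, every element of $S_2$ is a product of at most $n$ elements of $S_1\cup S_1^{-1}$, and symmetrically. Thus if $g, h$ are joined by a single edge of $\G(x, S_2)$, so $hg^{-1}\in S_2\cup S_2^{-1}$, the dictionary yields a path of length at most $n$ from $g$ to $h$ in $\G(x, S_1)$, i.e.\ $|g-h|_{S_1}\le n$. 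Concatenating along a geodesic of $\G(x, S_2)$ gives $|g-h|_{S_1}\le n\,|g-h|_{S_2}$, and the symmetric argument gives the reverse bound, so that
\[
n^{-1}\,|g-h|_{S_1}\le |g-h|_{S_2}\le n\,|g-h|_{S_1}
\]
for all $g, h\in\G_x^{\X}$. These inequalities exhibit the identity map as a quasi-isometry in the sense of Definition~\ref{def:quasiisometry} (take $\Lambda=n$ and $\Delta=1$, enlarging $\Lambda$ slightly in the degenerate case $n=1$), which completes the argument. Since essentially all the geometric work has been done in Proposition~\ref{pr:equivalencegensets}, no further obstacle remains beyond the bookkeeping of the dictionary above.
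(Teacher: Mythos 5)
Your proof is correct and follows exactly the route the paper intends: the corollary is stated as an immediate consequence of Proposition~\ref{pr:equivalencegensets}, and your path--product dictionary (including the check that partial products $s_i\cdots s_1g$ remain in $\G_x^{\X}$) is precisely the bookkeeping the paper leaves implicit. The bi-Lipschitz bounds $n^{-1}|g-h|_{S_1}\le|g-h|_{S_2}\le n|g-h|_{S_1}$ you obtain are in fact slightly stronger than the quasi-isometry claimed.
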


\begin{lemma}
Let $\X_1\subset\X_2$ be compact topological transversals. Let
$x\in\X_1$. Then the set $\G_x^{\X_1}$ is a net in the
Cayley graph $\G(x, S)$, where $(\X_2, S)$ is a compact generating pair.
\end{lemma}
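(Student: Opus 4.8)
The plan is to show that every vertex of $\G(x, S)$ lies within a uniformly bounded Cayley distance of a vertex whose target lies in $\X_1$, i.e.\ of an element of $\G_x^{\X_1}\subset\G_x^{\X_2}$. Throughout I take the generators to have origin and target in $\X_2$, i.e.\ $S\subset\G|_{\X_2}$, which is the natural convention here (it matches the introductory description of generators having range and domain in the transversal and it is exactly what the edge relation of $\G(x,S)$ sees). Fix a vertex $g\in\G_x^{\X_2}$ and write $y=\en(g)\in\X_2$. Since $\X_1$ is a topological transversal it contains an open transversal $\X_1^0$, which meets every orbit; in particular there is $a\in\G$ with $\be(a)=y$ and $\en(a)\in\X_1^0$. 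Then $ag$ is defined, $\be(ag)=x$ and $\en(ag)=\en(a)\in\X_1$, so $ag\in\G_x^{\X_1}$, and $(ag)g^{-1}=a$, so the Cayley distance from $g$ to $ag$ equals the word length of $a$. The whole difficulty is to bound this word length \emph{uniformly} in $y\in\X_2$.

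First I would produce a local bound at each point. The element $a=a_y$ above satisfies $\be(a_y)=y\in\X_2$ and $\en(a_y)\in\X_1^0\subset\X_2$, hence $a_y\in\G|_{\X_2}$. By Definition~\ref{def:compactlygenerated} there is $n_y$ such that $\bigcup_{k=1}^{n_y}(S\cup S^{-1})^k$ is a neighbourhood of $a_y$ in $\G|_{\X_2}$. Choosing a bisection $U_y\in\pG$ through $a_y$ with $\en(U_y)\subset\X_1^0$ and using that the germ map $y'\mapsto(U_y,y')$ is continuous into $\G|_{\X_2}$, I obtain a relatively open neighbourhood $N_y$ of $y$ in $\X_2$ such that for every $y'\in N_y$ the germ $(U_y,y')$ lies in $\bigcup_{k=1}^{n_y}(S\cup S^{-1})^k$ and has target in $\X_1^0$.

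Next I would pass to a finite subcover. The sets $N_y$, $y\in\X_2$, form an open cover of the compact space $\X_2$, so there are $y_1,\dots,y_m$ with $\X_2=\bigcup_i N_{y_i}$; set $\Delta=\max_i n_{y_i}$. Now take an arbitrary vertex $g\in\G_x^{\X_2}$ and let $y=\en(g)\in N_{y_i}$ for some $i$. The germ $a=(U_{y_i},y)$ satisfies $\be(a)=y=\en(g)$, $\en(a)\in\X_1^0\subset\X_1$, and $a=s_k\cdots s_1$ with $k\le n_{y_i}\le\Delta$ and $s_j\in S\cup S^{-1}$. Because $S\subset\G|_{\X_2}$, each partial product $s_j\cdots s_1 g$ again has target $\en(s_j)\in\X_2$, so the sequence $g,\,s_1 g,\,\dots,\,s_k\cdots s_1 g=ag$ is a genuine path of length $k\le\Delta$ in $\G(x,S)$ joining $g$ to the vertex $ag\in\G_x^{\X_1}$. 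Hence every vertex is within distance $\Delta$ of $\G_x^{\X_1}$, which is precisely the net condition of Definition~\ref{def:quasiisometry}.

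The routine verifications are the continuity of the germ map on a bisection and the existence of $U_y$ with $\en(U_y)\subset\X_1^0$, both standard for groupoids of germs. The one point that deserves care, and which I expect to be the main obstacle, is keeping the connecting word inside the vertex set: a product of generators realises a path in the Cayley graph only if all of its intermediate targets lie in $\X_2$, and this is guaranteed exactly because $S\subset\G|_{\X_2}$. Once that is observed, the passage from the local bounds $n_y$ to the global bound $\Delta$ is simply the compactness (Lebesgue-type) packaging carried out above.
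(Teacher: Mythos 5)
Your proof is correct and takes essentially the same route as the paper's: the paper covers $\X_2$ by the sources of finitely many bisections whose targets lie in an open transversal inside $\X_1$, then uses compact generation together with compactness (of the closures of those bisections in the groupoid) to get the uniform word-length bound, which is exactly what your pointwise bound $n_y$ plus finite-subcover argument achieves in the unit space. Your explicit attention to the intermediate-vertex issue (insisting that $S\subset\G|_{\X_2}$ so that partial products remain vertices) is left implicit in the paper, and the convention is harmless since such generating sets exist by Proposition~\ref{pr:differnttransv}.
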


\begin{proof}
We can find a finite collection of homeomorphisms $\{U_1, \ldots,
U_k\}$ such that $\be(U_i)$ cover $\X_2$, $\en(U_i)\subset\X_0$, and
$\overline{U_i}$ are compact. Then there
exists $n$ such that $\bigcup_{i=1}^k
\overline{U_i}\cap\G|_{\X_2}\subset\bigcup_{m=1}^n(S\cup S^{-1})^m$. Then for every
$g\in\G_x\cap\G|_{\X_2}$
there exists $U_i$ such that $U_i\cdot g\in\G|_{\X_1}$, hence the
set $\G_x\cap\G|_{\X_1}$ is an $n$-net in $\G(x, S)$.
\end{proof}

\begin{corollary}
Any two Cayley graphs $\G(x, S_1)$, $\G(x, S_2)$ associated
with the same point $x\in\X$ of a compactly generated groupoid are
quasi-isometric.
\end{corollary}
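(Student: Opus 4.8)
The plan is to reduce to the case of nested transversals and then glue. Write $(S_1,\X_1)$ and $(S_2,\X_2)$ for two compact generating pairs of $\G$ with $x\in\X_1\cap\X_2$, and let $|\cdot|_{S}$ denote graph distance in $\G(x,S)$. Since a union of two compact sets is compact and $\X_1\cup\X_2$ contains the open transversal sitting inside $\X_1$, the set $\X:=\X_1\cup\X_2$ is again a compact topological transversal, and it contains $x$. By Proposition~\ref{pr:differnttransv} I would choose a compact $S\subset\G|_{\X}$ with $(S,\X)$ a generating pair. It then suffices to prove that for each $i$ the inclusion of vertex sets $\G_x^{\X_i}\hookrightarrow\G_x^{\X}$ is a quasi-isometry $\G(x,S_i)\arr\G(x,S)$; since quasi-isometry of graphs is a symmetric and transitive relation, $\G(x,S_1)$ and $\G(x,S_2)$ are then quasi-isometric through $\G(x,S)$.

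Fix $i$ and abbreviate $\X':=\X_i$, $S':=S_i$. The net property is already in hand: by the lemma directly preceding this corollary, $\G_x^{\X'}$ is a net in $\G(x,S)$. For the upper Lipschitz bound I would argue as in Proposition~\ref{pr:equivalencegensets}. Since $S'\cup(S')^{-1}\subset\G|_{\X'}\subset\G|_{\X}$ is compact and $(S,\X)$ is a generating pair, there is $n$ with $S'\cup(S')^{-1}\subset\bigcup_{k=1}^{n}(S\cup S^{-1})^k$. Given an edge of $\G(x,S_i)$, i.e.\ vertices $g,h\in\G_x^{\X'}$ with $gh^{-1}\in S'\cup(S')^{-1}$, write $gh^{-1}=t_k\cdots t_1$ with $t_j\in S\cup S^{-1}$ and $k\le n$. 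The partial products $t_j\cdots t_1 h$ all have origin $\be(h)=x$ and target $\en(t_j)\in\X$, so they are vertices of $\G(x,S)$ realizing a path of length $\le n$ from $h$ to $g$; summing along a geodesic of $\G(x,S_i)$ gives $|g-h|_{S}\le n\,|g-h|_{S'}$.

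The lower bound is the crux. First I would establish the auxiliary claim: there is a constant $C$ so that any $w,w'\in\G_x^{\X'}$ with $|w-w'|_{S}\le R$ (for a fixed $R$) satisfy $|w-w'|_{S'}\le C$. A path of length $\le R$ in $\G(x,S)$ shows $w'w^{-1}\in\bigcup_{k=1}^{R}(S\cup S^{-1})^k$; since $w,w'\in\G_x^{\X'}$ we have $\en(w),\en(w')\in\X'$, so $w'w^{-1}\in\G|_{\X'}$, whence $w'w^{-1}$ lies in the compact set $K:=\bigl(\bigcup_{k=1}^{R}(S\cup S^{-1})^k\bigr)\cap\G|_{\X'}$. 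As $(S',\X')$ is a generating pair, compactness of $K$ yields $C$ with $K\subset\bigcup_{j=1}^{C}(S'\cup(S')^{-1})^j$, and the waypoint argument of the previous paragraph, now with factors in $S'\cup(S')^{-1}\subset\G|_{\X'}$ so that all intermediate germs stay in $\G_x^{\X'}$, gives $|w-w'|_{S'}\le C$. Granting the claim, take a geodesic $g=v_0,v_1,\dots,v_m=h$ in $\G(x,S)$ with $m=|g-h|_{S}$, and use the net property to pick $w_j\in\G_x^{\X'}$ with $|v_j-w_j|_{S}\le\Delta$, setting $w_0=g$ and $w_m=h$. Then $|w_j-w_{j+1}|_{S}\le 2\Delta+1=:R$, so $|w_j-w_{j+1}|_{S'}\le C$ by the claim, and hence $|g-h|_{S'}\le\sum_{j}|w_j-w_{j+1}|_{S'}\le Cm=C\,|g-h|_{S}$. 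This is the desired lower bound, so the inclusion is a quasi-isometry.

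The step I expect to be delicate is exactly this auxiliary claim: it is what forces the detour through the enlarged transversal $\X$, and it requires marrying the net property with the mechanism turning a generating pair into locally bounded word length, while keeping careful track that every intermediate germ used to realize a short path has its target in the correct transversal, so that it is a genuine vertex of the relevant Cayley graph.
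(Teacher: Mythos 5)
Your proof is correct and follows exactly the route the paper intends: the corollary appears there without an explicit proof, as an assembly of Proposition~\ref{pr:differnttransv}, the compactness argument of Proposition~\ref{pr:equivalencegensets}, and the lemma on nets for nested transversals, which is precisely what you carry out by enlarging to the transversal $\X_1\cup\X_2$ and verifying both Lipschitz bounds for the inclusions of vertex sets. No gaps.
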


\begin{examp}
Let $\theta\in\R$ be an irrational number. Consider the group $G\cong\Z^2$ acting on
$\R$ and generated by the transformations $x\mapsto x+1$ and $x\mapsto
x+\theta$. Let $\G$ be the corresponding groupoid of germs. Note that
all $G$-orbits are dense, hence any open subset of $\R$ is a $G$-transversal.

Consider the action of
$G$ on $\R^2$ generated by the maps $a:\left(\begin{array}{c}x\\
    y\end{array}\right)
\mapsto\left(\begin{array}{c}x+1\\ y\end{array}\right)$ and $b:\left(\begin{array}{c}x\\
y\end{array}\right)\mapsto\left(\begin{array}{c}x+\theta\\
y+1\end{array}\right)$. Projection of this action onto the first
coordinate is the action of $G$ on $\R$, defined above.

If $I=[x_1, x_2]\subset\R$ is a finite closed
interval and $t\in (x_1, x_2)$, then $\G_t^I$ can be represented by
the part $\hat I$ of the $G$-orbit of $\left(\begin{array}{c}t\\
    0\end{array}\right)$ that is projected to $[x_1, x_2]$. Each point
$\left(\begin{array}{c} r_1\\ r_2\end{array}\right)\in\hat I$ represents
the germ of the translation $x\mapsto x+(r_1-t)$ at $t$. See Figure~\ref{fig:rotation}.

Fix a sufficiently big set $R\subset G$, and consider the set $S_R$ of germs at
points of $I$ of elements of $R$. Then the Cayley graph $\G(t, S_R)$ is isomorphic
to the graph with the set of vertices $\hat I$ in which two vertices
are connected by an edge if and only if one is the image of the other
under the action of an element of $R$.

\begin{figure}
\includegraphics{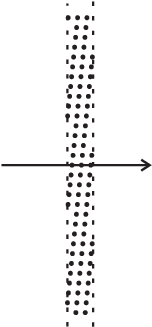}
\caption{Cayley graphs of irrational rotation}\label{fig:rotation}
\end{figure}

It is easy to see that the Cayley graphs of $\G$ are quasi-isometric to
$\R$. When we increase the interval $I$, the set of vertices of the
Cayley graph will increase, but the smaller Cayley graph is a net inside the larger one.

Note that the groupoid $\G$ is equivalent to the groupoid generated by
the rotation $x\mapsto x+\theta\pmod{1}$ of the circle $\R/\Z$. Orbits of
the latter can be naturally identified with $\Z$, and its Cayley
graphs coincide with the Cayley graphs of $\Z$.
\end{examp}

\begin{examp}
Consider the group $G$ acting on the Cantor set $\{0, 1\}^\infty$ and
generated by the transformations $a$ and $b$ defined inductively by the
rules
\[a(0w)=1w,\quad a(1w)=0b(w),\qquad b(0w)=0w,\quad b(1w)=1a(w)\]
for all $w\in\{0, 1\}^\infty$. This group is the \emph{iterated
  monodromy group} of the complex polynomial $z^2-1$
(see~\ref{ss:img}). It is often called the \emph{Basilica
  group}.\index{Basilica group}\index{group!Basilica}

We can take the whole $\{0, 1\}^\infty$ as our
transversal, and take the set of all germs of transformations $a$ and
$b$ as the generating set $S$. It is not hard to show that the groupoid of germs $\G$ of $G$ is
principal ($G$ is \emph{regular} in terminology of~\cite{nek:cpalg}). It follows
that the Cayley graphs $\G(x, S)$ coincide with the orbital graphs of
the action of $G$ on $\{0, 1\}^\infty$ (orbital graphs are also called
\emph{Schreier graphs} in the case of group actions).

The orbital graphs of the Basilica group where extensively studied
in~\cite{angelidonnomatternagnibeda}. In particular, it was shown that depending on the basepoint $x$
the graphs $\G(x, S)$ can have one, two, or four ends. In
particular, the quasi-isometry class of $\G(x, S)$ depends on $x$. In
fact, it follows from the results of~\cite{angelidonnomatternagnibeda}
that there are uncountably many
different quasi-isometry classes of the Cayley graphs $\G(x, S)$.
\end{examp}

\section{Relations in Hausdorff groupoids}

Let $\G$ be a \emph{Hausdorff} groupoid of germs. Let $\mS$ be a finite set of extendable compact
subsets $F\subset\G$. Let $\wh F\supset F$ be extensions of the sets $F\in\mS$ to elements of $\pG$.
We will denote the set of extensions $\wh F$ by $\wh\mS$. Denote $S=\bigcup_{F\in\mS}F$.

Suppose that $g_1g_2\cdots g_n=\be(g_n)$ for some $g_i\in S$.
Denote by $F_i$, for $i=1, 2, \ldots, n$, the element of $\mS$ such that $g_i\in F_i$.

Then $g_1g_2\cdots g_n=\be(g_n)$ is a germ of the
composition $\wh F_1\cdot\wh F_2\cdots\wh F_n$. Denote by $E$ the set of
points $x\in\be(\wh F_n)$ such that the germ of $\wh F_1\cdot \wh
F_2\cdots \wh F_n$ at $x$ is defined and is trivial (i.e., equal
to the germ of the identity).

\begin{lemma}
\label{lem:relations} The set $E$ is relatively closed and open in
$\be(\wh F_1\cdot\wh F_2\cdots\wh F_n)$.
\end{lemma}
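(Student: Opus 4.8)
The plan is to recognize $E$ as the preimage of the unit space $\G^{(0)}$ under a continuous section, and then to reduce the whole statement to the fact that $\G^{(0)}$ is clopen in $\G$. Write $\wh F=\wh F_1\cdot\wh F_2\cdots\wh F_n\in\pG$; this is a homeomorphism whose domain is the open set $\be(\wh F_1\cdot\wh F_2\cdots\wh F_n)\subset\be(\wh F_n)$. For such an $x$ the germ of $\wh F$ at $x$ is exactly the element $(\wh F,x)\in\G$, and it is trivial (equal to the germ of the identity) precisely when $(\wh F,x)$ is a unit. Hence I would record the identification
\[E=\{x\in\be(\wh F_1\cdot\wh F_2\cdots\wh F_n)\;:\;(\wh F,x)\in\G^{(0)}\}.\]

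Next I would produce the section. The element $\wh F$ determines a bisection $B=\{(\wh F,x):x\in\be(\wh F)\}\subset\G$, which is one of the basic open sets of the germ topology and on which $\be:B\arr\be(\wh F)$ is a homeomorphism. Its inverse is the continuous map $s:\be(\wh F)\arr\G$, $s(x)=(\wh F,x)$, so that $E=s^{-1}(\G^{(0)})$. It therefore suffices to show that $\G^{(0)}$ is simultaneously open and closed in $\G$, since then its preimage under the continuous $s$ is relatively clopen in $\be(\wh F)$, which is the assertion of the lemma.

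Openness of $\G^{(0)}$ is immediate from étaleness: the sets $\{(\mathrm{id},x):x\in U\}$ for $U\subset\X$ open are basic open subsets of $\G$, they consist of units, and they cover $\G^{(0)}$. The one substantive point — and the only place the hypotheses are genuinely used — is closedness of $\G^{(0)}$, which is exactly where the Hausdorff assumption enters. I would argue that if a net $g_\alpha\in\G^{(0)}$ converges to $g\in\G$, then continuity of the origin map gives $\be(g)=\lim_\alpha\be(g_\alpha)=\lim_\alpha g_\alpha=g$, so $g=\be(g)$ is a unit; here the uniqueness of the limit, i.e. the Hausdorff property, is essential, since in a non-Hausdorff groupoid of germs a net of units may converge to a non-unit. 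This yields that $\G^{(0)}$ is clopen and completes the proof. The main obstacle is precisely this closedness of $\G^{(0)}$, which fails without the Hausdorff hypothesis and is what forces the lemma to be stated for Hausdorff groupoids of germs.
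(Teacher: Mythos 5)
Your proof is correct, and it reaches the paper's conclusion by a different (more modular) route, though the underlying mechanism is the same. The paper argues directly: openness of $E$ is immediate (a trivial germ means $\wh F_1\cdots\wh F_n$ coincides with the identity near the point, so nearby germs are also trivial), and for relative closedness it takes $x$ in the closure of $E$ and observes that the germ $g$ of $\wh F_1\cdots\wh F_n$ at $x$ and the identity germ at $x$ admit no disjoint neighborhoods --- every basic neighborhood of $g$ consists of germs of $\wh F_1\cdots\wh F_n$ at points near $x$, and at points of $E$ these are identity germs --- whence $g$ is trivial by Hausdorffness. You instead factor the lemma into two clean pieces: the identification $E=s^{-1}(\G^{(0)})$, where $s(x)=(\wh F,x)$ is the continuous section of the bisection determined by $\wh F$, and the general fact that $\G^{(0)}$ is clopen in a Hausdorff \'etale groupoid (open by \'etaleness, closed because $\G^{(0)}$ is the equalizer of $\mathrm{id}$ and $\be$, i.e.\ by your net argument using continuity of $\be$ and uniqueness of limits). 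The Hausdorff hypothesis enters both arguments at exactly the same spot: your proof that a limit of units is a unit is precisely the paper's non-separability argument, transported along the section $s$. What your version buys is reusability --- the Hausdorff hypothesis is isolated in a standard statement about topological groupoids rather than being consumed inside a germ-by-germ computation; the small price is that you must justify that $s$ is continuous, i.e.\ that $\be$ restricted to a basic open set of the germ topology is a homeomorphism onto its image, a point the paper's direct manipulation of basic neighborhoods never needs to make explicit.
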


\begin{proof}
The set $E$ is obviously open. Suppose that $x$ belongs to the
closure of $E$ in $\be(\wh F_1\cdot\wh F_2\cdots\wh F_n)$. Consider the germ
$g$ of the homeomorphism $\wh F_1\cdot\wh F_2\cdots\wh F_n$ at the point
$x$. Intersection of every open neighborhood $V$ of $x$ with $E$
is a non-empty open set, hence every neighborhood of $g$ has a
non-empty intersection with the identical homeomorphism, i.e., $g$
and the germ of the identity at $x$ do not have disjoint
neighborhoods. It follows that $g$ is a germ of identity, hence
$x$ belongs to $E$.
\end{proof}

\begin{corollary}
\label{cor:relationspseudogroup} For any finite set $R$ of
sequences $(F_1, F_2, \ldots, F_n)$ of elements of $\mS$
there exists $\epsilon>0$ such that for all $x, y\in\G^{(0)}$
such that $|x-y|<\epsilon$, and for every sequence
$(F_1, F_2, \ldots, F_n)\in R$, if the germ $(F_1\cdot F_2\cdots
F_n, x)$ is trivial, then so is the germ $(\wh F_1\cdot\wh
F_2\cdots\wh F_n, y)$.
\end{corollary}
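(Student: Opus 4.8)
The plan is to handle the finitely many sequences in $R$ one at a time, extract a positive constant from each, and then take the minimum. Fix a sequence $(F_1, F_2, \ldots, F_n)\in R$ and consider the composition $\wh F_1\cdot\wh F_2\cdots\wh F_n$ of the chosen extensions. By Lemma~\ref{lem:relations} the set $E$ of points $x\in\be(\wh F_1\cdot\wh F_2\cdots\wh F_n)$ at which the germ of $\wh F_1\cdot\wh F_2\cdots\wh F_n$ is trivial is relatively closed and open in the open set $\be(\wh F_1\cdot\wh F_2\cdots\wh F_n)$; in particular $E$ is open in $\G^{(0)}$ and its complement $\G^{(0)}\setminus E$ is closed. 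Since $F_i\subset\wh F_i$ and the germs of $F_i$ and $\wh F_i$ agree wherever both are defined, the hypothesis that the germ $(F_1\cdot F_2\cdots F_n, x)$ is trivial holds exactly for the points $x$ of the set $E'=E\cap K$, where $K$ is the set of points at which the composition of the compact partial maps $F_1, \ldots, F_n$ is defined.

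First I would verify that $K$ is compact. The set of composable tuples $\{(g_1, \ldots, g_n)\in F_1\times\cdots\times F_n\;:\;\be(g_i)=\en(g_{i+1})\ \text{for all}\ i\}$ is a closed subset of the compact product $F_1\times\cdots\times F_n$, because $\be$ and $\en$ are continuous and $\G^{(0)}$ is Hausdorff; hence it is compact, and $K$, being its image under the continuous map $(g_1, \ldots, g_n)\mapsto\be(g_n)$, is compact as well. Note also that $K\subset\be(\wh F_1\cdot\wh F_2\cdots\wh F_n)$, since wherever the composition of the $F_i$ is defined so is that of the larger maps $\wh F_i$. Consequently $E'=E\cap K$ is a closed subset of the compact set $K$ (using that $E$ is relatively closed in $\be(\wh F_1\cdots\wh F_n)\supset K$), and therefore $E'$ is itself compact.

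With this in hand, $E'$ is a compact set contained in the open set $E$, so the distance $\epsilon_F=\mathrm{dist}(E', \G^{(0)}\setminus E)$ is strictly positive (taking $\epsilon_F=1$ in the degenerate case that $E'$ or $\G^{(0)}\setminus E$ is empty). For every $x\in E'$ the ball of radius $\epsilon_F$ about $x$ is then disjoint from $\G^{(0)}\setminus E$, hence contained in $E$, so for any $y$ with $|x-y|<\epsilon_F$ the germ $(\wh F_1\cdot\wh F_2\cdots\wh F_n, y)$ is trivial. Setting $\epsilon=\min\epsilon_F$ over the finitely many sequences in $R$ produces a single positive constant with the required property.

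The main obstacle, and the only genuinely non-formal point, is the compactness of the hypothesis set $E'$: this is precisely what upgrades the pointwise openness supplied by Lemma~\ref{lem:relations} into a neighborhood of \emph{uniform} radius valid at every admissible $x$, and it relies essentially on the compactness of the generating sets $F_i$ together with the Hausdorffness of $\G$. Everything else is the standard fact that a compact set inside an open set has a positive-width collar, applied uniformly over the finite family $R$.
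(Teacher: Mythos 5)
Your proof is correct and takes essentially the same route as the paper's: Lemma~\ref{lem:relations} gives that the triviality set $E$ is open and relatively closed, compactness of the domain of $F_1\cdot F_2\cdots F_n$ makes the hypothesis set a compact subset of the open set $E$, and a positive collar followed by a minimum over the finite set $R$ finishes the argument. The only differences are cosmetic — you spell out the compactness of the domain (which the paper merely asserts) and phrase the collar as a distance between a compact set and a disjoint closed set rather than as a uniform $\epsilon_r$-neighborhood.
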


\begin{proof}
For every sequence $r=(F_1, F_2, \ldots, F_n)\in R$ denote by
$E_r$ the set of points $x$ for which the germ of $\wh F_1\cdot\wh
F_2\cdots\wh F_n$ at $x$ is trivial. The set $E_r$ is open and
relatively closed in $\be(\wh F_1\cdot\wh F_2\cdots\wh F_n)$, by
Lemma~\ref{lem:relations}. Domain of $F_1\cdot
F_2\cdots F_n$ is contained in $\be(\wh F_1\cdot\wh F_2\cdots\wh
F_n)$ and is compact. It
follows that the intersection $E_r\cap\be(F_1\cdot F_2\cdots
F_n)$ is compact, and that $E_r$ is an open neighborhood of
$E_r\cap\be(F_1\cdot F_2\cdots F_n)$. Consequently, there exists
$\epsilon_r>0$ such that for every $x\in E_r\cap\be(F_1\cdot
F_2\cdots F_n)$ the $\epsilon_r$-neighborhood of $x$ is contained
in $E_r$. We can take $\epsilon$ equal to minimum of
$\epsilon_r$ for all $r\in R$.
\end{proof}

\section{Groupoids with additional structures}
\label{s:graddstr}
\subsection{Lipschitz structure}

\begin{lemma}
\label{lem:Lipschstructure}
Let $\G$ be a  groupoid of germs, and let $\X_1$ and $\X_2$ be
compact topological $\G$-transversals. Let $\ell_1$ be a positive log-scale
on $\X_1$ such that the elements
of  $\pG|_{\X_1}$ are locally Lipschitz with respect to $\ell_1$. Then
there exists a unique, up to Lipschitz equivalence, positive
log-scale $\ell_2$ on $\X_2$ such that the elements of $\pG|_{\X_1\cup\X_2}$ are
locally Lipschitz with respect to $\ell_1$ and $\ell_2$.
\end{lemma}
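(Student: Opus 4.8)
The plan is to transport $\ell_1$ from $\X_1$ to $\X_2$ through finitely many charts of the pseudogroup and then glue the resulting locally defined log-scales by the pasting Theorem~\ref{th:pastingLipschitz}. Fix an open transversal $\X_{1,0}\subseteq\X_1$. Since $\X_{1,0}$ meets every orbit, for each $x\in\X_2$ there is a germ $g\in\G$ with $\be(g)=x$ and $\en(g)\in\X_{1,0}$; as $\G$ is \'etale and $\G^{(0)}$ is locally compact, I can extend it to a bisection $U\in\pG$ with $x\in\be(U)$, with $\en(U)\subseteq\X_{1,0}$, and such that $U$ is the restriction of some $\wh U\in\pG$ with $\overline{\be(U)}$ compact, $\overline{\be(U)}\subseteq\be(\wh U)$, and $\en(\wh U)\subseteq\X_{1,0}$. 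The relatively open sets $\be(U)\cap\X_2$ cover the compact space $\X_2$, so I extract a finite subcover coming from charts $U_1,\dots,U_m$ with extensions $\wh U_1,\dots,\wh U_m$. Writing $V_i=\be(U_i)\cap\X_2$, each $\wh U_i$ is a homeomorphism on a neighborhood of the compact set $\overline{V_i}$ carrying it into $\X_1$, so I define a positive log-scale on $\overline{V_i}$ by pulling back $\ell_1$:
\[\ell_2^{(i)}(x,y)=\ell_1(\wh U_i(x),\wh U_i(y)).\]

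The key step, which I expect to be the main obstacle, is the overlap compatibility needed for Theorem~\ref{th:pastingLipschitz}. On $\overline{V_i}\cap\overline{V_j}$ the transition map $h_{ij}=\wh U_i\wh U_j^{-1}$, restricted to the compact set $K_{ij}=\wh U_j(\overline{V_i}\cap\overline{V_j})\subseteq\X_1$, is an element of $\pG|_{\X_1}$, hence locally Lipschitz with respect to $\ell_1$ by hypothesis; since $K_{ij}$ is compact and $\ell_1$ is positive, Lemma~\ref{l:locglob} upgrades this to honest Lipschitzness, and the same holds for $h_{ij}^{-1}$. Because $\wh U_i$ is an isometry from $(\overline{V_i},\ell_2^{(i)})$ onto its image in $(\X_1,\ell_1)$, this two-sided bound translates into $|\ell_2^{(i)}-\ell_2^{(j)}|\le k$ on the overlap, so $\ell_2^{(i)}$ and $\ell_2^{(j)}$ are Lipschitz equivalent there. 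Theorem~\ref{th:pastingLipschitz} then produces a log-scale on $\X_2$ that is Lipschitz equivalent to each $\ell_2^{(i)}$ on $\overline{V_i}$; as $\X_2$ is compact the associated metric is bounded, so this log-scale is bounded below, and after adding a constant (which alters neither the log-scale axioms nor the Lipschitz class) I obtain a \emph{positive} log-scale $\ell_2$.

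It remains to verify that $\pG|_{\X_1\cup\X_2}$ acts by locally Lipschitz maps and that $\ell_2$ is unique up to Lipschitz equivalence. By construction each $\wh U_i$ is a local bi-Lipschitz equivalence between $(\X_2,\ell_2)$ and $(\X_1,\ell_1)$ on $V_i$. Local Lipschitzness being a germ condition, I check an element $F\in\pG|_{\X_1\cup\X_2}$ at a point $p$ by factoring its germ through these charts: conjugating by a chart $\wh U_i$ at the source and/or $\wh U_j^{-1}$ at the target reduces each of the cases $\X_2\to\X_1$, $\X_1\to\X_2$, $\X_2\to\X_2$ to an element of $\pG|_{\X_1}$ (locally Lipschitz by hypothesis) composed with bi-Lipschitz charts, while $\X_1\to\X_1$ is the hypothesis itself. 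For uniqueness, let $\ell_2'$ be another positive log-scale with the stated property; applying it to $\wh U_i$ and $\wh U_i^{-1}$ (both in $\pG|_{\X_1\cup\X_2}$) shows $\wh U_i$ is also a local bi-Lipschitz equivalence between $(\X_2,\ell_2')$ and $(\X_1,\ell_1)$, so $\ell_2'$ is locally Lipschitz equivalent to $\ell_2^{(i)}$ on each $V_i$, hence locally Lipschitz equivalent to $\ell_2$ on all of $\X_2$. Since both are positive log-scales on the compact space $\X_2$, Lemma~\ref{l:locglob} promotes this local equivalence to global Lipschitz equivalence, which finishes the proof.
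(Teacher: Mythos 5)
Your proof is correct and follows essentially the same route as the paper's: cover $\X_2$ by finitely many relatively compact extendable charts of $\pG$ carrying it into the interior of $\X_1$, pull back $\ell_1$ through them, use positivity of $\ell_1$ together with Lemma~\ref{l:locglob} to upgrade the locally Lipschitz transition maps to genuine Lipschitz equivalence of the pulled-back scales on overlaps, and paste via Theorem~\ref{th:pastingLipschitz}. The only difference is that you spell out the final verification (local Lipschitzness of the elements of $\pG|_{\X_1\cup\X_2}$ and uniqueness of $\ell_2$), which the paper dismisses as ``easy to see.''
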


\begin{proof}
There exists a finite set $F_i$, $i\in I$, of relatively compact extendable
elements of $\pG$ such that $\be(F_i)$ cover $\X_2$
and $\en(F_i)$ are subsets of the interior of $\X_1$. Define
$\wt\ell_i(x, y)=\ell_1(F_i(x), F_i(y))$ for $x,
y\in\be(\overline{F_i})$. We get a covering of the compact set $\X_2$
 by open subsets $U_i=\be(F_i)$ and a collection of log-scales $\wt\ell_i$
defined on $\overline{U_i}$ such that $\wt\ell_i$ and $\wt\ell_j$ are
Lipschitz equivalent to each other on the intersection
$\overline{U_i}\cap\overline{U_j}$ (since an extension of
$\overline{U_j}(\overline{U_i})^{-1}$ is locally Lipschitz and
$\ell_1$ is positive).
Hence, by Theorem~\ref{th:pastingLipschitz}, there exists a log-scale
$\ell_2$ on $\X_2$ which is Lipschitz equivalent to every log-scale
$\wt\ell_i$, $i\in I$. It is easy to see that this log-scale satisfies
the conditions of the lemma.
\end{proof}

In view of Lemma~\ref{lem:Lipschstructure}, we adopt the following
definition.

\begin{defi}
A \emph{Lipschitz structure} on $\G$
\index{Lipschitz structure on a groupoid}
\index{groupoid!with a Lipschitz structure}
is given by a positive log-scale $\ell$ on a compact topological
transversal $\X$ such that all elements of $\pG$
act by locally Lipschitz transformations with respect to $\ell$.
If $\ell'$ is a log-scale on
a compact topological transversal $\X'$, then $\ell$ and $\ell'$ define the same
Lipschitz structure if the elements of $\pG|_{\X\cup\X'}$ are locally
Lipschitz with respect to $\ell$ and $\ell'$.
(In particular, $\ell$ and $\ell'$ are locally Lipschitz equivalent on
$\X\cap\X'$.)

Two equivalent groupoids $\G_1$ and $\G_2$ have equivalent Lipschitz
structures (are equivalent as groupoids with Lipschitz structure)
if their Lipschitz structures define a Lipschitz structure on $\G_1\vee\G_2$.
\end{defi}

\subsection{Local product structure}

\begin{defi}
Let $\X$ be a topological space with a local product structure. A
homeomorphism $F:U\arr V$ between open subsets of $\X$
\emph{preserves the local product structure} if for every point
$x\in U$ and a pair of rectangles $(R_i, [\cdot, \cdot]_i)$,
$(R_j, [\cdot, \cdot]_j)$ such that $x\in R_i$ and $F(x)\in R_j$
there exists a rectangular neighborhood $W$ of $x$ such that
\[F([y, z]_i)=[F(y), F(z)]_j\]
for all $y, z\in W$. \index{groupoid!preserving a local product
  structure} \index{pseudogroup!preserving a local product structure}

Let $\X$ be a space with a local product structure. A pseudogroup
$\pG$ acting on $\X$ (and its groupoid of germs)
is said to preserve the local product
structure if every element $F:U\arr V$ of $\pG$ preserves the
local product structure.
\end{defi}

Note that if a groupoid of germs $\G$ preserves a local product structure on
$\G^{(0)}$ then $\G$ itself has a natural
local product structure. Namely, for every germ $(F, x)$ we can find
a rectangular neighborhood $R$ of $x$ such $F(R)$ is also a
rectangle and $F([y, z])=[F(y), F(z)]$ for all $y, z\in R$. We
transform then the set of germs $\{(F, x)\;:\;x\in R\}$ into a
rectangle by setting $[(F, y), (F, z)]=(F, [y, z])$.

\begin{defi}
Let $\G$ be a groupoid preserving a local product structure and let
$\mathcal{R}$ be a covering of a $\G$-transversal by open rectangles.
An element $U\in\pG$ is called a \emph{rectangle subordinate to
$\mathcal{R}$} \index{subordinate rectangle} \index{rectangle!subordinate}
if there exist rectangles $R_i, R_j\in\mathcal{R}$ such
that $\be(U)\subset R_i$, $\en(U)\subset R_j$ and $U([x,
y]_{R_i})=[U(x), U(y)]_{R_j}$.
\end{defi}

Next proposition is a direct corollary of the definitions.

\begin{proposition}
\label{pr:extlocprod}
Let $\G$ be a groupoid of germs and suppose that there exists a
local product structure preserved by $\pG$ on an open transversal
$\X_0$. Then the local product structure on $\X_0$ can
be extended in a unique way to a local product structure on
$\G^{(0)}$ preserved by $\pG$.
\end{proposition}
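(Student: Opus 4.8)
The plan is to define the extended rectangle structure near an arbitrary point $y\in\G^{(0)}$ by transporting the given structure from $\X_0$ by means of a pseudogroup element carrying $y$ into $\X_0$, and then to check that the result is independent of all choices precisely because $\pG$ preserves the structure on $\X_0$.

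First, since $\X_0$ is a transversal, for every $y\in\G^{(0)}$ there is an $F\in\pG$ whose domain contains $y$ and with $F(y)\in\X_0$. Choosing a rectangle $R_j$ of the atlas on $\X_0$ with $F(y)\in R_j$ and a neighborhood $W$ of $y$ with $F(W)\subset R_j$, I would set
\[[a, b]_W=F^{-1}\bigl([F(a), F(b)]_j\bigr)\qquad\text{for }a, b\in W.\]
Since $F$ is a homeomorphism and $[\cdot, \cdot]_j$ satisfies the three rectangle axioms, so does $[\cdot, \cdot]_W$; hence $(W, [\cdot, \cdot]_W)$ is a rectangle. When $y\in\X_0$ one may take $F$ to be a restriction of the identity, recovering the original structure, so the extension does agree with the given one on $\X_0$.

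The key step is independence of the choice of $F$. Given $F_1, F_2\in\pG$ with $F_1(y), F_2(y)\in\X_0$, the composition $F_2F_1^{-1}$ is an element of $\pG$ defined near $F_1(y)$ and with values near $F_2(y)$, both in $\X_0$. Because $\pG$ preserves the local product structure on $\X_0$, it satisfies $(F_2F_1^{-1})([p, q]_i)=[(F_2F_1^{-1})(p), (F_2F_1^{-1})(q)]_j$ for $p, q$ near $F_1(y)$; substituting $p=F_1(a)$, $q=F_1(b)$ and applying $F_2^{-1}$ yields $F_1^{-1}([F_1(a), F_1(b)]_i)=F_2^{-1}([F_2(a), F_2(b)]_j)$, so the two pullbacks coincide on a common neighborhood of $y$. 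The very same computation, run with $F_2GF_1^{-1}$ in place of $F_2F_1^{-1}$ for an arbitrary $G\in\pG$ and with $F_1(y), F_2(G(y))\in\X_0$, gives $G([a, b])=[G(a), G(b)]$ for the extended structure, i.e.\ the extension is preserved by $\pG$.

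Collecting the rectangles $(W, [\cdot, \cdot]_W)$ over all $y\in\G^{(0)}$ produces an atlas whose pairwise agreement, just established, is exactly the compatibility condition of Definition~\ref{def:ldps}; this atlas therefore defines a local product structure on $\G^{(0)}$ restricting to the given one on $\X_0$ and preserved by $\pG$. Uniqueness is forced by preservation: any $\pG$-invariant extension must satisfy $F([a, b])=[F(a), F(b)]_j$ for $F$ as above, which rearranges to the pullback formula, so any two extensions agree up to the equivalence of Definition~\ref{def:ldps}. I expect the only delicate point to be the bookkeeping of neighborhoods on which all the germ identities hold simultaneously; the verification of the rectangle axioms and the transfer of the preservation identity from $\X_0$ are routine.
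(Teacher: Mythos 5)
Your argument is correct and is precisely the argument the paper leaves implicit: the paper offers no proof of Proposition~\ref{pr:extlocprod} beyond calling it ``a direct corollary of the definitions,'' and the transport construction you give (pull back the bracket through an element $F\in\pG$ carrying a neighborhood of $y$ into a rectangle of $\X_0$, then use invariance of the structure on $\X_0$ applied to $F_2F_1^{-1}$ and $F_2GF_1^{-1}$ to get independence of choices, invariance, and uniqueness) is the standard way to make that explicit. The one point that genuinely needs the bookkeeping you flag is that Definition~\ref{def:ldps} demands agreement of two rectangles $W$, $W'$ of your atlas near every point of $\overline{W}\cap\overline{W'}$, not only at points of the open intersection $W\cap W'$; this is settled by choosing each $W$ relatively compact with $\overline{W}$ contained in the domain of its transporting map $F$ and with $F(\overline{W})$ contained in a single rectangle of $\X_0$, so that both transporting maps are defined at such closure points and your germ computation with $F_2F_1^{-1}$ applies there verbatim.
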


\begin{corollary}
Let $\pG_1$ be a pseudogroup preserving a local product
structure of the space $\G_1^{(0)}$ and let $\pG_2$ be an
equivalent pseudogroup. Then there exists a unique local product
structure on $\G_2^{(0)}$ such that the equivalence pseudogroup
$\pG_1\vee\pG_2$ preserves the local product
structure of the disjoint union $\G_1^{(0)}\sqcup\G_2^{(0)}$.
\end{corollary}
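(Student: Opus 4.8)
The plan is to deduce the statement from Proposition~\ref{pr:extlocprod} by recognizing $\G_1^{(0)}$ as an open transversal of the equivalence groupoid. By the characterization of equivalence of groupoids of germs established above, the hypothesis that $\pG_1$ and $\pG_2$ are equivalent furnishes a pseudogroup $\pG_1\vee\pG_2$ acting on the disjoint union $\G_1^{(0)}\sqcup\G_2^{(0)}$ whose restriction to $\G_1^{(0)}$ is $\pG_1$, whose restriction to $\G_2^{(0)}$ is $\pG_2$, and each of whose orbits is the union of one $\pG_1$-orbit and one $\pG_2$-orbit. Let $\Gh$ denote the \'etale groupoid of germs of $\pG_1\vee\pG_2$, so that $\Gh^{(0)}=\G_1^{(0)}\sqcup\G_2^{(0)}$.

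First I would check that $\G_1^{(0)}$ is an open transversal of $\Gh$. It is clopen in the disjoint union, hence in particular open; and since every $(\pG_1\vee\pG_2)$-orbit contains a $\pG_1$-orbit, it meets $\G_1^{(0)}$, so $\G_1^{(0)}$ intersects every orbit. Because $(\pG_1\vee\pG_2)|_{\G_1^{(0)}}=\pG_1$ and $\pG_1$ preserves the given local product structure on $\G_1^{(0)}$, the groupoid $\Gh$ satisfies the hypotheses of Proposition~\ref{pr:extlocprod} with $\X_0=\G_1^{(0)}$. That proposition then yields a unique local product structure on $\Gh^{(0)}=\G_1^{(0)}\sqcup\G_2^{(0)}$ that is preserved by $\pG_1\vee\pG_2$ and that restricts to the original structure on $\G_1^{(0)}$.

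Restricting this structure to the clopen set $\G_2^{(0)}$ produces the desired local product structure there; it is preserved by $\pG_2=(\pG_1\vee\pG_2)|_{\G_2^{(0)}}$ because the whole structure on the disjoint union is preserved by $\pG_1\vee\pG_2$. For uniqueness, I would observe that any local product structure on $\G_2^{(0)}$ making $\pG_1\vee\pG_2$ preserve the product structure on $\G_1^{(0)}\sqcup\G_2^{(0)}$ assembles, together with the fixed structure on $\G_1^{(0)}$, into a preserved structure on $\Gh^{(0)}$ extending the one on the transversal $\G_1^{(0)}$; by the uniqueness clause of Proposition~\ref{pr:extlocprod} it coincides with the structure constructed above, so its restriction to $\G_2^{(0)}$ is forced. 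The only genuinely delicate point is this last transfer of uniqueness: Proposition~\ref{pr:extlocprod} guarantees uniqueness of the extension over the full unit space, and I must be sure that pinning the structure down on the transversal $\G_1^{(0)}$, where it agrees with the given $\pG_1$-invariant structure, together with preservation by $\pG_1\vee\pG_2$, really determines it everywhere -- which is precisely what that proposition asserts.
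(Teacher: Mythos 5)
Your proof is correct and is exactly the derivation the paper intends: the corollary is stated as an immediate consequence of Proposition~\ref{pr:extlocprod}, obtained by viewing $\G_1^{(0)}$ as an open transversal of the groupoid of germs of $\pG_1\vee\pG_2$ and invoking the existence-and-uniqueness of the extension of the product structure to the whole unit space. Your handling of the uniqueness transfer (any admissible structure on $\G_2^{(0)}$ assembles with the given one on $\G_1^{(0)}$ into a preserved extension, hence is forced by the uniqueness clause) is precisely the right observation and closes the argument.
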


Let $\pG$ be a pseudogroup preserving a local product structure of
$\G^{(0)}$. Let $\mathcal{R}=\{R_i\}_{i\in I}$ be a covering of $\G^{(0)}$ by
rectangles.
Let $F:U\arr V$ be a rectangular element of $\pG$ such that $U$
and $V$ are a sub-rectangles of $R_i$ and $R_j\in\mathcal{R}$ respectively.
Then there exist homeomorphisms $\proj_1(F):\proj_1(U)\arr\proj_1(V)$ and
$\proj_2(F):\proj_2(U)\arr\proj_2(V)$ such that $F(x, y)=(\proj_1(F)(x),
\proj_2(F)(y))$ with respect to the canonical decompositions
$U=\proj_1(U)\times\proj_2(U)$ and $V=\proj_1(V)\times\proj_2(V)$. We call the
homeomorphisms $\proj_1(F)$ and $\proj_2(F)$ \emph{projections}
\index{projection of an element of a pseudogroup} of the
rectangle $F$.

For a germ $g\in\G$, a rectangle $U\in\pG$ such that $g\in U$,
and an index $i=1,2$, the germ of the projection
$\proj_i(U)$ at $\proj_i(\be(g))$ is denoted $\proj_i(g)$.

\begin{defi}
Let $k=1$ or $2$. \emph{Projection} $\proj_k(\pG, \mathcal{R})$ of
$\pG$ (with respect to the covering $\mathcal{R}$) is the
pseudogroup of local homeomorphisms of the space
$\proj_k(\G^{(0)})=\bigsqcup_{i\in I}\proj_k(R_i)$ generated by the projections
$\proj_k(F)$ of rectangular elements $F$ of
$\pG_{\mathcal{R}}$. \index{projection of a groupoid or a pseudogroup}
\end{defi}

We will denote by $\proj_k(\G, \mathcal{R})$ the groupoid of
germs of the pseudogroup $\proj_k(\pG, \mathcal{R})$.

Let $\pG$ be a pseudogroup preserving a local product structure on
$\G^{(0)}$. Let $\X$ be a compact topological transversal. Let $|\cdot|$ be a
metric on a neighborhood $\wh\X$ of $\X$.

\begin{defi}
\label{def:compressible}
We say that a finite covering $\mathcal{R}$
of $\X$ by relatively compact rectangles $R_i\subset\wh\X$ has
\emph{compressible first (resp.\ second) direction} if there exists a constant
$\lambda\in (0, 1)$ such that
for every $R_i\in\mathcal{R}$ and every $x\in R_i$ there
exists a rectangular element $F\in\pG$ and a
rectangle $R_j\in\mathcal{R}$ such that
$\proj_k(R_i, x)\subset\be(F)$, $F(\proj_k(R_i, x))\subset R_j\cap\X$, and
\[|F(y_1)-F(y_2)|<\lambda |y_1-y_2|\]
for all $y_1, y_2\in\proj_k(R_i, x)$, where $k=1$ (resp.\ $k=2$).

We say that a covering by rectangles is \emph{compressible} if it is
compressible in both directions.  \index{compressible covering}
\end{defi}

\begin{proposition}
\label{prop:compressible}
If $\mathcal{R}$ and $\mathcal{R}'$ are coverings that are compressible in
the first direction, then the second projections $\proj_2(\pG,
\mathcal{R})$ and $\proj_2(\pG, \mathcal{R}')$ are equivalent.
\end{proposition}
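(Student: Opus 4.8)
The plan is to build directly the ``bridging'' pseudogroup required by the criterion for equivalence of groupoids of germs (the Proposition characterizing equivalence of \'etale groupoids via a pseudogroup on the disjoint union of the unit spaces). By definition $\proj_2(\G,\mathcal{R})$ and $\proj_2(\G,\mathcal{R}')$ are the groupoids of germs of the pseudogroups $\proj_2(\pG,\mathcal{R})$ and $\proj_2(\pG,\mathcal{R}')$ acting on $\proj_2^{\mathcal{R}}(\G^{(0)})=\bigsqcup_i\proj_2(R_i)$ and $\proj_2^{\mathcal{R}'}(\G^{(0)})=\bigsqcup_{i'}\proj_2(R'_{i'})$. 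Hence it suffices to produce a pseudogroup $\mathcal{P}$ on the disjoint union of these two spaces whose restriction to each summand is the corresponding projection, and all of whose orbits meet both summands.

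First I would define the bridging maps. Given overlapping rectangles $R_i\in\mathcal{R}$ and $R'_{i'}\in\mathcal{R}'$ and a point $z\in R_i\cap R'_{i'}$, the compatibility condition in the definition of a local product structure guarantees that the two bracket operations agree on a rectangular neighborhood $W\subset R_i\cap R'_{i'}$ of $z$; hence the second plaques of $R_i$ and of $R'_{i'}$ through each point of $W$ coincide locally. Projecting $W$ into the two summands therefore yields a homeomorphism $\beta_W\colon\proj_2^{\mathcal{R}}(W)\arr\proj_2^{\mathcal{R}'}(W)$ between open subsets of $\proj_2(R_i)$ and $\proj_2(R'_{i'})$. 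I take $\mathcal{P}$ to be the pseudogroup generated by $\proj_2(\pG,\mathcal{R})$, $\proj_2(\pG,\mathcal{R}')$, and all the maps $\beta_W$; since bridges and projections of rectangular elements are homeomorphisms between open sets, this is a genuine pseudogroup.

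Next I would check that the restriction of $\mathcal{P}$ to each summand is exactly the corresponding projection. The inclusion $\supseteq$ is immediate. For $\subseteq$, the key observation is that the second projection $\proj_2(F)$ of a rectangular $F\in\pG$ is intrinsic to the local product structure: it is the plaque-to-plaque map $z\mapsto F(z)$ read off in the second factor, independent of which covering supplies the ambient rectangle. Consequently the bridges intertwine the two projections, $\beta_W\circ\proj_2^{\mathcal{R}}(F)=\proj_2^{\mathcal{R}'}(F)\circ\beta_W$ on suitable domains; and since the identity of $\X$ is rectangular, the change-of-rectangle maps within $\mathcal{R}$ already lie in $\proj_2(\pG,\mathcal{R})$. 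Thus every excursion of a word of generators into the other summand and back reduces to an element of the projection on the starting summand. This gives the required restriction property and, by the same reduction, shows that the intersection of any $\mathcal{P}$-orbit with a summand is a single orbit of the projection there.

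Finally, and this is the step where compressibility is essential, I would verify that every $\mathcal{P}$-orbit meets both summands. Take $\xi\in\proj_2(R_i)$ and write $\xi=\proj_2(x)$ for some $x\in R_i$. Compressibility of $\mathcal{R}$ in the first direction furnishes a rectangular $F\in\pG$ and a rectangle $R_j$ with $\proj_1(R_i,x)\subset\be(F)$ and $F(\proj_1(R_i,x))\subset R_j\cap\X$; restricting $F$ to a rectangular neighborhood of $x$ we may assume it is subordinate to $\mathcal{R}$, so $\proj_2(F)\in\proj_2(\pG,\mathcal{R})$. Since $x\in\proj_1(R_i,x)$ we have $F(x)\in R_j\cap\X$, and $\proj_2(F)$ carries $\xi$ to $\proj_2(F(x))$, whose representative $F(x)$ now lies in $\X$ and hence in some $R'_{i'}\in\mathcal{R}'$; applying the bridge at $F(x)\in R_j\cap R'_{i'}$ lands in the second summand. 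The symmetric argument using compressibility of $\mathcal{R}'$ treats a point of the second summand, so both summands are $\mathcal{P}$-transversals and $\mathcal{P}$ is the desired equivalence pseudogroup. The main obstacle is exactly this last step: a point of $\proj_2(R_i)$ need not admit a representative inside $\X$ a priori, and compressibility in the first direction is precisely the hypothesis that lets one contract a first-plaque representative into $\X$, where the other covering can pick it up.
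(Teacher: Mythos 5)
Your bridging pseudogroup $\mathcal{P}$ is well defined, and your use of compressibility to show that every $\mathcal{P}$-orbit meets both summands is correct, but there is a genuine gap at the step you treat as formal: the claim that the restriction of $\mathcal{P}$ to each summand \emph{coincides} with the corresponding projection pseudogroup. The intertwining relation $\beta_W\circ\proj_2^{\mathcal{R}}(F)=\proj_2^{\mathcal{R}'}(F)\circ\beta_W$ can only absorb bridges adjacent to projections of elements of $\pG$ that restrict to rectangles subordinate to \emph{both} coverings, and only at points where bridges exist; it does nothing for the pure round trips $\beta_{W_2}\circ\beta_{W_1}^{-1}$, where $W_1\subset R\cap R_1'$ and $W_2\subset R\cap R_2'$ for a single $R\in\mathcal{R}$ and two \emph{disjoint} rectangles $R_1', R_2'\in\mathcal{R}'$. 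Such a composition is a nonempty element of $\mathcal{P}$ with domain and range in the $\mathcal{R}'$-summand whenever a first-direction plaque $\proj_1(R,x)$ meets both $W_1$ and $W_2$: it identifies the points of $\proj_2(R_1')$ and $\proj_2(R_2')$ represented by points on a common plaque of $R$, it involves no element of $\pG$ at all, and so no amount of intertwining rewrites it as a word in $\proj_2(\pG,\mathcal{R}')$. Your change-of-chart remark does not apply either: a rectangle subordinate to $\mathcal{R}'$ realizing this germ would need $\be(U)\subset R_1'$ and $\en(U)\subset R_2'$, which for the identity forces $\be(U)\subset R_1'\cap R_2'=\emptyset$. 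Absorbing exactly these plaque-holonomy identifications is the entire content of the paper's proof, and it is there — not in the transversality step — that compressibility is essential: one compresses $\proj_1(R,x)$ by iterated rectangular elements of $\pG$ into $\X$ until its image has diameter less than a Lebesgue number of the covering $\mathcal{R}'$ of $\X$, hence lies in a single $R'\in\mathcal{R}'$; the projections of the restrictions of the compressing map to $R_1'$ and to $R_2'$ are then generators $G_1, G_2$ of $\proj_2(\pG,\mathcal{R}')$, and the identification equals $G_2^{-1}G_1$.

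The misdiagnosis matters because the restriction property cannot be obtained by formal manipulation: the paper notes immediately after the proposition that the conclusion holds under other hypotheses (e.g.\ local connectedness) but not in general, so some geometric input must enter precisely where your argument is purely algebraic, while the step you call ``the main obstacle'' (pushing a point of $\proj_2(R_i)$ into the other summand) is comparatively easy and your treatment of it is fine. Your architecture — a bridging pseudogroup instead of the paper's reduction to a common refinement by sub-rectangles followed by a localization — is a legitimate alternative, but to complete it you must show, by the compressibility argument above applied to each covering, that every composition of generators of $\mathcal{P}$ beginning and ending in the same summand (in particular every round trip built from bridges) lies in the projection pseudogroup of that summand.
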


Note that the conclusion of
Proposition~\ref{prop:compressible} holds in many other cases, not
only when the covering is compressible.

\begin{proof}
Let $\mathcal{R}$ be a compressible covering of $\X$.
It is enough to prove the proposition
for every covering $\mathcal{R}'$ of $\X$ by sub-rectangles of
elements of $\mathcal{R}$, since for any two coverings there exists a
covering by rectangles that are sub-rectangles of both coverings.

For $R'\in\mathcal{R}'$ and $R\in\mathcal{R}$ such that $R'\subset R$
the projection $\proj_2(R')$ is naturally identified with a subset of
$\proj_2(R)$. In this way the projections $\proj_2(R')$ for
$R'\in\mathcal{R}'$
cover a transversal
of the space $\bigsqcup_{R\in\mathcal{R}}\proj_2(R)$ on which $\proj_2(\pG,
\mathcal{R})$ acts. Let us show that $\proj_2(\pG, \mathcal{R}')$ is
equivalent to the localization
$\proj_2(\pG, \mathcal{R})|_{\proj_2(\mathcal{R}')}$. See~\ref{ss:localization} for the
definition of a localization.

The generators of the pseudogroup $\proj_2(\pG,
\mathcal{R}')$ are naturally identified with elements of the localization
$\proj_2(\pG, \mathcal{R})|_{\proj_2(\mathcal{R}')}$. Namely, the projection of
an element $F\in\pG$ onto $\proj_2(R')$ for $R'\in\mathcal{R}'$
is naturally identified with the restrictions to
$\proj_2(R')\subset \proj_2(R)$ of the projection of $F$ onto
$\proj_2(R)$, where $R\in\mathcal{R}$ is such that $R\supset R'$.

The only generators of the localization $\proj_2(\pG,
\mathcal{R})|_{\proj_2(\mathcal{R}')}$ which are not obtained as
projections of  elements of $\pG|_{\mathcal{R}'}$ are the
identifications of the common parts of $\proj_2(R_1')$  and
$\proj_2(R_2')$ for $R_1', R_2'\in\mathcal{R}'$ such that
$R_1'\cap R_2'=\emptyset$. It is enough therefore
to show that these identification still
belong to $\proj_2(\pG, \mathcal{R}')$.

Let $\epsilon>0$ be a Lebesgue's number of the covering of the compact
set $\X$ by elements of $\mathcal{R}'$. Let $D$ be an
upper bound on diameter of the elements of $\mathcal{R}$.

Let $R_1'\cup R_2'\subset R$ for $R\in\mathcal{R}$, and
let $x\in R$ be such that $\proj_2(x)\in \proj_2(R_1')\cap
\proj_2(R_2')$. By the condition of the proposition, there exists a rectangle
$F\in\pG$ such that $\proj_1(R, x)\subset\be(F)$, $F$ contracts the
distances inside $\proj_1(R, x)$ at least by $\lambda$, and
$F(\proj_1(R, x))\subset R_k\cap\X$ for some $R_k\in\mathcal{R}$.
Applying this condition $n$ times, where $n$ is such
that $D\lambda^n<\epsilon$, we conclude that there exists a rectangle
$F\in\pG$
and $R'\in\mathcal{R}'$ such that $\proj_1(R, x)\subset\be(F)$ and
$F(\proj_1(R, x))\subset R'$. Then projections onto $\proj_2(R_1')$
and $\proj_2(R_2')$ of the restrictions of $F$ to
$\be(F)\cap R_1'$ and $\be(F)\cap
R_2'$ are generators of $\proj_2(\pG, \mathcal{R}')$. But projections of
these generators onto $\proj_2(R)$ are equal. It follows that the
identical gluing map between the common parts of the subsets
$\proj_2(R_1')$ and $\proj_2(R_2')\subset \proj_2(R)$ belongs to $\proj_2(\pG,
\mathcal{R}')\subset \proj_2(\pG, \mathcal{R})|_{\proj_2(\mathcal{R}')}$.

We have shown that all generators of $\proj_2(\pG,
\mathcal{R})|_{\proj_2(\mathcal{R}')}$ belong to $\proj_2(\pG,
\mathcal{R}')$, hence these two pseudogroups coincide, which
finishes the proof.
\end{proof}

\subsection{Locally diagonal groupoids}

\begin{defi}
\label{def:locdiagonal}
Let $\G$ be a  groupoid of germs preserving a local product
structure on $\G^{(0)}$. We say that the groupoid $\G$ is
\emph{locally diagonal} (with respect to the local product structure)
\index{groupoid!locally diagonal}
if there exists a covering $\mathcal{R}$ of a topological transversal
of $\G$ by open rectangles such that if $g\in\G$ is such that $\be(g)\in R$,
$\en(g)\in R$ for some $R\in\mathcal{R}$ and one of the projections
$\proj_i(g)$, $i=1,2$, is a unit, then $g$ is a unit.
\end{defi}

It is clear that if a covering $\mathcal{R}$ satisfies the conditions
of Definition~\ref{def:locdiagonal}, then any covering by
subrectangles of elements of $\mathcal{R}$ also satisfies the
conditions of the definition.

\begin{proposition}
Suppose that a covering $\mathcal{R}$ of a topological transversal
$\X\subset\G^{(0)}$ satisfies the conditions of
Definition~\ref{def:locdiagonal}. Then for any topological transversal
$\X'$ there exists a covering $\mathcal{R}'$ of $\X'$ satisfying
the conditions of Definition~\ref{def:locdiagonal}.
\end{proposition}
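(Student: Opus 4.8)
The plan is to transport the covering $\mathcal{R}$ from $\X$ to $\X'$ through the pseudogroup, using that every element of $\pG$ preserves the local product structure and hence respects the projections $\proj_1,\proj_2$. First I would fix a point $x'\in\X'$. Since $\mathcal{R}$ covers the topological transversal $\X$, there is an open transversal $\X_0\subset\X$, and as $\X_0$ is a transversal the orbit of $x'$ meets it: choose $g\in\G$ with $\be(g)=x'$ and $\en(g)=x\in\X_0$, and pick $R\in\mathcal{R}$ with $x\in R$. Because $\G$ is \'etale and local-product-preserving, I can take a bisection $U\in\pG$ containing $g$ and shrink it so that $\be(U)=:R'$ is an open rectangle containing $x'$, $\en(U)\subset R$, and $U$ carries the product structure of $R'$ into that of $R$ (i.e. $U$ is a rectangle subordinate to $\{R',R\}$); this uses that rectangles form a local basis together with the preservation condition in the definition of a local-product-preserving pseudogroup. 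Letting $\mathcal{R}'$ be the collection of all rectangles $R'=R'_{x'}$ obtained this way as $x'$ ranges over $\X'$, I obtain a covering of $\X'$ by open rectangles.

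Next I would verify the diagonality condition for $\mathcal{R}'$. Suppose $g'\in\G$ has $\be(g')=a$ and $\en(g')=b$, both in some $R'=R'_{x'}\in\mathcal{R}'$, and that $\proj_1(g')$ is trivial (the case of $\proj_2$ being symmetric). Writing $u_a,u_b$ for the germs of the transporting bisection $U$ at $a,b$, set $g=u_b\,g'\,u_a^{-1}$. Checking origins and targets shows $g$ is defined, with $\be(g)=U(a)\in R$ and $\en(g)=U(b)\in R$, so both endpoints of $g$ lie in the single rectangle $R\in\mathcal{R}$.

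The core of the argument is that $g$ inherits a trivial first projection. Since $U$ preserves the local product structure, the projection of a composite of rectangular germs is the composite of the projections and $\proj_1(u_a^{-1})=\proj_1(u_a)^{-1}$ (these are the standard properties of the projection construction preceding Proposition~\ref{pr:extlocprod}), so $\proj_1(g)=\proj_1(u_b)\,\proj_1(g')\,\proj_1(u_a)^{-1}$. Now $\proj_1(g')$ being a unit forces $\proj_1(a)=\proj_1(b)$, since a germ that is a unit has equal origin and target; hence $\proj_1(u_a)$ and $\proj_1(u_b)$ are germs of the \emph{same} map $\proj_1(U)$ at the \emph{same} point $\proj_1(a)=\proj_1(b)$, so they coincide, and $\proj_1(g)=\proj_1(u_a)\,\proj_1(u_a)^{-1}$ is a unit. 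Thus $g$ has both endpoints in $R\in\mathcal{R}$ and trivial first projection, so by the diagonality of $\mathcal{R}$ the germ $g$ is a unit. Solving $g=u_b g' u_a^{-1}$ gives $g'=u_b^{-1}u_a$; definedness of this product forces $U(a)=U(b)$, and injectivity of the bisection $U$ then gives $a=b$, so $u_a=u_b$ and $g'=u_a^{-1}u_a$ is a unit, as required.

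I expect the main obstacle to be the bookkeeping in the construction step: confirming that the transporting bisection $U$ can genuinely be chosen to be a rectangle subordinate to the two coverings, so that $\proj_1(U)$ is defined, and that projection is functorial under composition and inversion of rectangular germs (making the factorization $\proj_1(u_b g' u_a^{-1})=\proj_1(u_b)\,\proj_1(g')\,\proj_1(u_a)^{-1}$ legitimate). Both points are consequences of the definition of a local-product-preserving pseudogroup and the projection construction, but they need to be invoked with care; once they are in place, the endpoint and injectivity computations are routine.
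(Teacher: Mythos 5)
Your proof is correct and follows essentially the same route as the paper: transport each point of $\X'$ into a rectangle of $\mathcal{R}$ by a rectangular bisection $U\in\pG$, conjugate a germ $g'$ with endpoints in $R'=\be(U)$ to $h=Ug'U^{-1}$ with endpoints in $R\in\mathcal{R}$, and observe that triviality of a projection and being a unit both transfer across the conjugation. You merely spell out the germ-level cancellation $\proj_1(u_b)\,\proj_1(g')\,\proj_1(u_a)^{-1}$ and the final step back from $h$ to $g'$, which the paper's proof states without detail.
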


\begin{proof}
We can find a covering $\mathcal{R}'$ of $\X'$ by open rectangles
such that for every $R'\in\mathcal{R}'$
there exists a rectangle $U\in\pG$ such that $\be(U)=R'$, $\en(U)$
is a sub-rectangle of a rectangle $R\in\mathcal{R}$, and $U$ agrees
with the product decomposition of $R$ and $R'$. Then for any element
$g\in\pG$ such that $\be(g), \en(g)\in R'$ we have the corresponding
conjugate $h=UgU^{-1}$ such that $\be(h), \en(h)\in R$. A projection
$\proj_i(g)$ is trivial if and only if the projection $\proj_i(h)$ is
trivial. The element $g$ is a unit if and only if $h$ is a unit. It
follows that $\mathcal{R}'$ satisfies the conditions of
Definition~\ref{def:locdiagonal}.
\end{proof}

\begin{corollary}
If a groupoid $\G$ preserving a local product structure on $\G^{(0)}$
is locally diagonal, then every equivalent groupoid is also locally diagonal.
\end{corollary}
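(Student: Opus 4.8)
The plan is to reduce the statement to local diagonality of the groupoid $\G\vee\Gh$ attached to an equivalence between $\G$ and the given equivalent groupoid $\Gh$, and then to descend from $\G\vee\Gh$ to $\Gh$ by means of the Proposition preceding this corollary.

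First I would invoke the corollary following Proposition~\ref{pr:extlocprod}: since $\pG$ preserves a local product structure on $\G^{(0)}$ and $\wt{\Gh}$ is equivalent to $\pG$, there is a unique local product structure on $\Gh^{(0)}$ such that the equivalence pseudogroup $\pG\vee\wt{\Gh}$ preserves the local product structure on the disjoint union $\G^{(0)}\sqcup\Gh^{(0)}$. Equivalently, its groupoid of germs $\G\vee\Gh$ preserves this structure; moreover the restriction of $\G\vee\Gh$ to $\G^{(0)}$ is $\G$, its restriction to $\Gh^{(0)}$ is $\Gh$, and the induced product structures on $\G^{(0)}$ and $\Gh^{(0)}$ agree with the original ones.

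Next I would show that $\G\vee\Gh$ is itself locally diagonal, using the very covering $\mathcal{R}$ of a topological transversal $\X\subset\G^{(0)}$ that witnesses local diagonality of $\G$. Three points have to be checked. (i) Each element of $\mathcal{R}$ is an open rectangle of $\G^{(0)}\sqcup\Gh^{(0)}$, because $\G^{(0)}$ is open in the disjoint union and the combined structure restricts on $\G^{(0)}$ to the original one, so rectangles of $\G^{(0)}$ remain rectangles of the larger space. (ii) $\X$ is a $(\G\vee\Gh)$-transversal: by the description of equivalence, every orbit of $\pG\vee\wt{\Gh}$ meets $\G^{(0)}$ in exactly one $\pG$-orbit, and $\X$ meets every $\pG$-orbit, hence $\X$ meets every orbit of $\G\vee\Gh$. (iii) The diagonality condition holds for $\mathcal{R}$: if $g\in\G\vee\Gh$ has $\be(g),\en(g)\in R$ for some $R\in\mathcal{R}$, then since $R\subset\G^{(0)}$ we have $g\in(\G\vee\Gh)|_{\G^{(0)}}=\G$, so triviality of one projection $\proj_i(g)$ forces $g$ to be a unit by local diagonality of $\G$. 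Thus $\mathcal{R}$ witnesses local diagonality of $\G\vee\Gh$.

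Finally I would apply the preceding Proposition to the groupoid $\G\vee\Gh$: starting from the covering $\mathcal{R}$ of the transversal $\X\subset\G^{(0)}$, it produces, for a topological transversal $\X'\subset\Gh^{(0)}$, a covering $\mathcal{R}'$ of $\X'$ satisfying the conditions of Definition~\ref{def:locdiagonal} for $\G\vee\Gh$. Since each $R'\in\mathcal{R}'$ is contained in $\Gh^{(0)}$, any $g\in\G\vee\Gh$ with $\be(g),\en(g)\in R'$ lies in $(\G\vee\Gh)|_{\Gh^{(0)}}=\Gh$, so the condition inherited from $\mathcal{R}'$ is precisely local diagonality of $\Gh$ with respect to $\mathcal{R}'$. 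The only genuine care points are steps (ii) and (iii): verifying that a $\G$-transversal is a $(\G\vee\Gh)$-transversal and that confining both endpoints of $g$ to a rectangle inside $\G^{(0)}$ (respectively $\Gh^{(0)}$) forces $g$ into $\G$ (respectively $\Gh$); once these are in place, the remainder is bookkeeping with the cited results.
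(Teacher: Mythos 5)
Your proof is correct and is precisely the derivation the paper intends: the corollary is stated there without proof, as an immediate consequence of the preceding proposition applied to the equivalence groupoid $\G\vee\Gh$, whose unit space carries the induced local product structure (the corollary after Proposition~\ref{pr:extlocprod}), whose restrictions to $\G^{(0)}$ and $\Gh^{(0)}$ are $\G$ and $\Gh$, and for which transversals of either factor are transversals of the whole. Your steps (i)--(iii) and the final descent are exactly the bookkeeping the paper suppresses.

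One loose end in your last step: the proposition, as stated, does not guarantee that the rectangles $R'\in\mathcal{R}'$ lie inside $\Gh^{(0)}$. An abstract rectangle of $\G^{(0)}\sqcup\Gh^{(0)}$ can straddle both unit spaces, since a clopen subset of a rectangle need not be closed under $[\cdot,\cdot]$ (e.g.\ the set $\{(0,1),(1,0)\}$ inside the four-point rectangle $\{0,1\}\times\{0,1\}$), and Definition~\ref{def:locdiagonal} applied to $\Gh$ itself requires rectangles contained in $\Gh^{(0)}$. This is harmless to repair: around each point of $\X'$ choose a sub-rectangle of the relevant $R'$ contained in the open set $\Gh^{(0)}$ (product neighborhoods form a basis of rectangle neighborhoods inside any rectangle chart), and recall the paper's remark following Definition~\ref{def:locdiagonal} that a covering by sub-rectangles of a covering satisfying the conditions of the definition again satisfies them.
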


\subsection{Quasi-cocycles}

\begin{defi}
\label{def:graded} A \emph{quasi-cocycle} \index{quasi-cocycle!on a
  groupoid} on a groupoid $\G$ is a map $\coc:\G|_{\X}\arr\R$, where
$\X$ is a compact topological transversal,  such that
there exists a constant $\eta\ge 0$ for which the following conditions hold:
\begin{enumerate} \item
for every $g\in\G|_{\X}$ there exists a neighborhood $U$ of $g$ such
that
\[|\coc(g)-\coc(h)|\le\eta\] for all $h\in U$;
\item
for all $(g_1, g_2)\in\G|_{\X}^{(2)}$
\[\coc(g_1)+\coc(g_2)-\eta\le\coc(g_1g_2)\le\coc(g_1)+\coc(g_2)+\eta.\]
\end{enumerate}
If the above inequalities hold for $\coc$ and $\eta$, then we say that $\coc$ is
an \emph{$\eta$-quasi-cocycle}.
\end{defi}

\begin{defi}
We say that two
quasi-cocycles $\coc_1$ and $\coc_2$ (defined for the same
transversal $\X$) are \emph{strongly equivalent}
\index{strongly equivalent quasi-cocycles}
\index{quasi-cocycles!strongly equivalent} if
$|\coc_1(g)-\coc_2(g)|$ is uniformly bounded. 
They are \emph{(coarsely) equivalent}
\index{coarsely equivalent quasi-cocycles}
\index{quasi-cocycles!coarsely equivalent}
if there exist $\Lambda>1$ and $k>0$ such that
$\Lambda^{-1}\cdot\coc_1(g)-k\le\coc_2(g)\le\Lambda\cdot\coc_1(g)+k$
for all $g\in\G|_{\X}$.
\end{defi}

\begin{lemma}
\label{lem:coconx2}
Let $\X_1$ and $\X_2$ be compact topological transversals of a
groupoid of germs $\G$. Let
$\coc_1$ be a quasi-cocycle on $\G|_{\X_1}$. Then there exists a
unique, up to strong equivalence, quasi-cocycle
$\coc$ on $\G|_{\X_1\cup\X_2}$ such that $\coc$ and $\coc_1$ are strongly
equivalent on $\G|_{\X_1}$.
\end{lemma}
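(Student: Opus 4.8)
The plan is to extend $\coc_1$ from $\G|_{\X_1}$ to $\G|_{\X_1\cup\X_2}$ by \emph{transporting} every arrow back into $\G|_{\X_1}$ along a fixed finite family of bisections, and then reading off its value from $\coc_1$. First, using that $\X_1$ contains an open transversal (so that every orbit meeting $\X_2$ also meets $\X_1$) together with compactness of $\X_2$, I would choose, exactly as in the proofs of Proposition~\ref{pr:differnttransv} and Lemma~\ref{lem:Lipschstructure}, a finite family $\{F_i\}_{i\in I}$ of relatively compact extendable elements of $\pG$ with $\en(\overline{F_i})\subset\X_1$ whose sources $\be(F_i)$ cover $\X_2$. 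For each $x\in\X_1\cup\X_2$ I pick a \emph{transport germ} $t_x\in\G$: the unit at $x$ when $x\in\X_1$, and otherwise a germ $(F_i,x)$ with $x\in\be(F_i)$, so that $\be(t_x)=x$ and $\en(t_x)\in\X_1$. For $g\in\G|_{\X_1\cup\X_2}$ I then set
\[
\coc(g)=\coc_1\bigl(t_{\en(g)}\,g\,t_{\be(g)}^{-1}\bigr),
\]
observing that $t_{\en(g)}\,g\,t_{\be(g)}^{-1}$ indeed lies in $\G|_{\X_1}$.

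Quasi-multiplicativity (axiom (2) of Definition~\ref{def:graded}) is then immediate: if $\en(g_2)=\be(g_1)=z$, the identity $t_{\en(g_1)}g_1g_2\,t_{\be(g_2)}^{-1}=\bigl(t_{\en(g_1)}g_1t_z^{-1}\bigr)\bigl(t_zg_2t_{\be(g_2)}^{-1}\bigr)$ exhibits a product of two elements of $\G|_{\X_1}$ with $\coc_1$-values $\coc(g_1)$ and $\coc(g_2)$, so axiom (2) for $\coc$ follows from axiom (2) for $\coc_1$ with the same constant. The main obstacle is the local-boundedness axiom (1), since $x\mapsto t_x$ is chosen pointwise and may be discontinuous. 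I would first show that $\coc$ is independent of the transport choices up to a uniform constant: if $t_x,t_x'$ are two admissible transports at $x$, then $t_x(t_x')^{-1}$ ranges over the compact set $K=\bigl(\bigcup_{i,j}\overline{F_i}\cdot\overline{F_j}^{-1}\bigr)\cap\G|_{\X_1}$, on which $\coc_1$ is bounded by compactness together with axiom (1) for $\coc_1$; quasi-multiplicativity then bounds the resulting change in $\coc$. Given $g$, for $h$ in a small neighborhood I use the \emph{same} bisections $F_i,F_j$ to transport $\be(h)$ and $\en(h)$, so that $t_{\en(h)}\,h\,t_{\be(h)}^{-1}$ is the germ at a varying point of one fixed element of $\pG$; axiom (1) for $\coc_1$ controls this near $\coc(g)$, and the transport-independence bound absorbs the discrepancy with the actually chosen transports. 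This yields axiom (1) for $\coc$ with an enlarged constant.

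For $g\in\G|_{\X_1}$ both endpoints lie in $\X_1$, so the transports are units and $\coc(g)=\coc_1(g)$; hence $\coc$ is strongly equivalent to $\coc_1$ on $\G|_{\X_1}$, which proves existence. For uniqueness, suppose $\coc$ and $\coc'$ are quasi-cocycles on $\G|_{\X_1\cup\X_2}$ both strongly equivalent to $\coc_1$ on $\G|_{\X_1}$. Writing any $g$ as $t_{\en(g)}^{-1}\,\widetilde g\,t_{\be(g)}$ with $\widetilde g\in\G|_{\X_1}$ and applying quasi-multiplicativity of $\coc$ and of $\coc'$, it suffices to bound each of them on the transport germs and to compare them on $\widetilde g$. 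The transport germs all lie in a fixed compact subset of $\G$ (the closures of the finitely many $F_i$), on which both $\coc$ and $\coc'$ are bounded by axiom (1) and compactness; and on $\widetilde g\in\G|_{\X_1}$ both are within a constant of $\coc_1$, hence of each other. Combining these bounds gives a uniform bound on $|\coc(g)-\coc'(g)|$, i.e.\ strong equivalence. I expect the taming of the non-canonical transport in axiom (1) to be the only genuinely delicate point; the remaining verifications are direct applications of compactness and the two defining inequalities for quasi-cocycles.
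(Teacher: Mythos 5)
Your proposal is correct and follows essentially the same route as the paper's proof: choose a finite family $F_i$ of relatively compact elements of $\pG$ with $\be(F_i)$ covering $\X_2$ and $\en(F_i)\subset\X_1$, define $\coc(g)=\coc_1(h_2gh_1^{-1})$ by transporting the endpoints of $g$ into $\X_1$ along germs of the $F_i$, and prove uniqueness from the boundedness of any quasi-cocycle on the compact set of transport germs together with quasi-multiplicativity. The only difference is presentational: you fix the transports pointwise (making $\coc$ single-valued) and spell out the verification of the local-boundedness axiom, which the paper's proof asserts without detail.
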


\begin{proof}
Let us find a finite set $F_i$ of relatively compact elements of $\pG$ such
that $\be(F_i)$ cover $\X_2$ and $\en(F_i)\subset\X_1$.
For every $g\in\G|_{\X_1\cup\X_2}$ we either have
$g'=g\in\G_{\X_1}$, or $g'=h_2gh_1^{-1}\in\G_{\X_1}$, or
$g'=gh_1^{-1}\in\G_{\X_1}$, or $g'=h_2g\in\G_{\X_1}$ for some $h_1, h_2\in
F=\bigcup F_i$. If we define $\coc(g)=\coc_1(g')$, then $\coc$ will be
a quasi-cocycle satisfying the conditions of the lemma (it is defined
only up to a strong equivalence, since $h_i$ and $g$ are not unique).

For every quasi-cocycle $\coc$ on $\G|_{\X_1\cup\X_2}$ the set of
values of $\coc$ on $F\cap\G|_{\X_1\cup\X_2}$ is bounded. For every
element $g\in\G|_{\X_2}$ there exist elements $h_1, h_2\in
F\cap\G|_{\X_1\cup\X_2}$
such that $h_2gh_1^{-1}$ is an element of $\G|_{\X_1}$. It follows
that $\coc$ is unique up to a strong equivalence.
\end{proof}

We give then, in view of the previous lemma, the following
definition.

\begin{defi}
A \emph{graded groupoid} is a groupoid of germs $\G$ together with
a quasi-cocycle $\coc:\G|_{\X}\arr\R$. \index{groupoid!graded} \index{graded groupoid}

Two quasi-cocycles $\coc_1:\G|_{\X_1}\arr\R$ and
$\coc_2:\G|_{\X_2}\arr\R$ \emph{define the same grading} of $\G$ (or are
\emph{strongly equivalent}) if
there exists a quasi-cocycle $\coc:\G|_{\X_1\cup\X_2}\arr\R$ such
that the restrictions of $\coc$ to
$\G|_{\X_i}\subset\G|_{\X_1\cup\X_2}$ are
strongly equivalent to $\coc_i$.

Two graded groupoids $\G_1$ and $\G_2$ are \emph{equivalent as graded
groupoids} if the corresponding quasi-cocycles defined the same grading
of $\G_1\vee\G_2$.
\end{defi}

If a graded groupoid $\G$ is compactly generated, then the
corresponding quasi-cocycle $\coc$ defines a quasi-cocycle $\wt\coc$
in the sense of Definition~\ref{def:almcoc} on each
of its Cayley graphs $\G(x, S)$ by $\wt\coc(g,
h)=\coc(gh^{-1})$.

\begin{defi}
\label{def:positive}
Let $\coc:\G\arr\R$ be an $\eta$-quasi-cocycle. An element $g\in\G$ is
said to be \emph{positive} (with respect to $\coc$) if
$\coc(g)>2\eta$. \index{positive element of a groupoid}
\end{defi}

\subsection{Compatibility with the local product structure}

If $\ell_1$ and $\ell_2$ are log-scales on spaces $A$ and $B$
respectively, then we denote by $\ell_1\times\ell_2$ the log-scale on
$A\times B$ given by
\begin{equation}
\label{eq:prodscale}
(\ell_1\times\ell_2)((a_1, b_1), (a_2, b_2))=\min\{\ell_1(a_1, a_2),
\ell_2(b_1, b_2)\}.
\end{equation}

It is easy to check that it is a log-scale and that it defines the
product topology on the space $A\times B$.

\begin{defi}
\label{def:compatiblelipsch}
Let $\ell$ be a log-scale on a compact space $\X$ with a local product
structure. We say that the log-scale is \emph{compatible with the
local product structure} if there
\index{log-scale!compatible with a local product structure}
\index{metric!compatible with a local product structure}
\index{compatible with a local product structure!metric}
\index{compatible with a local product structure!log-scale}
exists a finite covering $\mathcal{R}=\{R_i\}$ of $\X$ by
open rectangles and log-scales $\ell_{i, 1}$ and $\ell_{i, 2}$ on
$\proj_1(R_i)$ and $\proj_2(R_i)$ such that $\ell$ is Lipschitz
equivalent to $\ell_{i, 1}\times\ell_{i, 2}$ on $R_i$.
\end{defi}

\begin{defi}
\label{def:compatiblegraded}
Let $\coc:\G|_{\X}\arr\R$ be a quasi-cocycle. We say that the
corresponding grading is \emph{compatible with the local product
structure}
\index{quasi-cocycle!compatible with a local product structure}
\index{compatible with a local product structure!quasi-cocycle}
if there exist a covering of $\X$ by open
rectangles $\mathcal{R}=\{R_i\}$ and a constant $c>0$ such that
if $g_1, g_2\in\Gh$ are such that $\{\be(g_1), \be(g_2)\}\subset R_i$,
$\{\en(g_1), \en(g_2)\}\subset R_j$ for some $R_i, R_j\in\mathcal{R}$,
and for some
$i\in\{1, 2\}$ we have $\proj_i(g_1)=\proj_i(g_2)$, then
$|\coc(g_1)-\coc(g_2)|\le c$.
\end{defi}

\chapter{Hyperbolic groupoids}
\label{s:hypgroupoids}

\section{Main definition}
\begin{defi}
\label{def:hyperbolic}
We say that a Hausdorff groupoid of germs $\G$ is \emph{hyperbolic}
\index{groupoid!hyperbolic} \index{hyperbolic groupoid} if
there exists a compact generating pair $(S, \X)$ of $\G$, a
metric $|\cdot|$ defined on an open neighborhood of $\X$, and
numbers $\lambda\in (0, 1)$, $\delta, \Lambda, \Delta>0$ such that
\begin{enumerate}
\item every element $g\in S$ is a germ of an element $U\in\pG$ that
  is a $\lambda$-contraction with respect to $|\cdot|$, i.e.,
$|U(x)-U(y)|\le\lambda |x-y|$ for all $x, y\in\be(U)$;
\item for every $x\in\X$ the Cayley graph $\G(x, S)$ is
  $\delta$-hyperbolic;
\item for every $x\in\X$ there exists a point $\omega_x$ of the
  boundary of $\G(x, S)$ such that every infinite directed path in
$\G(x, S^{-1})$ is a $(\Lambda, \Delta)$-quasi-geodesic converging to
$\omega_x$.
\item elements of $\pG$ are locally Lipschitz with respect to $|\cdot|$;
\item $\be(S)=\en(S)=\X$;
\end{enumerate}
\end{defi}

Definition~\ref{def:hyperbolic} is a combination of two contraction
conditions: the elements of $S$ are germs of contracting maps on
$\G^{(0)}$, while the elements of $S^{-1}$ are large-scale contracting
on the Cayley graphs (see Theorem~\ref{th:contraction}).

\begin{examp}
\label{ex:expanding}
As a simple example of a hyperbolic groupoid, consider the groupoid
$\mathfrak{F}$ of
germs of the pseudogroup of transformations of the circle $\R/\Z$
generated by the self-covering $f:x\mapsto 2x$. Consider the
generating pair $(S, \R/\Z)$, where $S$ is the set of all germs of
$f^{-1}$. The set of vertices of the Cayley graph $\mathfrak{F}(x, S)$
is the set of germs of the form $(f^{n_1}, y)^{-1}(f^{n_2}, x)$, where
$y\in\R/\Z$ and $n_1, n_2\ge 0$ are such that
$f^{n_2}(x)=f^{n_1}(y)$. Every point of $\R/\Z$ has exactly two
$f$-preimages and one $f$-image. The Cayley graph
$\mathfrak{F}(x, S)$ is a regular tree of degree three such that every
vertex has one incoming and two outgoing arrows.
See Figure~\ref{fig:cayleytree}. (Recall
that the edges of the Cayley graph $\mathfrak{F}(x, S)$ are oriented
according to the action of the elements of $S$, i.e., according to the
action of $f^{-1}$.) If the isotropy group of $x$ in $\mathfrak{F}$ is
trivial (i.e., if $x$ is not eventually periodic with respect to the
action of $f$), then the vertices of $\mathfrak{F}(x, S)$ are in a bijection with the
\emph{grand orbit} of $x$ (i.e., with the $\mathfrak{F}$-orbit of $x$), and the
graph $\mathfrak{F}(x, S)$ is the graph of the action of $f^{-1}$ on the grand orbit.

\begin{figure}
\includegraphics[width=2in]{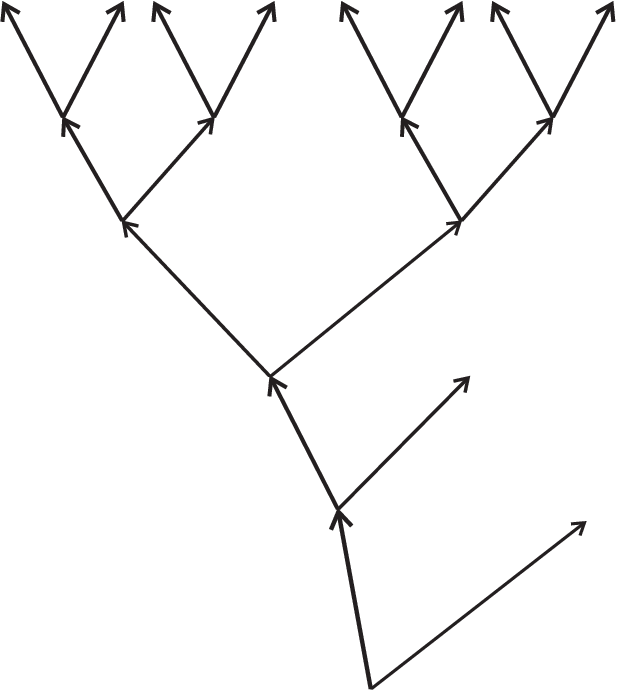}
\caption{Cayley graph of $\mathfrak{F}$}\label{fig:cayleytree}
\end{figure}

It is easy to see now that $\mathfrak{F}$ and $S$ satisfy the
conditions of Definition~\ref{def:hyperbolic}. Elements of $S$ are
germs of contractions (since $f$ is expanding), the Cayley graphs
$\mathfrak{F}(x, S)$ are Gromov-hyperbolic (are $0$-hyperbolic, in
fact), and the paths going against the orientation in the Cayley
graphs are geodesics converging to one point of the boundary (to the
limit of the forward $f$-orbit of $x$).
\end{examp}

\begin{examp}
\label{ex:dyadic}
 Let $\Z_2$ be the ring of dyadic integers, i.e., formal
  expressions \[\sum_{n=0}^\infty a_n2^n,\] where $a_n\in\{0,
  1\}$. Distance between $\sum_{n=0}^\infty
  a_n2^n$ and $\sum_{n=0}^\infty b_n2^n$ is $1/2^k$, where $k$ is the
 smallest index such that $a_k\ne b_k$.

Then the map $a:x\mapsto x+1$ is an isometry, and the map $s:x\mapsto
2x$ is a homeomorphism onto its range, contracting the distance twice.
Let $\pG$ be the pseudogroup generated by these transformations, and
let $\G$ be the corresponding groupoid of germs.

Consider then the generating set $S$ consisting of
germs of the transformations $s$, $as$, and $a^{-1}s$. Note that the
set of germs of $a$ is equal to the disjoint union of the sets of germs of
$as\cdot s^{-1}$ and $s\cdot (a^{-1}s)^{-1}$, therefore $S$ is a
generating set.

\begin{figure}
\includegraphics[width=3in]{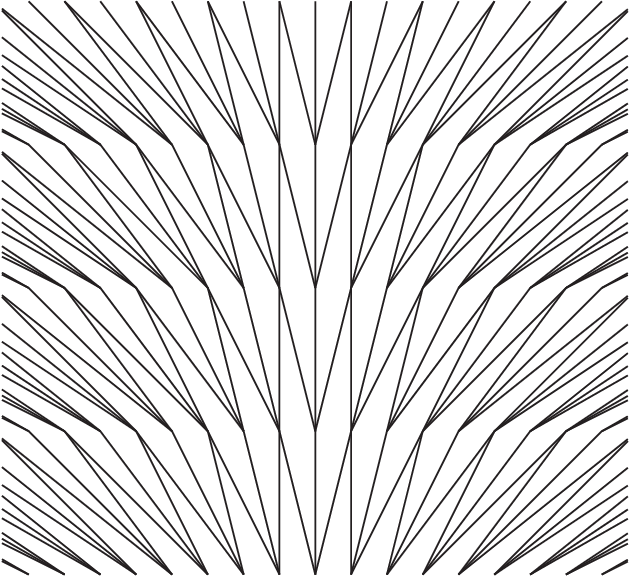}
\caption{Cayley graph of $\G$}\label{fig:admachcayley}
\end{figure}

The Cayley graph of $\G$ with respect to this generating set is shown
on Figure~\ref{fig:admachcayley}. Here the edges are directed upward.
It is not hard to check that the groupoid $\G$ and the generating set
$S$ satisfy the conditions of Definition~\ref{def:hyperbolic}.
\end{examp}

Let $\G$ be a hyperbolic groupoid.

\begin{defi}
\label{def:contracting}
We say that a subset $S\subset\G$ is \emph{contracting} (with
respect to a metric on a subset of $\G^{(0)}$) if
every germ $g\in S$ belongs to a contraction
$U\in\pG$. \index{contracting subset of a groupoid}
\end{defi}

\begin{proposition}
\label{prop:invariancehyperbolic}
Suppose that $\G$ is a hyperbolic groupoid. Let $\Y_1$ be a
compact topological $\G$-transversal.

Then there exists a generating pair $(S, \Y_1)$
of $\G$ satisfying the conditions of
Definition~\ref{def:hyperbolic}.
\end{proposition}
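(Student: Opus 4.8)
The plan is to prove that hyperbolicity of $\G$ does not depend on the chosen compact topological transversal, by transporting all of the data of Definition~\ref{def:hyperbolic} from the given transversal $\X$ to $\Y_1$. Let the compact generating pair $(S_0,\X)$, the metric $|\cdot|$ on a neighborhood of $\X$, and the constants $\lambda,\delta,\Lambda,\Delta$ witness the hyperbolicity of $\G$. First I would produce the metric on $\Y_1$: since the log-scale associated with $|\cdot|$ is positive and all elements of $\pG|_{\X}$ are locally Lipschitz, Lemma~\ref{lem:Lipschstructure} yields a positive log-scale on $\Y_1$, unique up to Lipschitz equivalence, with respect to which $\pG|_{\X\cup\Y_1}$ is locally Lipschitz. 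Fixing an associated metric $|\cdot|'$ on a neighborhood of $\Y_1$ immediately gives condition~(1) of Definition~\ref{def:hyperbolic} for $\Y_1$.

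The heart of the argument is constructing a generating set $S\subset\G|_{\Y_1}$ whose elements are germs of contractions and which respects the directed structure. As in the proof of Proposition~\ref{pr:differnttransv}, I would fix a finite family of relatively compact extendable $U_i\in\pG$ with the sets $\be(U_i)$ covering $\Y_1$ and $\en(U_i)$ contained in the interior of $\X$. By Lemma~\ref{l:locglob} each $U_i$ and $U_i^{-1}$ is Lipschitz on the relevant compact sets, so there is a uniform distortion bound $L$. I would then let $S$ consist of the germs in $\G|_{\Y_1}$ of the conjugates $U_j^{-1}\,w\,U_i$, where $w$ ranges over directed words in $S_0$ of a fixed length $n$ with origins and targets matching the $U_i$. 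Choosing $n$ so large that $\lambda^{n}L^{2}<1$ makes each such element contract $|\cdot|'$, yielding condition~(2) of Definition~\ref{def:hyperbolic}; arranging the $U_i$ and the admissible $w$ so that origins and targets exhaust $\Y_1$ gives condition~(4). That $S\cup S^{-1}$ generates $\G|_{\Y_1}$ follows, exactly as in Proposition~\ref{pr:differnttransv}, by telescoping the conjugators, since any bounded word in $S_0\cup S_0^{-1}$ conjugated into $\G|_{\Y_1}$ becomes a bounded word in $S\cup S^{-1}$.

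For the large-scale conditions I would pass to the grading. Transporting the Busemann quasi-cocycle of the original Cayley graphs by Lemma~\ref{lem:coconx2} gives a quasi-cocycle $\coc$ on $\G|_{\X\cup\Y_1}$; since each element of $S$ carries $n$ directed steps of $S_0$, its $\coc$-value exceeds $2\eta$ once $n$ is large, so the hypotheses of Theorem~\ref{th:contraction} are met. For $x\in\Y_1$ the Cayley graph $\G(x,S)$ is quasi-isometric to $\G(x,S_0)$ by the net and generating-set invariance results of Section~\ref{ss:cayley} (comparing both through a generating pair on $\X\cup\Y_1$), hence it is $\delta'$-hyperbolic by Theorem~\ref{th:thintriangles}, and condition~(2) of Theorem~\ref{th:contraction} transfers across this quasi-isometry. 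Theorem~\ref{th:contraction} then delivers condition~(3) together with the point $\omega_x$ for which all infinite directed paths in $\G(x,S^{-1})$ are quasi-geodesics (Lemma~\ref{lem:quasigeodesic}) converging to $\omega_x$, which is condition~(5).

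The main obstacle I expect is the simultaneous control required of $S$: the conjugating maps $U_i^{-1}$ a priori expand, so the contraction demanded in condition~(2) survives only after absorbing their distortion into a sufficiently high power $\lambda^{n}$, while the same $n$ must be uniform over all basepoints $x\in\Y_1$ and compatible with the grading so that Theorem~\ref{th:contraction} applies. Verifying that a single $n$ can be chosen meeting all of these demands at once — genuine contraction, uniform increase of $\coc$, and generation — is the crux, and it rests on the compactness of $\X$ and $\Y_1$ together with the finiteness of the connecting and generating data.
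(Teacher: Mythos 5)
Your overall strategy is the same as the paper's: transport the Lipschitz structure to $\Y_1$ via Lemma~\ref{lem:Lipschstructure}, conjugate high powers of the old generators into $\G|_{\Y_1}$ by a finite family of connecting maps so that contraction beats the distortion of the conjugators, and recover the large-scale conditions through a quasi-isometry of Cayley graphs. But there is a genuine gap exactly at the point you call the crux, and no choice of a single large $n$ can repair it: the set of conjugates $U_j^{-1}wU_i$ of directed words $w$ of one fixed length $n$ does not generate $\G|_{\Y_1}$ in general. The telescoping of Proposition~\ref{pr:differnttransv} rewrites products of conjugates of \emph{single} generators; it does not allow you to regroup an arbitrary word in $S_0\cup S_0^{-1}$ into positive blocks of length exactly $n$ and inverses of such blocks. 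Concretely, take $\G=\mathfrak{F}$ from Example~\ref{ex:expanding} (germs of $x\mapsto 2x$ on $\R/\Z$) and $\Y_1=\X=\R/\Z$, so that the connecting maps may be taken to be identity maps. The degree map $(y,m,x)\mapsto m$ is a groupoid homomorphism $\mathfrak{F}\arr\Z$; it takes the value $\mp n$ on each of your generators, so every product of elements of $S\cup S^{-1}$ has degree in $n\Z$, whereas $\G|_{\Y_1}=\mathfrak{F}$ contains germs of degree $1$. Hence for $n\ge 2$ the pair $(S,\Y_1)$ is not a generating pair: genuine contraction forces $n$ large, and generation forbids any single large $n$, so the two demands are incompatible as you have set them up.

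The paper resolves this by taking conjugates of $S_0^n\cup S_0^{n+1}$ (its set $S_n'=R_n\cup R_{n+1}$): since $\be(S_0)=\en(S_0)=\X$, every element of $\G|_{\X}$ can be written as $g_k\cdots g_1\cdot(h_l\cdots h_1)^{-1}$ with $g_i,h_i\in S_0$, both positive parts can be extended backwards to arbitrary common length, and every sufficiently large integer is a non-negative integer combination of $n$ and $n+1$, so both parts split into admissible blocks. With this single change the rest of your argument goes through and matches the paper's proof, which also builds the quasi-isometry explicitly as $g\mapsto U_{j(g)}^{-1}gU_i$ and notes that directed paths for the new generators land at bounded distance from the quasi-geodesics to $\omega_x$ obtained from directed $S_0$-paths by jumps of length $n$ or $n+1$. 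A smaller omission: for the generation and telescoping step you also need connecting maps in the opposite direction (the paper's $V_i$, with $\be(V_i)$ covering $\X$ and $\en(V_i)\subset\Y_1$), so that intermediate vertices of a word in $\G|_{\X}$ can be carried back into $\Y_1$; your one-sided family $U_i$ alone does not provide this.
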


\begin{proof} Let $\X$, $\wh{\X}$, $|\cdot|$, and $S$ be as in
Definition~\ref{def:hyperbolic}. Let $\X_0\subset\X$ be an
open transversal. Denote by $\ell$ a log-scale on $\wh{\X}$ such
that $|\cdot|$ is associated with $\ell$.

Let $\wh Y$ be a relatively compact
open neighborhood of $Y_1$. Let
$Y_0\subset Y_1$ be an open transversal. Then, by
Lemma~\ref{lem:Lipschstructure}, there exists a unique, up to a
Lipschitz equivalence, log-scale $\ell'$ on $\wh Y$ such
that all elements of $\pG$ are locally Lipschitz with respect to
$\ell$ and $\ell'$.

Cover the closure of $\wh Y$ by a finite number of sets
of the form $\en(U_i)$, where $U_i\in\pG$ are relatively compact,
extendable, and such that $\be(U_i)\subset\X_0$.
Similarly, cover $\X$ by sets of the form $\be(V_i)$ where
$V_i\in\pG$ are relatively compact, extendable, and
$\en(V_i)\subset Y_0$.

For every $n$ consider the restriction $R_n$ of the set
\[A_n=\left(\bigcup\overline{U_i}\cup\bigcup\overline{V_i}\right)\cdot S^n\cdot
\left(\bigcup\overline{U_i}\cup\bigcup\overline{V_i}\right)^{-1}\subset\G\]
to $Y_1$. Let $S_n'=R_n\cup R_{n+1}$. Then $(S'_n, Y_1)$ is a
generating pair of $\G$ for every $n$ (see the proof of
Proposition~\ref{pr:differnttransv}). We have $\be(S'_n)=\en(S'_n)=Y_1$.
Since $V_i$ and $U_i$ are locally Lipschitz, all elements of $S'_n$ will have contracting
neighborhoods for all $n$ big enough. Moreover, if $|\cdot|'$ is a
metric associated with $\ell'$, then for all $n$ big enough all elements of $S'_n$ have
contracting neighborhoods with respect to $|\cdot|'$.

Let $y\in\Y_1$ be an arbitrary point. Consider the Cayley graph
$\G(y, S'_n)$. Let $U_i\in\pG$ be such that $y\in\en(U_i)$. For every
$g\in\G_y^{Y_1}$ find $U_{j(g)}\in\pG$ such that $\en(g)\in\en(U_{j(g)})$,
and define $F(g)=U_{j(g)}^{-1}gU_i$. Then for any choice of indices
$j(g)$, the map $F:\G_y^{Y_1}\arr\G_x^{\X}$ is a quasi-isometry of
the Cayley graphs $\G(y, S_n')$ and $\G(y, S)$. Moreover, there is a
uniform estimate on the coefficients of the quasi-isometry. It follows
that condition (2) of Definition~\ref{def:hyperbolic}
is satisfied for $(S_n', Y_1)$. The directed
paths in the Cayley graph $\G(y, (S_n')^{-1})$ are quasi-geodesics
converging to $F^{-1}(\omega_x)$, since their $F$-images are on
a uniformly bounded distance from
the quasi-geodesics converging to $\omega_x$ obtained from the oriented
paths in $\G(x, S^{-1})$ by making jumps of length $n$ or $n+1$.
\end{proof}

\begin{corollary}
\label{cor:equivhyperbolic}
A groupoid of germs equivalent to a hyperbolic groupoid is
hyperbolic.
\end{corollary}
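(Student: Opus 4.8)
The plan is to deduce this from the transversal-invariance statement, Proposition~\ref{prop:invariancehyperbolic}, by exhibiting $\G$ and $\Gh$ as restrictions of one groupoid to two transversals. Let $\Gh$ be equivalent to the hyperbolic groupoid $\G$, and fix the hyperbolic data $(S,\X)$, $|\cdot|$, $\lambda,\delta,\Lambda,\Delta$ of $\G$ as in Definition~\ref{def:hyperbolic}, with $\X\subset\G^{(0)}$ compact. By the characterization of equivalence for groupoids of germs, there is a pseudogroup $\pG\vee\wt{\Gh}$ on the disjoint union $\G^{(0)}\sqcup\Gh^{(0)}$ whose restrictions to the two summands are $\pG$ and $\wt{\Gh}$ and each of whose orbits meets both summands. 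Writing $\mathfrak{K}=\G\vee_{\mathfrak{B}}\Gh$ for its groupoid of germs, we have $\G=\mathfrak{K}|_{\G^{(0)}}$ and $\Gh=\mathfrak{K}|_{\Gh^{(0)}}$, and both $\G^{(0)}$ and $\Gh^{(0)}$ are clopen topological transversals of $\mathfrak{K}$. Moreover $\Gh$, being \'etale and equivalent to the compactly generated $\G$, is compactly generated, so $\Gh^{(0)}$ contains a compact topological transversal $\Y$; since each $\mathfrak{K}$-orbit meets $\Gh^{(0)}$ in a single $\wt{\Gh}$-orbit, $\Y$ is also a $\mathfrak{K}$-transversal.

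First I would verify that $\mathfrak{K}$ is itself hyperbolic, witnessed by the unchanged pair $(S,\X)$. As $\X$ meets every $\mathfrak{K}$-orbit and $\mathfrak{K}|_{\X}=\G|_{\X}$, the pair $(S,\X)$ is a compact generating pair of $\mathfrak{K}$, and for $x\in\X$ the Cayley graph $\mathfrak{K}(x,S)$ has vertex set $\G_x^{\X}$ and so coincides with $\G(x,S)$; hence conditions (2)--(5) of Definition~\ref{def:hyperbolic} hold verbatim. Because $\G^{(0)}$ is clopen in $\mathfrak{K}^{(0)}$, the metric $|\cdot|$ is already defined on a neighborhood of $\X$ in $\mathfrak{K}^{(0)}$. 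The single point requiring work is condition (1): an element of $\pG\vee\wt{\Gh}$ may have range or domain in $\Gh^{(0)}$, where no metric has been specified. Here I would apply Lemma~\ref{lem:Lipschstructure} to $\mathfrak{K}$ with the transversals $\X$ and $\Y$, extending the log-scale associated with $|\cdot|$ to a positive log-scale on $\Y$ for which every element of $(\pG\vee\wt{\Gh})|_{\X\cup\Y}$ is locally Lipschitz; by local compactness this yields local Lipschitzness of all of $\pG\vee\wt{\Gh}$, giving (1). One also confirms that $\mathfrak{K}$ is Hausdorff, so that Definition~\ref{def:hyperbolic} applies; thus $\mathfrak{K}$ is hyperbolic.

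Finally I would feed the hyperbolic groupoid $\mathfrak{K}$ and its compact transversal $\Y\subset\Gh^{(0)}$ into Proposition~\ref{prop:invariancehyperbolic}, obtaining a generating pair $(S',\Y)$ of $\mathfrak{K}$ satisfying Definition~\ref{def:hyperbolic}. Since $\Y\subset\Gh^{(0)}$ and $\mathfrak{K}|_{\Y}=\Gh|_{\Y}$, we have $S'\subset\Gh$, so $(S',\Y)$ is a compact generating pair of $\Gh$ and $\mathfrak{K}(y,S')=\Gh(y,S')$ for every $y\in\Y$; the accompanying metric lives on a neighborhood of $\Y$ inside the clopen set $\Gh^{(0)}$, and the relevant contractions lie in $\wt{\Gh}$. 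All five conditions therefore descend to $\Gh$, which is hyperbolic. I expect the main obstacle to be precisely the middle step---manufacturing a single log-scale on $\G^{(0)}\sqcup\Gh^{(0)}$ compatible with the required contractions on both sides (condition (1) for $\pG\vee\wt{\Gh}$), which is where Lemma~\ref{lem:Lipschstructure} does the real work; the identifications $\mathfrak{K}(x,S)=\G(x,S)$ and $\mathfrak{K}(y,S')=\Gh(y,S')$ are immediate from the disjoint-union structure, and the verification that Hausdorffness survives the passage to $\mathfrak{K}$ is the one remaining routine point to confirm.
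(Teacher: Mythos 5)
Your proof is correct and is essentially the paper's own (implicit) argument: the corollary is stated as an immediate consequence of Proposition~\ref{prop:invariancehyperbolic}, obtained exactly as you do by transferring the hyperbolic data of $\G$ verbatim to the equivalence groupoid acting on $\G^{(0)}\sqcup\Gh^{(0)}$ and then applying that proposition to a compact topological transversal inside $\Gh^{(0)}$. Two minor remarks: your middle invocation of Lemma~\ref{lem:Lipschstructure} is harmless but not strictly needed, since condition (1) of Definition~\ref{def:hyperbolic} only constrains pseudogroup elements where the metric is defined (and there the equivalence pseudogroup restricts to $\pG$), and the Hausdorffness transfer you flag is indeed routine, as non-separable germs of $\Gh$ would be carried by bisections of the equivalence pseudogroup to non-separable germs of the Hausdorff groupoid $\G$.
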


\section{Busemann quasi-cocycle}

Let $\G$ be a hyperbolic groupoid. Consider its Cayley graph $\G(x,
S)$. Let $\omega_x\in\partial\G(x, S)$ be as in
Definition~\ref{def:hyperbolic}. Consider the associated Busemann
quasi-cocycle $\beta_{\omega_x}(g, h)$.

\begin{proposition}
The map $\beta:\G|_{\X}\arr\R$ given by $\beta(gh^{-1})=\beta_{\omega(x)}(g,
h)$ is a well defined (up to a strong equivalence) groupoid quasi-cocycle.
\end{proposition}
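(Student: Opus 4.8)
The plan is to verify, with constants depending only on the uniform hyperbolicity constant $\delta$ and the data of Definition~\ref{def:hyperbolic}, both axioms of Definition~\ref{def:graded} together with well-definedness. Throughout I write $e_x\in\G_x^{\X}$ for the unit (basepoint) vertex and use the cocycle property~\eqref{eq:cocycle} of the Busemann quasi-cocycle, $\beta_{\omega_x}(g_1,g_3)\doteq\beta_{\omega_x}(g_1,g_2)+\beta_{\omega_x}(g_2,g_3)$, on each Cayley graph.

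First I would settle well-definedness. For $c\in\G$ with $\be(c)=x'$ and $\en(c)=x$, right multiplication $g\mapsto gc$ is a bijection $\G_x^{\X}\to\G_{x'}^{\X}$ carrying an arrow $g\to sg$ to $gc\to s(gc)$, hence an isomorphism of the directed graphs $\G(x,S)$ and $\G(x',S)$; it therefore sends directed $S^{-1}$-paths to directed $S^{-1}$-paths and $\omega_x$ to $\omega_{x'}$. Since an isometry identifying the marked boundary points transports the Busemann quasi-cocycle up to the uniform error governed by $\delta$, we get $\beta_{\omega_{x'}}(gc,hc)\doteq\beta_{\omega_x}(g,h)$. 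Now if $gh^{-1}=g'(h')^{-1}$ with $g,h\in\G_x^{\X}$ and $g',h'\in\G_{x'}^{\X}$, the element $c:=h^{-1}h'$ satisfies $\be(c)=x'$, $\en(c)=x$, $hc=h'$ and $gc=g'$, so the previous identity gives $\beta_{\omega_x}(g,h)\doteq\beta_{\omega_{x'}}(g',h')$ with a constant independent of the representatives. This proves $\beta$ is well defined up to strong equivalence and simultaneously handles any change of basepoint.

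The cocycle axiom (condition (2) of Definition~\ref{def:graded}) follows by representing both factors over one basepoint. Given composable $m_1,m_2\in\G|_{\X}$, pick $h\in\G_x^{\X}$ with $\en(h)=\be(m_2)$ and set $g_2=m_2h$, $g_1=m_1g_2$; then $m_2=g_2h^{-1}$, $m_1=g_1g_2^{-1}$ and $m_1m_2=g_1h^{-1}$, all over the single basepoint $x$. The cocycle property of $\beta_{\omega_x}$ then yields $\beta(m_1m_2)\doteq\beta_{\omega_x}(g_1,h)\doteq\beta_{\omega_x}(g_1,g_2)+\beta_{\omega_x}(g_2,h)\doteq\beta(m_1)+\beta(m_2)$.

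The main obstacle is local boundedness (condition (1)), because nearby basepoints $x,x'$ need not lie in one $\G$-orbit, so the right-multiplication trick is unavailable. Fix $m\in\G|_{\X}$, represent it as $gh^{-1}$ with $g=(\wh G,x)$, $h=(\wh H,x)$, and observe that for $x'$ near $x$ the germs $g'=(\wh G,x')$, $h'=(\wh H,x')$ satisfy $g'(h')^{-1}=(\wh M,\wh H(x'))$, so these run over a whole neighborhood of $m$; it thus suffices to bound $|\beta_{\omega_{x'}}(g',h')-\beta_{\omega_x}(g,h)|$ uniformly. The key is that this number is a local quantity. Since $|g-h|$ equals the $S$-word length of $m$, a fixed number, the directed $S^{-1}$-paths issuing from $g$ and from $h$ become $\rho_0$-close after a bounded number of steps (the constant $k_m$ of Theorem~\ref{th:contraction} with $m=|g-h|$); combined with Lemma~\ref{lem:quasigeodesic}, Theorem~\ref{th:thintriangles}, and Theorem~\ref{th:rigidquasigeodesics}, the two geodesic rays toward $\omega_x$ merge inside a ball of radius $R=O(k_m+\delta)$ about $\{g,h\}$, so that $\beta_{\omega_x}(g,h)$ is determined up to an error $O(\delta+\rho_0)$ by the combinatorial structure of $\G(x,S)$ in that ball. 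This finite structure is encoded by finitely many relations among words of length at most $R$, which by Corollary~\ref{cor:relationspseudogroup} persist for all basepoints with $|x-x'|<\epsilon$; for such $x'$ the radius-$R$ ball of $\G(x',S)$ around $\{g',h'\}$ is isomorphic to that of $\G(x,S)$ around $\{g,h\}$, giving $\beta_{\omega_{x'}}(g',h')\doteq\beta_{\omega_x}(g,h)$ with uniform constant. Taking $\eta$ to be this constant yields condition (1) and completes the proof. The delicate point is precisely the claim that the merging of the directed rays, and hence the value of the Busemann quasi-cocycle on a fixed pair, is confined to a ball whose radius is controlled solely by $|g-h|$; this is where the large-scale contraction of Definition~\ref{def:hyperbolic}(5), equivalently Theorem~\ref{th:contraction}, does the essential work.
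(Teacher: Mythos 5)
Your proposal is correct and takes essentially the same approach as the paper: well-definedness via the right-multiplication isomorphism of the directed Cayley graphs carrying $\omega_x$ to $\omega_{x'}$, additivity from the cocycle property of the Busemann quasi-cocycle on a single Cayley graph, and local boundedness by noting that $\beta_{\omega_x}(g,h)$ is determined, up to uniform error, by a finite subgraph (where the directed paths from $g$ and $h$ merge within a controlled radius), whose combinatorial structure persists at nearby basepoints by Corollary~\ref{cor:relationspseudogroup}. The only difference is that you spell out details (the common-basepoint representation for additivity, the explicit radius bound via Theorem~\ref{th:contraction}) that the paper leaves implicit.
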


Note that the strong equivalence class of $\beta$ depends on the
generating pair $(\X, S)$.

\begin{proof}
If we have two pairs $g_1, h_1$ and $g_2, h_2$ of elements of $\G$ and two points $x_1,
x_2$ such that
$g_1h_1^{-1}=g_2h_2^{-1}$ and $\be(g_i)=\be(h_i)=x_i$, then
$g_1^{-1}g_2=h_1^{-1}h_2$, and the map
$F:\G(x_1, S)\arr\G(x_2, S)$ given by
\[F(g)=g\cdot g_1^{-1}g_2\]
is an isomorphism of directed graphs mapping $\omega_{x_1}$ to
$\omega_{x_2}$. It follows that $\beta_{\omega_{x_1}}(g_1,
h_1)\doteq\beta_{\omega_{x_2}}(g_2, h_2)$, i.e., that $\beta$ is well defined.

It satisfies the condition $\beta(gh)\doteq\beta(g)+\beta(h)$, since the
Busemann quasi-cocycle is a quasi-cocycle on the Cayley graph. It remains to
prove that there exists a constant $c>0$ such that for every
$g\in\G|_{\X}$ there exists a neighborhood $U$ of $g$ such that
$|\beta(g)-\beta(h)|<c$ for all $h\in U$. Since $\omega_x$ is the
limit of every directed path in $\G(x, S^{-1})$, we can compute
$\beta_{\omega_x}(g, h)$ by following arbitrary directed paths in
$\G(x, S^{-1})$ starting in $g$ and $h$, and finding a moment when
they are on some bounded distance $\Delta$ (depending only on the
constant $\delta$ of hyperbolicity) from each other. It follows that
the value of $\beta_{\omega_x}(g, h)$ depends only on a finite
subgraph of the Cayley graph, so using Corollary~\ref{cor:relationspseudogroup}
we conclude that $\beta$ is locally bounded.
\end{proof}

\begin{defi}
Let $\G$ be a hyperbolic groupoid. A quasi-cocycle $\beta$ on $\G$ is
called a \emph{Busemann quasi-cocycle} \index{Busemann
  quasi-cocycle!on a hyperbolic groupoid} if there exists a
generalized Busemann quasi-cocycle $\beta_{\omega_x}$ associated with
$\omega_x$ on the Cayley graph $\G(x, S)$, such that
$\beta(gh^{-1})\doteq\beta_{\omega_x}(g, h)$.
\end{defi}

\begin{examp}
\label{ex:derivativcoc}
Let $f$ be a complex rational function seen as a self-map of the
Riemann sphere. Suppose that it is \emph{hyperbolic}, i.e., that it is
expanding on a neighborhood of its Julia set $J_f$. Then the groupoid
of germs $\mathfrak{F}$ generated by the restriction of $f$ to $J_f$ is
hyperbolic (see  Example~\ref{ex:expanding}). Note
that a natural Busemann cocycle is given just by the degree:
\[\beta((f^n, x)^{-1}(f^m, y))=n-m.\]
This is precisely the Busemann cocycle on the Cayley graph (described
in Example~\ref{ex:expanding}).

On the other hand, it is easy to see that
\[\coc(F, x)=-\ln|F'(x)|,\]
for $(F, x)\in\mathfrak{F}$, satisfies
\[C^{-1}n-D\le\coc((f^n, x)^{-1})\le Cn+D\]
for all $n\ge 0$ and all $x\in J_f$, where $C>1$ and $D>0$ do not
depend on $n$ and $x$. Consequently, $\coc$ is 
also a Busemann cocycle on the hyperbolic groupoid generated by $f$.
\end{examp}

\section{Complete generating sets}

This section is purely technical. We need to prove existence of
generating sets of hyperbolic groupoids with special properties that
will be convenient later.

\begin{proposition}
\label{pr:hyperbolicgenset} Let $\G$ be a hyperbolic groupoid. Let
$\X$ be a compact topological transversal, and let $\X_0\subset\X$
be an open transversal. Choose a metric $|\cdot|$ on a neighborhood
$\wh\X$ of $\X$, satisfying the conditions of
Definition~\ref{def:hyperbolic}, and a Busemann $\eta$-quasi-cocycle
$\coc:\G|_{\wh\X}\arr\R$  .

Then there exist a compact generating set $S$ of $\G|_{\X}$, such that
\begin{enumerate}
\item for every $g\in S$ we have $\coc(g)>3\eta$;
\item $\be(S)=\en(S)=\X$, and for every $x\in  \X$ there
exists $g\in S$ such that $\en(g)=x$ and $\be(g)\in\X_0$;
\item there exists $\lambda\in(0, 1)$ such that for every
$g\in S$ there exists an element $U\in\pG|_{\wh\X}$ containing $g$ and
such that $|U(x)-U(y)|<\lambda |x-y|$ for all $x, y\in\be(U)$;
\item every element $g\in\G|_{\X}$ is equal to a
product of the form $g_n\cdots g_1\cdot (h_m\cdots h_1)^{-1}$ for
some $g_i, h_i\in S$.
\end{enumerate}
\end{proposition}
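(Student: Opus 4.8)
The plan is to obtain $S$ as a set of sufficiently deep products of a ``raw'' generating set adapted to $\X$. First I would apply Proposition~\ref{prop:invariancehyperbolic} to the transversal $\X$ to get a compact generating pair $(S_0,\X)$ satisfying Definition~\ref{def:hyperbolic}: a metric $|\cdot|$ on a neighborhood $\wh\X$ of $\X$ with all elements of $\pG$ locally Lipschitz, a constant $\lambda_0\in(0,1)$ with every $g\in S_0$ a germ of a $\lambda_0$-contraction $\wh g\in\pG|_{\wh\X}$, $\delta$-hyperbolic Cayley graphs, $\be(S_0)=\en(S_0)=\X$, and for each $x$ a point $\omega_x$ to which all directed paths in $\G(x,S_0^{-1})$ converge as $(\Lambda,\Delta)$-quasi-geodesics. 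Using the Busemann quasi-cocycle of $\G$ together with Lemma~\ref{lem:coconx2}, I would fix one $\eta$-quasi-cocycle $\coc:\G|_{\wh\X}\arr\R$ strongly equivalent to $\beta$; its constant $\eta$ is intrinsic to $\coc$ and is not altered by the later choice of generators. The key sign observation (matching Example~\ref{ex:derivativcoc}) is that the contracting generators are exactly the edges of $\G(x,S_0)$ pointing \emph{away} from $\omega_x$, so $\coc$ grows along products of elements of $S_0$.

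Next I would quantify this growth. A product $g=s_n\cdots s_1$ with $s_i\in S_0$ is a directed path in $\G(\be(g),S_0)$, hence by Lemma~\ref{lem:quasigeodesic} a quasi-geodesic running away from $\omega$; since $\coc$ is coarsely equivalent to the Busemann quasi-cocycle $\beta_\omega$, coarse monotonicity of the Busemann function along such quasi-geodesics (Corollary~\ref{cor:minell} and Theorem~\ref{th:rigidquasigeodesics}) gives constants $c_0>0$, $b$ with $\coc(g)\ge c_0 n-b$. I then fix $N$ so large that $c_0N-b>3\eta$, and let $S$ be the restriction to $\G|_\X$ of all products of elements of $S_0$ of length in a range $\{N,\dots,N+M\}$, each equipped with the contracting extension obtained by composing the maps $\wh s_i$. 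This immediately yields condition (1) from the growth estimate, condition (3) with $\lambda=\lambda_0^N$ since a composition of $n\ge N$ $\lambda_0$-contractions is a $\lambda_0^N$-contraction, and $\be(S)=\en(S)=\X$; the remaining clause of condition (2) — a generator into $\X_0$ over each $x$ — is arranged using $\be(S_0)=\X$ and the transversality of $\X_0$ to choose, for each $x$, a descending word landing its source in the open set $\X_0$.

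The substantive point is condition (4): writing an arbitrary $g\in\G|_\X$ as $g_n\cdots g_1\,(h_m\cdots h_1)^{-1}$ with all factors in $S$. Setting $x=\be(g)$, $y=\en(g)$, this says $g=vu^{-1}$ with $u,v$ positive $S$-words from a common base point $z$ reaching $x$ and $y$; equivalently, the path $x\to z\to y$ in the Cayley graph is descending-then-ascending. Since all directed paths in $\G(x,S_0^{-1})$ converge to the single point $\omega$, descending rays from $x$ and from $y$ become uniformly close (condition (2) of Theorem~\ref{th:contraction}, Proposition~\ref{prop:ascendingconverge}), and Proposition~\ref{prop:ushaped} produces a $(\Lambda,k)$-quasi-geodesic $\gamma_0\gamma_1\gamma_2$ from $x$ to $y$ with $\gamma_0$ descending, $\gamma_2$ ascending, and $\gamma_1$ of bounded length. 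I would absorb the bounded middle segment $\gamma_1$ into the adjacent descending part and then regroup the two long monotone $S_0$-words into products of $S$-generators, using that their lengths are realized as sums of elements of $\{N,\dots,N+M\}$.

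I expect condition (4) to be the main obstacle, for two intertwined reasons. First, one must force the descending legs from $x$ and from $y$ to terminate at a genuinely common base vertex $z$, not merely at uniformly close vertices, absorbing the bounded defect $\gamma_1$ and the coarse-geodesic error into the endpoints. Second, one must carry out the regrouping of the resulting long monotone words into \emph{exact} products of the finitely many new generators, which constrains the admissible lengths. Choosing the range $\{N,\dots,N+M\}$ with $M$ large enough that every sufficiently large integer is a nonnegative combination of its elements, together with a short check that the bounded remainders can be attached to a descending generator while keeping $\coc>3\eta$ and the contraction bound, is what closes the argument; the hyperbolicity input — thin triangles and rigidity of quasi-geodesics — is precisely what guarantees that the descending legs synchronize, so that such a common $z$ exists.
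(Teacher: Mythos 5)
Your skeleton is the paper's: obtain $(S_0,\X)$ from Proposition~\ref{prop:invariancehyperbolic}, fix an $\eta$-quasi-cocycle $\coc$ strongly equivalent to $\beta$ via Lemma~\ref{lem:coconx2}, pass to long positive words to secure (1) and (3), and attack (4) through the fact that descending rays from $\be(g)$ and $\en(g)$ converge to the same boundary point and hence become uniformly close. The gaps both come from the same decision: insisting that $S$ consist of \emph{pure} positive $S_0$-words. For condition (2), your claim that one can ``choose, for each $x$, a descending word landing its source in the open set $\X_0$'' is false in general, because the sources of positive $S_0$-words with target $x$ form a very thin set. In Example~\ref{ex:expanding} (the groupoid of $f:x\mapsto 2x$ on $\R/\Z$, with $S_0$ the germs of the inverse branches of $f$), every positive word with target the fixed point $0$ is a germ of a branch of $f^{-n}$ at $f^n(0)=0$, so its source is again $0$; taking $\X_0$ to be a small arc avoiding $0$ (an open transversal, since orbits are dense) leaves no admissible descending word of any length. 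This is why the paper's proof uses mixed elements: the net property of $\{h\in\G_x:\en(h)\in\X_0\}$ in the Cayley graph produces a compact set $A$ of bounded-length words in $S_0\cup S_0^{-1}$, and one adjoins to $S$ compositions of elements of $A$ with long positive $S_0$-words; these are again contractions with $\coc>3\eta$, and it is they that realize condition (2).

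The same issue undermines your treatment of (4). Hyperbolicity (thin triangles, rigidity of quasi-geodesics, Proposition~\ref{prop:ushaped}) gives only that the two descending legs become \emph{boundedly} close: one obtains $g=v\cdot s\cdot u^{-1}$ with $u,v$ positive $S_0$-words and $s$ a word of bounded length which in general mixes $S_0$ and $S_0^{-1}$. Nothing forces $s$ to be a unit, so your concluding claim that hyperbolicity ``guarantees that the descending legs synchronize, so that such a common $z$ exists'' is unjustified, and for $S$ exactly as you defined it condition (4) need not hold. The defect $s$ cannot be absorbed ``into the adjacent descending part'' while keeping that part a product of elements of $S_0$; the only way to absorb it is to enlarge the generating set, which is precisely the paper's move: the defects range over a compact set $\nuke_0$, and one replaces $S$ by $S\cup S_0^{n_0}\nuke_0$, checking that for $n_0$ large each element of $S_0^{n_0}\nuke_0$ is still a contraction with $\coc>3\eta$ (here it matters, as you correctly note, that $\eta$ was fixed together with $\coc$ and does not change). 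You do gesture at ``attaching the bounded remainders to a descending generator,'' which is this very enlargement, but it contradicts your stated definition of $S$, and your regrouping of lengths over $\{N,\dots,N+M\}$ would then have to be redone for words containing one corrected letter. So the proof is repairable by importing the paper's two compact correction sets ($A$ for (2), $\nuke_0$ for (4)), but as written the set $S$ you construct satisfies neither (2) nor (4).
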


\begin{defi}
\label{def:complete}
We say that a generating set $S$ is \emph{complete} if it satisfies
the conditions of
Proposition~\ref{pr:hyperbolicgenset}. \index{complete generating set}
\index{generating set!complete}
\end{defi}

We will denote paths in the Cayley graph $\G(x, S)$ as sequences
$(g_1, g_2, \ldots, g_n)$ of elements of $S$ corresponding to its
edges, when the initial vertex of the path is clear from the
context. If $h$ is the initial vertex of the path, then it passes
through the vertices $h, g_1h, g_2g_1h, \ldots, g_n\cdots g_2g_1h$.

\begin{proof}
Let $(S, \X)$ be a generating pair satisfying the
conditions of Definition~\ref{def:hyperbolic}. 

All infinite oriented paths in
$\G(x, S^{-1})$ are quasi-geodesics converging to the point $\omega$
such that $\coc$ is a generalized Busemann
quasi-cocycle associated with $\omega$. Every such quasi-geodesic path is
on a bounded distance from a geodesic path converging to $\omega$.
It follows that there exist constants $C>1$ and $D>0$ such that $\coc(s_1s_2\ldots s_n)\ge
C^{-1}n-D$ for all $n$ and all composable sequences $s_i\in S$. In
particular, replacing $S$ by $S^n\cup S^{n+1}$, we may assume that
$\coc(g)>3\eta$ for all $g\in S$.

Since $\en(S)=\X$, for every $h\in\G|_{\X}$ there exist
infinite paths $\gamma'=(\ldots, g_n', g_2', g_1')$ and
$\gamma''=(\ldots, g_n'', g_2'', g_1'')$ in $\G(\be(h),
S)$ such that $\en(g_1')=\be(h)$ and $\en(g_1'')=\en(h)$. The
paths $\gamma'$ and $\gamma''$ are quasi-geodesics converging to
$\omega$. By hyperbolicity of $\G(\be(h), S)$ (see
Theorem~\ref{th:contraction}), there exists a
constant $\Delta$ depending only on $\G(\be(h), S)$ such that some
truncated paths
\[(\ldots, g_{n+2}', g_{n+1}', g_n'),\qquad(\ldots, g_{m+2}'',
g_{m+1}'', g_m'')\]
are on distance at most $\Delta$ from each other.

It follows that there exists a compact set $\nuke_0\subset\G|_{\X}$
such that every element of $\G|_{\X}$ is equal to the product $g_n\cdots
g_1\cdot s\cdot (h_m\cdots h_1)^{-1}$ for some $g_i, h_i\in S$
and $s\in\nuke_0$.

There exists $n_0$ such that $S\cup S^{n_0}\nuke_0$ is a generating
set satisfying the conditions of
Definition~\ref{def:hyperbolic}. Then every element $h$ can be
written as a product $g_n\cdots g_1\cdot (h_1\cdots h_m)^{-1}$.

Since the set $\X_0\cap\G_x$ is a net in the Cayley graph $\G(x,
S)$, there exists a compact set $A\subset\G|_{\X}$
such that for every $y\in\X$ there exists $g\in A$ such
that $\be(g)\in\X_0$ and $\en(g)=y$. Replacing $S$ by
$S\cup AS^{n_1}$ for some $n_1$, we will get a generating set
satisfying condition (2) of the
proposition (we use the fact that $\be(S)=\en(S)=\X$). The
remaining conditions of our proposition follow directly from the
conditions of Definition~\ref{def:hyperbolic}.
\end{proof}

\begin{proposition}
\label{pr:geodreduction} Suppose that $S$ is complete. There exists
a constant $\Delta_1$ such that every geodesic path $\gamma$ in the
Cayley graph is $\Delta_1$-close to a path of the form $(g_n, \ldots, g_1,
h_1^{-1}, \ldots, h_m^{-1})$ for $g_i, h_i\in S$.
\end{proposition}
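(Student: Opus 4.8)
The plan is to identify the Cayley graph $\G(x, S)$, oriented by $S$ and equipped with the graph quasi-cocycle $\wt\coc(g, h)=\coc(gh^{-1})$ attached to the Busemann quasi-cocycle, with the abstract directed hyperbolic graph studied in the subsection on boundaries of directed hyperbolic graphs. Completeness of $S$ (Proposition~\ref{pr:hyperbolicgenset}) supplies exactly the standing hypotheses there: condition (1) gives $\coc(g)>3\eta>2\eta$ on every directed edge, local boundedness of $\coc$ gives the upper bound $\le\Delta$, and condition (2) together with $\be(S)=\en(S)=\X$ guarantees that every vertex has an outgoing directed edge, so infinite descending ($S^{-1}$-labelled, converging to $\omega_x$) and ascending ($S$-labelled) paths pass through each vertex. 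Conditions (3) and (5) of Definition~\ref{def:hyperbolic} say precisely that condition (1) of Theorem~\ref{th:contraction} holds with $\omega=\omega_x$. All resulting constants ($\delta$, $\Lambda$, $\Delta$, and the constant $\rho$ below) are uniform in $x$, and this is what will force $\Delta_1$ to be uniform.

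First I would invoke Proposition~\ref{prop:ushaped}: the two endpoints $u,v$ of $\gamma$ are joined by a $(\Lambda,k)$-quasi-geodesic $\gamma_0\gamma_1\gamma_2$, where $\gamma_0$ is a descending ($S^{-1}$-labelled) path, $\gamma_1$ has length at most $\rho$, and $\gamma_2$ is an ascending ($S$-labelled) path. By the quasi-geodesic rigidity Theorem~\ref{th:rigidquasigeodesics}, there is a constant $\delta'$ depending only on $\Lambda,k$ and the hyperbolicity constant $\delta$ such that $\gamma$ and $\gamma_0\gamma_1\gamma_2$ lie within Hausdorff distance $\delta'$ of each other. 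It then suffices to replace $\gamma_0\gamma_1\gamma_2$, up to bounded error, by a genuine two-segment $S^{\pm1}$-path of the asserted form.

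Next I would remove the bounded middle piece $\gamma_1$. Its two endpoints lie within $\rho$ of each other and at comparable values of $\coc$ (they bound the horizontal part). Using completeness I prolong a descending $S^{-1}$-path below the bottom of $\gamma_0$ and an ascending $S$-path, and apply the contraction estimate of Theorem~\ref{th:contraction}(2) (the paths are quasi-geodesics by Lemma~\ref{lem:quasigeodesic}) to conclude that, a uniformly bounded number of steps further down, the two prolonged descending paths come within the uniform distance $\rho_0$ of one another. Concatenating the descending part of $\gamma_0$ (prolonged by this bounded amount), a segment of uniformly bounded length, and the ascending part of $\gamma_2$ produces a path of the form $(g_n,\dots,g_1,h_1^{-1},\dots,h_m^{-1})$ with all $g_i,h_i\in S$, which is exactly the cone/word form provided by Proposition~\ref{pr:hyperbolicgenset}(4). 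Since each replacement changes the path by at most a uniform Hausdorff amount, the total error $\Delta_1$ is bounded by a constant depending only on $\delta,\Lambda,\Delta,\rho,\rho_0$, none of which depend on $x$ or on $\gamma$.

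The main obstacle is the treatment of the horizontal middle segment $\gamma_1$: I must convert the abstract ``descending--bounded--ascending'' quasi-geodesic into an honest path with only two monotone $S^{\pm1}$-segments and no residual horizontal part, while keeping the error uniform. This is where the contraction property, condition (2) of Theorem~\ref{th:contraction} — that two directed paths starting boundedly apart become uniformly close once they have descended far enough — does the real work: it lets me push the horizontal detour down toward $\omega_x$ and absorb it, and it is the only place where the topological (contraction) rather than merely large-scale nature of $\G$ is used. A secondary, purely bookkeeping point is tracking the orientation so that the descending segment is labelled by $S^{-1}$ (the $h_i^{-1}$) and the ascending one by $S$ (the $g_j$), matching the form in the statement.
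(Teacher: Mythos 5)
Your overall framework is sound and your two main tools, Proposition~\ref{prop:ushaped} and Theorem~\ref{th:rigidquasigeodesics}, are exactly the ones the paper's proof cites; framing the difficulty as the elimination of the bounded middle segment $\gamma_1$ is also correct. But the step you use to eliminate it does not work, and since you yourself identify this conversion as the crux, it is a genuine gap rather than an omitted routine detail. Condition (2) of Theorem~\ref{th:contraction} only guarantees that the two prolonged descending paths come within the uniform distance $\rho_0$ of one another; it does not make them \emph{meet} at a common vertex. Consequently the path you assemble --- a descending segment, then ``a segment of uniformly bounded length'', then an ascending segment --- is \emph{not} of the form $(g_n,\ldots,g_1,h_1^{-1},\ldots,h_m^{-1})$: that form consists of exactly two monotone $S^{\pm1}$-segments sharing a single apex vertex, with no residual connector, and the leftover bounded piece (whose edges lie in $S\cup S^{-1}$ in arbitrary order) cannot be absorbed into either monotone segment. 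Nor does ``close to a path that is close to the required form'' suffice, since the statement measures the error against a path that literally has the required form.

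The paper closes exactly this hole with condition (4) of Proposition~\ref{pr:hyperbolicgenset}, which you invoke only as a name for the target shape, never as a tool: applied to the element $vu^{-1}$ determined by the endpoints $u,v$ of $\gamma$, it \emph{produces} a genuine path of the required form joining $u$ to $v$ --- the word $g_n\cdots g_1\cdot(h_m\cdots h_1)^{-1}$, read as a path, is a descending quasi-geodesic followed by an ascending quasi-geodesic, i.e.\ the union of two sides of a quasi-geodesic triangle --- and Proposition~\ref{prop:ushaped} together with Theorem~\ref{th:rigidquasigeodesics} is then used to compare $\gamma$ with it. If you want to retain your surgery-based route, the missing ingredient is this same condition (4) plus a compactness argument: the bounded middle element $w_2w_1^{-1}$ (it lies in $\G|_{\X}$, since both endpoints are vertices of the Cayley graph) must be rewritten as $a_s\cdots a_1\cdot(b_t\cdots b_1)^{-1}$ with $a_i,b_j\in S$ and $s+t$ bounded \emph{uniformly} over all elements of bounded $S$-length --- a uniformity that does not follow from condition (4) alone and needs its own argument. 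Only with such a rewriting does your concatenation collapse to the required shape, because the inserted descending-then-ascending word merges with the prolonged descending part of $\gamma_0$ and the ascending part of $\gamma_2$ into a single descending and a single ascending segment.
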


\begin{proof}
The path $(g_n, \ldots, g_1, h_1^{-1}, \ldots, h_m^{-1})$ is the union of
two sides of a quasi-geodesic triangle. For every constant $\delta>0$
every pair of vertices of the Cayley graph on distance less than
$\delta$ from each other can be connected by a path of the form $(f_k,
\ldots, f_1, s_1^{-1}, \ldots, s_l^{-1})$ for uniformly bounded $k+l$.
Therefore, the statement of
the proposition follows from Theorem~\ref{th:rigidquasigeodesics} and
Proposition~\ref{prop:ushaped}.
\end{proof}

\section{Uniqueness of the point $\omega_x$}

\begin{proposition}
\label{pr:uniquegrading}
Let $\G$ be a hyperbolic groupoid, and let $x\in\G^{(0)}$.
Then the point $\omega_x$ is uniquely determined by $x$ and by the
topological groupoid $\G$.
\end{proposition}

\begin{proof}
Let $(S, \X)$ be a complete compact
generating pair. Note that for any two generating pairs $(S_i, Y_i)$
we can find one generating pair $(S, \X)$ such that $S_1\cup
S_2\subset S$ and $Y_1\cup Y_2\subset\X$.

Let $\X_2$ be a compact neighborhood of $\X$. Let
$\xi\in\partial\G(x, S)$ be an arbitrary point different from
$\omega_x$. If $(\ldots, g_2, g_1)$ is a geodesic path in $\G(x,
S)$ covering to $\xi$, then it is on a finite distance from a path
$(\ldots, h_2, h_1)$ in the
Cayley graph $\G(x, S)$ where $h_i\in S$. It follows that
there exist neighborhoods $U_i\in\pG$ of the elements $g_i$ such that
for every $n$ the composition $F_n=U_n\cdots U_2U_1$ is defined on
$\be(U_1)$ and $|F_n(y_1)-F_n(y_2)|\to 0$ as $n\to\infty$ for all
$y_1, y_2\in\be(U_1)$. Note
that since $\X_2$ is compact, the last condition is purely topological
and does not depend on the choice of the metric $|\cdot|$.

On the other hand, if $(\ldots, g_2, g_1)$ converges to
$\omega_x$, then it is on a bounded distance in the Cayley graph
$\G(x, S)$ to a path of the form $(\ldots, h_2^{-1}, h_1^{-1})$,
where $h_i\in S$. For any neighborhoods $W_i\in\pG$ of the elements
$h_i^{-1}$ the map $W_n\cdots W_2W_1$ is $\lambda^n$-expanding on a
neighborhood of $x$. It follows that for any neighborhoods $U_i\in\pG$
of the elements $g_i$ and any $y_1, y_2\in\be(U_1)$ such that $y_1\ne
y_2$ the sequence $|F_n(y_1)-F_n(y_2)|$ does not converge to $0$, if
it is defined for all $n$.

We see that the point $\omega_x$ is uniquely determined in
$\partial\G(x, S)$ by a condition that uses only purely topological
properties of $\G$ (and does not use the grading and the metric).
\end{proof}

\begin{corollary}
Let $(S, X)$ be a compact generating pair of a hyperbolic groupoid
$\G$ satisfying the conditions of Definition~\ref{def:hyperbolic}.
A quasi-cocycle $\coc:\G|_X\arr\R$ is a Busemann quasi-cocycle
if and only if there exist constants $C>1$ and $D>0$ such that
\[C^{-1}n-D\le\coc(s_1s_2\ldots s_n)\le Cn+D\]
for all composable sequences $s_1, s_2, \ldots, s_n$ of elements of $S$.
\end{corollary}

\begin{proof}
Every geodesic path in the Cayley graph $\G(x, S)$ converging to
$\omega_x$ is on a uniformly bounded distance from a directed path in
$\G(x, S^{-1})$. The statement of the corollary follows then from the
definition of a generalized Busemann cocycle, see
Definition~\ref{def:genbusemann} and the fact that directed paths in
$\G(x, S^{-1})$ are quasi-geodesics.
\end{proof}

\begin{defi}
A \emph{graded hyperbolic groupoid} is a hyperbolic groupoid together
with a choice of a strong equivalence class of a Busemann
quasi-cocycle. \index{graded hyperbolic groupoid} \index{groupoid!hyperbolic, graded}
\end{defi}

Note that a strong equivalence class of a Busemann
quasi-cocycle determines a Lipschitz class of the natural log-scale on
the boundaries $\partial\G_x$. See more on the natural metrics and
log-scales on $\partial\G_x$ in~\cite{nek:psmeasure}.

\section{Boundaries of the  Cayley graphs}

Let $\G$ be a hyperbolic groupoid, and let $\X$,
$\X_0\subset\X$, $\nu$, and $S$ satisfy the conditions of
Proposition~\ref{pr:hyperbolicgenset}.

Denote by $\partial\G_x$ \index{boundary!dGx@$\partial\G_x$}
\index{dGx@$\partial\G_x$} for $x\in\X$ the boundary of the
hyperbolic graph $\G(x, S)$ minus the point $\omega_x$. Since
the quasi-isometry type of the Cayley graph $\G(x, S)$ and the point $\omega_x$
do not depend on the choice of the generating pair $(S, \X)$, the
boundary $\partial\G_x$ does not depend on $(S, \X)$ and
is well defined for all $x\in\G^{(0)}$.

\begin{examp}
\label{ex:expandingboundary}
Let $\mathfrak{F}$ be the groupoid from
Example~\ref{ex:expanding}. Then $\partial\G_x$ is the boundary of the
tree of the grand orbit of $x$ without the limit of the forward orbit
of $x$.

\begin{figure}
\includegraphics{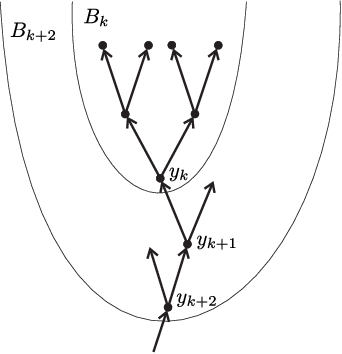}
\caption{Structure of $\partial\mathfrak{F}_x$} \label{fig:backtree}
\end{figure}

We adopt the following definition.

\begin{defi}
\label{def:treepreimages}
For a map $F:\X\arr\X$ and a point $t\in\X$ the \emph{tree of
  preimages} \index{tree of preimages} $T_{(F, t)}$ of the point $t$
is the tree with the set of vertices equal
to the disjoint union $\bigsqcup_{n\ge 0}F^{-n}(t)$ in which two
vertices $v\in F^{-n}(t)$ and $u\in F^{-(n-1)}(t)$ are connected by an
edge if $f(v)=u$.
\end{defi}

Every point $\xi\in\partial\G_x$ is the limit of a
path of germs of the form \[g_n=(f^n, x_n)^{-1}(f^k, x),\] where $k$ is
fixed, $x_n$ is a sequence of points such that
$f(x_{n+1})=x_n$ for all $n$, and $f^n(x_n)=f^k(x)$.

For a fixed value of $k$ the set of limits of the sequences $g_n$ is
naturally identified with the boundary of the tree of preimages
$T_{(f, y_k)}$, where $y_k=f^k(x)$.
The boundary $B_k=\partial T_{(f, y_k)}$ of the tree of preimages of
$y_k=f^k(x)$ is naturally a subset of the boundary $B_{k+1}$ of the
tree of preimages of $y_{k+1}=f^{k+1}(x)$. The boundary $\partial\G_x$ is
the inductive limit of the boundaries $B_k$. See Figure~\ref{fig:backtree}.
\end{examp}

\begin{examp}
Let $\G$ be the groupoid from Example~\ref{ex:dyadic}. Let
$x\in\Z_2=\G^{(0)}$. Then the boundary $\partial\G_x$ can be naturally
identified with the set of formal series
\[\sum_{n=-\infty}^{\infty}a_n2^n\]
where $a_n\in\{0, 1\}$ and $x-\sum_{n=0}^\infty a_n2^n\in\Z$. Here two
series $\sum_{n=-\infty}^\infty a_n2^n$ and $\sum_{n=-\infty}^\infty
b_n2^n$ are considered to represent the same point of the boundary if
both differences
\[\sum_{n=-\infty}^{-1}a_n2^n-\sum_{n-\infty}^{-1}b_n2^n,\qquad
\sum_{n=0}^\infty b_n2^n-\sum_{n=0}^\infty a_n2^n\]
belong to $\Z$ and are equal. Here the first difference is a
difference of real numbers, while the second difference is a difference
of elements of $\Z_2$.
\end{examp}

Denote by $\partial\G$ \index{dG@$\partial\G$} \index{boundary!dG@$\partial\G$}
the disjoint union (as a set) of the
spaces $\partial\G_x$ for all
$x\in\G^{(0)}$. We will introduce a natural topology on $\partial\G$ later.

The following proposition follows from
Proposition~\ref{prop:ascendingconverge}.

\begin{proposition}
For all $h\in\G_x$ and $g_i\in S$, $i\ge 1$, such that $\en(h)=\be(g_1)$
and $\en(g_i)=\be(g_{i+1})$ the sequence $g_n\cdots g_1\cdot h$
converges to a point of $\partial\G_x$.

For every point $\xi\in\partial\G_x$ there exist $h\in\G_x$ and a
sequence $g_i\in S$ such that
\[\xi=\lim_{n\to\infty} g_n\cdots g_1\cdot h.\]
\end{proposition}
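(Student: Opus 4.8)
The plan is to apply Proposition~\ref{prop:ascendingconverge} to the Cayley graph equipped with the orientation coming from $S^{-1}$, i.e. to the directed graph $\Gamma=\G(x,S^{-1})$, after checking that $\Gamma$ meets all the standing hypotheses of the subsection on boundaries of directed hyperbolic graphs and that its ascending paths are exactly the sequences $g_n\cdots g_1\cdot h$ of the statement. The underlying unoriented graph of $\G(x,S^{-1})$ is that of $\G(x,S)$, so its hyperbolic boundary is $\partial\G(x,S)$ and $\partial\Gamma_{\omega_x}=\partial\G_x$; only the orientation is reversed. By condition~(5) of Definition~\ref{def:hyperbolic} the directed paths of $\G(x,S^{-1})$ are precisely those converging to $\omega_x$, which is the convention of that subsection (all arrows directed towards $\omega$).

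First I would verify the standing hypotheses for $\Gamma$. The graph quasi-cocycle $\wt\coc(g,h)=\coc(gh^{-1})$ is a quasi-cocycle in the sense of Definition~\ref{def:almcoc} with the same constant $\eta$ as the groupoid quasi-cocycle $\coc$ of Proposition~\ref{pr:hyperbolicgenset}. A directed edge $(v,u)$ of $\Gamma$ is one with $vu^{-1}\in S$, whence $\wt\coc(v,u)=\coc(vu^{-1})>3\eta>2\eta$ by condition~(1) of Proposition~\ref{pr:hyperbolicgenset}, while $\wt\coc$ is bounded above on edges by compactness of $S$; this is exactly why the slack $3\eta$ was built into the notion of a complete generating set. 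Every vertex has an outgoing arrow because $\be(S)=\en(S)=\X$: given a vertex $v$, choose $s\in S$ with $\en(s)=\en(v)$ and set $u=s^{-1}v$, a vertex with $vu^{-1}=s\in S$. Finally $\Gamma$ is hyperbolic of bounded degree, and $\wt\coc$ is coarsely equivalent to $\beta_{\omega_x}$ by the definition of a Busemann quasi-cocycle, so condition~(1) of Theorem~\ref{th:contraction}, and hence the equivalent conditions used in that subsection, holds.

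Next I would record the dictionary between ascending paths and the sequences in the statement. An ascending path $(v_1,v_2,\ldots)$ of $\Gamma$ is one with $(v_{i+1},v_i)$ a directed edge for all $i$, that is $v_{i+1}v_i^{-1}\in S$; writing $g_i=v_{i+1}v_i^{-1}$ and $h=v_1$ gives $v_{n+1}=g_n\cdots g_1\cdot h$, and the composability conditions $\en(h)=\be(g_1)$, $\en(g_i)=\be(g_{i+1})$ are exactly the requirement that all $v_i$ lie in $\G_x$. Conversely, any sequence of the stated form is an ascending path: its tail from $g_1h$ on consists of honest vertices of $\G(x,S)$, since $\be(g_1h)=x$ and $\en(g_1h)=\en(g_1)\in\X$. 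Under this identification the first assertion is the statement that every ascending path of $\Gamma$ converges to a point of $\partial\Gamma_{\omega_x}=\partial\G_x$, and the second is the statement that every point of $\partial\Gamma_{\omega_x}$ is a limit of an ascending path; both are supplied verbatim by Proposition~\ref{prop:ascendingconverge}.

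The whole argument thus reduces to a translation, and the only delicate point is the orientation bookkeeping: one must confirm that passing from $S$ to $S^{-1}$ turns the Cayley-graph arrows into arrows pointing towards $\omega_x$, so that it is the \emph{ascending} (against-the-arrow) paths of $\G(x,S^{-1})$ that are produced by successive left multiplication by generators, and that the sign of $\wt\coc$ on the relevant edges is positive. Once these conventions are fixed, no estimates beyond those already contained in Proposition~\ref{prop:ascendingconverge} are needed.
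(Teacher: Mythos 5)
Your proposal is correct and takes essentially the same route as the paper: the paper's entire proof is the one-line remark that the proposition follows from Proposition~\ref{prop:ascendingconverge}, applied to the directed graph $\G(x,S^{-1})$. Your additional bookkeeping --- checking that $\wt\coc(v,u)=\coc(vu^{-1})>2\eta$ on directed edges, that $\be(S)=\en(S)=\X$ gives every vertex an outgoing arrow, that condition~(1) of Theorem~\ref{th:contraction} holds, and that ascending paths are exactly the sequences $g_n\cdots g_1\cdot h$ --- is precisely the verification the paper leaves implicit.
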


We will use the following notation.
\[\cdots g_2g_1\cdot g=\lim_{n\to\infty} g_n\cdots g_2g_1\cdot
g\in\partial\G_{\be(g)}.\]
Similarly, for $U_i\subset\pG$ and $g\in\G$, we denote
\[\cdots U_2U_1\cdot g=\lim_{n\to\infty} U_n\cdots U_2U_1\cdot g.\]

Recall that a log-scale on $\partial\G_x$ is defined by
the function
\[\ell_{\omega_x}(\xi_1, \xi_2)\doteq
\lim_{g\to\omega_x, g_1\to\xi_2, g_2\to\xi_2}\ell_g(g_1, g_2)-|x-g|,\]
where the limit is
taken with respect to any convergent sub-sequence
(see~\ref{sss:busemann}), $\ell_g$ denotes the Gromov product in
the Cayley graph (see~\ref{sss:hyperbolicdef}), and $|x-g|$ is
combinatorial distance between vertices of the Cayley graph.

The log-scale $\ell_{\omega_x}(\xi_1, \xi_2)$ is H\"older
equivalent to the minimum value of $\coc$ along a geodesic
connecting $\xi_1$ to $\xi_2$ in the Cayley graph $\G(x, S)$, by
Corollary~\ref{cor:minell}.

We get the following fact from Proposition~\ref{pr:geodreduction} and
Theorem~\ref{th:rigidquasigeodesics}.

\begin{proposition}
\label{pr:topology} Denote by $\ell(\xi_1, \xi_2)$ the maximum
value of $n$ for which there exist representations
\[\xi_1=\cdots g_n\cdots g_1\cdot g,\qquad
\xi_2=\cdots h_n\cdots h_1\cdot g,\] where $g_i, h_i\in S$,
$g\in\G_x^{\X}$, and $\coc(g)\ge n$.

Then the function $\ell$ is a log-scale H\"older
equivalent to $\ell_{\omega_x}(\xi_1, \xi_2)$. The Lipschitz class of
$\ell$ does not depend on the choice of the generating set $S$ and the
transversal $\X$.
\end{proposition}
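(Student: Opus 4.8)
The plan is to compare $\ell$ with the geometric quantity
\[M(\xi_1,\xi_2)=\min\{\coc(v)\ :\ v\in\gamma\},\]
where $\gamma$ is any geodesic of the Cayley graph $\G(x, S)$ joining $\xi_1$ to $\xi_2$, and to prove the single estimate $\ell\doteq M$; every assertion then reduces to facts already available for $M$. First I would record that $M$ is a log-scale. Given $\xi_1,\xi_2,\xi_3\in\partial\G_x$, fix geodesics for the three sides; by $\delta$-thinness of the geodesic triangle (Theorem~\ref{th:thintriangles}) each point of the side $[\xi_1,\xi_3]$ lies within $\delta$ of the union of the other two sides, and since $\coc$ has uniformly bounded increments along edges (the quasi-cocycle inequality~\eqref{eq:coclipschitz}) the minimum of $\coc$ on $[\xi_1,\xi_3]$ cannot fall more than a constant below the minimum over $[\xi_1,\xi_2]\cup[\xi_2,\xi_3]$. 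This is exactly the log-scale inequality, and together with the relation between $M$ and $\ell_{\omega_x}$ recorded above (via Corollary~\ref{cor:minell}) it will supply both the log-scale property and the H\"older equivalence, once $\ell\doteq M$ is established.

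The heart of the matter is therefore the bound $|\ell(\xi_1,\xi_2)-M(\xi_1,\xi_2)|\le C$ with $C$ independent of $\xi_1,\xi_2$. For $\ell\le M+C$, take any common vertex $g$ carrying ascending representations $\xi_1=\cdots g_n\cdots g_1\cdot g$ and $\xi_2=\cdots h_n\cdots h_1\cdot g$. Prefixing a descending path from $g$ to $\omega_x$ to each of these produces two bi-infinite paths $\omega_x\to g\to\xi_i$ which are quasi-geodesics by Lemma~\ref{lem:quasigeodesic}; by rigidity of quasi-geodesics (Theorem~\ref{th:rigidquasigeodesics}) each stays within a uniform distance of a geodesic line $[\omega_x,\xi_i]$, so $g$ is uniformly close to both $[\omega_x,\xi_1]$ and $[\omega_x,\xi_2]$. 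These two geodesic lines share the endpoint $\omega_x$, hence they fellow-travel below their point of divergence and separate above it; the $\coc$-height of that divergence point is $\doteq M(\xi_1,\xi_2)$, since the geodesic from $\xi_1$ to $\xi_2$ is $V$-shaped (Proposition~\ref{pr:geodreduction}) and attains its minimal $\coc$ there. A vertex close to both lines must sit below the divergence point, so $\coc(g)\le M(\xi_1,\xi_2)+C$, whence $\ell\le M+C$.

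For the reverse inequality I would exhibit one common vertex of height $\doteq M$. Choose a vertex $g$ on the shared part of $[\omega_x,\xi_1]$ and $[\omega_x,\xi_2]$ near their divergence point, so that $\coc(g)\ge M(\xi_1,\xi_2)-C$. Using completeness of $S$ together with Proposition~\ref{prop:ascendingconverge}, the geodesic rays $[g,\xi_1]$ and $[g,\xi_2]$ can each be shadowed by an ascending path of generators starting at $g$: because $\be(S)=\en(S)=\X$, every vertex has outgoing generators, and every boundary point is the limit of an ascending path, which by quasi-geodesic rigidity may be rerouted so as to emanate from $g$. This makes $g$ a valid common vertex with $\coc(g)\ge M-C$, giving $\ell\ge M-C$. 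Combined with the previous paragraph we obtain $\ell\doteq M$; hence $\ell$ is a log-scale (a bounded additive perturbation of the log-scale $M$, as noted after Definition~\ref{def:logscale}) and is H\"older equivalent to $\ell_{\omega_x}$.

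It remains to argue that the Lipschitz class of $\ell$ is independent of $S$ and $\X$. The function $\ell$ depends on these data only through the Cayley graph up to quasi-isometry and through the Busemann quasi-cocycle $\coc$. For a fixed transversal, two complete generating sets give quasi-isometric Cayley graphs (Proposition~\ref{pr:equivalencegensets}) with the same point $\omega_x$, and the corresponding gradings are strongly equivalent; for different transversals the quasi-cocycle is unique up to strong equivalence by Lemma~\ref{lem:coconx2}. Thus $M$, and with it $\ell$, changes by at most a bounded additive amount, i.e.\ within its Lipschitz class. The step I expect to be the main obstacle is the lower bound in the previous paragraph: producing a \emph{single} vertex through which ascending generator-paths to both $\xi_1$ and $\xi_2$ pass, at the prescribed $\coc$-height, requires a careful simultaneous rerouting argument controlling $\coc$ along the rays, and it is here that completeness of $S$ and the rigidity of quasi-geodesics must be used most delicately.
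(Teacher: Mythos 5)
Your framework is sound and is essentially the route the paper intends: the paper deduces the proposition from Corollary~\ref{cor:minell} (identifying $\ell_{\omega_x}$ with the minimum of the grading along uniform quasi-geodesics), Proposition~\ref{pr:geodreduction}, and Theorem~\ref{th:rigidquasigeodesics}, and your quantity $M$, the log-scale property of $M$ via thin ideal triangles, and the upper bound $\ell\le M+C$ (extend a common vertex to two bi-infinite directed paths, apply Lemma~\ref{lem:quasigeodesic} and rigidity, and locate the vertex below the divergence point of the two rays) are a correct rendering of that argument. The genuine gap is in the lower bound $\ell\ge M-C$, exactly the step you flag: the mechanism you propose there cannot do the job. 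Quasi-geodesic rigidity gives Hausdorff proximity of quasi-geodesics with prescribed endpoints; it cannot ``reroute'' an ascending path so that it emanates from a prescribed vertex $g$. Nothing in the definitions guarantees that the set $\til_g$ of boundary points reachable by ascending $S$-paths from $g$ contains $\xi_1$ even when $g$ lies on a geodesic from $\omega_x$ to $\xi_1$; this failure is precisely why the paper proves Proposition~\ref{pr:nbhdtil}, where $\til_h$ must be replaced by $\til_{ah}$ for a compact correction $a$ before it becomes a neighborhood. So from Proposition~\ref{prop:ascendingconverge} and rigidity your argument produces only two \emph{nearby} vertices $u$ and $v$, one lying on an ascending path converging to $\xi_1$ and one on an ascending path converging to $\xi_2$ --- not a common one.

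What closes the gap is not rigidity but condition (4) of Proposition~\ref{pr:hyperbolicgenset} (completeness), i.e.\ the content of Proposition~\ref{pr:geodreduction}: every element of $\G|_{\X}$ factors as $g_n\cdots g_1\cdot(h_m\cdots h_1)^{-1}$ with $g_i, h_i\in S$. Apply this to $vu^{-1}$, where $u, v$ are the two nearby vertices above; since $|u-v|$ is bounded and the $V$-shaped path realizing the factorization is a uniform quasi-geodesic, the number of letters is bounded. This yields a vertex $b=h_1^{-1}\cdots h_m^{-1}u$ from which the ascending paths $(h_1,\ldots,h_m)$ and $(g_1,\ldots,g_n)$ reach $u$ and $v$ respectively; concatenating with the tails of the two given ascending paths produces ascending paths from the \emph{single} vertex $b$ to $\xi_1$ and to $\xi_2$, and $\coc(b)\ge\coc(u)-C'\ge M-C$ because $b$ is at bounded graph distance from $u$ and $\coc$ is Lipschitz with respect to graph distance. (Note that one cannot insist, as you do, that the common vertex be $g$ itself; a vertex at bounded distance below $g$ is what the construction gives, and is all that is needed.) With this replacement your proof is complete and agrees with the paper's; the remaining parts --- the H\"older equivalence via Corollary~\ref{cor:minell} and the independence of the Lipschitz class for a fixed grading via Proposition~\ref{pr:equivalencegensets} and Lemma~\ref{lem:coconx2} --- are correct as you state them.
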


We call the log-scale $\ell$
defined in Proposition~\ref{pr:topology} (more precisely, its Lipschitz class)
the \emph{natural log-scale} \index{natural log-scale}
\index{log-scale!natural}
on $\partial\G_x$ defined by the quasi-cocycle $\coc$.

Denote by $\til_g$ the set of points of $\partial\G_{\be(g)}$ of the
form $\lim_{n\to\infty} g_n\cdots g_1\cdot g$ for $g_i\in S$. In
particular, $\til_x$ for $x\in\G^{(0)}$ is the set of points of
$\partial\G_x$ of the form $\lim_{n\to\infty} g_n\cdots g_1$, where
$g_i\in S$ and $\be(g_1)=x$.

\begin{proposition}
The sets $\til_g$ are compact. \index{Tg@$\til_g$}
\end{proposition}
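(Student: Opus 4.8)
The plan is to realize $\til_g$ as the continuous image of a compact space, namely the space of infinite ascending paths in the Cayley graph $\G(\be(g),S)$ issuing from the vertex $g$. By an \emph{ascending path from $g$} I mean a sequence $(s_1,s_2,\ldots)$ with $s_i\in S$, $\be(s_1)=\en(g)$, and $\be(s_{i+1})=\en(s_i)$ for all $i$; it traces out the vertices $g,\ s_1g,\ s_2s_1g,\ldots$, and by Proposition~\ref{prop:ascendingconverge} (applied to $\G(\be(g),S)$) it converges to a point $\lim_{n\to\infty}s_n\cdots s_1\cdot g\in\partial\G_{\be(g)}$. Write $\mathcal{P}_g$ for the set of all such paths and $\Phi:\mathcal{P}_g\arr\partial\G_{\be(g)}$ for the map sending a path to its limit. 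By the very definition of $\til_g$ we have $\til_g=\Phi(\mathcal{P}_g)$, so it suffices to topologize $\mathcal{P}_g$ so that it is compact and $\Phi$ is continuous.

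First I would check that $\mathcal{P}_g$ is compact. Since $S$ is compact and $\G$ is \'etale, for each $x\in\X$ the fiber $\{s\in S:\be(s)=x\}$ is compact and discrete, hence finite; compactness of $S$ then yields a uniform bound $N$ on the number of $S$-arrows leaving any vertex of $\G(\be(g),S)$. Moreover every vertex $v$ has at least one outgoing $S$-arrow, because $\be(S)=\X$ while $\en(v)\in\X$. Consequently the truncations $\mathcal{P}_g^{(n)}$ (ascending paths of length $n$ from $g$) are finite and nonempty, and $\mathcal{P}_g=\varprojlim_n\mathcal{P}_g^{(n)}$ is an inverse limit of finite discrete sets under truncation, hence compact (equivalently, the tree of ascending paths is locally finite and one applies K\"onig's lemma).

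The substantive step, where I expect the real content to lie, is the continuity of $\Phi$. Equip $\mathcal{P}_g$ with the inverse-limit topology, so that two paths are close precisely when they share a long initial segment. Suppose $p=(s_i)$ and $p'=(s_i')$ agree for $i\le n$, and let $w=s_n\cdots s_1\cdot g$ be their common $n$-th vertex. Using that $\coc(s)>3\eta$ for every $s\in S$ (completeness of $S$, Proposition~\ref{pr:hyperbolicgenset}) together with the $\eta$-quasi-cocycle inequality iterated $n$ times, one gets $\coc(w)\ge\coc(g)+2\eta n$, which tends to $\infty$ with $n$. Both $\Phi(p)$ and $\Phi(p')$ are ascending limits of the form $\cdots\cdot w$ issuing from the \emph{single} germ $w$, so the description of the natural log-scale in Proposition~\ref{pr:topology} gives $\ell(\Phi(p),\Phi(p'))\ge\coc(w)\ge\coc(g)+2\eta n$. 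Since large values of $\ell$ correspond to small distance, $\Phi(p)$ and $\Phi(p')$ become arbitrarily close as $n\to\infty$; hence $\Phi$ is (uniformly) continuous.

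Finally, $\til_g=\Phi(\mathcal{P}_g)$ is the continuous image of the compact space $\mathcal{P}_g$, and is therefore compact. This simultaneously shows that $\til_g$ is closed in $\partial\G_{\be(g)}$ and that the special point $\omega_{\be(g)}$ is not a limit point of it, the latter being automatic since every path in $\mathcal{P}_g$ converges into $\partial\G_{\be(g)}=\partial\G(\be(g),S)\setminus\{\omega_{\be(g)}\}$ by Proposition~\ref{prop:ascendingconverge}. The only delicate point is the continuity estimate; the compactness of $\mathcal{P}_g$ and the identification $\til_g=\Phi(\mathcal{P}_g)$ are bookkeeping with the uniform bound on out-degrees and the already-established log-scale description.
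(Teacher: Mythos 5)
Your proof is correct and takes essentially the same route as the paper: the paper's proof considers the set $\Sigma_{\en(g)}$ of composable sequences in $S^\infty$ (your $\mathcal{P}_g$), notes it is compact (there as a closed subset of the compact product $S^\infty$, in your version as an inverse limit of finite truncations), and observes that the limit map onto $\til_g$ is continuous by Proposition~\ref{pr:topology} and surjective by definition. Your K\"onig-lemma compactness argument and your explicit quasi-cocycle estimate $\coc(s_n\cdots s_1\cdot g)\ge\coc(g)+2\eta n$ for continuity are simply more detailed justifications of the same two steps.
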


\begin{proof}
For every $x\in\X$ the set $\Sigma_x$ of sequences $(g_1, g_2,
\ldots)\in S^\infty$ such that $\be(g_1)=x$ and
$\en(g_n)=\be(g_{n+1})$ is closed and hence compact in the product
topology of $S^\infty$. It follows from
Proposition~\ref{pr:topology} that for every $g\in\G|_{\X}$ the
map
\[\Sigma_{\en(g)}\arr\til_g:(g_1, g_2, \ldots)\mapsto \ldots g_2g_1\cdot
g\] is continuous. It is surjective by definition. Consequently,
$\til_g$ is a continuous image of a compact set.
\end{proof}

\begin{proposition}
\label{pr:nbhdtil} There exists a compact set
$A\subset\G|_{\X}$ such that for every $h\in\G|_{\X}$
there exists $a\in A$ such that $\til_{ah}$ is a
neighborhood of $\til_h$ and $\en(a)\in\X_0$.
\end{proposition}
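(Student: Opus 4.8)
The plan is to take $a$ to be a single inverse generator, chosen so that its target falls into $\X_0$, and then to read off the neighborhood property from the fact that $ah$ is an ``ancestor'' of $h$, one $\coc$-level closer to the special point $\omega$. First I would set $A=S^{-1}$; this is compact and contained in $\G|_{\X}$, since $S\subset\G|_{\X}$ and inversion is continuous. Fix $h$ as in the statement, so that $\en(h)\in\X$. By condition (2) of completeness (Proposition~\ref{pr:hyperbolicgenset}) there is $s\in S$ with $\en(s)=\en(h)$ and $\be(s)\in\X_0$; put $a=s^{-1}\in A$. Then $\be(a)=\en(s)=\en(h)$, so $ah$ is defined with $\be(ah)=\be(h)$, and $\en(a)=\be(s)\in\X_0$, as required. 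Moreover $h=s\cdot(ah)$ exhibits $h$ as a one-step forward continuation of $ah$, so any $\xi=\cdots g_2g_1\cdot h\in\til_h$ equals $\cdots g_2g_1s\cdot(ah)$ with $s\in S$ prepended; hence $\til_h\subseteq\til_{ah}$. (If more room were wanted one could instead take $a\in (S^{-1})^m$ by iterating condition (2), keeping each intermediate base in $\X_0$, but a single generator should suffice.)

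The substance is to show that $\til_h$ lies in the interior of $\til_{ah}$ inside $\partial\G_{\be(h)}$. Here I would work with the log-scale $\ell$ of Proposition~\ref{pr:topology}: recall that $\ell(\xi,\zeta)$ is the largest $n$ admitting representations $\xi=\cdots g_n\cdots g_1\cdot g$ and $\zeta=\cdots g_n'\cdots g_1'\cdot g$ through a common base $g\in\G_{\be(h)}^{\X}$ with $\coc(g)\ge n$. For a point $\xi\in\til_h$ I claim that the $\ell$-ball $U_\xi=\{\zeta:\ell(\zeta,\xi)>\coc(ah)+C\}$ is contained in $\til_{ah}$, where $C$ depends only on the global constants $\delta,\Lambda,\Delta$ of Definition~\ref{def:hyperbolic}, on the rigidity constant of Theorem~\ref{th:rigidquasigeodesics}, and on $\max_{s\in S}\coc(s)$. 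Granting this, $\bigcup_{\xi\in\til_h}U_\xi$ is an open neighborhood of the compact set $\til_h$ contained in $\til_{ah}$, which proves the proposition. Note the neighborhood radius $\coc(ah)+C$ may grow with $\coc(h)$; this is harmless, since being a neighborhood is not a uniform condition.

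To prove the claim I would take $\zeta\in U_\xi$ together with a common base $g$ satisfying $\coc(g)>\coc(ah)+C$ and $\xi,\zeta\in\til_g$. Choosing $C$ larger than $\max_{s\in S}\coc(s)$ plus the relevant additive slack forces $\coc(g)>\coc(h)$, so $g$ is strictly deeper than $h$. Since $\xi\in\til_g\cap\til_h$, the forward directed paths from $g$ and from $h$ to $\xi$ are quasi-geodesics (Lemma~\ref{lem:quasigeodesic}), hence fellow-travel by Theorem~\ref{th:rigidquasigeodesics}; being deeper, $g$ then lies within bounded distance of a forward continuation of $h$, and hence of a forward continuation of $ah=s^{-1}h$. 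Combining this with completeness, Proposition~\ref{pr:topology}, and Corollary~\ref{cor:minell} places $g$ in the forward cone of $ah$ up to the global constant, so that $\til_g\subseteq\til_{ah}$ and in particular $\zeta\in\til_{ah}$.

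The main obstacle is precisely this last step: converting the coarse fact ``$g$ fellow-travels a forward ray through $h$'' into the honest cone inclusion $\til_g\subseteq\til_{ah}$, and doing so with a correction $a$ of bounded size that works for every basepoint. The needed uniformity is available because the hyperbolicity constant $\delta$, the quasi-geodesic constants $\Lambda,\Delta$ in Definition~\ref{def:hyperbolic}, and the rigidity constant of Theorem~\ref{th:rigidquasigeodesics} are all independent of $x\in\X$; the care lies in bookkeeping the additive errors so that the threshold $C$, and therefore the single generator $a$, can be fixed once and for all.
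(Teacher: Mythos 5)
Your reduction to a pointwise statement (covering $\til_h$ by log-scale balls contained in $\til_{ah}$) and the easy inclusion $\til_h\subseteq\til_{ah}$ are fine, but the heart of your argument --- the claim that a common base $g$ with $\coc(g)>\coc(ah)+C$ and $\xi\in\til_g\cap\til_h$ satisfies $\til_g\subseteq\til_{ah}$ --- is exactly where the real content lies, and your justification (``fellow-traveling places $g$ in the forward cone of $ah$ up to the global constant, so that $\til_g\subseteq\til_{ah}$'') does not close it. Fellow-traveling (via Proposition~\ref{pr:thininftriangles}; note that Theorem~\ref{th:rigidquasigeodesics} by itself does not apply, since your two rays have different starting points) only gives $g=q\cdot h_m\cdots h_1\cdot h$ with $h_i\in S$ and $q$ of uniformly bounded word length. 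Cone membership, however, is an exact combinatorial condition, not a coarse one: to conclude $\til_g\subseteq\til_{ah}$ you would need $g\cdot(ah)^{-1}=q\cdot h_m\cdots h_1\cdot s$ to be rewritten as an honest positive product, and the error $q$ sits at the \emph{deep} end of this word, at depth $m$ which is unbounded. No choice of $a$, which is attached at the basepoint end --- whether a single generator or a bounded power, as in your fallback --- can reach and absorb it. Two cones $\til_g$, $\til_{g'}$ with $g$, $g'$ at bounded distance need not be comparable as sets, so ``in the cone up to a bounded error'' does not yield containment; this is not a matter of bookkeeping additive constants.

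The paper's proof restructures the argument precisely to avoid this. It reduces to $h=x$ a unit and characterizes a neighborhood of $\xi\in\til_x$ not as a union of deep cones but as $\bigcup_{b\in B_\xi}\til_b$, where $B_\xi$ consists of bases at \emph{bounded} level $0\le\coc(b)\le\Delta$ with $\xi\in\til_b$ (each deep base $g$ is first pushed up, $g=f_k\cdots f_1\cdot g'$ with $f_i\in S$, so that $\til_g\subseteq\til_{g'}$). Fellow-traveling then bounds the word length of every $b\in B_\xi$, so the errors to be absorbed are boundedly many bounded elements sitting at the \emph{shallow} end, adjacent to where $a$ acts. The remaining (and main) step is an induction, using condition (4) of Proposition~\ref{pr:hyperbolicgenset} and Proposition~\ref{pr:geodreduction}, producing for the finite set $B\cap\G_x$ a single positive product $a'$ of controlled length such that $ba'$ is a positive product for \emph{every} $b\in B\cap\G_x$ simultaneously; then $\til_{(a')^{-1}}\supseteq\til_b$ for all such $b$, so $\til_{(a')^{-1}}$ is a neighborhood of $\til_x$, and one takes $A=(A')^{-1}$. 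This $a'$ depends on the whole finite set $B\cap\G_x$, and its length is bounded only in terms of the uniform bounds on that set; in particular $A$ consists of inverses of bounded-length positive products, and your proposed $A=S^{-1}$ is in general too small.
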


\begin{proof}
It is enough to prove the proposition for the cases when $h$ is a unit
$x\in\X$. By Proposition~\ref{pr:topology}, a neighborhood of a point
$\xi\in\til_x$ is
the set of points $\zeta\in\partial\G_x$ such that
$\xi=\cdots g_2g_1\cdot g$ and $\zeta=\cdots
h_2h_1\cdot g$ for some $g\in\G_x^{\X}$ such that
$\coc(g)\ge 0$ and $g_i, h_i\in S$.

Since every vertex of $\G(x, S)$ is the end of a path
$(\ldots, f_2, f_1)$ of edges corresponding to elements of $S$, and
$\coc$ is bounded on $S$, there exists $\Delta>0$ such that every
$g\in\G_x^{\X}$ such that $\coc(g)\ge 0$
can be represented in the form $g=f_k\cdots f_1\cdot g'$ for
$f_i\in S$ and $0\le\coc(g')\le\Delta$.

Consequently, a neighborhood of $\xi$ is the union of the sets
$\bigcup_{g\in B_\xi}\til_g$, where $B_\xi$ is the set of elements
$g\in\G_x^{\X}$ such that $0\le\coc(g)\le\Delta$ and $\xi\in\til_g$.

Let $g\in B_\xi$, so that $\xi=\ldots s_2s_1\cdot g$ for some $s_i\in
S$. We know that $\xi=\ldots g_2g_1$ for some $g_i\in S$, since
$\xi\in\til_x$. It follows from Theorem~\ref{th:contraction} that
there exists a uniform constant $\Delta_1$ (not depending on $x$ and
$\xi$) such that the paths $\cdots g_2g_1$ and $\cdots s_2s_1\cdot g$ are
at most $\Delta_1$ apart. Consequently, there is a uniform upper
bound on the length of elements of $B_\xi$, hence there exists a
compact set $B$ such that $B_\xi\subset B$ for all $\xi$.
Then $\bigcup_{a\in B\cap\G_x}\til_a$ is a neighborhood of $\til_x$.

Let $\{a_1, a_2, \ldots, a_n\}\subset\G_x^{\X}$ be an arbitrary set.
Let us prove by induction on
$n$ that there exists a product $g=h_m\cdots h_1\in\G^x$ of
elements of $S$ such that for every $a_i$ the element
$a_ig$ can be written as a product $f_k\cdots f_1$ of elements of
$S$ and $m$ is bounded from above by a function of
$n$ and the lengths of representations of $a_i$ as products of
elements of $S$ and $S^{-1}$. It is true for $n=1$, by condition (4) of
Proposition~\ref{pr:hyperbolicgenset} and by Proposition~\ref{pr:geodreduction}.
If there is such an element
$g'$ for $\{a_1, a_2, \ldots, a_{n-1}\}$, then, again by
Propositions~\ref{pr:hyperbolicgenset} and~\ref{pr:geodreduction},
there exists a product $t_l\cdots t_1$ of elements of $S$ such that
$a_ng't_l\cdots t_1$ is equal to a product of elements of $S$. Then we can take
$g=g't_l\ldots t_1$ for the set $\{a_1, \ldots, a_n\}$, since then every element
$a_ig=a_ig't_l\ldots t_1$ is equal to a product of elements of
$S$.

The size of the sets $B\cap\G_x$ and the lengths of elements of $B$
are uniformly bounded. Consequently, there exists a compact set $A'$
such that for every $x\in\X$ there exists an element $a\in A'$ that
can be written as a product of elements of $S$, and is such that $ba$
is as a product of elements of $S$ for all $b\in B\cap\G_x$.
Then $\til_{\be(a)}\supset\til_{ba}$, hence
$\til_{a^{-1}}\supset\til_b$ for all $b\in B\cap\G_x$. It follows that
$\til_{a^{-1}}$ is a neighborhood of $\til_x$, hence we can take
$A=(A')^{-1}$. We can assume that $\en(a^{-1})\in\X_0$ by condition
(2) of Proposition~\ref{pr:hyperbolicgenset}.
\end{proof}

\begin{proposition}
\label{pr:internalshift}
If $\xi=\ldots g_2g_1$ for $g_i\in S$ belongs to the interior of
$\til_x$, then for every compact set $\nuke$ there exists $n$ such
that every point of the form $\ldots h_2h_1\cdot f\cdot g_n\cdots
g_1$, where $h_i, g_i\in S$ and $f\in\nuke$,
belongs to the interior of $\til_x$.
\end{proposition}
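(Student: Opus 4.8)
The plan is to reduce the whole statement to one quantitative estimate on the natural log-scale $\ell$ of $\partial\G_x$ from Proposition~\ref{pr:topology}, and then to turn membership in $\til_x$ into membership in its interior by the log-scale inequality. Write $v_n=g_n\cdots g_1\in\G_x^{\X}$, so that $\xi=\cdots g_{n+1}\cdot v_n$ and, by condition (1) of Proposition~\ref{pr:hyperbolicgenset} together with the quasi-cocycle inequality, $\coc(v_n)\ge 2\eta n$. Every point of the indicated form is $\zeta=\cdots h_2h_1\cdot w_n$ with $w_n=f\cdot v_n$. Since $\nuke$ is compact and $S$ generates $\G|_{\X}$, the word length in $S\cup S^{-1}$ of elements of $\nuke$ is bounded by some $L=L(\nuke)$; consequently the Cayley-graph distance satisfies $|v_n-w_n|\le L$ and $|\coc(f)|$ is bounded uniformly over $f\in\nuke$.

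First I would prove the key estimate: there are constants $c>1$ and $C>0$, depending only on $\nuke$ and $\G$, with
\[\ell(\xi,\zeta)\ge c^{-1}\coc(v_n)-C\]
for all admissible $h_i\in S$ and $f\in\nuke$. To see this I connect $\xi$ to $\zeta$ by the path formed from the ascending ray $(v_n, g_{n+1}v_n, g_{n+2}g_{n+1}v_n,\dots)$ converging to $\xi$, a crossing from $v_n$ to $w_n$ of length at most $L$, and the ascending ray $(w_n, h_1w_n, h_2h_1w_n,\dots)$ converging to $\zeta$. Each ray is a quasi-geodesic by Lemma~\ref{lem:quasigeodesic}, the middle segment is uniformly bounded, and the path has the descending--bounded--ascending shape of Proposition~\ref{prop:ushaped}, so it is a $(\Lambda,k)$-quasi-geodesic with constants depending only on $\nuke$ and $\G$. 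Its minimal value of $\coc$ is attained near the bottom and is at least $\coc(v_n)-L\Delta_0$, where $\Delta_0$ bounds $|\coc|$ on $S\cup S^{-1}$. Passing to the boundary (truncating the two rays at high vertices and letting them tend to $\xi$ and $\zeta$), Corollary~\ref{cor:minell} and Proposition~\ref{prop:ellmin} identify this minimal value, up to a bounded error, with $\ell_{\omega_x}(\xi,\zeta)$, which is H\"older equivalent to $\ell$ by Proposition~\ref{pr:topology}; collecting constants yields the displayed bound. I expect this geometric step to be the main obstacle, precisely because the bounded factor $f$ destroys the purely positive-product description of $\til_x$: one cannot simply exhibit a common base vertex for $\xi$ and $\zeta$ and must instead argue through hyperbolicity and the rigidity of quasi-geodesics (Theorem~\ref{th:rigidquasigeodesics}).

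Finally I would convert this into the interior statement. Since $\xi$ lies in the interior of $\til_x$ and the topology of $\partial\G_x$ is that of $\ell$, there is $N_0$ with $\{\zeta':\ell(\xi,\zeta')>N_0\}\subseteq\til_x$. Let $\delta'$ be a log-scale constant for $\ell$. Choosing $n$ so large that $c^{-1}\coc(v_n)-C>N_0+\delta'$ — possible since $\coc(v_n)\ge 2\eta n$ while $c,C,N_0,\delta'$ are fixed — the estimate gives $\ell(\xi,\zeta)>N_0+\delta'$ for every admissible $\zeta$. Then for any $\zeta'$ with $\ell(\zeta,\zeta')>N_0+\delta'$ the log-scale inequality yields
\[\ell(\xi,\zeta')\ge\min\big(\ell(\xi,\zeta),\ell(\zeta,\zeta')\big)-\delta'>N_0,\]
so the whole neighborhood $\{\zeta':\ell(\zeta,\zeta')>N_0+\delta'\}$ of $\zeta$ lies in $\til_x$. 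Hence $\zeta$ belongs to the interior of $\til_x$, and since the chosen $n$ depends only on $\xi$ and $\nuke$ this holds simultaneously for all $h_i\in S$ and $f\in\nuke$, as required.
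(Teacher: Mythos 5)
Your proof is correct and follows essentially the same route as the paper: the paper likewise connects $\zeta$ to $\xi$ by the path $\ldots h_2h_1\cdot f\cdot g_{n+1}^{-1}g_{n+2}^{-1}\cdots$ passing through the vertex $g_n\cdots g_1$, deduces that $\ell(\zeta,\xi)$ agrees with $\coc(g_n\cdots g_1)$ up to a uniform constant, and concludes interiority for $n$ large. Your write-up simply fills in what the paper leaves implicit, namely the multiplicative (H\"older) constants inherited from Proposition~\ref{pr:topology} (where the paper asserts a purely additive bound) and the final log-scale triangle-inequality argument showing that a point lying deep inside a neighborhood contained in $\til_x$ is itself an interior point of $\til_x$.
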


\begin{proof}
Let $\zeta=\ldots h_2h_1\cdot f\cdot g_n\cdots
g_1\in\partial\G_x$. 
Then $\ldots h_2h_1\cdot f\cdot
g_{n+1}^{-1}g_{n+2}^{-1}\cdots$ is a path connecting $\zeta$ to $\xi$,
where the segments $\ldots h_2h_1\cdot f$ and
$g_{n+1}^{-1}g_{n+2}^{-1}\cdots$ are uniformly quasi-geodesic.
It follows that for some uniform constant $\Delta>0$ we
have $\ell(\zeta, \xi)\ge \coc(g_n\cdots g_1)-\Delta$. Since $\xi$ is an
internal point of $\til_x$, for $n$ big enough, the point $\zeta$ will
belong to the interior of $\til_x$.
\end{proof}

\section{The boundary $\partial\G$}

Recall that $\partial\G$ is the disjoint union of the sets
$\partial\G_x$ for $x\in\G^{(0)}$. For $\xi\in\partial\G$ define $P(\xi)\in\G^{(0)}$ by the condition
$\xi\in\partial\G_{P(\xi)}$.

\begin{theorem}
\label{th:localproduct} Let $\G$ be a hyperbolic groupoid.
There exist unique topology and local product
structure on $\partial\G$ \index{dG@$\partial\G$}
\index{boundary!dG@$\partial\G$}
satisfying the following condition.

Let $S$ be a complete generating set of $\G$, and let $\mS$ be a finite
covering of $S$ by contracting positive elements of $\pG$ (see
Definition~\ref{def:positive}).
Then there exist a covering $\mathcal{R}$ of $\partial\G$ by open
rectangles and elements $U_R\in\pG$ for every
$R\in\mathcal{R}$ such that $\be(U_R)=P(R)$, and for
every $\xi\in R$ there exists a sequence $F_i\in\mS$ such
that $\be(F_n\cdots F_1U_R)=\be(U_R)$ for all $n$ and
\[[\zeta, \xi]_R=\ldots F_2F_1(U_R, P(\zeta))\]
for all $\zeta\in R$.
\end{theorem}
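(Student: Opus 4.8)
The plan is to construct everything from the single operation of parallel translation and to show that this operation is \emph{intrinsic}, i.e.\ depends only on the point being translated and on its target fibre. Fix a complete generating set $S$ and a finite covering $\mS$ of $S$ by contracting positive elements of $\pG$, as in the statement. For $\xi\in\partial\G_x$ I choose a representation $\xi=\ldots F_2F_1(U,x)$ with $U\in\pG$, $F_i\in\mS$, and with $\be(F_n\cdots F_1U)=\be(U)$ for all $n$; such a representation exists because the germs of $\mS$ cover $S$ and the contraction property of Definition~\ref{def:hyperbolic} keeps the whole composition defined on $\be(U)$. For $y\in\be(U)$ the germs $(F_n\cdots F_1U,y)$ form an ascending path in $\G(y,S)$, so by Proposition~\ref{prop:ascendingconverge} the limit $[y,\xi]:=\ldots F_2F_1(U,y)\in\partial\G_y$ exists. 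First I would prove that $[y,\xi]$ is independent of the representation $(U,(F_i))$ of $\xi$; this is the technical core.

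For the independence, suppose $\ldots F_2F_1(U,x)=\ldots F_2'F_1'(U',x)=\xi$. Both ascending paths converge to $\xi$ in $\G(x,S)$, hence by condition~(2) of Theorem~\ref{th:contraction} and the rigidity of quasi-geodesics (Theorem~\ref{th:rigidquasigeodesics}) their sufficiently deep vertices lie within a uniform bounded distance $\rho_0$; each such coincidence is a relation of bounded length between the two words of generators, based at a deep target $F_m'\cdots F_1'U(x)$. The point is to transport these relations to the corresponding deep targets over a nearby base $y$: by the contraction estimate $|F_m'\cdots F_1'U(x)-F_m'\cdots F_1'U(y)|\le\lambda^m|U(x)-U(y)|$ the deep targets at $x$ and at $y$ become arbitrarily close, so for all large $m$ the single $\epsilon$ furnished by Corollary~\ref{cor:relationspseudogroup} applies and the same bounded relations hold over $y$. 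Thus the two translated ascending paths also stay within $\rho_0$ in $\G(y,S)$, and by Proposition~\ref{pr:topology} (which reads off the boundary log-scale from the depth of a common vertex) they converge to the same point of $\partial\G_y$. The hard part is exactly this coordination: matching ``closeness of germs at $x$'' with ``closeness of germs at $y$'', which is possible only because contraction forces the deep targets to converge together, supplying a uniform $\epsilon$ for the infinitely many relations.

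With parallel translation shown to be intrinsic, the rest is largely formal. I would take rectangles $R$ of the form $\{[y,\xi]:y\in\be(U_R),\ \xi\in B\}$, where $B$ is an open subset of a fibre $\til_{(U_R,x)}$; Proposition~\ref{pr:nbhdtil} guarantees that such fibres are neighbourhoods in the boundary direction, so these sets cover $\partial\G$. Setting $[\zeta,\xi]_R=\ldots F_2F_1(U_R,P(\zeta))$ for any representation $(F_i)$ of $\xi$, the three rectangle axioms follow immediately: $[\xi,\xi]_R=\xi$ is the defining representation; $P([\zeta,\xi]_R)=P(\zeta)$ since every germ $F_n\cdots F_1(U_R,P(\zeta))$ has origin $P(\zeta)$, which yields $[[\zeta,\eta]_R,\xi]_R=[\zeta,\xi]_R$; and $[\zeta,[\eta,\xi]_R]_R=[\zeta,\xi]_R$ because $[\eta,\xi]_R$ carries the same representing sequence $(F_i)$ as $\xi$. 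Declaring the products $A\times B$ (with $A\subset\G^{(0)}$ open and $B\subset\partial\G_x$ open in the natural log-scale) to be a basis defines the topology, and the uniform estimates of Proposition~\ref{pr:topology} show that $(y,\xi)\mapsto[y,\xi]$ is a homeomorphism of $\be(U_R)\times B$ onto $R$, injectivity being supplied by $P$ together with the inverse translation $[x,[y,\xi]]=\xi$.

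Finally, compatibility on overlaps and uniqueness both reduce to the intrinsic character of parallel translation. If $\xi\in R\cap R'$, then for $\zeta$ near $\xi$ both $[\zeta,\xi]_R$ and $[\zeta,\xi]_{R'}$ equal the parallel translate of $\xi$ to $P(\zeta)$, so the two rectangle maps agree on a neighbourhood, which is precisely the compatibility condition of Definition~\ref{def:ldps}. For uniqueness, any local product structure and topology meeting the stated condition must have its rectangle maps given by the explicit formula $\ldots F_2F_1(U_R,P(\zeta))$; hence any two such structures share a common compatible atlas and induce the same products and the same open sets, so they coincide.
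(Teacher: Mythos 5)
Your overall route is the same as the paper's: define parallel translation by transporting a representing ascending sequence, prove it is independent of the representation by transferring bounded ``rung'' relations (supplied by Theorem~\ref{th:contraction} and quasi-geodesic rigidity) from the fibre over $x$ to the fibre over $y$ via Corollary~\ref{cor:relationspseudogroup}, and then build rectangles, topology, compatibility and uniqueness formally on top. The genuine gap is inside the transfer step, which is exactly where the paper invests its effort (Lemmas~\ref{l:relationsboundary} and~\ref{lem:relationsbndm}). The rung relations form a chain anchored at the common initial germ: the rung at stage $k$ is an equality of germs based at the stage-$(k-1)$ deep target, and it relates the two paths only because the earlier rungs have already identified the image of the first path under the previous connecting germ with the actual second path up to stage $m_{k-1}$. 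You transfer the rungs only ``for all large $m$,'' i.e.\ once the deep targets over $x$ and over $y$ have become $\epsilon$-close. But a rung transferred at stage $k_0$ without its predecessors relates the translated first path to the connecting-germ image of \emph{itself}, not to the translated second path, whose stage-$m_{k_0-1}$ vertex is a different, merely nearby, germ; and ascending paths through distinct nearby germs need not converge to the same point of $\partial\G_y$ (injectivity of translation in the base variable says the opposite). So ``the two translated ascending paths stay within $\rho_0$'' does not follow from what you proved. The paper's mechanism is different: it arranges that \emph{all} rungs transfer simultaneously, by exploiting that the generators are contractions, so that \emph{every} base point over $y$ stays within $\delta_0$ of the corresponding base point over $x$ as soon as the initial targets satisfy $|U(x)-U(y)|<\delta_0$; then the chain over $y$ is anchored at the unit and propagates by induction.

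This is not a mere quantifier slip, because you impose no smallness condition on $\en(U_R)$, whereas the paper's rectangles use ``admissible'' $U$ with $\en(U)$ of diameter less than $\delta_0$, and that condition is what makes $|U(x)-U(y)|<\delta_0$ hold for \emph{all} $y\in\be(U_R)$. Independence of the representation is genuinely only local in $y$ --- that is why the object being constructed is a \emph{local} product structure. Concretely, for the doubling map of Example~\ref{ex:expanding}, take two branches $F, F'$ of $f^{-1}$ defined on long arcs $B, B'$ whose intersection has two components: one can choose them so that $F$ and $F'$ have equal germs on one component (hence represent the same boundary point over $x$ there) while differing by the deck transformation $t\mapsto t+1/2$ on the other component; then the two ``translates'' of the same $\xi$ to a point $y$ of the second component are distinct points of $\partial\G_y$. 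So without the admissibility restriction and the all-rungs transfer, your map $[y,\xi]$ is not well defined on all of $\be(U_R)\times B$, and the subsequent formal steps (rectangle axioms, agreement on overlaps, uniqueness) have nothing to stand on. Two smaller points in the same spirit: comparing representations through different base maps $U_1\neq U_2$ (needed for your compatibility claim) requires a transfer lemma that tolerates one bi-Lipschitz, non-contracting factor in each sequence, as in Lemma~\ref{lem:relationsbndm} and Lemma~\ref{lem:loccompatible}; and the atlas condition at points of $\overline{R}\cap\overline{R'}$ not in $R\cap R'$ is handled in the paper by enlarging $S$ to a bigger complete generating set, a step your sketch omits.
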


\begin{figure}
\centering
\includegraphics{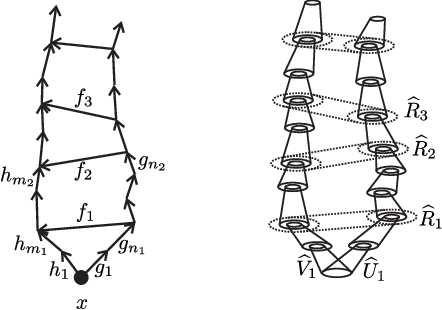}
\caption{} \label{fig:ladder}
\end{figure}

\begin{examp}
Let $\mathfrak{F}$ be as in Example~\ref{ex:expanding}.
Let $U$ be an arbitrary open arc of the circle
$\R/\Z=\mathfrak{F}^{(0)}$  (not coinciding with the whole circle).
Then $U$ is evenly covered by every iteration $f^n$ of $f$.

For every $x\in U\subset\R/\Z$ the boundary $B_x$ of
the tree of preimages $T_{(f, x)}$ of $x$ is an open
subset of $\partial\mathfrak{F}_x$ (see
Example~\ref{ex:expandingboundary}). For every $\xi\in B_x$ there
exists a unique sequence $U_0=U, U_1, U_2, \ldots$ of arcs of $\R/\Z$
such that $f(U_{n+1})=U_n$ and $\xi$ is the limit of the germs at $x$
of the local homeomorphisms $f^{-n}:U\arr U_n$. Then for every $y\in
U$, we get the corresponding point of $T_y$ equal to the limit of the
germs at $y$ of the same maps $f^{-n}:U\arr U_n$. We see that there is
a natural identification of the union of the boundaries $B_y$ of the
tree of preimages of points of $U$ with the direct product $U\times T_x$.
\end{examp}

\begin{proof}
Let $\X$ be a compact topological $\G$-transversal, and let $\wh\X$ be
its open neighborhood. Fix a metric $|\cdot|$ on $\wh\X$ and a
$\eta$-quasi-cocycle $\coc:\G|_{\wh\X}\arr\R$ satisfying the conditions of
Definition~\ref{def:hyperbolic}.

Let us start with some technical lemmas.

\begin{lemma}
\label{l:relationsboundary} Let $S\subset\G|_{\wh\X}$ be a compact
subset such that $\coc(g)>2\eta$ for every $g\in S$.

Then there exists a compact set $\nuke\subset\G|_{\wh\X}$ and a number
$\Delta>0$ such that for all sequences $g_n, h_n\in S$
such that $\be(g_1)=\be(h_1)$,
$\be(g_{n+1})=\en(g_n)$, $\be(h_{n+1})=\en(h_n)$ the following
conditions are equivalent
\begin{enumerate}
\item the sequences
$g_n\cdot g_{n-1}\cdots g_1$ and $h_n\cdot h_{n-1}\cdots
h_1$ converge to the same point of $\partial\G_x$, where $x=\be(g_1)=\be(h_1)$;
\item there exist  a sequence
$f_k\in\nuke$, and strictly increasing
sequences $n_k$ and $m_k$ such that $n_k-n_{k-1}\le\Delta$ and
$m_k-m_{k-1}\le\Delta$ for all $k\ge 1$,
\[f_k\cdot g_{n_k}\cdot g_{n_k-1}\cdots g_{n_{k-1}+1}=h_{m_k}\cdot
h_{m_k-1}\cdots h_{m_{k-1}+1}\cdot f_{k-1}\] for all $k\ge 1$, and
$f_0=\be(g_1)$.
\end{enumerate}
\end{lemma}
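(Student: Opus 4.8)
The plan is to reduce the stated equivalence to a single \emph{fellow-traveling} fact about the two ascending paths, combined with one purely algebraic identity that makes the ladder relation automatic. Write $G_n=g_n g_{n-1}\cdots g_1$ and $H_m=h_m h_{m-1}\cdots h_1$, with $G_0=H_0=x$. Fix once and for all a complete generating set $S_0$ of $\G|_{\X}$ as in Proposition~\ref{pr:hyperbolicgenset}; since $S$ is compact it has uniformly bounded $S_0$-word length, so the $G_n,H_m$ are vertices of $\G(x,S_0)$ and consecutive ones are at uniformly bounded distance. Because $\coc(g)>2\eta$ for $g\in S$, each step raises $\coc$ by an amount in $(\eta,\,M+\eta]$ with $M=\sup_{g\in S}\coc(g)$, so $\coc(G_n)\to\infty$; hence by Lemma~\ref{lem:quasigeodesic} and Proposition~\ref{prop:ascendingconverge} both $(x,G_1,G_2,\dots)$ and $(x,H_1,H_2,\dots)$ are quasi-geodesics converging to points of $\partial\G_x$, with constants depending only on $\G$ and $S$. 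The whole lemma now rests on the identity
\[ \bigl(H_{m_k}G_{n_k}^{-1}\bigr)\,\bigl(g_{n_k}\cdots g_{n_{k-1}+1}\bigr)=\bigl(h_{m_k}\cdots h_{m_{k-1}+1}\bigr)\,\bigl(H_{m_{k-1}}G_{n_{k-1}}^{-1}\bigr), \]
valid for \emph{any} increasing $n_k,m_k$, since $g_{n_k}\cdots g_{n_{k-1}+1}=G_{n_k}G_{n_{k-1}}^{-1}$ and $h_{m_k}\cdots h_{m_{k-1}+1}=H_{m_k}H_{m_{k-1}}^{-1}$. Thus setting $f_k:=H_{m_k}G_{n_k}^{-1}$ turns (2) into the single requirement that the $f_k$ lie in a fixed compact set and that $n_k,m_k$ can be chosen with bounded gaps; note $f_0=H_0G_0^{-1}=x$.

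For $(1)\Rightarrow(2)$, assume $G_n\to\xi$ and $H_m\to\xi$. The two rays are quasi-geodesics from the common vertex $x$ to the common ideal point $\xi$, so by uniform hyperbolicity (Definition~\ref{def:hyperbolic}(3)), Theorem~\ref{th:rigidquasigeodesics}, and Proposition~\ref{pr:thininftriangles} they Hausdorff-fellow-travel: there is $\rho$, depending only on $\G$ and $S$, such that every $G_n$ is within Cayley distance $\rho$ of some $H_m$ and conversely. I then synchronize by height. Put $\lambda(v)=\coc(v)$, choose a step $c>M+\eta$ exceeding $\coc(G_0)$ and $\coc(H_0)$, set $n_0=m_0=0$, and for $k\ge 1$ let $n_k$ (resp.\ $m_k$) be the first index with $\lambda(G_{n_k})\ge kc$ (resp.\ $\lambda(H_{m_k})\ge kc$). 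Since each edge raises $\lambda$ by an amount in $(\eta,M+\eta]$, consecutive thresholds are reached within a bounded number of steps, giving $n_k-n_{k-1}\le\Delta$ and $m_k-m_{k-1}\le\Delta$ for a uniform $\Delta$, and $\lambda(G_{n_k}),\lambda(H_{m_k})\in[kc,\,kc+M+\eta)$. Combining the matched heights with Hausdorff fellow-traveling (a $\rho$-close partner of $G_{n_k}$ has $\lambda$-value within a bounded multiple of $\rho$ of $\lambda(G_{n_k})$, hence is within bounded index, hence bounded distance, of $H_{m_k}$) yields a uniform bound $|G_{n_k}-H_{m_k}|\le\rho'$. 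Consequently $f_k=H_{m_k}G_{n_k}^{-1}$ has bounded $S_0$-word length and lies in a fixed compact set $\nuke\subset\G|_{\wh\X}$, and the identity above is precisely (2).

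Conversely, given data as in (2), an induction on $k$ using the ladder relation and $f_0=x$ shows $f_k=H_{m_k}G_{n_k}^{-1}$: substituting $f_{k-1}=H_{m_{k-1}}G_{n_{k-1}}^{-1}$ into $f_k\,G_{n_k}G_{n_{k-1}}^{-1}=H_{m_k}H_{m_{k-1}}^{-1}f_{k-1}$ gives $f_kG_{n_k}=H_{m_k}$. Since $\nuke$ is compact and $S_0$ is a compact generating set, the elements of $\nuke$ have uniformly bounded word length, so $|G_{n_k}-H_{m_k}|\le R$ for all $k$. As $n_k,m_k\to\infty$ and the two rays converge to points $\xi,\xi'\in\partial\G_x$, the Gromov product based at $x$ satisfies $(G_{n_k},H_{m_k})_x\ge\tfrac12(|x-G_{n_k}|+|x-H_{m_k}|)-\tfrac{R}{2}\to\infty$, whence $\xi=\xi'$; this is (1).

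The routine algebra (the identity and the induction) is immediate; the real content, and the only place uniformity must be watched, is the fellow-traveling step of $(1)\Rightarrow(2)$ together with the height synchronization. The essential point is that the constants $\rho,\rho',\Delta$ and the compact set $\nuke$ must not depend on $x$ or on the particular admissible sequences $g_n,h_n$. This is guaranteed by the \emph{uniform} $\delta$-hyperbolicity of all the Cayley graphs and the \emph{uniform} quasi-geodesic constants for directed paths built into Definition~\ref{def:hyperbolic} and Proposition~\ref{pr:hyperbolicgenset}, which is exactly what lets me extract one $\nuke$ and one $\Delta$ serving all choices simultaneously.
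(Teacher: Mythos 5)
Your proof is correct and follows essentially the same route as the paper's: both directions reduce to the fact that the two directed paths are uniform quasi-geodesics in a hyperbolic Cayley graph, with (2)$\Rightarrow$(1) coming from the observation that the ladder relation keeps the rays a bounded distance apart, and (1)$\Rightarrow$(2) from uniform fellow-traveling plus a synchronized choice of indices whose connecting elements $f_k=H_{m_k}G_{n_k}^{-1}$ then lie in a compact set of bounded word length. The only genuinely different bookkeeping is the synchronization: you take first-passage times of the quasi-cocycle past thresholds $kc$ with $c>M+\eta$, whereas the paper spaces the $n_k$ by Cayley distance (more than $2\Delta_1$ apart but less than a fixed constant) and then picks $m_k$ as a $\Delta_1$-close partner on the other ray; both devices secure strict monotonicity and bounded gaps.

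One technical slip should be flagged. You fix a complete generating set $S_0$ of $\G|_{\X}$ and assert that every element of $S$ has uniformly bounded $S_0$-word length; but elements of $S\subset\G|_{\wh\X}$ may have origin or target in $\wh\X\setminus\X$, and such elements are not products of elements of $S_0\cup S_0^{-1}$ at all, since those products have both endpoints in $\X$. Consequently the partial products $G_n$, $H_m$ need not even be vertices of $\G(x,S_0)$. The repair is exactly the paper's opening move: choose a compact topological transversal $\X'$ with $\X\cup\be(S)\cup\en(S)\subset\X'\subset\wh\X$ and take a complete generating set of $\G|_{\X'}$ (Propositions~\ref{prop:invariancehyperbolic} and~\ref{pr:hyperbolicgenset} apply to any compact topological transversal); with $\X$ replaced by $\X'$ throughout, your argument goes through verbatim, and all constants remain uniform as you require.
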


See the left-hand side of Figure~\ref{fig:ladder} for an illustration
of the lemma.

\begin{proof}
Embed $S$ into a compact generating set $S_1$ of $\G|_{\X'}$ for some
compact topological transversal $\X'\subset\wh\X$. There exists $n$
such that the set $S_1^n\cup S_1^{n+1}$ can be embedded into a
complete generating set $S_2$ (see the proof of
Proposition~\ref{pr:hyperbolicgenset}).
It is enough to prove our proposition for any compact set containing
$S^n$. Therefore, we assume that $S$ satisfies the conditions of
Proposition~\ref{pr:hyperbolicgenset} for a compact topological
transversal $\X'$.

The quasi-geodesics $(\ldots, g_n, \ldots, g_1)$ and
$(\ldots, h_n, \ldots, h_1)$ converge to the same point of
$\partial\G_x$ if and only if they are on a finite distance from
each other. If $\Delta$ is as in condition (2) and $r$ is the maximal
length of elements of $\nuke$, then every point of one of the
paths $(\ldots, g_2, g_1)$ and $(\ldots, h_2, h_1)$ is on distance not more
than $\Delta+r$ from the other, hence the quasi-geodesics $(\ldots,
g_n, \ldots, g_1)$ and $(\ldots, h_n, \ldots,
h_1)$ converge to the same point of the boundary.

In the other direction, there exists $\Delta_1$ such that
if quasi-geodesic paths $(\ldots, g_2, g_1)$ and $(\ldots, h_2, h_1)$,
$g_i, h_i\in S$, starting at a common vertex converge to the
same point of the boundary, then they are on
a distance not more than $\Delta_1$ from each other.

Let $\nuke$ be the set of elements of $\G$ which can be
represented as products of not more than $\Delta_1$ elements of $S\cup
S^{-1}$. Then $\nuke$ is a compact set.

Define $n_0=m_0=0$.
By induction, if $m_k$ and $n_k$ are defined, define
$n_{k+1}$ to be such that distance from $(g_{n_k}, \ldots, g_1)$
to $(g_{n_{k+1}}, \ldots, g_1)$ in $\G(x, S)$ is greater than
$2\Delta_1$, but less than some fixed constant $c$ (which we can find since
$(\ldots, g_2, g_1)$ is quasi-geodesic). Then define $m_{k+1}$ so that
distance from $(g_{n_{k+1}}, \ldots, g_1)$ to
$(h_{m_{k+1}}, \ldots, h_1)$ is not more than $\Delta_1$. Then
$m_{k+1}>m_k$ and the differences $m_{k+1}-m_k$ and $n_{k+1}-n_k$
are uniformly bounded.
\end{proof}

\begin{lemma}
\label{lem:relationsbndm} Let
$\mS$ and $\mathcal{B}$ be finite sets of elements of $\pG|_{\wh\X}$ such that
all elements of $\mS$ are positive contractions, and all
elements of $\mathcal{B}$ are bi-Lipschitz.
Let $\epsilon>0$.
There exists a number $\delta_0>0$ for which the following
statements hold.

Suppose that the sequences $U_n, V_n\in\mS\cup\mathcal{B}$
and $g_n, h_n\in\G$ are
such that each sequence $(U_n)$, $(V_n)$ contains at most
one element of $\mathcal{B}$, $g_n$ and $h_n$ are $\epsilon$-contained
in $U_n$, $V_n$ respectively, $\be(g_0)=\be(h_0)$,
and the sequences $g_ng_{n-1}\cdots g_0$ and $h_nh_{n-1}\cdots
h_0$ are defined and converge to the same point of
$\partial\G_{\be(g_0)}$.

Then for every $y\in\wh\X$ such that $|y-\be(g_0)|<\delta_0$ the
sequences
\[U_n\cdots U_{n-1}\cdots(U_0, y),\quad\text{and}\quad
V_n\cdots V_{n-1}\cdots (V_0, y)\]
are defined and converge to the same point.
\end{lemma}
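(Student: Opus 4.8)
The plan is to reduce the statement to a transfer of \emph{relations} from the base point $x=\be(g_0)$ to the nearby point $y$: I would use Lemma~\ref{l:relationsboundary} to encode ``convergence to the same boundary point'' combinatorially, and Corollary~\ref{cor:relationspseudogroup} to move that encoding from $x$ to $y$. Throughout, the quantitative content is squeezed into the choice of $\delta_0$.

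First I would settle definedness. Since at most one factor of each sequence $(U_n)$, $(V_n)$ lies in $\mathcal{B}$ and is merely bi-Lipschitz (say with constant $L$), while every other factor is a positive contraction with ratio $\le\lambda<1$, the composition $U_n\cdots U_0$ distorts distances near $\be(g_0)$ by a factor at most $L\lambda^{(\text{number of contractions})}\le L$. Hence, choosing $\delta_0$ so small that $L\delta_0<\epsilon$, an elementary simultaneous induction shows that for $|y-\be(g_0)|<\delta_0$ the translated points $U_n\cdots U_0(y)$ stay within $\epsilon$ of $\be(g_{n+1})=U_n\cdots U_0(x)$; because $g_n$ is $\epsilon$-contained in $U_n$, this keeps every $U_n$ defined on the relevant point, so $U_n\cdots U_0(y)$ and $V_n\cdots V_0(y)$ are defined for all $n$. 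This step is exactly where the hypothesis ``at most one element of $\mathcal{B}$'' is essential: it caps the total expansion by a single factor $L$, so one $\delta_0$ controls all levels uniformly.

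Next, by Lemma~\ref{l:relationsboundary} applied at $\be(g_0)$, the assumption that $g_n\cdots g_0$ and $h_n\cdots h_0$ converge to the same point of $\partial\G_{\be(g_0)}$ yields a compact set $\nuke$, a bound $\Delta$, germs $f_k\in\nuke$, and index sequences with $n_k-n_{k-1},\,m_k-m_{k-1}\le\Delta$ satisfying the ladder relations $f_k\cdot g_{n_k}\cdots g_{n_{k-1}+1}=h_{m_k}\cdots h_{m_{k-1}+1}\cdot f_{k-1}$. Each relation asserts that a word of length at most $2\Delta+2$ in the finite data $\mS\cup\mathcal{B}$ together with fixed extensions of the finitely many germ-types in $\nuke$ has trivial germ at the orbit point $\be(g_{n_{k-1}+1})$; as $k$ varies there are only finitely many such words $R$, so Corollary~\ref{cor:relationspseudogroup} gives a single $\epsilon'>0$ valid for all of them, and I would further shrink $\delta_0$ so that $L\delta_0<\epsilon'$ as well. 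By the previous paragraph the $y$-translate of $\be(g_{n_{k-1}+1})$ lies within $\epsilon'$ of it, so Corollary~\ref{cor:relationspseudogroup} transfers each relation verbatim: the germs $g'_i,h'_i$ of $U_i,V_i$ at the translated points and the germs $f'_k$ of the extensions of $f_k$ satisfy $f'_k\cdot g'_{n_k}\cdots g'_{n_{k-1}+1}=h'_{m_k}\cdots h'_{m_{k-1}+1}\cdot f'_{k-1}$. Since the $f'_k$ lie in the compact set of germs of those finitely many extensions and the gaps are unchanged, condition~(2) of Lemma~\ref{l:relationsboundary} now holds at the base point $y$, and its equivalent condition~(1) says precisely that $U_n\cdots U_0(y)$ and $V_n\cdots V_0(y)$ converge to the same point of $\partial\G_y$.

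The main obstacle is the uniformity packaged into $\delta_0$: a single radius must simultaneously keep infinitely many compositions defined and $\epsilon$-close to the $x$-orbit, and make infinitely many ladder relations transfer. Both are resolved by the two boundedness features — the ``at most one $\mathcal{B}$-element'' cap on expansion, and the boundedness of the ladder gaps (hence finiteness of relation types). The one point needing care is that the quasi-cocycle hypothesis of Lemma~\ref{l:relationsboundary} governs only the purely contracting tails of the two paths; the single bi-Lipschitz factor of each sequence must be absorbed into the connecting germs $f_k$ (equivalently, into the compact set $\nuke$) so that the cited equivalence applies cleanly.
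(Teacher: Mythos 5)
Your overall architecture coincides with the paper's: definedness of the translated compositions via the contraction bound and $\delta_0<\epsilon$; Lemma~\ref{l:relationsboundary} at $\be(g_0)$ to encode convergence by ladder relations; finiteness of the relation types (after covering $\nuke$ by finitely many extendable elements of $\pG$) so that Corollary~\ref{cor:relationspseudogroup} yields a single $\delta_0$; and the reverse implication of Lemma~\ref{l:relationsboundary} at $y$. The gap is in the one step you defer to your final sentence: the treatment of $\mathcal{B}$. Lemma~\ref{l:relationsboundary} is stated only for sequences $g_n, h_n$ all of whose terms lie in a compact set $S$ with $\coc(g)>2\eta$ for every $g\in S$; a germ of an element of $\mathcal{B}$ is merely bi-Lipschitz and may have arbitrary (even negative) quasi-cocycle value, so the lemma does not apply to the sequences in the statement --- neither at $\be(g_0)$ in your second paragraph nor at $y$ in your conclusion. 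Moreover, your proposed patch, absorbing the bi-Lipschitz factor ``into the connecting germs $f_k$'', cannot be carried out as stated: the $\mathcal{B}$-germ is a term of the sequence, so in any ladder it necessarily sits inside one of the segments $g_{n_k}\cdots g_{n_{k-1}+1}$, not in a rung $f_k$; it cannot be moved into a rung while keeping the relations $f_k\cdot g_{n_k}\cdots g_{n_{k-1}+1}=h_{m_k}\cdots h_{m_{k-1}+1}\cdot f_{k-1}$ over the same index sequences.

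There are two honest ways to close this. The paper's way: observe that there exists $k$ such that for every $F\in\mathcal{B}$ and every $F_1,\ldots,F_k\in\mS$ the composition $F_1\cdots F_k\cdot F$ is, where nonempty, again a contraction all of whose germs satisfy $\coc(g)>2\eta$; fusing the unique $\mathcal{B}$-term of each sequence with the $k$ contractions applied after it replaces $\mS\cup\mathcal{B}$ by a finite set of positive contractions and reduces everything to the case $\mathcal{B}=\emptyset$, where your argument is complete and is exactly the paper's proof. Alternatively, one can re-prove Lemma~\ref{l:relationsboundary} for paths containing one uniformly bounded jump (such paths are still uniform quasi-geodesics, so the Hausdorff-closeness argument goes through with larger constants), enlarging $\nuke$ and allowing one $\mathcal{B}$-letter in each relation word before invoking Corollary~\ref{cor:relationspseudogroup}; that is the rigorous form of your ``absorption'', but it amounts to re-proving the cited lemma rather than applying it, and that work is missing from your write-up.
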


See the right-hand side of Figure~\ref{fig:ladder}.

\begin{proof} There exists $k$ such that for every $F\in\mathcal{B}$ and
for every sequence $F_1, \ldots, F_k$ of elements of $\mathcal{S}$ the map
$F_1\cdots F_k\cdot F$ is either empty or is a contraction such
that $\coc(g)>2\eta$ for every its germ $g$. Consequently, it
is enough to prove our lemma for the case when $\mathcal{B}$ is empty.

Let $S$ be the closure of the
sets of elements $g\in\G$ such that there exists $U\in\mS$
such that $g$ is  $\epsilon$-contained in $U$.

Let $\nuke$ and $\Delta$ be as in Lemma~\ref{l:relationsboundary} for
the set $S$. The
set $\nuke$ can be covered by a finite set of relatively compact
extendable elements of $\pG|_{\wh\X}$. Therefore, there exists a finite
collection $\mathcal{R}$ of elements $\wh R_i\in\pG|_{\wh\X}$ and open subsets
$R_i\subset\wh R_i$ such that $R_i$ cover $\nuke$, $\overline R_i$
are compact and are contained in $\wh R_i$.

Let us apply Corollary~\ref{cor:relationspseudogroup} to the set
$\mS\cup\mathcal{R}$, and some extensions $\wh F$ of its elements.

The number of possible sequences of the form \[(R_k, G_{n_k}, \ldots,
G_{n_{k-1}+1}, H_{m_k}, \ldots, H_{m_{k-1}+1}, R_{k-1}),\]
where $n_k-n_{k-1}, m_k-m_{k-1}\le\Delta$, $G_i, H_i\in\mS$, and
$R_i\in\mathcal{R}$
is finite. Consequently, by Corollary~\ref{cor:relationspseudogroup},
there exists $\delta_0>0$ such that for every relation of the form
\[f_k\cdot g_{n_k}\cdots g_{n_{k-1}+1}=h_{m_k}\cdots
h_{m_{k-1}+1}\cdot f_{k-1}\] for $f_k\in R_k, f_{k-1}\in R_{k-1}$
and $g_i\in G_i, h_i\in H_i$, for $R_i\in\mathcal{R}$, $G_i, H_i\in\mS$
the maps
\[\wh R_k\cdot\wh G_{n_k}\cdots\wh G_{n_{k-1}+1},\qquad\wh H_{m_k}\cdots\wh
H_{m_{k-1}+1}\cdot\wh R_{k-1}\] are equal on the
$\delta_0$-neighborhood of $\be(g_{n_{k-1}+1})$.

Taking $\delta_0$ sufficiently small (in particular,
$\delta_0<\epsilon$), we may assume that $\be(\wh R_k)$,
$\en(\wh R_k)$, $\be(\wh G_i)$, $\be(\wh H_i)$, $\be(\wh R_{k-1})$, $\en(\wh
R_{k-1})$ always contain the $\delta_0$-neighborhoods
of $\be(f_k)$, $\en(f_k)$, $\be(g_i)$, $\be(h_i)$, $\be(f_{k-1})$,
$\en(f_{k-1})$ respectively. Since the maps $G_i$ and $H_i$ are
contractions, this will imply that the maps
\[\wh R_k\cdot\wh G_{n_k}\cdots\wh G_{n_{k-1}+1},\qquad\wh H_{m_k}\cdots
\wh H_{m_{k-1}+1}\cdot\wh R_{k-1}\] are defined on the
$\delta_0$-neighborhood of $\en(g_{n_{k-1}})=\be(g_{n_{k-1}+1})$.
This implies the statement of the lemma (since $\wh F$ and $F$ for
$F\in\mS$ coincide on the $\epsilon$-neighborhood of $\be(g)$ for all
$g$ that are $\epsilon$-contained in $F$).
\end{proof}

We may assume that $\X$ contains open transversals $\X_0$ and $\X_0'$ such that
$\overline{\X_0}\subset\X_0'$.
Let $S$ be a generating set of $\G$ satisfying the conditions of Proposition~\ref{pr:hyperbolicgenset} for $\X_0$. Let $\mS$ be a finite
covering of $S$ by relatively compact extendable
elements of $\pG$. We assume that every $F\in\mS$ is a
$\lambda$-contraction and $\coc(g)>2\eta$ for every $g\in F$.

Let $A$ be a compact set satisfying the conditions of
Proposition~\ref{pr:nbhdtil}. Let $\Delta_2$ be such that any two
paths $(\ldots, g_2, g_1)$, $(\ldots, h_2, h_1)$, $g_i, h_i\in S$,
converging to one point of $\partial\G$ are eventually on distance not
more than $\Delta_2$ from each other. Let $Q$ be the set of elements
of $\G$ that can be represented as a product of length at most
$\Delta_2$ of elements of $\mS$ (in particular, $Q$ contains
$\X$). Denote $B=A\cdot Q$.

Find a finite covering $\mathcal{B}=\{U_i\}$ of
$B$ by open relatively compact extendable bi-Lipschitz
elements of $\pG|_{\wh\X}$. Denote $\wh
B=\bigcup_{U_i\in\mathcal{B}}U_i$.

Let $\epsilon>0$ be such that
$2\epsilon$ is a common Lebesgue's
number of the coverings $\mS$ and $\mathcal{B}$
of $S$ and $B$ respectively.

Let us apply Lemma~\ref{lem:relationsbndm} to $\mathcal{B}$,
$\mS$, and $\epsilon$. Let us fix a number
$\delta_0<\epsilon$ satisfying the conditions of the lemma. We assume that
the $\delta_0$-neighborhood of $\be(F)$ is contained in $\be(\wh F)$
for every $F\in\mS$, and that the
$\delta_0$-neighborhood of $\X_0$ is contained in $\X_0'$.

\begin{defi}
We say that $U\in\pG$ is \emph{admissible} if it is relatively
compact, extendable, and the closure of $\en(U)$ is a subset of
$\X_0'$ of diameter less than $\delta_0$. \index{admissible set}
\end{defi}

Let $U\in\pG$ be admissible and consider an arbitrary point
$\zeta=\cdots g_2g_1\cdot g\in\til_g$, where $g_i\in S$ and
$g\in U$. Let $F_i\in\mS$ be such that $g_i$ is $\epsilon$-contained
in $F_i$ for every $i$. Then the
map $F_n\cdots F_1U$ is defined on $\be(U)$.

Define then for $y\in\be(U)$
\begin{equation}
\label{eq:locprodU}
[y, \zeta]_U=\cdots F_2 F_1\cdot (U, y),
\end{equation}
where $(U, y)$, as usual, denotes the germ of $U$ at $y$.

\begin{lemma}
\label{lem:welldef} The map $[\cdot, \cdot]_U:\be(U)\times\til_g\arr\partial\G$ is well defined and injective for every $g\in U$.
\end{lemma}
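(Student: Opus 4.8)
The plan is to reduce both claims to Lemma~\ref{lem:relationsbndm} after peeling off the fixed bisection $U$. Write $z=U(y)$, and observe that right multiplication by the germ $(U,y)$ is a graph isomorphism $\G(z,S)\arr\G(y,S)$ carrying $\omega_z$ to $\omega_y$, hence induces a homeomorphism $\cdot\,(U,y)\colon\partial\G_z\arr\partial\G_y$ (this is available from the Cayley-graph structure alone, independently of the local product structure being constructed). For a representation $\zeta=\cdots g_2g_1\cdot g$ with lifts $F_i\in\mS$ (so $g_i$ is $\epsilon$-contained in $F_i$), the germs $(F_n\cdots F_1,z)$ form an ascending path in $\G(z,S)$, which by Proposition~\ref{prop:ascendingconverge} converges to a point $\eta\in\partial\G_z$; since $(F_n\cdots F_1U,y)=(F_n\cdots F_1,z)\cdot(U,y)$ and the action is continuous, I obtain the factorization $[y,\zeta]_U=\eta\cdot(U,y)$. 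This isolates the content of the definition into the pure $\mS$-path $\eta$ based at $z$, which lies in $\en(U)$, a set of diameter less than $\delta_0$; in particular $z$ and $\en(g)=U(\be(g))$ are within $\delta_0$ of each other, and convergence of the defining limit is the ascending-path fact just used.

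First I would establish well-definedness. Factoring out $g$, a point $\zeta\in\til_g$ corresponds to $\theta=\zeta\cdot g^{-1}=\cdots g_2g_1\in\partial\G_{\en(g)}$, and any two germ-representations of $\zeta$ together with any admissible lifts give germ sequences based at $\en(g)$ converging to the same point $\theta$. Applying Lemma~\ref{lem:relationsbndm} with base point $\en(g)$ (no $\mathcal{B}$-elements occur, all maps being contractions in $\mS$) shows that the corresponding $\mS$-paths from every point within $\delta_0$ of $\en(g)$ converge to a common limit; taking this point to be $z$ gives that $\eta$, and therefore $[y,\zeta]_U=\eta\cdot(U,y)$, is independent of the choices.

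Next, injectivity. The first coordinate is recovered for free: $[y,\zeta]_U\in\partial\G_y$, so $P([y,\zeta]_U)=y$ and distinct $y$ yield images in disjoint fibres. Fixing $y$, and using that $\cdot\,(U,y)$ is a bijection, the equation $[y,\zeta_1]_U=[y,\zeta_2]_U$ is equivalent to $\eta_1=\eta_2$, the equality of the two $z$-based $\mS$-limits. Here I would run Lemma~\ref{lem:relationsbndm} in the opposite direction, now with base point $z$: the $z$-based germs of the $F_i$ are still $\epsilon$-contained in $F_i$, because the germs of $S$ were chosen $2\epsilon$-contained and $|z-\en(g)|<\delta_0<\epsilon$ (the margin being preserved along the contracting path), so the lemma applies and yields that the $\mS$-paths from $\en(g)$ coincide. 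But these paths are exactly the germ paths $\cdots g_n\cdots g_1$ defining $\theta_1$ and $\theta_2$, so $\theta_1=\theta_2$ and hence $\zeta_1=\theta_1\cdot g=\theta_2\cdot g=\zeta_2$.

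The main obstacle is precisely this bidirectional use of Lemma~\ref{lem:relationsbndm}: well-definedness is its literal statement (base $\en(g)$, conclude at $z$), whereas injectivity needs the implication with the two base points interchanged (base $z$, conclude at $\en(g)$). This interchange is legitimate only because of the quantitative slack built into the constants — that $2\epsilon$, rather than $\epsilon$, is the Lebesgue number used to select the lifts $F_i$, so the nearby base point $z$ still sees each germ $\epsilon$-deep inside $\be(F_i)$ — together with the fact that admissibility of $U$ forces $z$ and $\en(g)$ to lie within $\delta_0$. Checking that this $\epsilon$-containment really persists along the whole contracting $\mS$-path, so that the hypotheses of the lemma genuinely hold at base $z$, is the one step requiring care.
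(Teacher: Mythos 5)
Your proposal is correct and follows essentially the same route as the paper: both the well-definedness and the injectivity claims are reduced to Lemma~\ref{lem:relationsbndm}, applied at the base point $\en(g)$ and then (for injectivity) with the roles of the base points $\en(g)$ and $z=U(y)$ interchanged, which is exactly what the paper's terse ``Injectivity is proved in the same way'' intends. Your extra care about the $2\epsilon$ versus $\epsilon$ containment margin surviving the shift of base point along the contracting path (justified, after well-definedness is known, by re-choosing $2\epsilon$-contained lifts) is precisely the quantitative slack the paper builds in by taking $2\epsilon$ to be the Lebesgue number of $\mS$, so your write-up fills in correctly what the paper leaves implicit.
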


\begin{proof}
Suppose that $\xi=\cdots g_2g_1\cdot g=\cdots h_2h_1\cdot g$ for
$g_i, h_i\in S$ and $g\in U$. Let $U_i$ and $V_i\in\mS$ be such
that $g_i$ and $h_i$ are $2\epsilon$-contained in $U_i$
and $V_i$ respectively. It follows then directly from Lemma~\ref{lem:relationsbndm} that  for every
$y\in\be(U)$ we have
\[\cdots U_2 U_1\cdot (U, y)=\cdots V_2 V_1\cdot (U, y),\]
which proves that the map $[\cdot, \cdot]_U$ is well defined.
Injectivity is proved in the same way.
\end{proof}

\begin{proposition}
\label{pr:injective} Let $U$ be admissible.
For fixed $y\in\be(U)$ and $g\in U$ the map
$[y, \cdot]_U:\til_g\arr\partial\G_y$ is Lipschitz with respect to
the natural log-scales on $\partial\G_{\be(g)}$ and $\partial\G_y$.
\end{proposition}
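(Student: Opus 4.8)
The plan is to translate the assertion into a statement about the \emph{depth of the splitting vertex} of two descending paths, using the description of the natural log-scale in Proposition~\ref{pr:topology}, and then to check that parallel translation along $U$ moves that splitting vertex only by a bounded amount of $\coc$. First I would fix $\zeta_1,\zeta_2\in\til_g$ and set $n=\ell(\zeta_1,\zeta_2)$. By Proposition~\ref{pr:topology} there are representations $\zeta_1=\cdots g_2g_1\cdot g$ and $\zeta_2=\cdots h_2h_1\cdot g$, with $g_i,h_i\in S$, sharing a common initial segment: there is a vertex $w=c_k\cdots c_1\cdot g$ (with $c_i\in S$) lying on both descending paths, with $\coc(w)\ge n$, after which the two paths diverge. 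That the splitting vertex may be taken on the common descending path issuing from $g$ follows from the rigidity of quasi-geodesics (Theorem~\ref{th:rigidquasigeodesics}) together with Theorem~\ref{th:contraction}: in the $\delta$-hyperbolic graph $\G(\be(g),S)$ the two quasi-geodesics emanating from $g$ stay uniformly close until they separate. I would then choose $F_i,C_i\in\mS$ covering $g_i$ and $c_i$ with the $\epsilon$-containment of the setup, so that the compositions $\cdots F_2F_1\,C_k\cdots C_1\,(U,y)$ are defined for all $y\in\be(U)$; by the defining formula~\eqref{eq:locprodU} and Lemma~\ref{lem:welldef} these limits are exactly $[y,\zeta_1]_U$ and $[y,\zeta_2]_U$, and they descend from the common vertex $w'=(C_k\cdots C_1U,\,y)$.

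Next I would show $\ell_y\big([y,\zeta_1]_U,[y,\zeta_2]_U\big)\doteq\coc(w')$. Each parallel path is a directed path whose edges are germs of elements of $\mS$, all with $\coc>2\eta$; hence by Lemma~\ref{lem:quasigeodesic} it is a quasi-geodesic, and the concatenation of the two through $w'$ (with one reversed) is a $V$-shaped quasi-geodesic joining the two boundary points, in the sense of Proposition~\ref{prop:ushaped}. Its minimal value of $\coc$ is attained at the apex $w'$, so Corollary~\ref{cor:minell} gives $\ell_y([y,\zeta_1]_U,[y,\zeta_2]_U)\doteq\coc(w')$. The same reasoning in $\G(\be(g),S)$ gives $\ell(\zeta_1,\zeta_2)\doteq\coc(w)$. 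Thus the proposition reduces to establishing $\coc(w')\ge\coc(w)-N$ for a constant $N$ independent of $y$, $g$, the chosen $c_i$, and $k$.

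The main obstacle is precisely this last inequality, which is a bounded-distortion statement: $w=(C_k\cdots C_1U,\be(g))$ and $w'=(C_k\cdots C_1U,y)$ are germs of the \emph{same} map $V=C_k\cdots C_1U$ at the two points $\be(g),y\in\be(U)$. Since $U$ is admissible, $\be(g)$ and $y$ are $\delta_0$-close, and their images $p_i=C_i\cdots C_1U(\be(g))$ and $p_i'=C_i\cdots C_1U(y)$ under the successive contractions converge geometrically, $|p_i-p_i'|\le\lambda^{\,i}\,\mathrm{diam}\,\en(U)$. I would estimate $\coc(w)-\coc(w')$ by telescoping along the two parallel orbits: since $\coc$ is an $\eta$-quasi-cocycle, the difference is governed by $\sum_i\big(\coc(C_i,p_{i-1})-\coc(C_i,p_{i-1}')\big)$. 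Because the $C_i$ range over the finite family $\mS$ and the two orbits are exponentially close, one expects these increments to be summable to a uniform bound; this is the step where the compatibility between the metric $|\cdot|$, the contraction constant $\lambda$, and the regularity of $\coc$ in the contracting direction is genuinely needed, and where the relations furnished by Lemma~\ref{lem:relationsbndm} and Corollary~\ref{cor:relationspseudogroup} must be invoked to ensure that the parallel germs remain honest $\mS$-germs all the way down (so that $w'$ really is a common ancestor in the sense of Proposition~\ref{pr:topology}).

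Granting the uniform bound $|\coc(w)-\coc(w')|\le N$, the chain of estimates yields $\ell_y([y,\zeta_1]_U,[y,\zeta_2]_U)\ge\ell(\zeta_1,\zeta_2)-N'$, which is the asserted Lipschitz property. If one prefers to localise the estimate, the compactness of $\til_g$ and positivity of the natural log-scale on $\partial\G_y$ would allow one to conclude from local Lipschitz control via Lemma~\ref{l:locglob}; but the difficulty is unchanged, since the regime of deep splitting is exactly the asymptotic regime $k\to\infty$, so the distortion bound of the third paragraph is the real content of the proof.
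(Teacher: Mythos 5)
Your first two steps are exactly the paper's: you pass to representations of $\zeta_1,\zeta_2$ that share a common prefix ending at a splitting vertex $w$ (justified by Theorem~\ref{th:rigidquasigeodesics} and Lemma~\ref{lem:quasigeodesic}), and you note that parallel translation, being defined by applying the \emph{same} covering elements of $\mS$ at the translated base point, produces representations of $[y,\zeta_1]_U$ and $[y,\zeta_2]_U$ sharing the corresponding prefix ending at $w'=(C_k\cdots C_1U,y)$. (One small inaccuracy along the way: the concatenation of the two translated rays through $w'$ need not be a quasi-geodesic, since they may fellow-travel beyond $w'$; so Corollary~\ref{cor:minell} gives you only $\ell_y\ge\coc(w')-c$ rather than $\doteq$. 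That inequality is all you need, and it already follows from the definition of the natural log-scale in Proposition~\ref{pr:topology}, $w'$ being a common ancestor of the two translated points.)

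The genuine gap is your third paragraph. You reduce the proposition to the pointwise distortion bound $|\coc(w)-\coc(w')|\le N$ for parallel germs of the same map, and propose to prove it by telescoping, hoping the increments $|\coc(C_i,p_{i-1})-\coc(C_i,p_{i-1}')|$ are summable because $|p_{i-1}-p_{i-1}'|$ decays like $\lambda^{i-1}$. Nothing in the hypotheses supports summability: by Definition~\ref{def:graded} a quasi-cocycle is only \emph{locally bounded} --- each germ has \emph{some} neighborhood on which $\coc$ oscillates by at most $\eta$ --- and it need not be continuous, let alone H\"older or Dini. Compactness of the elements of $\mS$ and a Lebesgue-number argument give at best a fixed bound of order $\eta$ per increment once the points are uniformly close, no matter how close they are; adding the defect $\eta$ accumulated at each use of the quasi-cocycle inequality, the telescoping yields only $|\coc(w)-\coc(w')|=O(k\eta)$, linear in the depth $k$. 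Since $\coc(w)$ is itself comparable to $k$ only multiplicatively, this produces a H\"older-type comparison of the natural log-scales, not the claimed Lipschitz one. Note also that the statement you reduce to inverts the paper's logical order: bounded distortion of $\coc$ under parallel translation is essentially the compatibility of the lifted quasi-cocycle $\wt\coc$ with the local product structure, which the paper proves only later, in Theorem~\ref{th:geodflowhyperbolic}, \emph{using} Proposition~\ref{pr:injective} together with~\eqref{eq:nataction}. The paper's own proof never isolates such a pointwise bound: it deduces the estimate directly from the preservation of the common-prefix structure, the definition of the natural log-scale, quasi-geodesic rigidity, and the independence (up to Lipschitz equivalence) of the natural log-scale from the choice of generating set --- the last point being what lets the germs of elements of $\mS$ appearing in the translated representations serve as legitimate generators. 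So while your reduction is logically valid, the mechanism you offer for the key bound fails under the stated hypotheses, and that bound is precisely the content you cannot assume here.
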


\begin{proof}
If $\xi_1, \xi_2\in\til_g$ are represented as $\xi_1=\cdots
g_{n+1}g_n\cdots g_1\cdot g$ and $\xi_2=\cdots h_{n+1}g_n\cdots
g_1\cdot g$ for some $n\ge 1$ and $g_i, h_i\in S$, then $[y, \xi_1]_U=
\cdots g_{n+1}'g_n'\cdots g_1'\cdot g'$ and $[y, \xi_2]_U=
\cdots h_{n+1}'g_n'\cdots g_1'\cdot g'$ for some germs $g_i'$
and $h_i'$ of elements of $\mS$. It follows from the definition of the
log-scale on $\partial\G_x$, Theorem~\ref{th:rigidquasigeodesics},
and Lemma~\ref{lem:quasigeodesic} that there
exists $\Delta_0$ (depending only on $(S, \X)$) such that
\[\ell_y([y, \xi_1]_U, [y, \xi_2]_U)\ge\ell_{\be(g)}(\xi_1,
\xi_2)-\Delta_0,\] where $\ell_y$ and $\ell_{\be(g)}$ are the
natural log-scales on $\partial\G_y$ and $\partial\G_{\be(g)}$
respectively. We used the fact that $\ell_y$ and $\ell_{\be(g)}$ do
not depend, up to Lipschitz equivalence, on the choice of the
generating set.
\end{proof}

Denote by $\til_g^\circ$ the interior of $\til_g\subset\partial\G_{\be(g)}$.
Denote for an admissible $U\in\pG$ and $g\in U$
\[R_{g, U}=\{[y, \xi]_U\;:\;\xi\in\til_g^\circ, y\in\be(U)\}\subset\partial\G.\]

By Lemma~\ref{lem:welldef} the map $(y, \xi)\mapsto [y, \xi]_U$ from
$\be(U)\times\til_g^\circ$ to $R_{g, U}$ is a bijection. We consider
$R_{g, U}$ as a rectangle with respect to the direct product
decomposition $R_{g, U}\approx\be(U)\times\til_g^\circ$. At this moment we do not have any topology on
$\partial\G$ and $R_{g, U}$ yet.

\begin{lemma}
\label{lem:loccompatible}
Let $\xi\in R_{g_1, U_1}\cap R_{g_2, U_2}$. Then there exist $g$ and
$U$ such that $R_{g, U}$ is defined, $\xi\in R_{g, U}\subset R_{g_1, U_1}\cap
R_{g_2, U_2}$ and the rectangle $R_{g, U}$ is compatible with $R_{g_1,
  U_1}$ and $R_{g_2, U_2}$.
\end{lemma}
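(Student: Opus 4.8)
The plan is to reduce the whole statement to two facts: that the first (base) coordinate of the rectangles $R_{g,U}$ is \emph{intrinsic}, and that parallel translation in the fibre direction does not depend on the admissible chart used to define it — the latter being exactly Lemma~\ref{lem:relationsbndm}. First I would record the reduction. For $\eta\in R_{g,U}$ we have $\eta=[y,\zeta]_U\in\partial\G_y$, so the first coordinate of $\eta$ in the product decomposition $R_{g,U}\approx\be(U)\times\til_g^\circ$ is $y=P(\eta)$, and this is visibly independent of the chart. Consequently, for $\eta,\zeta\in R_{g_i,U_i}$ the rectangle bracket $[\eta,\zeta]_{R_{g_i,U_i}}$ is the unique point of the horizontal $[\cdot,\cdot]_{U_i}$-leaf through $\zeta$ lying over the base point $P(\eta)$; that is, it is the $U_i$-parallel translate of $\zeta$ to $P(\eta)$. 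Thus compatibility of $R_{g,U}$ with $R_{g_i,U_i}$ in the sense of Definition~\ref{def:ldps} amounts to the single assertion that parallel translation of a boundary point to a nearby base point is independent of the admissible chart.

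The engine for that assertion is Lemma~\ref{lem:relationsbndm}. Suppose $\zeta\in\partial\G_{y'}$ is written in two ways as $\zeta=\cdots F_2F_1(U,y')=\cdots F_2'F_1'(U',y')$, with $F_i,F_i'\in\mS$ positive contractions and $U,U'$ the single bi-Lipschitz factors of the two expansions. Both products converge to $\zeta$, so applying Lemma~\ref{lem:relationsbndm} with $\mathcal{B}=\{U,U'\}$ yields, for every $y''$ with $|y''-y'|<\delta_0$, the equality $\cdots F_2F_1(U,y'')=\cdots F_2'F_1'(U',y'')$. Unwinding the definition of $[\cdot,\cdot]_U$ in~\eqref{eq:locprodU}, this says precisely that the two chart translates of $\zeta$ to base $y''$ agree. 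This is the chart-independence we need, and it is exactly here that the requirement $|y''-y'|<\delta_0$ — hence that the rectangles be \emph{admissible} with base of diameter less than $\delta_0$ — is used.

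Next I would carry out the shrinking. Put $y_0=P(\xi)$ and write $\xi=\cdots g_2g_1$ with $g_i\in S$ and $\be(g_1)=y_0$, a descending representation of $\xi$ in $\G(y_0,S)$. For large $n$ set $g=g_n\cdots g_1$, so that $\be(g)=y_0$ and $\xi\in\til_g$, and $[y_0,\cdot]_U$ restricts to the identity on $\til_g\subset\partial\G_{y_0}$. By Proposition~\ref{pr:internalshift} together with the description of the natural log-scale in Proposition~\ref{pr:topology}, for $n$ large the set $\til_g^\circ$ is an arbitrarily small neighbourhood of $\xi$ in $\partial\G_{y_0}$; in particular I may choose $n$ so that $\til_g^\circ$ lies inside both fibres $[y_0,\til_{g_1}^\circ]_{U_1}$ and $[y_0,\til_{g_2}^\circ]_{U_2}$ of the two given charts over $y_0$. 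I then extend the germ $g$ to an admissible $U$ whose base $\be(U)$ is a neighbourhood of $y_0$ of diameter less than $\delta_0$ contained in $\be(U_1)\cap\be(U_2)$ (possible since $g$ is a germ of a contraction and the admissibility conditions only constrain the range). This produces $R_{g,U}$ with $\xi\in R_{g,U}$.

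Finally I would establish the nesting and conclude. Any point $[y,\eta]_U\in R_{g,U}$ has $\eta\in\til_g^\circ\subset[y_0,\til_{g_i}^\circ]_{U_i}$, so $\eta=[y_0,\eta_i]_{U_i}$ for some $\eta_i\in\til_{g_i}^\circ$; by the chart-independence of the second step, $[y,\eta]_U$ coincides with the $U_i$-translate $[y,\eta_i]_{U_i}\in R_{g_i,U_i}$, which gives $R_{g,U}\subset R_{g_1,U_1}\cap R_{g_2,U_2}$. The very same identity of parallel translates, combined with the first-step reduction
\[ [\eta,\zeta]_{R_{g,U}}=[\,P(\eta),\ \text{(fibre coordinate of }\zeta)\,]_U, \]
shows that $[\cdot,\cdot]_{R_{g,U}}$ and $[\cdot,\cdot]_{R_{g_i,U_i}}$ agree on all of $R_{g,U}$, so $R_{g,U}$ is compatible with each $R_{g_i,U_i}$. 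The hard part will be the bookkeeping that makes Lemma~\ref{lem:relationsbndm} literally applicable: arranging the two expansions of each boundary point so that the generators are genuinely $\epsilon$-contained in elements of $\mS$, that each expansion carries exactly one bi-Lipschitz factor, and that the deep-shift fibre $\til_g^\circ$ is simultaneously small and still open (the openness coming from Proposition~\ref{pr:topology} and Lemma~\ref{lem:welldef}); once these are in place the geometric content is entirely contained in the chart-independence of parallel translation.
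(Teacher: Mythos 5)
Your opening reduction is sound: the base coordinate is intrinsic (it is $P$), so compatibility of rectangles amounts to chart-independence of parallel translation of fibre points, and that is indeed the heart of the matter. The genuine gaps are in how you manufacture $R_{g,U}$ and how you place its fibre inside the fibres of the given charts; these are not bookkeeping, and each is precisely a point where the paper's proof does something you have omitted. First, the representation $\xi=\cdots g_2g_1$ with $g_i\in S$ and $\be(g_1)=y_0$ need not exist: in general $\til_{y_0}$ is a proper (compact) subset of $\partial\G_{y_0}$, and all you know about $\xi$ is that $\xi=\cdots S_2S_1(U_1,y_0)=\cdots R_2R_1(U_2,y_0)$, i.e.\ a positive limit taken \emph{after} the germ of a chart, and that initial germ is not a product of generators. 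Hence your $g=g_n\cdots g_1$ is unavailable; the new chart has to factor through the given ones. This is exactly what the paper arranges by taking $U$ to be a common restriction of $F_1S_{k_1}\cdots S_1U_1$ and $F_2R_{k_2}\cdots R_1U_2$, where the bounded connecting element $q=(R_{k_2}\cdots R_1(U_2,y_0))\cdot(S_{k_1}\cdots S_1(U_1,y_0))^{-1}\in Q$ and $f_2=f_1q\in B=A\cdot Q$ are what tie one and the same $U$ to \emph{both} charts simultaneously.

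Second, even granting a germ $g$ of the right shape, two of your claimed properties fail for the reasons the paper's auxiliary results exist. (a) Interiority: $\xi$ need not lie in $\til_g^\circ$, since the sets $\til_g$ are not in general neighbourhoods of their points; this is repaired only by composing with an element $f_1$ of the compact set $A$ of Proposition~\ref{pr:nbhdtil}, which simultaneously yields $\en(f_1)\in\X_0$ — the range condition needed for admissibility of $U$, which your ``extend $g$ to an admissible $U$'' step does not secure. Proposition~\ref{pr:topology} and Lemma~\ref{lem:welldef}, which you cite for openness, give neither. (b) The inclusion $\til_g^\circ\subset[y_0,\til_{g_i}^\circ]_{U_i}$ cannot be deduced from smallness of $\til_g^\circ$: at this stage of the construction it is not known that $[y_0,\til_{g_i}^\circ]_{U_i}$ is a neighbourhood of $\xi$ in $\partial\G_{y_0}$ — that statement is essentially Lemma~\ref{lem:topology}, which is proved \emph{from} the present lemma, so invoking it is circular. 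The paper obtains the inclusion structurally: because $U$ factors as $F_1S_{k_1}\cdots S_1U_1$ with $k_1\ge n_0$, every fibre point $\cdots H_2H_1\cdot g$ corresponds to $\zeta_1=\cdots H_2H_1F_1S_{k_1}\cdots S_1\cdot g_1$, which lies in $\til_{g_1}^\circ$ by Proposition~\ref{pr:internalshift}. Finally, a smaller defect of your ``engine'': applying Lemma~\ref{lem:relationsbndm} with $\mathcal{B}=\{U_1,U_2\}$ makes $\delta_0$ depend on the charts and presupposes a metric on their bases, which need not lie in $\wh\X$ at all; the paper instead applies that lemma only to what follows the common chart germ, with the basepoint moving inside $\en(U_1)\subset\X_0'$, where admissibility ($\mathrm{diam}\,\en(U_1)<\delta_0$) makes the fixed $\delta_0$ suffice — which is also why it gets compatibility over all of $\be(U)$ with no smallness assumption on the base.
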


\begin{proof}
Let $x=P(\xi)$. Denote $x_i=\en(g_i)$.
Since $\xi$ belongs to the intersection $R_{g_1, U_1}\cap R_{g_2,
  U_2}$, there exist points $\xi_1\in\til_{g_1}^\circ$ and
$\xi_2\in\til_{g_2}^\circ$ such that $\xi=[x, \xi_1]_{U_1}=[x,
\xi_2]_{U_2}$. Note that $x\in\be(U_1)\cap\be(U_2)$. Let $\xi_1=\cdots
s_2s_1\cdot g_1$ and $\xi_2=\cdots r_2r_1\cdot g_2$ where $s_i, r_i\in
S$. Let $S_i$ and $R_i$ be elements of $\mS$ such that $s_i$ and $r_i$
are $2\epsilon$-contained in $S_i$ and $R_i$ respectively. Then
\[\xi=\cdots S_2S_1(U_1, x)=\cdots R_2R_1(U_2, x).\]

Since the points $\xi_1$ and $\xi_2$ belong to the interior of the
sets $\til_{g_1}$ and $\til_{g_2}$, there exists $n_0$ such that for all
$k\ge n_0$, for all sequences $H_1, H_2, \ldots\in\mS$, and for all
$b\in\wh B$ we have
\[\cdots H_2H_1\cdot b\cdot S_kS_{k-1}\cdots S_1g_1\in\til_{g_1}^\circ, \quad
\cdots H_2H_1\cdot b\cdot R_kR_{k-1}\cdots R_1g_2\in\til_{g_2}^\circ,\]
whenever the corresponding products are defined, see
Proposition~\ref{pr:internalshift}.

There exist indices $k_1$ and $k_2$ both greater than $n_0$
such that \[q=(R_{k_2}\cdots R_1(U_2, x))\cdot(S_{k_1}\cdots
S_1(U_1, x))^{-1}\in Q.\]
There exists $f_1\in A$ such that $\be(f_1)=S_{k_1}\cdots S_1U_1(x)$,
$\en(f_1)\in\X_0$, and $\xi$ is an internal point of $\til_{f_1\cdot
  S_{k_1}\cdots S_1(U_1, x)}$, see Figure~\ref{fig:proof1}.
Then $f_2=f_1q$ belongs to $B$ and
\[(f_1S_{k_1}\cdots S_1U_1, x)=(f_2R_{k_2}\cdots R_1U_2, x).\]

\begin{figure}
\includegraphics{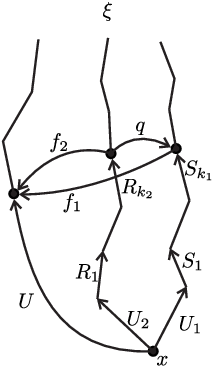}
\caption{}
\label{fig:proof1}
\end{figure}

Let $F_1, F_2\in\mathcal{B}$ such that
$f_1$ and $f_2$ are $2\epsilon$-contained in $F_1$ and $F_2$
respectively. Then the germs $(F_1S_{k_1}\cdots S_1U_1, x)$ and
$(F_2R_{k_2}\cdots R_1U_2, x)$ coincide. Let $U$ be a common
restriction of $F_1S_{k_1}\cdots S_1U_1$ and $F_2R_{k_2}\cdots R_1U_2$
to an open neighborhood of $x$. Note that
$\be(U)\subset\be(U_1)\cap\be(U_2)$.
We may assume that $U$ is admissible, since
$\en(f_1)=\en(f_2)\in\X_0$. Denote $g=(U, x)$, and consider the
rectangle $R_{g, U}$.
By the choice of $f_1$, the point $\xi$ belongs to the interior of $\til_g$,
hence $\xi\in R_{g, U}$.

It remains to show for every $i=1, 2$
that $R_{g, U}\subset R_{g_i, U_i}$ and that the direct product
structure on $R_{g, U}$ agrees with the direct product structure
on $R_{g_i, U_i}$. It is enough to prove the statements for $i=1$
(since we will use only that $f_i\in B$, and not the way they were
defined, so that both $i=1$ and $i=2$ will be equivalent).

In order to show that $R_{g, U}\subset R_{g_1, U_1}$, we have to
show that for every $y\in\be(U)$ and $\zeta\in\til_g^\circ$ we
have $[y, \zeta]_U\in R_{g_1, U_1}$, i.e., that there exists
$\zeta_1\in\til_{g_1}^\circ$ such that $[y, \zeta]_U=[y, \zeta_1]_{U_1}$.
Then, in order to show that the direct product structures of $R_{g, U}$
and $R_{g_1, U_1}$ agree, it will be enough to show that for
every $z\in\be(U)$ we have $[z, \zeta]_U=[z, \zeta_1]_{U_1}$.

Let $\zeta=\ldots h_2h_1\cdot g$, where
$h_i\in S$. Let $H_i\in\mS$ be such that $h_i$ is $2\epsilon$-contained
in $H_i$. Then
\[[y, \zeta]_U=\ldots H_2H_1(U, y)=\ldots H_2H_1F_1S_{k_1}\cdots
S_1(U_1, y).\]
By the choice of $k_1$, the point
\[\zeta_1=\ldots H_2H_1F_1S_{k_1}\cdots S_1\cdot g_1\] belongs to
$\til_{g_1}^\circ$. In particular, it can be represented in the form
$\ldots t_2t_1 g_1$ for $t_i\in S$. If $T_i\in\mS$ are such that
$t_i$ is $2\epsilon$-contained in $T_i$, then
\[[y, \zeta_1]_{U_1}=\ldots T_2T_1(U_1, y)=\ldots
H_2H_1F_1S_{k_1}\cdots S_1(U_1, y)=[y, \zeta]_U,\]
where the last equality follows by Lemma~\ref{lem:relationsbndm} from
the equality
\[\ldots T_2T_1(U_1, \be(g_1))=\ldots H_2H_1F_1S_{k_1}\cdots S_1(U_1,
\be(g_1)).\]

In fact, by Lemma~\ref{lem:relationsbndm}, we have
\[[z, \zeta_1]_{U_1}=\ldots T_2T_1(U_1, z)=\ldots
H_2H_1F_1S_{k_1}\cdots S_1(U_1, z)=[z, \zeta]_U,\]
for all $z\in\be(U)$, which finishes the proof.
\end{proof}

\begin{lemma}
\label{lem:topology}
The set of rectangles $R_{g, U}$ form a base of a topology on
$\partial\G$. The map $[\cdot, \cdot]_U:\be(U)\times\til_g^\circ\arr
R_{g, U}$ is a homeomorphism with respect to it.
\end{lemma}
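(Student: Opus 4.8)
The plan is to first establish that the family $\{R_{g,U}\}$ is a base for a topology $\tau$ on $\partial\G$ by checking the two base axioms, and then to show that each chart $[\cdot,\cdot]_U$ is a homeomorphism onto $R_{g,U}$ by matching the product topology on $\be(U)\times\til_g^\circ$ with the subspace topology from $\tau$. For the covering axiom, given $\xi\in\partial\G_x$ there is a germ $h$ with $\be(h)=x$ and $\xi\in\til_h$; by Proposition~\ref{pr:nbhdtil} there is $a\in A$ with $\en(a)\in\X_0$ such that $\til_{ah}$ is a neighborhood of $\til_h$, so $\xi$ lies in the interior $\til_g^\circ$ for $g=ah$. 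Since $\en(a)\in\X_0$ we may choose an admissible $U\in\pG$ whose germ at $x$ is $g$, and then $\xi=[x,\xi]_U\in R_{g,U}$; Proposition~\ref{pr:internalshift} guarantees the requisite interior points survive such restrictions. The refinement axiom — that any point of $R_{g_1,U_1}\cap R_{g_2,U_2}$ lies in some $R_{g,U}$ inside the intersection — is precisely Lemma~\ref{lem:loccompatible}. Hence $\{R_{g,U}\}$ is a base for a topology $\tau$.

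Recall from Lemma~\ref{lem:welldef} that $[\cdot,\cdot]_U:\be(U)\times\til_g^\circ\arr R_{g,U}$ is already a bijection, so it remains to compare the product topology (the metric topology on $\be(U)\subset\X$ times the natural log-scale topology on $\til_g^\circ\subset\partial\G_{\be(g)}$) with the subspace topology from $\tau$. The crucial input, from Lemma~\ref{lem:loccompatible}, is that on an overlap the transition map has the pure product form $(y,\zeta)\mapsto(y,\phi(\zeta))$: the first coordinate is literally the same point of $\X$, while $\phi$ maps the interior of one $\til$ into that of another. For continuity I would check that the preimage of a basic set $R_{g',U'}\cap R_{g,U}$ is product-open: by Lemma~\ref{lem:loccompatible} this intersection is a union of compatible sub-rectangles $R_{g'',U''}$, and each pulls back to $\be(U'')\times\phi(\til_{g''}^\circ)$, a product of a metric-open set with a log-scale-open set. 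For openness I would realize an arbitrary product-open $V\times W$ as a union of rectangles: restricting $U$ to $V$ handles the first factor, and by Proposition~\ref{pr:topology} the sets $\til_{g'}^\circ$ with $g'\in S^n\cdot g$ form a neighborhood base for the log-scale topology, so $W$ is a union of such sets and $[V\times W]_U=\bigcup R_{g',\,U|_V}\in\tau$.

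The main obstacle will be the ``vertical'' direction: proving that the transition maps $\phi$ between the interiors of the sets $\til_{g''}$ are homeomorphisms for the natural log-scale topologies, equivalently that the plaque maps $[y,\cdot]_U:\til_g^\circ\arr\partial\G_y$ are homeomorphisms onto open subsets. One inequality is exactly Proposition~\ref{pr:injective}, giving that $[y,\cdot]_U$ is Lipschitz; the reverse inequality I would obtain by applying the same proposition to the inverse parallel transport, which is a chart of the same form, so that $[y,\cdot]_U$ is bi-Lipschitz. Combining this with the fact that $\{\til_{g'}^\circ\}$ is a base for $\partial\G_{\be(g)}$ (Proposition~\ref{pr:topology}), the image of every basic set is basic, so the product topology and the subspace topology coincide on each $R_{g,U}$ and $[\cdot,\cdot]_U$ is the desired homeomorphism.
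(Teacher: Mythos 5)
Your proposal is correct in substance and runs on the same engine as the paper's proof: the refinement axiom for the base is Lemma~\ref{lem:loccompatible}, the covering axiom comes from Proposition~\ref{pr:nbhdtil}, bijectivity of the chart is Lemma~\ref{lem:welldef}, and the topological comparison ultimately rests on Proposition~\ref{pr:injective} together with the sub-rectangle constructions. The one genuinely different move is your treatment of continuity: you route it through openness of the vertical transition maps, which forces you to show that the plaque maps $[y,\cdot]_U$ are homeomorphisms onto open images, and your symmetry trick (the inverse of the transport from $\be(g)$ to $y$ is the transport from $y$ back to $\be(g)$, a chart of the same shape, so Proposition~\ref{pr:injective} applies to it too) is sound --- with the caveat that the image $[y,\til_g]_U$ consists of limits of products of germs of elements of $\mS$ rather than of $S$, so one must first enlarge the generating set, as the paper itself does later in the proof of Theorem~\ref{th:localproduct}. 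The paper sidesteps all of this: it proves continuity of $[\cdot,\cdot]_U$ directly (in the first variable from the definition, in the second from Proposition~\ref{pr:injective}) and proves openness by exhibiting a basic rectangle inside the image of each product-open set, never needing plaque maps to be open.

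One step of your openness argument does need repair. You assert, citing Proposition~\ref{pr:topology}, that the sets $\til_{g'}^\circ$ with $g'\in S^n\cdot g$ form a neighborhood base of the log-scale topology. Proposition~\ref{pr:topology} contains no interiority statement, and the claim is not true as written: a point $\zeta=\cdots g_2g_1\cdot g$ lies in $\til_{g_n\cdots g_1\cdot g}$ for every $n$, but it may lie on the topological boundary of every one of these sets --- the sets $\til_h$ are shadows, not open sets, which is precisely the reason Proposition~\ref{pr:nbhdtil} exists. The correct statement, and the one the paper uses at the corresponding point of its proof, is that the sets $\til_{g'}^\circ$ with $g'\in A\cdot S^n\cdot g$ form a neighborhood base, where $A$ is the fattening set of Proposition~\ref{pr:nbhdtil}: take $h=g_n\cdots g_1\cdot g$, then $a\in A$ so that $\til_{ah}$ is a neighborhood of $\til_h$; since $\coc(ah)$ is still of order $n$, the set $\til_{ah}$ stays inside the given open set $W$. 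Since you already invoked Proposition~\ref{pr:nbhdtil} correctly in your covering argument, this is an immediate fix rather than a missing idea. Relatedly, the admissible element you pair with $g'$ cannot be $U|_V$, because $g'$ is not a germ of $U$; it must be a composition of the form $F'\cdot U|_V$, where $F'$ is a product of covering elements of $\mS$ (and of the covering of $A$) along the path defining $g'$.
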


\begin{proof}
The first statement follows directly from
Lemma~\ref{lem:loccompatible}. Let us prove the second statement. We
know that the map is a bijection (see
Lemma~\ref{lem:welldef}). Let $A\subset\be(U)$ and
$B\subset\til_g^\circ$ be open sets, and let $y\in A$, $\zeta\in B$ be
arbitrary points. Using Proposition~\ref{pr:nbhdtil} (as in the proof
of Lemma~\ref{lem:loccompatible}) we show that
there exist $g_1\in\G$ and admissible $U_1\in\pG$
such that $\til_{g_1}\subset B$ and $\zeta\in\til_{g_1}^\circ$,
$g_1\in U_1$, and $R_{g_1, U_1}\subset R_{g, U}$.

Then, applying Lemma~\ref{lem:loccompatible} to $R_{g_1, U_1}$ and
$\xi=[y, \zeta]_U$, we see that there exist
$U'\in\pG$ and $g'\in\G$ such that $[y, \zeta]_U\in R_{g', U'}\subset
[A, B]_U$. This shows that the image of an open subset of
$\be(U)\times\til_g^\circ$ is open, i.e., that the map inverse to
$[\cdot, \cdot]_U:\be(U)\times\til_g^\circ\arr R_{g, U}$ is continuous.

It remains to prove that the map $[\cdot, \cdot]_U$ is
continuous. Continuity on the first argument follows directly from the
definition, while continuity on the second argument follows from
Lemma~\ref{pr:injective}.
\end{proof}

\begin{lemma}
If $\delta_0$ is small enough, then
the rectangles $R_{g, U}$ form an atlas of a local product structure
on the topological space $\partial\G$.
\end{lemma}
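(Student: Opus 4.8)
The plan is to check the two clauses of Definition~\ref{def:ldps} for the family $\{R_{g,U}\}$: that it is a covering of $\partial\G$ by open rectangles, and that the induced product structures are compatible. Throughout we keep $\delta_0$ fixed as chosen after Lemma~\ref{lem:relationsbndm}, small enough that Lemmas~\ref{lem:welldef}, \ref{lem:loccompatible}, and~\ref{lem:topology} apply and that the $\delta_0$-neighborhood of $\X_0$ lies in $\X_0'$; this is exactly the force of the hypothesis ``$\delta_0$ small enough.'' First I would put a rectangle structure on each chart. By Lemma~\ref{lem:topology} the map $[\cdot,\cdot]_U\colon\be(U)\times\til_g^\circ\arr R_{g,U}$ is a homeomorphism, so $R_{g,U}$ inherits the canonical rectangle structure of the direct product $\be(U)\times\til_g^\circ$, namely
\[[[y_1,\xi_1]_U,[y_2,\xi_2]_U]_{R_{g,U}}=[y_1,\xi_2]_U.\]
This map is continuous and satisfies the three rectangle axioms, since the corresponding map on any direct product does. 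Because the sets $R_{g,U}$ form a base of the topology (Lemma~\ref{lem:topology}), they are open, so each is an open rectangle.

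Next I would verify that they cover $\partial\G$. Given $\xi\in\partial\G_x$, write $\xi=\cdots g_2g_1\cdot h$ with $h\in\G_x$ and $g_i\in S$, so $\xi\in\til_h$. By Proposition~\ref{pr:nbhdtil} there is $a\in A$ with $\en(a)\in\X_0$ such that $\til_{ah}$ is a neighborhood of $\til_h$; hence $\xi\in\til_{ah}^\circ$. Put $g=ah$, so $\be(g)=x$ and $\en(g)\in\X_0$, and choose a relatively compact extendable $U\in\pG$ with $g=(U,x)$ and $\overline{\en(U)}\subset\X_0'$ of diameter less than $\delta_0$; such $U$ exists because $\en(g)\in\X_0$ and the $\delta_0$-neighborhood of $\X_0$ lies in $\X_0'$. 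Then $U$ is admissible, $\til_g=\til_{ah}$, and evaluating $[\cdot,\cdot]_U$ at the first argument $x=\be(g)$ gives $[x,\xi]_U=\xi$, so $\xi\in R_{g,U}$ as $\xi\in\til_g^\circ$.

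The heart of the matter is the compatibility clause, and it is supplied by Lemma~\ref{lem:loccompatible}. Fix charts $R_{g_1,U_1}$ and $R_{g_2,U_2}$. For each $\zeta$ in the open set $R_{g_1,U_1}\cap R_{g_2,U_2}$, Lemma~\ref{lem:loccompatible} produces a rectangle $R_{g,U}$ with $\zeta\in R_{g,U}\subset R_{g_1,U_1}\cap R_{g_2,U_2}$ whose product structure agrees with both given ones throughout $R_{g,U}$, i.e.\ $[\cdot,\cdot]_{R_{g_1,U_1}}=[\cdot,\cdot]_{R_{g,U}}=[\cdot,\cdot]_{R_{g_2,U_2}}$ on $R_{g,U}$. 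For a point $x\in R_{g_1,U_1}\cap R_{g_2,U_2}$ one takes $W=R_{g,U}$ furnished at $x$, whereupon $[y,z]_1=[y,z]_2$ for all $y,z\in W\cap R_{g_1,U_1}\cap R_{g_2,U_2}=W$. For disjoint charts the condition is void, and for a point $x$ in $\overline{R_{g_1,U_1}}\cap\overline{R_{g_2,U_2}}$ outside the open intersection one shrinks $W$ so that $W\cap R_{g_1,U_1}\cap R_{g_2,U_2}$ lies inside the union $\bigcup R_{g,U}\times R_{g,U}$ of the agreement rectangles, an open neighborhood of the diagonal of $(R_{g_1,U_1}\cap R_{g_2,U_2})^2$; this is where the germ-near-the-diagonal description of a (locally compact, metrizable) local product structure noted after Definition~\ref{def:ldps} is invoked.

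The principal obstacle has in fact already been overcome inside Lemma~\ref{lem:loccompatible}, whose proof is where the contraction estimates and the relation bounds of Lemma~\ref{lem:relationsbndm} do the real work; what remains here is assembly. The only genuinely delicate point is the last one above: translating the ``compatible refining rectangle'' conclusion of Lemma~\ref{lem:loccompatible} into the neighborhood formulation of Definition~\ref{def:ldps} at points lying on the topological boundary of an intersection of two charts, where it is the openness of the agreement locus about the diagonal, together with metrizability of $\partial\G$, that guarantees a suitable $W$.
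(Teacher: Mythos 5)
Your proposal is fine up to the last step, but the resolution you offer for the genuinely delicate point is exactly where it breaks down, and it is also where the hypothesis ``$\delta_0$ small enough'' actually enters --- not, as you claim, merely to make Lemmas~\ref{lem:welldef}, \ref{lem:loccompatible}, and~\ref{lem:topology} available for $S$ and $\mS$. The compatibility condition of Definition~\ref{def:ldps} must be verified at every point of $\overline{R_{g_1,U_1}}\cap\overline{R_{g_2,U_2}}$, including points $\xi$ outside the open intersection $R_{g_1,U_1}\cap R_{g_2,U_2}$. At such a point your argument is: the agreement rectangles supplied by Lemma~\ref{lem:loccompatible} at points $\zeta\in R_{g_1,U_1}\cap R_{g_2,U_2}$ cover the diagonal of the square of the intersection by open product sets, so one can shrink a neighborhood $W$ of $\xi$ until all pairs from $W\cap R_{g_1,U_1}\cap R_{g_2,U_2}$ fall into a single agreement rectangle. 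This does not follow: an open neighborhood of the diagonal of $(R_{g_1,U_1}\cap R_{g_2,U_2})^2$ can pinch as $\zeta$ approaches the boundary point $\xi$ (the rectangles $R_{g,U}$ produced by Lemma~\ref{lem:loccompatible} at $\zeta$ carry no uniform lower bound on their size), and then no neighborhood $W$ of $\xi$ has $(W\cap R_{g_1,U_1}\cap R_{g_2,U_2})^2$ inside the agreement locus --- compare the interval $(0,1)$ with the pinching neighborhood $\{(y,z):|y-z|<yz\}$ of its diagonal and the boundary point $0$. Moreover, the remark after Definition~\ref{def:ldps} that you invoke describes a property of a space already equipped with a local product structure; using it to establish the compatibility condition is circular.

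What is actually needed, and what the paper does, is to reduce the boundary case to the interior case by enlarging the generating system: embed $S$ into a bigger complete generating set $S'$ (with covering $\mS'\supset\mS$) so that $\til_g'$ is a neighborhood of $\til_g$ for every $g$, and extend the admissible elements $U_i$ to admissible $\wh U_i$ with $\overline{U_i}\subset\wh U_i$. Then the closure of each $R_{g_i,U_i}$ embeds into an open rectangle defined using $S'$ and $\mS'$ (this uses Lemma~\ref{lem:welldef} and needs $\en(U_i)$ to have diameter less than the $\delta_0$ associated with $S'$ and $\mS'$), so the troublesome point $\xi$ lies in the \emph{open} intersection of the two enlarged rectangles, and Lemma~\ref{lem:loccompatible} applied to $S'$ and $\mS'$ produces a compatible rectangle that is an honest neighborhood of $\xi$; compatibility with the enlarged rectangles restricts to compatibility with the original sub-rectangles. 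This enlargement is precisely the content of the hypothesis ``if $\delta_0$ is small enough'': $\delta_0$ must satisfy the conditions of Lemma~\ref{lem:relationsbndm} for the bigger pair $(S',\mS')$, a strictly stronger demand than the one you fixed at the outset. Without some such device your proof establishes compatibility only at points of the open intersections, which is weaker than the lemma.
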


\begin{proof}
Let $R_{g_1, U_1}$ and $R_{g_2, U_2}$ be two rectangles, and let
$\xi\in R_{g_1, U_1}\cup R_{g_2, U_2}$. We have to show that there
exists a rectangle $R_{g, U}$ containing $\xi$ and such that the direct
product structure on $R_{g, U}$ agrees with the direct product
structures on both rectangles $R_{g_i, U_i}$. Note that since the
rectangles form a basis of topology of $\partial\G$, the condition
obviously holds when $\xi$ does not belong to the intersection of the
closures of $R_{g_i, U_i}$.

Therefore, we suppose that $\xi\in\overline{R_{g_1,
    U_1}}\cap\overline{R_{g_2, U_2}}$.
We can embed $S$ into a bigger complete generating set
$S'$ so that for every $g\in\G$ the new set $\til_g'=\{\ldots
g_2g_1\cdot g\;:\;g_i\in S'\}$ is a neighborhood of $\til_g=\{\ldots
g_2g_1\;:\;g_i\in S\}$. Let $\mS'$ be the corresponding covering (we
assume that $\mS\subset\mS'$).
If $R_{h, W}$ is a rectangle defined using $S$, such that $\en(W)$ is
sufficiently small (i.e., has diameter less than the value of
$\delta_0$ corresponding to $S'$ and $\mS'$), then it is a sub-rectangle of the
rectangle $R_{h, W}'$ defined using $S'$ and $\mS'$, by
Lemma~\ref{lem:welldef}.

We can always extend the admissible elements $U_i$ to admissible
elements $\wh U_i$ such that $\overline{U_i}\subset\wh
U_i$.

It follows that if $\delta_0$ is small enough (i.e., if it satisfies
the conditions of Lemma~\ref{lem:relationsbndm} for $S'$ and $\mS'$),
then closure of every rectangle $R_{g_i, U_i}$ can be embedded into a
rectangle defined using $S'$. Then Lemma~\ref{lem:loccompatible}
(applied to $S'$ and $\mS'$) implies that rectangles $R_{g, U}$ form
an atlas of a local product structure on $\partial\G$.
\end{proof}

Independence of topology and the local product structures from the
choices of the Lipschitz structure, grading, and
the sets $S$, $\mS$, $\X$, $\X_0$, etc.,
follow directly from Lemma~\ref{lem:welldef}, since for any
two choices of the sets we can find one including the first two as
subsets (possibly after replacing $S$ and $\mS$ by $S^n\cup S^{n+1}$
and $\mS^n\cup\mS^{n+1}$, if we change the Lipschitz structures and the
grading). Independence on the choice of $\delta_0$ follows from
Lemma~\ref{lem:loccompatible}.

This finishes the proof of the theorem, since the constructed local
product and topology satisfy its conditions, and every local product
structure satisfying the conditions of the theorem coincides with the
one we have constructed (by Lemma~\ref{lem:welldef}).
\end{proof}

\section{Geodesic quasi-flow}
\label{ss:actionofgonpg}
Denote by $P:\partial\G\mapsto\G^{(0)}$ the map defined by the
condition $\xi\in\partial\G_{P(\xi)}$.

It is easy to see from the formula for the local product in
Theorem~\ref{th:localproduct} that the map $P$ is
compatible with the local product structure on $\partial\G$, i.e.,
for every point $\xi\in\partial\G$ there exists an open rectangle
$R$ containing $\xi$ such that $P:R\arr\G^{(0)}$ induces
homeomorphisms of the plaques $\proj_2(R, x)$ with $P(R)$.

Let $\xi=\cdots h_2h_1\cdot h\in\partial\G$ for $h_i\in S$ and $h\in\G$. Define
then for $g\in\G$ such that $\en(g)=\be(h)=P(\xi)$
\[\xi\cdot g=\cdots h_2h_1\cdot hg.\]
We get in this way a right action of $\G$ on $\partial\G$ over the
projection $P:\partial\G\arr\G^{(0)}$.

Consider the groupoid of this action, i.e., the set
\[\partial\G\rtimes\G=\{(\xi,
g)\in\partial\G\times\G\;:\;P(\xi)=\en(g)\}\] with topology
induced from the direct product topology on $\partial\G\times\G$ and
groupoid structure given by the multiplication rule
\[(\xi_2, g_2)\cdot (\xi_1, g_1)=(\xi_1, g_2g_1),\]
where the product is defined if and only if $\xi_2=\xi_1\cdot
g_1$ (see Definition~\ref{def:actiongroupoid}).
We naturally identify  the space of units of $\partial\G\rtimes\G$
with $\partial\G$.

We call the groupoid $\partial\G\rtimes\G$ the \emph{geodesic
  quasi-flow} \index{geodesic quasi-flow} of $\G$. It is easy to see that
$\partial\G\rtimes\G$ is a Hausdorff groupoid of germs
preserving the local direct product structure on $\partial\G$.

\begin{proposition}
If $\G_1$ and $\G_2$ are equivalent hyperbolic groupoids, then the
groupoids $\partial\G_1\rtimes\G_1$ and $\partial\G_2\rtimes\G_2$
are equivalent.
\end{proposition}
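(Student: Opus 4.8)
The plan is to embed both $\G_1$ and $\G_2$ as restrictions of a single hyperbolic groupoid, pass to its geodesic quasi-flow, and recognize each $\partial\G_i\rtimes\G_i$ as the restriction of that quasi-flow to an open transversal. Since $\G_1$ and $\G_2$ are equivalent groupoids of germs, there is a pseudogroup $\wt{\G_1}\vee\wt{\G_2}$ acting on $\G_1^{(0)}\sqcup\G_2^{(0)}$ whose restrictions to the clopen sets $\G_i^{(0)}$ are $\wt{\G_i}$; let $\Gh$ be its groupoid of germs, so $\Gh|_{\G_i^{(0)}}=\G_i$ and each $\G_i^{(0)}$ is an open $\Gh$-transversal. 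By Corollary~\ref{cor:equivhyperbolic}, $\Gh$ is hyperbolic, so its boundary $\partial\Gh$, the local product structure of Theorem~\ref{th:localproduct}, and the geodesic quasi-flow $\partial\Gh\rtimes\Gh$ are all defined, with projection $P\colon\partial\Gh\arr\Gh^{(0)}$.

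The central identification I would establish is that the part of $\partial\Gh$ lying over $\G_i^{(0)}$ is exactly $\partial\G_i$, with the same topology and local product structure. For $x\in\G_i^{(0)}$, the vertex set $\Gh_x^{\G_i^{(0)}}=\G_{i,x}^{\G_i^{(0)}}$ of the Cayley graph of $\G_i$ is a net in $\Gh(x,S)$ (the net lemma for nested transversals $\G_i^{(0)}\subset\X$), so the two Cayley graphs are quasi-isometric; since the distinguished point $\omega_x$ is determined intrinsically by Proposition~\ref{pr:uniquegrading}, we get $\partial\Gh_x=\partial\G_{i,x}$ with the same natural log-scale (Proposition~\ref{pr:topology}). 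Thus $P^{-1}(\G_i^{(0)})=\partial\G_i$ as sets. Because the topology and local product structure of a boundary are intrinsic (the uniqueness clause of Theorem~\ref{th:localproduct}) and the parallel transports defining the rectangles $R_{g,U}$ use only elements of $\wt{\G_1}\vee\wt{\G_2}$ with source and range in $\G_i^{(0)}$, i.e. elements of $\wt{\G_i}$, the structure induced on $P^{-1}(\G_i^{(0)})$ coincides with that of $\partial\G_i$.

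Next I would identify the restricted quasi-flow. An element of $\partial\Gh\rtimes\Gh$ with both source and range in $P^{-1}(\G_i^{(0)})$ is a pair $(\xi,g)$ with $\xi,\xi\cdot g\in\partial\G_i$; this forces $\be(g),\en(g)\in\G_i^{(0)}$, hence $g\in\Gh|_{\G_i^{(0)}}=\G_i$, and the action $\xi\cdot g$ is given by the same formula $\cdots h_2h_1\cdot hg$. So $(\partial\Gh\rtimes\Gh)|_{P^{-1}(\G_i^{(0)})}=\partial\G_i\rtimes\G_i$ as topological groupoids of germs. Moreover $P^{-1}(\G_i^{(0)})$ is open in $\partial\Gh$ (as $\G_i^{(0)}$ is clopen and $P$ is continuous) and is a transversal: given $\xi\in\partial\Gh$, pick $g\in\Gh$ with $\en(g)=P(\xi)$ and $\be(g)\in\G_i^{(0)}$, possible because $\G_i^{(0)}$ meets every $\Gh$-orbit; then $\xi\cdot g\in P^{-1}(\G_i^{(0)})$ lies in the orbit of $\xi$. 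Restriction to an open transversal yields an equivalent groupoid (Section~\ref{ss:localization}), so $\partial\G_i\rtimes\G_i$ is equivalent to $\partial\Gh\rtimes\Gh$ for $i=1,2$, and chaining the two equivalences gives the claim.

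The main obstacle is the second paragraph: verifying not merely the set equality $P^{-1}(\G_i^{(0)})=\partial\G_i$, but that the \emph{topology} and the \emph{local product structure} restrict correctly. This rests on the independence statements built into Theorem~\ref{th:localproduct} and Proposition~\ref{pr:uniquegrading}, together with the observation that the rectangle-forming data $(U,\mS)$ over points of $\G_i^{(0)}$ may be chosen inside $\wt{\G_i}$; the delicate point is that these choices are compatible across the two transversals, which is precisely what the uniqueness clauses guarantee. Everything else is bookkeeping with the net lemma and the restriction-equivalence for open transversals.
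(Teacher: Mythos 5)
Your proposal is correct and follows essentially the same route as the paper: both form the equivalence groupoid $\G_1\vee\G_2$ (your $\Gh$), observe that its boundary is the disjoint union $\partial\G_1\sqcup\partial\G_2$ with the restricted action agreeing with each $\G_i$-action, and conclude that the geodesic quasi-flow of $\G_1\vee\G_2$ realizes the equivalence between $\partial\G_1\rtimes\G_1$ and $\partial\G_2\rtimes\G_2$. The only cosmetic difference is that the paper states directly that $\partial(\G_1\vee\G_2)\rtimes(\G_1\vee\G_2)$ is an equivalence groupoid, while you package the same fact as two restriction-to-open-transversal equivalences chained together.
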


\begin{proof}
It follows from the definition of the local product structure on
$\partial\G$ that the space $\partial(\G_1\vee\G_2)$ is the disjoint
union of the spaces $\partial\G_1$ and $\partial\G_2$ and that the
restriction of the action of $\G_1\vee\G_2$ to $\partial\G_1$
coincides with the action of $\G_1$ on $\partial\G_1$. It follows
that the groupoid $\partial(\G_1\vee\G_2)\rtimes(\G_1\vee\G_2)$
defines an equivalence between $\partial\G_1\rtimes\G_1$ and
$\partial\G_2\rtimes\G_2$.
\end{proof}

\begin{proposition}
\label{prop:geodcompgen}
The groupoid $\partial\G\rtimes\G$ is compactly generated.
\end{proposition}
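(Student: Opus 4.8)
The plan is to produce an explicit compact generating pair $(T,W)$ for $\partial\G\rtimes\G$, identifying its unit space with $\partial\G$. Fix a complete generating set $S$ together with transversals $\X_0\subset\X$ as in Proposition~\ref{pr:hyperbolicgenset} and a covering $\mS$ of $S$ by contracting elements of $\pG$. For the transversal I would take
\[W=\bigcup_{x\in\X}\til_x\subset\partial\G.\]
This set is compact, being the image of the compact space $\Sigma_{\X}$ of sequences $(g_1,g_2,\dots)\in S^\infty$ with $\be(g_1)\in\X$ and $\en(g_i)=\be(g_{i+1})$ under the continuous map $(g_i)\mapsto\cdots g_2g_1$, exactly as in the proof that the sets $\til_g$ are compact. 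To check that $W$ is a topological transversal, observe that any $\xi=\cdots h_2h_1\cdot h\in\partial\G$ with $h_i\in S$ satisfies $\xi\cdot(h_n\cdots h_1h)^{-1}=\cdots h_{n+2}h_{n+1}\in\til_{\en(h_n)}\subset W$ and $\en(h_n)\in\X$, so every orbit meets $W$; using Proposition~\ref{pr:internalshift} one pushes such points into the interiors $\til_x^\circ$, so that $\bigcup_{x\in\X_0}\til_x^\circ$ contains an open transversal whose closure lies in the compact set $W$.

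The generating set I would use is
\[T=\{(\xi,g)\in\partial\G\rtimes\G\ :\ \xi\in W,\ \xi\cdot g\in W,\ g\in S\cup\nuke\},\]
where $\nuke\subset\G|_{\X}$ is the compact set of elements representable as products of at most $\Delta$ elements of $S\cup S^{-1}$, and $\Delta$ is the quasi-geodesic rigidity constant of the Cayley graphs of $\G$ (Theorem~\ref{th:rigidquasigeodesics}). Since $W$, $S$ and $\nuke$ are compact and $\partial\G\rtimes\G$ is étale (Lemma~\ref{lem:etaleaction}) and Hausdorff, $T$ is compact. The two moves inside $W$ that I rely on are, first, $(\zeta,s)$ for $s\in S$ and $\zeta=\cdots g_2g_1\in\til_x$ with $\en(s)=x$, which sends $\zeta$ to $\zeta\cdot s=\cdots g_2g_1s\in\til_{\be(s)}$, and second its inverse, the ``stripping'' move sending $\cdots g_2g_1\in\til_x$ to $\cdots g_2\in\til_{\en(g_1)}$; both keep the point inside $W$.

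The key step is to write each $(\xi,g)\in(\partial\G\rtimes\G)|_{W}$ as a finite product of elements of $T\cup T^{-1}$ all of whose intermediate vertices lie in $W$. Write $\xi=\cdots p_2p_1\in\til_x$ and $\xi\cdot g=\cdots q_2q_1\in\til_{x'}$ with $p_i,q_i\in S$. Stripping $p_1,\dots,p_M$ carries $\xi$ to $b_M=\cdots p_{M+1}\in W$, and stripping $q_1,\dots,q_N$ carries $\xi\cdot g$ to $a_N=\cdots q_{N+1}\in W$, all steps lying in $T^{-1}$ and staying in $W$. Now $\cdots q_2q_1$ and $\cdots p_2p_1\cdot g$ are two quasi-geodesics (the latter is the directed path issuing from the vertex $g$ of $\G(x',S)$, hence a quasi-geodesic by Lemma~\ref{lem:quasigeodesic}) converging to the same point $\xi\cdot g$; by Theorem~\ref{th:rigidquasigeodesics} and Proposition~\ref{pr:thininftriangles} they $\Delta$-fellow-travel, so for $N$ large there is $M$ with $|\,q_N\cdots q_1-p_M\cdots p_1g\,|\le\Delta$. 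Then $c:=(p_M\cdots p_1)\,g\,(q_N\cdots q_1)^{-1}\in\nuke$, with $(b_M,c)\in T$ and $b_M\cdot c=a_N$; and re-appending $q_N,\dots,q_1$ to $a_N$ returns to $\xi\cdot g$ through points of $W$ via moves in $T$. Since $(p_M\cdots p_1)^{-1}c\,(q_N\cdots q_1)=g$, this realizes $(\xi,g)$ as a word of length $M+N+1$ in $T\cup T^{-1}$. Finally, because the generators are germs of bisections and the relevant relations are locally stable (Corollary~\ref{cor:relationspseudogroup}), the product of the corresponding extended bisections is an open neighborhood of $(\xi,g)$ in $(\partial\G\rtimes\G)|_{W}$ contained in $\bigcup_{k\le M+N+1}(T\cup T^{-1})^k$, which is the neighborhood demanded by Definition~\ref{def:compactlygenerated}.

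The main obstacle is exactly the intermediate-point control in this last step. A naive V-shaped factorization $g=w_2w_1^{-1}$ of an element of $\G$ forces descending moves that leave the positive cone $W=\bigcup_x\til_x$, so the word would exit the transversal. The device of flowing \emph{both} endpoints $\xi$ and $\xi\cdot g$ along their own ascending rays until those rays fellow-travel, and joining them by a single uniformly bounded correction $c\in\nuke$, is what confines the whole path to the compact transversal; it is precisely here that Gromov-hyperbolicity of the Cayley graphs of $\G$ (uniform thinness of triangles and quasi-geodesic rigidity) is indispensable.
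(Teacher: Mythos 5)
Your transversal and generating set are essentially the paper's: the paper takes $T$ to be the closure of the positive cone $\bigcup_{x\in\X}\til_x$ and the generating set $T\times_P S$ of germs of the $S$-action on it. However, the ``main obstacle'' you describe does not exist, and recognizing this collapses the argument to the paper's much shorter one. If $g=r_1\cdots r_k\cdot(s_1\cdots s_l)^{-1}$ is the factorization supplied by condition (4) of Proposition~\ref{pr:hyperbolicgenset}, then the intermediate points of the associated path are $\xi\cdot r_1\cdots r_m$ for $m\le k$, and $\xi\cdot r_1\cdots r_ks_l^{-1}\cdots s_n^{-1}=(\xi\cdot g)\cdot s_1\cdots s_{n-1}$ for $n\le l$. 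The first group stays in $W$ because appending elements of $S$ (whose origins lie in $\X$, as $\be(S)=\X$) to a point of $W$ keeps it in $W$; the second group stays in $W$ for the same reason, applied to $\xi\cdot g\in W$. Read from its far end, the ``descending'' leg of the V is an ascending leg based at $\xi\cdot g$, so the path never leaves the cone. This is exactly the paper's proof; it needs neither your correction set $\nuke$ nor the fellow-travelling argument, and the length $k+l$ of the decomposition is controlled by Proposition~\ref{pr:geodreduction}. (Your $\Lambda$-shaped decomposition --- strip both rays, then bridge with $c\in\nuke$ --- is itself correct; it is simply more machinery than is needed.)

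The genuine gap is in your final step. It is not true, or at least not justified, that the product of the extended bisections through your factors is contained in $\bigcup_{k\le M+N+1}(T\cup T^{-1})^k$. An element of that product factors into germs of those bisections based at points \emph{near}, but in general not in, $W$; since $W$ is not open in $\partial\G$, such germs fail the defining condition of your generating set $T$, which requires source and image to lie in $W$. Corollary~\ref{cor:relationspseudogroup} concerns stability of trivial germs and does not repair this. What Definition~\ref{def:compactlygenerated} actually demands is that every $(\xi',g')$ in a neighborhood of $(\xi,g)$ \emph{inside} $(\partial\G\rtimes\G)|_W$ be a product of boundedly many generators, and the correct argument is to re-run the decomposition for $(\xi',g')$ itself, noting that its length is bounded by a function of the $S\cup S^{-1}$-word length of $g'$, which is locally bounded on $\G|_{\X}$ because $(S,\X)$ is a compact generating pair; in your version one also notes that the fellow-travelling threshold depends only on an upper bound for $|g'|$. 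This is precisely how the paper concludes. A smaller looseness of the same kind: to show $W$ is a \emph{topological} transversal you invoke Proposition~\ref{pr:internalshift}, but the fibrewise interiors $\til_x^\circ$ need not be open in $\partial\G$; the paper instead uses Proposition~\ref{pr:nbhdtil} together with the open rectangles $R_{g,U}$ of Theorem~\ref{th:localproduct} to exhibit an open subset of the cone meeting every orbit.
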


\begin{proof}
We will use notations from the proof of
Theorem~\ref{th:localproduct}.
Denote by $T\subset\partial\G$ the closure of the set of points of
the form $\cdots g_2g_1$ for $g_i\in F\in\mS$ and
$\be(g_1)\in\X$.

By Proposition~\ref{pr:nbhdtil}, for every $\xi\in\partial\G$
there exists $g\in\G$ such that $\xi$ is an internal point of
$\til_g$ and $\en(g)\in\X_0$. Let $\xi=\cdots g_2g_1\cdot g$ for
$g_i\in S$. Then $\xi\cdot g^{-1}=\cdots g_2g_1$ is an internal
point of $\til_{\en(g)}$. Let $U\subset\X_0'$ be an open
neighborhood of $\be(g_1)$ of diameter less than
$\delta_0$. Then $\xi\cdot g^{-1}$ belongs to the open set
$R_{\en(g), U}\subset T$. It follows that $T$ is a topological
transversal.

Let us show that $T$ is compact. Every point of $T$ can be
represented as $\ldots g_2g_1$, where $g_i$ belong to the elements
of the set $\mS$. Consider the compact space $\check
S=\bigcup_{F\in\mS}\overline F$.
The set of all composable
sequences $(\ldots, g_2, g_1)$ of elements of $\check S$ is a
closed subset of the Cartesian product $(\check S)^\infty$.
Hence the space of all composable sequences is
compact. It follows from Proposition~\ref{pr:topology} and
Theorem~\ref{th:localproduct}
that the map $(\ldots, g_2, g_1)\mapsto \ldots g_2g_1$ from the
space of composable sequences to $\partial\G$ is continuous.
Hence, $T$ is contained in a continuous image of a compact space and
is compact.

Consider the set \[T\times_P S=\{(\xi,
g)\in\partial\G\rtimes\G\;:\;\xi\in T, g\in S, P(\xi)=\en(g)\}\]
of germs of the action of elements of $S$ on $T$. It is compact,
since $T$ and $S$ are compact, and $P$ and $\be$ are continuous.

Let us show that $T\times_P S$ is a generating set of the
restriction of $\partial\G\rtimes\G$ to $T$. Let $(\xi,
g)\in\partial\G\rtimes\G$, be an arbitrary element of
$(\partial\G\rtimes\G)|_T$. Then $\en(g)=P(\xi)$, $\xi\in T$, and
$\xi\cdot g\in T$. In particular, $g\in\G|_{\X}$. By
Proposition~\ref{pr:hyperbolicgenset}, the
element $g\in\G$ can be written in the form $r_1\cdots r_k\cdot
(s_1\cdots s_l)^{-1}$ for $r_i, s_i\in S$. The points $\xi\cdot
r_1\cdots r_m$ and
$\xi\cdot g\cdot s_1\cdots s_{n-1}=\xi\cdot r_1\cdots r_k\cdot s_l^{-1}\cdots
s_n^{-1}$, for all $1\le m\le k$ and $1\le n\le l$, belong to $T$. It
follows that $(\xi, g)$ is a product of $k+l$ elements of
$T\times_P S\cup(T\times_P S)^{-1}$.

The set $S$ is a generating set of $\G|_{\X}$, and the length
of a representation of an element $g\in\G|_{\X}$ as a product
of the form $r_1\cdots r_k\cdot (s_1\cdots s_l)^{-1}$ is bounded
from above by a function of the $S$-length of $g$ (see
Proposition~\ref{pr:geodreduction}). It
follows that there exists a neighborhood $U$ of $g$ in $\G$ and a
number $N$ such that every element $h\in U\cap\G|_{\X}$ can be
written as a product $r_1\cdots r_k\cdot (s_1\cdots s_l)^{-1}$ for
$k+l\le N$. Consequently every element of $T\times_P U$ can be represented
as a product of length at most $N$ of elements of
$T\times_P S$, which shows that $T\times_P S$ is a generating
set of $(\partial\G\rtimes\G)|_T$.
\end{proof}

\chapter{Smale quasi-flows and duality}
\label{s:quasiflow}

\section{Definitions}
\begin{defi}
\label{def:hypflow}
Let $\Gh$ be a graded Hausdorff groupoid of germs with a Lipschitz
structure and a local product structure on $\Gh^{(0)}$ preserved by
$\Gh$. We assume that the Lipschitz structure and the grading agree
with the local product structure (see~\ref{s:graddstr}).

Let $\X$ be a compact topological transversal, let $|\cdot|$ be a
metric defined on a compact neighborhood $\wh\X$ of $\X$
and compatible with the Lipschitz structure. Let
$\coc:\G|_{\wh\X}\arr\R$ be an $\eta$-quasi-cocycle compatible with the grading.

We say that the graded groupoid $\Gh$ is a
\emph{Smale quasi-flow} \index{Smale quasi-flow} if there exists a
compact generating set $S$ of $\Gh|_{\X}$  such that
\begin{enumerate}
\item $\coc(g)>3\eta$ for every $g\in S$;
\item  $\be(S)=\en(S)=\X$;
\item
there exists $\lambda\in (0, 1)$ such that every element $g\in S$ has
a rectangular neighborhood $F\in\wt\Gh|_{\wh\X}$ such that
\[|F(x)-F(y)|\le\lambda |x-y|\]
for all $x, y\in\proj_1(\be(F), \be(g))$, and
\[|F(x)-F(y)|\ge\lambda^{-1}|x-y|\] for all $x, y\in\proj_2(\be(F), \be(g))$;
\item the groupoid $\Gh$ is locally diagonal with respect to the local
product structure (see Definition~\ref{def:locdiagonal});
\item for every $r>0$ and every compact subset $A\subset\wh\X$
the closure of the set $\{g\in\G\;:\;\be(g)\in A, \en(g)\in A,
|\coc(g)|\le r\}$ is compact.
\end{enumerate}
\end{defi}

We will call the first and second directions of the local product
structure of a Smale space \emph{stable} and \emph{unstable},
and denote projections $\proj_1$ and $\proj_2$
by $\proj_+$ and $\proj_-$ respectively.

The last condition of Definition~\ref{def:hypflow}
implies that $\coc$ is a quasi-isometry of the Cayley graph of $\Gh$ with
$\R$, which is the reason why we call such groupoids \emph{quasi-flows}.

\begin{defi}
We say that a covering $\mathcal{R}$ of a transversal of a Smale
quasi-flow $\Gh$ by open rectangles
is \emph{fine} \index{fine covering} if it satisfies the conditions of
Definitions~\ref{def:compatiblelipsch},~\ref{def:compatiblegraded},
and~\ref{def:locdiagonal}.
\end{defi}

It is easy to see that if a covering $\mathcal{R}$ is fine then any
covering by subrectangles of $\mathcal{R}$ is also fine.

The following statements are straightforward (compare with
Proposition~\ref{prop:invariancehyperbolic} and Corollary~\ref{cor:equivhyperbolic}).

\begin{proposition}
Let $\Gh$ be a Smale quasi-flow. Then for any compact topological
transversal $\X$ of $\Gh$, a metric on a neighborhood of
$\X$ compatible with the Lipschitz structure, and a quasi-cocycle
$\coc:\Gh|_{\wh\X}\arr\R$ compatible with the grading of $\Gh$
there exists a compact generating set $S$ of $\Gh|_{\wh\X}$ satisfying
the conditions of Definition~\ref{def:hypflow}.
\end{proposition}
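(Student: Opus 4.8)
The plan is to follow the template of Proposition~\ref{prop:invariancehyperbolic}, transporting a generating set that witnesses the Smale condition on the data used to define $\Gh$ onto the new transversal $\X$, while carefully tracking the local product structure. First I would observe that conditions (4) and (5) of Definition~\ref{def:hypflow} do not depend on the choice of generating set. Condition (4), local diagonality, is a property of $\Gh$ together with its local product structure alone, and holds for $\X$ after passing to a covering by subrectangles transported by rectangular elements of $\wt\Gh$. Condition (5) is a coarse-properness statement: since the given $\coc$ is compatible with the grading, it is, after extension to a common transversal via Lemma~\ref{lem:coconx2}, coarsely equivalent to the quasi-cocycle $\coc_1$ witnessing the Smale structure, so $|\coc(g)|\le r$ bounds $|\coc_1(g)|$ linearly; conjugating by a finite family of relatively compact elements of $\wt\Gh$ carrying a neighborhood of $\X$ into the old transversal then sends $\{g:\be(g),\en(g)\in A,\ |\coc(g)|\le r\}$ into a relatively compact set. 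Thus the entire content is the construction of a compact generating set $S$ verifying conditions (1)--(3).

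For that construction I would fix open transversals $\X_0\subset\X_0'\subset\X$ with $\overline{\X_0}\subset\X_0'$ and a fine covering $\mathcal{R}$ of $\X$ by open rectangles. Choosing finitely many relatively compact, extendable \emph{rectangular} elements $U_i,V_j\in\wt\Gh$ subordinate to $\mathcal{R}$ so that the $\en(U_i)$ cover $\wh\X$ with $\be(U_i)$ inside the old transversal and the $\be(V_j)$ cover the old transversal with $\en(V_j)\subset\X_0$, I would set, exactly as in Proposition~\ref{prop:invariancehyperbolic}, $S_n'=R_n\cup R_{n+1}$, where $R_n$ is the restriction to $\X$ of $(\bigcup\overline{U_i}\cup\bigcup\overline{V_j})\cdot S_1^n\cdot(\bigcup\overline{U_i}\cup\bigcup\overline{V_j})^{-1}$ and $S_1$ witnesses the Smale structure on the old data. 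Then $(S_n',\X)$ is a compact generating pair with $\be(S_n')=\en(S_n')=\X$ for every $n$ (condition (2)), by the argument of Proposition~\ref{pr:differnttransv}. Because the transport maps $U_i,V_j$ are rectangular, every element of $S_n'$ is again a rectangle and its projections factor as $\proj_\pm(U)\,\proj_\pm(g)\,\proj_\pm(V)^{-1}$.

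Condition (1) is then immediate for $n$ large: since $\coc$ is coarsely equivalent to $\coc_1$ and each of the $\sim n$ factors drawn from $S_1$ contributes a definite amount to $\coc_1$, one gets $\coc(g)>3\eta$ once $n$ exceeds a bound depending only on the finitely many transport maps and the equivalence constants. The heart of the matter, and the step I expect to be the main obstacle, is condition (3): a single constant $\lambda<1$ controlling simultaneously the $\proj_+$-direction (contraction) and the $\proj_-$-direction (expansion, i.e.\ contraction of the inverse) in the \emph{new} metric $|\cdot|$. Here I would use that $|\cdot|$ is compatible with the product structure, so on each rectangle it is Lipschitz equivalent to a product log-scale $\ell_{i,1}\times\ell_{i,2}$; this lets me estimate each direction separately. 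In the $\proj_+$ direction the composite $\proj_+(U)\,\proj_+(S_1^n)\,\proj_+(V)^{-1}$ is a bi-Lipschitz map sandwiching an $n$-fold $\lambda_1$-contraction, so its Lipschitz constant is at most $C\lambda_1^{\,n}$, where $C$ absorbs the bounded distortion of the finitely many transport maps; symmetrically in the $\proj_-$ direction for the inverse. The geometric factor $\lambda_1^{\,n}\to0$ thus dominates the bounded distortion, yielding a uniform $\lambda<1$ for $n$ large. Uniformity in $x\in\X$ and across generators follows from compactness of $\X$, finiteness of $\mathcal{R}$ and of the transport family, and Lemma~\ref{l:locglob}. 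Choosing $n$ large enough to satisfy (1) and (3) simultaneously and setting $S=S_n'$ completes the argument.
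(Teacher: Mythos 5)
Your proposal is correct and takes essentially the same route the paper intends: the paper gives no detailed proof here, declaring the statement straightforward by comparison with Proposition~\ref{prop:invariancehyperbolic}, and your argument is precisely that proposition's transport construction ($S_n'=R_n\cup R_{n+1}$ obtained by conjugating powers of the old generating set with finitely many relatively compact extendable elements), adapted by taking the transport maps rectangular and adding the routine transfer checks for conditions (1), (4), and (5) of Definition~\ref{def:hypflow}. The only nominal deviation — invoking coarse rather than strong equivalence of the quasi-cocycles — is harmless, since either suffices for the growth and properness estimates.
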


\begin{corollary}
Every groupoid of germs equivalent to a Smale quasi-flow is also a
Smale quasi-flow.
\end{corollary}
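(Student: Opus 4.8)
The plan is to mirror the argument that derives Corollary~\ref{cor:equivhyperbolic} from Proposition~\ref{prop:invariancehyperbolic}: reduce the statement about an equivalent groupoid to the preceding Proposition, which already handles a change of transversal within a fixed groupoid. Suppose $\Gh$ is a Smale quasi-flow and $\Gh'$ is a groupoid of germs equivalent to it. First I would form the equivalence groupoid $\Gh\vee_{\mathfrak{B}}\Gh'$, whose unit space contains both $\Gh^{(0)}$ and $(\Gh')^{(0)}$ as open transversals, and record that its restrictions to these two transversals are $\Gh$ and $\Gh'$, respectively.

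The first task is to equip $\mathfrak{K}:=\Gh\vee_{\mathfrak{B}}\Gh'$ with the auxiliary structures of a Smale quasi-flow. By Lemma~\ref{lem:Lipschstructure} the positive log-scale defining the Lipschitz structure of $\Gh$ on its transversal $\X$ extends, uniquely up to Lipschitz equivalence, to a compact transversal $\X'\subset(\Gh')^{(0)}$ so that all elements of the pseudogroup of $\mathfrak{K}$ act by locally Lipschitz maps; this is exactly the assertion that $\Gh$ and $\Gh'$ share a single Lipschitz structure on $\mathfrak{K}$. By the corollary following Proposition~\ref{pr:extlocprod}, the local product structure on $\Gh^{(0)}$ preserved by $\wt\Gh$ extends uniquely to a local product structure on the disjoint union $\Gh^{(0)}\sqcup(\Gh')^{(0)}$ preserved by the equivalence pseudogroup, and by the corollary following Definition~\ref{def:locdiagonal} this extension remains locally diagonal. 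Finally, by Lemma~\ref{lem:coconx2} the grading quasi-cocycle of $\Gh$ extends, uniquely up to strong equivalence, to a quasi-cocycle on $\mathfrak{K}|_{\X\cup\X'}$.

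With these structures in place I would verify that $\mathfrak{K}$ satisfies Definition~\ref{def:hypflow} by using the transversal $\X$, the metric $|\cdot|$, and the quasi-cocycle $\coc$ that already witness that $\Gh$ is a Smale quasi-flow: since $\mathfrak{K}|_{\X}=\Gh|_{\X}$, the same generating set $S$ and constants $\lambda,\eta$ satisfy conditions (1)--(5) verbatim, so $\mathfrak{K}$ is itself a Smale quasi-flow. Applying the preceding Proposition to $\mathfrak{K}$ with the transversal $\X'$, the extended metric, and the extended quasi-cocycle then produces a compact generating set of $\mathfrak{K}|_{\X'}$ satisfying Definition~\ref{def:hypflow}; since $\mathfrak{K}|_{\X'}=\Gh'$, this exhibits $\Gh'$ as a Smale quasi-flow.

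The step I expect to require the most care is confirming that the \emph{compatibility} conditions of Definitions~\ref{def:compatiblelipsch} and~\ref{def:compatiblegraded} — that the extended Lipschitz structure and the extended grading agree with the extended local product structure — genuinely survive the extension, rather than only the three structures surviving individually. Compatibility ties the log-scale, the cocycle, and the product decomposition together through the projections $\proj_i$, so one must check that pulling the rectangles of $\Gh^{(0)}$ back along the relatively compact bi-Lipschitz elements of $\wt\Gh\vee\wt{\Gh'}$ that realize the equivalence carries product-scale decompositions to product-scale decompositions up to Lipschitz equivalence, and carries the cocycle bound for pairs of germs with a common projection to the corresponding bound over $\X'$. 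This is where the local Lipschitz property of the equivalence elements and the positivity of the log-scales, exactly as exploited in the proof of Lemma~\ref{lem:Lipschstructure}, enter.
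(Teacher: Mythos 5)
Your proposal is correct and follows exactly the route the paper intends: the paper leaves this corollary as ``straightforward'' by comparison with Proposition~\ref{prop:invariancehyperbolic} and Corollary~\ref{cor:equivhyperbolic}, and your argument is precisely that comparison carried out --- pass to the equivalence groupoid, transfer the Lipschitz structure (Lemma~\ref{lem:Lipschstructure}), local product structure and local diagonality (the corollaries to Proposition~\ref{pr:extlocprod} and Definition~\ref{def:locdiagonal}), and grading (Lemma~\ref{lem:coconx2}), then apply the preceding change-of-transversal proposition on a transversal inside $(\Gh')^{(0)}$. You also correctly single out the survival of the compatibility conditions of Definitions~\ref{def:compatiblelipsch} and~\ref{def:compatiblegraded} as the only point needing real care, which is where the paper's bi-Lipschitz, rectangle-preserving equivalence elements do the work.
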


\begin{examp}
Suppose that $\coc:\Gh\arr\R$ is a  cocycle, i.e., that
$\coc(g_1g_2)=\coc(g_1)+\coc(g_2)$ for all $(g_1, g_2)\in\Gh^{(2)}$
and $\coc$ is continuous, and suppose that it satisfies condition (5) of
Definition~\ref{def:hypflow}. Consider then the action of $\G$ on
$\G^{(0)}\times\R$ given by the formula
\[h\cdot(g, t)=(hg, t+\coc(h)).\]

Note that the natural action of $\R$ on $\G^{(0)}\times\R$ is free,
proper, and commutes with the defined action of $\G$.
It follows from condition (5) of
Definition~\ref{def:hypflow} that the action of $\G$ on
$\G^{(0)}\times\R$ is proper. Suppose that it is free, i.e., that conditions
$\coc(g)=0$ and $\be(g)=\en(g)$ for $g\in\G$ imply that $g$ is a
unit. Then (see~\ref{s:equivalence}) the groupoid of the action of
$\R$ on the space
$M=\G\backslash(\G^{(0)}\times\R)$ is equivalent to the groupoid
$\G$. It follows from the compact generation that $M$ is compact. It is
Hausdorff by properness of the action of $\G$ on
$\G^{(0)}\times\R$. We see that in this case the groupoid $\G$ is
equivalent to a flow on a compact metric space. The space $M$ has a
natural local decomposition into a direct product of three spaces
$P_+\times P_-\times I$, where $I$ is an interval of $\R$, so that the
plaques in the direction of $I$ are orbits of the flow, plaques in the
direction of $P_+$ are contracted, and the plaques in the direction of
$P_-$ are expanded by the flow. In this sense the action of $\R$ on
$M$ is a \emph{Smale flow}.

If the action of $\G$ on $\G^{(0)}\times\R$ is not free, then
$M=\G\backslash(\G^{(0)}\times\R)$ is an \emph{orbispace} and $\G$ is
equivalent to a \emph{Smale orbispace flow}. It is defined by the
action of $\R$ on the \emph{orbispace atlas} $\G^{(0)}\times\R$. For
more on orbispaces see~\cite{nek:book}.
\end{examp}

\section{Special generating sets}

Similarly to hyperbolic groupoids (see
Proposition~\ref{pr:hyperbolicgenset}), we will need generating sets
with special properties.

If $(A, |\cdot|_A)$ and $(B, |\cdot|_B)$ are metric spaces, then the
\emph{direct product} of the metrics $|\cdot|_A$ and $|\cdot|_B$ is
the metric on $A\times B$ given by
\[|(a_1, b_1)-(a_2, b_2)|_{A\times B}=\max(|a_1-a_2|_A, |b_1-b_2|_B).\]

\begin{proposition}
\label{prop:generatorsflow}
Every Smale quasi-flow is equivalent to a groupoid $\Gh$
satisfying the following properties.

The space of units $\Gh^{(0)}$ is a disjoint union of a finite
number of rectangles $W_1=A_1\times B_1, \ldots, W_n=A_m\times
B_m$. The covering $\mathcal{R}=\{W_i\}$ is fine, and the metric on
$\Gh^{(0)}$ is locally isometric to the direct product of metrics
on $A_i$ and $B_i$.

There exists an open transversal $\X_0$ equal to the union of open
sub-rectangles $W_i^\circ=A_i^\circ\times B_i^\circ\subset W_i$
such that closures of $W_i^\circ$ are compact. Denote by $\X$ the
union of closures of the rectangles $W_i^\circ$.

There exists a finite set $\mS$ of elements of the pseudogroup
$\wt\Gh$ such that
\begin{enumerate}
\item every $F\in\mS$ is a rectangle $A_F\times B_F$;
\item for every $F\in\mS$ there exist $i, j\in 1, \ldots, n$ such that
$\be(F)\subset W_i$, $\en(F)\subset W_j$, $\be(A_F)=A_i$, $\en(B_F)=B_j$;
\item intersections of $\be(F)$ and $\en(F)$ with $\X$ are
non-empty;
\item $A_F$ and $B_F^{-1}$ are $\lambda$-contracting for some
$\lambda\in (0, 1)$;
\item $S=\{(F, x)\;:\;x, F(x)\in\X\}$ is a
generating set of $\Gh|_{\X}$;
\item $\be(S)=\en(S)=\X$;
\item $\coc(g)>2\eta$ for all germs of elements of $\mS$;
\end{enumerate}
\end{proposition}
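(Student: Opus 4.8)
The plan is to obtain the required groupoid through a localization that turns the unit space into a disjoint union of product rectangles, and then to manufacture the generating set $\mS$ by completing the generators furnished by Definition~\ref{def:hypflow} in the stable and unstable directions, in the manner of a Markov partition.

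First I would fix the data of Definition~\ref{def:hypflow}: a compact transversal, a metric $|\cdot|$ compatible with the Lipschitz structure, an $\eta$-quasi-cocycle $\coc$ compatible with the grading, and a generating set satisfying conditions (1)--(5). Since the metric and grading are compatible with the local product structure and $\Gh$ is locally diagonal (Definitions~\ref{def:compatiblelipsch},~\ref{def:compatiblegraded}, and~\ref{def:locdiagonal}), there is a finite \emph{fine} covering $\{R_i\}$ of the transversal by open rectangles. Localizing onto the disjoint union $\bigsqcup_i R_i$ (see~\ref{ss:localization} and Proposition~\ref{pr:localization}) replaces $\Gh$ by an equivalent groupoid whose unit space is a disjoint union of genuine product rectangles $W_i=A_i\times B_i$, with $A_i=\proj_+(R_i)$ and $B_i=\proj_-(R_i)$. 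On each $W_i$ the log-scale is Lipschitz equivalent to a product log-scale $\ell_{i,1}\times\ell_{i,2}$; choosing metrics on the factors associated to $\ell_{i,1}$ and $\ell_{i,2}$ (Proposition~\ref{pr:logscalemetric}) and declaring $|\cdot|$ on each $W_i$ to be their direct product yields a metric in the same Lipschitz class, hence still compatible with the Lipschitz structure, that is locally isometric to the product. Setting $\X_0=\bigsqcup_i W_i^\circ$ and $\X=\bigsqcup_i\overline{W_i^\circ}$ secures the first three assertions.

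It remains to build $\mS$. Each generator $g$ from Definition~\ref{def:hypflow} has a rectangular neighborhood that $\lambda$-contracts the stable plaque $\proj_+$ and $\lambda^{-1}$-expands the unstable plaque $\proj_-$. Refining such neighborhoods to rectangles subordinate to $\{R_i\}$ (possible as the covering is fine) and using the product metric, each splits as $A_F\times B_F$ with $A_F$ and $B_F^{-1}$ being $\lambda$-contractions; this gives conditions (1), (4), and (7). The delicate point is the fullness condition (2), that $\be(A_F)=A_i$ and $\en(B_F)=B_j$. Here I would exploit the one-sided contraction directly: since the stable projections are uniform contractions, for each $i$ the factor $A_i$ can be covered by finitely many stable plaques carried by rectangular elements into the rectangles $W_j$, and these can be assembled — using local diagonality to guarantee coherence of the product decompositions and compressibility of the covering (Definition~\ref{def:compressible} and Proposition~\ref{prop:compressible}, which hold for a Smale quasi-flow by condition (3) of Definition~\ref{def:hypflow}) — into rectangular generators whose stable domain is all of $A_i$. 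The dual construction, applied to the expanding direction via the inverses, forces $\en(B_F)=B_j$.

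Finally, conditions (3), (5), and (6) are obtained by the bookkeeping already used in the proof of Proposition~\ref{pr:hyperbolicgenset}: replacing $S$ by $S^n\cup S^{n+1}$ for suitable $n$ (and $\mS$ by a covering of the new generating set) preserves the generating property and the estimates on $\coc$ and on the contraction constants, while arranging $\be(S)=\en(S)=\X$ and the nonemptiness of the intersections of $\be(F)$ and $\en(F)$ with $\X$. The main obstacle is the fullness step: extending each generator so that $\be(A_F)=A_i$ and $\en(B_F)=B_j$ while keeping it a single rectangle with the correct \emph{one-sided} uniform contraction, since a careless extension in one direction can either break the product form or spoil the uniform constant $\lambda$; this must be driven by the local product structure together with the uniform hyperbolicity of Definition~\ref{def:hypflow} rather than carried out piecemeal.
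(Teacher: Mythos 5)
Your first half (localizing onto a finite disjoint union of open rectangles, replacing the metric by a product metric in the same Lipschitz class, and taking $\X_0$, $\X$ to be unions of subrectangles and their closures) agrees in outline with the paper's proof. The genuine gap is exactly at the point you flag as delicate: the fullness condition (2), $\be(A_F)=A_i$ and $\en(B_F)=B_j$. Your proposed mechanism --- covering $A_i$ by finitely many stable plaques carried by \emph{different} rectangular elements and then ``assembling'' them into a single rectangular generator --- is not a legitimate operation in a pseudogroup: a union of local homeomorphisms is again an element of the pseudogroup only when the pieces agree on overlaps and the union is injective, and distinct rectangular elements carrying nearby plaques have no reason to have coherent germs or even a common target rectangle. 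The tools you invoke do not supply this coherence: local diagonality (Definition~\ref{def:locdiagonal}) is a faithfulness statement about projections of single elements, and Proposition~\ref{prop:compressible} asserts equivalence of the projected pseudogroups for two compressible coverings --- neither says anything about gluing different generators into one. (Using compressibility here is also close to circular, since the paper obtains the existence of compressible coverings as a consequence of this very proposition.)

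The paper resolves the fullness problem in the opposite direction: instead of enlarging the generators to stretch across fixed rectangles, it shrinks the rectangles so that a \emph{single} extension of a generator already covers each one. Concretely: cover a generating set $S'$ by finitely many relatively compact rectangles $V$ extendable to rectangles $\wh V$; choose $\delta$ so that the $\delta$-neighborhoods of $\be(V)$ and $\en(V)$ lie in $\be(\wh V)$ and $\en(\wh V)$; choose $\epsilon<\min(\delta(\lambda^{-1}-1), \delta)$; take the rectangles $W$ of diameter less than $\epsilon$, and localize onto their $(\delta-\epsilon)$-neighborhoods $\wh W$ (which are rectangles because the metric is a max-product metric). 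Then for a germ $g\in V\cap S'$ with $\be(g)\in\overline{W_o}$, $\en(g)\in\overline{W_t}$, one gets $\be(\wh V)\supset\wh W_o$ and $\en(\wh V)\supset\wh W_t$, and the contraction estimate $\lambda\delta<\delta-\epsilon$ forces $\wh V(\proj_+(\wh W_o, x))\subset\proj_+(\wh W_t, \wh V(x))$. Hence the domain of the restriction $\wh V:\wh W_o\cap(\wh V)^{-1}(\wh W_t)\arr\wh W_t\cap\wh V(\wh W_o)$ is automatically a union of full stable plaques of $\wh W_o$, giving $\be(A_F)=A_i$; the same argument applied to $(\wh V)^{-1}$ in the unstable direction gives $\en(B_F)=B_j$. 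The order of quantifiers is the essential point you are missing: the covering by rectangles must be chosen \emph{after}, and small relative to, the extensions of the generators --- not fixed at the outset with the generators adapted to it afterwards.
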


See Figure~\ref{fig:generators} illustrating
Proposition~\ref{prop:generatorsflow}.

\begin{figure}
\includegraphics{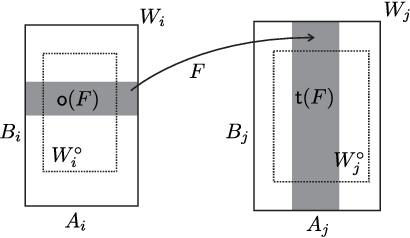}
\caption{}\label{fig:generators}
\end{figure}

\begin{proof}
Let $\Gh'$ be a Smale quasi-flow. Let $\X'$ be a
compact topological transversal of $\Gh'$. We can choose $\X'$ equal
to the union of a finite set of rectangles $\overline
R$ equal to the closure of an open rectangle
$R$ and contained in an open rectangle $\wh R$. Let us denote by
$\mathcal{R}$ the set of the rectangles $R$, and let $\wh{\mathcal{R}}$
be the set of the rectangles $\wh R$. Then the union of
the rectangles $R\in\mathcal{R}$ is a $\Gh'$-transversal, which we will denote
$\X_0'$. We assume that $\wh{\mathcal{R}} $ is fine.

According to Definition~\ref{def:compatiblelipsch}, we may
assume that the metric
$|\cdot|$ on $\X'$ is uniformly Lipschitz equivalent to metrics $|\cdot|_R$
on $\wh R$ for
$R\in\mathcal{R}$ such that
\[\left|[x_1, y_1]_R-[x_2, y_2]_R\right|_R=\max\{|x_1-x_2|_R,
|y_1-y_2|_R\}\]
for all $x_1, y_1, x_2, y_2\in\wh R$.

Passing, if necessary, to subrectangles and using Definition~\ref{def:ldps},
we may assume that for any two rectangles $R_i, R_j\in\mathcal{R}$
the products structures $[\cdot, \cdot]_i$ and
$[\cdot, \cdot]_j$ of the rectangles $\wh R_i$ and $\wh R_j$
agree on the intersection $\wh R_i\cap\wh R_j$.

Let $S'$ be a compact generating set of $\Gh'|_{\X'}$ satisfying the
conditions of Definition~\ref{def:hypflow}. We may assume (as usual, by
passing to $(S')^n\cup(S')^{n+1}$) that $S'$ satisfies the
contraction-expansion property with respect to the metrics $|\cdot|_R$.
Let us find a covering $\mathcal{V}$ of $S'$ by a finite number of
open rectangles $V$ such that the closure of each rectangle
$V\in\mathcal{V}$ is compact and extendable to a
rectangle $\wh V\in\wt{\Gh'}$; the rectangles $\wh V$ satisfy the
expansion-contraction condition with
a coefficient $\lambda\in (0, 1)$; and they are
subordinate to the covering $\{\wh R\;:\;R\in\mathcal{R}\}$.

Let $\delta$ be such that for all $V\in\mathcal{V}$ and $R_o,
R_t\in\mathcal{R}$ such that
$\be(V)\cap R_o\ne\emptyset$ and $\en(V)\cap R_t\ne\emptyset$,
the $\delta$-neighborhood of $\be(V)$ in $(\wh R_o, |\cdot|_{R_o})$
is contained in $\be(\wh V)$, and the $\delta$-neighborhood of
$\en(V)$ in $(\wh R_t, |\cdot|_{R_t})$ is contained in $\en(\wh V)$.
Such $\delta$ exists, since the metrics $|\cdot|_R$ are pairwise
Lipschitz equivalent
on intersections of their domains, and $\mathcal{V}$ is subordinate to
$\mathcal{R}$. Let now $\epsilon>0$ be such that
$\epsilon<\min(\delta(\lambda^{-1}-1), \delta)$.

We can find a finite set $\mathcal{W}=\{W_1, \ldots, W_n\}$
of open sub-rectangles of $\X'$ such that the closure $\overline W_i$ of each
$W_i\in\mathcal{W}$
is a subrectangle of a rectangle
$R(W)\in\mathcal{R}$ and is of diameter less than
$\epsilon$ (with respect to $|\cdot|_{R(W)}$),
$\X_0'=\bigcup_{W\in\mathcal{W}}W$, and $\X'=\bigcup_{W\in\mathcal{W}}\overline W$.

For every $W\in\mathcal{W}$ let $\wh W$ be the
$(\delta-\epsilon)$-neighborhood
of $\overline W$ in the metric space $(\wh{R(W)}, |\cdot|_{R(W)})$. Then, by the
properties of $|\cdot|_{R(W)}$, the set $\wh W$ is an open sub-rectangle of
$\wh{R(W)}$.

Let $\Gh$ be localization of $\Gh'$ onto the covering $\{\wh W_1,
\ldots, \wh W_n\}$. We will denote the elements
of the localization $\Gh$ by $(g, i, j)$ for $i, j\in\{1, \ldots,
n\}$ (see definition of localization in~\ref{ss:localization}).

 Let $\X_0$ be union of the copies of the rectangles
$W$, and let $\X$ be the closure of $\X_0$ (i.e., the union of the
copies of $\overline W$). Since every point of $\X_0'$ is an interior
point of a rectangle $W$, the set $\X_0$ is an open transversal of
$\Gh$.

Define a metric on $\Gh=\bigsqcup_{W\in\mathcal{W}}\wh W$ equal to
$|\cdot|_{R(W)}$ on every subset $\wh W$ (and, for example, equal to
one for any two points belonging to different elements of $\mathcal{W}$).

\begin{lemma}
The set $S$ of copies $(g, i, j)$ of elements $g\in S'$ such that $\be(g),
\en(g)\in\X$ is a generating set of $\Gh|_{\X}$ satisfying
condition (8).
\end{lemma}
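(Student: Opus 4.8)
The plan is to deduce generation of $\Gh|_{\X}$ from generation of $\Gh'|_{\X'}$ by lifting factorizations through the localization. First I would record the identification underlying the whole construction: viewed inside $(\Gh')^{(0)}$, the transversal $\X$ is the union $\bigcup_i\overline{W_i}$, which is exactly $\X'$ (recall $\X'=\bigcup_{W\in\mathcal{W}}\overline W$). Hence every element of $\Gh|_{\X}$ is a copy $(g,i,j)$ of a \emph{single} element $g\in\Gh'|_{\X'}$ with $\be(g)\in\overline{W_i}$ and $\en(g)\in\overline{W_j}$, while each generator in $S$ is a copy of an element of $S'$ whose origin and target lie in $\X'$.

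For the elementwise factorization I would use that $S'$ generates $\Gh'|_{\X'}$: any such $g$ is a product $g=t_1t_2\cdots t_N$ with $t_a\in S'\cup(S')^{-1}$. Because every $t_a$ already has both endpoints in $\X'$, all the junction points $\be(t_a)=\en(t_{a+1})$ lie in $\X'=\bigcup_i\overline{W_i}$; choosing for each junction an index $c(a)$ with that point in $\overline{W_{c(a)}}$ (and fixing the index at the two endpoints to be $i$ and $j$ respectively), each factor becomes a copy in $S\cup S^{-1}$ whose origin and target lie in $\X$. Composing these copies reproduces $(g,i,j)$, so $(g,i,j)\in\bigcup_{k\le N}(S\cup S^{-1})^k$. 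The same covering argument, together with $\be(S')=\en(S')=\X'$, gives $\be(S)=\en(S)=\X$: for a point of $\X$ sitting in a copy $\wh W_i$, pick $s'\in S'$ with target that point and take its copy into $\wh W_i$.

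To obtain a genuine compact generating pair (not merely elementwise products) I would upgrade this to a neighborhood statement, following the template of the proof of Proposition~\ref{pr:differnttransv}. Since $S'$ is a generating set, some bounded union $\bigcup_{k\le m}(S'\cup(S')^{-1})^k$ is a neighborhood of $g$ in $\Gh'|_{\X'}$; using that the sets $\wh W_i$ are open, the index assignment $c(\cdot)$ can be kept locally constant, so copies of elements near $g$ admit the same length bound with the same indices. Local boundedness of the word length then transfers, and Corollary~\ref{cor:relationspseudogroup} controls the passage between germs of the extended maps. Finally I would verify the remaining numbered property, condition~(8), by transporting the corresponding property of the generating set $S'$ through this same copy-lifting dictionary.

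The step I expect to be the real obstacle is exactly this upgrade from pointwise factorization to the neighborhood (locally bounded word-length) requirement in the definition of a generating pair, combined with the bookkeeping of copy indices at junction points that belong to several of the $\overline{W_i}$; both are to be controlled by the openness of the neighborhoods $\wh W_i$ and by the uniform length bounds already available for the Cayley graphs of $\Gh'$.
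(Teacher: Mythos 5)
Your proposal follows essentially the same route as the paper's proof: identify $\X$ with $\X'$ inside $(\Gh')^{(0)}$, take the neighborhood $U$ of $g$ and length bound coming from compact generation of $\Gh'|_{\X'}$, lift any factorization $h=g_1\cdots g_k$ of an element $h\in U\cap\Gh'|_{\X'}$ into a product of elements of $S\cup S^{-1}$ by choosing indices at the junction points (which lie in $\X'$ because $\be(S')=\en(S')=\X'$), and observe that $\{(h,s,r)\;:\;h\in U\}$ is a neighborhood of $(g,s,r)$ in the localization, so the locally bounded word length transfers. The step you flagged as the main obstacle is handled in exactly this way in the paper, and your invocation of Corollary~\ref{cor:relationspseudogroup} is not needed.
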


\begin{proof}
Let $(g, s, r)\in\Gh|_{\X}$, where $g\in\Gh'$ and $s, r\in\{1,
\ldots, n\}$ are such that
$\be(g)\in\wh W_o$ and $\en(g)\in\wh W_t$.  Note that by definition of
$\X$, we in fact have $\be(g)\in\overline W_o=\wh W_o\cap\X$ and
$\en(g)\in\overline W_t=\wh W_t\cap\X$. It follows that
$g\in\Gh'|_{\X'}$. Consequently, there exists a neighborhood $U\in\wt{\Gh'}$ of
$g$ and $n$ such that every element of $U\cap\Gh'|_{\X'}$ can be
represented as a product of not more than $n$ elements of $S'\cup
(S')^{-1}$. The set of elements of the form $(h, s, r)$, where $h\in
U$ is a neighborhood of $(g, s, r)$.

If $(h, s, r)$ belongs to $\Gh|_{\X}$, then $\be(h)\in\overline
W_o$ and $\en(h)\in\overline W_t$. For any representation
$h=g_1\cdots g_k$ of $h$ as a product of
elements of $S'\cup (S')^{-1}$ we can then find indices
$i_k=s, \ldots, i_1, i_0=r$ such that $\be(g_m)\in\overline W_{i_m}$, and
$\en(g_m)\in\overline W_{i_{m-1}}$ for all $m=1, 2, \ldots, m$.
Then $(h, s, r)=(g_1, i_1, i_0)(g_2, i_2, i_1)\cdots (g_m,
i_m, i_{m-1})$ is a representation of $(h, s, r)$ as a product of
elements of $S$. It follows that $S$ is a generating set of
$\Gh|_{\X}$.
\end{proof}

Let $(g, s, r)\in S$ for some $r, s\in\{1, \ldots,
n\}$ and $g\in S'$. Denote $R_o=R(W_o)$ and $R_t=R(W_t)$.

Suppose that $g\in V\in\mathcal{V}$.
The sets $\be(\wh V)$ and $\en(\wh V)$ contain the $\delta$-neighborhoods of
$\be(V)$ and $\en(V)$ in the
metric spaces  $(\wh R_o, |\cdot|_{R_o})$ and $(\wh R_t, |\cdot|_{R_t}$) respectively.
The sets $\wh W_o$ and $\wh W_t$ are the
$(\delta-\epsilon)$-neighborhoods of $\overline{W_o}$
and $\overline{W_t}$, which in turn are contained in the
$\epsilon$-neighborhoods of the points $\be(g)$ and $\en(g)$
respectively. It follows that $\wh W_o$ and $\wh W_t$ are contained in
the $\delta$-neighborhoods of $\be(g)\in R_o$ and $\en(g)\in R_w$ respectively,
which implies that $\be(\wh V)\supset\wh W_o$ and
$\en(\wh V)\supset\wh W_t$.

For every $x\in\overline{W_o}$ the plaque $\proj_+(\wh W_o, x)$
has diameter less than $\delta$ and is mapped
by $\wh V$ into the intersection of the $\lambda\delta$-neighborhood
of $\wh V(x)$ with $\proj_+(\wh W_t, \wh V(x))$. But since
$\lambda\delta<\delta-\epsilon$, the image $\wh V(\proj_+(\wh W_o, x))$
is contained in $\proj_+(\wh{W_t}, \wh V(x))$. The same is true for the
inverse of $\wh V$ and the plaques $\proj_-(\wh W_o, x)$,
$\proj_-(\wh W_t, \wh V(x))$, which implies that the set of maps of
the form
\[\wh V:\wh W_o\cap(\wh V)^{-1}(\wh W_t)\arr\wh W_t\cap\wh V(\wh
W_o)\]
such that $V\cap S'\neq\emptyset$ satisfy the conditions of
the proposition.
\end{proof}

\section[Geodesic quasi-flow as a Smale quasi-flow]{Geodesic quasi-flow of a hyperbolic groupoid as
a Smale quasi-flow}
\begin{theorem}
\label{th:geodflowhyperbolic}
Let $\G$ be a hyperbolic groupoid and suppose that the geodesic quasi-flow
$\partial\G\rtimes\G$ is locally diagonal (with respect to the
local product structure on $\partial\G$). Then $\partial\G\rtimes\G$ is a
Smale quasi-flow.
\end{theorem}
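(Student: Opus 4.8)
The plan is to verify, one at a time, the conditions of Definition~\ref{def:hypflow} for the groupoid $\Gh=\partial\G\rtimes\G$, taking as its unit space $\partial\G$ equipped with the local product structure furnished by Theorem~\ref{th:localproduct}. Several ingredients are already at hand: $\partial\G\rtimes\G$ is a Hausdorff groupoid of germs preserving this local product structure (Section~\ref{ss:actionofgonpg}), it is compactly generated (Proposition~\ref{prop:geodcompgen}), and condition (4), local diagonality, is precisely the hypothesis of the theorem. Thus the real work is to equip $\partial\G$ with a Lipschitz structure and a grading compatible with the local product structure, to exhibit a generating set with the contraction--expansion property (3), and to establish the quasi-flow condition (5).

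First I would build the metric. In a rectangle $R_{g,U}\approx\be(U)\times\til_g^\circ$ the two directions of the product are the $\G^{(0)}$-direction $\be(U)$ and the boundary direction $\til_g^\circ\subset\partial\G_{\be(g)}$. On the first factor I use (the log-scale of) the metric $|\cdot|$ on $\G^{(0)}$, and on the second the natural log-scale of Proposition~\ref{pr:topology}. The resulting product log-scales agree up to Lipschitz equivalence on overlaps, so Theorem~\ref{th:pastingLipschitz} pastes them into a single log-scale on a compact transversal of $\partial\G$, compatible with the local product structure in the sense of Definition~\ref{def:compatiblelipsch}. Elements of $\wt\Gh$ are locally Lipschitz since they split into a $\proj_+$-part acting through $\pG$ on $\G^{(0)}$ (locally Lipschitz) and a $\proj_-$-part that is Lipschitz on the boundaries by Proposition~\ref{pr:injective}. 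For the grading I set $\coc(\xi,g)=\beta(g)$, the pullback of the Busemann quasi-cocycle; it is an $\eta$-quasi-cocycle, and its compatibility with the product (Definition~\ref{def:compatiblegraded}) holds because $\coc$ sees only the $\G$-coordinate, which pins down the level. Taking $S$ complete (Proposition~\ref{pr:hyperbolicgenset}) and the generating set $T\times_P S$ of Proposition~\ref{prop:geodcompgen}, conditions (1) ($\coc>3\eta$) and (2) ($\be(S)=\en(S)$ equal the transversal) are then immediate.

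Next comes the contraction--expansion condition (3). A generator $(\xi,g)$ with $g\in S$ is realized in $\wt\Gh$ by the local homeomorphism $\xi g\mapsto\xi$, i.e.\ $\eta\mapsto\eta g^{-1}$. On the stable ($\proj_+$, i.e.\ $\G^{(0)}$) direction this is the $\lambda$-contraction $U\in\pG$ containing $g$, so $\proj_+$ is contracted by $\lambda$. On the unstable ($\proj_-$, i.e.\ $\partial\G_x$) direction the map lowers the base level of a boundary representation by $\coc(g)$; since the natural log-scale of Proposition~\ref{pr:topology} records the confluence level, this decreases it by $\coc(g)>3\eta$, hence expands the boundary metric by $\approx\alpha^{-\coc(g)}\ge\lambda^{-1}$ (after, if necessary, passing to a power $S^n\cup S^{n+1}$ to synchronize the two constants). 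Thus every generator contracts $\proj_+$ and expands $\proj_-$, which is (3).

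The main obstacle is condition (5): for every $r$ and every compact $A\subset\partial\G$ the set of $\mathbf g\in\Gh$ with $\be(\mathbf g),\en(\mathbf g)\in A$ and $|\coc(\mathbf g)|\le r$ must be relatively compact; equivalently, $\coc$ must realize each Cayley graph of $\Gh$ as a quasi-isometry with $\R$. The lower bound is free, since each edge changes $\coc$ by a bounded amount, so $|\coc(\mathbf g)-\coc(\mathbf h)|$ controls the Cayley distance from below. The substance is the reverse bound, namely that the $\coc$-level sets have uniformly bounded diameter in the Cayley graph of $\Gh$. Here I would use that directed paths in $\Gh$ are quasi-geodesics (Lemma~\ref{lem:quasigeodesic}, as $\coc$ increases by more than $3\eta$ per step) and that, by the hyperbolicity of $\G$ together with the stable contraction, all ascending flow-paths issuing from a compact region converge to a single end, so that two vertices at a common level lying over a compact part of $\partial\G$ are joined by a path of bounded length. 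This is exactly where local diagonality is indispensable: it forces the flow action to be faithful enough that distinct same-level vertices over one transversal point cannot persist, collapsing the level sets to bounded diameter. Granting this, compact generation (Proposition~\ref{prop:geodcompgen}) converts the $\coc$-bound into a word-length bound, and hence into relative compactness, completing the verification.
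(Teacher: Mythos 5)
Your construction of the Lipschitz structure, the lifted quasi-cocycle $\wt\coc(\xi,g)=\coc(g)$, and the contraction--expansion property of the generators (via the displacement formula $\ell_{\be(g)}(\zeta_1\cdot g,\zeta_2\cdot g)\doteq\ell_{\en(g)}(\zeta_1,\zeta_2)+\coc(g)$, passing to $S^n\cup S^{n+1}$ to synchronize constants) all track the paper's proof. The genuine gap is in condition (5), which you correctly single out as the main obstacle but then justify by the wrong mechanism. You claim local diagonality is ``indispensable'' there, forcing ``distinct same-level vertices over one transversal point'' to collapse. This would not work: local diagonality is a statement about a single germ (endpoints in one rectangle and one trivial projection imply it is a unit); it says nothing about \emph{families} of germs with bounded $\wt\coc$, and it is perfectly compatible with infinitely many isotropy elements having both projections nontrivial at a bounded level, so it cannot by itself yield relative compactness or bounded diameter of level sets. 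In the paper, local diagonality appears \emph{only} as condition (4) of Definition~\ref{def:hypflow} --- i.e., it is literally the hypothesis of the theorem and plays no role elsewhere. Condition (5) is instead proved from Gromov hyperbolicity of $\G$ alone: if $\xi_1\cdot g=\xi_2$ with $\xi_1,\xi_2$ in a compact set covered by finitely many rectangles $R_{h_i,U_i}$ and $|\coc(g)|\le r$, write $\xi_2=\ldots g_2'g_1'\cdot h_ig=\ldots g_2''g_1''\cdot h_j$ as limits of two ascending quasi-geodesic paths converging to the same boundary point; by quasi-geodesic rigidity (Theorem~\ref{th:rigidquasigeodesics}, via Theorem~\ref{th:contraction}) these paths are eventually uniformly close, and the quasi-cocycle inequalities then bound the $S$-word length of $g$ by a function of $r$ and the covering, which gives compact closure. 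Your sentence ``two vertices at a common level \ldots are joined by a path of bounded length'' is exactly the right statement, but its proof is this rigidity argument, not any faithfulness of the flow action.

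A secondary gap: compatibility of $\wt\coc$ with the local product structure (Definition~\ref{def:compatiblegraded}) does not follow just from ``$\coc$ sees only the $\G$-coordinate.'' That remark disposes of pairs whose projections in the $\G^{(0)}$-direction agree, since such pairs share the same germ $g$ of $\G$. The nontrivial case is a pair $(\xi_1,g_1),(\xi_2,g_2)$ with $g_1\ne g_2$ inducing the same germ on the boundary fibers; there one must show $|\coc(g_1)-\coc(g_2)|$ is uniformly bounded, which the paper obtains from Proposition~\ref{pr:injective} together with the displacement formula above. As written, your proposal omits this half of the verification.
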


\begin{proof}
By Proposition~\ref{prop:geodcompgen}, the geodesic quasi-flow
$\partial\G\rtimes\G$ is compactly generated. Let $(S, \X)$ be a
generating pair of $\G$ satisfying the conditions of
Proposition~\ref{pr:hyperbolicgenset}, and consider the atlas of the local
product structure on $\partial\G$ defined in
the proof of Theorem~\ref{th:localproduct}.

Denote by $\ell_{\be(h)}(\xi_1, \xi_2)$
the natural log-scale on $\partial\G_{\be(h)}$ (as defined in
Proposition~\ref{pr:topology}).
Let us define then on each rectangle $R_{h, U}$ the log-scale
\[\ell_{h, U}([x_1, \xi_1], [x_2, \xi_2])=
\min\{\ell_{\be(h)}(\xi_1, \xi_2), \ell(x_1, x_2)\},\]
where $\ell$ is the log-scale defining the Lipschitz structure of $\G$.

Note that by Proposition~\ref{pr:injective} there
exists a uniform constant $\Delta$ such that
\[|\ell_y([y, \xi_1]_U, [y, \xi_2]_U)-\ell_{\be(h)}(\xi_1,
\xi_2)|\le\Delta\]
for all $U$, $\xi_1, \xi_2$, and $y$ for which the corresponding
expressions are defined.

Suppose that $g\in\G$ is such that $\en(g)\in\be(U_i)$ and
$\be(g)\in\be(U_j)$, so that $g$ moves the plaque $[\en(g),
\til_{h_i}^\circ]_{U_i}\subset\partial\G_{\en(g)}$ of $R_{h_i, U_i}$ to the
plaque $[\be(g), \til_{h_i}^\circ]_{U_j}\subset\partial\G_{\be(g)}$ of
$R_{h_j, U_j}$. For all $\zeta_1, \zeta_2\in\partial\G_{\en(g)}$ we
have
\begin{equation}
\label{eq:nataction}
\ell_{\be(g)}(\zeta_1\cdot g, \zeta_2\cdot
g)\doteq\ell_{\en(g)}(\zeta_1, \zeta_2)+\coc(g),
\end{equation}
by the definition of the natural log-scale.

It follows that for all $\zeta_1, \zeta_2\in R_{h_i, U_i}$ such that
$\proj_+(\zeta_i)=\en(g)$ and $\zeta_1\cdot g, \zeta_2\cdot g\in
R_{h_j, U_j}$ we have
\begin{equation}
\label{eq:ellonpartial}
\ell_{h_j, U_j}(\zeta_1\cdot g, \zeta_2\cdot g)\doteq\ell_{h_i U_i}(\zeta_1,
\zeta_2)+\coc(g).
\end{equation}

It follows that the log-scales $\ell_{h_i, U_i}$ define a Lipschitz
structure on $\partial\G\rtimes\G$.

Let us define a quasi-cocycle on $\partial\G\rtimes\G$ just by
lifting it from the quasi-cocycle $\coc$ on $\G$:
\begin{equation}\label{eq:liftofcoc}\wt\coc(\xi, g)=\coc(g).\end{equation}
Let us show that this quasi-cocycle is compatible with the local product
structure. We will use the covering by the rectangles $R_{h_i, U_i}$ again.
If two elements of $\partial\G\rtimes\G$ have equal projections onto
the second coordinate of the local product structure,
then they are of the form $(\xi_1, g)$ and
$(\xi_2, g)$, hence the values of $\wt\coc$ on them are equal.

If two elements have equal projections on the first coordinate, then
the corresponding elements $g_1$ and $g_2$ of $\G$ act on the
corresponding factors $\til_{h_i}^\circ\arr\til_{h_j}^\circ$ of the
rectangles by transformations coinciding on some non-empty open
sets. But then using Proposition~\ref{pr:injective}
and~\eqref{eq:nataction} we get a universal upper bound on the difference
$|\coc(g_1)-\coc(g_2)|$, which shows that the quasi-cocycle is compatible
with the local product structure.

It follows from~\eqref{eq:ellonpartial} that
after passing, if necessary, to $\mS^n\cup\mS^{n+1}$, we will
obtain a generating set of $\G$ acting by expanding maps on the first
direction, and contracting maps on the second direction of the rectangles
$R_{h_i, U_i}$.

It remains to show that the last condition of
Definition~\ref{def:hypflow} is satisfied. Let $A\subset\partial\G$
be a compact set, and let $r>0$. We can cover the set $A$ by a finite
set of rectangles $R_{h_i, U_i}$ for $h_i\in\G$ and
$U_i\in\pG$. Suppose that $\xi_1\cdot g=\xi_2$ for some $\xi_1,
\xi_2\in A$ and $g\in\G$ such that $|\coc(g)|<r$. Let $\xi_1=\ldots
g_2'g_1'\cdot h_i$ and $\xi_2=\ldots g_2''g_1''\cdot h_j$ for some
$g_k', g_k''\in S$. Then $\ldots g_2'g_1'\cdot h_ig=\ldots
g_2''g_1''\cdot h_j$. Let $M$ be the maximal value of $\coc$ on an
element of $S$ and suppose that $\coc(g)$ is non-negative.
There exists a uniform constant $\Delta_1$ and an index
$k$ such that
\[0\le\coc(h_ig)-\coc(g_k''\cdots
g_2''g_1''\cdot h_j)\le 2M,\quad\text{and}\quad |g_k''\cdots
g_2''g_1''h_j\cdot g^{-1}h_i^{-1}|\le\Delta_1,\]
where $|\cdot|$ denotes the length of the shortest representation as a
product of elements of $S\cup S^{-1}$.
But then
\[|g|\le |h_j|+k+\Delta_1+|h_i|,\]
and \[(k-1)\eta\le\coc(g_k''\cdots
g_1'')\le\coc(g)+\coc(h_i)-\coc(h_j)+2M+3\eta,\]
hence we get a uniform bound on $k$ and $|g|$, which implies
condition (5) of Definition~\ref{def:hypflow}.
\end{proof}

\section{Hyperbolicity of the Ruelle groupoids}

\begin{defi}
Let $\G$ be a Smale quasi-flow. Its \emph{Ruelle groupoids}
\index{Ruelle groupoid} \index{groupoid!Ruelle}
$\proj_i(\G)$ for $i=+, -$ are the groupoids $\proj_i(\G,
\mathcal{R})$, where $\mathcal{R}$ is a compressible
covering of a topological $\G$-transversal by rectangles.
\end{defi}

By Proposition~\ref{prop:generatorsflow}, compressible coverings exist. By
Proposition~\ref{prop:compressible}, Ruelle groupoids
$\proj_i(\G)$ are unique, up to an equivalence of groupoids.

Since the Lipschitz structure on $\G^{(0)}$ is compatible with the
local product structure, we get naturally defined Lipschitz
structures on the Ruelle groupoids.

\begin{theorem}
\label{th:Ruellehyperbolic}
Let $\Gh$ be a Smale quasi-flow and let $\coc:\Gh|_{\X}\arr\R$ be
an $\eta$-quasi-cocycle compatible the grading, where $\X$ is a compact
topological transversal. Let $\mathcal{R}$ be a finite compressible
covering of $\X$ by open rectangles. Then there exists a
quasi-cocycle $\coc_+:\proj_+(\Gh, \mathcal{R})\arr\R$
such that the groupoid $\proj_+(\Gh, \mathcal{R})$ is hyperbolic,
$\coc_+$ is a Busemann cocycle, and $|\coc_+(\proj_+(g))-\coc(g)|$ is
uniformly bounded for all $g\in\Gh|_{\X}$.
\end{theorem}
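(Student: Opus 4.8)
The plan is to exhibit a complete generating set for $\proj_+(\Gh,\mathcal{R})$ whose Cayley graphs satisfy the hyperbolicity criterion of Theorem~\ref{th:contraction}, and to take for $\coc_+$ the function obtained by pushing $\coc$ through the projection $\proj_+$. First I would put $\Gh$ into the normal form of Proposition~\ref{prop:generatorsflow}: a finite generating set $\mS$ of rectangles $F=A_F\times B_F$ with $A_F$ a $\lambda$-contraction, $B_F^{-1}$ a $\lambda$-contraction, and $\coc(g)>2\eta$ for every germ $g\in F$. The germs $\proj_+(g)$ for $g\in F$ are exactly the maps $A_F$, and they form a generating set $S_+$ of $\proj_+(\Gh,\mathcal{R})|_{\proj_+(\X)}$ consisting of $\lambda$-contractions, with $\be(S_+)=\en(S_+)=\proj_+(\X)$; this already yields conditions (1), (2) and (4) of Definition~\ref{def:hyperbolic}, local Lipschitzness coming from the compatibility of the Lipschitz structure with the local product structure. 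I would then define $\coc_+$ by $\coc_+(\proj_+(g))=\coc(g)$: Definition~\ref{def:compatiblegraded} (compatibility of the grading with the local product structure) guarantees this is well defined up to a uniformly bounded error, so $|\coc_+(\proj_+(g))-\coc(g)|$ is bounded, and the quasi-cocycle identity for $\coc_+$ is inherited from that of $\coc$. Passing, as usual, from $\mS$ to $\mS^n\cup\mS^{n+1}$, I may also arrange $\coc_+(s)>2\eta$ for every $s\in S_+$.

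The heart of the proof is the verification of condition (2) of Theorem~\ref{th:contraction} for the Cayley graph $\Gamma_+=\proj_+(\Gh,\mathcal{R})(x,S_+)$, oriented so that $\coc_+$ increases along arrows and infinite directed paths run toward a distinguished end $\omega_x$. The key device is to lift $\Gamma_+$ to the Cayley graphs of $\Gh$. Any vertex $u=\proj_+(g)$ of $\Gamma_+$ lifts to a germ $g\in\Gh$ with $\proj_+(\be(g))=x$, and any directed $S_+$-labelled path lifts to an $\mS$-labelled directed path in a single Cayley graph $\Gh(\xi,S)$, with $\coc(w_i)\doteq\coc_+(u_i)$ along the lift. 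Condition (5) of Definition~\ref{def:hypflow} makes $\coc$ a quasi-isometry of each $\Gh(\xi,S)$ with $\R$; hence two lifted vertices lying over vertices $u_i,v_j$ of $\Gamma_+$ at comparable $\coc_+$-levels are within bounded $\Gh$-distance \emph{once} they are placed in a common Cayley graph. I would use the local product structure (the holonomy between plaques over the common stable coordinate $x$), together with the properness condition (5), to transport the two lifts, which a priori sit over different unstable fibres, into one Cayley graph $\Gh(\xi,S)$ at the cost of a uniformly bounded connecting element, and then read off $|u_i-v_j|_{\Gamma_+}\le\mathrm{const}$ from $|w_i-w'_j|_{\Gh}\le\mathrm{const}$. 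Here local diagonality (Definition~\ref{def:locdiagonal}, i.e.\ condition (4) of Definition~\ref{def:hypflow}) is precisely what prevents $\proj_+$ from collapsing distances, so that bounded $\Gh$-distance does force bounded $\Gamma_+$-distance.

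Once condition (2) is checked, Theorem~\ref{th:contraction} yields at once that every $\Gamma_+$ is $\delta$-hyperbolic with a uniform $\delta$ and that $\coc_+$ is coarsely equivalent to the Busemann quasi-cocycle $\beta_{\omega_x}$; combined with the contraction of the generators $A_F$ and with $\be(S_+)=\en(S_+)=\proj_+(\X)$, this verifies all five clauses of Definition~\ref{def:hyperbolic}, so $\proj_+(\Gh,\mathcal{R})$ is hyperbolic and $\coc_+$ is, by definition, a Busemann cocycle of it. The bound $|\coc_+(\proj_+(g))-\coc(g)|<\infty$ was already secured in the first step. I expect the main obstacle to be the comparison of $\Gh$-distance with $\Gamma_+$-distance in the presence of the unstable fibres, i.e.\ transporting the two lifted paths into a common Cayley graph of $\Gh$ and uniformly bounding the connecting element, where properness (5) and local diagonality (4) must be combined with care. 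A secondary technical point is the uniformity of all the constants in $x$, which follows as usual from compactness of $\X$ and finiteness of $\mS$.
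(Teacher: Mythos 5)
Your overall strategy is the same as the paper's (normalize via Proposition~\ref{prop:generatorsflow}, transfer $\coc$ through $\proj_+$, and verify condition (2) of Theorem~\ref{th:contraction} by lifting directed paths to $\Gh$), but there are genuine gaps at the two places where the real work happens. First, the formula $\coc_+(\proj_+(g))=\coc(g)$ does not define a function on the Ruelle groupoid, because $\proj_+:\Gh\arr\proj_+(\Gh,\mathcal{R})$ is not surjective on germs: a word in the projected generators can be nonempty at a stable coordinate while the corresponding word in $\Gh$ is empty at every point of the fibre, since composability upstairs also constrains the unstable coordinates, which the generators expand. Concretely, for the two-sided shift of finite type, $\Gh$ consists of germs of powers of the shift, while the Ruelle groupoid is the Cuntz--Krieger groupoid, whose elements of the form $S_0S_1^{-1}$ (change the first letter) are not projections of any element of $\Gh$; your $\coc_+$ is simply undefined on them. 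Definition~\ref{def:compatiblegraded} only controls the fibres of $\proj_+$ over germs that do lift; it cannot produce values on germs with no lift, and extending $\coc_+$ additively over words would require showing that different words representing the same germ get values within a bounded distance of each other --- which is not automatic. This is exactly why the paper first proves the normal-form lemma (Lemma~\ref{lem:reductiontop}): every $g\in\proj_+(\Gh)$ can be written as $\proj_+(G_l''\cdots G_1'')^{-1}\proj_+(rG_k'\cdots G_1')$ with $G_i',G_i''\in\mS^{-1}$ and $r$ in a fixed compact set, and $\coc_+(g)$ is defined from the values of $\coc$ on the two lifted descending parts, followed by a lengthy computation showing independence of all choices up to a uniform constant. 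That lemma is also precisely the ``transport into a common Cayley graph with a uniformly bounded connecting element'' that you name as the main obstacle and then defer: its proof conjugates by long products of elements of $\mS^{-1}$ (which contract the unstable direction), applies properness (condition (5) of Definition~\ref{def:hypflow}) to land in a compact set, and runs a geometric-series estimate over a finite Lipschitz cover of that set to keep the transported composition defined. Without it, neither your definition of $\coc_+$ nor your verification of condition (2) of Theorem~\ref{th:contraction} can be completed.

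Second, you never verify that $\proj_+(\Gh,\mathcal{R})$ is Hausdorff, which Definition~\ref{def:hyperbolic} explicitly requires and which is not automatic for the groupoid of germs of a generated pseudogroup; the paper devotes a separate argument to this, again resting on Lemma~\ref{lem:reductiontop}. This is also where local diagonality (condition (4) of Definition~\ref{def:hypflow}) actually enters the proof: it promotes triviality of a projected germ to triviality of the germ upstairs. Your proposed use of local diagonality --- to ensure that bounded $\Gh$-distance of lifts forces bounded distance in the Ruelle Cayley graph --- is misplaced: that implication is automatic, since $\proj_+$ sends products of generators of $\Gh$ to products of generators of the Ruelle groupoid and is therefore $1$-Lipschitz between the Cayley graphs; no injectivity-type hypothesis is needed there.
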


\begin{proof}
Let us pass to an equivalent groupoid satisfying the conditions of
Proposition~\ref{prop:generatorsflow} and use its notations.
The fact that the generating set $\proj_+(S)$ is a set of germs of contractions
follows directly from the definitions.

Let $\Delta_1$ be such that $\coc(g)\le\Delta_1-\eta$ for all $g\in F\in\mS$.
Let $\X'$ be a compact neighborhood of $\X$, and let
$\nuke$ be the closure of the set of elements $g\in\Gh$ such that
$|\coc(g)|<\Delta_1$ and $\be(g), \en(g)\in\X'$.
There exists $\delta_0>0$ and a finite collection
$\mathcal{U}\subset\wt{\Gh}$
such that for every $g\in\nuke$ there exists $U\in\mathcal{U}$ such that
$g\in U$, and $\be(U)$ and $\en(U)$ contain the
$\delta_0$-neighborhoods of $\be(g)$ and $\en(g)$ respectively.
We assume that the elements $U\in\mathcal{U}$ are $\Lambda$-Lipschitz
for some common constant $\Lambda$.

Let $\proj_+(s_1)\cdots\proj_+(s_n)$ be any product of elements of
$\proj_+(S)\cup\proj_+(S^{-1})$. Let
$F_i\in\mS\cup\mS^{-1}$ be such that $s_i\in F_i$.
Let $R_1, \ldots, R_{n+1}\in\mathcal{R}$ be such
that $\be(F_n)\subset R_{n+1}$, $\en(F_n)\cup\be(F_{n-1})\subset
R_n$, \ldots, $\en(F_1)\subset R_1$ (see Figure~\ref{fig:ruelle}).

\begin{figure}
\centering
\includegraphics{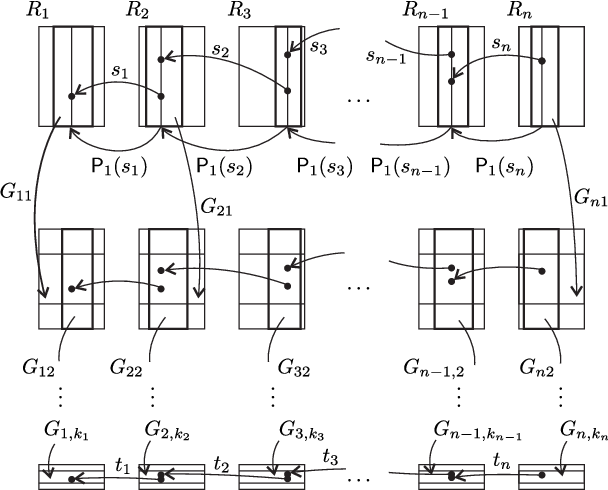}
\caption{}\label{fig:ruelle}
\end{figure}

For every $i=1, \ldots, n+1$ consider a sequence $G_{ik}\in\mS^{-1}$ such
that \[\{\en(s_i), \be(s_{i-1})\}\subset\be(G_{ik}\cdots
G_{i2}G_{i1})\] and $G_{ik}\cdots G_{i1}(\en(s_i))\in\X$. Such
sequences exist by conditions (2), (5), (6) of
Proposition~\ref{prop:generatorsflow}. Then
$\proj_-(G_{ik}\cdots G_{i1})$ are $\lambda^k$-contractions, which are
defined on whole side $\proj_-(R_i)$ of the rectangle $R_i$.

We will use notation
\[\prod_l^nG_i=G_{i, n}G_{i, n-1}\cdots G_{i, l}.\]

We can find a sequence $k_1, k_2, \ldots, k_{n+1}$ such that
the values of $\coc$ on the elements of the
set \[\left(\prod_1^{k_i}G_i\right)F_i\left(\prod_1^{k_{i+1}}G_{i+1}\right)^{-1}\]
are not more than $\Delta_1$ in absolute value.

Moreover, we can make the indices $k_i$ as large as we wish.
We may hence assume that diameters of
$\proj_-\left(\en\left(\prod_1^{k_i}G_i\right)\right)$ are less than
$\epsilon=\delta_0\frac{\Lambda-1}{\Lambda^n-\Lambda}$.

If $k_i$ and $k_{i+1}$ are big enough, then the germs
\[t_i=\left(\prod_1^{k_i}G_i\right)s_i\left(\prod_1^{k_{i+1}}G_{i+1}\right)^{-1}\]
are such that $\be(t_i), \en(t_i)\in\X'$, hence $t_i\in\nuke$.
Consequently, there exist $\Lambda$-Lipschitz
elements $H_i\in\mathcal{U}$ such that $t_i\in H_i$
and $\be(H_i)$ and $\en(H_i)$ contain the $\delta_0$ neighborhoods
of $\be(t_i)$ and $\en(t_i)$ respectively.

Distance between $\en(t_{i+1})$ and $\be(t_i)$ is
less than $\epsilon$. It follows that
\begin{eqnarray*}
|H_n(\be(t_n))-\be(t_{n-1})|<&\epsilon&<\delta_0,\\
|H_{n-1}H_n(\be(t_n))-\be(t_{n-2})|<&\Lambda\epsilon+\epsilon&<\delta_0,\\
\vdots\\
|H_2H_3\cdots H_n(\be(t_n))-\be(t_1)|<&\Lambda^{n-2}\epsilon+\cdots+\Lambda\epsilon+\epsilon&<\delta_0,
\end{eqnarray*}
hence $H_1\cdots H_n(\be(t_n))$ is defined, and
\[
|H_1H_2\cdots H_n(\be(t_n))-\en(t_n)|<\Lambda^{n-1}\epsilon+\Lambda^{n-2}\epsilon+\cdots+\Lambda\epsilon=\delta_0.\]

Since the maps $F_i$ and $G_{i, j}$ are rectangles, i.e., agree with
the product structure on the rectangles of $\Gh^{(0)}$, we get that
\[\proj_+\left(\prod_1^{k_1}G_1\right)\proj_+(s_1)\cdots
\proj_+(s_n)\proj_+\left(\prod_1^{k_{n+1}}G_{n+1}\right)^{-1}=
\proj_+(r_1\cdots r_n)\] for some germs $r_i$ of $H_i$ such that
$\be(r_n)=\be(t_n)$.

By the same argument,
\[\proj_+\left(\prod_1^{k_1}G_1\right)\proj_+(s_1)\cdots
\proj_+(s_n)\proj_+\left(\prod_1^{k_{n+1}}G_{n+1}\right)^{-1}=
\proj_+(r_1'\cdots r_n'),\]
for some germs $r_i'$ of $H_i$ such that $\en(r_1')=\en(t_1)$.

We can find $l_1>k_1$ and $l_{n+1}>k_{n+1}$
such that \[\left|\coc\left(\prod_{k_1+1}^{l_1}G_1\cdot
r_1\cdots
r_n\cdot\left(\prod_{k_{n+1}+1}^{l_{n+1}}G_{n+1}\right)^{-1}\right)\right|
\le\Delta_1,\]
and hence
\[\prod_{k_1+1}^{l_1}G_1\cdot r_1\cdots
r_n\cdot\left(\prod_{k_{n+1}+1}^{l_{n+1}}G_{n+1}\right)^{-1}\in\nuke.\]
We have proved therefore the following result.

\begin{lemma}
\label{lem:reductiontop}
For every $g\in\proj_+(\Gh)$, and every pair of sequences $G_i',
G_i''\in\mS^{-1}$ such that $\be(g)\in\be(G_i'\cdots G_1')$,
$\en(g)\in\be(G_i''\cdots G_1'')$, $G_i''\cdots G_1''(\en(g))\in\X$,
there exist $l, k\ge 1$ and $r\in Q$ such
that \[g=\proj_+(G_l''\cdots G_1'')^{-1}\proj_+(rG_k'\cdots G_1').\]
Moreover, $r$ and $k$ exist for all sufficiently big $l$, and $r$ and
$l$ exist for all sufficiently big $k$.
\end{lemma}

Suppose that the groupoid $\proj_+(\Gh)$ is not Hausdorff.
Then there exist two elements $h_1, h_2$ that
can not be separated by neighborhoods, hence there exists an element
$g=h_1^{-1}h_2\in\proj_+(\Gh)$ such that $\be(g)=\en(g)$, and
$g$ is not a unit, but $g$ can not be separated by disjoint
neighborhoods from the unit $\be(g)$. This means that for every
neighborhood $U\in\wt{\proj_+(\Gh)}$ of $g$ there exists an open
subset $W\subset U$ which is an identical homeomorphism.

By Lemma~\ref{lem:reductiontop}, if $G_1, G_2, \ldots$ is a sequence of elements
of $\mS$ such that $\be(g)=\en(g)\in\be(G_kG_{k-1}\cdots G_1)$ and
$G_kG_{k-1}\cdots G_1(\be(g))\in\X$ for all $k$, then there exist
$k$ and $l$, and $r, r'\in\nuke$ such that
\[g=\proj_+(G_k\cdots G_1)^{-1}\proj_+(r)\proj_+(G_l\cdots
G_1),\quad\be(r)\in\en(G_l\cdots G_1)\]
and
\[g=\proj_+(G_k\cdots G_1)^{-1}\proj_+(r')\proj_+(G_l\cdots
G_1),\quad\en(r')\in\en(G_k\cdots G_1).\]

For any neighborhood $W\in\wt{\Gh}$ of $r$ the map \[W_1=\proj_+(G_k\cdots
G_1)^{-1}\proj_+(W)\proj_+(G_l\cdots G_1)\] is a neighborhood of $g$
belonging to $\wt{\proj_+(\Gh)}$. Hence, $\be(g)$ is a limit of a
sequence of trivial germs of $W_1$.

If $l>k$, then
\begin{multline*}
W_1=\proj_+(G_k\cdots G_1)^{-1}\proj_+(W)\proj_+(G_l\cdots G_1)=\\
\proj_+(G_k\cdots G_1)^{-1}\bigl(\proj_+(WG_l\cdots
G_{k+1})\bigr)\proj_+(G_k\cdots G_1),
\end{multline*}
hence the trivial germs of the transformation $\proj_+(WG_l\cdots
G_{k+1})$ accumulate on $x_k=\proj_+(G_k\cdots G_1)(\be(g))$. But then, by
condition (4) of Definition~\ref{def:hypflow}, the trivial germs of the
transformation $WG_l\cdots G_{k+1}$ accumulate on the points of the set
$\proj_+^{-1}(x_k)\cap\be(WG_l\cdots G_{k+1})$.

Consider the point $z=\be((G_l\cdots G_1)^{-1}r)$. Then $z\in\be(G_k\cdots
G_1)\cap\proj_+^{-1}(\be(g))$. The point $G_k\cdots G_1(z)$ belongs to
$\be(WG_l\cdots G_{k+1})$ and $WG_l\cdots G_1(z)=G_k\cdots G_1(z)$,
since the trivial germs of $WG_l\cdots G_{k+1}$ accumulate on
$G_k\cdots G_1(z)$. Then the local homeomorphism
$(G_k\cdots G_1)^{-1}WG_l\cdots G_1(z)$ is
defined (i.e., is not empty),
hence $g=\proj_+((G_k\cdots G_1)^{-1}rG_l\cdots G_1)$. It
follows that every neighborhood of $(G_k\cdots G_1)^{-1}rG_l\cdots
G_1$ contains trivial germs, which by Hausdorffness of $\Gh$ implies
that $(G_k\cdots G_1)^{-1}rG_l\cdots
G_1$ is trivial, hence $g$ is also trivial, which is a contradiction.
The case $k<l$ is treated in the same way, but using $r'$.
Thus, $\proj_+(\Gh)$ is Hausdorff.

Let us define a grading on $\proj_+(\Gh)$. Consider an arbitrary element
$g\in\proj_+(\Gh)$. According to
Lemma~\ref{lem:reductiontop}, for any two sequences $G_1',
G_2', \ldots$ and $G_1'', G_2'', \ldots$ of elements of $\mS^{-1}$ such that
$\be(g)\in\be(G_i'\cdots G_1')$, $\en(g)\in\be(G_i''\cdots G_1'')$,
and $G_i''\cdots G_1''(\en(g))\in\X$ there exist $k, l\ge 1$ and
$r\in\nuke$ such that
\[g=\proj_+(G_l''\cdots G_1'')^{-1}\proj_+(rG_k'\cdots G_1').\]
Define
\[\coc_+(g)=\coc(G_k'\cdots G_1', z_1)-\coc(G_l''\cdots G_1'', z_2),\]
where $z_1$ and $z_2$ are arbitrary points in the domains of the
corresponding homeomorphisms. Since we assume that the covering
$\mathcal{R}$ by rectangles of $\Gh$ (see
Proposition~\ref{prop:generatorsflow}) is fine, the value of $\coc_+(g)$,
up to a uniformly bounded constant, does not depend on the choice of
the points $z_1$ and $z_2$.

Let us show that $\coc_+$ is well defined, up to strong equivalence.
Suppose that $H_1', H_2', \ldots$, $H_1'', H_2'', \ldots$, is another
pair of sequences of elements of $\mS$, and let $m, n\in\N$, and
$t\in\nuke$ are such that
\[g=\proj_+(H_n''\cdots H_1'')^{-1}\proj_+(tH_m'\cdots H_1').\]

We can find indices $k_1>k$, $l_1>l$, $m_1>m$, and $n_1>n$ such that
all the differences
\begin{eqnarray*}
|\coc(G_{k_1}'\cdots G_{k+1}', y_1)&-&\coc(G_{l_1}''\cdots G_{l+1}'', y_2)|,\\
|\coc(H_{m_1}'\cdots H_{m+1}', y_3)&-&\coc(H_{n_1}''\cdots H_{n+1}'', y_4)|,\\
|\coc(G_{k_1}'\cdots G_1', y_5)&-&\coc(H_{m_1}'\cdots H_1', y_6)|
\end{eqnarray*}
are less than $\Delta_1+2c$,
where $y_i$ are arbitrary points in the domains of
the corresponding maps, and $c$ is as in
Definition~\ref{def:compatiblegraded}

Then the elements
\begin{gather*}
r'=G_{k_1}'\cdots G_{k+1}'r^{-1}(G_{l_1}''\cdots G_{l+1}'')^{-1},\quad
u=H_{m_1}'\cdots H_1'z(G_{k_1}'\cdots G_1')^{-1},\\
t'=H_{n_1}''\cdots H_{n+1}''t(H_{m_1}'\cdots H_{m+1}')^{-1}
\end{gather*}
belong to a fixed compact set $\nuke'$,
where $z\in\proj_+^{-1}(\be(g))$ is such that
$G_k'\cdots G_1'(z)=\be(r)$ and $H_n'\cdots H_1'(z)=\be(t)$ (see
Figure~\ref{fig:uniqueness}). Here we identify $z$ with
the corresponding unit of the groupoid. In particular, for $F\in\wt{\Gh}$
the product $Fz$ coincides with the germ $(F, z)$.
Note that then the product
$t'ur'$ is defined and belongs to $(\nuke')^3$.

\begin{figure}
\includegraphics{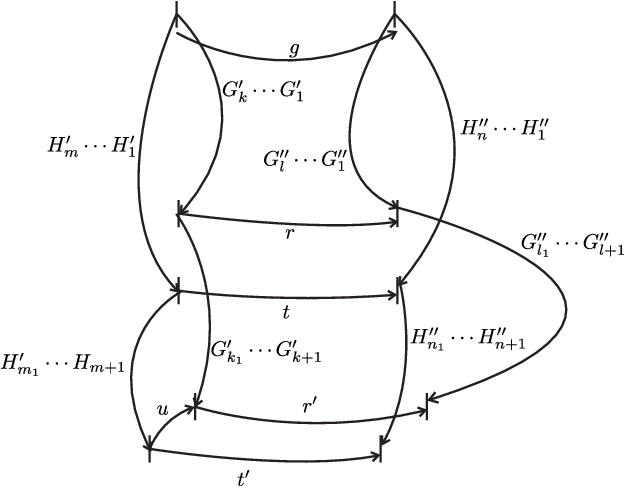}
\caption{}\label{fig:uniqueness}
\end{figure}

It follows from
\[g=\proj_+(G_l''\cdots G_1'')^{-1}\proj_+(r)\proj_+(G_k'\cdots G_1')
=\proj_+(H_n''\cdots H_1'')^{-1}\proj_+(t)\proj_+(H_m'\cdots H_1')\]
that
\begin{multline*}
\proj_+((\nuke')^3)\ni
\proj_+(H_{n_1}''\cdots H_{n+1}''t(H_{m_1}'\cdots H_{m+1}')^{-1})\cdot
\proj_+(H_{m_1}'\cdots H_1'z(G_{k_1}'\cdots
G_1')^{-1})\cdot\\ \proj_+(G_{k_1}'\cdots G_{k+1}'r^{-1}(G_{l_1}''\cdots
G_{l+1}'')^{-1})=\\
\proj_+(H_{n_1}''\cdots H_{n+1}'')\proj_+(t)\proj_+(H_m'\cdots
H_1')\be(g)\proj_+(G_k'\cdots G_1')^{-1}\proj_+(r)^{-1}(G_{l_1}''\cdots
G_{l+1}'')^{-1}=\\
\proj_+(H_{n_1}''\cdots H_{n+1}'')\cdot\proj_+(H_n''\cdots
H_1'')gP(H_m'\cdots H_1')^{-1}\cdot\\
P_1(H_m'\cdots H_1')\be(g)P_1(G_k'\cdots G_1')^{-1}\cdot
\proj_+(G_k'\cdots G_1')g^{-1}\proj_+(G_l''\cdots G_1'')^{-1}\cdot\\
(G_{l_1}''\cdots G_{l+1}'')^{-1}=
\proj_+(H_{n_1}''\cdots H_1'')\en(g)\proj_+(G_{l_1}''\cdots G_1'')^{-1}.
\end{multline*}
Let $M$ be an upper bound of the value of $|\coc|$ on elements of
$(\nuke')^3$. Then the values of $|\coc|$ on $H_{n_1}''\cdots
H_1''z_2(G_{l_1}''\cdots G_1'')^{-1}$ are bounded above by $M+C$ for
every $z_2\in\proj_+^{-1}(\en(g))$, where
$c$ is as in Definition~\ref{def:compatiblegraded}. Consequently,
\[|\coc(H_{n_1}''\cdots H_1'', z_2)-\coc(G_{l_1}''\cdots G_1'',
z_2)|<M+C+\eta.\]
By the same arguments,
\[|\coc(H_{m_1}'\cdots H_1', z)-\coc(G_{k_1}'\cdots G_1',
z)|<M+C+\eta.\]

We have
\[\coc(G_{k_1}'\cdots G_{k+1}', \wt z)-\eta\le\coc(G_{k_1}'\cdots
G_1', z)-\coc(G_k'\cdots G_1', z)\le
\coc(G_{k_1}'\cdots G_{k+1}', \wt z)+\eta,\]
where $\wt z=G_k'\cdots G_1'(z)$;
and
\[
\coc(G_{l_1}''\cdots G_{l+1}'', \wt z_2)-\eta\le\coc(G_{l_1}''\cdots
G_1'', z_2)-\coc(G_l''\cdots G_1'', z_2)\le\coc(G_{l_1}''\cdots
G_{l+1}'', \wt z_2)+\eta,\]
where $\wt z_2=G_l''\cdots G_1''(z_2)$.
Therefore,
\begin{multline*}
|\bigl(\coc(G_{k_1}'\cdots G_1', z)-\coc(G_k'\cdots G_1', z)\bigr)-
\bigl(\coc(G_{l_1}''\cdots G_1'', z_2)-\coc(G_l''\cdots G_1'', z_2)\bigr)|\le\\
|\coc(G_{k_1}'\cdots G_{k+1}', \wt z)-\coc(G_{l_1}''\cdots
G_{l+1}'', \wt z_2)|+2\eta\le\Delta_1+2C+2\eta.\end{multline*}
Similarly,
\begin{multline*}
|\coc(H_{m_1}'\cdots H_1', z)-\coc(H_m'\cdots H_1', z)-
\coc(H_{n_1}''\cdots H_1'', z_2)+\coc(H_n''\cdots H_1'', z_2)|\le\\
\Delta_1+2C+2\eta.
\end{multline*}

It follows that
\begin{multline*}|\bigl(\coc(G_l''\cdots G_1'', z_2)-\coc(G_k'\cdots
G_1', z)\bigr)-\bigl(\coc(H_n''\cdots H_1'', z_2)-\coc(H_m'\cdots H_1',
z)\bigr)|\le\\
|\coc(G_{l_1}''\cdots G_1'', z_2)-\coc(H_{n_1}''\cdots H_1'', z_2)|+|
\coc(H_{m_1}'\cdots
H_1', z)-\coc(G_{k_1}'\cdots
G_1', z)|+\\ |\coc(G_{k_1}'\cdots G_1', z)-\coc(G_k'\cdots G_1', z)-
\coc(G_{l_1}''\cdots G_1'', z_2)+\coc(G_l''\cdots G_1'', z_2)|+\\
|\coc(H_{n_1}''\cdots H_1'', z_2)-\coc(H_n''\cdots
H_1'', z_2)+\coc(H_m'\cdots H_1', z)-\coc(H_{m_1}'\cdots H_1',
z)|\le\\
2M+2C+2\eta+2\Delta_1+4C+4\eta,
\end{multline*}
hence the map $\coc_+$ is well defined, up to an additive constant. This
also implies that $\coc_+$ is a quasi-cocycle.

Theorem~\ref{th:contraction} and Lemma~\ref{lem:reductiontop} imply
that the directed Cayley graph of
$\proj_+(\Gh)$, where oriented edges correspond to the elements of
$\proj_+(S^{-1})$, is Gromov hyperbolic, and that the quasi-cocycle
$\coc_+$ is a Busemann quasi-cocycle associated with the limit of the directed paths.
\end{proof}

Replacing $\Gh$ by the \emph{inverted} Smale quasi-flow (i.e., the
quasi-flow graded by $-\coc$ in which the names of projections $\proj_+$
and $\proj_-$ are swapped) we get the following corollary.

\begin{corollary}
Let $\Gh$ be a Smale quasi-flow and let $\coc:\Gh|_{\X}\arr\R$ be
the corresponding quasi-cocycle. Then there exists a quasi-cocycle
$\coc_-:\proj_-(\Gh)\arr\R$ such that the groupoid $\proj_-(\Gh)$
graded by $\coc_-$ is hyperbolic and $|\coc_-(\proj_-(g))+\coc(g)|$ is
uniformly bounded for all $g\in\Gh|_{\X}$.
\end{corollary}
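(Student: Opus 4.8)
The plan is to deduce this corollary from Theorem~\ref{th:Ruellehyperbolic} by applying that theorem not to $\Gh$ itself but to the \emph{inverted} Smale quasi-flow, as suggested in the paragraph preceding the statement. First I would define $\Gh'$ to be the very same groupoid of germs as $\Gh$, carrying the same Lipschitz structure and the same local product structure on $(\Gh')^{(0)}=\Gh^{(0)}$, but graded by $\coc'=-\coc$ and with the names of the two directions of the local product structure interchanged, so that by definition $\proj_+(\Gh',\mathcal{R})$ is literally the pseudogroup $\proj_-(\Gh,\mathcal{R})$ for each covering $\mathcal{R}$ by rectangles, and $\coc'=-\coc$.

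Next I would verify that $\Gh'$ is again a Smale quasi-flow in the sense of Definition~\ref{def:hypflow}; this is the step where the symmetry of the definition is used. The function $\coc'=-\coc$ is again an $\eta$-quasi-cocycle, since the inequalities of Definition~\ref{def:graded} are invariant under negation, and it is compatible with the local product structure because the condition of Definition~\ref{def:compatiblegraded} constrains only $|\coc(g_1)-\coc(g_2)|$ and is symmetric in the two projection indices. The Lipschitz structure remains compatible with the local product structure for $\Gh'$ as well, since the product log-scale $\ell_{i,1}\times\ell_{i,2}$ of Definition~\ref{def:compatiblelipsch} is symmetric in its two factors, and the metric $|\cdot|$ is, locally, a product metric. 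For the generating conditions I would take $S'=S^{-1}$, where $S$ is a generating set of $\Gh|_{\X}$ as in Definition~\ref{def:hypflow}; then $S'$ is again a compact generating set with $\be(S')=\en(S')=\X$, and $\coc'(g^{-1})\doteq\coc(g)>3\eta$ up to a bounded error, so that after passing if necessary to $(S')^n\cup (S')^{n+1}$ (exactly as in Proposition~\ref{pr:hyperbolicgenset}) condition (1) holds. Condition (3) is precisely symmetric: if $F$ is a rectangular neighborhood of $g\in S$ contracting the old first direction and expanding the old second by $\lambda$, then $F^{-1}$ is a neighborhood of $g^{-1}\in S'$ that contracts the old second direction (the new first) and expands the old first (the new second) with the same $\lambda$. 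Local diagonality (condition (4)) is a symmetric condition on the two projections, and condition (5) involves only $|\coc'|=|\coc|$; both are therefore unchanged.

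Having established that $\Gh'$ is a Smale quasi-flow, I would apply Theorem~\ref{th:Ruellehyperbolic} to it, producing a quasi-cocycle $\coc'_+$ on $\proj_+(\Gh')$ such that $\proj_+(\Gh')$ graded by $\coc'_+$ is hyperbolic, $\coc'_+$ is a Busemann quasi-cocycle, and $|\coc'_+(\proj_+(g))-\coc'(g)|$ is uniformly bounded for all $g\in\Gh'|_{\X}$. Here I would note that compressibility of a covering is a two-sided property, hence unaffected by the swap of factors, so a compressible covering for $\Gh$ is also compressible for $\Gh'$. Translating through the identifications $\proj_+(\Gh')=\proj_-(\Gh)$ and $\coc'=-\coc$, and setting $\coc_-:=\coc'_+$, the conclusion reads that $\proj_-(\Gh)$ graded by $\coc_-$ is hyperbolic and that $|\coc_-(\proj_-(g))-(-\coc(g))|=|\coc_-(\proj_-(g))+\coc(g)|$ is uniformly bounded, which is exactly the assertion of the corollary.

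The only genuine work is the verification in the second step that inverting the grading and interchanging the two factors of the local product structure carries Smale quasi-flows to Smale quasi-flows; I expect this to be the main point, though a routine one, since it reduces to the observation that every condition in Definition~\ref{def:hypflow} and in the compatibility definitions of Section~\ref{s:graddstr} is invariant under this operation once the generating set is replaced by $S^{-1}$ (and, if needed, a power thereof).
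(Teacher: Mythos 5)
Your proposal is correct and is precisely the paper's argument: the paper derives this corollary in one line by replacing $\Gh$ with the inverted Smale quasi-flow (graded by $-\coc$, with the names of $\proj_+$ and $\proj_-$ swapped) and citing Theorem~\ref{th:Ruellehyperbolic}. The symmetry verification you spell out in your second paragraph (quasi-cocycle property under negation, symmetry of the compatibility conditions, $S^{-1}$ as generating set after passing to a power, and the two-sidedness of condition (3)) is exactly the routine check the paper leaves implicit.
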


\section{Duality theorems}

\begin{theorem}
\label{th:everyhypflowgeod}
Let $\Gh$ be a Smale quasi-flow. Then $\Gh$ is equivalent (as a
graded groupoid with a local product structure)
to the groupoid $\partial\proj_+(\Gh)\rtimes\proj_+(\Gh)$.
\end{theorem}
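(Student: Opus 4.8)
Set $\G:=\proj_+(\Gh)$. The plan is to realize the original Smale quasi-flow $\Gh$ as the geodesic quasi-flow of its stable Ruelle groupoid $\G$. By Theorem~\ref{th:Ruellehyperbolic} the groupoid $\G$ is hyperbolic and carries a Busemann quasi-cocycle $\coc_+$ with $|\coc_+(\proj_+(g))-\coc(g)|$ uniformly bounded, so that $\partial\G$, its local product structure, and the geodesic quasi-flow $\partial\G\rtimes\G$ of Section~\ref{ss:actionofgonpg} are all defined. I would work throughout in the normal form of Proposition~\ref{prop:generatorsflow}, so that $\Gh^{(0)}=\bigsqcup_i A_i\times B_i$, the generators $F\in\mS$ are rectangles $A_F\times B_F$ with $A_F$ and $B_F^{-1}$ being $\lambda$-contractions, and the generating set $S$ and its $\proj_+$-image are available. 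The goal is to produce a homeomorphism $\Psi$ from a transversal of $\partial\G$ onto a transversal of $\Gh^{(0)}$ which identifies the projection $P:\partial\G\arr\G^{(0)}=\proj_+(\Gh^{(0)})$ with the stable coordinate and each fibre $\partial\G_a$ with the unstable plaque over $a$, and then to upgrade $\Psi$ to an equivalence of graded groupoids with local product structure by the criterion of Section~\ref{s:equivalence}.

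The central construction identifies each fibre $\partial\G_a$ with the unstable plaque over $a$. A boundary point $\xi\in\partial\G_a$ is a limit $\xi=\lim_n \proj_+(s_n)\cdots\proj_+(s_1)$ with $s_i\in S$, i.e.\ a directed path in the Cayley graph of $\G$. Lifting the $\proj_+(s_i)$ to the rectangles $s_i=A_{s_i}\times B_{s_i}$ of $\Gh$, the full product $s_n\cdots s_1$ is defined exactly on the set $D_n$ of unstable coordinates $\beta$ over $a$ for which the successive constraints $\beta\in B_{s_1}$, $\proj_-(s_1)(\beta)\in B_{s_2}$, and so on, all hold. Since each $\proj_-(s_i)$ expands the unstable direction by $\lambda^{-1}$, pulling these constraints back contracts them, so the $D_n$ are nested with $\mathrm{diam}\,\overline{D_n}=O(\lambda^{n})$ and shrink to a single point $b(\xi)$; I would set $\Psi(\xi)=[a,b(\xi)]$. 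That $b(\xi)$ depends only on the boundary class of $\xi$ and not on the chosen lift is where the ``ladder'' Lemma~\ref{lem:relationsbndm} enters: two directed paths converge to the same $\xi$ precisely when they are interleaved by a bounded relation (Proposition~\ref{pr:topology} and Lemma~\ref{l:relationsboundary}), and Lemma~\ref{lem:relationsbndm} turns such a relation into the equality of the two intersections $\bigcap_n\overline{D_n}$; the same lemma gives injectivity. Surjectivity onto an open sub-plaque and the homeomorphism property follow from Proposition~\ref{pr:nbhdtil} together with the description of the topology and the atlas on $\partial\G$ in Theorem~\ref{th:localproduct}. In particular $\Psi$ matches the base$\,\times\,$fibre local product structure of $\partial\G$ with the stable$\,\times\,$unstable structure of $\Gh^{(0)}$, and, via the estimate of Proposition~\ref{pr:injective}, their Lipschitz structures.

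Next I would check that $\Psi$ intertwines the dynamics. A germ $s=A_s\times B_s$ of $\Gh$ determines $\proj_+(s)\in\G$, and the corresponding element of $\wt{\partial\G\rtimes\G}$ moves the base point by $\proj_+(s)=A_s$ and acts on the fibre by $\xi\mapsto\xi\cdot\proj_+(s)$; unwinding the nested-plaque description of $b(\xi)$ and using the reduction Lemma~\ref{lem:reductiontop} shows that on unstable coordinates this fibre action is exactly $\proj_-(s)=B_s$. Thus under $\Psi$ the germ of $\partial\G\rtimes\G$ attached to $\proj_+(s)$ corresponds to the germ $s$ of $\Gh$, and conversely every germ of $\Gh$ arises this way. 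Injectivity of this correspondence on germs is where local diagonality enters: if two germs $s,s'$ of $\Gh$ at a common point induce the same germ of $\partial\G\rtimes\G$, then equality of the base actions forces $\proj_+(s)=\proj_+(s')$ as germs of $\G$, whence $s(p)=s'(p)$ and $\proj_+(s^{-1}s')$ is trivial, so $s^{-1}s'$ is a unit by Definition~\ref{def:locdiagonal}. Assembling these identifications yields a pseudogroup on the disjoint union $\Gh^{(0)}\sqcup\partial\G$ restricting to $\wt\Gh$ and to $\wt{\partial\G\rtimes\G}$ respectively, with each combined orbit a union of one orbit from each side; by the equivalence criterion of Section~\ref{s:equivalence} this is the desired equivalence. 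Finally the gradings agree because $\wt\coc(\xi,g)=\coc_+(g)$ is coarsely equal to $\coc(s)$ for $g=\proj_+(s)$, by the uniform bound of Theorem~\ref{th:Ruellehyperbolic}.

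The hard part will be the fibre identification $\partial\G_a\cong$ unstable plaque of the second paragraph: one must show that the nested unstable domains $\overline{D_n}$ genuinely collapse to a single point and that this point is an invariant of the boundary class, independent of the lift. This is precisely the content for which the ladder lemmas were proved, and the technical heart is to combine Lemmas~\ref{l:relationsboundary} and~\ref{lem:relationsbndm} with the hyperbolicity estimate of Proposition~\ref{pr:topology} and the reduction Lemma~\ref{lem:reductiontop} so that the boundary description of a germ and its $\Gh$-description match up exactly. Securing faithfulness of the reconstructed unstable action, via local diagonality, is the second delicate point; once both are in place the remaining verifications (compact generation of the combined pseudogroup, compatibility of metrics and cocycles) are routine.
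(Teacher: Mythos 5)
Your overall architecture is the paper's own: you construct the same map (the paper's $\Phi$, your $\Psi$ read in the other direction) by lifting a directed path $\proj_+(s_n)\cdots\proj_+(s_1)$ to a sequence of rectangles of $\Gh$ and taking the intersection of the nested, exponentially shrinking unstable domains, then intertwine the dynamics and conclude by exhibiting isomorphic restrictions to open transversals. But there is a genuine gap at exactly the step you single out as the technical heart: well-definedness of $b(\xi)$. You claim that Lemma~\ref{lem:relationsbndm} ``turns the interleaving relation into the equality of the two intersections $\bigcap_n\overline{D_n}$.'' It cannot. Lemma~\ref{lem:relationsbndm} is a statement entirely internal to the hyperbolic groupoid $\G$ and its boundary $\partial\G$: it says that two sequences of germs converging to the same point of $\partial\G_{\be(g_0)}$ still converge to a common point after parallel transport to a nearby base point. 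It carries no information about unstable coordinates in $\Gh^{(0)}$, where the sets $\overline{D_n}$ live. The interleaving relation furnished by Lemma~\ref{l:relationsboundary} (applied to $\G$, via Lemma~\ref{lem:reductiontop}) consists of equalities of germs in $\G=\proj_+(\Gh)$, i.e.\ equalities of \emph{stable projections} only; to convert these into compatibilities between the lifted rectangles in $\Gh$ you must exclude elements of $\Gh$ whose stable projection is trivial but whose unstable projection is not. Such an element would displace the intersection point $\bigcap_n\overline{D_n}$ while leaving the boundary point $\xi$ unchanged, so without excluding it your $b(\xi)$ is simply not well defined.

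The missing ingredient is local diagonality, condition (4) of Definition~\ref{def:hypflow}, used not only where you place it (injectivity of the germ correspondence --- that use is correct and matches the paper) but already in the well-definedness step. This is precisely what the paper's Lemma~\ref{lem:Phiwelldefined} does: after producing the interleaving elements $r_i\in\nuke$ from Lemma~\ref{l:relationsboundary} and Lemma~\ref{lem:reductiontop}, it observes that a certain composite of the lifted rectangles has trivial $\proj_+$-germ, invokes condition (4) to conclude that the composite germ in $\Gh$ itself is trivial, deduces that the domains of the two lifted compositions intersect, and only then passes to the limit using the expansion of the $\proj_-$ projections to obtain a common unstable point. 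With this substitution (local diagonality in place of Lemma~\ref{lem:relationsbndm}) your argument closes up and coincides with the paper's proof; the remaining steps you sketch --- intertwining via Lemma~\ref{lem:reductiontop}, faithfulness via Definition~\ref{def:locdiagonal}, surjectivity onto $\X_0$ and openness of the image of the transversal, matching of gradings through the bound of Theorem~\ref{th:Ruellehyperbolic}, and the equivalence criterion of Section~\ref{s:equivalence} --- all follow the paper's treatment.
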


\begin{proof} We assume that $\Gh$, $\X$, $\X_0$, and $\mS$ satisfy
the conditions of Proposition~\ref{prop:generatorsflow} and use its notations.
Let us denote $\G=\proj_+(\Gh)$.

Let $\eta$ be such that $\coc$ is an $\eta$-quasi-cocycle.
Let $\Delta_1-\eta$ be an upper bound on the value of $|\coc|$ on elements
of $\mS$, and let $\nuke$ be a compact set containing all
elements of $\Gh$ such that $|\coc(g)|\le\Delta_1$ and $\be(g),
\en(g)\in\X'$, where $\X'$ is a compact neighborhood of $\X$.
Let us cover $\nuke$ by a finite set $\mathcal{U}$ of Lipschitz
rectangles $U\in\wt{\Gh}$. Let $\delta>0$ be such
that for every element $r\in\nuke$ there exists $U\in\mathcal{U}$ such
that $U\ni r$ and the $\delta$-neighborhoods of $\be(r)$ and
$\en(r)$ are contained in $\be(U)$ and $\en(U)$ respectively.

Let $\cdots g_2g_1$ be a composable sequence of germs of elements of
$\proj_+(\mS)$. Let $G_k\in\mS$ be such that $g_k\in\proj_+(G_k)$.
Then the homeomorphism $F_n=G_n\cdots G_1$
is a rectangle such that $\proj_+(\be(F_n))=\be(A_{G_1})$ and
$\proj_-(\en(F_n))=\en(B_{G_n})$ (here $A_F=\proj_+(F)$ and
$B_F=\proj_-(F)$, and $\be(A_F)$ and $\en(B_F)$ are sides of the
rectangles $W_i$ from Proposition~\ref{prop:generatorsflow}), see
Figure~\ref{fig:boundary}.

Diameter of $\proj_-(\be(F_n))$ is not more than $\lambda^n\cdot p$,
where $p$ is an upper bound of diameters of $\proj_-(R)$ for
$R\in\mathcal{R}$. The set $\proj_-(\be(G_n\cdots G_1))$ contains the
closure of the set $\proj_-(\be(G_{n-1}\cdots G_1))$. It follows that
for every infinite composable sequence $\ldots g_2g_1$ the intersection
\[\bigcap_n\proj_-(\be(G_n\cdots G_1))\] consists of
just one point $\xi$. Denote by $\Phi(\ldots, g_2, g_1)$ the point of
$\Gh^{(0)}$ such that \[\proj_+(\Phi(\ldots, g_2, g_1))=\be(g_1),\quad
\proj_-(\Phi(\ldots, g_2, g_1))=\xi.\]

\begin{figure}
\centering
\includegraphics{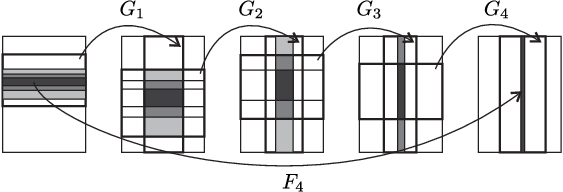}
\caption{}\label{fig:boundary}
\end{figure}

\begin{lemma}
\label{lem:Phiwelldefined}
Let $\ldots g_2g_1, \ldots h_2h_1$ be points of $\partial\G$, where
$g_i, h_i$ are germs of elements of $\proj_+(\mS)$. Suppose that
$g\in\G$ is such that $\ldots g_2g_1\cdot g=\ldots h_2h_1$. Then there
exists a unique element $\wt g\in\Gh$ such that
$\proj_+(\wt g)=g$, $\be(\wt g)=\Phi(\ldots, g_2, g_1)$, and
$\en(\wt g)=\Phi(\ldots, h_2, h_1)$.
\end{lemma}

\begin{proof}
By Lemma~\ref{lem:reductiontop} there exist sequences $G_i',
G_i''\in\mS$ and an element $r\in\nuke$ such that
$g=\proj_+(G_l''\cdots G_1''r)\proj_+(G_k'\cdots G_1')^{-1}$.
Let $G_i$, $H_i\in\mS$ be such that $g_i\in\proj_+(G_i)$ and
$h_i\in\proj_+(H_i)$. Let $U\in\mathcal{U}$ be such that $r\in U$.

By Lemma~\ref{l:relationsboundary} and Lemma~\ref{lem:reductiontop}
there exists a sequence of elements $r_i\in\nuke$ such that
$\proj_+(r_i)\cdot g_{n_i}\cdots g_1\cdot g=h_{m_i}\cdots
h_1$ for some increasing sequences $n_i$ and $m_i$.
Let $U_i\in\wt{\Gh}$ be such that $r_i\in U_i$.

We have
\[\proj_+(r_i)\cdot g_{n_i}\cdots g_1\cdot \proj_+(G_l''\cdots
G_1''r)=h_{m_i}\cdots h_1\cdot\proj_+(G_k'\cdots G_1')\]

Consider the products $U_i\cdot G_{n_i}\cdots G_1G_l''\cdots G_1''U$ and
$H_{m_i}\cdots H_1G_k'\cdots G_1'$. Denote
$z_1=(G_{n_i}\cdots G_1G_l''\cdots G_1''U)^{-1}(\be(r_i))$
and $z_2=(H_{m_i}\cdots H_1G_k'\cdots G_1')^{-1}(\en(r_i))$. Then
$\proj_+(z_1)=\proj_+(z_2)$, and the germ of
\[\proj_+((G_1')^{-1}\cdots (G_k')^{-1}H_1^{-1}\cdots
H_{n_i}^{-1}U_iG_{n_i}\cdots G_1G_l''\cdots G_1''U)\] at
$\proj_+(z_1)$ is trivial. It follows, by condition (4) of
Definition~\ref{def:hypflow}, that the germ of $(G_1')^{-1}\cdots
(G_k')^{-1}H_1^{-1}\cdots H_{n_i}^{-1}U_iG_{n_i}\cdots G_1G_l''\cdots G_1''U$
is trivial at $z_1=z_2$. Consequently, the domains
$\be(U_iG_{n_i}\cdots G_1G_l''\cdots G_1''U)$ and
$\be(H_{m_i}\cdots H_1G_k'\cdots G_1')$ have a
non-empty intersection. Passing to the
limit as $i\to\infty$ and using the fact that $\proj_-(G_i)$ and
$\proj_-(H_i)$ are expanding, we conclude that
there exists $y$ such that $\proj_+(y)=\proj_+(\be(r))$ and
\[y\in\be(H_n\cdots H_1G_k'\cdots G_1')\] for all $n$. Taking $r'=(U,
y)$, we get an element
$\wt g=G_l''\cdots G_1''r'(G_k'\cdots G_1')^{-1}$ such that $\be(\wt
g)=\Phi(\ldots, g_2, g_1)$, $\en(\wt g)=\Phi(\ldots, h_2, h_1)$ and
$\proj_+(\wt g)=g$.
Uniqueness of $g$ follows from the fact that $\Gh$ is locally diagonal
and the covering $\mathcal{R}$ is fine.
\end{proof}

Denote by $T$ the set of points of $\partial\G$ that can be
represented as $\ldots g_2g_1$ for $g_i\in\proj_+(G_i)$ for some
$G_i\in\mS$.

Applying Lemma~\ref{lem:Phiwelldefined} to the case
when $g$ is a unit, we conclude that
$\Phi(\ldots, g_2, g_1)$ depends only on $\ldots g_2g_1\in T$, and we get a
well defined map from $T$ to $\Gh^{(0)}$.

Moreover, Lemma~\ref{lem:Phiwelldefined} implies that $(\ldots g_2g_1,
g)\mapsto\wt g$ is a well defined map from
$(\partial\G\rtimes\G)|_T$ to $\Gh$. We will denote this map also by
$\Phi$, so that $\Phi(\ldots g_2g_1)=\Phi(\ldots, g_2, g_1)$ and, in
conditions of Lemma~\ref{lem:Phiwelldefined}, $\wt g=\Phi(\ldots
g_2g_1, g)$. It is easy to see (using uniqueness of $\wt g$) that
$\Phi:(\partial\G\rtimes\G)|_T\arr\Gh$ is a homomorphism of groupoids
(i.e., a functor of the corresponding small categories).

Note also that the homomorphism $\Phi$ is uniquely determined by its
restriction to $T$, since $\wt g=\Phi(\xi, g)$ is uniquely determined by
the condition $\be(\wt g)=\Phi(\xi)$, $\en(\wt g)=\Phi(\xi\cdot g)$,
and $\proj_+(\wt g)=g$.

The definition of $\proj_-(\Phi(\cdots g_2g_1))$ depends
only on the sequence $G_k\in\mS$ such that
$g_k\in\proj_+(G_k)$. It
follows that the homomorphism $\Phi$ agrees with the local product
structures on the geodesic quasi-flow and $\Gh$.

\begin{lemma}
\label{lem:injectivity}
Suppose that $h\in\Gh$ is such that $\be(h)=\Phi(\ldots g_2g_1)$
and $\en(h)=\Phi(\ldots h_2h_1)$. Then $\ldots g_2g_1=\ldots
h_2h_1\cdot\proj_+(h)$.
\end{lemma}

\begin{proof}
Let $G_i, H_i\in\mS$ be such that $g_i\in\proj_+(G_i)$ and
$h_i\in\proj_+(H_i)$. Denote $z=\be(h)$.
There exist increasing sequences $m_k$ and $n_k$ such
that \[|\coc((H_{m_k}\cdots H_1h)\cdot (G_{n_k}\cdots
G_1)^{-1})|\le\Delta_1.\]
Then $(H_{m_k}\cdots H_1)h(G_{n_k}\cdots
G_1)^{-1}\in\nuke$, and for \[r_k=\proj_+((H_{m_k}\cdots H_1)h
(G_{n_k}\cdots G_1)^{-1})\in\proj_+(\nuke),\] we have $r_k\cdot g_{n_k}\cdots
g_1=h_{m_k}\cdots h_1\proj_+(h)$, hence
$\cdots g_2g_1=\cdots h_2h_1\proj_+(h)$.
\end{proof}

\begin{lemma}
The homomorphism $\Phi$ is continuous.
\end{lemma}

\begin{proof}
It is enough to show that $\Phi$ is continuous on $T$, by the above
remark on uniqueness of the definition of $\wt g$. Moreover, it is enough to
show that it is continuous on each
$\partial\G_x$, since $\Phi$ agrees with the local product structure
and is obviously continuous on the first direction of the local
product structure of $\partial\G$.

Then the proof of continuity of $\Phi$ becomes essentially the same as of
Lemma~\ref{lem:Phiwelldefined}.
Suppose that $\ldots g_2g_1$ and $\ldots h_2h_1$ are close in
$\partial\G_x$, where $x=\be(g_1)=\be(h_1)$. Then there exists
$r\in\nuke$ and large indices $m$ and $n$ such that
$\proj_+(r)g_n\cdots g_1g=h_m\cdots h_1$. Then, as in the proof of
Lemma~\ref{lem:Phiwelldefined}, we conclude that the domains of
$G_n\cdots G_1$ and $H_m\cdots H_1$ are close to each
other, hence $\Phi(\ldots g_2g_1)$ and $\Phi(\ldots h_2h_1)$ are close.
\end{proof}

\begin{lemma}
The homomorphism $\Phi$ is injective, and the inverse partial map is
a continuous homomorphism.
\end{lemma}

\begin{proof}
Again, it is enough to show that it is injective on $T$ and that the
map inverse to $\Phi|_T$ is continuous.

Injectivity and functoriality
of the inverse follow directly
from Lemma~\ref{lem:injectivity}.
Continuity of $\Phi^{-1}|_T$ is proved in a way similar to
the proof of Lemma~\ref{lem:injectivity}.
Namely, suppose that distance between
$z_1=\Phi(\cdots g_2g_1)$ and $z_2=\Phi(\cdots
h_2h_1)$ is less that $\Lambda^{-n}\delta$, where
$\Lambda$ is such that the maps $\proj_-(G)$ for
$G\in\mS$ are $\Lambda$-Lipschitz. Then there exist $G_i, H_i\in\mS$
such that $g_i\in G_i$, $h_i\in H_i$, and
\[\{z_1, z_2\}\subset\be(G_k\cdots G_1)\cap\be(H_k\cdots H_1)\]
for all $k\le n$.
Then in the same way as in the proof of injectivity of $\Phi|_T$, we show
that for $n$ big enough there exist large indices $m_k$ and $n_k$ and
$r_k\in\proj_+(\nuke)$ such that $r_k\cdot g_{n_k}\cdots
g_1=h_{m_k}\cdots h_1$, which implies that
$\ldots g_2g_1$ and $\ldots h_2h_1$
are close to each other.
\end{proof}

\begin{lemma}
The set $\X_0$ belongs to the range of $\Phi$, and for every
$x\in\X_0$ the point $\Phi^{-1}(x)$ belongs to the interior of $T$.
\end{lemma}

\begin{proof}
By Proposition~\ref{prop:generatorsflow}, for every point $x\in\X$
there exists $h\in S$ such that $\be(h)=x$ and
$\en(h)\in\X$. Applying this fact infinitely many times we will get
a composable sequence $\ldots h_2h_1$ of elements of $S$ such that
$\be(h_1)=x$. Then $\Phi(\ldots\proj_+(h_2)\proj_+(h_1))=x$, hence
$\X\supset\X_0$ belong to the range of $\Phi$.

Let $\xi=\ldots g_2g_1$, and let $G_i\in\mS$ be such that
$g_i\in\proj_+(G_i)$. Suppose that $\Phi(\xi)\in\X_0$.

There exists a compact set $Q_1\subset\G|_{\X}$ such that
the set $N_n$ of points $\zeta\in\partial\G_{\be(g_1)}$ such that
$\ell_{\be(g_1)}(\zeta, \xi)\ge n$ is contained in the set of points
representable in the form $\ldots g_{n+2}'g_{n+1}'ag_ng_{n-1}\cdots
g_1$, where $a\in Q_1$ and $g_i'\in\proj_+(S)$.

It follows then
from Lemma~\ref{lem:reductiontop} that there exists a constant $k_0$
such that the set $N_n$ is contained in the set of points of the form
\[\zeta=\ldots\proj_+(h_2)\proj_+(h_1)\proj_+(r)g_{n-k_0}\cdots g_1\]
for some composable sequence $\ldots h_2h_1r$ of elements $h_i\in
S$ and $r\in\nuke$. We can find a sequence $H_0, H_{-1}, \ldots,
H_{-m}$ of elements of $\mS$ such that $\en(r)\in\en(H_0\cdots
H_{-m})$ and $r'=(H_0\cdots H_{-m})^{-1}rG_{n-k_0}\cdots
G_1\in\nuke$. Let $h_0, h_{-1}, \ldots, h_{-m}$ be the germs of $H_0,
H_{-1}, \ldots, H_{-m}$ such that $h_0\cdots h_{-m}$ is defined and
$\en(h_0\cdots h_{-m})=\en(r)$. Then
$\en(r')=\Phi(\ldots\proj_+(h_2)\proj_+(h_1)\proj_+(h_0)\cdots\proj_+(h_{-m}))$,
and distance between $\be(r')$ and $\Phi(\zeta)$ is not more than
$\lambda^{n-k_0}p$. It follows that $\be(r')$ belongs to
$\X_0\subset\Phi(T)$, for all $n$ big enough.
Then $\be(r')=\Phi(\ldots h_2'h_1')$ for some
$h_i'\in\proj_+(S)$,
\begin{multline*}
\zeta=\ldots\proj_+(h_2)\proj_+(h_1)\proj_+(r)g_{n-k_0}\cdots g_1=\\
\ldots\proj_+(h_2)\proj_+(h_1)\proj_+(h_0)\cdots\proj_+(h_{-m})\proj_+(r'),
\end{multline*}
and
\[
\en(r')=\Phi(\ldots\proj_+(h_2)\proj_+(h_1)\proj_+(h_0)\cdots\proj_+(h_{-m})).
\]
Then by Lemma~\ref{lem:injectivity} we have
$\zeta=\ldots h_2'h_1',$
hence $\zeta$ belongs to $T$. We have shown that all points of $N_n$
belong to $T$ for all $n$ big enough. It follows
that $\xi$ is an internal point of $T$.
\end{proof}

We have shown that $\Phi$ is an isomorphism of the restrictions of the
groupoids $\partial\G\rtimes\G$ and $\Gh$ to $T$ and $\Phi(T)$ respectively.
We have also shown that $\Phi^{-1}$ maps $\X_0$ to an open
subset of $T$. Consequently, $\Phi$ implements an
isomorphism of restrictions of $\Gh$ and $\partial\G\rtimes\G$ to
open transversals, which implies that $\Gh$ and $\partial\G\rtimes\G$
are equivalent.
\end{proof}

Suppose that $\G$ is a hyperbolic groupoid such that
$\partial\G\rtimes\G$ is locally diagonal. Then
$\proj_+(\partial\G\rtimes\G)$ is equivalent to $\G$ (which
follows directly from the definition of the local product structure on
$\partial\G$, see Theorem~\ref{th:localproduct}).

\begin{defi}
Let $\G$ be a hyperbolic groupoid such that $\partial\G\rtimes\G$ is
locally diagonal. Then the \emph{dual groupoid} \index{dual groupoid}
\index{groupoid!dual} $\G^\top$ is the projection
$\proj_-(\partial\G\rtimes\G)$.
\end{defi}

Note that the grading of $\proj_-(\partial\G\rtimes\G)$ is projection
of the quasi-cocycle $-\wt\coc$, where $\wt\coc$ is the lift of $\coc$
to $\partial\G\rtimes\G$ given by~\eqref{eq:liftofcoc}.

The following theorem is a direct corollary of
Theorems~\ref{th:geodflowhyperbolic},~\ref{th:Ruellehyperbolic},
and~\ref{th:everyhypflowgeod}.

\begin{theorem}
\label{th:dualitytheorem}
Let $\G$ be a hyperbolic groupoid with locally diagonal geodesic quasi-flow.
Then its dual $\G^\top$ is also a hyperbolic groupoid and
the groupoid $(\G^\top)^\top$ is equivalent to $\G$.
\end{theorem}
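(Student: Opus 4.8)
The plan is to read the statement off from Theorems~\ref{th:geodflowhyperbolic},~\ref{th:Ruellehyperbolic}, and~\ref{th:everyhypflowgeod}, the only genuine work being the bookkeeping of which direction of the local product structure plays the role of $\proj_+$ and which plays the role of $\proj_-$ under the relevant equivalences. Write $\Gh=\partial\G\rtimes\G$ for the geodesic quasi-flow of $\G$, graded by the lift $\wt\coc$ of a Busemann quasi-cocycle (see~\eqref{eq:liftofcoc}). By hypothesis $\Gh$ is locally diagonal, so Theorem~\ref{th:geodflowhyperbolic} shows that $\Gh$ is a Smale quasi-flow, in which $\proj_+$ is the direction of $\G^{(0)}$ and $\proj_-$ is the direction of the boundaries $\partial\G_x$. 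By definition $\G^\top=\proj_-(\Gh)$, so the Corollary to Theorem~\ref{th:Ruellehyperbolic} (the $\proj_-$ version) immediately gives that $\G^\top$ is hyperbolic, graded by a quasi-cocycle $\coc_-$ with $|\coc_-(\proj_-(g))+\wt\coc(g)|$ uniformly bounded. This settles the first assertion.

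For the second assertion I would first identify the geodesic quasi-flow of $\G^\top$. Let $\Gh'$ denote the \emph{inverted} Smale quasi-flow: the same groupoid $\Gh$, graded by $-\wt\coc$ and with the names of $\proj_+$ and $\proj_-$ interchanged. Then $\proj_+(\Gh')=\proj_-(\Gh)=\G^\top$, so Theorem~\ref{th:everyhypflowgeod} applied to $\Gh'$ yields an equivalence of graded groupoids with local product structure
\[
\partial\G^\top\rtimes\G^\top=\partial\proj_+(\Gh')\rtimes\proj_+(\Gh')\;\sim\;\Gh'.
\]
Thus the geodesic quasi-flow of $\G^\top$ is equivalent to $\Gh$ itself, with the two directions of the product structure swapped: the $\proj_+$-direction of $\partial\G^\top\rtimes\G^\top$ corresponds to the $\proj_-$-direction of $\Gh$, and vice versa. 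Before proceeding one checks that $(\G^\top)^\top$ is even defined: since $\partial\G^\top\rtimes\G^\top$ is equivalent to the Smale quasi-flow $\Gh$, it is locally diagonal (condition (4) of Definition~\ref{def:hypflow}, preserved under equivalence by the Corollary following Definition~\ref{def:locdiagonal}), so $\G^\top$ has locally diagonal geodesic quasi-flow.

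It then remains to compute the second projection. By definition $(\G^\top)^\top=\proj_-(\partial\G^\top\rtimes\G^\top)$. Since the Ruelle-groupoid construction is invariant under equivalence of Smale quasi-flows (Proposition~\ref{prop:compressible}, applied to a rectangle covering of the equivalence groupoid $\Gh'\vee(\partial\G^\top\rtimes\G^\top)$ that restricts to coverings of both transversals), the equivalence above carries $\proj_-$ to $\proj_-(\Gh')=\proj_+(\Gh)$. Hence
\[
(\G^\top)^\top=\proj_-(\partial\G^\top\rtimes\G^\top)\;\sim\;\proj_-(\Gh')=\proj_+(\Gh)\;\sim\;\G,
\]
the last equivalence being the one recorded just before the definition of $\G^\top$ (a direct consequence of the description of the local product structure on $\partial\G$ in Theorem~\ref{th:localproduct}).

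The single point requiring care --- and the only place a genuine error could enter --- is the direction-bookkeeping across the inversion: one must verify that the lift $\wt\coc$ grading $\Gh$ really does make $\proj_+$ the contracted (stable) direction and $\proj_-$ the expanded (unstable) one, so that passing to $\G^\top=\proj_-(\Gh)$ and then to \emph{its} dual returns $\proj_+(\Gh)$ rather than $\proj_-(\Gh)$ again. This is exactly the content of~\eqref{eq:nataction} and~\eqref{eq:ellonpartial} in the proof of Theorem~\ref{th:geodflowhyperbolic}, which show that the generators expand the $\partial\G_x$-direction by $+\coc$ while contracting the $\G^{(0)}$-direction; combined with the sign convention $\coc_-\doteq-\wt\coc$ from the Corollary to Theorem~\ref{th:Ruellehyperbolic}, this guarantees that the two applications of duality compose to the identity at the level of directions, giving $(\G^\top)^\top\sim\G$ as claimed.
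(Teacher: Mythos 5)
Your proposal is correct and follows exactly the paper's route: the paper presents the theorem as a direct corollary of Theorems~\ref{th:geodflowhyperbolic},~\ref{th:Ruellehyperbolic}, and~\ref{th:everyhypflowgeod}, with $\G^\top=\proj_+(\Gh')$ for the inverted quasi-flow $\Gh'$ and $\coc^\top=-\coc_-$, which is precisely your bookkeeping. Your explicit verification that the geodesic quasi-flow of $\G^\top$ is locally diagonal (so that $(\G^\top)^\top$ is defined) is a detail the paper leaves implicit, and it is handled correctly via condition (4) of Definition~\ref{def:hypflow} and invariance of local diagonality under equivalence.
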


Note also that for a graded hyperbolic groupoid $(\G, \coc)$ there is
a well defined, up to strong equivalence,
quasi-cocycle $\coc^\top$ on $\G^\top$. Namely, it is shown in the
proof of Theorem~\ref{th:geodflowhyperbolic} that the lift $\wt\coc$ of $\coc$
to the geodesic flow $\partial\G\rtimes\G$ is a quasi-cocycle
satisfying the definitions of the quasi-cocycle on a Smale
quasi-flow. Theorem~\ref{th:Ruellehyperbolic} shows that there are
unique, up to strong equivalence, quasi-cocycles $\coc_+$ and $\coc_-$
which are projections of $\wt\coc$ onto the Ruelle groupoids of
$\partial\G\rtimes\G$. Moreover, $\coc_+$ is strongly equivalent to
$\coc$. Then we can take
$\coc^\top=-\coc_-$. Theorem~\ref{th:everyhypflowgeod} shows that
this construction is a duality of graded groupoids, i.e., that
applying it twice we get back the original graded groupoid $(\G, \coc)$.

\section{Other definitions of the dual groupoid}
\label{s:anotherdefinition}

Let $\G$ be a hyperbolic groupoid. Let $(S, \X)$ be a complete
generating pair of $\G$ (see Proposition~\ref{pr:hyperbolicgenset}).
Let $\mS$ be a finite covering of $S$ by contracting positive elements
of $\pG$.

We denote by $\half$ the union $\G_x^{\X}\cup\partial\G_x=\G(x,
S)\cup\partial\G(x, S)\setminus\{\omega_x\}$ \index{Gx@$\half$}
of the set of vertices
$\G_x^{\X}$ of the Cayley graph $\G(x, S)$ with the boundary
$\partial\G_x$. The set $\half$ comes with the topology
defined by the natural log-scale on $\half$ (see
Proposition~\ref{pr:topology}).

Let $A\subset\G$ be a compact set satisfying the conditions of
Proposition~\ref{pr:nbhdtil}.
Suppose that for any two sequences $g_i, h_i$ of
germs of elements of $\mS$ an equality $\ldots g_2g_1\cdot g=\ldots
h_2h_1\cdot h$ for some $g, h$, $\be(g)=\be(h)\in\X$ implies
that for all sufficiently big $n$ there exists $m$ and $a\in A$ such that
$ag_n\cdots g_1g=h_m\cdots h_1h$. Existence of such a set $A$ follows
from hyperbolicity of the Cayley graphs of $\G$ and the fact that all
directed paths in $\G(x, S)$ are quasi-geodesics.

Find a finite covering $\mathcal{A}=\{U\}$ of $A$ by bi-Lipschitz
elements of $\pG$. Let $\wh A$ be the set of germs of the
elements of $\mathcal{A}$.

The following lemma is proved in the same
way as~\ref{lem:relationsbndm}.

\begin{lemma}
\label{lem:deltabijection}
Let $\epsilon$ be a common Lebesgue's number of the coverings $\mS$, $\mathcal{A}$, and $\mathcal{A}^{-1}$ of $S$, $A$, and $A^{-1}$ respectively.
There exists $0<\delta_0<\epsilon$ such that the following condition is satisfied.

Let $U_i, V_i$, $i=1, 2, \ldots$ be finite or infinite sequences of
elements of the set $\mS\cup\mathcal{A}$ in which at most one
element belongs to $\mathcal{A}$. Let $|x-y|<\delta_0$ for $x,
y\in\X$, and the $\epsilon$-neighborhoods of $U_i\cdots U_1(x)$ and
$V_i\cdots V_1(x)$ are contained in $\be(U_{i+1})$ and $\be(V_{i+1})$ respectively.
Then the equality
\[(\ldots U_2U_1, x)=(\ldots V_2V_1, x)\]
of finite or infinite products of germs implies
\[(\ldots U_2U_1, y)=(\ldots V_2V_1, y).\]
\end{lemma}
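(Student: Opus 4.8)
The plan is to follow the proof of Lemma~\ref{lem:relationsbndm} almost verbatim, the only genuinely new features being that finite products (equalities of germs in $\G$) are now allowed alongside the infinite ones (convergence to a common point of $\partial\G$), and that the connecting compact set is the prescribed $A$ of Proposition~\ref{pr:nbhdtil} rather than an arbitrary bi-Lipschitz family. First I would reduce to the case in which neither sequence contains a letter of $\mathcal{A}$. Exactly as in Lemma~\ref{lem:relationsbndm}, there is an integer $k$ such that for every $F\in\mathcal{A}\cup\mathcal{A}^{-1}$ and every $F_1,\dots,F_k\in\mS$ the composition $F_k\cdots F_1F$ is either empty or a contraction all of whose germs $g$ satisfy $\coc(g)>2\eta$; absorbing the unique $\mathcal{A}$-letter of each sequence into the $k$ contractions that follow it turns both $(U_i)$ and $(V_i)$ into sequences of positive contractions, at the cost of shifting the initial index by a bounded amount. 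When the $\mathcal{A}$-letter sits too near the end of a finite sequence only finitely many relations survive, and these are dispatched directly by Corollary~\ref{cor:relationspseudogroup}.

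Next I would convert the hypothesis $(\ldots U_2U_1, x)=(\ldots V_2V_1, x)$ into a ladder of uniformly bounded relations. For the infinite case this is precisely Lemma~\ref{l:relationsboundary}, while for a finite germ equality $U_n\cdots U_1=V_m\cdots V_1$ at $x$ the same quasi-geodesic estimate (all directed paths in $\G(x,S)$ being quasi-geodesics, together with the defining property of $A$ fixed in this section) yields elements $a_l\in A$ and strictly increasing indices $n_l,m_l$ with $n_l-n_{l-1}\le\Delta$ and $m_l-m_{l-1}\le\Delta$ such that
\[a_l\,g_{n_l}\cdots g_{n_{l-1}+1}=h_{m_l}\cdots h_{m_{l-1}+1}\,a_{l-1}\]
at the germ level, with $a_0=\be(g_1)$ and a terminal rung recording the germ equality (in which $a_L$ is trivial). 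In both cases every rung is a product of at most $\Delta$ letters of $\mS$ bracketed by an element of $\mathcal{A}\cup\mathcal{A}^{-1}$, so there are only finitely many rung-types.

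Then I would apply Corollary~\ref{cor:relationspseudogroup} to this finite list of rung-types, using fixed extendable extensions of the elements of $\mS\cup\mathcal{A}$, to obtain a single $\delta_0\in(0,\epsilon)$ such that whenever a rung relation holds at a base point, the two corresponding extended compositions agree on the entire $\delta_0$-neighborhood of that point. Shrinking $\delta_0$ so that the $\delta_0$-neighborhoods of $\be$ and $\en$ of each letter lie in the extended domains and ranges, and using that the $\mS$-letters are $\lambda$-contractions (so images of $\delta_0$-balls shrink), I would propagate the equality rung by rung: from $|x-y|<\delta_0$, the assumption that the $\epsilon$-neighborhoods of the partial images $U_i\cdots U_1(x)$ lie in $\be(U_{i+1})$ guarantees the analogous maps are defined at $y$ at every stage, and the rungs then force $(\ldots U_2U_1, y)=(\ldots V_2V_1, y)$. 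The main obstacle is the uniformity of $\delta_0$ along the unbounded (possibly infinite) ladder: each rung relation only transports equality to a $\delta_0$-ball, and one must verify that the contraction of the $\mS$-letters keeps every partial composition inside the fixed domains so that one $\delta_0$ serves all rungs simultaneously. This is exactly where the $\epsilon$-containment-in-domains hypothesis and the choice $\delta_0<\epsilon$ enter, just as in Lemma~\ref{lem:relationsbndm}; the finite case contributes only the bookkeeping of the terminal rung.
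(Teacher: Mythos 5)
Your proposal is correct and takes exactly the approach the paper intends: the paper's entire proof of this lemma is the single remark that it ``is proved in the same way as Lemma~\ref{lem:relationsbndm}''. The details you supply --- absorbing the single $\mathcal{A}$-letter into a block of contractions, extracting a ladder of boundedly many relation types (Lemma~\ref{l:relationsboundary} for infinite products, quasi-geodesic stability via Theorem~\ref{th:rigidquasigeodesics} for finite germ equalities, where the rungs lie in a compact set of bounded-length elements rather than literally in $A$), and propagating each rung with Corollary~\ref{cor:relationspseudogroup} while the contractions keep the partial images of $y$ within $\delta_0$ of those of $x$ --- are precisely the intended adaptations.
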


Fix $\delta_0$ satisfying the conditions of
Lemma~\ref{lem:deltabijection}. Suppose that $g\in\G|_{\X}$ and
$h\in\G$ are such that $|\en(g)-\en(h)|<\delta_0$. For a finite or
infinite product
$\xi=\ldots g_2g_1g\in\overline{T_g}$, where $g_i\in S$,
find elements $U_i\in\mS$ such that $g_i$ is $\epsilon$-contained in $U_i$.
Define then
\begin{equation}\label{eq:rgh}R_g^h(\xi)=\ldots U_2U_1\cdot
  h,\end{equation}
see Figure~\ref{fig:dualgroupoid}.
By Lemma~\ref{lem:deltabijection}, $R_g^h(\xi)$ depends only on
$g$, $h$, and $\xi$ (and does not depend on the choice of the generators
$g_i$ or the choice of the elements $U_i$). Note that $R_g^h(\xi)
\notin\G|_{\X}$ in general (even for $\xi\in\G_x^{\X}$).

\begin{figure}
\centering
\includegraphics{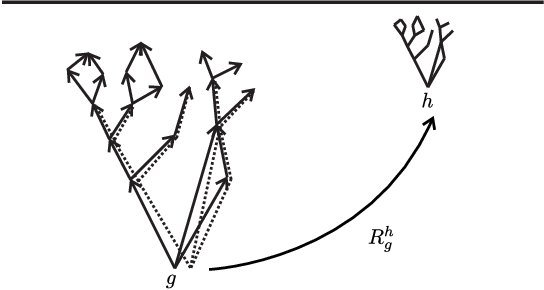}
\caption{The map $R_g^h$}
\label{fig:dualgroupoid}
\end{figure}

\begin{theorem}
\label{th:holonomies}
The space $\bdry$ of germs of restrictions of the maps $R_g^h$,
for $g\in\G|_{\X}, h\in\G$, to open subsets of the disjoint union
$\bigsqcup_{x\in\X}\partial\G_x$ is a
groupoid (i.e., is closed under taking compositions and inverses), and
depends only on $\G$ and $\X$. Restriction of $\bdry$ to any set
$\bigsqcup_{x\in Y}\partial\G_x$ for $Y\subset\X$ does
not depend on $\X$.
\end{theorem}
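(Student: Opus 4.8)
The plan is to reduce everything to a single \emph{transitivity} property of the parallel-translation maps $R_g^h$, from which closure under composition and inversion follow formally, and then to deduce the two independence statements from the locality of germs. First I would record, straight from~\eqref{eq:rgh} and Lemma~\ref{lem:deltabijection}, that for $g\in\G|_{\X}$, $h\in\G$ with $|\en(g)-\en(h)|<\delta_0$ the map $R_g^h$ is a well-defined homeomorphism from $\overline{\til_g}$ onto $\overline{\til_h}$, independent of the auxiliary generators $g_i$ and covering elements $U_i$. I would also note the normalization that the base element may be pushed outward along the ray, $\xi=\ldots g_2(g_1g)$ with $g_1g\in\G|_{\X}$ (since $\be(g_1g)=\be(g)\in\X$ and $\en(g_1g)=\en(g_1)\in\X$ as $g_1\in S$ and $\en(S)=\X$); this lets me always take representatives with base elements in $\G|_{\X}$.

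The key step — and the one I expect to be the main obstacle — is the transitivity identity: if $g_1\in\G|_{\X}$ and $h_1,g_2,h_2\in\G$ are such that $\til_{h_1}$ and $\til_{g_2}$ overlap and the relevant targets are within $\delta_0$, then the germs of $R_{g_2}^{h_2}\circ R_{g_1}^{h_1}$ and of a single map $R_{g_1}^{h}$ coincide on their common domain, for an element $h\in\G$ determined by the data. The difficulty is that the intermediate point $\zeta=R_{g_1}^{h_1}(\xi)=\ldots U_2U_1h_1$ is presented with base $h_1$, whereas applying $R_{g_2}^{h_2}$ requires the presentation $\ldots w_2w_1g_2$ with base $g_2$. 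To match the two presentations I would use that both rays converge to $\zeta$ and are uniformly quasi-geodesic, so by Proposition~\ref{pr:nbhdtil} and the quasi-geodesic rigidity of the Cayley graphs (Theorem~\ref{th:rigidquasigeodesics}) their tails are interleaved by elements of a fixed compact set $A$; feeding the resulting relation into Lemma~\ref{lem:deltabijection} transports it from the base point $\be(h_1)$ to a neighborhood and yields the claimed equality of germs. This is precisely the assertion that parallel translation from the $g_1$-cone to the $h_1$-cone followed by translation to the $h_2$-cone equals direct translation to the $h_2$-cone.

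Granting transitivity, closure is formal. Closure under composition is the lemma itself, once the normalization of the first paragraph places $g_1$ in $\G|_{\X}$. For inversion, the set-theoretic inverse of $R_g^h$ is $R_h^g$ — replacing the innermost $g$ by $h$ and then $h$ by $g$ with a common covering returns the original ray, again via Lemma~\ref{lem:deltabijection}. To exhibit $R_h^g$ inside the generating family, whose first index must lie in $\G|_{\X}$, I would enlarge $h$ by Proposition~\ref{pr:nbhdtil} to $ah\in\G|_{\X}$ with $\en(a)\in\X_0$ and $\til_{ah}$ a neighborhood of $\til_h$; transitivity then identifies the germ of $R_h^g$ with that of $R_{ah}^{ag}$, which belongs to $\bdry$. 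Thus $\bdry$ is closed under both operations and is a groupoid of germs.

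Finally, the independence statements follow from locality. For independence from the choices of $S$, $\mS$, $A$, $\mathcal A$, and $\delta_0$, I would compare two systems by passing to a common refinement (replacing $S$ by $S^n\cup S^{n+1}$ as in the proof of Proposition~\ref{pr:hyperbolicgenset}); since the topology and natural log-scale on each $\partial\G_x$ are choice-independent by Proposition~\ref{pr:topology}, the two families $R_g^h$ define the same germs, so $\bdry$ depends only on $(\G,\X)$. For the last assertion, a germ of $R_g^h$ over a point of $\Y\subset\X$ is determined by arbitrarily small neighborhoods of boundary points lying over $\Y$, so enlarging or shrinking the ambient transversal beyond $\Y$ does not affect it; comparing two transversals $\X,\X'\supset\Y$ through the compatibility of Lemma~\ref{lem:Lipschstructure} identifies the two restrictions of $\bdry$ to $\bigsqcup_{x\in\Y}\partial\G_x$.
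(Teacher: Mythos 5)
Your proposal is correct in substance, but it takes a genuinely different route from the paper. The paper never verifies closure of the family $\{R_g^h\}$ directly: it introduces the groupoid $\mathfrak{H}$ of germs of the pseudogroup generated by the holonomies $[\cdot,\cdot]_V$ of the local product structure and by the right action $\xi\mapsto\xi\cdot g$, so that closure under composition and inversion is automatic, and then shows that the set of germs of the maps $R_g^h$ coincides with $\mathfrak{H}$. The easy inclusion is the factorization $\xi\mapsto\xi\cdot g^{-1}\mapsto[\en(h),\xi\cdot g^{-1}]_V\mapsto[\en(h),\xi\cdot g^{-1}]_V\cdot h$ of $R_g^h$; the converse is Lemma~\ref{lem:reductiontop}, imported from the Smale quasi-flow chapter. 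Independence of all choices then comes for free from the canonicity of the local product structure (Theorem~\ref{th:localproduct}), and the identification of $\bdry$ with $\proj_-(\partial\G\rtimes\G)=\G^\top$ is exactly what makes Proposition~\ref{prop:newdual} immediate afterwards. Your transitivity lemma re-proves, inside the hyperbolic-groupoid chapter, what the paper imports from Lemma~\ref{lem:reductiontop}, and it does go through with the ladder-plus-transport machinery you name (Lemma~\ref{l:relationsboundary} is the sharper citation for the interleaving than Theorem~\ref{th:rigidquasigeodesics}); this makes your argument self-contained, at the price of losing the identification with the dual groupoid, which would then need a separate proof.

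Two points need repair when you write out the transitivity step. First, the composite is not a germ of $R_{g_1}^{h}$ with the same first index: the ladder rung fixes only a finite prefix $t_n\cdots t_1$ of the presentation of $\xi$, so what you actually obtain is agreement with $R_q^{h}$ on the cone $\til_q$ for the deeper base $q=t_n\cdots t_1g_1$; and since $\til_q$ need not be a neighborhood of $\xi$, you must enlarge $q$ via Proposition~\ref{pr:nbhdtil} (the same trick you use for inversion) before you can speak of equality of germs. Second, substituting the ladder relation into a presentation that already carries an enlarging element $a\in A$ (or composing with the inverse of a covering element $F\in\mathcal{A}$) produces two adjacent bi-Lipschitz factors in the presenting sequence, whereas Lemma~\ref{lem:deltabijection} allows only one element of $\mathcal{A}$; you must first close the bi-Lipschitz family under the relevant finite products. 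The same caveat applies to your inversion step, where $ag$ does not literally compose (only $|\en(g)-\en(h)|<\delta_0$ holds, not equality), so $a$ must be replaced by the transported germ of a covering element at $\en(g)$. Finally, note that your independence arguments also rest on matching presentations over different generating sets, which is again the ladder argument rather than pure locality or Lemma~\ref{lem:Lipschstructure}; this is where the paper's canonical description via $\mathfrak{H}$ is doing real work.
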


\begin{proof}
If $x, y$ are units of $\G$ such that $R_x^y$ is defined, then
restriction of the transformation $R_x^y$ to $\partial\G_x$ is equal
to the transformation $\xi\mapsto [y, \xi]_V$, where $V$ is a
neighborhood of points $x, y$ of diameter less than
$\delta_0$. See~\eqref{eq:locprodU} on page~\pageref{eq:locprodU}.

The groupoid $\mathfrak{H}$
of germs of the pseudogroup acting on the disjoint union
$\bigsqcup_{x\in\X}\partial\G_x$ and generated by the transformations
$[\cdot, \cdot]_V$ and $\xi\mapsto\xi\cdot g$ is equivalent to the
projection $\proj_-(\partial\G\rtimes\G)$ of the geodesic quasi-flow onto
the direction of the boundaries of the Cayley graphs. Every element of the groupoid $\mathfrak{H}$
is, by Lemma~\ref{lem:reductiontop}, equal to a composition of
$\xi\mapsto\xi\cdot g$, followed by $\xi\mapsto [x, \xi]_U$, and then
by $\xi\mapsto\xi\cdot h$ for some $g, h\in\G$.

Restriction of the transformation $R_g^h$ to an open subset of
$\partial\G_{\be(g)}$ is equal to the composition of the transformations
\[\xi\mapsto\xi\cdot g^{-1}\mapsto [\en(h), \xi\cdot
g^{-1}]_V\mapsto[\en(h), \xi\cdot g^{-1}]_V\cdot h,\]
where $V$ is a neighborhood of diameter less than $\delta_0$ of
$\en(g)$ in $\G^{(0)}$.

It follows that the set of germs of $R_g^h$ is equal to the groupoid
$\mathfrak{H}$.
The last statement of the theorem is straightforward.
\end{proof}

\begin{proposition}
\label{prop:newdual}
Let $\G$ be a hyperbolic groupoid. The dual groupoid $\G^\top$
is equivalent to $\bdry$.
\end{proposition}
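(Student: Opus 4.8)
The plan is to reduce the statement to the equivalence between $\Gh$ and $\proj_-(\partial\G\rtimes\G)$ that is already implicit in the proof of Theorem~\ref{th:holonomies}, and then to invoke the definition of the dual groupoid. By definition $\G^\top=\proj_-(\partial\G\rtimes\G)$ (computed with respect to a compressible covering $\mathcal{R}$ of a transversal of $\partial\G$ by rectangles, which exists and is unique up to equivalence by Proposition~\ref{prop:compressible}), whereas Theorem~\ref{th:holonomies} identifies $\bdry$ with the groupoid $\Gh$ of germs of the pseudogroup on $\bigsqcup_{x\in\X}\partial\G_x$ generated by the parallel translations $[\cdot,\cdot]_V$ and the action maps $\xi\mapsto\xi\cdot g$. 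Thus it suffices to produce an explicit equivalence between $\Gh$ and $\proj_-(\partial\G\rtimes\G)$.

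First I would make the two underlying spaces comparable. The space on which $\proj_-(\partial\G\rtimes\G,\mathcal{R})$ acts is $\bigsqcup_{R_i\in\mathcal{R}}\proj_2(R_i)$, and for the rectangles $R_{g,U}$ of Theorem~\ref{th:localproduct} one has $\proj_2(R_{g,U})=\til_g^\circ\subseteq\partial\G_{\be(g)}$. Hence there is a natural map $f\colon\bigsqcup_{R_i}\proj_2(R_i)\arr\bigsqcup_{x\in\X}\partial\G_x$ sending each plaque $\proj_2(R_{g_i,U_i})$ identically onto $\til_{g_i}^\circ\subseteq\partial\G_{\be(g_i)}$. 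This map is étale, and since $\mathcal{R}$ covers a transversal of $\partial\G$ (cf.\ Proposition~\ref{pr:nbhdtil}), the range of $f$ meets every $\Gh$-orbit. By Proposition~\ref{pr:localization}, the localization $f^*(\Gh)$ is then equivalent to $\Gh$, so the task reduces to the equality $f^*(\Gh)=\proj_-(\partial\G\rtimes\G,\mathcal{R})$ as pseudogroups of local homeomorphisms of $\bigsqcup_{R_i}\proj_2(R_i)$.

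Next I would match generators. A rectangular generator $F$ of $\wt{\partial\G\rtimes\G}$ taking $\be(F)\subset R_i$ to $\en(F)\subset R_j$ decomposes in the canonical coordinates $R_{g,U}\approx\be(U)\times\til_g^\circ$ of Theorem~\ref{th:localproduct} as $F=\proj_+(F)\times\proj_-(F)$, so that $\proj_-(F)$ is precisely the holonomy of the boundary plaques produced by the action map $\xi\mapsto\xi\cdot g$ together with the plaque-identifications $H_{2,y}\colon\proj_2(R,x)\arr\proj_2(R,y)$, the latter being exactly the parallel translations $[\cdot,\cdot]_V$. Performing the decomposition of $R_g^h$ as in the proof of Theorem~\ref{th:holonomies} --- namely $\xi\mapsto\xi\cdot g^{-1}\mapsto[\en(h),\xi\cdot g^{-1}]_V\mapsto[\en(h),\xi\cdot g^{-1}]_V\cdot h$, cf.~\eqref{eq:rgh} and~\eqref{eq:locprodU} --- shows that under $f$ the germs of the maps $\proj_-(F)$ correspond to the germs of $R_g^h$, and conversely. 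Well-definedness of this correspondence (independence of the chosen representatives $U_i\in\mS$ that $\epsilon$-contain the $g_i$) is exactly the content of Lemma~\ref{lem:deltabijection}, while Lemma~\ref{lem:reductiontop} guarantees that every element of either pseudogroup is a composition of generators of the stated form.

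The main obstacle, and where I would spend the most care, is the bookkeeping reconciling the two coordinate descriptions of the boundary direction: the intrinsic one coming from the $\epsilon$-containment formula~\eqref{eq:rgh} for $R_g^h$, and the one coming from the abstract local product structure of Theorem~\ref{th:localproduct} and the projection $\proj_-$. Verifying that these produce literally the same germs --- and that the identifications of overlapping plaques $\proj_2(R_i)\cap\proj_2(R_j)$ are carried correctly by $f$ --- requires applying Lemma~\ref{lem:deltabijection} uniformly and checking the étale, freeness, and properness hypotheses underlying the localization. Independence of all auxiliary choices ($S$, $\mS$, $\X$, $\delta_0$, and $\mathcal{R}$) then follows from Proposition~\ref{prop:compressible} together with the independence statements already established in Theorems~\ref{th:localproduct} and~\ref{th:holonomies}. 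Once these identifications are in place we obtain $f^*(\Gh)=\proj_-(\partial\G\rtimes\G,\mathcal{R})=\G^\top$, and since $f^*(\Gh)$ is equivalent to $\Gh=\bdry$, the groupoids $\G^\top$ and $\bdry$ are equivalent.
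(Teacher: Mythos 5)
Your proposal is correct and takes essentially the same route as the paper: the paper's proof of Proposition~\ref{prop:newdual} simply cites the proof of Theorem~\ref{th:holonomies}, in which $\bdry$ is identified with the groupoid $\mathfrak{H}$ generated by the parallel translations $[\cdot,\cdot]_V$ and the maps $\xi\mapsto\xi\cdot g$, and $\mathfrak{H}$ is stated to be equivalent to $\proj_-(\partial\G\rtimes\G)=\G^\top$. Your localization argument via Proposition~\ref{pr:localization} and the plaque identification $\proj_2(R_{g,U})\cong\til_g^\circ$ merely fills in the details of that asserted equivalence, so it is an elaboration of the paper's argument rather than a different approach.
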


\begin{proof}
Follows directly from the proof of
Theorem~\ref{th:holonomies}.
\end{proof}

Below we describe a reformulation of the last proposition, which is less
explicit, but is probably more elegant and self-contained.

Let $\G$ be a hyperbolic groupoid. Let $(S, \X)$ be any compact
generating pair of $\G$. Let
$S'$ and $\X'$ be compact neighborhoods of $S$ and $\X'$
respectively. Let $|\cdot|$ be a metric on $S'$.

In the following definition we write
$g_n\to\infty$ if $g_n$ eventually leaves every finite set.

\begin{defi}
We say that a map $F:T_1\arr T_2$, where $T_1\subset\half$ and
$T_2\subset\overline{\G_y^{\X'}}$ are compact
neighborhoods of points $\xi_1\in\partial\G_x, \xi_2\in\partial\G_y$, is an
\emph{asymptotic automorphism of $\G$}, if the following condition holds.
If sequence $g_n, h_n\in T_1$ are such that $g_n\to\infty$,
$h_n\to\infty$, and $g_n^{-1}h_n\in S$, then
$F(g_n)F(h_n)^{-1}$ eventually belongs to $S'$,
and \[|g_nh_n^{-1}-F(g_n)F(h_n)^{-1}|\to 0\]
as $n\to\infty$.
\end{defi}

Note that the condition of the definition does not depend on the
choice of the metric $|\cdot|$ on $S'$, since $S'$ is compact.

\begin{proposition}
\label{pr:asautom}
The set of germs of partial maps $\partial\G_x\arr\partial\G_y$
equal to restrictions of asymptotic automorphisms of $\G$ coincides with $\bdry$ and is
hence equivalent to $\G^\top$.
\end{proposition}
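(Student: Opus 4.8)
The plan is to establish the set equality $\{\text{germs of the maps }\partial\G_x\arr\partial\G_y\text{ induced by local automorphisms}\}=\bdry$ by proving two inclusions; the concluding equivalence with $\G^\top$ is then immediate from Proposition~\ref{prop:newdual}, which already identifies $\bdry$ with (a groupoid equivalent to) $\G^\top$. Throughout I would fix the data $(S,\X)$, the covering $\mS$, the common Lebesgue number $\epsilon$, and the constant $\delta_0$ of Lemma~\ref{lem:deltabijection}, exactly as in the construction of $R_g^h$ in~\eqref{eq:rgh}.

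For the inclusion $\bdry\subseteq\{\text{local automorphisms}\}$, I would show that each generating map $R_g^h$ is itself a local automorphism, so that its restriction to $\partial\G_{\be(g)}$ contributes a germ in the local-automorphism set. To verify the defining condition, suppose $v_n,w_n\in\overline{T_g}$ satisfy $v_n,w_n\to\infty$ and $s_n=v_nw_n^{-1}\in S$. Since $s_n$ is $\epsilon$-contained in some $U_{s_n}\in\mS$, the relation $v_n=s_nw_n$ together with Lemma~\ref{lem:deltabijection} gives $R_g^h(v_n)=U_{s_n}\cdot R_g^h(w_n)$ as germs, so $R_g^h(v_n)R_g^h(w_n)^{-1}$ is the germ of $U_{s_n}$ at $\en(R_g^h(w_n))$, while $s_n$ is the germ of the same $U_{s_n}$ at $\en(w_n)$. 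Writing $w_n=h_{l_n}\cdots h_1 g$ and $R_g^h(w_n)=V_{l_n}\cdots V_1 h$ along the same contraction word, the contraction property of $\mS$ yields $|\en(w_n)-\en(R_g^h(w_n))|\le\lambda^{l_n}\delta_0\to 0$ (note $l_n\to\infty$). As both germs lie in the single element $U_{s_n}$ drawn from the finite family $\mS$, this forces $|v_nw_n^{-1}-R_g^h(v_n)R_g^h(w_n)^{-1}|\to 0$ and eventual membership in $S'$, so $R_g^h$ is a local automorphism.

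For the reverse inclusion, given a local automorphism $F$ and $\xi_1\in\partial\G_x$, I would represent $\xi_1=\cdots g_2g_1 g$ with $g_i\in S$ and set $v_n=g_n\cdots g_1 g$, so that $v_n\to\xi_1$ and $v_nv_{n-1}^{-1}=g_n\in S$. Applying the metric clause of the definition to the consecutive pairs $(v_n,v_{n-1})$ shows, first, that $|\en(v_{n-1})-\en(F(v_{n-1}))|\to 0$ (closeness of germs in the \'etale metric dominates closeness of base points), so that for large $N$ the pair $(v_N,F(v_N))$ has $\delta_0$-close targets and $R_{v_N}^{F(v_N)}$ is defined; and second, that for $n>N$ the germ $F(v_n)F(v_{n-1})^{-1}$ is a germ of the element $U_n\in\mS$ that $\epsilon$-contains $g_n$, whence $F(v_n)=U_n\cdots U_{N+1}F(v_N)=R_{v_N}^{F(v_N)}(v_n)$. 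Using Proposition~\ref{pr:nbhdtil} to choose $v_N$ with $T_{v_N}$ a neighborhood of $\xi_1$, this identifies $F$ with $R_{v_N}^{F(v_N)}$ along cones converging to $\xi_1$, and in particular at $\xi_1$.

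The main obstacle is upgrading this pointwise identity to an equality of \emph{germs} on a neighborhood. Because the local-automorphism condition is only asymptotic, $F$ need not preserve the finitely many initial generator-edges of a nearby point $\zeta=\cdots g_2'g_1' v_N$, so $F(\zeta)$ and $R_{v_N}^{F(v_N)}(\zeta)$ arise from cone seeds that may differ on a bounded initial segment. I would resolve this exactly in the spirit of Lemma~\ref{lem:deltabijection} and Theorem~\ref{th:rigidquasigeodesics}: two infinite products along the same tail word converge to the same boundary point once their seeds have targets within $\delta_0$, so the bounded initial discrepancy is washed out in the limit, giving $F(\zeta)=R_{v_N}^{F(v_N)}(\zeta)$ throughout a neighborhood and hence equality of germs. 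This tail-stability step, together with the verification that the metric on $S'$ controls base-point distance, are the two points requiring genuine care; the remainder is bookkeeping with the relation lemma and the contraction estimates already in place.
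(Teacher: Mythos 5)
Your overall architecture is the same as the paper's: both inclusions rest on the observation that the metric condition forces $F(g_n)F(h_n)^{-1}$ to be a germ of the very element of $\mS$ that contains $g_nh_n^{-1}$; your verification that each $R_g^h$ is a local automorphism is correct (and more detailed than the paper's ``it is easy to see''); and the final equivalence with $\G^\top$ via Proposition~\ref{prop:newdual} is as in the paper. The genuine problem is the step you yourself single out as the main obstacle: your proposed resolution is false. It is not true that two infinite products $\ldots U_2U_1\cdot a$ and $\ldots U_2U_1\cdot b$ along the same tail word converge to the same boundary point once the seeds $a,b$ have $\delta_0$-close targets. If $\be(a)\ne\be(b)$ the limits lie in the disjoint spaces $\partial\G_{\be(a)}$ and $\partial\G_{\be(b)}$; and if $b=ac$ for a non-unit isotropy germ $c$ (so that the targets actually coincide), then $\ldots U_2U_1\cdot b=(\ldots U_2U_1\cdot a)\cdot c=\xi\cdot c$, which differs from $\xi$ in general --- for the groupoid of a hyperbolic group acting on its boundary this is exactly the nontrivial action of a germ at its fixed point. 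Were your claim true, isotropy would act trivially on $\partial\G$ and every $R_g^h$ would degenerate. Lemma~\ref{lem:deltabijection} cannot help here: it only propagates an \emph{equality} of two germs at one point to nearby points; it never identifies products with distinct seeds. Moreover, in your situation the hypothesis of your claim is not even available: at the bounded index $k_0$ nothing controls $\en(F(w_{k_0}))$, because the local-automorphism condition is purely asymptotic.

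The correct repair --- and this is what the paper's terse phrase ``up to a finite set can be covered by transformations of the form $R_g^h$'' encodes --- is to uniformize the asymptotic condition \emph{before} running your path argument. Arguing by contradiction from the definition (exhaust the countable vertex set of $T_1$ by finite sets and extract offending sequences $g_n,h_n\to\infty$), one finds for every $\epsilon'>0$ a single finite set $K$ of vertices such that for \emph{all} adjacent pairs $g,h\in T_1\setminus K$ with $gh^{-1}\in S$ one has $F(g)F(h)^{-1}\in S'$ and $|gh^{-1}-F(g)F(h)^{-1}|<\epsilon'$. Then choose the basepoint deep enough: since directed paths increase $\coc$ and $K$ is finite, for $N$ large the entire cone above $av_N$ (with $a\in A$ chosen via Proposition~\ref{pr:nbhdtil} so that $\til_{av_N}$ is a neighborhood of $\xi_1$) avoids $K$ and, by hyperbolicity, lies in $T_1$. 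Now \emph{every} edge of \emph{every} path in that cone is preserved by $F$ --- there is no initial discrepancy left to wash out --- so $F$ coincides with $R_{av_N}^{F(av_N)}$ on all vertices of the cone, hence, by continuity and density of the vertices, on all of $\til_{av_N}$, which gives the required equality of germs at $\xi_1$. With this replacement your proof goes through and agrees with the paper's.
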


\begin{proof}
It is easy to check that the set of asymptotic automorphisms does not
depend on the generating sets $S$ and $S'$, hence we may assume that
they satisfy the conditions of
Proposition~\ref{pr:hyperbolicgenset} (it depends, however, on $X$ and
$X'$).

Let $\mS$ be, as in the proof of Theorem~\ref{th:localproduct}, a
finite covering of $S$ by elements of $\pG$. Let $\delta$ be a number
smaller than the Lebesgue's number of the covering $\mS$.
Then $|g_nh_n^{-1}-F(g_n)F(h_n)^{-1}|\to 0$ implies that for all
$n$ big enough we have $F(g_n)F(h_n)^{-1}\in U_n$, where $U_n\in\mS$ are
such that $g_nh_n^{-1}$ is $\delta$-contained in $U_n$. It follows
that every asymptotic automorphism, up to a finite set can
be covered by transformations of the form $R_g^h$.

On the other hand, it is easy to see that every transformation $R_g^h$ is
an asymptotic automorphism. Consequently, the set of germs of
asymptotic automorphisms on the boundaries of the Cayley graphs
coincides with the set of germs of the maps $R_g^h$, i.e., with $\bdry$.
\end{proof}

\section{Minimal hyperbolic groupoids}

Let $\G$ be a hyperbolic groupoid and let $(S, \X)$ be its
generating pair. We
say that a Cayley graph $\G(x, S)$ is \emph{topologically mixing} if
for every point $\xi\in\partial\G_x$ and every neighborhood $U$ of
$\xi$ in $\half$ the set of accumulation points
of $\en(U\cap\G_x^{\X})$ contains the interior of $\X$.

\begin{proposition}
\label{pr:minimal}
Let $\G$ be a hyperbolic groupoid. Then the following conditions are
equivalent.
\begin{enumerate}
\item Some Cayley graph of $\G$ is topologically mixing.
\item Every Cayley graph of $\G$ is topologically mixing.
\item Every $\G$-orbit is dense in $\G^{(0)}$.
\end{enumerate}
\end{proposition}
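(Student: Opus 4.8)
The plan is to prove the three implications in the cycle $(2)\Rightarrow(1)\Rightarrow(3)\Rightarrow(2)$, the whole argument resting on a description of the target set $\en(U\cap\G_x^{\X})$ in terms of the contracting generators. The implication $(2)\Rightarrow(1)$ is immediate, since $\X$ is a nonempty transversal. For the rest I would first set up the following dictionary. Fix a complete generating set $S$ and a finite covering $\mS$ of $S$ by contracting positive elements of $\pG$, as in Theorem~\ref{th:localproduct}, and for $p\in\X$ let $\Lambda_p\subset\X$ be the set of accumulation points of the backward orbit $\{\,F_{i_n}\cdots F_{i_1}(p)\ :\ F_{i_k}\in\mS,\ \text{composable at }p\,\}$. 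A basic neighbourhood $U$ of a point $\xi\in\partial\G_x$ in $\half$ consists, by Proposition~\ref{pr:topology} and Theorem~\ref{th:rigidquasigeodesics}, of the vertices and boundary points that stay $\delta$-close to a geodesic ray towards $\xi$ beyond some deep vertex $v$. The graph isomorphism $w\mapsto wv^{-1}$, which preserves $\en$ and carries $\G(x,S)$ onto $\G(\en(v),S)$, identifies $U\cap\G_x^{\X}$, up to a uniformly bounded correction, with the descendants of the unit at $p=\en(v)$, whose targets are precisely $\{F_{i_n}\cdots F_{i_1}(p)\}$. Hence the accumulation set of $\en(U\cap\G_x^{\X})$ is $\Lambda_{\en(v)}$, and $\G(x,S)$ is topologically mixing if and only if $\mathrm{int}\,\X\subset\Lambda_p$ for every target $p=\en(g)$ with $g\in\G_x^{\X}$.

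The technical heart is a contraction lemma asserting that $\Lambda_p$ is essentially independent of $p$. Since every $F\in\mS$ contracts distances by a fixed factor $\lambda<1$, two admissible words agreeing in their last $m$ letters yield points within $\lambda^m\cdot\mathrm{diam}\,\X$ of each other; as in Corollary~\ref{cor:relationspseudogroup} and Proposition~\ref{pr:internalshift} this gives two facts. First, $p\mapsto\Lambda_p$ is locally constant: for $p'$ near $p$ the same generators are defined near both, and the deep backward-orbit points coincide in the limit, so $\Lambda_{p'}=\Lambda_p$. Second, $\Lambda_p$ is invariant along $\G$-orbits: replacing $p$ by a point one expanding step away (that is, by $\be(s)$ for $s\in S$ with $\en(s)=p$) changes the backward orbit only by an extra innermost letter, and the estimate on the ``outer depth'' of words shows the two orbits have the same accumulation points; iterating, $\Lambda_p=\Lambda_{p'}$ whenever $p$ and $p'$ lie in one $\G$-orbit.

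With this dictionary $(1)\Rightarrow(3)$ proceeds as follows. Taking $U=\half$ (a neighbourhood of any $\xi$), condition $(1)$ forces $\mathrm{int}\,\X$ into the closure of $\en(\G_{x_0}^{\X})$, which is the intersection of the orbit of $x_0$ with $\X$; thus the orbit of $x_0$ is dense in $\mathrm{int}\,\X$, and it is dense in all of $\G^{(0)}$ because every $y\in\G^{(0)}$ admits $F\in\pG$ carrying a neighbourhood of $y$ into $\X_0\subset\mathrm{int}\,\X$ (using that $\X_0$ is an open transversal and that elements of $\pG$ are open maps), so pulling back an orbit point of $F(W)$ produces one near $y$. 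By the contraction lemma $p\mapsto\Lambda_p$ is locally constant and constant along this dense orbit, hence globally constant, say equal to $\Lambda$. Finally $\Lambda=\X$: for arbitrary $z\in\X$ the equalities $\be(S)=\en(S)=\X$ provide an infinite backward itinerary $z=F_{i_1}\cdots F_{i_n}(z_n)$, and since the backward dynamics branches the points $F_{i_1}\cdots F_{i_n}F_k(z_n)$ are distinct from $z$ yet within $\lambda^n\,\mathrm{diam}\,\X$ of it, whence $z\in\Lambda_{z_n}=\Lambda$. Therefore $\mathrm{int}\,\X\subset\Lambda=\Lambda_p$ for all $p$, every Cayley graph is mixing, and applying $U=\half$ once more yields density of every orbit, i.e. $(3)$.

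The remaining implication $(3)\Rightarrow(2)$ is the same mechanism with the dense orbit supplied directly by the hypothesis: orbit-invariance and local constancy again make $p\mapsto\Lambda_p$ globally constant, the backward-itinerary argument again gives $\Lambda=\X$, and the dictionary converts $\mathrm{int}\,\X\subset\Lambda_p$ into topological mixing of $\G(x,S)$ for every $x$. I expect the main obstacle to be the contraction lemma, and specifically its orbit-invariance half: one must show that inserting forward (expanding) steps does not enlarge the accumulation set of the purely backward orbit, so that density cannot be manufactured by the forward dynamics alone. This is exactly where the uniform factor $\lambda$ and the finiteness of words of each outer depth are used to reduce everything to the backward (preimage) dynamics; carrying this out, together with the bookkeeping of Proposition~\ref{pr:topology} identifying small neighbourhoods of $\xi$ with cones of descendants, is the bulk of the work.
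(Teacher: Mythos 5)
Your dictionary and the local-constancy half of your ``contraction lemma'' are essentially sound: local constancy is precisely the parallel-transport mechanism the paper itself builds (the maps $R_x^{x'}$ of Section~\ref{s:anotherdefinition}, resting on Lemma~\ref{lem:relationsbndm}), where the uniform factor $\lambda$ lets any word of generators applicable at $p$ be applied at every $p'$ with $|p-p'|<\delta_0$, the images converging together. The genuine gap is the orbit-invariance half: the claim $\Lambda_{\be(s)}=\Lambda_{\en(s)}$ for $s\in S$ is \emph{false} for general hyperbolic groupoids. Only the inclusion $\Lambda_{\en(s)}\subset\Lambda_{\be(s)}$ is automatic (a word applicable at $\en(s)$ extends by the letter $s$ to a word applicable at $\be(s)$); the reverse inclusion is exactly the step you flag as ``the main obstacle,'' and it cannot be carried out because the statement fails. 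Concretely, take the hyperbolic groupoid generated by the one-sided shift of finite type over $\{a,b,c\}$ with allowed transitions $aa$, $ac$, $bb$, $bc$, $cc$ (a reducible Cuntz--Krieger groupoid; it is hyperbolic exactly as in the paper's expanding self-covering examples, which nowhere assume irreducibility). The points $p'=c^\infty$ and $p=ac^\infty=\en(s)$, where $s$ is the germ at $c^\infty$ of the contraction $w\mapsto aw$, lie in one orbit, yet the backward orbit of $c^\infty$ is $\{a^kc^\infty\}\cup\{b^kc^\infty\}$, so $\Lambda_{p'}=\{a^\infty,b^\infty\}$, while the backward orbit of $ac^\infty$ is $\{a^kc^\infty\colon k\ge 1\}$, so $\Lambda_p=\{a^\infty\}$. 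Forward (expanding) steps genuinely do enlarge the accumulation set of the backward orbit, because they open access to other branches of preimages.

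This is fatal to the architecture of your proof, not merely to one lemma: in $(1)\Rightarrow(3)$ you use orbit-invariance along the dense orbit of $x_0$ to make $p\mapsto\Lambda_p$ globally constant, and in $(3)\Rightarrow(2)$ you use it again; but orbit-invariance can only hold as a \emph{consequence} of the minimality you are trying to prove, so invoking it there is circular --- note that in the example above the orbit of $c^\infty$ is dense, yet $\Lambda$ is not constant along it. The paper avoids comparing accumulation sets along orbits altogether. For basepoint-independence it moves the basepoint \emph{through the space} rather than along an orbit: mixing at $x$ first yields density of the orbit of $x$, which brings $x$ arbitrarily close to any other basepoint $x'$, and then the maps $R_x^{x'}$ (defined only for $|x-x'|<\delta_0$) transfer neighborhoods of boundary points together with their sets of targets, the contraction property guaranteeing equal accumulation sets; independence of the generating pair is handled separately by quasi-isometry and net arguments. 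And $(3)\Rightarrow(1)$ is proved by a soft argument with no $\Lambda_p$ at all: if every orbit is dense, then every open $U\subset\X$ is a transversal, so $\{g\in\G_x^{\X}\colon\en(g)\in U\}$ is a net in the Cayley graph, whence every neighborhood of every point of $\partial\G_x$ in $\half$ contains vertices with targets in $U$. To repair your write-up you would have to replace orbit-invariance by these two moves; as it stands, the key lemma is false.
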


\begin{proof}
Note that (2) obviously implies (3). It remains to prove that (1)
implies (2) and that (3) implies (1).

Let us show that (1) implies (2). We will split the proof into four lemmas.

\begin{lemma}
If $\G(x, S)$ is topologically mixing, then $\G(x, S')$ is topologically mixing for any generating set $S'$ of $\G|_{\X}$.
\end{lemma}

\begin{proof} The Lipschitz class of the log-scale on $\half$ does not depend on the choice of the generating set, hence the set of open
neighborhoods of points of $\half$ does not depend on $S$.
\end{proof}

We may assume now that all generating pairs in our proof satisfy the conditions of Proposition~\ref{pr:hyperbolicgenset}.

\begin{lemma}
Let $(\X', S')$ be a compact generating pair of $\G$ such that $\G(x, S')$ is topologically mixing. Let $(\X, S)$ be a generating pair such that $\X\subset\X'$.
Then $\G(x, S)$ is topologically mixing.
\end{lemma}

\begin{proof}
For every neighborhood $W\subset\half$ of a point
$\xi\in\partial\G_x$ and every point
$y$ of the interior of $\X$, the point $y$ is a limit of a sequence
of $\en(g_n)$ for pairwise different $g_n\in W$.
Since $y$ is an internal point of $\X$, for all sufficiently big $n$
we will have $\en(g_n)\in\X$, hence
$g_n\in\G|_{\X}$. Consequently, the set of accumulation points of
$\en(W\cap\G|_{\X})$ contains the interior of $\X$, which shows that
$\G(x, S)$ is topologically mixing.
\end{proof}

\begin{lemma}
\label{l:increasing}
Let $(\X, S)$ and $(\X', S')$ be compact generating pairs of $\G$ such that $\X\subset\X'$ and $\G(x, S)$ is topologically mixing. Then $\G(x, S')$ is topologically mixing.
\end{lemma}

\begin{proof}
The identical embedding $\G_x^{\X}\arr\G_x^{\X'}$ is a quasi-isometry
of the Cayley graphs $\G(x, S)\arr\G(x, S')$.

There exists a finite collection
$\mathcal{U}$ of elements $U\in\pG$ such that
$\be(U)$ cover $\X'$ and $\en(U)\subset\X$. Denote
$A=\bigcup_{U\in\mathcal{U}}U\cap\G|_{\X'}$.
It follows from elementary properties of
Gromov hyperbolic graphs that for every neighborhood $W'$ of
$\xi\in\partial\G_x$ in  $\half[\X']$ there exists a
neighborhood $W$ of $\xi$ in
$\half$ such that for every $g\in W$ and $h\in A$
we have $h^{-1}g\in W'$. Let $y$ be an arbitrary internal point of
$\X'$. Let $U\in\mathcal{U}$ be such that $y\in\be(U)$. Then $U(y)$
is an internal point of $\X$, and hence it is a limit of a sequence
of points of the form $\en(g_n)$ for a sequence of different elements
$g_n\in W$. Then $\en(g_n)$ belongs to $\en(U)$
and $U^{-1}(\en(g_n))$ belongs to $\X'$ for all sufficiently big
$n$. If $h$ is the germ of $U$ at $U^{-1}(\en(g_n))$, then $h\in A$,
$h^{-1}g_n$ belongs to $W'$, and $y$ is equal to the limit of the
sequence $\en(h^{-1}g_n)$. Consequently, the Cayley graph $\G(x, S')$
is also topologically mixing.
\end{proof}

The following lemma will finish the proof that (1) implies (2).

\begin{lemma}
If a Cayley graph $\G(x, S)$ is topologically
mixing for some
$x\in\X$, then the Cayley graph $\G(x', S)$ is topologically mixing
for every $x'\in\X$.
\end{lemma}

\begin{proof}
Since the property of being topologically mixing does not depend on
the choice of the generating set $S$, we may assume that $S$ is complete. Let $\mS$ be
a finite open covering of $S$ by positive contracting elements of $\pG$.

Note that if $\G(x, S)$ is topologically mixing, then for any
point $y\in\X$ in the orbit of $x$ the Cayley graph $\G(y, S)$ is
topologically mixing. It follows that we may assume that $x'$ belongs
to an open transversal $\X_0\subset\X$ and that $x$ is arbitrarily
close to $x'$ (since the orbit of $x$ is dense in the interior of
$\X$).

Let $\xi'\in\partial\G_{x'}$ be an arbitrary point. Let $X'\supset \X$ be a compact set such that $\en(F)$ is contained in $\X'$ for every $F\in\mS$.
Let $W$ be a neighborhood of $\xi'$ in
$\overline{\G_{x'}^{\X'}}$. Note that then the set of values of $R_x^{x'}$ belongs to $\overline{\G_{x'}^{\X'}}$. Denote $\xi=R_{x'}^x(\xi')$. We have $R_x^{x'}(\xi)=\xi'$. There exists
$g\in\G_x$ such that $\overline{T_g}$ is a neighborhood of $\xi$ in
$\partial\G_x$ and $R_x^{x'}(\til_g)\subset W\cap\partial\G_{x'}$.
The set $\overline{T_g}$ is a neighborhood of
$\xi$ in $\half$, hence $\en(T_g)$ is dense in the
interior of $\X$. Denote $T_g'=R_x^{x'}(T_g)$. The set of accumulation points of $T_g$
in $\half$ is equal to $\til_g$. The set of accumulation points of $T_g'$ in $\half[X']$ is $R_x^{x'}(\til_g)$. Consequently, all but finitely many points of $T_g'$ belong
to $W$. But the set of accumulation points of $\en(T_g)$ is equal to
the set of accumulation points of $\en(T_g')$,
due to the fact that elements of $\mS$ are contracting.
Consequently, the set of accumulation points of
$\en(W\cap\G_{x'})$ contains the set of accumulation points of
$\en(T_g)$, hence it contains the interior of $\X$. We have shown that for every neighborhood $W$ of $\xi'$ in $\overline{\G_{x'}^{\X'}}$ the set of accumulation points of $\en(W\cap\G_{x'})$ contains the interior of $\X$. Repeating the argument from the proof of Lemma~\ref{l:increasing}, we conclude that the set of accumulation points of $\en(W\cap\G_{x'})$ contains the interior of $\X'$.
\end{proof}

Finally, let us show that (3) implies (1). Let $\G(x, S)$ be a Cayley
graph, where $(S, \X)$ is a generating pair.
We have to show that
$\en(\G_x^\X)$ is dense in the interior of $\X$. Let
$U\subset\X$ be an arbitrary open subset. Since every orbit of $\G$
is dense, $U$ is a transversal. Consequently, the set of elements
$g\in\G_x^\X$ such that $\en(g)\in U$
is a net in the Cayley graph $\G(x, S)$. This implies that
for every neighborhood $W$ of a point of $\partial\G_x$ in $\half$,
the set $\en(W)$ intersects $U$.
\end{proof}

\begin{defi}
We say that a hyperbolic groupoid $\G$ is \emph{minimal}
\index{groupoid!minimal} \index{minimal groupoid}
if it satisfies the equivalent conditions of Proposition~\ref{pr:minimal}.
\end{defi}

\begin{proposition}
\label{pr:dualgroupoid}
If $\G$ is minimal, then the groupoid $\bdry[x]$ equal to
restriction of $\bdry$ to $\partial\G_x$ is equivalent to the
groupoid $\bdry$.
\end{proposition}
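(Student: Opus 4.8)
The plan is to exhibit $\partial\G_x$ as a localization of $\bdry$ along an \'etale map and then, via Proposition~\ref{pr:localization}, to reduce the entire statement to the single assertion that $\partial\G_x$ meets every orbit of $\bdry$; minimality will enter only through this transversality assertion. Since by Theorem~\ref{th:holonomies} the restriction of $\bdry$ does not depend on the ambient transversal, I may first enlarge $\X$ so that $x$ lies in an open transversal $\X_0$, hence in $\operatorname{int}\X$. I would then pass to the model $\bdry\simeq\G^\top=\proj_-(\partial\G\rtimes\G)$ provided by Proposition~\ref{prop:newdual}. The unit space of $\proj_-(\partial\G\rtimes\G)$ is the projection $\proj_-(\partial\G)$ of the boundary onto the second direction of its local product structure; by Theorem~\ref{th:localproduct} a neighbourhood of a point of $\partial\G$ is a rectangle $A\times B$ with $A$ open in $\X$ and $B$ open in some $\partial\G_x$, and $\proj_-$ collapses the $A$-direction. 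Consequently the tautological map $f\colon\partial\G_x\to\proj_-(\partial\G)$ (include the fibre, then project) restricts on each rectangle to the homeomorphism $\{x\}\times B\to B$, so $f$ is \'etale, and the pullback $f^{*}\bigl(\proj_-(\partial\G\rtimes\G)\bigr)$ is precisely $\bdry[x]$, the groupoid of germs of those holonomies $R_g^h$ whose source and target both lie in $\partial\G_x$. By Proposition~\ref{pr:localization} it therefore suffices to show that $f(\partial\G_x)$ is a transversal, i.e.\ that $\partial\G_x$ meets every $\bdry$-orbit.

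For the transversality I would fix $y\in\X$ and an arbitrary $\zeta\in\partial\G_y$, and use minimality through Proposition~\ref{pr:minimal}, which makes the Cayley graph $\G(y,S)$ topologically mixing. Writing $\zeta=\cdots g_2g_1\cdot h$ with $g_i\in S$ and $h\in\G_y^{\X}$, and replacing $\zeta$ by the orbit-equivalent point $\zeta\cdot h^{-1}$ (obtained from the generator $\xi\mapsto\xi\cdot h^{-1}$ of $\bdry$, which is defined since $\en(h^{-1})=\be(h)=y=P(\zeta)$), I may assume $\zeta\in\til_y$ with $y\in\X$. Applying topological mixing to a small neighbourhood $W$ of $\zeta$ in $\overline{\G_y^{\X}}$, the accumulation set of $\en(W\cap\G_y^{\X})$ contains $\operatorname{int}\X\ni x$, so there are vertices $v\in W\cap\G_y^{\X}$ with $\en(v)$ arbitrarily close to $x$; taking $v$ far out towards $\zeta$ I obtain $\zeta\cdot v^{-1}\in\partial\G_{\en(v)}$ with $\en(v)\in\X$ and $|\en(v)-x|<\delta_0$, again in the $\bdry$-orbit of $\zeta$. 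Applying the parallel transport $R_{\en(v)}^{x}$ of~\eqref{eq:rgh} (defined on $\overline{\til_{\en(v)}}$ and carrying it into $\partial\G_x$ precisely because $|\en(v)-x|<\delta_0$) then lands the orbit of $\zeta$ inside $\partial\G_x$, proving transversality.

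The main obstacle is the bracketed claim that $\zeta\cdot v^{-1}$ genuinely falls into the domain $\overline{\til_{\en(v)}}$ of a single parallel transport. Here I would argue geometrically: a vertex $v$ with $\ell(v,\zeta)$ large lies within a uniformly bounded Cayley distance of the ascending path $(g_k\cdots g_1)$ defining $\zeta$, so $\zeta\cdot v^{-1}$ differs from a genuine tail $\cdots g_{k+2}g_{k+1}\in\til_{\en(v)}$ by a boundedly long element based at $\en(v)$; combining this with Proposition~\ref{pr:nbhdtil} and Proposition~\ref{pr:internalshift} places $\zeta\cdot v^{-1}$ in the interior $\til_{\en(v)}^{\circ}$ (after premultiplying by a fixed $a\in A$ if necessary), where $R_{\en(v)}^{x}$ is a well-defined local homeomorphism. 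Once transversality is established, Proposition~\ref{pr:localization} gives $\bdry[x]=f^{*}(\G^\top)\simeq\G^\top\simeq\bdry$, which completes the proof.
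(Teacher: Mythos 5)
Your reduction of the statement to the claim that $\partial\G_x$ meets every $\bdry$-orbit is correct, but the machinery you build for it is heavier than necessary: by Theorem~\ref{th:holonomies} the unit space of $\bdry$ is the disjoint union $\bigsqcup_{y\in\X}\partial\G_y$, so $\partial\G_x$ is already an \emph{open} subset of that unit space, and restriction of a groupoid of germs onto an open transversal is an equivalence. No passage through $\G^\top$, Proposition~\ref{prop:newdual}, or a pullback along an \'etale map is needed; moreover $\G^\top$ is only defined when $\partial\G\rtimes\G$ is locally diagonal, which for minimal $\G$ is Proposition~\ref{pr:irredlocdiag} --- proved in the paper \emph{after} the present statement, so your route, while not circular, imports that result for no gain.

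The genuine gap is in the transversality argument, exactly at the step you flagged, and your proposed repair does not close it. Writing $v=q\cdot g_k\cdots g_1$ with $|q|$ uniformly bounded, you get $\zeta\cdot v^{-1}=(\ldots g_{k+2}g_{k+1})\cdot q^{-1}\in\til_{q^{-1}}$, and there is no reason for this point to lie in $\til_{\en(v)}$, the boundary part of the domain of $R_{\en(v)}^{x}$ from~\eqref{eq:rgh}. This is not a technicality. In Example~\ref{ex:expanding} (the doubling map, a minimal groupoid) the Cayley graphs are trees, and a boundary point belongs to $\til_v$ if and only if the ray defining it passes through $v$; taking $y=0$ and $\zeta$ the end determined by the constant backward orbit at the fixed point $0$, the condition $\zeta\cdot v^{-1}\in\til_{\en(v)}$ is equivalent to $\zeta\in\til_v$, which forces $v$ to lie on that constant ray and hence $\en(v)=0$. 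So for $x$ farther than $\delta_0$ from $0$, \emph{no} vertex $v$ supplied by topological mixing works. Your fix does not rescue this: Proposition~\ref{pr:nbhdtil} gives $a\in A$ with $\zeta\cdot v^{-1}\in\til_{aq^{-1}}^{\circ}$, but the parallel transports defined on that neighborhood are the maps $R_{aq^{-1}}^{h}$, which require $|\en(h)-\en(aq^{-1})|=|\en(h)-\en(a)|<\delta_0$, and $\en(a)$ is merely some point of $\X_0$ --- nothing places it near $x$, so the proximity $|\en(v)-x|<\delta_0$ that mixing bought you never enters the argument. To produce $h$ with $\be(h)=x$ and $\en(h)$ within $\delta_0$ of $\en(a)$ you must invoke density of the orbit of $x$, i.e.\ condition (3) of Proposition~\ref{pr:minimal}; and once you use that, the mixing detour is superfluous: for any $\zeta\in\partial\G_y$ choose by Proposition~\ref{pr:nbhdtil} an element $g\in\G_y^{\X}$ with $\en(g)\in\X_0$ and $\zeta\in\til_g^{\circ}$, choose by minimality $h\in\G$ with $\be(h)=x$ and $|\en(h)-\en(g)|<\delta_0$, and the germ at $\zeta$ of $R_g^h$ is an element of $\bdry$ carrying $\zeta$ into $\partial\G_x$. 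That short argument --- openness of $\partial\G_x$ in the unit space plus dense orbits producing the connecting elements --- is precisely the paper's proof.
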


\begin{proof}
The set $\partial\G_x$ is an open subset of the space of units of the
groupoid $\bdry$. It follows from minimality that for every
$y\in\G^{(0)}$ there exist elements $g\in\G$ and $x\in\X$ such that
$\be(g)=y$ and $|\en(g)-x|<\delta_0$. Consequently, $\partial\G_x$
is a $\bdry$-transversal.
\end{proof}

Hence, we can define, in the case of minimal groupoids,
the dual groupoid $\G^\top$ as the groupoid of
germs of restrictions of the maps $R_g^h$ to open subsets of $\partial\G_x$.

\begin{proposition}
If $\G$ is minimal, then $\G^\top$ is also minimal.
\end{proposition}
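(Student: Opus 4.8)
The plan is to verify condition (3) of Proposition~\ref{pr:minimal} directly for the dual groupoid. Since $\G$ is minimal, $\G^\top$ is the groupoid of germs of the holonomy maps $R_g^h$ restricted to open subsets of $\partial\G_x$ (see Proposition~\ref{pr:dualgroupoid} and the preceding remark), so its unit space is $\partial\G_x$; moreover $\G^\top$ is hyperbolic by Theorem~\ref{th:dualitytheorem}, so Proposition~\ref{pr:minimal} applies to it. Thus it suffices to show that every $\G^\top$-orbit is dense in $\partial\G_x$. By Theorem~\ref{th:holonomies} every element of $\G^\top$ is a germ of some $R_g^h$, so the orbit of a point $\xi\in\partial\G_x$ is the set of points $R_g^h(\xi)$, where $g$ runs over vertices of $\G(x,S)$ with $\xi\in\overline{\til_g}$ (equivalently, $g$ lies on a ray representing $\xi$), $\be(g)=\be(h)=x$, and $|\en(g)-\en(h)|<\delta_0$.

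First I would fix $\xi,\zeta\in\partial\G_x$ and a level $N$, and aim to produce a point of the orbit of $\xi$ that is close to $\zeta$ in the natural log-scale $\ell$ of Proposition~\ref{pr:topology}, where closeness means sharing a common vertex of large $\coc$-value. I choose once and for all a vertex $g$ on a ray representing $\xi$ with $\en(g)$ in the interior of $\X$. Then I invoke minimality of $\G$ in the form of topological mixing of the Cayley graph $\G(x,S)$ (Proposition~\ref{pr:minimal}): taking the neighborhood $U=\{\eta\in\half:\ell(\eta,\zeta)\ge N\}$ of $\zeta$, the set of targets $\en(U\cap\G_x^{\X})$ accumulates on the whole interior of $\X$, and in particular meets the $\delta_0$-ball around $\en(g)$. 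This yields a vertex $h\in U\cap\G_x^{\X}$ with $\be(h)=x$ and $|\en(h)-\en(g)|<\delta_0$, so that $R_g^h(\xi)$ is defined and, by~\eqref{eq:rgh}, passes through the vertex $h$.

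The estimate then runs as follows. Because $h\in U$, the vertex $h$ and the point $\zeta$ share a common vertex $w$ with $\coc(w)\ge N$ lying on the inner part of the path to $h$; since $R_g^h(\xi)$ passes through $h$ it also passes through $w$, while $\zeta$ passes through $w$ by the choice of $w$. Proposition~\ref{pr:topology} then gives $\ell(R_g^h(\xi),\zeta)\ge N$. As $N$ is arbitrary, the orbit of $\xi$ accumulates at $\zeta$; as $\zeta$ is arbitrary, every orbit is dense, and hence $\G^\top$ satisfies condition (3) of Proposition~\ref{pr:minimal} and is minimal.

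The main obstacle will be the bookkeeping that turns the stated mixing condition into the two simultaneous requirements on $h$ — that $\en(h)$ land within $\delta_0$ of the prescribed interior target while $h$ stays inside the prescribed neighborhood $U$ of $\zeta$ — together with the precise verification, via the log-scale of Proposition~\ref{pr:topology}, that sharing the deep vertex $w$ indeed forces $\ell(R_g^h(\xi),\zeta)\ge N$ up to the usual additive constant (so that one may need to replace $N$ by $N+c$ throughout). A minor point to dispatch is that the interior of $\X$ is dense in $\X$, so the approximation of $\en(g)$ by interior targets remains available even when $\en(g)$ happens to lie on the topological boundary of $\X$.
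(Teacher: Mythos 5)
Your proposal is correct and follows essentially the same route as the paper: both arguments realize $\G^\top$ as the groupoid $\bdry[x]$ of germs of the maps $R_g^h$ on $\partial\G_x$ and then invoke topological mixing of the Cayley graph $\G(x,S)$ to produce a vertex $h$ lying in a prescribed neighborhood of the target point $\zeta$ with $\en(h)$ within $\delta_0$ of $\en(g)$, so that $R_g^h$ carries $\xi$ into that neighborhood. The only cosmetic differences are that the paper phrases the target neighborhoods via the cone sets (finding $f\in T_h$ with $\til_f\subset\til_h$) instead of your log-scale estimate, and it works from the start with vertices in $\G_x^{\X_0}$ -- which, together with Proposition~\ref{pr:nbhdtil}, is also the clean way to secure your choice of $g$ with $\en(g)$ in the interior of $\X$ (the interior of $\X$ need not be dense in $\X$, so your fallback remark should be replaced by that citation).
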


\begin{proof}
It is enough to show that the groupoid $\bdry[x]$
is minimal. By definition of topologically mixing Cayley graphs, for
any $g, h\in\G_x^{\X_0}$ and any neighborhood $U$ of $\en(g)$
there exists $f\in T_h$ such that $\en(f)\in U$ and
$\til_f\subset\til_h$.
Then restriction of $R_g^f:\til_g\arr\til_h$ to the interior of
$\til_g$ is an element of $\wt{\bdry}$. It
follows that orbit of any internal point of $\til_g$ is dense.
\end{proof}

For definition of a locally diagonal groupoid,
see~\ref{def:locdiagonal}.

\begin{proposition}
\label{pr:irredlocdiag}
If $\G$ is minimal, then the geodesic quasi-flow
$\partial\G\rtimes\G$ is locally diagonal.
\end{proposition}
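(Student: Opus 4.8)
The plan is to take as the witnessing covering $\mathcal{R}$ the atlas of rectangles $R_{g,U}\approx\be(U)\times\til_g^\circ$ built in the proof of Theorem~\ref{th:localproduct}, with the admissible $U$ chosen so small that $\be(U)$ has diameter below the constant $\delta_0$ fixed there. In this decomposition the first (stable) direction is the $\G^{(0)}$-factor $\be(U)$ and the second (unstable) direction is the boundary factor $\til_g^\circ\subset\partial\G_{\be(g)}$; recall that under these identifications $\proj_+(\partial\G\rtimes\G)$ is equivalent to $\G$, while $\proj_-(\partial\G\rtimes\G)$ is equivalent to the groupoid $\bdry$ of Theorem~\ref{th:holonomies}. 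Since $\en(\xi,h)=(\xi,\en(h))$ and $\be(\xi,h)=(\xi\cdot h,\be(h))$, the hypothesis of Definition~\ref{def:locdiagonal} for an element $(\xi,h)\in\partial\G\rtimes\G$ is exactly that $\xi$ and $\xi\cdot h$ lie in one common rectangle $R_{g,U}$; I must then show that triviality of either projection forces $h$ to be a unit, so that $(\xi,h)$ is a unit.

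The case $\proj_1(\xi,h)=\proj_+(\xi,h)$ trivial is immediate and uses no minimality. By the formula for $[\cdot,\cdot]_U$ in Theorem~\ref{th:localproduct}, the first-coordinate map of the local action of $(\xi,h)$ is precisely the germ on $\G^{(0)}$ of the bisection of $\G$ carrying $\en(h)$ to $\be(h)$, i.e.\ the germ $h$ itself under $\proj_+(\partial\G\rtimes\G)\cong\G$. Hence $\proj_1(\xi,h)$ trivial means $h$ is a unit of $\G$, so $\xi\cdot h=\xi$ and $(\xi,h)$ is a unit.

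The substantive case is $\proj_2(\xi,h)=\proj_-(\xi,h)$ trivial. Unwinding the product decomposition, this says that on a neighbourhood of $\xi$ the action $\zeta\mapsto\zeta\cdot h$ agrees, in the boundary direction, with the holonomy $[\be(h),\,\cdot\,]_U$; equivalently, the germ of $h$ in $\bdry$ is trivial, so $h$ carries the boundary values of all $\zeta\in\partial\G_{\en(h)}$ near $\xi$ to the holonomy-prescribed values. Here I would invoke minimality through topological mixing (Proposition~\ref{pr:minimal}): for a neighbourhood $W$ of $\xi$ in $\half$, the targets $\en(W\cap\G_{\en(h)}^{\X})$ accumulate on the whole interior of $\X$. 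If $h$ were not a unit, its bisection would act non-trivially on some open $O\subset\X$; choosing, by density, vertices $v$ in the cone of $\xi$ with $\en(v)\in O$, the germs $vh$ and $v$ differ by that non-trivial germ, and I would argue that appending this germ perturbs the limiting boundary value of the directed path through $v$, contradicting the agreement of $\cdot h$ with holonomy. Thus $h$ must be a unit.

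I expect this last translation to be the main obstacle. The delicate point is to show that a non-unit right multiplier $h$, detected on a set of targets that is dense by topological mixing, genuinely moves the boundary value rather than being \emph{absorbed} by the contractions $F_i$ defining $[\cdot,\cdot]_U$. Making this precise requires the fellow-traveller rigidity of directed quasi-geodesics (Theorem~\ref{th:rigidquasigeodesics}) together with the relation lemmas (Lemma~\ref{l:relationsboundary} and Lemma~\ref{lem:relationsbndm}) to certify that two directed paths differing by the non-trivial germ $h$ cannot converge to the same point unless $h$ is trivial; the density furnished by minimality is exactly what rules out such absorption and upgrades faithfulness of $\G$ on $\X$ to faithfulness of its boundary action, which is the content of local diagonality.
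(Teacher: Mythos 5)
Your setup and the easy half are fine: the covering by the rectangles $R_{g,U}$ is the one the paper uses, and triviality of the first projection does force $h$ to be a trivial germ of $\G$, hence a unit, with no minimality needed. The gap is exactly at the point you flag as ``the main obstacle,'' and the route you propose for closing it would fail. Write $\xi=\ldots g_2g_1\cdot g$ with $g=(U,\en(h))$, $g'=(U,\be(h))$, and let $g_i'$ be the germs of the same covering bisections along the transported path, so that triviality of $\proj_2(\xi,h)$ says $\ldots g_2g_1\cdot gh=\ldots g_2'g_1'\cdot g'$. Your key claim --- that two directed paths differing by a non-trivial germ cannot converge to the same boundary point, to be certified by Theorem~\ref{th:rigidquasigeodesics} and Lemmas~\ref{l:relationsboundary} and~\ref{lem:relationsbndm} --- cannot be established this way, because convergence to the same point is precisely what the hypothesis asserts: the two paths do fellow-travel and do have the same limit, and coarse-geometric rigidity extracts no contradiction from this. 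All that survives of $h$ is the sequence of difference elements $h_n=g_n\cdots g_1\,g\,h\,(g_n'\cdots g_1'g')^{-1}$, which are non-units but whose displacements $|\be(h_n)-\en(h_n)|$ tend to $0$ because the generators are contractions; the non-triviality of these germs is invisible to quasi-geodesic arguments. Moreover, if your claim were true unconditionally, every hyperbolic groupoid would have locally diagonal geodesic quasi-flow and the hypothesis in Theorem~\ref{th:dualitytheorem} would be vacuous; and your only concrete use of minimality --- choosing vertices $v$ with $\en(v)$ in an open set $O$ where a bisection through $h$ acts non-trivially --- does not engage the difficulty, since $O$ is a neighbourhood of $\en(h)$ while right multiplication by $h$ perturbs the paths at their basepoints, not at the far vertices $v$.

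The paper's actual argument is topological rather than coarse-geometric, and it has two ingredients missing from your proposal. First, propagation: the conjugates $h_n$ above are non-units with trivial second projection and uniformly bounded $|\coc|$, hence they lie in a compact set $\nuke$, and $|\be(h_n)-\en(h_n)|\to 0$; minimality (topological mixing) is used here to make the set of basepoints of all such elements, over all choices of $\xi$ and of the paths, dense in the interior of $\X$. Second, a Hausdorffness/Baire-category step: covering $\nuke$ by finitely many extendable bisections, one notes that for each bisection the set of its fixed points carrying a non-trivial germ is nowhere dense, so there exist an interior point $x$ of $\X$ and constants $r,\epsilon_0>0$ such that every element of $\nuke$ with basepoint within $r$ of $x$ is either a trivial germ or moves its basepoint by more than $\epsilon_0$. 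Placing some non-trivial $h_n$ with displacement less than $\epsilon_0$ near $x$, which density permits, gives the contradiction. In short, absorption of a single germ cannot be refuted; what can be refuted is a dense family of absorbed non-trivial germs with vanishing displacement, and that is what minimality manufactures.
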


\begin{proof}
Consider the covering of $\partial\G\rtimes\G$ by the rectangles
$R_{U, g}$, as defined in the proof of~Theorem~\ref{th:localproduct}.
Suppose that this covering does not satisfy the conditions
of Definition~\ref{def:locdiagonal}.
Then there exist a rectangle $R_{U, g}$ and a non-unit element $h\in\G$
such that $[\be(h), \xi]_U=\xi\cdot h$ for all $\xi\in\til_g^\circ$.

Let $\xi=\ldots g_2g_1\cdot g$ for $g_i\in S$ be an arbitrary point of
$\til_g^{\circ}$, and let $G_i\in\mS$ be such that
$g_i$ is $\epsilon$-contained in $G_i$. Then
$\be(h), \en(h)\in\be(U)$, and $[\be(h),
\xi]_U$ is represented by the product $\ldots G_2G_1\cdot (U,
\be(h))$. If $g'=(U, \be(h))$ and $g_i'$ are the germs of $G_i$ such
that $\ldots G_2G_1\cdot (U, \be(h))=\ldots g_2'g_1'\cdot g'$, then
$\ldots g_2g_1\cdot gh=\ldots g_2'g_1'\cdot g'$.

There exists a constant $\Delta$
such that for any element $h\in\G$ which has trivial projection on the
second direction of the geodesic quasi-flow we have
$|\coc(h)|\le\Delta$ (see~\eqref{eq:nataction} on page~\pageref{eq:nataction}).
Let $\nuke$ be the set of elements $(\xi, h)$ of the
geodesic quasi-flow such that $\xi\in\til_{\en(h)}$, $\xi\cdot
h\in\til_{\be(h)}$, and $|\coc(h)|\le\Delta$. Then $\nuke$ has compact
closure.

Consider the sequence
\begin{gather*}
h_0=gh(g')^{-1}=UhU^{-1},\\
h_1=g_1gh(g_1'g')^{-1}=G_1Uh(G_1U)^{-1},\\ \vdots\\
h_n=g_n\cdots g_1gh(g_n'\cdots g_1'g')^{-1}=G_n\cdots G_1Uh(G_n\cdots
G_1U)^{-1}.
\end{gather*}
For each element $h_n$ we have \[\ldots
g_{n+2}g_{n+1}\cdot h_n=\ldots g_{n+2}'g_{n+2}'\]
and
\[[\be(h_n), \ldots g_{n+2}g_{n+1}]_{G_n\cdots G_1U}=\ldots
g_{n+2}g_{n+1}.\]
There exists $n_0$ such that $\til_{g_n\cdots g_1\cdot
  g}\subset\til_g^{\circ}$ for all $n\ge n_0$. It follows that all
elements $h_n$ have trivial projections onto the second coordinate of
the geodesic quasi-flow. Note that all elements $h_n$ are non-units and
belong to $\nuke$ for $n\ge 1$. Let $H$ be the union of the sequences
$h_n$ for all possible choices of $\xi\in\til_g^{\circ}$,
representations $\xi=\ldots g_2g_1\cdot g$, and $G_i\in\mS$.

Then, by definition of minimality, the sets $\be(H)$ and
$\en(H)$ are dense in the interior of $\X$. Note also that
$|\be(h_n)-\en(h_n)|\to 0$ as $n\to\infty$.

The set $\nuke$ can be covered by a finite set $\mathcal{U}$
of extendable homeomorphisms $U\in\pG$. Choose for each $U\in\mathcal{U}$ an extension
$\wh U\in\pG$ such that $\overline U\subset\wh U$.
For every $U\in\mathcal{U}$ the set
of points $x\in\be(\overline{U})$ such that the germ $(\wh U, x)$ is
non-trivial and $\overline U(x)=x$ is nowhere dense. It follows that
there exists an internal point $x$ of $\X$ such that for every
$U\in\mathcal{U}$ either $(U, x)$ is trivial (hence $x$ belongs to an
open subset of the set of fixed points of $U$), or $U(x)\ne x$.
Then there exist numbers $r>0$ and $\epsilon_0>0$
such that for every $g\in\nuke$ such that $|x-\be(g)|<r$ either $g$ is
trivial, or $|\be(g)-\en(g)|>\epsilon_0$.

But then we get a contradiction, since there will exist $h_n\in H$
such that $|x-\be(h_n)|<r$ and $|\be(h_n)-\en(h_n)|<\epsilon_0$.
\end{proof}

\chapter{Examples of hyperbolic groupoids and their duals}

\section{Gromov hyperbolic groups}

Let $G$ be a Gromov hyperbolic group, i.e., a finitely generated group
such that its Cayley graph is hyperbolic. The group $G$ acts from
the right on its left Cayley graph by isomorphisms, hence it acts on the
boundary $\partial G$ of the Cayley graph by homeomorphisms. Suppose
that for any non-trivial element $g\in G$ and for any $\xi\in\partial
G$ the germ $(g, \xi)$ is non-trivial. This is true, for instance,
when $G$ is torsion-free.

Let $\G$ be the groupoid of germs of the action, which will coincide
with the groupoid $\partial G\rtimes G$ of the action. It is generated
by the compact set of germs of generators of $G$. In order to maintain
correct order of multiplication, we will denote by $(g, \xi)$ the germ
of the transformation $\zeta\mapsto \zeta\cdot g^{-1}$ at $\xi\in\partial G$.

\begin{proposition}
The groupoid $\G$ of the action of $G$ on $\partial G$ is hyperbolic.
\end{proposition}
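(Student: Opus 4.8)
**The plan is to verify the five conditions of Definition~\ref{def:hyperbolic} for a suitably chosen generating pair of $\G = G \ltimes \partial G$.** The natural transversal is $\X = \partial G$ itself, which is compact since $G$ is Gromov hyperbolic (its boundary is compact). For the generating set I would take $S$ to be the set of germs $(g, \xi)$ with $g$ ranging over a fixed finite symmetric generating set of $G$ and $\xi \in \partial G$. Since the left Cayley graph of $G$ is obtained from the action by germs, the Cayley graph $\G(\xi, S)$ of the groupoid based at a point $\xi$ with trivial-isotropy is naturally identified (via $g \mapsto \en(g) = \xi \cdot g$) with the orbital/Cayley graph of $G$ itself — this is the essential point that transfers all the large-scale geometry of $G$ to the groupoid. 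The torsion-free (or non-triviality of germs) hypothesis guarantees the isotropy groups are trivial, so this identification is a genuine graph isomorphism.

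First I would set up the metric structure on $\partial G$. The visual metric $|\cdot|$ associated with the Busemann/Gromov-product log-scale based at a point $\omega \in \partial G$ makes the action of $G$ by boundary homeomorphisms \emph{locally Lipschitz}, and for each generator $g$ of $G$, the map $\zeta \mapsto \zeta \cdot g$ contracts distances near its ``attracting'' behavior. More precisely, I would fix $\omega_\xi$ to be the attracting fixed point direction so that the elements $(g,\xi) \in S$, which correspond to moving \emph{away} from $\omega$, become contractions in the visual metric; this is where conditions (1) and (2) are checked. The key geometric fact is that for a hyperbolic group, translation by a generator, viewed on the boundary minus the relevant endpoint, contracts the visual metric by a uniform factor $\lambda < 1$ — this follows from the standard estimates on how the Gromov product changes under the group action.

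Next, conditions (3), (4), and (5) are the large-scale conditions. Condition (3), $\delta$-hyperbolicity of every $\G(\xi, S)$, is \emph{immediate} from the identification with the Cayley graph of $G$, which is $\delta$-hyperbolic by hypothesis, together with Theorem~\ref{th:thintriangles} (quasi-isometry invariance of hyperbolicity to handle the finitely many exceptional isotropy points). Condition (4), $\be(S) = \en(S) = \X$, holds because the action is by homeomorphisms of the whole $\partial G$. For condition (5) I would identify $\omega_\xi \in \partial\G(\xi, S)$ with the boundary point $\omega \in \partial G$ itself, viewed inside the boundary of the Cayley graph of $G$: the paths in $\G(\xi, S^{-1})$ correspond to directed paths in $G$ moving in the direction that converges to $\omega$, and these are uniform quasi-geodesics converging to the single point $\omega_\xi$. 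Lemma~\ref{lem:quasigeodesic} together with the coarse-equivalence of the natural grading to a Busemann quasi-cocycle (Theorem~\ref{th:contraction}) makes this precise.

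\textbf{The main obstacle} will be the delicate interplay in conditions (1)--(2) versus (5): I must choose the orientation of the generating set and the basepoint $\omega$ of the visual metric \emph{consistently}, so that the elements of $S$ are simultaneously topological contractions of $\partial G$ (in the visual metric) and large-scale ``away from $\omega$'' in the Cayley graph. Concretely, the subtlety is that $\partial G$ with its visual metric and $\partial G$ as the abstract boundary-of-Cayley-graph carry \emph{a priori} different topologies (as the remark following Definition~\ref{def:busemann} warns), so I would need to verify that the contraction direction for the metric coincides with the direction converging to $\omega_\xi = \omega$, and that this choice is uniform in $\xi$. Once this alignment is established — using that all the $\G(\xi,S)$ are uniformly identified with a single fixed graph (the Cayley graph of $G$) carrying a single distinguished boundary point $\omega$ — the uniformity constants $\lambda, \delta, \Lambda, \Delta$ follow because they depend only on $G$ and not on the basepoint $\xi$. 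I expect the bulk of the genuine work to be the estimate showing generators contract the visual metric, which is a standard but technical computation with Gromov products that I would only sketch.
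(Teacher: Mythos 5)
Your choice of generating set is where the argument breaks down, and the defect cannot be repaired by any choice of basepoint $\omega$ or ``orientation.'' Take $S$ to be all germs $(g,\xi)$ with $g$ in a finite symmetric generating set $A$ of $G$ and $\xi\in\partial G$. Then condition (2) of Definition~\ref{def:hyperbolic} fails: the contraction rate of the germ of $\zeta\mapsto\zeta\cdot g^{-1}$ at $\xi$ is governed (in the visual metric) by the Busemann value $\coc(g,\xi)=\beta_\xi(g,1)\doteq -2\ell(g,\xi)+|g|$, which for $|g|=1$ lies in a bounded interval around $0$; since $A$ is symmetric, roughly half of these germs have non-positive $\coc$ and are expansions (e.g.\ germs of a loxodromic generator near its repelling fixed point), and even the ones with $\coc>0$ gain too little to beat the additive errors coming from the hyperbolicity constant. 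The problem is pointwise in $\xi$ --- the same group element has contracting germs at some boundary points and expanding germs at others, and your $S$ contains all of them --- so no global alignment of $\omega$ with the edge orientation can fix it. Condition (5) fails as well: with $S$ essentially symmetric, directed paths in $\G(\xi,S^{-1})$ are arbitrary edge-paths in the Cayley graph of $G$, which are neither uniform quasi-geodesics nor convergent to a single boundary point; and even a non-symmetric choice of $A$ would give positive words converging to many different points of $\partial G$, violating the uniqueness of $\omega_\xi$.

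The idea you are missing is the heart of the paper's proof: the generators must be germs of \emph{long} group elements aligned with geodesics pointing toward the base of the germ, so that the Busemann quasi-cocycle is uniformly \emph{large} on every generator. The paper defines $S_n$ as the set of germs $(g,\xi)$ such that $g^{-1}=g_n\cdots g_1$ is an initial segment of a geodesic ray converging to $\xi\cdot g^{-1}$; then $\coc(g,\xi)>n-\delta_4$ on $S_n$, and the estimate $\ell(\xi\cdot g^{-1},\zeta\cdot g^{-1})\doteq\ell(\xi,\zeta)+\coc(g,\xi)$ makes every such germ a genuine $\lambda$-contraction once $n$ dominates the error constants, while the same largeness of $\coc$ is exactly what Theorem~\ref{th:contraction} and Lemma~\ref{lem:quasigeodesic} need for condition (5). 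The remaining substantive work, for which your sketch has no substitute, is showing that $Z_n=(S_n\cup S_{n+1})\cup A^\delta\cdot(S_n\cup S_{n+1})$ is a compact generating set with word length controlled by $|g|$: one splits an arbitrary germ $(g,\xi)$ along two geodesic rays, to $\xi$ and to $\xi\cdot g^{-1}$, joined by a bounded connector $r$ supplied by thin triangles. What is correct in your proposal --- the identification of $\G(\xi,S)$ with the Cayley graph of $G$ via trivial isotropy, and hence condition (3), and condition (4) --- is also used in the paper, but it transfers only the large-scale hyperbolicity, not the two directional conditions that the choice of generating set must secure.
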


\begin{proof}
Fix a finite generating set $A$ of $G$ and let $|\cdot|$ be the
word length function on $G$ defined by $A$. We assume that $A=A^{-1}$
and $1\in A$. Then distance between $g$ and $h\in G$ is equal to
$|gh^{-1}|=|hg^{-1}|$.

Let us use the standard log-scale on $\partial G$ equal to the Gromov product $\ell(\xi_1, \xi_2)$ computed with respect to the basepoint $1$. Recall, that it is given by
\[\ell(\xi_1, \xi_2)\doteq\lim_{n\to\infty}\frac 12(
|g_n|+|h_n|-|g_nh_n^{-1}|),\]
where $g_n\to\xi_1$ and $h_n\to\xi_2$ as $n\to\infty$.

Define $\coc(g, \xi)$ as the value of the Busemann quasi-cocycle
$\beta_\xi(g, 1)$, i.e., as any partial limit of the sequence
$|g_ng^{-1}|-|g_n|$, for $g_n\to\xi$ (see Figure~\ref{fig:busemann}).

\begin{figure}
\includegraphics{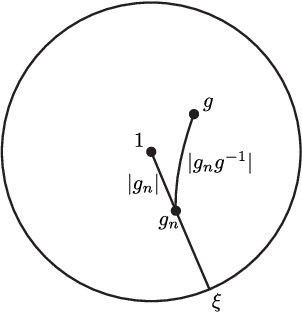}
\caption{}\label{fig:busemann}
\end{figure}

We have then
\[\coc(g, \xi)\doteq-2\ell(g, \xi)+|g|.\]

Let $\delta_2$ be a constant such that for any triple $z_1, z_2,
z_3\in G\cup\partial G$ we have
\[\ell(z_1, z_3)>\min(\ell(z_1, z_2), \ell(z_2, z_3))-\delta_2.\]

Suppose that $\zeta$ is sufficiently close to $\xi$, so that
\begin{equation}\label{eq:ellzetaxi}
\ell(\zeta, \xi)>\ell(g, \xi)+\delta_2.
\end{equation}
Then
\[\ell(g, \zeta)>\min(\ell(g, \xi), \ell(\zeta, \xi))-\delta_2=\ell(g,
\xi)-\delta_2,\]
and
\[\ell(g, \xi)>\min(\ell(g, \zeta), \ell(\zeta, \xi))-\delta_2,\]
which implies that
\[\ell(g, \xi)>\ell(g, \zeta)-\delta_2,\]
since the other case $\ell(g, \xi)>\ell(\zeta, \xi)-\delta_2$ is not
possible, due to~\eqref{eq:ellzetaxi}. Consequently,
\[|\coc(g, \xi)-\coc(g, \zeta)|\doteq 2|\ell(g, \zeta)-\ell(g,
\xi)|<2\delta_2,\]
hence there exists a constant $\delta_3$ such that
\[|\coc(g, \xi)-\coc(g, \zeta)|<\delta_3\]
for all $\xi, \zeta\in\partial G$ such that $\ell(\xi, \zeta)>\ell(g^{-1},
\xi)+\delta_2$. In particular, we conclude that $\coc:\G\arr\R$ is
locally bounded.

Suppose that $(g, \xi)\cdot (h, \zeta)$ is a composable pair of
elements of $\partial G\rtimes G$, i.e., that $\zeta\cdot h^{-1}=\xi$. Then
$(g, \xi)\cdot (h, \zeta)=(gh, \zeta)$, and
\begin{multline*}
\coc(\xi, gh)\doteq\lim_{h_n\to\zeta}|h_nh^{-1}g^{-1}|-|h_n|=\\
\lim_{h_n\to\zeta}(|h_nh^{-1}g^{-1}|-|h_nh^{-1}|)+\lim_{h_n\to\zeta}(|h_nh^{-1}|-|h_n|)=\\
\lim_{g_n\to\xi}(|g_ng^{-1}|-|g_n|)+\lim_{h_n\to\zeta}(|h_nh^{-1}-|h_n|)\doteq\\
\coc(\xi, g)+\coc(\zeta, h),
\end{multline*}
i.e., $\coc$ is a quasi-cocycle.

It follows from the definitions and right-invariance of the metric $|\cdot|$
that
\[\ell(\xi\cdot g^{-1}, \zeta\cdot g^{-1})-\ell(\xi, \zeta)\doteq\frac
12(\coc(g, \xi)+\coc(g, \zeta)).\]

Consequently, if $\xi$ and $\zeta$ are close enough, so that
$\ell(\xi, \zeta)>\ell(\xi, g)+\delta_2$, then we have
\[\ell(\xi\cdot g^{-1}, \zeta\cdot g^{-1})\doteq\ell(\xi,
\zeta)+\coc(g, \xi),\]
since $|\coc(g, \zeta)-\coc(g, \xi)|<\delta_3$.

It follows that $g$ is contracting on a neighborhood of a point
$\xi\in\partial G$ if $\coc(g, \xi)$ is big enough.

Therefore, it is sufficient to find for every $c>0$ a compact generating set of $\G$
consisting of germs $(g, \xi)$ such that $\coc(g, \xi)>c$. It will
satisfy then conditions of Definitions~\ref{def:hyperbolic},
which will show that $\G$ is hyperbolic.

Let $\delta$ be such that any geodesic triangle with vertices in
$G\cup\partial G$ is $\delta$-thin. Let $\delta_4$ be such that for
any $\xi\in\partial G$, $g\in G$, and a sequence $g_n\in G$ such that
$g_n\to\xi$, difference between any two partial limits of
$|g_ng^{-1}|-|g_n|$ is less than $\delta_4$. 

We say that an infinite product $\ldots g_2g_1$ of elements of $A$ is
a geodesic path converging to $\xi$ if the sequence $1, g_1, g_2g_1,
g_3g_2g_1, \ldots$ is a geodesic path converging to $\xi$. For every
point $\xi\in\partial G$ there exists a geodesic path $\ldots g_2g_1$
converging to $\xi$.

Let $S_n$ be the set of germs $(g, \xi)$ such that there exists a
geodesic path $\ldots g_2g_1$ converging to $\xi\cdot g^{-1}$ where
$g^{-1}=g_n\cdots g_1$. Then for every
$(g, \xi)\in S_n$ we have $\coc(g, \xi)>n-\delta_4$.

For every geodesic path $\ldots g_2g_1=\xi$ and for every $k\ge 1$ the
germ of the transformation $(g_k\cdots g_1)^{-1}$ at the point
$\ldots g_{k+2}g_{k+1}$ can be written as a
product of elements of $S_n\cup S_{n+1}$ and their inverses. Note also
that since the length of elements of $S_n$ (with respect to the
generating set $A$) is bounded above by $n$, the set $S_n\cup S_{n+1}$
has compact closure.

Let $(g, \xi)$ be an arbitrary element of $\G$. Let $\ldots
g_2g_1=\xi$ and $\ldots h_2h_1=\xi\cdot g^{-1}$ be geodesic paths. There
exist indices $n_1$ and $n_2$ both greater than $m$ and such that
distance from $g_{n_1}\cdots g_1\cdot g$ to $h_{n_2}\cdots h_1$ is
less than $\delta$. Denote \[r=h_{n_2}\cdots
h_1\cdot g\cdot (g_{n_1}\cdots g_1)^{-1}.\] Then
$g=(h_{n_2}\cdots h_1)^{-1}rg_{n_1}\cdots g_1 $, and
$(g, \xi)=(g, \ldots g_2g_1)$ is decomposed into the product of the
germs
\[(g_{n_1}\cdots g_1,\;\ldots g_2g_1),\quad(r,\;\ldots
g_{n_1+2}g_{n_1+1}),\quad(h_{n_2}\cdots h_1,\;\ldots g_{n_1+1}g_{n_1+2}r^{-1}).\]

The first and the last germs are products of elements of $S_n\cup
S_{n+1}$ and their inverses. It follows that every element of $\G$ can
be written as a product of elements of $Z_n=(S_n\cup S_{n+1})\cup
A^\delta\cdot(S_n\cup S_{n+1})$. The length of the representation of
$(g, \xi)$ as a product of elements of $Z_n$ is bounded from above by
a function of $|g|$. Consequently, $Z_n$ is a generating set of the groupoid $\G$.

Note that there exists $R>0$ such that $|\coc(g, \xi)|<R$ for all
$g\in A^\delta$. It follows that taking sufficiently big $n$ we can
make the values of $\coc$ on $Z_n$ arbitrarily big.
\end{proof}

Denote by $\partial^2 G$ the direct product $\partial G\times\partial
G$ minus the diagonal. The space $\partial^2 G$ has a natural local
product structure as an open subset of the direct product $\partial
G\times\partial G$. The action $(\xi_1, \xi_2)\cdot g=(\xi_1\cdot g,
\xi_2\cdot g)$ of $G$ on $\partial^2 G$ obviously preserves the local
product structure.

\begin{proposition}
There is an isomorphism of the geodesic quasi-flow $\partial\G\rtimes\G$ with the groupoid of the action of $G$ on $\partial^2 G$ identifying the natural projection $P:\partial\G\arr\partial G=\G^{(0)}$ with the projection of $\partial^2 G$ onto the first coordinate $\partial G$ of the direct product $\partial G\times\partial G\supset\partial^2 G$.
\end{proposition}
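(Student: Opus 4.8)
The plan is to produce an explicit isomorphism of groupoids, identifying the unit space $\partial\G$ with $\partial^2 G$ and the arrows $(h,\xi)$ of $\G$ with the group element $h$ acting diagonally. First I would use the standing hypothesis that every non-trivial $g\in G$ has non-trivial germ at every $\xi\in\partial G$: this makes $\G=G\ltimes\partial G$ principal, so for each $\xi$ the map $(k,\xi)\mapsto k$ is a bijection of the vertex set $\G_\xi^{\partial G}$ of the Cayley graph $\G(\xi,S)$ with $G$. A direct computation of the edge relation (the germ $(k_1,\xi)(k_2,\xi)^{-1}$ equals $(k_2^{-1}k_1,\xi k_2)$, which lies in $S$ precisely when $k_1$ and $k_2$ are right-multiples of one another by a generator) shows that this bijection is a quasi-isometry of $\G(\xi,S)$ with the \emph{right} Cayley graph of $G$. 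Hence $\partial\G(\xi,S)\cong\partial G$, and the distinguished point $\omega_\xi$ --- the common limit of the $S^{-1}$-directed paths, i.e. the paths whose targets in $\partial G$ run into $\xi$ --- corresponds to $\xi$ itself. Because the groupoid Cayley graph is the right Cayley graph while the action on $\partial G=\G^{(0)}$ is by the right multiplications, which are the isometries of the \emph{left} Cayley graph, the canonical identification of $\partial\G(\xi,S)$ with $\partial G$ carries the inversion homeomorphism $\iota\colon g\mapsto g^{-1}$. Composing with $\iota$ I obtain $\partial\G_\xi\cong\partial G\setminus\{\xi\}$, and assembling over $\xi$ a bijection $\Psi\colon\partial\G\to\partial^2 G$, $\;\mathfrak{p}\mapsto(P(\mathfrak{p}),\iota\eta(\mathfrak{p}))$, under which $P$ becomes the projection onto the first coordinate.

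Next I would transport the $\G$-action through $\Psi$ and check it becomes the diagonal action. For $g=(h,\xi)\in\G$ one has $\be(g)=\xi$, $\en(g)=\xi\cdot h$, and for $\mathfrak{p}\in\partial\G_{\xi h}$ represented by vertices $(k_n,\xi h)$ with $k_n\to\eta$ the groupoid product gives $(k_n,\xi h)(h,\xi)=(hk_n,\xi)$, so that $\mathfrak{p}\cdot g$ has boundary coordinate $\lim hk_n=h*\eta$, where $*$ is the left-multiplication action on $\partial G$. The elementary identity $\iota(h*\eta)=(\iota\eta)\cdot h^{-1}$ relating the two boundary actions then yields $\Psi(\mathfrak{p}\cdot g)=(\xi,\;(\iota\eta)\cdot h^{-1})$ while $\Psi(\mathfrak{p})=(\xi\cdot h,\;\iota\eta)$; these are precisely the origin and target of the diagonal germ determined by $h$. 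Consequently the assignment $(\mathfrak{p},(h,\xi))\mapsto\bigl(h,\Psi(\mathfrak{p}\cdot g)\bigr)$ is a functor from $\partial\G\rtimes\G$ to $G\ltimes\partial^2 G$; injectivity and surjectivity follow from faithfulness of the $G$-action and from $\Psi$ being a bijection, so it is an isomorphism of abstract groupoids intertwining $P$ with the first-coordinate projection.

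Finally I would promote this to an isomorphism of topological groupoids preserving the local product structures. The local product structure on $\partial\G$ furnished by Theorem~\ref{th:localproduct} is, on each rectangle $R_{g,U}$, the product $\be(U)\times\til_g^\circ$ of a piece of the base $\partial G$ with a piece of a fibre $\partial\G_\xi$; under $\Psi$ this is exactly the restriction to $\partial^2 G$ of the obvious product structure on $\partial G\times\partial G$, and the explicit parallel-transport formula $[y,\zeta]_U=\cdots F_2F_1(U,y)$ shows $\Psi$ and $\Psi^{-1}$ to be continuous. I expect the genuine difficulty to be precisely this bookkeeping of left versus right: correctly pinning down that $\omega_\xi$ corresponds to $\xi$, that the intrinsic boundary identification carries the hidden inversion $\iota$, and that with this inversion the right $\G$-action becomes the diagonal action with the \emph{same} group element $h$ (rather than $h^{-1}$, or with a shifted second coordinate). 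Once the conventions are fixed, verifying that $\Psi$ respects the topology and the local product structure is routine, being a direct comparison of the two product descriptions.
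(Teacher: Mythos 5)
Your proof follows the paper's own strategy exactly --- identify each fibre Cayley graph $\G(\xi,S)$ with a Cayley graph of $G$, show the distinguished point $\omega_\xi$ corresponds to $\xi$, assemble the fibrewise boundary identifications into a bijection onto $\partial^2 G$, and verify equivariance and compatibility with the local product structure directly --- so the only question is whether your extra left/right bookkeeping is right. It is, and it is genuinely needed under your literal reading of the notation: since $(k_1,\xi)(k_2,\xi)^{-1}=(k_2^{-1}k_1,\xi\cdot k_2)$, the fibre Cayley graph is the right Cayley graph, and its boundary is identified with $\partial G$ (the boundary of the left Cayley graph) only through $\iota\colon g\mapsto g^{-1}$; your check that, after inserting $\iota$, the $\G$-action becomes the diagonal action of the \emph{same} element $h$ is also correct. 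The reason no inversion appears in the paper is that its multiplication rule $(g,\xi)\cdot(h,\zeta)=(gh,\zeta)$, composable when $\zeta\cdot h^{-1}=\xi$, is the rule for the labelling in which $(g,\xi)$ is the germ of $\zeta\mapsto\zeta\cdot g^{-1}$ at $\xi$ (despite what the surrounding prose says); with that labelling the fibre Cayley graph is the left Cayley graph and $\Gamma_\xi$ lands in $\partial G$ with no inversion. So your $\iota$ and the paper's convention are the same mathematics, and your version makes explicit a point the paper hides.

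Two of your intermediate claims are false, however, and one sits at the point you yourself call the crux. First, the standing hypothesis does not make $\G$ principal: any loxodromic $g\in G$ fixes two points of $\partial G$, and its germs there are nontrivial isotropy elements. What the hypothesis gives --- and all you actually use --- is that distinct elements of $G$ have distinct germs at every point, so that the vertex set of $\G(\xi,S)$ is a copy of $G$. Second, the gloss that the $S^{-1}$-directed paths are ``the paths whose targets in $\partial G$ run into $\xi$'' is wrong: if the vertex labels along such a path are $k_i$, their targets are $\xi\cdot k_i$, which are the limits of longer and longer shifts of a geodesic representing $\xi$ (delete its initial segments), and these do not converge to $\xi$ in general (consider $\xi$ non-periodic). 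What is true, and what actually yields $\omega_\xi\mapsto\xi$, is a statement about the labels rather than the targets: for the paper's generating sets the defining condition places $k_i^{-1}$ on a geodesic from $k_{i-1}^{-1}$ toward $\xi$, so the inverted labels $\iota(k_i)$ march along quasi-geodesics to $\xi$. Alternatively, argue as the paper does: under the fibrewise identification the grading of $\G(\xi,S)$ becomes the Busemann quasi-cocycle $\beta_\xi$ of the group, and since the grading of a hyperbolic groupoid is coarsely the Busemann quasi-cocycle of $\omega_\xi$, and Busemann quasi-cocycles of distinct boundary points are never coarsely equivalent, $\omega_\xi$ must correspond to $\xi$. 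With these two repairs your proposal is a complete and correct proof, matching the paper's.
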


\begin{proof}
For every point $\xi\in\partial G$ the Cayley graph $\G(\xi, S)$ is
isomorphic to the Cayley graph of the group $G$, where the isomorphism
is the map $\Gamma_\xi:(g, \xi)\mapsto g^{-1}$. The quasi-cocycle $\coc$
restricted to $\G_\xi$ is then identified by $\Gamma_\xi$ with the
Busemann quasi-cocycle $\beta_\xi$. Consequently, the isomorphism
$\Gamma_\xi$ induces a homeomorphism
$\Gamma_\xi:\partial\G_\xi\arr\partial G\setminus\{\xi\}$. Denote by
$\Gamma(\zeta)$ for $\zeta\in\partial\G_\xi$ the point $(\xi,
\Gamma_\xi(\zeta))$ of $\partial^2 G$. We get a bijection
$\Gamma:\partial\G\arr\partial^2 G$. The fact that it is continuous
and agrees with the local product structure follows directly from the
definitions and Theorem~\ref{th:localproduct}.
It is also checked directly that the map induces an isomorphism of groupoids.
\end{proof}

\begin{corollary}
The groupoid $\G^\top$ is equivalent to $\G$.
\end{corollary}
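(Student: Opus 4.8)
The plan is to deduce the self-duality directly from the isomorphism just established between the geodesic quasi-flow $\partial\G\rtimes\G$ and the groupoid of the diagonal action of $G$ on $\partial^2 G=\partial G\times\partial G\setminus\{\text{diagonal}\}$, exploiting the symmetry of this action under interchanging the two boundary factors. First I would check that $\G^\top$ is even defined: the action of $G$ on $\partial G$ is minimal (every $G$-orbit is dense in $\partial G$), so by Proposition~\ref{pr:irredlocdiag} the geodesic quasi-flow $\partial\G\rtimes\G$ is locally diagonal, and hence $\G^\top=\proj_-(\partial\G\rtimes\G)$ is well defined.

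Next I would transport everything through the preceding isomorphism $\Gamma\colon\partial\G\rtimes\G\arr G\ltimes\partial^2 G$. That proposition identifies the local product structure on $\partial\G$ (constructed in Theorem~\ref{th:localproduct}) with the evident product local product structure on the open subset $\partial^2 G$ of $\partial G\times\partial G$, and identifies the projection $P\colon\partial\G\arr\G^{(0)}=\partial G$ with projection to the \emph{first} coordinate. Under this identification the first (stable) direction of the local product structure is the first $\partial G$ factor and the second (unstable) direction is the second $\partial G$ factor, so that $\proj_+(\partial\G\rtimes\G)$ is the groupoid of the $G$-action on the first factor and $\proj_-(\partial\G\rtimes\G)$ the groupoid of the $G$-action on the second factor. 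Since the remark preceding Definition~\ref{def:locdiagonal}'s dual states that $\proj_+(\partial\G\rtimes\G)$ is equivalent to $\G$, it now suffices to interchange the roles of the two projections.

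The key step is the coordinate swap $\tau\colon\partial^2 G\arr\partial^2 G$, $\tau(\xi_1,\xi_2)=(\xi_2,\xi_1)$. This is a homeomorphism that interchanges the two directions of the local product structure and, because the action $(\xi_1,\xi_2)\cdot g=(\xi_1\cdot g,\xi_2\cdot g)$ is \emph{diagonal} and therefore symmetric in the two factors, commutes with the $G$-action. Consequently $\tau$ induces an isomorphism of $G\ltimes\partial^2 G$ with itself carrying $\proj_-$ onto $\proj_+$; concretely it realizes the passage to the \emph{inverted} Smale quasi-flow discussed after Theorem~\ref{th:Ruellehyperbolic} (it also negates the lifted quasi-cocycle $\wt\coc$, which matches $\coc^\top=-\coc_-$). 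Therefore
\[
\G^\top=\proj_-(\partial\G\rtimes\G)\cong\proj_+(\partial\G\rtimes\G),
\]
and the right-hand side is equivalent to $\G$, which proves the corollary.

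I expect the main obstacle to be the verification that $\tau$ is genuinely compatible with the \emph{local product structure atlas} produced in Theorem~\ref{th:localproduct}, rather than merely with the naive product $\partial G\times\partial G$: one must confirm that the rectangle charts $R_{g,U}$ pulled back through $\Gamma$ are carried by $\tau$ to admissible rectangles with interchanged plaque directions, and that the grading transforms by $\coc\mapsto-\coc$ under the swap. This reduces to the antisymmetry of the Gromov product and the Busemann quasi-cocycle under interchanging the two endpoints, together with the fact (from the preceding proposition) that $\Gamma$ respects the local product structure; no essentially new estimate beyond those already used in the identification of $\partial\G\rtimes\G$ with $G\ltimes\partial^2 G$ should be required.
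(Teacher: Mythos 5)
Your proposal is correct and takes essentially the same route as the paper: the corollary there is stated as a direct consequence of the preceding proposition, the intended argument being exactly your coordinate swap of $\partial^2 G$, which commutes with the diagonal $G$-action, interchanges the two directions of the local product structure, and therefore intertwines $\proj_-(\partial\G\rtimes\G)$ with $\proj_+(\partial\G\rtimes\G)\sim\G$. Your preliminary check that $\G^\top$ is defined, via minimality and Proposition~\ref{pr:irredlocdiag}, is a sound way to get local diagonality; it can also be read off directly from the section's standing assumption that germs of non-trivial elements of $G$ at boundary points are non-trivial.
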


\section{Expanding self-coverings}

\subsection{General definitions}

Example~\ref{ex:expanding} on page~\pageref{ex:expanding}
is naturally generalized in the following way.
Let $f:\X\arr\X$ be a finite degree self-covering map of a compact metric
space $\X$, and suppose that it is
\emph{expanding}, i.e., that there exists a metric on $\X$ such that
$f$ locally expands the distances by a factor greater than one.

Since $f$ is a covering, it is a local homeomorphism, so that it
generates a pseudogroup $\wt{\mathfrak{F}}$ and the associated groupoid of
germs $\mathfrak{F}$. A natural compact generator of $\mathfrak{F}$ is
the set $S_f$ of all germs of
$f$. Every element $g\in\mathfrak{F}$ can be written as $(f,
y)^{-n_1}(f, x)^{n_2}$, where $x=\be(g)$, $y=\en(g)$, and $n_1,
n_2\in\N$ are such that $f^{n_1}(y)=f^{n_2}(x)$. Note that there
exists a pair $(n_1, n_2)$ such that if $(m_1, m_2)$ is
such that $g=(f, y)^{-m_1}(f, x)^{m_2}$, then $(m_1, m_2)=(n_1+k,
m_2+k)$ for some $k\ge 0$. It follows that $g$ is uniquely determined
by the triple $(y, n_2-n_1, x)$. Multiplication is given by the formula
\[(z, n_1, y)\cdot (y, n_2, x)=(z, n_1+n_2, x).\]

For every $x\in\X$ the $\mathfrak{F}$-orbit
of $x$ is the \emph{grand orbit} of $x$, i.e., the set
\[\bigcup_{n\ge 0}\bigcup_{k=0}^\infty f^{-k}(f^n(x)).\]

Since $f$ is a covering map, the Cayley graph $\mathfrak{F}(x, S_f)$
is a tree. It has a special point of the boundary
corresponding to the forward orbit of $x$. The map $\coc(y, n, x)=-n$
coincides with the Busemann cocycle on the tree associated with this
point. It is easy to see now that the groupoid $\mathfrak{F}$ is
hyperbolic.

Let us describe its dual. Denote by $\wh\X$ the inverse limit of the constant
sequences of spaces $\X$ with respect to the maps $f:\X\arr\X$. The
map $f$ naturally induces a homeomorphism $\wh f:\wh\X\arr\wh\X$
called the \emph{natural extension} of $f$.

Points of the space $\wh\X$ are sequences $(x_1, x_2, \ldots)$ of
points of $\X$ such that $f(x_n)=x_{n-1}$. Denote by $P:\wh\X\arr\X$
the projection $P(x_1, x_2, \ldots)=x_1$.

Let $r>0$ be such that for every open subset $U\subset\X$ of diameter
less than $r$ the set $f^{-1}(U)$ can be decomposed into a disjoint
union of a finite number of subsets $U_i$ such that
$f:U_i\arr U$ is an expanding
homeomorphism. Then diameters of $U_i$ are also less than $r$, and we
conclude that $P^{-1}(U)$ is the direct product of $U$ with the
boundary $C_x$ of the tree of preimages $T_{(f, x)}=\bigsqcup_{n\ge 0}f^{-n}(x)$
for $x\in U$. See Definition~\ref{def:treepreimages}.

In particular, the space $\wh\X$ is a
fiber bundle over $P:\wh\X\arr\X$ and we get a natural local product
structure defined by the covering of $\wh\X$ by the sets
$P^{-1}(U)\approx U\times C_x$. The local product structure is
obviously preserved by $\wh f$. The homeomorphism $\wh
f:\wh\X\arr\wh\X$ is expanding in the direction of the factors
$U\subset\X$ and contracting in the direction of the factors $C_x$ of
the local product structure.

It is easy to check now that the groupoid generated by the action of
the homeomorphism $\wh f$ on $\wh\X$ is a Smale quasi-flow and that
its projection onto the direction of $\X$ is equivalent to
$\mathfrak{F}$. The dual groupoid $\mathfrak{F}^\top$ acts on the
disjoint union of the Cantor sets $C_x$ and is generated by the
holonomies of the local product structure (by the partial
homeomorphisms between the Cantor sets $C_x$ coming from identifications of
the common parts of the covering $\{U\times C_x\}$
of $\wh\X$) and by the action of $\wh f$.
The sets $P^{-1}(U_1)$ and $P^{-1}(U_2)$ intersect if and only if
$U_1$ and $U_2$ intersect, and the corresponding holonomy is a
homeomorphism $C_{x_1}\arr C_{x_2}$ for $x_1\in U_1$ and $x_2\in U_2$.

If $\X$ is connected, then each set $C_x$ is an open transversal of
$\mathfrak{F}^\top$ and the groupoid $\mathfrak{F}^\top$ is
generated by the holonomy group of the fiber bundle $P:\wh\X\arr\X$
and by projection of the action of $\wh f:\wh\X\arr\wh\X$. The
holonomy group is called the \emph{iterated monodromy group} of the
map $f:\X\arr\X$.

\begin{examp}
Let $\alb=\{1, 2, \ldots, d\}$ be a finite alphabet, and let
$T\subset\alb\times\alb$ be a set of words of length $2$. The set $T$
can be described by the matrix $A=(a_{ij})_{1\le i, j\le d}$ where
$a_{ij}=0$ if $ij\notin T$ and $a_{ij}=1$ if $ij\in T$. Consider the
space $F$ of sequences $x_1x_2\ldots\in\alb^\omega$ such that
$x_ix_{i+1}\in T$ for all $i\ge 1$. Then the space $F$ is invariant
under the shift map $f(x_1x_2\ldots)=x_2x_3\ldots$ and the map
$f:F\arr F$ is an expanding self-covering.

The groupoid generated by the shift $f$ is a well known object, called
the \emph{Cuntz-Krieger} groupoid $\mathfrak{O}_A$, see~\cite{cuntzkrie}.

The natural extension of $f:F\arr F$ is the space of two-sided
sequences $\ldots x_{-1}x_0x_1\ldots$ such that $x_ix_{i+1}\in T$ for
all $i\in\Z$ together with the shift map. It is easy to see now that
the groupoid dual to $\mathfrak{O}_A$ is the Cuntz-Krieger groupoid
$\mathfrak{O}_{A^\top}$ defined by the transposed matrix $A^\top$.
\end{examp}

\begin{examp}
Let $F_2$ be the free group generated by $a$ and $b$. Then the
boundary $\partial F_2$ is naturally identified with the one-sided
shift $F$ of finite type over the alphabet $\alb=\{a, b, a^{-1}, b^{-1}\}$
defined by the set $T=\alb^2\setminus\{aa^{-1}, a^{-1}a, bb^{-1},
b^{-1}b\}$. It is easy to see that the groupoid of the action of $F_2$
on its boundary is isomorphic to the groupoid generated by the
one-sided shift $F$.
\end{examp}

\subsection{Iterated monodromy groups}
\label{ss:img}

Suppose that $f:\X\arr\X$ is an expanding self-covering and the space $\X$ is
path connected. Let, as above, $\mathfrak{F}$ be the groupoid
generated by $f$. In this case we can realize the dual groupoid
$\mathfrak{F}^\top$ as a groupoid with the space of units equal to
the boundary of the tree of preimages $T_{(f, x_0)}$ of a point
$x_0\in\X$, see Definition~\ref{def:treepreimages}. The boundary $C_{x_0}$
of the tree $T_{(f, x_0)}$ is naturally identified with the fiber
$P^{-1}(x_0)$ of the natural extension $\wh\X$, i.e., with the set of
sequences $(x_0, x_1, x_2, \ldots)$ such that $f(x_{i+1})=x_i$ for all
$i\ge 0$.

The pseudogroup associated with the dual groupoid $\mathfrak{F}^\top$
is generated by the holonomy group and the action of $\wh f$. More
explicitly, for any element $\gamma$ of the fundamental group
$\pi_1(\X, x_0)$, the corresponding homeomorphism $M_\gamma$ of
$C_{x_0}=\partial T_{(f, x_0)}$ is the limit of the monodromy actions of
$\gamma$ on the levels $f^{-n}(x_0)$ of the tree $T_{(f, x_0)}$. Namely,
for every $z\in f^{-n}(x_0)$ there exists a unique lift of $\gamma$
that starts at $z$; denote the end of this lift by $\gamma(z)$; then
$M_\gamma(x_0, x_1, \ldots)=(x_0, \gamma(x_1), \gamma(x_2), \ldots)$
for every $(x_0, x_1, \ldots)\in C_{x_0}$.

The homeomorphism $(\wh f)^{-1}$ maps $(x_0, x_1, \ldots)$ to $(x_1,
x_2, \ldots)$. If $\alpha$ is a path starting at $x_1$ and ending in
$x_0$ then we get the associated element of the pseudogroup
$\wt{\mathfrak{F}^\top}$ equal to the map $L_\alpha:(x_0, x_1,
\ldots)\mapsto (y_0, y_1, \ldots)$ where $y_n$ is the end of the lift
of $\alpha$ by $f^n$ starting at $x_{n+1}$ (note that then $y_0=x_0$).

We have then the following description of $\mathfrak{F}^\top$.

\begin{proposition}
Choose a generating set $S$ of $\pi_1(\X, x_0)$ and a collection
$\alpha_x$ of paths connecting $x\in f^{-1}(x_0)$ to $x_0$. Then the
pseudogroup $\wt{\mathfrak{F}^\top}$ is generated by the local
homeomorphisms $L_{\alpha_x}$ and $M_\gamma$
for $\gamma\in S$.
\end{proposition}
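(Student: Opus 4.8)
The plan is to begin from the description of $\wt{\mathfrak{F}^\top}$ obtained just above: realized on the fiber $C_{x_0}=\partial T_{(f,x_0)}$ of the natural extension (the boundary of the tree of preimages, Definition~\ref{def:treepreimages}), this pseudogroup is generated by the holonomy group $\{M_\gamma\}_{\gamma\in\pi_1(\X,x_0)}$ of the bundle $P:\wh\X\arr\X$ together with the partial maps coming from $(\wh f)^{-1}$. Since $(\wh f)^{-1}$ carries $C_{x_0}$ onto $C_x$ for $x\in f^{-1}(x_0)$, every partial map of the second kind, once brought back to $C_{x_0}$ by a holonomy along a path $\alpha$ joining $x$ to $x_0$, is precisely the map $L_\alpha$ defined above. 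Thus it suffices to prove two things: first, that the $M_\gamma$ for $\gamma\in S$ generate all the $M_\gamma$; and second, that the maps $L_\alpha$ for arbitrary admissible $\alpha$ are generated by the finite family $\{L_{\alpha_x}\}_{x\in f^{-1}(x_0)}$ together with the $M_\gamma$. I would also record here that the higher powers $(\wh f)^{-n}$ need not be treated separately, since inserting the holonomies that return the intermediate fibers to $C_{x_0}$ rewrites any such power as a composition of maps of the form $L_\alpha^{\pm1}$ and $M_\gamma$.

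For the first point, the assignment $\gamma\mapsto M_\gamma$ is, by the definition of the monodromy action on the levels $f^{-n}(x_0)$ and passage to the limit, a group homomorphism (up to the order reversal coming from the right-action convention of the paper) from $\pi_1(\X,x_0)$ into the homeomorphisms of $C_{x_0}$. Consequently the subgroup generated by $\{M_\gamma:\gamma\in S\}$ coincides with the whole holonomy group as soon as $S$ generates $\pi_1(\X,x_0)$, which is the hypothesis; in particular these finitely many generators produce every $M_\gamma$ inside the pseudogroup.

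For the second point, fix $x\in f^{-1}(x_0)$ and an arbitrary path $\alpha$ from $x$ to $x_0$. Setting $\gamma=\bar\alpha_x\cdot\alpha\in\pi_1(\X,x_0)$, one has $\alpha\simeq\alpha_x\cdot\gamma$ rel endpoints, and functoriality of the holonomy of $P:\wh\X\arr\X$ under concatenation of paths turns this into $L_\alpha=M_\gamma\circ L_{\alpha_x}$, the common initial factor $(\wh f)^{-1}$ cancelling from the comparison. Hence every $L_\alpha$ lies in the pseudogroup generated by $\{L_{\alpha_x}\}$ and the $M_\gamma$, and combining this with the first point yields the claim.

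The step that genuinely requires care is the identification $L_\alpha=M_\gamma\circ L_{\alpha_x}$ together with the reduction of an arbitrary element to the normal forms $L_\alpha^{\pm1}$ and $M_\gamma$: one must check that holonomy depends only on the homotopy class of $\alpha$ rel endpoints, that it is functorial with the correct variance, and that the order of composition is compatible with the right-action convention used throughout. The expanding and covering hypotheses enter only to guarantee that the relevant fibers are genuine Cantor sets on which the holonomies and $(\wh f)^{\pm1}$ act as local homeomorphisms, so that all of the above takes place inside a bona fide pseudogroup; once this bookkeeping is settled, the generation statement is immediate.
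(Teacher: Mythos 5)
This is correct and is essentially the argument the paper intends: the proposition appears there without a separate proof, as an immediate consequence of the preceding description of $\wt{\mathfrak{F}^\top}$ as generated by the holonomy group of $P:\wh\X\arr\X$ together with the action of $\wh f$, and your reduction (powers of $(\wh f)^{-1}$ rewritten via inserted holonomies, the monodromy $\gamma\mapsto M_\gamma$ being a homomorphism so that $S$ suffices, and $L_\alpha=M_\gamma\circ L_{\alpha_x}$ with $\gamma$ the loop built from $\alpha$ and $\bar\alpha_x$) is exactly that consequence made explicit. The only blemish is the one you flag yourself: under the paper's convention that in a product of paths the rightmost factor is traversed first, your loop should be written $\alpha\cdot\bar\alpha_x$ (a loop at $x_0$) rather than $\bar\alpha_x\cdot\alpha$ (which would be a loop at $x$); this is pure bookkeeping and does not affect the argument.
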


Note that
\begin{equation}\label{eq:recurn}L_{\alpha_y}^{-1}M_\gamma L_{\alpha_x}=M_{\alpha_y^{-1}\gamma_x\alpha_x}
\end{equation}
where $\gamma_x$ is the lift of $\gamma$ by $f$ starting at $x$ and
$y$ is the end of $\gamma_x$, see Figure~\ref{fig:recur}. Here we multiply paths in the same way
as functions, i.e., in the product $\gamma_x\alpha_x$ the path $\alpha_x$
is passed before $\gamma_x$.

\begin{figure}
\includegraphics{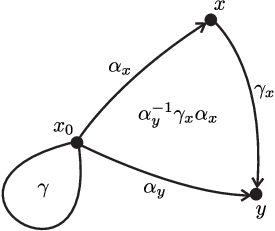}
\caption{}\label{fig:recur}
\end{figure}

We can find a homeomorphism of $C_{x_0}$ with the space of infinite
sequences $a_1a_2\ldots$ over the alphabet $\alb=f^{-1}(x_0)$
conjugating the maps $L_{\alpha_x}$ with the map $a_1a_2\ldots\mapsto
xa_1a_2\ldots$. Then equality~\eqref{eq:recurn} becomes a recurrent
formula for computing the action of the generators of $\pi_1(\X, x_0)$
on the space of infinite sequences.

Note that the obtained formulae defining the groupoid
$\mathfrak{F}^\top$ do not use the fact that $f$ is expanding. The
group of homeomorphisms of $C_{x_0}$ generated by $M_\gamma$ for
$\gamma\in\pi_1(\X)$ is called the \emph{iterated monodromy group} of the
self-covering $f$.

\begin{examp} Let us revisit Example~\ref{ex:expanding}.
Consider the circle $\R/\Z$ and its self-covering $f:x\mapsto 2x$. Let
$\mathfrak{F}$ be the groupoid generated by $f$. Note that it is equivalent to
the groupoid generated by the maps $x\mapsto x+1$ and $x\mapsto 2x$
acting on $\R$.

The natural extension of $f$ is topologically
conjugate to the Smale solenoid (see~\cite{brin:book}).

Let us compute $\mathfrak{F}^\top$ using~\eqref{eq:recurn}. Take
$x_0=0$ and let $\gamma$ be the generator of the fundamental group
$\pi_1(\R/\Z, x_0)$ equal to the image of the segment $[0, 1]$ (where
$0$ is the beginning). The point $x_0$ has two preimages $0$ and
$1/2$. Let $\alpha_0$ be the trivial path at $0$, and let $\alpha_1$ be
the path from $1/2$ to $0$ equal to the image of the segment $[0,
1/2]$. Consider the space $\{0, 1\}^\omega$ of binary infinite
sequences, and identify the transformations $L_{\alpha_0}$ and
$L_{\alpha_1}$ with the maps $a_1a_2\ldots\mapsto 0a_1a_2\ldots$ and
$a_1a_2\ldots\mapsto 1a_1a_2\ldots$ respectively. Denote
$M_\gamma=\tau$. Then it follows from ~\eqref{eq:recurn} that the
transformation $\tau$ of the space of binary sequences is defined by
the recurrent rule
\[\tau(0w)=1w,\qquad \tau(1w)=0\tau(w).\]
Note that this transformation coincides with the rule of adding 1 to a
dyadic integer, so that $\tau(a_0a_1\ldots)=b_0b_1\ldots$ is
equivalent to $1+\sum_{n=0}^\infty a_n2^n=\sum_{n=0}^\infty b_n2^n$ in
the ring of dyadic integers. The transformation $\tau$ is known as the
\emph{adding machine}, or the \emph{odometer}.

Note that transformations $L_{\alpha_0}$ and $L_{\alpha_1}$ are identified
with the maps $x\mapsto 2x$ and $x\mapsto 2x+1$ on the ring of
dyadic integers.

As a corollary we get the following description of
$\mathfrak{F}^\top$, connecting Examples~\ref{ex:expanding} and~\ref{ex:dyadic}.

\begin{proposition}
Let $\mathfrak{F}$ be the groupoid generated by the maps $x\mapsto x+1$ and $x\mapsto 2x$ on $\R$. Then it is hyperbolic and the dual groupoid $\mathfrak{F}^\top$ is the groupoid generated by the maps $x\mapsto x+1$ and $x\mapsto 2x$ on the ring $\Z_2$ of dyadic integers.
\end{proposition}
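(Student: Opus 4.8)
The plan is to combine two facts established earlier in the excerpt: first, that $\mathfrak{F}$ (the groupoid generated by $x\mapsto x+1$ and $x\mapsto 2x$ on $\R$) is hyperbolic, and second, the general recipe for computing $\mathfrak{F}^\top$ as an iterated monodromy group via equation~\eqref{eq:recurn}. Hyperbolicity of $\mathfrak{F}$ follows because $\mathfrak{F}$ is equivalent (by Example~\ref{ex:expanding} and the localization results of Section~\ref{ss:localization}) to the groupoid generated by the expanding self-covering $f:x\mapsto 2x$ of the circle $\R/\Z$, and Corollary~\ref{cor:equivhyperbolic} transports hyperbolicity across equivalences. The grading is by $\coc=-\deg$, as noted in the general discussion of expanding self-coverings.

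For the dual, I would apply the explicit iterated-monodromy-group computation already carried out in the preceding example. First I would fix the basepoint $x_0=0$ on $\R/\Z$, take the generator $\gamma$ of $\pi_1(\R/\Z,x_0)$ given by the image of $[0,1]$, and record the two $f$-preimages $0$ and $1/2$ of $x_0$ with connecting paths $\alpha_0$ (trivial) and $\alpha_1$ (image of $[0,1/2]$). Next I would invoke the recurrent formula~\eqref{eq:recurn}, $L_{\alpha_y}^{-1}M_\gamma L_{\alpha_x}=M_{\alpha_y^{-1}\gamma_x\alpha_x}$, to deduce that the odometer $\tau=M_\gamma$ acts on $\{0,1\}^\omega$ by $\tau(0w)=1w$, $\tau(1w)=0\tau(w)$, and that $L_{\alpha_0},L_{\alpha_1}$ act by prepending $0$ and $1$. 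The key identification is then that under the homeomorphism $\{0,1\}^\omega\to\Z_2$ sending $a_0a_1\ldots\mapsto\sum a_n2^n$, the odometer $\tau$ becomes $x\mapsto x+1$, while $L_{\alpha_0}$ and $L_{\alpha_1}$ become $x\mapsto 2x$ and $x\mapsto 2x+1$ on $\Z_2$.

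Finally I would observe that the pseudogroup $\wt{\mathfrak{F}^\top}$ is generated by the $L_{\alpha_x}$ and the $M_\gamma$ (by the Proposition describing iterated monodromy groups), so $\mathfrak{F}^\top$ is the groupoid of germs of the pseudogroup on $\Z_2$ generated by $x\mapsto 2x$, $x\mapsto 2x+1$, and $x\mapsto x+1$. Since the set of germs of $x\mapsto 2x+1$ coincides with the germs of the composition $(x\mapsto x+1)\circ(x\mapsto 2x)$, the generator $x\mapsto 2x+1$ is redundant, and $\mathfrak{F}^\top$ is exactly the groupoid generated by $x\mapsto x+1$ and $x\mapsto 2x$ on $\Z_2$, as claimed in Example~\ref{ex:dyadic}.

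I expect the only genuine subtlety to be the bookkeeping in translating the symbolic action on $\{0,1\}^\omega$ into arithmetic on $\Z_2$, and verifying that the base-$2$ digit correspondence intertwines the generators correctly; everything else is a direct citation of the general expanding-self-covering machinery and the already-computed odometer example. The verification that $x\mapsto 2x+1$ is generated by the other two maps is routine, so the main conceptual content is simply recognizing the adding machine as addition of $1$ in $\Z_2$.
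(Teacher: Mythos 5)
Your proposal is correct and follows essentially the same route as the paper: hyperbolicity via the equivalence of $\mathfrak{F}$ with the groupoid of the expanding self-covering $x\mapsto 2x$ of $\R/\Z$ (Example~\ref{ex:expanding} together with Corollary~\ref{cor:equivhyperbolic}), and the dual via the iterated monodromy computation with formula~\eqref{eq:recurn}, the adding-machine identification of $M_\gamma$ with $x\mapsto x+1$, and of $L_{\alpha_0},L_{\alpha_1}$ with $x\mapsto 2x$, $x\mapsto 2x+1$ on $\Z_2$. The only addition is your explicit remark that $x\mapsto 2x+1$ is redundant as a generator, a point the paper leaves implicit when passing from the computed generating set to the stated one.
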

\end{examp}

\subsection{Hyperbolic rational functions}

In many cases of expanding self-coverings $f:\X\arr\X$ the fundamental
group of $\X$ is too complicated, but we can embed $\X$ into a space
$\M$ with a finitely generated fundamental group so that $f$ can be
extended to a covering $f:\M_1\arr\M$ of $\M$ by a subset
$\M_1\subset\M$. Let $f^n:\M_n\arr\M$ be the $n$th iteration of the
partial map $f$. If $\X=\bigcap_{n\ge 1}\M_n$, then the iterated
monodromy group of $f:\X\arr\X$ can be computed directly on
$f:\M_1\arr\M$ using the recurrent relation~\eqref{eq:recurn}. This is
a particular case of a more general method of approximation of
expanding self-coverings described in~\cite{nek:models}. For more on
iterated monodromy groups see the monograph~\cite{nek:book}
and~\cite{bgn,nek:filling}.

A particular class of examples comes from holomorphic dynamics. Let
$f\in\C(z)$ be a rational function seen as a self-map of the Riemann
sphere $\wh\C$. Let $C_f$ be the set of critical values of $f$, and
let $P_f=\bigcup_{n\ge 1}f^n(C_f)$ be the \emph{post-critical set} of
$f$.

Suppose that $f$ is \emph{hyperbolic}, i.e., is expanding on a
neighborhood of its Julia set in some metric. Then the post-critical
set $P_f$ accumulates on a union of a finite number of cycles, which
are disjoint with the Julia set. We get hence a covering map
$f:\M_1\arr\M$, where $\M=\wh\C\setminus\overline P_f$ and
$\M_1=f^{-1}(\M)\subset\M$ are open neighborhoods of the Julia set. The
iterated monodromy group of the action of $f$ on the Julia set of $f$
can be computed from the covering $f:\M_1\arr\M$.

For example, direct computation
(see~\cite[Subsection~5.2.2]{nek:book})
show that the iterated
monodromy group of the polynomial $z^2-1$ is generated by the
transformations $a$ and $b$ of the space of infinite binary sequences
$\{0, 1\}^\omega$ given by the recurrent rules:
\[a(0w)=1w,\quad a(1w)=0b(w),\qquad b(0w)=0w,\quad b(1w)=1a(w).\]

It follows that the groupoid dual to the groupoid generated by the
action of $z^2-1$ on its Julia set is the groupoid generated
by the transformations $a$, $b$, and by the one-sided shift
$x_1x_2\ldots\mapsto x_2x_3\ldots$. The Julia set is shown on Figure~\ref{fig:basilicaj}.

\begin{figure}
\includegraphics{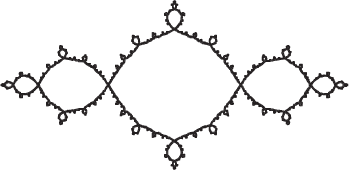}
\caption{Julia set of $z^2-1$} \label{fig:basilicaj}
\end{figure}

Iterated monodromy groups have interesting group theoretic properties
and are useful tools in the study of symbolic dynamics of expanding
self-coverings,
see~\cite{nek:book,nek:filling,bartnek:rabbit,nek:models,nek:dendrites}.

\section{Contracting self-similar groups}

We have seen above that dual groupoids of groupoids generated
by  expanding self-coverings
of connected topological spaces are generated by the shift map and by
a group $G$ of homeomorphisms
$g:\alb^\omega\arr\alb^\omega$ satisfying recurrent rules of the form
\[g(xw)=yg_x(w)\]
for $x, y\in\alb$ and $g, g_x\in G$.

\begin{defi}
\label{def:contractinggroups}
A group $G$ acting faithfully on $\alb^\omega$ is said to be \emph{self-similar}
if for every $g\in G$ and $x\in\alb$ there exist $h\in G$ and
$y\in\alb$ such that $g(xw)=yh(w)$ for all $w\in\alb^\omega$.
\end{defi}

It follows from the definition that for every $g\in G$ and for every
finite word $v\in\alb^*$ there exists an element $h\in G$ such that
$g(vw)=uh(w)$ for some word $u$ of length equal to the length of
$v$. We denote $h=g|_v$.

Denote by $S_x$ the map $S_x(w)=xw$. Then $S_x^{-1}$ is restriction of
the shift to the set of sequences starting with $x$. We will denote
$S_{x_1x_2\ldots x_n}=S_{x_1}S_{x_2}\cdots S_{x_n}$. Then the condition
$g(vw)=uh(w)$ is equivalent to the equality
\[gS_v=S_uh.\]
In particular, $g|_v=S_u^{-1}gS_v$.

\begin{proposition}
\label{pr:contractinggroups}
Let $G$ be a self-similar group acting on $\alb^\omega$ and suppose
that it is \emph{contracting}, i.e., that there exists a finite set
$\nuke\subset G$ such that for every $g\in G$ there exists a
positive integer $n$ such that $g|_v\in\nuke$ for all words of
length at least $n$.

If the groupoid of germs of $G$ is Hausdorff, then the groupoid $\G$ of germs
of the pseudogroup generated by the shift and $G$ is
hyperbolic.
\end{proposition}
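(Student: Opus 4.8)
The plan is to exhibit a generating set realizing the hyperbolicity axioms of Definition~\ref{def:hyperbolic}, with the large-scale hyperbolicity coming from Theorem~\ref{th:contraction} and the contracting property of $G$ supplying its combinatorial hypothesis.

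I would equip $\alb^\omega$ with the standard log-scale $\ell(x,y)=$ length of the longest common prefix of $x,y$ (an associated metric is contracted by the factor $|\alb|^{-1}$ under each section $S_x$). Take as generating set $S$ the compact set of germs of the finitely many maps $S_x h$, $x\in\alb$, $h\in\nuke$, after enlarging $\nuke$ so that it is symmetric, contains the identity, and is closed under restriction (the nucleus already absorbs long restrictions, so the set of all restrictions of its elements is a finite such enlargement). Each $S_x h$ is the composition of the isometry $h$ with the section $S_x$, hence a $|\alb|^{-1}$-contraction, so conditions (1) and (2) hold; since every $S_x h$ has domain $\alb^\omega$ and range the cylinder $x\alb^\omega$ we get $\be(S)=\en(S)=\alb^\omega$, i.e.\ (4). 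That $S$ generates $\G|_{\alb^\omega}$ follows from the relation $gS_x=S_{g(x)}\,g|_x$: pushing all shifts to the right turns every element of $\G$ into a germ of the form $S_u h S_v^{-1}$, and choosing $|v|$ at least the contraction depth of the middle factor places it in $\nuke$, so the germ becomes a product of generators and their inverses. Define the genuine (hence $\eta$-quasi-) cocycle $\coc(S_u h S_v^{-1})=|u|-|v|$; it equals $+1$ on $S$ and is locally constant. Finally, Hausdorffness of $\G$ reduces, through this normal form, to Hausdorffness of the groupoid of germs of $G$: the locus where $S_u h S_v^{-1}$ is a trivial germ is $\{u=v\}$ intersected with the clopen locus where $h$ is a trivial germ, so $\G$ is the Hausdorff groupoid of germs required by Definition~\ref{def:hyperbolic}.

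It remains to prove conditions (3) and (5). I would orient the Cayley graph as $\G(\xi,S^{-1})$ and apply Theorem~\ref{th:contraction} to it with the quasi-cocycle $-\coc$, which increases by $1$ along every directed (shifting) edge; every vertex has an outgoing edge given by the plain shift, so the cocycle hypotheses hold with edge bound $\eta<1/2$. The theorem then yields simultaneously that $\G(\xi,S^{-1})$ is Gromov hyperbolic and that $-\coc$ is coarsely equivalent to the Busemann quasi-cocycle of a single boundary point $\omega_\xi$; together with Lemma~\ref{lem:quasigeodesic} this shows that every infinite directed path is a $(\Lambda,\Delta)$-quasi-geodesic converging to $\omega_\xi$, giving (5), while hyperbolicity gives (3). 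All constants produced depend only on $\alb$ and $\nuke$, not on $\xi$, furnishing the uniformity in $\xi$ demanded by (3) and (5).

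The heart of the argument, and the step I expect to be the main obstacle, is verifying condition (2) of Theorem~\ref{th:contraction} for $\G(\xi,S^{-1})$, which is exactly where the contracting hypothesis enters. Along a directed path the prepend part of the normal form is exhausted after finitely many steps, and since $\nuke$ is closed under restriction the middle factor then stays in the finite set $\nuke$; thus for large $i$ a vertex $u_i$ has target $\en(u_i)=h^{(i)}(s^{N_i}\xi)$ with $h^{(i)}\in\nuke$ and shift-depth $-\coc(u_i)=N_i$, and likewise $\en(v_j)=k^{(j)}(s^{N'_j}\xi)$ with $k^{(j)}\in\nuke$. The hypothesis $|\coc(u_i,v_j)|\le\Delta_1$ says precisely $|N_i-N'_j|\le\Delta_1$, so $u_iv_j^{-1}$ transports $\en(v_j)$ to $\en(u_i)$ by the composition of an element of $\nuke$, a shift of depth at most $\Delta_1$, and another element of $\nuke$; its word length in $S\cup S^{-1}$ is therefore at most $2C_\nuke+\Delta_1=:\rho_0$, where $C_\nuke$ bounds the $S$-length of the finitely many elements of $\nuke$. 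Crucially $\rho_0$ is independent of $m$: the dependence on the initial distance $m$ is absorbed entirely into how large the threshold $k_m$ must be (so that both the prepend length $\le m$ is used up and the restrictions of the $O(m)$-complexity initial data have entered $\nuke$). The finiteness of the nucleus is thus exactly the ingredient making the contraction constant $\rho_0$ uniform, which completes the verification. This argument is modelled on the proof of hyperbolicity of the self-similarity complex in~\cite{nek:hyplim}.
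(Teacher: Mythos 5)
Your proposal follows the same route as the paper's own proof: the normal form $S_ugS_v^{-1}$ coming from the relation $gS_x=S_{g(x)}g|_x$, the contracting property to force the middle factor into the nucleus, the degree cocycle $\coc=|u|-|v|$, contraction of the generators on $\alb^\omega$, and an appeal to Theorem~\ref{th:contraction} for conditions (3) and (5) of Definition~\ref{def:hyperbolic}. Your treatment of generation, of the cocycle, of Hausdorffness, and of the contraction condition (2) is sound, and you correctly identify that everything hinges on verifying condition (2) of Theorem~\ref{th:contraction} (a verification the paper itself leaves implicit).

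The gap is exactly in that step. You claim that along a directed path of $\G(\xi,S^{-1})$, once the prepend part is exhausted, the middle factor ``stays in $\nuke$ \ldots since $\nuke$ is closed under restriction.'' It does not. Write $u_i=g_iS_{b_i}^{-1}$ (germ at $\xi$, with $b_i$ a prefix of $\xi$); the next directed edge is labelled by some $(S_xh_{i+1})^{-1}=h_{i+1}^{-1}S_x^{-1}$, where $x$ is forced but $h_{i+1}\in\nuke$ is \emph{arbitrary}, and one computes
\[
u_{i+1}=h_{i+1}^{-1}S_x^{-1}g_iS_{b_i}^{-1}=\bigl(h_{i+1}^{-1}\,(g_i)|_y\bigr)S_{b_iy}^{-1},\qquad g_i(y)=x,
\]
so each step multiplies the middle factor by a fresh nucleus element and then restricts by only \emph{one} letter. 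Restriction-closure of $\nuke$ therefore yields only $g_i\in\nuke^{\,i}$, which is a priori unbounded, and the stronger claim $g_i\in\nuke$ is simply false: for the adding machine, with $\nuke=\{1,\tau,\tau^{-1}\}$ (already symmetric, unital, restriction-closed), basepoint $\xi=111\ldots$ and the choices $h_1=h_2=\tau^{-1}$, one gets $g_2=\tau\cdot\tau|_1=\tau^2\notin\nuke$. Moreover no finite restriction-closed enlargement of $\nuke$ can be invariant under the step $g\mapsto h^{-1}(g|_y)$ by pure closure properties, since that would force it to contain all of $\nuke^k$. What your bound $\rho_0=2C_\nuke+\Delta_1$ actually needs is that the $g_i$ range over some fixed finite subset of $G$, and proving this is where the contracting hypothesis must do genuine quantitative work: by the uniform contraction estimate for contracting groups (there exist $n_0$ and $C$ such that the word length of $g|_v$ is at most half the word length of $g$ plus $C$ whenever $|v|\ge n_0$), the halving accumulated over $n_0$ consecutive edges beats the $n_0$ nucleus factors adjoined along the way, so the word lengths of the $g_i$ stay uniformly bounded. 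This interleaving lemma is the real content of the self-similarity-complex argument of~\cite{nek:hyplim} that you cite; once it is in place, your computation of $\rho_0$ goes through with $\nuke$ replaced by the resulting finite ball, but as written condition (2) of Theorem~\ref{th:contraction} has not been established.
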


\begin{proof}
Every element of the inverse semigroup generated by the
transformations $g\in G$ and $S_x$, for $x\in X$, is equal to a product
of the form $S_vgS_u^{-1}$, since every product $gS_x$ can be
rewritten as $S_yg|_x$, the product $S_x^{-1}S_x$ is equal to the
identical homeomorphism, and the product $S_x^{-1}S_y$ is empty for
$x\ne y$. Moreover, since every homeomorphism $g\in G$ is written as a
disjoint union of local homeomorphisms $S_yg|_xS_x^{-1}$, any element
of $\G$ is a germ of a local homeomorphism $S_ugS_v^{-1}$ for
$g\in\nuke$. We conclude that $\G$ is compactly generated by
$\nuke$ and $S_x$ for $x\in\alb$.

Define the degree $\coc$ of any germ of a local homeomorphism
$S_vgS_u^{-1}$ as length of $v$
minus length of $u$. It is a well defined continuous cocycle
$\coc:\G\arr\Z$.
It is easy to see that every element of $\G$ of positive degree has a
neighborhood which is a contracting map. Hence, the groupoid $\G$ has a
compact contracting generating set. Hyperbolicity follows then from
Theorem~\ref{th:contraction}.
\end{proof}

The groupoid dual to the groupoid associated with a self-similar
contracting group is generated by the \emph{limit dynamical
  system}. Consider the space of left-infinite sequences
$\alb^{-\omega}$ with the direct product topology. We say that
sequences $\ldots x_2x_1, \ldots y_2y_1$ are \emph{asymptotically
  equivalent} (with respect to the action of $G$) if there exists a
finite subset $N\subset G$ and a sequence $g_k\in N$ such that
$g_k(x_k\ldots x_1)=y_k\ldots y_1$ for all $k$. The quotient of
$\alb^{-\omega}$ by the asymptotic equivalence relation is called the
\emph{limit space} of $G$. The shift $\ldots x_2x_1\mapsto\ldots
x_3x_2$ agrees with the asymptotic equivalence relation, and hence it
induces a continuous self-map of the limit space. This self-map is the
\emph{limit dynamical system} of the contracting self-similar group
$G$. 

If the groupoid of germs of $G$ is principal, then the limit dynamical
system is a self-covering (see~\cite{nek:cpalg}). 
More generally, if the groupoid of germs of $G$ is Hausdorff, then the limit dynamical system is a
self-covering of an orbispace, and the groupoid dual to $\G$ is still
generated by the self-covering (but the definitions should be modified
to include the orbispace case).

Since the groupoid $\G^\top$ acts on the limit space of $G$, the
boundaries of the Cayley graphs of $\G$ are locally homeomorphic to
the limit space. The fact that the limit space is the boundary of a
naturally defined Gromov hyperbolic graph $\Gamma(G)$ was noted for
the first time in~\cite{nek:hyplim}. Definition of this graph is very
similar to the definition of the Cayley graphs of $\G$. In fact,
$\Gamma(G)$ is locally isomorphic to the Cayley graphs of $\G$ (in
particular, positive cones of the Cayley graphs are isomorphic to
positive cones of $\Gamma(G)$). It is defined in the following
way. The set of vertices of $\Gamma(G)$ is the set $\alb^*$ of all
finite words over the alphabet $\alb$. Two vertices are connected by
an edge if and only if either they are of the form $v, s(v)$ for a
generator $s$ of $G$ and a word $v\in\alb^*$, or they are of the form
$v, xv$ for $v\in\alb^*$ and $x\in\alb$. If $G$ is contracting,
then $\Gamma(G)$ is Gromov hyperbolic and its boundary is naturally
homeomorphic to the limit space of $G$.

See Figure~\ref{fig:complex}, where a part of the graph $\Gamma(G)$
for $G$ equal to the iterated monodromy group of $z^2-1$ is
shown. Compare it with the Julia set of $z^2-1$ on
Figure~\ref{fig:basilicaj}. Here we draw only edges corresponding to
generators $a, b$, and $S_1$, since $S_0=a^{-1}S_1$.

\begin{figure}
\includegraphics[width=3in]{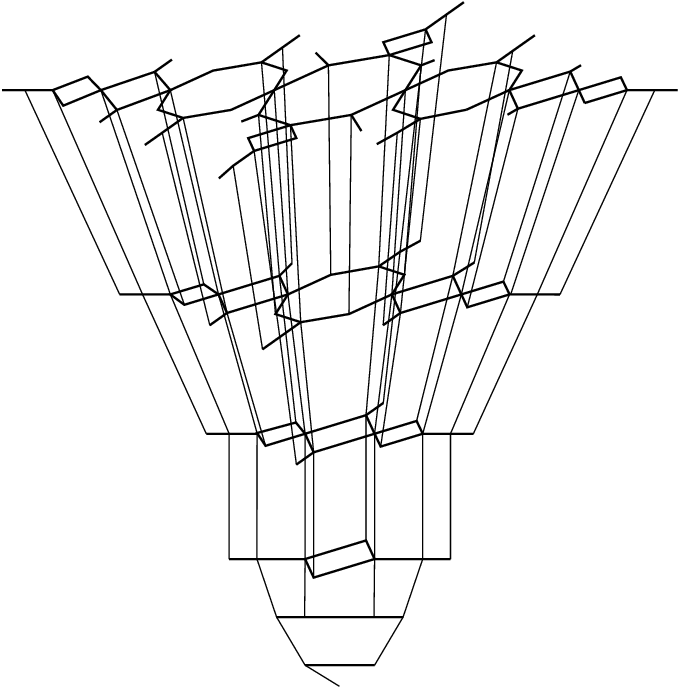}
\caption{Graph $\Gamma(\mathrm{IMG}(z^2-1))$} \label{fig:complex}
\end{figure}

There are many examples of contracting self-similar groups for
which the groupoid of germs of the action on $\alb^\omega$ is not
Hausdorff. For such groups the limit dynamical system is a covering of
an orbispace by its \emph{sub-orbispace}, and it does not generate any
\'etale groupoid. However, theory of iterated monodromy groups for
such partial self-coverings of orbispaces is well developed
(see~\cite{nek:book,nek:models}), which suggests that a more general
duality theory for hyperbolic groupoids exists that
includes non-Hausdorff (and non-\'etale) groupoids, e.g.,
sub-hyperbolic rational functions.

\section{Smale spaces}
\subsection{Definitions}

Smale spaces were defined by D.~Ruelle (see~\cite{ruelle:therm}) as
synthetic models of
hyperbolic dynamical systems, and are generalizations of axiom
A diffeomorphisms restricted to the non-wandering set.

\begin{defi}
A \emph{Smale space} is a compact metric space $\X$ together with a
local product structure and a homeomorphism $f:\X\arr\X$ that
preserves the local product structure, is locally contracting in
the first direction, and locally expanding in the second direction of
the local product structure (see condition (3) of
Definition~\ref{def:hypflow}).
\end{defi}

Just by definition, the groupoid of germs of the homeomorphism
$f:\X\arr\X$ is compactly generated and has a natural continuous
cocycle $\coc$ equal to the degree of the iteration of $f$. We see
immediately that this groupoid satisfies almost all conditions of
Definition~\ref{def:hypflow}. It is not assumed, however, in
the standard definition
of a Smale space that $f$ is locally Lipschitz and that the local
product structure agrees with the metric in the sense of
Definition~\ref{def:compatiblelipsch}.

But any Smale space carries a natural log-scale, defined by
D.~Fried~\cite{fried:naturalmetric}, which satisfies all the
compatibility conditions.

Let $U\subset\X\times\X$ be a neighborhood of the diagonal such that
for $(f^k(x), f^k(y))\in U$ for all $k\in\Z$ implies that $x=y$. Such
a neighborhood exists for every Smale space.

Define then $\ell(x, y)$ to be the maximal value of $n$ such that
$(f^k(x), f^k(y))\in U$ for all $k\in\Z$ such that $|k|<n$. If
such $n$ does not exist, we set $\ell(x, y)=0$.

Denote by $U_n$ the set of pairs $(x, y)$ such that $\ell(x, y)\ge
n$. Then $U_n=\bigcap_{|k|<n}f^k(U)$, where $f^k(x, y)=(f^k(x),
f^k(y))$. The sets $U_n$ are neighborhoods of the diagonal, and
$\bigcap_nU_n$ is equal to the diagonal.

\begin{lemma}
The defined function $\ell$ is a log-scale compatible with the
topology on $\X$.
The log-scale $\ell$ does not depend, up to Lipschitz equivalence, on
the choice of $U$.
\end{lemma}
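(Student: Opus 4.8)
The plan is to derive everything from the single dynamical fact that $f$ is expansive, i.e. from the isolation property of $U$, together with compactness of $\X$; the local product and contraction--expansion structure enter only through the existence of $U$, which is given. Throughout I would assume, as we may, that $U$ is open and symmetric (replacing it by the interior of $U\cap U^{-1}$) and that $\overline U$ is still isolating, equivalently that $\overline U$ lies inside some expansive neighborhood; call such $U$ \emph{admissible}. Since one isolating neighborhood exists, the admissible ones form a neighborhood basis of the diagonal $\Delta\subset\X\times\X$, so this is no loss. With $U$ symmetric, axiom (1) of Definition~\ref{def:logscale} is immediate. For axiom (2), note that $\Delta\subseteq U$ and $\Delta$ is $f$-invariant, so $\ell(x,x)=\infty$; and if $x\ne y$ the isolation property produces some $k$ with $(f^k(x),f^k(y))\notin U$, whence $\ell(x,y)\le |k|+1<\infty$.

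The crux is the following comparison estimate, which I expect to be the main obstacle (though, given expansivity, it is a clean compactness argument rather than a hard one): if $V\subseteq U$ are symmetric open neighborhoods of $\Delta$ and $\overline U$ is isolating, then there is a constant $c$ with $U_n\subseteq V_{n-c}$ for all $n$. I would argue by contradiction: a failure yields, after translating by a suitable power $f^{k}$, pairs $(a_j,b_j)\in U_j$ with $(a_j,b_j)\notin V$, where the stay-close window grows with $j$. Passing to a convergent subsequence $(a_j,b_j)\to(a,b)$, the limit avoids $V$, so $a\ne b$, while for each fixed $m$ the relation $(f^m(a_j),f^m(b_j))\in U$ for large $j$ forces $(f^m(a),f^m(b))\in\overline U$; thus the entire orbit of $(a,b)$ stays in $\overline U$, contradicting isolation. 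The one point needing care here is exactly the bookkeeping guaranteeing that the limiting orbit lands inside an isolating set, which is why I insist that $\overline U$ (and not merely $U$) isolate.

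Granting the comparison estimate, the remaining items are formal. For axiom (3) I would take $U'=U\circ U$ (relation composition), a symmetric neighborhood of $\Delta$ that is admissible once $U$ is chosen small; the inclusion $U_n\circ U_n\subseteq U'_n$ is immediate from the definition, and the estimate gives $U'_n\subseteq U_{n-c}$, so that $(x,y),(y,z)\in U_n$ implies $(x,z)\in U_{n-c}$, i.e. $\ell(x,z)\ge\min(\ell(x,y),\ell(y,z))-c$. For independence, given two admissible choices $U_1,U_2$ I would set $W=U_1\cap U_2\subseteq U_i$; then $W_n\subseteq (U_i)_n$ gives $\ell_W\le\ell_{U_i}$ for free, while the comparison estimate applied with the larger set $U_i$ gives $\ell_{U_i}\le\ell_W+c_i$. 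Hence $\ell_{U_1}$ and $\ell_{U_2}$ are each Lipschitz equivalent to $\ell_W$, and therefore to each other.

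Finally, for compatibility with the topology I would reduce to showing that the sets $U_n$ form a neighborhood basis of $\Delta$: the same compactness-and-isolation argument shows that every open $W\supseteq\Delta$ contains some $U_n$, and $\bigcap_n U_n=\Delta$ is given. By Proposition~\ref{pr:logscalemetric} an associated metric satisfies $|x-y|\asymp\alpha^{\ell(x,y)}$, so $x_j\to x$ in the metric iff $\ell(x_j,x)\to\infty$ iff $(x_j,x)$ eventually enters every $U_n$. The forward implication ``$x_j\to x$ in the original topology $\Rightarrow\ell(x_j,x)\to\infty$'' follows from continuity of the finitely many maps $f^k$ with $|k|<n$, so that $(f^k(x_j),f^k(x))\to\Delta\subseteq U$, and the reverse implication follows from the basis property. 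Thus the topology induced by $\ell$ coincides with the original one, completing the proof.
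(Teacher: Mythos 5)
Your proof is correct, and at bottom it runs on the same engine as the paper's: compactness plus expansivity force the sets $U_n$ to shrink uniformly to the diagonal, and applying $f^k$ shifts indices by a constant. The paper packages this more directly: it asserts by compactness that $U_{n_0}\circ U_{n_0}\subset U$ for some $n_0$, applies $f^k$ to get $U_{n+n_0}\circ U_{n+n_0}\subset U_n$ (which is axiom (3) of Definition~\ref{def:logscale} with $\delta=n_0$), and for independence uses $U_{n_1}\subset U'$ translated to $U_{n+n_1}\subset U'_n$. Your ``comparison estimate'' $U_n\subseteq V_{n-c}$ is exactly that last mechanism, proved once and reused three times: for the triangle inequality via $U\circ U$, for independence via intersections, and for the neighborhood-basis property underlying topological compatibility (a point the paper's proof leaves implicit). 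The detour through $U\circ U$ is slightly longer than the paper's direct composition inclusion, but it is sound, including the observation that $\overline{U\circ U}$ is isolating when $U$ is small, since it then sits inside a fixed closed isolating set and subsets of isolating sets are isolating.

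The one place where you are genuinely more careful than the paper is the insistence that $\overline U$ (not just $U$) be isolating, and this care is warranted rather than cosmetic. The paper's one-line claim that compactness and $\bigcap_n U_n=\Delta$ yield $U_{n_0}\circ U_{n_0}\subset U$ fails for an arbitrary isolating neighborhood, because the $U_n$ need not be closed and hence need not shrink to the diagonal: starting from $U_0=\{d\le\epsilon\}$ one can adjoin finite orbit segments $\{(f^kp,f^kq_i):|k|\le T_i\}$ of pairs that stay at distance greater than $\epsilon$ over windows $T_i\to\infty$, arranged so that each such orbit still exits the enlarged set; the result is an isolating neighborhood $U$ for which every $U_n$ contains points far from the diagonal, and for which $\ell_U$ even violates axiom (3) (take $x=p$, $y=q_i$, and $z$ extremely close to $q_i$ but off the added orbits). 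So the lemma as literally stated needs a restriction of exactly your ``admissible'' type (for instance $U$ closed), such sets always exist inside any given isolating neighborhood, and the same implicit restriction is needed to make the paper's own argument rigorous. In other words, your reduction is not a loss of generality to apologize for; it is the correct reading of the statement.
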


\begin{proof}
Since $\X$ is compact, and intersection of the sets $U_n$ is equal to
the diagonal, there exists $n_0$ such that $U_{n_0}\circ
U_{n_0}\subset U$. Then $f^k(U_{n_0})\circ f^k(U_{n_0})\subset f^k(U)$
for every $k$, hence $U_{n+n_0}\circ U_{n+n_0}\subset U_n$ for every
$n$. It follows that for every positive integer $n$ and for all $x, y,
z\in\X$ the inequalities $\ell(x, y)\ge n+n_0$ and $\ell(y, z)\ge n+n_0$
imply $\ell(x, z)\ge n$. Consequently, $\ell(x,
z)\ge\min(\ell(x, y), \ell(y, z))-n_0$ for all $x, y, z\in\X$.

If $U'$ is another neighborhood of the diagonal, and $\ell'$ is the
corresponding log-scale, then there exists $n_1$ such that
$U_{n_1}\subset U'$, and then $f^k(U_{n_1})\subset f^k(U')$, hence
$U_{n_1+n}\subset U'_n$, so that $\ell'(x, y)\ge\ell(x, y)-n_1$.
\end{proof}

We call the log-scale $\ell$ the \emph{natural log-scale} on the Smale
space. It is obvious that $f$ is Lipschitz with respect to the
natural log-scale. In fact, $|\ell(f(x), f(y))-\ell(x, y)|\le 1$.

\begin{lemma}
The natural log-scale is compatible with the local product structure.
\end{lemma}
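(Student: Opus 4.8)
The plan is to split the natural log-scale into a forward and a backward part and to match these with the two directions of the local product structure. With $U$ as in the construction of $\ell$, define
\[\ell^+(x,y)=\max\{n : (f^k(x),f^k(y))\in U \text{ for all } 0\le k<n\},\]
and symmetrically $\ell^-(x,y)$ using the backward iterates $f^{-k}$, $0\le k<n$. Exactly as in the preceding lemma (using compactness of $\X$ and $\bigcap_n U_n=$ diagonal, so that $U_{n_0}\circ U_{n_0}\subset U$ for some $n_0$), each of $\ell^+,\ell^-$ is a log-scale with some constant $\delta^+,\delta^-$, and directly from the definitions $\ell(x,y)\doteq\min(\ell^+(x,y),\ell^-(x,y))$. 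It therefore suffices to identify $\ell^+$ with a log-scale on the second (unstable) direction and $\ell^-$ with a log-scale on the first (stable) direction.

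First I would fix a finite covering $\mathcal{R}=\{R_i\}$ of $\X$ by open rectangles $R_i=\proj_1(R_i)\times\proj_2(R_i)$, chosen small enough (by compactness together with condition (3) of Definition~\ref{def:hypflow}) that any two points lying on a common plaque in the first direction have all their forward iterates in $U$, and dually any two points on a common plaque in the second direction have all their backward iterates in $U$. Since $f$ preserves the local product structure and contracts the first direction by $\lambda<1$, forward iterates of such a pair remain on a shrinking stable plaque and stay $U$-close once the rectangle is small. This is the step I expect to be the main obstacle, since it is where the uniform contraction factor, the preservation of plaques, and compactness must be combined to produce a bound uniform over each rectangle and over all $R_i$. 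On such pairs one then has $\ell^+=+\infty$, and dually $\ell^-=+\infty$ on pairs sharing a second-direction plaque.

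For $x=(a_1,b_1)$ and $y=(a_2,b_2)$ in a rectangle $R$, recall $[x,y]=(a_1,b_2)$. The pair $[x,y],y$ shares the first-direction plaque through $y$, so $\ell^+([x,y],y)=+\infty$; applying the log-scale inequality for $\ell^+$ to the triple $x,[x,y],y$ in both directions gives $\ell^+(x,y)\doteq\ell^+(x,[x,y])$. Dually, $x,[x,y]$ share a second-direction plaque, so $\ell^-(x,[x,y])=+\infty$ and $\ell^-(x,y)\doteq\ell^-([x,y],y)$. Now $\ell^+(x,[x,y])=\ell^+((a_1,b_1),(a_1,b_2))$ depends only on $b_1,b_2$ up to a bounded error: inserting the intermediate points $(a_1',b_1),(a_1',b_2)$, whose pairings with $(a_1,b_1),(a_1,b_2)$ have infinite $\ell^+$ (common first-direction plaques), the log-scale inequality yields $|\ell^+((a_1,b_1),(a_1,b_2))-\ell^+((a_1',b_1),(a_1',b_2))|\le 2\delta^+$. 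Hence $\ell^+(x,[x,y])$ defines, up to Lipschitz equivalence, a log-scale $\ell_{R,2}$ on $\proj_2(R)$, and symmetrically $\ell^-([x,y],y)$ defines a log-scale $\ell_{R,1}$ on $\proj_1(R)$.

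Combining these,
\[\ell(x,y)\doteq\min(\ell^+(x,y),\ell^-(x,y))\doteq\min(\ell_{R,2}(b_1,b_2),\ell_{R,1}(a_1,a_2)),\]
and the right-hand side is exactly the product log-scale $(\ell_{R,1}\times\ell_{R,2})((a_1,b_1),(a_2,b_2))$ of~\eqref{eq:prodscale}. Since $\doteq$ denotes a uniformly bounded difference, this is precisely the Lipschitz equivalence of $\ell$ with $\ell_{R,1}\times\ell_{R,2}$ on $R$ demanded by Definition~\ref{def:compatiblelipsch}, so $\ell$ is compatible with the local product structure.
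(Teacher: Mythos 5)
There is a genuine gap at the first step, and every later step depends on it. You assert that $\ell^+$ and $\ell^-$ are log-scales ``exactly as in the preceding lemma,'' but the preceding lemma's argument does not apply to them: it rests on the fact that $\bigcap_n U_n$ is the \emph{diagonal}, which by compactness yields $U_{n_0}\circ U_{n_0}\subset U$. For the one-sided sets $U_n^+=\bigcap_{0\le k<n}(f\times f)^{-k}(U)$ the intersection $\bigcap_n U_n^+$ is not the diagonal at all --- it is the local stable relation, and its non-triviality is the whole point of the lemma. In fact $\ell^+$ fails the approximate triangle inequality of Definition~\ref{def:logscale} outright: take three points $x,y,z$ on a common stable plaque (of a leaf of diameter larger than $\epsilon$, which exists in any non-trivial Smale space) with $|x-y|,|y-z|<\epsilon\le|x-z|$. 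Since forward iterates contract stable distances, $\ell^+(x,y)=\ell^+(y,z)=+\infty$, while $\ell^+(x,z)=0$ because the pair $(x,z)$ already fails to lie in $U$ at time $k=0$. (The same example shows $\ell^+(x,y)=+\infty$ with $x\ne y$, violating axiom (2), though your argument only really needs axiom (3).) So the steps where you ``apply the log-scale inequality for $\ell^+$'' to the triples $x,[x,y],y$ and $(a_1,b_1),(a_1',b_1),\dots$ are unjustified; the exact identity $\ell=\min(\ell^+,\ell^-)$ and the plaque-invariance idea are fine, but they do not carry the proof without this inequality.

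The gap is not cosmetic, because the local (within-rectangle) version of the triangle inequality for $\ell^\pm$, which is all you actually use, is itself equivalent to the contraction--expansion estimates that constitute the proof. What one gets for free from the metric triangle inequality is only a degraded statement: if $\ell'$ denotes the scale built from the doubled neighborhood $U'=\{|x-y|<2\epsilon\}$, then $(\ell^+)'(x,z)\ge\min(\ell^+(x,y),\ell^+(y,z))$; and one cannot return from $(\ell^+)'$ to $\ell^+$, since the one-sided scales attached to different neighborhoods are \emph{not} Lipschitz equivalent (again by the stable-plaque example). The paper's proof is structured precisely to avoid ever needing a one-sided scale: it proves the two inequalities $\ell'(x,y)\ge\min(\ell([x,y],x),\ell([x,y],y))$ and $\min(\ell'([x,y],x),\ell'([x,y],y))\ge\ell(x,y)$, where the decomposition is carried by the \emph{honest two-sided} scales $\ell,\ell'$ evaluated at the plaque point $[x,y]$, and then invokes the Lipschitz equivalence $\ell\doteq\ell'$, which holds only for two-sided scales by the preceding lemma. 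The factor log-scales on $\proj_1(R)$ and $\proj_2(R)$ are then restrictions of $\ell$ itself to plaques (made well defined by the sliding estimates $|\ell(x_1,x_2)-\ell([x_1,y],[x_2,y])|<D$), so they inherit the log-scale axioms automatically. To repair your argument you would either have to prove the local triangle inequality for $\ell^\pm$ by exactly this kind of analysis --- at which point you have reproduced the paper's proof --- or replace $\ell^\pm$ from the start by the restrictions of $\ell$ to unstable and stable plaques, which is the paper's route.
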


\begin{proof}
Let $\epsilon>0$ be such that the inequalities $|f^k(x)-f^k(y)|<2\epsilon$
for all $k\in\Z$ imply that $x=y$. Let $U$ be the
set of pairs $(x, y)\in\X\times\X$ such that $|x-y|<\epsilon$ and
let $U'$ be the set of pairs such that $|x-y|<2\epsilon$. Let $\ell$
and $\ell'$ be the corresponding log-scales. We know that they are
Lipschitz equivalent.

Let $R\subset\X$ be a rectangle of diameter less than $\epsilon$. Let
us show that there exists a constant $D$ such that
\[|\ell(x, y)-\min(\ell(x, [x, y]), \ell(y, [x, y]))|<D\]
for all $x, y\in R$.

Suppose that $k$ is a positive integer such that $(f^k(x),
f^k(y))\notin U'$, i.e., $|f^k(x)-f^k(y)|\ge 2\epsilon$. Since $f$
is contracting in the first direction of the local product structure,
$|f^k([x, y])-f^k(y)|<\epsilon$, therefore $|f^k([x,
y])-f^k(x)|>\epsilon$, by the triangle inequality.

If $k$ is a positive integer such that $|f^k([x,
y])-f^k(x)|>2\epsilon$,
then by the same argument, $|f^k(x)-f^k(y)|>\epsilon$.

Similarly, if $k$ is negative and $|f^k(x)-f^k(y)|\ge 2\epsilon$, then
$|f^k([x, y])-f^k(y)|>\epsilon$; and if
$|f^k([x, y])-f^k(y)|>2\epsilon$, then $|f^k(x)-f^k(y)|>\epsilon$.

It follows that \[\ell'(x, y)\ge\min(\ell([x, y], x), \ell([x, y], y)),\qquad
\min(\ell'([x, y], x), \ell'([x, y], y))\ge\ell(x, y).\] But the
difference $|\ell'-\ell|$ is uniformly bounded, hence the difference
$|\ell(x, y)-\min(\ell([x, y], x), \ell([x, y], y))|$ is uniformly
bounded too.

Similar arguments show that there exists a constant $D$ such that
if $x_1, x_2$ are such that $x_2\in\proj_+(R, x_1)$, then for every
$y\in R$ we have
\[|\ell(x_1, x_2)-\ell([x_1, y], [x_2, y])|<D,\quad |\ell(x_1, [x_1,
y])-\ell(x_2, [x_2, y])|<D.\]
This finishes the proof of the lemma.
\end{proof}

We have thus proved the following proposition.

\begin{proposition}
The groupoid of germs generated by the homeomorphism $f:\X\arr\X$ of a
Smale space is a Smale quasi-flow.
\end{proposition}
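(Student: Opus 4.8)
The plan is to check directly that $\Gh$, the groupoid of germs generated by $f$, satisfies all the conditions of Definition~\ref{def:hypflow} with respect to the natural log-scale $\ell$ and the degree cocycle. First I would record the structural data. The two lemmas just proved show that $\ell$ is a log-scale compatible with the topology of $\X$, that $f$ is Lipschitz for $\ell$ (indeed $|\ell(f(x),f(y))-\ell(x,y)|\le 1$), and that $\ell$ is compatible with the local product structure in the sense of Definition~\ref{def:compatiblelipsch}. Every element of $\wt\Gh$ is a restriction of a power $f^n$, so all of them are locally Lipschitz; thus $\ell$ defines a Lipschitz structure on $\Gh$, and I fix a metric $|\cdot|$ on a compact neighborhood $\wh\X$ of $\X=\Gh^{(0)}$ associated with $\ell$ via Proposition~\ref{pr:logscalemetric}. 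The degree $\coc((f^n,x))=n$ is a genuine continuous cocycle, hence an $\eta$-quasi-cocycle with $\eta=0$, and condition (5) is then immediate: for fixed $r$ the inequality $|\coc(g)|\le r$ allows only the finitely many values $|n|\le r$, and for each of them $\{(f^n,x)\;:\;x\in A,\ f^n(x)\in A\}$ is a closed subset of the compact set of germs of $f^n$ over $A$.

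For the generating set I would take $S$ to consist of the germs of $f^N$ and $f^{N+1}$ with domain and range in $\X$, for a large $N$ chosen below; since $\gcd(N,N+1)=1$ these still generate $\Gh|_{\X}$, while $\coc$ takes the values $N$ and $N+1$ on $S$, giving condition (1) ($\coc(g)>3\eta=0$) and condition (2) ($\be(S)=\en(S)=\X$, as $f$ is a homeomorphism of $\X$). Compatibility of the grading with the local product structure (Definition~\ref{def:compatiblegraded}) follows from the fact that on a rectangle $f$ respects the product decomposition and acts by a strict contraction on the first factor and a strict expansion on the second: if two germs share a first (or a second) projection, their degrees must coincide, since $\proj_1(f^{k})$ and $\proj_2(f^{k})$ have nontrivial germ whenever $k\ne 0$. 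Hausdorffness I would also deduce from the strict contraction: were $f^n=\mathrm{id}$ on an open rectangle with nontrivial stable direction, $f^n$ would fix two distinct points of a common stable plaque, contradicting $|f^n(x)-f^n(y)|<|x-y|$ for $n>0$; hence $f^n$ has trivial germ only for $n=0$, so $\Gh$ coincides with the transformation groupoid $\X\rtimes_f\Z$ and is Hausdorff.

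The heart of the argument is condition (3), where I must transfer the contraction and expansion of $f$ into the metric associated with the Fried log-scale, \emph{direction by direction}. The key estimate, extracted from the proof of the compatibility lemma, is that on a stable plaque only the backward orbit governs $\ell$, so applying $f$ widens the admissible backward window by one and hence $\ell(f(x),f(y))\doteq\ell(x,y)+1$, and symmetrically $f$ decreases $\ell$ by one up to a constant along unstable plaques. Translating through the association $|x-y|\asymp\alpha^{\ell(x,y)}$, the map $f^N$ contracts first-direction plaques and expands second-direction plaques by a factor comparable to $\alpha^{N}$; choosing $N$ large enough to absorb the multiplicative constants of the association produces a genuine $\lambda\in(0,1)$ and the rectangular neighborhoods witnessing condition (3). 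Condition (4), local diagonality (Definition~\ref{def:locdiagonal}), then comes for free: taking $\mathcal{R}$ to be any covering by sufficiently small rectangles, if $g=(f^n,x)$ has both endpoints in one rectangle and a trivial projection $\proj_i(g)$, then $\proj_i(f^n)$ is the identity germ; but $\proj_1(f)$ and $\proj_2(f)^{-1}$ are strict contractions on plaques, so $\proj_i(f^n)$ is never locally the identity unless $n=0$, whence $g$ is a unit.

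Assembling these verifications shows that $\Gh$ is a Smale quasi-flow. I expect the genuinely delicate step to be condition (3): the remaining conditions reduce to the facts that $\coc$ is an honest $\Z$-valued cocycle and that $f$ strictly contracts and expands the two directions, but condition (3) requires the quantitative, direction-wise comparison between the Fried log-scale and an associated metric, and care is needed to see that the per-step change of $\ell$ is exactly $\pm 1$ on each individual plaque rather than merely on the combined log-scale.
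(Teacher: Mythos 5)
Your proposal is correct and takes essentially the same route as the paper: the metric-compatibility conditions of Definition~\ref{def:hypflow} are supplied by Fried's natural log-scale through the two preceding lemmas, and the remaining conditions are checked directly from the definition of a Smale space. The paper declares those remaining conditions immediate (``We see immediately that this groupoid satisfies almost all conditions\dots''), so your explicit verifications --- the generating set of germs of $f^N$ and $f^{N+1}$, Hausdorffness, local diagonality, and the per-plaque estimate $\ell(f(x),f(y))\doteq\ell(x,y)\pm 1$ that yields condition (3) after choosing $N$ large --- are elaborations of the paper's argument rather than a departure from it.
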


\begin{corollary}
Ruelle groupoids of the groupoid of germs of a Smale space
are mutually dual hyperbolic groupoids.
\end{corollary}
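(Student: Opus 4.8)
The plan is to derive this corollary directly from the general duality machinery for Smale quasi-flows, using the groupoid $\Gh$ of germs generated by $f\colon\X\arr\X$ as the quasi-flow. By the preceding Proposition, $\Gh$ is a Smale quasi-flow, so its Ruelle groupoids $\proj_+(\Gh)$ and $\proj_-(\Gh)$ are defined. First I would apply Theorem~\ref{th:Ruellehyperbolic} together with its corollary to conclude that both $\proj_+(\Gh)$ and $\proj_-(\Gh)$ are hyperbolic groupoids, graded by quasi-cocycles $\coc_+$ and $\coc_-$ with $|\coc_+(\proj_+(g))-\coc(g)|$ and $|\coc_-(\proj_-(g))+\coc(g)|$ uniformly bounded, where $\coc$ is the degree cocycle of $f$. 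This settles hyperbolicity; it remains to identify the duals.

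Set $\G=\proj_+(\Gh)$. By Theorem~\ref{th:everyhypflowgeod} the Smale quasi-flow $\Gh$ is equivalent, as a graded groupoid carrying a local product structure, to the geodesic quasi-flow $\partial\G\rtimes\G$. Next I would check that this geodesic quasi-flow is locally diagonal, so that $\G^\top$ is defined: indeed, $\Gh$ is locally diagonal by condition (4) of Definition~\ref{def:hypflow}, and local diagonality is preserved under equivalence of groupoids preserving a local product structure (as recorded after Definition~\ref{def:locdiagonal}); hence $\partial\G\rtimes\G$ is locally diagonal. Therefore the dual $\G^\top=\proj_-(\partial\G\rtimes\G)$ is defined.

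Since $\partial\G\rtimes\G$ and $\Gh$ are equivalent as graded groupoids with local product structure, their projections onto the unstable (second) direction are equivalent; concretely, by Proposition~\ref{prop:compressible} the Ruelle groupoid $\proj_-$ depends, up to equivalence, only on the equivalence class of the quasi-flow, so that
\[\G^\top=\proj_-(\partial\G\rtimes\G)\simeq\proj_-(\Gh).\]
Thus $(\proj_+(\Gh))^\top$ is equivalent to $\proj_-(\Gh)$. Applying the Duality Theorem~\ref{th:dualitytheorem} to $\G=\proj_+(\Gh)$ gives $(\G^\top)^\top\simeq\G$, and since the dual operation respects equivalence, this reads $(\proj_-(\Gh))^\top\simeq\proj_+(\Gh)$. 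Hence $\proj_+(\Gh)$ and $\proj_-(\Gh)$ are mutually dual hyperbolic groupoids, as claimed.

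The step I expect to require the most care is the middle one: verifying that the equivalence produced by Theorem~\ref{th:everyhypflowgeod} is genuinely an equivalence of \emph{graded} groupoids together with \emph{compatible} local product structures, so that passing to the $\proj_-$-projection is legitimate and yields $\proj_-(\partial\G\rtimes\G)\simeq\proj_-(\Gh)$. This is where one must choose the compressible coverings defining the two Ruelle groupoids compatibly across the equivalence and invoke Proposition~\ref{prop:compressible}; once this bookkeeping is done, the remainder is a direct citation of Theorems~\ref{th:Ruellehyperbolic},~\ref{th:everyhypflowgeod}, and~\ref{th:dualitytheorem}.
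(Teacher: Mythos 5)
Your proposal is correct and follows essentially the same route as the paper: the paper states this corollary without proof, as an immediate consequence of the preceding proposition (the germ groupoid of a Smale space is a Smale quasi-flow) combined with Theorems~\ref{th:Ruellehyperbolic} and~\ref{th:everyhypflowgeod} and the definition of the dual groupoid, which is exactly the chain you assemble. Your explicit verification of local diagonality of $\partial\G\rtimes\G$ via equivalence with $\Gh$, and your flagging of the compatibility of coverings needed to identify $\proj_-(\partial\G\rtimes\G)$ with $\proj_-(\Gh)$, are the right points to make this implicit argument rigorous.
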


All examples above, except for Gromov hyperbolic groups, are
Ruelle groupoids of some Smale spaces.

The Smale space associated with an expanding self-covering
$f:\X\arr\X$ is the natural extension $\wh f:\wh\X\arr\wh\X$. The
Smale space associated with a contracting group is called the
\emph{limit solenoid} and is defined in a way similar to the
definition of the limit space, but starting from bi-infinite sequences
(see~\cite[Section~5.7]{nek:book}). Some other examples are considered
below.

\subsection{Quadratic irrational rotation}

Consider the classical Anosov diffeomorphism of the two-torus
$\R^2/\Z^2$ defined by the matrix $A=\left(\begin{array}{cc}2 & 1\\ 1 &
  1\end{array}\right)$. Let
$\Gh$ be the groupoid generated by it. Lifting $\Gh$ to the universal
covering $\R^2$ of the torus we get a groupoid equivalent to
$\Gh$ generated by the action of $\Z^2$ on $\R^2$ and the linear map
$A$. In other words, it is the groupoid of germs of the group of
affine transformations of the form $\vec v\mapsto A^n\vec v+\vec a$,
where $n\in\Z$ and $\vec a\in\Z^2$.

The eigenvalues of $A$ are $\frac{3\pm\sqrt{5}}2$. The eigenvectors
$\left(\begin{array}{c}1\\ \frac{-1+\sqrt{5}}2\end{array}\right)$
and $\left(\begin{array}{c}1\\ \frac{-1-\sqrt{5}}2\end{array}\right)$
are orthogonal. Ratio of lengths of projections of
$\left(\begin{array}{c} 1\\ 0\end{array}\right)$ and
$\left(\begin{array}{c} 0\\ 1\end{array}\right)$ onto either eigenspace
is the golden ratio $\varphi=\frac{1+\sqrt{5}}2$. Note that
$\varphi^2$ and $\varphi^{-2}$ are the eigenvalues of $A$.

It follows the that Ruelle groupoids of the Smale quasi-flow $\Gh$ are
equivalent to the groupoid of germs of the group of affine
transformations of $\R$ generated by $x\mapsto x+1$, $x\mapsto
x+\varphi$, and $x\mapsto\varphi^2x$. It is the groupoid of germs of
the group of affine transformations of the form $x\mapsto
\varphi^{2n}x+\alpha$ for $n\in\Z$ and $\alpha\in\Z[\varphi]$.

In general, if $\theta$ is a root of the polynomial $x^2+bx+1$ for
$b\in\Z$, $|b|>2$, then the groupoid generated by $x\mapsto x+1$,
$x\mapsto x+\theta$, and $x\mapsto\theta x$ is hyperbolic and
self-dual.

\subsection{Vershik transformations, Williams solenoids, and aperiodic tilings}

A \emph{Bratteli diagram} is defined by two sequences $(V_0, V_1,
\ldots)$ and $(E_0, E_1, \ldots)$ of finite sets and sequences
$\be_n:E_n\to V_n$, $\en_n:E_n\to V_{n+1}$ of maps. We interpret
$\bigcup_{i\ge 0}V_i$ and $\bigcup_{i\ge 0}E_i$ as the sets of
vertices and edges respectively. An edge $e\in E_n$ connects the
vertex $\be_n(e)\in V_n$ with $\en(e)\in V_{n+1}$.

A Bratteli diagram is called \emph{stationary} if the sequences $V_n=V$,
$E_n=E$, $\be_n=\be$, and $\en_n=\en$ are constant. If we 
identify $V$ with
$\{1, 2, \ldots, d\}$ for $d=|V|$, we the
stationary Bratteli diagram can be described by the matrix $A=(a_{ij})$, where $a_{ij}$
is the number of edges $e\in E$ such that $\be(e)=i$ and $\en(e)=j$.

The \emph{space of paths} in a Bratteli diagram is the space of all
sequences $(e_0, e_1, \ldots)$ such that $\en(e_n)=\be(e_{n+1})$ for
all $n\ge 0$ with the topology of a subset of the direct
product $E_0\times E_1\times\cdots$.

A \emph{Vershik-Bratteli diagram} is a Bratteli diagram in which for
every $v\in V_n$, $n\ge 1$, there is a linear ordering of the
set of edges $\en_{n-1}^{-1}(v)$. A \emph{stationary Vershik-Bratteli
  diagram} is a stationary Bratteli diagram with the same orderings on
each level.

Suppose that $(e_0, e_1, \ldots)$ is a path in a Vershik-Bratteli
diagram consisting not only of maximal edges. Find
the first non-maximal edge $e_n$. Define then
\[\tau(e_0, e_1, \ldots)=(f_0, f_1, \ldots, f_n, e_{n+1}, e_{n+2},
\ldots),\]
where $f_n$ is the next after $e_n$ edge in the linear order of
$\en_n^{-1}(\en_n(e_n))$, and all $f_i$ for $i<n$ are minimal. This
condition uniquely determines a continuous map $\tau$ from the set of
all non-maximal paths to the set of all non-minimal
paths of the diagram. If the diagram has only one minimal path (i.e.,
a path consisting of minimal edges only) and
one maximal path, then we set the image of the maximal path
under $\tau$ to be the
minimal path. In this way we get a homeomorphism of the space of paths
of the diagram, called the \emph{Vershik transformation}.

For example, the binary adding machine transformation is conjugate to the
Vershik transformation defined by the diagram consisting of single
vertices and two edges on each level.

Consider a stationary Vershik-Bratteli diagram containing only one
minimal and only one maximal paths. Let $\tau:F\arr F$ be the
corresponding Vershik transformation. Since the diagram is stationary,
$F$ is shift-invariant. Let $\G$ be the groupoid of germs of the
pseudogroup generated by $\tau$ and the shift. It is easy to check
using Theorem~\ref{th:contraction} (in the same way as in
Proposition~\ref{pr:contractinggroups}) that the groupoid $\G$
is hyperbolic. Its geodesic quasi-flow is an example of a
\emph{Williams solenoid}, see~\cite{williams:attractors}.
Instead of describing the general situation, let us just analyze one example.

Consider the stationary Bratteli diagram defined by the matrix
$\left(\begin{array}{cc} 2 & 1 \\ 1 & 1\end{array}\right)$. Let us label the edges of the diagram as it is shown on Figure~\ref{fig:bratteli}. We order the edges by the relation $1<2<3$ and $4<5$.

\begin{figure}
\centering
\includegraphics{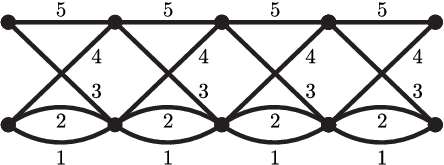}
\caption{} \label{fig:bratteli}
\end{figure}

The space $F$ of infinite paths is the subset of the space $\{1, 2, 3,
4, 5\}^\omega$ consisting of sequences $a_0a_1\ldots$ such that
\[a_ia_{i+1}\in\{11, 12, 14, 21, 22, 24, 31, 32, 34, 43, 45, 53, 55\}\]
for all $i$. The
corresponding Vershik transformation is given by the recurrent rules
\begin{gather*}
\tau(1w)=2w,\quad\tau(2w)=3w,\\
\tau(31w)=1\tau(1w),\quad\tau(32w)=4\tau(2w),\quad\tau(34w)=4\tau(4w),\\
\tau(4w)=5w,\quad\tau(5w)=1\tau(w).
\end{gather*}

Let $\G$ be the groupoid of germs of the pseudogroup generated by the
shift and $\tau$. We will see later that it is hyperbolic. The
corresponding Smale space $\partial\G$ can be
described in the following way. Consider the mapping torus of the
homeomorphism $\tau$, i.e., direct product $I\times F$, where
$I=[0, 1]$ is the unit interval, modulo the equivalence identifying
every point $(1, w)$ with $(0, \tau(w))$. Let us denote it by $T$. We
will denote by $I_w$ the subset $I\times\{w\}$ of $T$. The space $T$
has a natural local product structure inherited from the direct
product $I\times F$.

Denote by $A_w$ the union $I_{1w}\cup I_{2w}\cup I_{3w}$ and by $B_w$
the union $I_{4w}\cup I_{5w}$. Note that $A_w$ and $B_w$ are
homeomorphic to intervals. Note that $A_w$ is defined if and
only if $w$ starts with $1$, $2$, or $4$; $B_w$ is defined if and only
if $w$ starts with $3$ or $5$.

Applying the recursive definition of $\tau$ twice, we get that the sets
\[I_{11w}\cup I_{21w}\cup I_{31w}\cup I_{12w}\cup I_{22w}\cup I_{32w}\cup I_{43w}\cup I_{53w}=A_{1w}\cup A_{2w}\cup B_{3w}\]
and
\[I_{14w}\cup I_{24w}\cup I_{34w}\cup I_{45w}\cup I_{55w}=A_{4w}\cup B_{5w}\]
are intervals.
Let us change the metric on $T$ so that all intervals
$A_w$ have length $1$ and the intervals $B_w$ have length
$\frac{\sqrt{5}-1}2$. Then length of the interval $A_{1w}\cup
A_{2w}\cup B_{3w}$ is equal to
$2+\frac{\sqrt{5}-1}2=\frac{3+\sqrt{5}}2$, and length of $A_{4w}\cup
B_{5w}$ is equal to $1+\frac{\sqrt{5}-1}2=\frac{1+\sqrt{5}}2=
\frac{\sqrt{5}-1}2\cdot\frac{3+\sqrt{5}}2$.

Consider now the homeomorphism $f:T\arr T$ mapping $A_{1w}\cup
A_{2w}\cup B_{3w}$ by an affine orientation-preserving map to $A_w$,
and $A_{4w}\cup B_{5w}$ to $B_w$. Then $f$ contracts the distances
inside the intervals by $\frac{3+\sqrt{5}}2$, and acts as the shift on
the fibers $F$ of the local product structure. It follows that $(T,
f)$ is a Smale space. It follows directly from the definition that the
holonomy pseudogroup of the stable lamination is generated  by the
homeomorphism $\tau:F\arr F$, and that the unstable Ruelle groupoid of
$(T, f)$ is equivalent to $\G$.

Let us describe the dual groupoid $\G^\top$, which is the
Ruelle groupoid acting on the stable direction. The leaves of the stable
lamination are the path connected components of $T$, are homeomorphic
to $\R$, and are unions of intervals $A_w$ and $B_w$. Each leaf is
tiled by the intervals $A_w$ and $B_w$ in some order. The map
$f^{-1}:T\arr T$ will map each tile $A_w$ to the union of the tiles
$A_{1w}\cup A_{2w}\cup B_{3w}$, and will map each tile $B_w$ to the
union of the tiles $A_{4w}\cup B_{5w}$. We get a
\emph{self-similarity}, or \emph{inflation rule} for the obtained
class of tilings of $\R$. In fact this inflation rule determines the
class of tilings in a unique way. Let us write for every leaf of the
stable foliation the sequence of letters $A$ and $B$ according to the
types ($A_w$ or $B_w$) of the corresponding intervals of the
tiling. Then it follows from the inflation rule that the obtained set
of bi-infinite sequences is the \emph{substitution shift}: it is the
space of all bi-infinite sequences $w$ such that every finite subword
of $w$ is a subword of an element of the sequence $A, AAB, AABAABAB,
\ldots$ obtained from $A$ by iterations of the endomorphism $A\mapsto
AAB, B\mapsto AB$ of the free semigroup generated by $A$ and $B$.

All the tilings of leaves of $T$ are \emph{aperiodic}, i.e., have no
translational symmetries. On the other hand, they have many \emph{local symmetries}:
every finite portion of the tiling appears infinitely often in it (and
in any other tiled leaf of $T$). The Ruelle pseudogroup of the stable
direction of $T$ is the pseudogroup generated by all such local
symmetries and the self-similarity given by the inflation rule.

Note that the stable Ruelle groupoid $\G^\top$ of the Smale space $(T,
f)$ is equivalent to a sub-groupoid of the groupoid generated by the
transformations $x\mapsto x+1, x\mapsto x+\frac{1+\sqrt{5}}{2}$, and
$x\mapsto \frac{3+\sqrt{5}}2x$, which is the Ruelle groupoid of the
Anosov diffeomorphism defined by $\left(\begin{array}{cc} 2 & 1\\ 1&
    1\end{array}\right)$, described above.

Another
classical example of a self-similar aperiodic tilings is the Penrose tiling
(see~\cite{penrose,mgard}).
See~\cite{nek:smale} for description of the pseudogroup dual to the pseudogroup
generated by local symmetries and self-similarity of the Penrose
tiling.

Paper~\cite{nek:smale} studies a general situation of a
\emph{self-similar inverse semigroup}, in particular contracting
self-similar inverse semigroups (in the spirit of
Definition~\ref{def:contractinggroups}).
Groupoid of germs of the inverse semigroup generated by a contracting self-similar
inverse semigroup and the shift is hyperbolic (if it is Hausdorff).

\backmatter

\def\cprime{$'$}
\providecommand{\bysame}{\leavevmode\hbox to3em{\hrulefill}\thinspace}
\providecommand{\MR}{\relax\ifhmode\unskip\space\fi MR }
% \MRhref is called by the amsart/book/proc definition of \MR.
\providecommand{\MRhref}[2]{%
  \href{http://www.ams.org/mathscinet-getitem?mr=#1}{#2}
}
\providecommand{\href}[2]{#2}

\printindex
\end{document}